\newenvironment{customthm}[1]
  {\innercustomthm}
  {\endinnercustomthm}
\newtheorem{theorem}{Theorem}[section]
\newtheorem{question*}{Question}
\newtheorem{lemma}[theorem]{Lemma}
\newtheorem{proposition}[theorem]{Proposition}
\newtheorem{claim}[theorem]{Claim}
\newtheorem*{claim*}{Claim}
\theoremstyle{definition}
\newtheorem{definition}[theorem]{Definition}
\newtheorem{construction}[theorem]{Construction}
\newtheorem{notation}[theorem]{Notation}
\newtheorem*{notation*}{Notation}
\newtheorem{convention}[theorem]{Convention}
\newtheorem{example}[theorem]{Example}
\newtheorem{remark}[theorem]{Remark}
\newtheorem*{remark*}{Remark}
\newcommand{\tmfrac}[2]{\mbox{\large$\frac{#1}{#2}$}} 
\definecolor{bettergreen}{rgb}{0.0, 0.5, 0.0}
\newcommand{\Z}{\mathbb{Z}}
\newcommand{\Q}{\mathbb{Q}}
\newcommand{\R}{\mathbb{R}}
\newcommand{\C}{\mathbb{C}}
\newcommand{\bsm}{\left(\begin{smallmatrix}}
\newcommand{\esm}{\end{smallmatrix}\right)}
\newcommand{\id}{\operatorname{id}}
\newcommand{\Bl}{\operatorname{Bl}}
\newcommand{\coker}{\operatorname{coker}}
\newcommand{\proj}{\operatorname{proj}}
\newcommand{\Iso}{\operatorname{Iso}}
\newcommand{\Imm}{\operatorname{Imm}}
\newcommand{\Surf}{\operatorname{Surf}}
\newcommand{\Homeo}{\operatorname{Homeo}}
\newcommand{\Aut}{\operatorname{Aut}}
\newcommand{\Ext}{\operatorname{Ext}}
\newcommand{\im}{\operatorname{im}}
\newcommand{\intt}{\operatorname{int}}
\newcommand{\fr}{\operatorname{fr}}
\newcommand{\Hom}{\operatorname{Hom}}
\newcommand{\shrink}{\operatorname{shr}}
\newcommand{\ks}{\operatorname{ks}}
\newcommand{\pt}{\operatorname{pt}}
\newcommand{\sm}{\setminus}
\newcommand{\ol}{\overline}
\newcommand{\Span}{\operatorname{span}}
\newcommand{\unaryminus}{\scalebox{0.75}[1.0]{\( - \)}}
\DeclareSymbolFont{EulerScript}{U}{eus}{m}{n}
\DeclareSymbolFontAlphabet\mathscr{EulerScript}
\begin{document}

\title{Immersed surfaces with knot group~$\Z$}

\author[A.~Conway]{Anthony Conway}
\address{The University of Texas at Austin, Austin TX}
\email{anthony.conway@austin.utexas.edu}
\author[A.~N.~Miller]{Allison N.~Miller}
\address{Swarthmore College, Swarthmore PA}
\email{amille11@swarthmore.edu }

\begin{abstract}
This article is concerned with locally flatly immersed surfaces in simply-connected~$4$-manifolds where the complement of the surface has fundamental group~$\Z$.
Once the genus and number of double points are fixed,  we classify such immersed surfaces in terms of the equivariant intersection form of their exterior and a secondary invariant.
Applications include criteria for deciding when an immersed $\Z$-surface in~$S^4$ is isotopic to the standard immersed surface that is obtained from an unknotted surface by adding local double points.
As another application,  we enumerate~$\Z$-disks in $D^4$ with a single double point and boundary a given knot; we prove that the number of such disks may be infinite.
We also prove that a knot bounds a~$\Z$-disk in $D^4$ with $c_+$ positive double points and~$c_-$ negative double points if and only if it can be converted into an Alexander polynomial one knot via changing $c_+$ positive crossings and $c_-$ negative crossings.
In $4$-manifolds other than~$D^4$ and~$S^4$,  applications include measuring the extent to which immersed~$\Z$-surfaces are determined by the equivariant intersection form of their exterior.
Along the way,  we prove that any two~$\Z^2$-concordances between the Hopf link and an Alexander polynomial one link $L$ are homeomorphic rel. boundary.
\end{abstract}
\maketitle

\section{Introduction}
\label{sec:Intro}

This article is concerned with locally flatly immersed surfaces in compact, simply-connected topological~$4$-manifolds where the complement of the surface has fundamental group~$\Z$.
For brevity, we refer to these as \emph{immersed~$\Z$-surfaces}.
We say that two such surfaces are \emph{equivalent} (rel.\ boundary) if there is an orientation-preserving (rel.\ boundary) homeomorphism of the~$4$-manifold taking one surface to the other,  and \emph{isotopic} (rel.\ boundary) if there exists such a homeomorphism that is isotopic (rel.\ boundary) to the identity.
Whereas 
embedded~$\Z$-surfaces are well understood~\cite{ConwayPowell, ConwayPiccirilloPowell}, little is known about the immersed case.

Our main results, stated in Theorems~\ref{thm:SurfacesClosedIntro} and~\ref{thm:SurfacesRelBoundaryIntro} formulate necessary and sufficient conditions for the existence of an immersed~$\Z$-surface with a prescribed genus and number of double points,  and establish bijections between sets of equivalence classes of immersed~$\Z$-surfaces whose exteriors have a prescribed equivariant intersection form and sets of isometries of certain Blanchfield forms.

Here is a non-exhaustive list of applications of these results.
Theorems~\ref{thm:UnknottingAllSameSignIntro} and~\ref{thm:UniqueSpherec=1Intro} state criteria for an immersed~$\Z$-surface in~$S^4$ to be topologically isotopic to the  \emph{standard} immersed~$\Z$-surface obtained from an unknotted surface in~$S^4$ by adding local positive and negative double points.
Theorems~\ref{thm:UnknottingAllSameSignIntro},~\ref{thm:UniqueSpherec=1Intro} and~\ref{thm:g=0c=1BoundaryD4Intro} describe various classification results of immersed~$\Z$-surfaces in~$D^4$.
In particular, we show that there are knots that bound infinitely many immersed~$\Z$-disks in~$D^4$ with a single double point.
Theorem~\ref{thm:Unknotting} shows that a knot $K$ bounds a $\Z$-disk in $D^4$ with~$c_+$ positive double points and~$c_-$ negative double points if and only if it can be converted into an Alexander polynomial one knot by changing $c_+$ positive crossings and $c_-$ negative crossings.
Theorem~\ref{thm:Other4ManifoldsSpheresIntro} contains applications to immersed~$\Z$-surfaces in~$4$-manifolds other than~$D^4$ and~$S^4$. 

Independently of these results, Theorem~\ref{thm:HopfConcordancesEquivalent} shows that any two~$\Z^2$-concordances between the Hopf link and an Alexander polynomial one link $L$ are homeomorphic rel.\ boundary.

\medskip

In what follows,  a~$4$-manifold is understood to mean a compact, connected, oriented, topological~$4$-manifold,  and knots and surfaces are assumed to be oriented. 
Immersions are understood to be locally flat,  and are assumed to have 
 singular sets that consist only of transverse double points within the interior of the surface.

\subsection{Results in~$D^4$ and~$S^4$}
\label{sub:IntroD4S4}

The \emph{standard} genus~$g$ immersed~$\Z$-surface in~$S^4$ with $c_+$ positive double points and~$c_-$ negative double points is 
obtained from the unknotted genus $g$ surface by locally adding $c_+$ positive double points and $c_-$ negative double points.
An Alexander polynomial one knot $K$ bounds an embedded $\Z$-disk~\cite{FreedmanQuinn} which is unique up to isotopy rel.\ boundary~\cite{ConwayPowellDiscs}, 
and the \emph{standard} genus~$g$ immersed~$\Z$-surface in $D^4$ with boundary~$K$ is
 obtained by taking the connected sum of this disk with the corresponding standard immersed $\Z$-surface.

Our first result provides a necessary and sufficient condition for an immersed $\Z$-surface  to be isotopic to the standard one.

\begin{theorem}
\label{thm:UnknottingAllSameSignIntro}
Assume that~$S$ is either a closed immersed~$\Z$-surface in~$S^4$ or a immersed~$\Z$-surface in~$D^4$ with boundary an Alexander polynomial one knot~$K$.
The following assertions are equivalent:
\begin{itemize}
\item $S$ is isotopic to the standard genus $g$ surface with~$c_+$ positive double points and~$c_-$ negative double points.  In the non-closed case,  this isotopy can be taken to be rel.\ boundary. 
\item The equivariant intersection form of the exterior of~$S$ is isometric to
$$\lambda_{c_+,c_-} \oplus \mathcal{H}_2^{\oplus g}:=((t-1)(t^{-1}-1))^{\oplus c_+} \oplus (-(t-1)(t^{-1}-1))^{\oplus c_-} \oplus \bsm 0&t-1\\ t^{-1}-1&0 \esm^{\oplus g}.$$
\end{itemize}
\end{theorem}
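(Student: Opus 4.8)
The plan is to deduce Theorem~\ref{thm:UnknottingAllSameSignIntro} from the main classification results (Theorems~\ref{thm:SurfacesClosedIntro} and~\ref{thm:SurfacesRelBoundaryIntro}), which are advertised as establishing bijections between equivalence classes of immersed $\Z$-surfaces whose exteriors have a prescribed equivariant intersection form and sets of isometries of certain Blanchfield forms. The forward implication is essentially formal: if $S$ is isotopic (rel.\ boundary, in the non-closed case) to the standard surface, then their exteriors are homeomorphic (rel.\ boundary), so their equivariant intersection forms are isometric, and one is left only to compute the equivariant intersection form of the \emph{exterior of the standard surface}. For this computation I would build the standard surface explicitly: start with the unknotted genus $g$ surface (whose exterior has equivariant intersection form $\mathcal{H}_2^{\oplus g}$, coming from the standard handle decomposition of an unknotted surface), and then analyze the effect on the exterior of adding a single local positive or negative double point. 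A local double point is modelled on the cone on a Hopf link, and excising a neighbourhood of it and computing the equivariant intersection form via a Mayer--Vietoris argument over $\Z[t^{\pm 1}]$ should yield the rank-one summand $\pm(t-1)(t^{-1}-1)$; iterating gives $\lambda_{c_+,c_-}$, and additivity of the equivariant intersection form under the relevant connected-sum/boundary-connected-sum operation gives $\lambda_{c_+,c_-}\oplus\mathcal{H}_2^{\oplus g}$.

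For the reverse implication, I would invoke the classification theorem in the case where the prescribed form is exactly $\lambda_{c_+,c_-}\oplus\mathcal{H}_2^{\oplus g}$. By the bijection, the equivalence classes (rel.\ boundary, in the non-closed case) of immersed $\Z$-surfaces of genus $g$ with $c_+$ positive and $c_-$ negative double points whose exterior realizes this form are in bijection with isometries of an associated Blanchfield form. The key point is to identify this Blanchfield form in the two relevant cases and to show its isometry group is trivial — or at least that the relevant pointed set of isometries is a singleton — so that there is a \emph{unique} such surface up to equivalence, which must then be the standard one. In the closed $S^4$ case the ambient manifold is simply connected with trivial $H_2$ away from the surface contribution, so the Blanchfield form in question should be the zero form (over the trivial group, or on the trivial module), hence has a unique isometry. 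In the $D^4$ case with $\partial S = K$ an Alexander polynomial one knot, the relevant Blanchfield form is that of $K$, which vanishes because $\Delta_K = 1$; again the isometry set is trivial. So in both cases the classification collapses to a single equivalence class.

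Finally I must upgrade ``equivalent'' to ``isotopic'' (rel.\ boundary): a homeomorphism of $S^4$ (or of $D^4$ rel.\ boundary) taking $S$ to the standard surface must be shown to be isotopic to the identity. For $S^4$ this uses the fact that every orientation-preserving homeomorphism of $S^4$ is isotopic to the identity; for $D^4$ rel.\ boundary one uses Alexander's trick (or its topological analogue). One should check that this isotopy can be chosen to respect the surface, which follows since the standard surface is taken to itself, or more carefully by an isotopy-extension argument. The main obstacle I anticipate is the bookkeeping in the reverse direction: correctly extracting from Theorems~\ref{thm:SurfacesClosedIntro} and~\ref{thm:SurfacesRelBoundaryIntro} the precise statement that the set of isometries of the relevant Blanchfield form is a singleton in these two cases, and verifying that the distinguished element of that set corresponds to the standard surface rather than to some exotic competitor — in other words, pinning down the basepoint of the pointed bijection. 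The forward direction's intersection-form computation is routine but must be done with care over $\Z[t^{\pm1}]$ to get the signs on the $(t-1)(t^{-1}-1)$ summands right.
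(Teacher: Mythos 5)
Your forward direction and the final equivalence-to-isotopy upgrade (Quinn for $S^4$, the Alexander trick for $D^4$) match the paper, and your computation of the standard surface's exterior form via a local model plus Mayer--Vietoris is a reasonable alternative to the paper's handle-diagram calculation. The reverse direction, however, contains a genuine gap that is in fact the entire technical content of the theorem.

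You assert that the relevant Blanchfield form is zero (``the zero form\dots hence has a unique isometry'' in the closed case; ``that of $K$, which vanishes because $\Delta_K=1$'' in the disk case), so that the classification collapses to a singleton for free. This is false. The form appearing in Theorems~\ref{thm:SurfacesClosedIntro} and~\ref{thm:SurfacesRelBoundaryIntro} is the Blanchfield form of the plumbed $3$-manifold $P_{U,g}(c_+,c_-)$ (resp.\ $P_{K,g}(c_+,c_-)$), not of $S^4$ or of $K$ alone. Its underlying module is $\Z_\varepsilon^{2g}\oplus\bigl(\Z[t^{\pm1}]/(t-1)^2\bigr)^{\oplus c}$, which is nonzero whenever $g>0$ or $c>0$, and it is presented by $\lambda_{c_+,c_-}\oplus\mathcal{H}_2^{\oplus g}$ itself. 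Its isometry group is large: it contains scaling of generators by $t^k$, the mixing isometries $p_{i,i'}$ and $q_{i,j}$, and an $\operatorname{Sp}(2g,\Z)$ worth of automorphisms. (The hypothesis $\Delta_K=1$ only kills the $\Aut(\Bl_K)$ factor in the splitting $\Aut(\Bl_{P_K})\cong\Aut(\Bl_P)\oplus\Aut(\Bl_K)$; the $\Aut(\Bl_P)$ factor survives.) What the classification actually gives is a bijection with the orbit set $\Aut(\Bl_{P_U})/(\Aut(\lambda)\times\Homeo_\alpha(\Sigma_g))$, and proving this orbit set is a singleton requires two substantial steps that your proposal does not supply: first, an algebraic decomposition showing that every isometry of $\partial\lambda_{c_+,c_-}\oplus\partial\mathcal{H}_2^{\oplus g}$ factors as a composition of the explicit isometries above followed by one induced from $\Aut(\lambda)$ (a multi-step normal-form argument), and second, a geometric realization of each of those explicit isometries by a homeomorphism in $\Homeo_\alpha(\Sigma_g)$, constructed from Dehn twists along curves tubing the plumbing regions to each other and to the genus curves. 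Without these two steps the reverse implication does not go through, and there is no shortcut via triviality of the Blanchfield form.
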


It is conceivable that the surface exterior $X_S$ could necessarily have equivariant intersection form isometric to~$\lambda_{c_+,c_-} \oplus \mathcal{H}_2^{\oplus g}$ but we are not able to prove this in general (see Theorem~\ref{thm:UniqueSpherec=1Intro} for spheres and disks with a single double point). 
In the embedded case ($c_+=0=c_-)$,  it is known that for~$g \geq 3$,  the equivariant intersection form of $X_S$ is necessarily~$\mathcal{H}_2^{\oplus g}$~\cite[Theorem 7.4]{ConwayPowell}.

When the immersed surfaces are genus zero and have a single double point, we argue that only the standard equivariant intersection form can arise, allowing us to strengthen Theorem~\ref{thm:UnknottingAllSameSignIntro}.

\begin{theorem}
\label{thm:UniqueSpherec=1Intro}
~
\begin{itemize}
\item  Every~$\Z$-sphere in~$S^4$ with a single double point is isotopic to the standard ~$\Z$-sphere with a single double point of the same sign.
\item Given an Alexander polynomial one knot~$K$,  every~$\Z$-disk in~$D^4$ with boundary~$K$ and a single double point is isotopic rel.\ boundary to the standard~$\Z$-disk with boundary~$K$ and a single double point of the same sign.
\end{itemize}
\end{theorem}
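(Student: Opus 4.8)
The plan is to reduce this to Theorem~\ref{thm:UnknottingAllSameSignIntro} by showing that for a $\Z$-surface $S$ of genus zero with a single double point, the equivariant intersection form of the exterior $X_S$ is \emph{forced} to be isometric to $\lambda_{c_+,c_-}$, i.e.\ to $\pm(t-1)(t^{-1}-1)$ depending on the sign of the double point. Write $\Lambda = \Z[t^{\pm 1}]$ and let $\lambda$ denote the equivariant intersection form on $H_2(X_S;\Lambda)$. First I would compute the basic algebraic topology of $X_S$: since $S$ is a genus zero surface with one double point in a simply-connected $4$-manifold ($S^4$ or $D^4$) and $\pi_1(X_S)\cong\Z$, a Mayer--Vietoris / Poincar\'e--Lefschetz duality argument (exactly as in the embedded case treated in \cite{ConwayPowell}, but keeping track of the extra $2$-handle contributed by the double point) shows that $H_2(X_S;\Lambda)$ is $\Lambda$-free of rank $1$, that $\lambda$ is nonsingular over the field of fractions, and that its Blanchfield-type boundary pairing is the Blanchfield form of $K$ (respectively the trivial form in the closed case). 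Since $K$ has Alexander polynomial one, this boundary form vanishes, so $\lambda$ is in fact a \emph{nonsingular} hermitian form on $\Lambda \cong \Lambda^1$.

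The heart of the argument is then purely algebraic: classify nonsingular hermitian forms on the rank-one free $\Lambda$-module. Such a form is multiplication by a single element $p(t) \in \Lambda$ satisfying $\ol{p(t)} = p(t^{-1}) \doteq p(t)$ (hermitian) and $p(t)$ a unit in $\Lambda$ is \emph{not} required — rather nonsingularity of $\lambda$ as computed from the handle structure, together with the fact that the only torsion in $H_1$ is controlled by the Alexander polynomial of $K$ (which is $1$), forces $\det \lambda = p(t)$ to be, up to units $\pm t^k$, equal to $(t-1)(t^{-1}-1)$. Here I would use that the double point contributes a Clifford-torus-like summand whose Blanchfield form is $(t-1)(t^{-1}-1)$, so $p(t) \doteq (t-1)(t^{-1}-1)$; the sign is then pinned down by the ordinary ($\Z$-coefficient, $t=1$ is degenerate so instead one uses $t=-1$ or the signature) intersection form, which for a single positive (resp.\ negative) double point evaluates to $+1$ (resp.\ $-1$). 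This identifies $\lambda$ with $\pm(t-1)(t^{-1}-1) = \lambda_{1,0}$ or $\lambda_{0,1}$.

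Once the equivariant intersection form is shown to be $\lambda_{c_+,c_-}$ with $(c_+,c_-) \in \{(1,0),(0,1)\}$ and $g=0$, the second bullet of Theorem~\ref{thm:UnknottingAllSameSignIntro} applies verbatim and yields that $S$ is isotopic (rel.\ boundary, in the $D^4$ case) to the standard $\Z$-surface with a single double point of the corresponding sign, completing the proof.

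I expect the main obstacle to be the rank and nonsingularity computation for $H_2(X_S;\Lambda)$ together with the identification of the boundary linking form: one must carefully handle the half-lives-half-dies and duality arguments over the (non-field) ring $\Lambda$, track the contribution of the double point precisely enough to see the factor $(t-1)(t^{-1}-1)$ rather than some associate with extra Alexander-polynomial factors, and rule out the a priori possibility of additional free summands in $H_2$. This is where the genus-zero, single-double-point hypothesis is essential and where the argument genuinely differs from the general case of Theorem~\ref{thm:UnknottingAllSameSignIntro}.
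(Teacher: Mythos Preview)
Your overall strategy---reduce to Theorem~\ref{thm:UnknottingAllSameSignIntro} by showing the equivariant intersection form is forced to be~$\pm(t-1)(t^{-1}-1)$---is exactly the paper's approach, and the rank computation and final appeal to Theorem~\ref{thm:UnknottingAllSameSignIntro} are fine.

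However, there is a genuine error in the middle. You assert that the boundary linking form of~$X_S$ is the Blanchfield form~$\Bl_K$ of the knot, and hence vanishes since~$\Delta_K=1$, making~$\lambda$ nonsingular. This is false: the boundary of~$X_S$ is not the knot exterior~$E_K$ but the plumbed manifold~$P_K = E_K \cup P$, and~$\Bl_{P_K} \cong \Bl_K \oplus \Bl_P$ (Proposition~\ref{prop:BlPK}). Even when~$\Bl_K=0$, the plumbing piece~$\Bl_P$ is nontrivial: with one double point,~$H_1(P;\Z[t^{\pm 1}]) \cong \Z[t^{\pm 1}]/(t-1)^2$ (Proposition~\ref{prop:HomologyPZZ}). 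So~$\lambda$ is nondegenerate but \emph{not} nonsingular; its cokernel has order~$(t-1)(t^{-1}-1)$, and this---not any Clifford-torus heuristic---is precisely what forces~$p(t) \doteq (t-1)(t^{-1}-1)$. Your text is internally inconsistent on this point (you say~$\lambda$ is nonsingular, then that~$p$ need not be a unit, then invoke the double-point contribution), and the inconsistency stems from this misidentification of the boundary.

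The paper's proof is correspondingly shorter: since~$\lambda$ presents~$\Bl_{P_U}$ (Proposition~\ref{prop:NecessaryConditions}), and a rank-one hermitian form~$(p(t))$ presenting a linking form on~$\Z[t^{\pm 1}]/(t-1)^2$ is determined up to isometry (hermitian symmetry forces~$p(t)=\pm(t-1)(t^{-1}-1)$, and the sign of~$\Bl_{P_U}$ selects~$\varepsilon$), one gets~$\lambda \cong (\varepsilon(t-1)(t^{-1}-1))$ immediately. Your proposed sign detection via~$t=-1$ or signatures would also work, but is unnecessary once the boundary form is correctly identified.
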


Next we introduce some terminology needed to generalize this result to 
 knots with nontrivial Alexander polynomial.
Given a polynomial~$\Delta$, consider the following group of \emph{unitary units}:
$$U(\Delta)=\left\lbrace x(t) \in \Z[t^{\pm 1}]/\Delta \ \Big| \ x(t)x(t^{-1})=1   \right\rbrace.$$
A rapid verification shows that~$\{t^k \}_{k \in \Z}$ acts on this group by multiplication.

Our classification of~$\Z$-disks with a single double point now reads as follows.

\begin{theorem}
\label{thm:g=0c=1BoundaryD4Intro}
Let~$K$ be a knot and $\varepsilon \in \{\pm 1 \}$. 
The following assertions are equivalent:
\begin{enumerate}
\item the Blanchfield form $\Bl_K$ is presented by $(\varepsilon \Delta_K)$;
\item the set of rel.\ boundary isotopy classes of~$\Z$-disks in $D^4$ with boundary~$K$ and a single double point of sign~$\varepsilon$  is nonempty and corresponds bijectively to~$U(\Delta_K)/\{ t^k \}_{k \in \Z}.$
\end{enumerate}
\end{theorem}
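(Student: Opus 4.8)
The plan is to deduce this statement from the general classification machinery of Theorems~\ref{thm:SurfacesClosedIntro} and~\ref{thm:SurfacesRelBoundaryIntro} by specializing to genus zero and a single double point. First I would unpack what the equivariant intersection form of the exterior $X_S$ of a $\Z$-disk $S$ in $D^4$ with one double point of sign $\varepsilon$ can be: it is a rank-one Hermitian form over $\Z[t^{\pm 1}]$, so it is given by a single element $p(t) \in \Z[t^{\pm1}]$ with $p(t) = \overline{p(t)}$; the double-point sign forces its "leading behavior" to match $\pm(t-1)(t^{-1}-1)$, and a Poincar\'e--Lefschetz duality/Blanchfield argument (as surely carried out earlier in the paper) shows that the only possibility compatible with $H_1(X_S) = \Z$ and boundary $0$-framed $K$ is $p(t) = \varepsilon(t-1)(t^{-1}-1)$ up to isometry, \emph{provided such a surface exists at all}. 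The existence of a surface realizing this form is then governed, via the surgery-theoretic existence half of the main theorems, by whether the Blanchfield form $\Bl_K$ is presented by $(\varepsilon\Delta_K)$ — this is where the equivalence (1) $\Leftrightarrow$ "the set in (2) is nonempty" comes from, since a presentation of $\Bl_K$ by $(\varepsilon\Delta_K)$ is precisely the algebraic input needed to build (and is extracted from) the exterior of such a disk.

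For the bijective-correspondence half of (2), I would invoke the classification bijection from Theorem~\ref{thm:SurfacesRelBoundaryIntro}: rel.\ boundary isotopy classes of $\Z$-disks with a \emph{fixed} exterior equivariant intersection form correspond to a quotient of the set of isometries of the relevant Blanchfield form (the form on the boundary of a tubular neighborhood, or a variant thereof), modulo the isometries that extend over the exterior. So the concrete task is to identify that isometry set and the extendability relation explicitly when the form is the rank-one form $\varepsilon(t-1)(t^{-1}-1)$ with the appropriate boundary decoration determined by $K$. I expect the isometry group of this decorated object to be computed as the unitary units $U(\Delta_K) = \{x(t) \in \Z[t^{\pm1}]/\Delta_K : x(t)x(t^{-1}) = 1\}$: an isometry of a rank-one Hermitian module is multiplication by a unit of norm one, and the reduction mod $\Delta_K$ enters because the relevant torsion module is $\Z[t^{\pm1}]/\Delta_K$. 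The self-homeomorphisms of the disk exterior that are realized (equivalently, the "inner" isometries that must be quotiented out) should act through the $t^k$ coming from meridional twisting — Dehn-twist-like homeomorphisms supported near the boundary torus — giving the quotient $U(\Delta_K)/\{t^k\}_{k \in \Z}$.

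The main obstacle, and the step requiring genuine care rather than bookkeeping, is pinning down exactly which isometries of the Blanchfield-type form extend over the exterior of a single-double-point $\Z$-disk, i.e.\ computing the image of the relevant mapping-class-type group inside the isometry group. In higher genus or with more double points there is room for the extendable subgroup to be larger and harder to control, but with genus zero and $c=1$ the exterior is simple enough (it deformation retracts onto something close to a once-punctured $\R P^2$-like spine, with $H_*$ concentrated in low degrees) that one can hope to show the only extendable isometries are the $\{t^k\}$. Concretely I would analyze the effect of a self-homeomorphism of $X_S$ on the equivariant intersection form together with the boundary data, show such a map must preserve the meridian up to sign and the double-point structure, and conclude it acts on $U(\Delta_K)$ by a power of $t$; conversely each $t^k$ is realized by an explicit boundary twist. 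Combined with the existence criterion and the surjectivity/injectivity statements imported from the main classification theorems, this yields the claimed bijection with $U(\Delta_K)/\{t^k\}_{k\in\Z}$, and the equivalence with presentability of $\Bl_K$ by $(\varepsilon\Delta_K)$ follows by noting nonemptiness of the left-hand set is equivalent to realizability of the standard form, hence to the existence of the required Blanchfield presentation.
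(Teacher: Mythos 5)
Your overall strategy---specialize the classification of Theorem~\ref{thm:SurfacesRelBoundaryIntro} to $g=0$ and $c=1$, identify the unique rank-one hermitian form presenting the relevant Blanchfield form, and compute the resulting orbit set of isometries---is the route the paper takes. But there is a concrete error at the first step that derails the argument for every knot with $\Delta_K\neq 1$: you identify the equivariant intersection form of the exterior as $(\varepsilon(t-1)(t^{-1}-1))$. That form has cokernel $\Z[t^{\pm 1}]/(t-1)^2$ and therefore cannot present $\Bl_{P_K}\cong \Bl_K\oplus\Bl_P$, whose underlying module contains $H_1(E_K;\Z[t^{\pm1}])$. The unique size-one candidate is $\lambda=(\varepsilon\Delta_K(t-1)(t^{-1}-1))$, with determinant $\doteq\Delta_K(t-1)(t^{-1}-1)$. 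Relatedly, the pivot of the whole theorem---that this $\lambda$ presents $\Bl_{P_K}$ if and only if $(\varepsilon\Delta_K)$ presents $\Bl_K$---is asserted in your writeup (``precisely the algebraic input needed'') rather than proved; this is Proposition~\ref{prop:BlKBlPK} in the paper, and the ``only if'' direction requires the factorization $B=zA$ with $z=(t-1)(t^{-1}-1)$ together with the congruence $A(1)\cong(\varepsilon)$, neither of which is formal.

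Your account of where the quotient by $\{t^k\}_{k\in\Z}$ comes from is also misattributed. In the classification the orbit set is $\bigl((\Aut(\Bl_P)/\Homeo_\alpha(\Sigma_{0,1},\partial))\times\Aut(\Bl_K)\bigr)/\Aut(\lambda)$. The boundary/meridional Dehn twists you invoke live in $\Homeo_\alpha(\Sigma_{0,1},\partial)$, whose action on the $\Aut(\Bl_K)$ factor is \emph{trivial}; what they do is act on $\Aut(\Bl_P)=\{\pm t^k\}$ through the pair-of-pants mapping class group, reducing that factor to $\{\pm\id\}$. The quotient of $\Aut(\Bl_K)\cong U(\Delta_K)$ by $\{t^k\}$ comes instead from $\Aut(\lambda)=\{\pm t^k\}$, i.e.\ from isometries of the rank-one form of the exterior. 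Both mechanisms occur, but on different factors, and without separating them you cannot justify that the answer is $U(\Delta_K)/\{t^k\}_{k\in\Z}$ rather than, say, $U(\Delta_K)/\{\pm t^k\}_{k\in\Z}$. The remaining pieces (the identification $\Aut(\Bl_K)\cong U(\Delta_K)$ for a cyclic module, and the Alexander-trick upgrade from equivalence to isotopy in $D^4$) are fine as sketched.
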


Section~\ref{sub:BlanchfieldIntro} contains some background on Blanchfield forms, but presently we simply note that~$\Bl_K$ is \emph{presented} by a size $n$ hermitian matrix~$A$ with~$\det(A) \neq 0$ and entries in~$\Z[t^{\pm 1}]$ if~$\Bl_K$ is isometric to the following linking form:
\begin{align}
\label{eq:presents}
\Z[t^{\pm 1}]^n/A^T\Z[t^{\pm 1}]^n \times \Z[t^{\pm 1}]^n/A^T\Z[t^{\pm 1}]^n  &\to \Q(t)/\Z[t^{\pm 1}] \\
([x],[y]) &\mapsto -x^TA^{-1}\overline{y}. \nonumber
\end{align}
Also,  in Theorem~\ref{thm:g=0c=1BoundaryD4Intro} and in what follows,  $\Delta_K$ denotes the symmetric representative of the Alexander polynomial of $K$ with~$\Delta_K(1)=1$.

\begin{remark}
\label{rem:InfinitebAut}
The group~$U(\Delta_K)/\{ t^k \}_{k \in \Z}$ can be infinite.
Indeed as proved in~\cite[Theorem~1.7]{ConwayDaiMiller}, if the polynomial~$\Delta_K$ is quadratic and so
of the form~$\Delta_n:=-nt+(2n+1)-nt^{-1}$ for some $n \in \Z$,
 the group~$U(\Delta_n)/\{ t^k \}_{k \in \Z}$ is finite if and only if~$n = 2, 1, 0, -1$, or~$-p^k$ for a prime~$p$ and, in these cases, 
$$
U(\Delta_n)/\{t^k\}_{k \in \Z} 
\cong \begin{cases}
\lbrace 1 \rbrace & \quad \text{ if $n=-1,0$},  \\
\Z/2\Z & \quad \text{ if $n=1,2$ or $n=-p^k$ with $k$ odd},  \\
\Z/4\Z & \quad \text{ if $n=-p^k$ with $k$ even}. \\
\end{cases}
$$
If $n=0$, then $\Delta_0=1$ and so Theorem~\ref{thm:g=0c=1BoundaryD4Intro} accords with Theorem~\ref{thm:UniqueSpherec=1Intro}.
Theorem~\ref{thm:g=0c=1BoundaryD4Intro} applied to the case~$n=-1$ implies that the right-handed trefoil knot $K_{-1}$ bounds a unique immersed~$\Z$-disk in~$D^4$ with a single positive double point. 
This disk can be taken to be smoothly immersed and arises from the fact that $K_{-1}$ can be unknotted by changing a single positive crossing.
We refer to~\cite[end of Section~6]{ChaOrrPowell} for a discussion relating unknotting sequences to smooth $\Z$-disks.
In general the polynomial~$-nt+(2n+1)-nt^{-1}$ arises as the Alexander polynomial of the twist knot~$K_n$ illustrated in~\cite[Figure 2]{ConwayDaiMiller}.
Each $K_n$ can be unknotted by changing a single positive crossing and therefore bounds an immersed $\Z$-disk in $D^4$ with a single positive double point.
Thus, for example,  Theorem~\ref{thm:g=0c=1BoundaryD4Intro} implies that the knot~$K_6$ bounds infinitely many immersed $\Z$-disks in~$D^4$ with a single positive double point.
This result should be contrasted with the embedded case: every knot bounds at most one embedded~$\Z$-disk in $D^4$~\cite{ConwayPowellDiscs}.
One can always remove double points from immersed discs by blowing up with $\C P^2$. 
In fact,  the number of immersed $\Z$-disks in~$D^4$ with boundary $K$ and a single positive double point obtained in Theorem~\ref{thm:g=0c=1BoundaryD4Intro} is identical to the number of embedded $\Z$-disks in $\C P^2 \setminus \intt (B^4)$ with boundary~$K$~\cite{ConwayDaiMiller},  showing that, at least in a simple case,  this operation gives a bijective correspondence.
\end{remark}

In the case of immersed surfaces with two or more double points, our (non-) uniqueness results are in general less definitive because we are not able to control the equivariant intersection form of the surface exterior.
We nevertheless obtain the following existence result.

\begin{theorem}
\label{thm:Unknotting}
Let $c_+,c_- \geq 0$ be integers and set $c:=c_++c_-.$
Given a knot $K$,  the following assertions are equivalent:
\begin{enumerate}
\item $K$ bounds a $\Z$-disk in $D^4$ with $c_+$ positive double points and $c_-$ negative double points;
\item $K$ can be converted into an Alexander polynomial one knot via $c_+$ positive-to-negative crossing changes and $c_-$ negative-to-positive crossing changes;
\item $K$ bounds an embedded $\Z$-disk in $(\#_{c_+}  \C P^2 \#_{c_-}  \overline{\C P^2})\setminus \intt(B^4)$;
\item $\Bl_K$ is presented by a size $c$ nondegenerate hermitian matrix $A$ with~$A(1)=(1)^{\oplus c_+} \oplus~(-1)^{\oplus c_-}$.
\end{enumerate}
\end{theorem}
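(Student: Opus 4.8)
The plan is to deduce the four equivalences from three largely independent ingredients: a local blow-up/blow-down correspondence for $(1)\Leftrightarrow(3)$, the operation of pushing double points to the boundary for $(1)\Leftrightarrow(2)$, and the equivariant intersection form classification of Theorems~\ref{thm:SurfacesClosedIntro} and~\ref{thm:SurfacesRelBoundaryIntro} for $(3)\Leftrightarrow(4)$. Throughout, we fix orientation conventions so that a positive double point corresponds to a $\C P^2$-summand, to a positive-to-negative crossing change, and to a $(+1)$ on the diagonal of $A(1)$; keeping these three bookkeeping schemes synchronised is one of the places that needs care. For $(1)\Leftrightarrow(3)$, write $N_{c_+,c_-}:=(\#_{c_+}\C P^2\#_{c_-}\ol{\C P^2})\sm\intt(B^4)$. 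Given an immersed $\Z$-disk $D\subset D^4$ with $\partial D=K$, blow up $D^4$ once at each double point: a sign-$\eps$ double point, locally a cone on a Hopf link, is removed by introducing an $\eps\cdot\C P^2$-summand, replacing the cone by an embedded pair of disks linking the exceptional sphere, and tubing these along an arc in that sphere so that the result stays a connected disk $D'\subset N_{c_+,c_-}$. A Mayer--Vietoris computation shows that the complement of $D'$ still has fundamental group $\Z$ (and forces $D'$ to be null-homologous). Conversely, reversing the blow-ups on an embedded $\Z$-disk in $N_{c_+,c_-}$ that meets the exceptional spheres as prescribed produces an immersed $\Z$-disk in $D^4$ with $c_+$ positive and $c_-$ negative double points. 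This is the $c=1$ argument of~\cite{ConwayDaiMiller} carried out once per double point.

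For $(1)\Leftrightarrow(2)$, the key tool is the ``finger move to the boundary''. If $D\subset D^4$ is an immersed $\Z$-disk with $\partial D=K$ and $p$ is a double point of sign $\eps$, choose an embedded arc in $D$ from $p$ to $\partial D$ avoiding the other double points, and drag $p$ along it: the outcome is an immersed $\Z$-disk with one fewer double point whose boundary differs from $K$ by a single crossing change, of the type determined by $\eps$. Iterating over all double points yields an embedded $\Z$-disk in $D^4$ bounded by a knot obtained from $K$ by $c_+$ positive-to-negative and $c_-$ negative-to-positive crossing changes; since a knot bounds an embedded $\Z$-disk in $D^4$ if and only if its Alexander polynomial is one --- Freedman~\cite{FreedmanQuinn} for one direction, and for the other the fact that the equivariant intersection form of a $\Z$-disk exterior in $D^4$ is empty and hence presents the trivial Blanchfield form --- this gives $(1)\Rightarrow(2)$. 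The converse $(2)\Rightarrow(1)$ runs the finger moves backwards: start from the embedded $\Z$-disk supplied by Freedman for the Alexander polynomial one knot produced by the crossing changes, and reintroduce one double point of the appropriate sign per crossing change. In both directions one verifies, by a van Kampen argument localised near the relevant arcs, that the complement retains fundamental group $\Z$.

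For $(3)\Leftrightarrow(4)$, we apply the classification theorems with ambient manifold $M=N_{c_+,c_-}$ and $g=c_+=c_-=0$, so that the $\Z$-surface sought is an embedded disk. On one side, the exterior $X$ of an embedded $\Z$-disk in $N_{c_+,c_-}$ with boundary $K$ satisfies $\pi_1(X)\cong\Z$, has $\partial X\cong S^3_0(K)$ (the disk being null-homologous), and has equivariant intersection form represented by a nondegenerate hermitian matrix $A$ over $\Z[t^{\pm1}]$ of size $b_2(N_{c_+,c_-})=c$ with $A(1)$ equal to the ordinary intersection form $(1)^{\oplus c_+}\oplus(-1)^{\oplus c_-}$ of $N_{c_+,c_-}$; the long exact sequence of the pair $(X,\partial X)$ with $\Z[t^{\pm1}]$-coefficients, together with Blanchfield duality, then identifies $A$ as a presentation of $\Bl_K$, proving $(3)\Rightarrow(4)$. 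On the other side, the existence half of Theorem~\ref{thm:SurfacesRelBoundaryIntro}, applied to the matrix $A$ furnished by $(4)$, produces an embedded $\Z$-disk in $N_{c_+,c_-}$ with boundary $K$, proving $(4)\Rightarrow(3)$.

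The step I expect to be the main obstacle is the realisation direction $(4)\Rightarrow(3)$ (equivalently $(4)\Rightarrow(1)$): constructing a $\Z$-surface with a prescribed equivariant intersection form is precisely where Freedman's disk embedding theorem and the surgery-theoretic machinery behind Theorems~\ref{thm:SurfacesClosedIntro} and~\ref{thm:SurfacesRelBoundaryIntro} carry the weight. Beyond that, the remaining work is organisational: tracking the three sign conventions consistently, and checking after each blow-up and each finger move that the complement still has fundamental group $\Z$.
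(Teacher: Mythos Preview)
Your arguments for $(2)\Rightarrow(1)$ and for $(3)\Leftrightarrow(4)$ are correct and match the paper's; in particular your use of Theorem~\ref{thm:SurfacesRelBoundaryIntro} with ambient manifold $N_{c_+,c_-}$ and $g=c_+=c_-=0$ is exactly how $(4)\Rightarrow(3)$ and $(3)\Rightarrow(4)$ go. The genuine gap is that none of your proposed implications \emph{out of} condition $(1)$ works, and this is precisely where the paper's new content lies. For $(1)\Rightarrow(2)$: after dragging the double points to the boundary you obtain an embedded disk $D'\subset D^4$ bounded by some $K'$ reached from $K$ by the prescribed crossing changes, but there is no mechanism forcing $\pi_1(D^4\setminus D')\cong\Z$; the van Kampen decomposition $E_D=E_{D'}\cup E_{\text{collar}}$ only tells you that a certain pushout of $\pi_1(E_{D'})$ is $\Z$, and topologically slice knots can have nontrivial Alexander polynomial. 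For $(3)\Rightarrow(1)$: by Proposition~\ref{prop:ZSurfaceAreNullhomologous} every $\Z$-disk in $N_{c_+,c_-}$ is nullhomologous, so it meets each exceptional sphere algebraically zero times and cannot be blown down as you describe. For $(1)\Rightarrow(3)$: capping the local Hopf link in $(\C P^2)^\circ$ by the two fibre disks produces a disk $D'$ with $[D']\cdot[\C P^1]=2$, and a direct van Kampen computation gives $\pi_1(N_{c_+,c_-}\setminus D')\cong\Z/2$, not $\Z$; your ``tubing along an arc in the sphere'' joins the two capping disks into an annulus and raises the genus rather than fixing this. The paper's Remark~\ref{rem:InfinitebAut} notes only that the counts on the two sides agree for $c=1$, not that a naive blow-up/blow-down realises the bijection.

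The paper's route for the missing direction $(1)\Rightarrow(4)$ is different in kind: one applies Theorem~\ref{thm:SurfacesRelBoundaryIntro} with ambient manifold $N=D^4$ (rather than $N_{c_+,c_-}$), which converts $(1)$ into the statement that $\Bl_{P_{K,0}(c_+,c_-)}$ is presented by a size-$c$ hermitian matrix $B$ with $B(1)=(0)^{\oplus c}$. The substantial work is then the purely algebraic Proposition~\ref{prop:BlKBlPK}, which shows that any such $B$ must factor as $B=(t-1)(t^{-1}-1)A$ with $A$ presenting $\Bl_K$ and $A(1)$ congruent to $(1)^{\oplus c_+}\oplus(-1)^{\oplus c_-}$; the argument requires analysing how the summands $\Bl_K$ and $\Bl_P$ sit inside $\Bl_{P_K}$ under the splitting of Proposition~\ref{prop:BlPK}. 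So you have the difficulty located in the wrong place: $(4)\Rightarrow(3)$ is immediate from the classification as you say, while $(1)\Rightarrow(4)$ is where Freedman-type input alone is not enough and the extra algebra is needed.
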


The equivalence $(4) \Leftrightarrow (2)$ is due to Borodzik-Friedl~\cite{BorodzikFriedlLinking},  and is seen without too much difficulty to imply $(4) \Leftrightarrow (3) \Leftrightarrow (2)$.
Our contribution is the addition of $(1)$. 

When $c_+=c_-$,  Feller and Lewark have proved that that the last three conditions of Theorem~\ref{thm:Unknotting} are equivalent to~$K$ bounding an embedded $\Z$-surface in $D^4$ of genus $2c_+$~\cite{FellerLewarkBalanced}.
Informally speaking,  while pairs of double points can always be resolved at the price of genus,  Theorem~\ref{thm:Unknotting} shows that for~$\Z$-surfaces, the converse is true as well: genus can be converted into double points.

\subsection{Results in other~$4$-manifolds}
\label{sub:Other4ManifoldsIntro}

Our theory applies to surfaces in~$4$-manifolds other than~$S^4$ and~$D^4$.
For example,  we obtain the following result for immersed $\Z$-surfaces in arbitrary simply connected $4$-manifolds.

\begin{theorem}
\label{thm:Other4ManifoldsSpheresIntro}
Let~$X$ be a closed simply-connected~$4$-manifold, and let~$c_+,c_-$ be positive integers.
The number of equivalence classes of closed~$\Z$-surfaces in~$X$ with~$c_+$~positive double points, ~$c_-$ negative double points and whose exteriors have the same equivariant intersection form is
\begin{itemize}
\item one if~$(c_+,c_-) \in \{(1,0),(0,1)\}$,
\item at most two if~$(c_+,c_-)=(1,1)$,
\item finite if~$(c_+,c_-) \in \{(c,0),(0,c)\}$.
\end{itemize}
For $N$ a simply-connected $4$-manifold with boundary $S^3$, the same statement holds for rel.\ boundary equivalence classes of $\Z$-surfaces in $N$ with boundary a fixed Alexander polynomial one knot.
\end{theorem}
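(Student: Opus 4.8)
The plan is to derive everything from the classification theorem, Theorem~\ref{thm:SurfacesClosedIntro} (respectively Theorem~\ref{thm:SurfacesRelBoundaryIntro} in the rel.\ boundary case). That theorem identifies the set of (rel.\ boundary) equivalence classes of $\Z$-surfaces with fixed genus and double point data and fixed equivariant intersection form $\lambda$ with an explicit quotient $Q(\lambda)$ of the isometry group of a Blanchfield form determined by $\lambda$, the quotient being by the isometries realised by homeomorphisms of the exterior. Thus the theorem reduces to estimating $|Q(\lambda)|$ in the three cases listed.

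First I would pin down the relevant Blanchfield form. Since $X$ is closed and simply connected its intersection form is unimodular, and in the rel.\ boundary case the hypothesis that the surface boundary is an Alexander polynomial one knot makes the knot's Alexander module vanish; in both situations the Blanchfield form of the exterior is supported entirely at the prime $(t-1)$, and after discarding the summand coming from $H_2(X)$ (which carries no torsion) it is isometric to the Blanchfield form $b$ of the standard model, living on $(\Z[t^{\pm 1}]/(t-1)^2)^{\oplus(c_++c_-)}$ with the first $c_+$ cyclic summands contributing one sign and the last $c_-$ the opposite one. When $c_++c_-=1$ the equivariant intersection form itself is moreover forced to be $\lambda_{1,0}$ or $\lambda_{0,1}$ by the existence part of the classification.

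Next I would analyse $\operatorname{Isom}(b)$ by reducing modulo $(t-1)$. Multiplication by $(t-1)$ identifies $M/(t-1)M$ with $(t-1)M$, and $\Z[t^{\pm 1}]$-linearity then forces an isometry of $b$ to act by the same integer matrix on both; that matrix must preserve the induced perfect pairing $(t-1)M \times M/(t-1)M \to \Q(t)/\Z[t^{\pm 1}]$, which up to the natural identifications is the diagonal form $\langle 1\rangle^{\oplus c_+}\oplus\langle -1\rangle^{\oplus c_-}$. This yields a short exact sequence $1 \to U \to \operatorname{Isom}(b) \to G$, where $U$ (isometries congruent to the identity modulo $(t-1)$) is a finitely generated free abelian group cut out of the integer matrices by a linear, skew-hermitian-type condition, and $G$ is a subgroup of $\mathrm{O}(c_+,c_-;\Z)$. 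In exactly the three cases of the theorem this orthogonal group is finite: $\mathrm{O}(1;\Z)\cong\Z/2\Z$ for $(1,0)$ and $(0,1)$, $\mathrm{O}(1,1;\Z)\cong(\Z/2\Z)^2$ for $(1,1)$, and the group of signed permutation matrices for $(c,0)$ and $(0,c)$; for mixed signatures with $c_++c_-\geq 3$ it is infinite, which is why the theorem stops there.

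Finally I would show that the unipotent part $U$ --- or at least a finite-index subgroup of it --- consists of realised isometries, so that $|Q(\lambda)|$ is bounded by $[U:U\cap R]\cdot|G/\overline{R}|$ with $R$ the realised subgroup and $\overline{R}$ its finite image in $G$. Multiplication by $\pm t^k$ on a cyclic summand is an automorphism of $\lambda$ induced by a meridional twist, realising the diagonal generators of $U$, and finger moves joining two double points realise the remaining, off-diagonal, generators. For $(1,0)$ and $(0,1)$ one then checks that $\operatorname{Isom}(b)$ equals the full unit group $\{\pm t^k\}_{k\in\Z}$ of $\Z[t^{\pm 1}]/(t-1)^2$ and that all of it is realised, so $Q(\lambda)$ is a point; for $(1,1)$ one shows $U$ is realised and that $\overline{R}$ is nontrivial in $G\cong(\Z/2\Z)^2$ (for instance multiplication by $-1$ on a single summand is realised), giving $|Q(\lambda)|\leq 2$; for $(c,0)$ and $(0,c)$ finiteness of $G$ already forces finiteness of $Q(\lambda)$. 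The rel.\ boundary case for $N$ runs word for word the same way, the Alexander polynomial one hypothesis being exactly what guarantees the standard local model for the Blanchfield form. The main obstacle is the realisation step: $U$ is infinite (already $\cong\Z$ when $c_++c_-=1$), so controlling which of its elements, and which elements of $G$, arise from homeomorphisms rather than merely from abstract isometries is where the real work lies; pinning $\overline{R}$ down precisely enough to get the sharp bound $2$ in the $(1,1)$ case is the delicate point.
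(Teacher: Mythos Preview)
Your overall strategy matches the paper's: reduce via the classification theorem to the quotient of $\Aut(\Bl_{P_U})$ by $\Aut(\lambda)\times\Homeo_\alpha(\Sigma)$, then analyse this via the ``reduction mod $(t-1)$'' map to $O(c_+,c_-;\Z)$, showing the kernel is realised by Dehn twists and the image is finite in the listed cases. The paper does exactly this (Propositions~\ref{prop:AutBl/AutStandard}--\ref{prop:AutBl/AutStandard11} give your exact sequence and Proposition~\ref{prop:realizability} realises the kernel), so the architecture is right.

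There are two concrete gaps. First, you have implicitly set $g=0$: your Blanchfield module is $(\Z[t^{\pm1}]/(t-1)^2)^{\oplus c}$, but for genus $g$ surfaces it is $\Z_\varepsilon^{2g}\oplus(\Z[t^{\pm1}]/(t-1)^2)^{\oplus c}$. Isometries can mix the two summands and act by $\operatorname{Sp}(2g,\Z)$ on the genus part; both families must also be realised by surface homeomorphisms, which the paper does via explicit Dehn twists (Examples~\ref{ex:Rhoijrealize} and~\ref{ex:Sp2gZrealize}). Without this your unipotent subgroup $U$ is too small and the reduction to $G$ fails for $g>0$.

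Second, your assertion that ``multiplication by $-1$ on a single summand is realised'' is not justified for an arbitrary equivariant intersection form $\lambda$: there is no reason this element lies in the image of either $\Aut(\lambda)$ or $\Homeo_\alpha(\Sigma)$. The paper instead uses the global $-\id\in\Aut(\lambda)$, which is always available. In the $(1,1)$ case this maps to the diagonal $(-1,-1)\in G\cong(\Z/2\Z)^2$ and already yields the index-$2$ bound; in the $c=1$ case it is again this global $-\id$, not a meridional twist, that kills the sign---indeed the paper computes in Remark~\ref{rem:WhyItsHardMain} that $\Aut(\Bl_{P_U})/\Homeo_\alpha(\Sigma_0)$ has two elements when $(c_+,c_-)=(1,0)$, so $\Homeo_\alpha$ alone does not realise $-1$.
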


We restate the first statement for emphasis: closed immersed~$\Z$-surfaces in~$X$ with a single double point of a given sign are equivalent if and only if their exteriors have isometric equivariant intersection forms; the analogous statement holds for equivalence rel boundary of~$\Z$-surfaces with boundary an Alexander polynomial one knot.
We note that in the proof of the third item of Theorem~\ref{thm:Other4ManifoldsSpheresIntro}, we obtain an explicit upper bound for the number of~$\Z$-surfaces
with~$c_+$ positive double points, ~$c_-$ double points and whose exteriors have the same equivariant intersection form when $(c_+,c_-) \in \{(c,0),(0,c)\}$,  though we do not expect that bound to be sharp.
Remark~\ref{rem:InfinitebAut} illustrates that the finiteness results of Theorem~\ref{thm:Other4ManifoldsSpheresIntro} fail in general when~$K$ has nontrivial Alexander polynomial,  even when $c=1$.

\begin{remark}
\label{rem:DeterminedByIntForm}
Whereas closed embedded $\Z$-surfaces in closed simply-connected $4$-manifolds are determined by the equivariant intersection form of their exteriors~\cite[Theorem 1.4]{ConwayPowell}, we have not been able to prove this in general in the presence of double points.
We discuss some of the challenges in Remark~\ref{rem:WhyItsHard} below. 

Despite the lack of a general uniqueness result, Theorem~\ref{thm:Other4ManifoldsSpheresIntro} could nonetheless in principle be used to produce exotica: this would require constructing large families of smoothly distinct surfaces whose exteriors all have the same equivariant intersection form,  where here `large' means more than specified in (the proof of) Theorem~\ref{thm:Other4ManifoldsSpheresIntro}.
\end{remark}

We note two additional applications.
Firstly,  Theorem~\ref{thm:StandardNotS4} records an analogue of Theorem~\ref{thm:UnknottingAllSameSignIntro} for arbitrary simply-connected~$4$-manifolds that are either closed or have boundary $S^3$: the equivariant intersection form detects whether an immersed $\Z$-surface is equivalent to a local copy of the standard immersed $\Z$-surface.
Secondly, on the topic of disks with a single double point, we note that Theorem~\ref{thm:g=0c=1Boundary} contains a generalization of Theorem~\ref{thm:g=0c=1BoundaryD4Intro} to arbitrary simply connected~$4$-manifolds with boundary $S^3$. 

\subsection{Main classification results}
\label{sub:ClassificationIntro}

We state our main classification theorems and discuss how, together with some algebra,  they lead to the results stated up to this point.

Let~$N$ be a simply-connected~$4$-manifold with boundary~$\partial N \cong S^3$, and let~$K \subset \partial N$ be a knot.
Our goal is to describe the set of rel.\ boundary equivalence classes of immersed~$\Z$-surfaces in~$N$ with boundary~$K$ whose exteriors have a fixed equivariant intersection form.
In other words, we wish to understand the following set:
\begin{align*}
\operatorname{Surf}^0_\lambda(g;c_+,c_-)(K,N) :=
\frac{
\{ \Z\text{-surface } S \subset N  \mid \partial S=K, \ g(S)=g, \ \text{$(c_+,c_-)$ double points, } \lambda_{N_S}\cong \lambda \}}
{\text{equivalence rel.\ boundary}}.
\end{align*}
Here $g(S)$ refers to the genus of $S$ and we say that $S$ has $(c_+,c_-)$ double points if it has $c_+$ positive and $c_-$ negative double points. 
Also,  $N_S$ denotes the exterior of~$S$ and $\lambda_{N_S}$ refers to its equivariant intersection form.\

In the closed case,  we fix a simply-connected closed~$4$-manifold~$X$ and consider
\begin{align*}
\operatorname{Surf}_\lambda(g;c_+,c_-)(X)
&:=
\frac{ \{\text{closed } \Z\text{-surface } S \subset X \mid  g(S)=g, \ \text{$(c_+,c_-)$-double points, }  \lambda_{X_S}\cong \lambda\}}{\text{equivalence}}.
\end{align*}
Roughly speaking, our description of these sets of surfaces will be in terms of quotients of the automorphism group of the Blanchfield pairing of 
a 3-manifold that is homeomorphic to the boundary of an immersed surface exterior.  Let~$P_g(c_+,c_-)$ be the $3$-manifold obtained from~$\Sigma_{g,1} \times S^1$ by~$c_+$ positive plumbings and~$c_-$ negative plumbings and, given a knot $K$,  let~$P_{K,g}(c_+,c_-)$ be the closed~$3$-manifold obtained as the union of $P_g(c_+,c_-)$ with the knot exterior~$E_K:=S^3 \setminus \nu(K)$ (for more details, see Section~\ref{sec:PlumbedManifolds}).
If~$K$ bounds an immersed genus~$g$ surface in~$N$ with~$c_+$ positive double points and~$c_-$ negative double points, then the boundary of its exterior is homeomorphic to~$P_{K,g}(c_+,c_-)$.
We write
$$\Bl_{P_{K,g}(c_+,c_-)} \colon H_1(P_{K,g}(c_+,c_-);\Z[t^{\pm 1}]) \times H_1(P_{K,g}(c_+,c_-);\Z[t^{\pm 1}])\to \Q(t)/\Z[t^{\pm 1}]$$
 for the Blanchfield form of~$P_{K,g}(c_+,c_-)$,  deferring details to
Section~\ref{sub:BlanchfieldIntro}.

 When there is no risk for confusion we write 
 $$P:=P_g(c_+,c_-) \ \ \quad \text{and} \quad \ \ P_K:=P_{K,g}(c_+,c_-)$$
but, in theorem statements, we use the full notation to emphasize the dependence on $c_+,c_-$ and~$g$.
 

The statements of our classifications differ depending on whether the surfaces are closed or have a knot as their boundary,  but much of the underlying notation is similar.
Indeed in both cases, we will use that if a $\Z[t^{\pm 1}]$-valued hermitian form $\lambda$ presents the Blanchfield form $\Bl_{P_K}$, then the group $\Aut(\lambda)$ of isometries of $\lambda$ acts on $\Aut(\Bl_{P_K})$, the group of isometries of $\Bl_{P_K}$.
This action, which involves the boundary linking form $\partial \lambda$ and played a prominent r\^ole in~\cite{ConwayPowell,ConwayPiccirilloPowell},  will be described in detail in Section~\ref{sec:Classification}. 
There is another action on $\Aut(\Bl_{P_K})$ by a certain subgroup of $\Homeo^{+}(\Sigma_g)$ (or $\Homeo(\Sigma_{g,1}, \partial)$ in the rel.\  boundary case).  While we defer the details until Section~\ref{sec:Classification},  this subgroup,  which we will denote $\Homeo_{\alpha}(\Sigma_g)$ (or $\Homeo_\alpha(\Sigma_{g,1}, \partial)$  in the rel.\  boundary case),  essentially consists of those homeomorphisms that pointwise fix prescribed neighborhoods of chosen points in $\Sigma$ that will become transverse double points under our immersions.  
These homeomorphisms extend to homeomorphisms of $P_U$
(or $P_K= P \cup_{\partial} E_K$ in the rel.\ boundary case) that lift to the infinite cyclic cover,  where their action on first homology gives the desired isometry.

One last piece of notation: if $(H,\lambda)$ is a hermitian form on a finitely generated free~$\Z[t^{\pm 1}]$-module, then~$\lambda(1)$ denotes the symmetric bilinear form~$(H,\lambda) \otimes_{\Z[t^{\pm 1}]} \Z[t^{\pm1}]/(t-1)$.
More concretely, if $\lambda$ is represented by the matrix $A(t)$, then $\lambda(1)$ is represented by $A(1).$

Our main classification of closed immersed $\Z$-surfaces,  which is a combination of Theorem~\ref{thm:SurfacesClosed} and Proposition~\ref{prop:SimplifyAlgebraClosed}, now reads as follows,  where $Q_X$ denotes the intersection form of  $X$.

\begin{theorem}
\label{thm:SurfacesClosedIntro}
Let~$X$ be a closed simply-connected~$4$-manifold, and let $c_+,c_-$ and $g$ be non-negative integers.
Given a nondegenerate hermitian form~$\lambda$ over~$\Z[t^{\pm 1}]$, the following assertions are equivalent:
\begin{enumerate}
\item
the hermitian form~$\lambda$ presents~$\Bl_{P_{U,g}(c_+,c_-)}$ and satisfies~$\lambda(1)\cong Q_X \oplus  (0)^{\oplus 2g+c_++c_-}$;
\item the set~$\operatorname{Surf}_\lambda(g;c_+,c_-)(X)$ is nonempty and there is a bijection
$$\operatorname{Surf}_\lambda(g;c_+,c_-)(X) \approx
\frac{\Aut(\Bl_{P_{U,g}(c_+,c_-)})}{\Aut(\lambda) \times \Homeo_\alpha(\Sigma_g)}.
$$
\end{enumerate}
\end{theorem}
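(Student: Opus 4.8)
The plan is to follow the now-standard strategy from the embedded case \cite{ConwayPowell, ConwayPiccirilloPowell}, adapted to accommodate the double points via the group $\Homeo_\alpha(\Sigma_g)$. The implication $(2)\Rightarrow(1)$ is the easy direction: if $S$ is any closed $\Z$-surface in $X$ of genus $g$ with $(c_+,c_-)$ double points and $\lambda_{X_S}\cong\lambda$, then a Mayer--Vietoris / half-lives-half-dies argument over $\Z[t^{\pm1}]$ shows that the boundary $\partial X_S\cong P_{U,g}(c_+,c_-)$ has Blanchfield form presented by $\lambda_{X_S}$, so $\lambda$ presents $\Bl_{P_{U,g}(c_+,c_-)}$; and evaluating the intersection form at $t=1$ recovers the ordinary intersection form of $X_S$, which a standard computation identifies with $Q_X\oplus(0)^{\oplus 2g+c_++c_-}$ (the hyperbolic-like summands coming from the genus and the double points). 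For the substantive direction $(1)\Rightarrow(2)$, I would build a reference surface and then set up the bijection.

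The first step in $(1)\Rightarrow(2)$ is an \emph{existence/realization} argument: given $\lambda$ satisfying the conditions in (1), construct a topological $4$-manifold with boundary $P_{U,g}(c_+,c_-)$ and equivariant intersection form $\lambda$, then glue in the standard piece $P_{U,g}(c_+,c_-)\times I$ side and cap off using Freedman's surgery theory over $\Z$ (the fundamental group of the exterior is $\Z$, which is good) to produce a closed simply-connected $4$-manifold $X'$ containing a $\Z$-surface $S$ with the prescribed genus and double points and $\lambda_{X_{S}}\cong\lambda$. One must check $X'$ is homeomorphic to $X$: this follows from $Q_{X'}\cong Q_X$ (read off from $\lambda(1)$) together with the Kirby--Siebenmann invariant matching, which can be arranged since the $\Z$-homology cobordism/plumbing construction does not change $\ks$. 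This shows $\operatorname{Surf}_\lambda(g;c_+,c_-)(X)\neq\emptyset$.

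The second step is the bijection itself. Given two $\Z$-surfaces $S_0,S_1$ with exteriors having isometric equivariant intersection form $\lambda$, their exteriors $X_{S_0},X_{S_1}$ are $4$-manifolds with boundary $P:=P_{U,g}(c_+,c_-)$, and the isometry of equivariant intersection forms, together with the fact that $\Z$ is a good group, should produce (via Boyer's classification of simply-connected $4$-manifolds with given boundary, or the relative surgery machinery as in \cite{ConwayPowell}) a homeomorphism $X_{S_0}\to X_{S_1}$ once one fixes the induced identification on the boundary $P$. The ambiguity is precisely a self-isometry of $\Bl_P$ realized on $\partial X_{S_i}$, and two such homeomorphisms of exteriors glue up to an equivalence of the pairs $(X,S_0)\cong(X,S_1)$ if and only if the corresponding boundary isometries differ by (a) an isometry of $P$ extending over $X_{S_i}$ — these form the image of the $\Aut(\lambda)$-action, since extending over the exterior forces compatibility with $\partial\lambda$ — and (b) an isometry induced by a homeomorphism of $P$ that extends over a tubular-neighborhood-of-surface piece, i.e.\ one coming from $\Homeo_\alpha(\Sigma_g)$ acting on $\Sigma_g\times D^2$ with the plumbing regions respected. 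Assembling these two sources of ambiguity gives the action of $\Aut(\lambda)\times\Homeo_\alpha(\Sigma_g)$ on $\Aut(\Bl_P)$, and the orbit set is exactly $\operatorname{Surf}_\lambda(g;c_+,c_-)(X)$; one checks the map is well-defined, surjective (every isometry of $\Bl_P$ is realized by some exterior by the realization step applied relatively), and injective (two surfaces inducing the same orbit are equivalent).

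The main obstacle I expect is controlling the $\Homeo_\alpha(\Sigma_g)$-action and verifying that the two ambiguity sources in step two are \emph{exactly} what quotients out the non-uniqueness — i.e.\ that there are no further identifications and no fewer. Concretely: one must show (i) that any homeomorphism between the surface exteriors can be chosen to carry a neighborhood of one surface to a neighborhood of the other (so the problem genuinely reduces to the exterior plus a standard-neighborhood piece), which in the immersed setting requires a uniqueness statement for tubular neighborhoods of locally flat immersed surfaces with transverse double points; and (ii) that a homeomorphism of $P$ coming from $\Homeo_\alpha(\Sigma_g)$ lifts to the infinite cyclic cover and that its action on $H_1(\,\cdot\,;\Z[t^{\pm1}])$ lands in $\Aut(\Bl_P)$ and is independent of choices — this is where the precise definition of $\Homeo_\alpha$ (fixing neighborhoods of the would-be double points) is essential, since an arbitrary mapping class need not fix the plumbing locus and so need not extend over the plumbed piece. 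Both points are handled in principle by the machinery of Section~\ref{sec:Classification}, but pinning down that the plumbing regions are respected and that no extra $\pi_0\Homeo$ appears is the delicate part; the rest is bookkeeping over $\Z[t^{\pm1}]$ and invoking Freedman--Quinn plus Boyer-type classification.
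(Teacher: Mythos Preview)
Your proposal is essentially the paper's approach: the direction $(2)\Rightarrow(1)$ is exactly Proposition~\ref{prop:NecessaryConditions}, and for $(1)\Rightarrow(2)$ the paper likewise builds a $\pi_1\cong\Z$ filling of $P_{U,g}(c_+,c_-)$ with intersection form $\lambda$, glues back the plumbed neighborhood $W_U$, and invokes Freedman to identify the result with $X$; the bijection then factors through the set $\mathcal{V}^0_\lambda(P_U)$ of (filling, boundary-identification) pairs and the classification \cite[Theorem~1.1]{ConwayPiccirilloPowell}, with the $\Homeo_\alpha(\Sigma_g)$-action intertwined exactly as you describe. One slip: the exteriors have $\pi_1\cong\Z$, not trivial, so Boyer's classification does not apply---you need the $\pi_1\cong\Z$ analogue from \cite{ConwayPiccirilloPowell}, as you hint with ``relative surgery machinery''.

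The point you flag as the main obstacle is indeed where the paper spends most of its effort, and it is a bit more delicate than ``uniqueness of tubular neighborhoods''. The issue is not that the exterior is well-defined, but that the \emph{boundary identification} $\partial X_S\cong P_U$ depends on choices (double point charts, framings) and one must show the resulting class in $\mathcal{V}^0_\lambda(P_U)$ is independent of them; the paper handles this in Sections~\ref{sec:BoundarIdentifications}--\ref{sec:MainStatement}, and the key technical input---absent from your sketch---is Theorem~\ref{thm:HopfConcordancesEquivalent}, the uniqueness of $\Z^2$-concordances to the Hopf link, which is what allows one to compare any two families of double point charts (Lemma~\ref{lem:Homeo}). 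Without this you cannot show that the map to $\mathcal{V}^0_\lambda(P_U)$ is well-defined, which is prerequisite to everything else.
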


The results stated in Theorem~\ref{thm:UnknottingAllSameSignIntro},~\ref{thm:UniqueSpherec=1Intro} and~\ref{thm:Other4ManifoldsSpheresIntro} are obtained by analyzing the algebra involved in Theorem~\ref{thm:SurfacesClosedIntro}.
This involves analyzing the Blanchfield form $P_U$ and deciding which of its isometries arise from surface homeomorphisms and isometries of $\lambda$.
These results in $X=S^4$ involve ambient isotopies instead of equivalences thanks to Quinn's result that every orientation-preserving homeomorphism of $S^4$ is isotopic to the identity~\cite{Quinn}.

Before moving on to surfaces with nonempty boundary, we make a note on the embedded case i.e.  on the case where~$c_+=0=c_-$.

\begin{remark}
\label{rem:ClosedEmbedded}
If $c_+=0=c_-$, then $P_{U,g}(c_+,c_-)=\Sigma_g \times S^1$ and $\Homeo_\alpha(\Sigma_g)=\Homeo^+(\Sigma_g)$.
Thus,  if the set of embedded $\Z$-surfaces, which in this case we denote $\Surf_\lambda(g)(X)$, is nonempty then the bijection of Theorem~\ref{thm:SurfacesClosedIntro} takes the form
$$\Surf_\lambda(g)(X) \approx \frac{\Aut(\Bl_{\Sigma_g \times S^1})}{\Aut(\lambda) \times \Homeo^{+}(\Sigma_g)}.$$
The target of this bijection is trivial~\cite[Proposition 5.6]{ConwayPowell} so, in the embedded case, Theorem~\ref{thm:SurfacesClosedIntro} reduces to~\cite[Theorem 6.10]{ConwayPiccirilloPowell}: a nondegenerate hermitian form $\lambda$ presents~$\Bl_{\Sigma_g \times S^1}$ and satisfies~$\lambda(1) \cong Q_X \oplus (0)^{\oplus 2g}$ if and only if $\Surf_\lambda(g)(X)$ is a singleton.
In particular,  closed embedded~$\Z$-surfaces are determined by the equivariant intersection forms of their exteriors~\cite[Theorem~1.4]{ConwayPowell}.
\end{remark}

\begin{remark}
\label{rem:WhyItsHard}
In Remark~\ref{rem:DeterminedByIntForm}, we noted that we were not able to prove that closed immersed~$\Z$-surfaces are determined by the equivariant intersection forms of their exteriors.
Theorem~\ref{thm:SurfacesClosedIntro} encapsulates the challenge: deciding whether or not~$\Aut(\Bl_{P_{U,g}(c_+,c_-)})/(\Aut(\lambda) \times \Homeo_\alpha(\Sigma_g))$ is trivial for \emph{every} hermitian form $\lambda$ that presents $\Bl_{P_{U,g}(c_+,c_-)}$.
In the embedded case, this was possible because~\cite[Proposition 5.6]{ConwayPowell} actually shows that~$\Aut(\Bl_{\Sigma_g \times S^1})/\Homeo^{+}(\Sigma_g)$ is trivial.
We show in Remark~\ref{rem:WhyItsHardMain} that the corresponding result does not generally hold in the presence of double points: for $g=0$ and $(c_+,c_-)=(1,0)$, 
$$\Bigg| \frac{\Aut(\Bl_{P_{U,0}(1,0)})}{\Homeo_\alpha(\Sigma_0)} \Bigg|=2.$$
In fact,  we conjecture that this set is trivial if and only if $c_++c_-=0$. 
We also conjecture that for $(c_+,c_-)=\{(0,c),(c,0)\}$ it contains $2^cc!$ elements; for $(c_+,c_-)=(1,1)$ it contains 2 elements; and otherwise it is infinite.
\end{remark}

We move on to surfaces with boundary.
Proposition~\ref{prop:AutBlPK} shows that the inclusions $P \subset P_K$ and~$E_K \subset P_K$ induce an isometry $\Aut(\Bl_P) \oplus \Aut(\Bl_K) \cong \Aut(\Bl_{P_K})$,  where $\Bl_K$ denotes the Blanchfield form of $E_K$.
We will see that the action of~$\Homeo_\alpha(\Sigma_{g,1},\partial)$ on~$\Aut(\Bl_{P_K})$ is trivial on $\Aut(\Bl_K)$ and so restricts to an action on $\Aut(\Bl_P)$.

Our main classification of immersed $\Z$-surfaces with boundary,  which is a combination of Theorem~\ref{thm:SurfacesRelBoundary} and Proposition~\ref{prop:SimplifyAlgebraBoundary},  now reads as follows.

\begin{theorem}
\label{thm:SurfacesRelBoundaryIntro}
Let~$N$ be a simply-connected~$4$-manifold with boundary~$\partial N \cong S^3$,  let~$K \subset S^3$ be a knot, and let $c_+,c_-$ and $g$ be non-negative integers.
Given a nondegenerate hermitian form~$\lambda$ over~$\Z[t^{\pm 1}]$, the following assertions are equivalent:
\begin{enumerate}
\item
the hermitian form~$\lambda$ presents~$\Bl_{P_{K,g}(c_+,c_-)}$ and satisfies~$\lambda(1)\cong Q_N \oplus  (0)^{\oplus 2g+c_++c_-}$;
\item the set~$\operatorname{Surf}_\lambda^0(g;c_+,c_-)(N,K)$ is nonempty and there is a bijection
$$\operatorname{Surf}_\lambda^0(g;c_+,c_-)(N,K) \approx
  \frac{(\Aut(\Bl_{P_g(c_+,c_-)})/\Homeo_\alpha(\Sigma_{g,1},\partial))\times \Aut(\Bl_K)}{\Aut(\lambda)}.
  $$
\end{enumerate}
\end{theorem}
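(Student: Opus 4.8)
The plan is to deduce the theorem from Theorem~\ref{thm:SurfacesRelBoundary} together with Proposition~\ref{prop:SimplifyAlgebraBoundary}, so the work is genuinely two separate pieces glued along a purely algebraic simplification of the target set. First, I would establish the geometric classification: given a $\Z$-surface $S \subset N$ with boundary $K$, genus $g$, and $(c_+,c_-)$ double points, the exterior $N_S$ is a simply-connected (well, $\pi_1 = \Z$ on the complement) $4$-manifold with boundary $P_{K,g}(c_+,c_-)$, and by the usual surgery-theoretic machinery (Freedman--Quinn, Boyer-type classification of $4$-manifolds with boundary, as in \cite{ConwayPowell, ConwayPiccirilloPowell}) such exteriors are classified by their equivariant intersection form $\lambda$ together with the identification of $\partial N_S$ with $P_{K,g}(c_+,c_-)$ up to the relevant equivalence. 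The condition that such an $S$ \emph{exists} with $\lambda_{N_S} \cong \lambda$ should be exactly that $\lambda$ presents $\Bl_{P_{K,g}(c_+,c_-)}$ (so that a $4$-manifold with the right boundary and equivariant intersection form exists) and that $\lambda(1) \cong Q_N \oplus (0)^{\oplus 2g+c_++c_-}$ (the ordinary intersection-form constraint coming from gluing the exterior back to a neighborhood of $S$ to recover $N$, where the genus contributes $2g$ hyperbolic-at-$1$ summands and each double point contributes one). This gives $(1) \Leftrightarrow (2)$ at the level of nonemptiness.

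Second, assuming nonemptiness, I would produce the bijection. Two surfaces are rel.\ boundary equivalent iff their exteriors are homeomorphic rel.\ the boundary identification \emph{up to} the ambiguity in how one identifies $\partial N_S$ with $P_{K,g}(c_+,c_-)$, i.e.\ up to the group of self-homeomorphisms of $P_{K,g}(c_+,c_-)$ realized by ambient moves. Running the classification of exteriors over $\Z$, the homeomorphism type rel.\ a fixed boundary parametrization is governed by the isometry class of $\lambda$, and changing the parametrization acts through $\Aut(\Bl_{P_K})$; the isometries of $\lambda$ itself act by restriction to the boundary via the boundary linking form $\partial\lambda$. This yields a surjection from $\Aut(\Bl_{P_K})$ onto $\operatorname{Surf}_\lambda^0$ whose fibers are the orbits of $\Aut(\lambda)$ acting as described in Section~\ref{sec:Classification}, \emph{together with} the orbits of the mapping-class-type group $\Homeo_\alpha(\Sigma_{g,1},\partial)$, whose elements extend over $P_g(c_+,c_-)$, lift to the infinite cyclic cover, and act on $H_1$ to give isometries of $\Bl_{P_K}$ not coming from $\Aut(\lambda)$. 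So at this stage one gets
$$\operatorname{Surf}_\lambda^0(g;c_+,c_-)(N,K) \approx \frac{\Aut(\Bl_{P_{K,g}(c_+,c_-)})}{\Aut(\lambda)\times \Homeo_\alpha(\Sigma_{g,1},\partial)},$$
which is exactly the statement of Theorem~\ref{thm:SurfacesRelBoundary}.

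Finally, I would invoke Proposition~\ref{prop:AutBlPK}: the inclusions $P \subset P_K$ and $E_K \subset P_K$ induce $\Aut(\Bl_{P_K}) \cong \Aut(\Bl_P) \oplus \Aut(\Bl_K)$. Since the $\Homeo_\alpha(\Sigma_{g,1},\partial)$-action is supported on the $P$-side and is trivial on $\Aut(\Bl_K)$ (as asserted just before the theorem statement), quotienting the first factor alone by $\Homeo_\alpha$ and then by the $\Aut(\lambda)$-action (which mixes the two factors via $\partial\lambda$) rewrites the target as
$$\frac{(\Aut(\Bl_{P_g(c_+,c_-)})/\Homeo_\alpha(\Sigma_{g,1},\partial))\times \Aut(\Bl_K)}{\Aut(\lambda)},$$
giving the displayed bijection in $(2)$. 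The main obstacle, and the step I would budget the most care for, is the geometric classification of the surface exteriors rel.\ boundary with $\pi_1 = \Z$: one must check that the relevant $4$-manifolds-with-boundary classification applies (good fundamental group, the $s$-cobordism / surgery input), pin down precisely which boundary automorphisms are realized ambiently versus only abstractly, and verify the compatibility of the two actions (the $\Aut(\lambda)$-action through $\partial\lambda$ and the $\Homeo_\alpha$-action through lifting to the cyclic cover) so that the quotient is well-defined and the fibers are exactly the double orbits — this bookkeeping, rather than any single hard theorem, is where the real content lies, and it is deferred to Section~\ref{sec:Classification}.
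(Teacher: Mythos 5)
Your proposal is correct and follows essentially the same route as the paper: Theorem~\ref{thm:SurfacesRelBoundaryIntro} is obtained there by combining Theorem~\ref{thm:SurfacesRelBoundary} (the geometric classification via exteriors as $\Z$-fillings of $P_{K,g}(c_+,c_-)$, using the bijection with $\mathcal{V}^0_\lambda(P_K)$ and the main theorem of~\cite{ConwayPiccirilloPowell}) with Proposition~\ref{prop:SimplifyAlgebraBoundary} (the algebraic rewriting of the target via $\Aut(\Bl_{P_K})\cong\Aut(\Bl_P)\oplus\Aut(\Bl_K)$ and the triviality of the $\Homeo_\alpha$-action on the $\Aut(\Bl_K)$ factor). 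The only point worth keeping in mind when you flesh out the deferred geometric step is the Kirby--Siebenmann bookkeeping when $\lambda$ is odd, which the paper handles by restricting to the fillings with $\ks=\ks(N)$.
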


The results stated in Theorem~\ref{thm:UnknottingAllSameSignIntro},~\ref{thm:UniqueSpherec=1Intro} and~\ref{thm:g=0c=1BoundaryD4Intro} are obtained by analyzing the target of the bijection from Theorem~\ref{thm:SurfacesRelBoundaryIntro}.
The Alexander trick is what makes it possible for these results to involve ambient isotopies instead of equivalences when $N=D^4$.
The key step for obtaining the~$(1) \Leftrightarrow (4)$ equivalence of Theorem~\ref{thm:Unknotting} from Theorem~\ref{thm:SurfacesRelBoundaryIntro} is to show that~$\Bl_{P_{K,0}(c_+,c_-)}$ is presented by a size~$c:=c_++c_-$ matrix~$B$ with~$B(1)=(0)^{\oplus c}$ if and only if~$\Bl_K$ is presented by a size~$c$ matrix~$A$ with~$A(1)=(1)^{\oplus c_+} \oplus (1)^{\oplus c_-}$.
In particular,  when~$g=0$,  the condition in Theorem~\ref{thm:SurfacesRelBoundaryIntro} ensuring the existence of a~$\Z$-surface (with the condition on the specific~$\lambda$ dropped) can be expressed solely in terms of~$\Bl_K$; the same is known to be true when~$c=0$~\cite{ConwayPiccirilloPowell,FellerLewarkBalanced}.
We expect the same to be true for arbitrary~$c_+,c_-$ and~$g$.

We also note that when $c_+=0=c_-$, Theorem~\ref{thm:SurfacesRelBoundaryIntro} specializes to the classification of embedded~$\Z$-surfaces stated in~\cite[Theorem~6.2 and Remark~6.3]{ConwayPiccirilloPowell}.
It also is possible to generalize Theorem~\ref{thm:SurfacesRelBoundaryIntro} leading to a result with the rel.\ boundary clause dropped.

\begin{remark}
The bijections from Theorems~\ref{thm:SurfacesClosedIntro} and~\ref{thm:SurfacesRelBoundaryIntro} are not canonical.
Indeed, as explained in Section~\ref{sub:Simplifications}, they depend on the choice of an isometry of the boundary linking form~$\partial \lambda$ with the relevant Blanchfield form.
Sections~\ref{sub:SurfacesRelBoundary} and~\ref{sub:SurfacesClosed} state our classification results in a more canonical way,  where the bijection is explicit, but as the drawback,  the algebra becomes more challenging to parse and the connection to the results stated in Sections~\ref{sub:IntroD4S4} and~\ref{sub:Other4ManifoldsIntro} becomes less clear.
\end{remark}

The strategy underlying the proofs of Theorems~\ref{thm:SurfacesClosedIntro} and~\ref{thm:SurfacesRelBoundaryIntro} is described in Section~\ref{sec:ProofStrategy} but here is a brief outline in the case with boundary when for simplicity we assume $\lambda$ is even.
Very roughly, the idea is to show that mapping a surface to its exterior determines a bijection from~$\operatorname{Surf}_\lambda^0(g;c_+,c_-)(N,K)$ to a set $\mathcal{V}_\lambda^0(P_{K,g}(c_+,c_-))$ of $\Z$-fillings of $P_{K,g}(c_+,c_-)$ and to then apply~\cite[Theorem 1.1]{ConwayPiccirilloPowell} to relate this latter set to isometries of the Blanchfield form~$\Bl_{P_{K,g}(c_+,c_-)}$.
The idea is modeled on~\cite[Section 5]{ConwayPiccirilloPowell}, which treats the embedded case.  The generalization to immersed surfaces requires several additional ideas as well as careful bookkeeping of the choices involved in defining the aforementioned bijection.
The main challenge stems from the fact that, contrarily to the embedded case,  the data of $g,c_+,c_-$, and~$K$ does not uniquely determine~$P_{K,g}(c_+,c_-)$: specifying the location of the plumbings requires we fix an embedding~$\alpha \colon \bigsqcup_{2(c_+ + c_-)} D^2 \hookrightarrow \intt(\Sigma_{g,1})$.
We  then need to 
consider immersions that are suitably compatible with $\alpha$ while simultaneously ensuring that we obtain a bijection with~$\operatorname{Surf}_\lambda^0(g;c_+,c_-)(N,K)$, which, as a set,  is independent of~$\alpha.$

Along the way, we prove the following result that might be of independent interest.
\begin{theorem}
\label{thm:HopfConcordancesEquivalent}
Let~$L$ be a~$2$-component link with multivariable Alexander polynomial~$1$.
Any two~$\Z^2$-concordances between~$L$ and the Hopf link $H$ are equivalent rel.\  boundary.
\end{theorem}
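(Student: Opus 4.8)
The plan is to reformulate the statement in terms of the concordance exteriors and then to run the filling-classification argument underlying the rest of the paper, but now over $\Z[s^{\pm1},t^{\pm1}]$ in place of $\Z[t^{\pm1}]$. Given a $\Z^2$-concordance $C=A_1\sqcup A_2\subset S^3\times[0,1]$ from the Hopf link $H$ to $L$, set $W_C:=(S^3\times[0,1])\setminus\nu(C)$; this is a compact $4$-manifold with $\pi_1(W_C)\cong\Z^2$ whose boundary is the closed $3$-manifold
$$Y_L:=E_H\cup_{T^2\sqcup T^2}\big((T^2\times I)\sqcup(T^2\times I)\big)\cup_{T^2\sqcup T^2}E_L,$$
where $E_H=S^3\setminus\nu(H)$ and $E_L=S^3\setminus\nu(L)$ and the two $T^2\times I$ pieces record the $0$-framed normal bundles of the annuli $A_i$; crucially, $Y_L$ depends only on $L$ and not on $C$. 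As in the embedded case (cf.\ the proof strategy in~\cite{ConwayPiccirilloPowell}), two $\Z^2$-concordances $C_0,C_1$ are equivalent rel.\ boundary if and only if there is a homeomorphism $W_{C_0}\xrightarrow{\,\cong\,}W_{C_1}$ restricting to the identity on $Y_L$: one implication is restriction, and for the converse such a homeomorphism extends over the normal $D^2$-bundles $\nu(A_i)\cong A_i\times D^2$ to a homeomorphism of $S^3\times[0,1]$ taking $C_0$ to $C_1$ and fixing $\partial(S^3\times[0,1])$, because it already agrees with the identity on all of $\partial\nu(A_i)$ and the mapping class group of $\nu(A_i)$ relative to its boundary is trivial.

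The second step is a homological computation. Because the multivariable Alexander polynomial of $L$ is $1$, and that of the Hopf link is $1$, a Mayer--Vietoris argument over $\Z[s^{\pm1},t^{\pm1}]$ --- splitting $Y_L$ along the plumbing tori and $W_C$ along a copy of $E_H\times I$ --- shows that the inclusions $E_H\hookrightarrow W_C\hookleftarrow E_L$ are $\Z[\Z^2]$-homology equivalences; in other words $W_C$ has the equivariant homology of a product. In particular the equivariant intersection form of $W_C$ vanishes, as does the relevant Blanchfield form of $Y_L$. So each $\Z^2$-concordance exterior is a degree-one normal cobordism, rel.\ the fixed boundary $Y_L$, carrying no surgery-theoretic information visible on equivariant homology.

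The remaining task is to promote this to a homeomorphism rel.\ boundary. Here one invokes the filling-classification machinery: by the appropriate $\Z[\Z^2]$-coefficient analogue of~\cite[Theorem~1.1]{ConwayPiccirilloPowell}, together with topological surgery and the $s$-cobordism theorem for the good group $\Z^2$~\cite{FreedmanQuinn}, the set of $\Z[\Z^2]$-acyclic fillings of $Y_L$ (with the relevant boundary behaviour) is in bijection with the isometry group of its Blanchfield form; as that form vanishes, the set is a singleton, so $W_{C_0}\cong W_{C_1}$ rel.\ $Y_L$, and the first step concludes the proof. Concretely this amounts to building an $h$-cobordism rel.\ boundary between $W_{C_0}$ and $W_{C_1}$, the normal-invariant obstruction being killed by the homological triviality established above and the residual obstruction lying in $L_5(\Z[\Z^2])$.

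I expect the main obstacle to be precisely this last step. In contrast with the rest of the paper, $W_C$ is not simply connected, so neither the main classification theorems nor the Boyer-type uniqueness results for simply-connected $4$-manifolds apply off the shelf; one must genuinely carry out the surgery and $s$-cobordism arguments over $\Z[\Z^2]$, in particular ruling out ``fake'' concordance exteriors by checking that the normal-invariant and $L_5(\Z[\Z^2])$ obstructions vanish. An equivalent repackaging that may be more convenient is to cap the Hopf-link end of $C$ inside a collar $4$-ball with the fibre annulus of $H$, turning $C$ into a $\Z^2$-surface in $B^4$ with boundary $L$; the problem then becomes one about uniqueness of such surfaces rel.\ boundary, to be handled by the $\Z[\Z^2]$-analogue of Theorem~\ref{thm:SurfacesRelBoundaryIntro}. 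Either way, the crux is the $4$-dimensional rigidity statement: the exterior is determined rel.\ its $3$-dimensional boundary.
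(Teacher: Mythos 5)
Your outline follows the same broad strategy as the paper (pass to exteriors, establish equivariant homological triviality, then apply surgery theory over $\Z[\Z^2]$ and the $s$-cobordism theorem for the good group $\Z^2$), but the two steps you yourself flag as delicate are exactly where the real content lies, and as written they are gaps rather than proofs. First, the reduction to ``a homeomorphism of exteriors restricting to the identity on $Y_L$'' hides a framing issue: the identification of $\partial W_C$ with the model $Y_L$ depends on a choice of framing of the two annuli, and for two different concordances $C_0,C_1$ the resulting boundary homeomorphism $\partial W_{C_0}\to\partial W_{C_1}$ need not be compatible with any isomorphism $\pi_1(W_{C_0})\cong\pi_1(W_{C_1})$ --- concretely, a loop running from the $H$-end to the $L$-end and back through the two plumbing regions can be nullhomologous in one exterior but not in the other. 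The paper devotes Lemma~\ref{lem:CompatibleHomology} and Proposition~\ref{prop:Compatible} to adjusting the framings (by adding meridional twists) so that this loop dies in both exteriors; without this, the obstruction-theoretic extension of the boundary homeomorphism to a homotopy equivalence fails at the level of $\pi_1$.

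Second, there is no ``$\Z[\Z^2]$-coefficient analogue of~\cite[Theorem~1.1]{ConwayPiccirilloPowell}'' available to invoke: that theorem is a classification of simply-connected-at-infinity $\Z$-fillings in terms of Blanchfield forms and does not transport to $\pi_1\cong\Z^2$. What actually makes the argument work is that $E_C$ is \emph{aspherical} (Proposition~\ref{prop:ConcordanceExterior}: the $\Z[\Z^2]$-homology of $E_C$ vanishes in positive degrees, so $E_C\simeq K(\Z^2,1)$); asphericity gives both the extension of the boundary homeomorphism to a homotopy equivalence and the identification of the surgery obstruction maps with assembly maps $H_i(E_{C};\mathbf{L}\langle 1\rangle_\bullet)\to L_i(\Z[\Z^2])$. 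One then uses that $B\Z^2\simeq T^2$ has a $2$-dimensional CW model and satisfies the Farrell--Jones conjecture to conclude $\sigma_4$ is injective and $\sigma_5$ is surjective, whence $\mathcal{S}(E_{C},\partial)$ is a singleton. Note in particular that the normal invariant set is \emph{not} killed by ``homological triviality of $W_C$''; rather, the structure set is trivial because the assembly map is an isomorphism in the relevant range. Your capping-off reformulation at the end runs into the same problem: Theorem~\ref{thm:SurfacesRelBoundaryIntro} is proved only for simply-connected ambient manifolds and knot groups $\Z$, so there is no off-the-shelf $\Z^2$ version to appeal to.
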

For context, Davis proved that any~$2$-component link with multivariable Alexander polynomial~$1$ is concordant to the Hopf link~\cite{Davis},  which should be thought of as a $\Z^2$-analogue of Freedman's theorem that Alexander polynomial one knots are $\Z$-slice~\cite{Freedman}.
Theorem~\ref{thm:HopfConcordancesEquivalent} is then a $\Z^2$-analogue of the fact that Alexander polynomial one knots bound a single $\Z$-disk up to isotopy rel.\ boundary~\cite{ConwayPowellDiscs}.
Our result is stated up to equivalence instead of isotopy because we do not know whether a given $\Z^2$-concordance from $L$ to $H$ is isotopic rel.\ boundary to the concordance obtained by applying a Dehn twist to $S^3 \times [0,1]$.
The proof of Theorem~\ref{thm:HopfConcordancesEquivalent}, which is given in the appendix, is similar to the proofs of the results in~\cite{ConwayPowellDiscs,ConwayDiscs}.
\color{black}

\begin{remark}
We conclude with a note concerning the isotopy classification.
Relying on the work of Quinn~\cite{Quinn} and Orson-Powell~\cite{OrsonPowell}, a criterion for improving our results from equivalence to isotopy can be formulated in a similar way to~\cite[Theorem 1.4]{ConwayPowell} and~\cite[Theorem~G]{OrsonPowell}.
This will not be pursued further,  nor will be the question of developing the algebra needed to describe the isotopy analogues of the sets $\operatorname{Surf}_\lambda^0(g;c_+,c_-)(N,K)$ and $\operatorname{Surf}_\lambda(g;c_+,c_-)(X)$.
\end{remark}

\subsection*{Organization}

This article decomposes into two main parts.
Sections~\ref{sec:PlumbedManifolds}-\ref{sec:Unknotting} set up the preliminaries needed to state our classification results and apply them to obtain the theorems stated in Sections~\ref{sub:IntroD4S4} and~\ref{sub:Other4ManifoldsIntro}.
Sections~\ref{sec:ProofStrategy}-\ref{sec:ProofClassifications} focus on the proofs of the classification results.

More specifically,  Section~\ref{sec:PlumbedManifolds} introduces the manifolds $P$ and $P_K$ in more detail,  and Section~\ref{sec:HomologyZ} calculates their homology. 
Section~\ref{sec:IsoBlPk} is concerned with the Blanchfield form of $P_K$.
Section~\ref{sec:Classification} precisely states our main classification results,  though we defer their proofs to Section~\ref{sec:ProofClassifications}.
Section~\ref{sec:EquivariantIntersectionForm} focuses on equivariant intersection forms, while Sections~\ref{sec:BlP} and~\ref{sec:BlPHomeomorphisms} center on the isometries of $\Bl_P$.
Section~\ref{sec:Enumerating} applies our classification results to obtain the theorems stated in Section~\ref{sub:IntroD4S4} and~\ref{sub:Other4ManifoldsIntro},  and Section~\ref{sec:Unknotting} proves Theorem~\ref{thm:Unknotting}.

Section~\ref{sec:ProofStrategy} outlines the main steps of the proofs to come.
Section~\ref{sec:ImmersionsAndNormalBundles} discusses immersions and their normal bundles.
Section~\ref{sec:BoundarIdentifications} describes how to identify the boundary of the exterior of an immersed surface with $P_K$.
Section~\ref{sec:MainStatement} establishes a relation between the sets of immersed~$\Z$-surfaces and sets of $\Z$-fillings of $P_K$.
Section~\ref{sec:ProofClassifications} proves our main classification results, namely Theorems~\ref{thm:SurfacesClosedIntro} and~\ref{thm:SurfacesRelBoundaryIntro}.

Finally, Section~\ref{sec:OpenQuestions} lists some open questions, and the appendix proves Theorem~\ref{thm:HopfConcordancesEquivalent}.

\subsection*{Acknowledgments}
AC was partially supported by the NSF grant DMS~2303674.  ANM was partially supported by an AMS Simons Research Enhancement Grant for PUI Faculty,  and thanks the Department of Mathematics at the University of Texas at Austin for their hospitality during her visit of fall 2024, when this paper was completed. 

\subsection*{Conventions}

We work in the topological category with locally flat immersions.
All manifolds are assumed to be compact, connected, based,  and oriented; if a manifold has a nonempty, connected boundary, then its basepoint is assumed to be in the boundary.
The singular sets of our immersions are assumed to consist only of transverse double points in the interior of the surface.

We write~$\Sigma_g$ for the closed genus~$g$ surface and~$\Sigma_{g,1}$ for the compact genus~$g$ surface with one nonempty boundary component.
Unless mentioned otherwise, the normal bundle of an immersion~$e$ is denoted~$\nu(e)$ and its disk bundle is~$\overline{\nu}(e)$.
A tubular neighborhood of an immersed surface $S$ is denoted~$\overline{\nu}(S)$.
Finally,  all hermitian forms will be defined on finitely generated free modules.


\section{Plumbed manifolds}
\label{sec:PlumbedManifolds}


In this section, we construct several manifolds:~$E^4$, a model for the normal bundle of an immersed surface with boundary~$S$;~$W^4$,  a model for a tubular neighborhood of~$S$; and~$P^3$,  a model such that the boundary of the exterior of~$S$ is homeomorphic to~$P \cup E_K$, where~$K=\partial S.$
While~$P$ is quickly described as the result of plumbing the $3$-manifold~$\Sigma_{g,1} \times S^1$, several of our later calculations require a more granular approach.

\begin{notation}\label{notation:abstractmodels}
In this section,  we fix the following: 
\begin{itemize}
\item a genus~$g$ surface $\Sigma:=\Sigma_{g,1}$ with a single nonempty boundary component;
\item integers $c_+,c_- \geq 0$ and $c:=c_++c_-$;
\item an embedding $\alpha =\bigsqcup \alpha_k \colon \bigsqcup_{2c} D^2 \hookrightarrow \operatorname{int}(\Sigma)$.  
\end{itemize}
We omit this data from the notation, but the reader should keep in mind these dependencies.  

We define $B_k:= \alpha_k(D^2)$ for $k=1, \dots, 2c$,  and set $\Sigma^{\circ}= \Sigma \smallsetminus \intt \bigcup_{k=1}^{2c} B_k$.  
 We think of~$B_1,  \dots B_{2c_+}$ as positively labelled disks  and $B_{2c_++1}, \dots, B_{2c}$ as negatively labelled disks. 
\end{notation}

We begin by describing a model for 
a disk bundle over $\Sigma$.

\begin{construction}[A model for a~$D^2$-bundle over~$\Sigma$]
\label{cons:E}
Consider the manifold
$$ E:=\left(\bigsqcup_{k=1}^{2c}\left( D^2 \times D^2 \right)_k \sqcup \Sigma^\circ \times D^2\right)\Bigg/\sim_E$$
where~$\sim_E$ identifies~$x \in (S^1 \times D^2)_k$ with its image under the composition
\[ (S^1 \times D^2)_k \xrightarrow{\eta^{\pm}} S^1 \times D^2 \xrightarrow{\alpha_k|_{S^1} \times \id_{D^2}} \partial B_k \times D^2\subset \Sigma^\circ \times D^2,\]
where   $\eta^{\pm}(\omega,(r,  \theta)):= (\omega, (r,  \theta \pm \omega))$, and we use $\eta^{\pm}$ depending on whether~$B_k$ is a positively or negatively labelled disk. Note that $\eta^{\pm}$ sends~$S^1 \times \{ \pt \} \subset S^1 \times \partial D^2$ to the~$(1,\pm 1)$ curve. 

The assignment
\[(x,y) \mapsto
 \begin{cases}
  \alpha_k(x)  \quad & \text{if } (x,y) \in (D^2 \times D^2)_k \\ 
  x \quad & \text{if } (x,y) \in \Sigma^\circ \times D^2
  \end{cases}\]
 induces a map~$\pi_E \colon E \to \Sigma$ that is the projection map of a~$D^2$-bundle over~$\Sigma$.
When~$\Sigma:=\Sigma_g$ is closed, this construction can be repeated and yields the total space of the Euler number~$2(c_+-c_-)$ disk bundle over $\Sigma_g$.
\end{construction}

Next we describe a manifold homeomorphic to a tubular neighbhorhood of an immersed surface.
\begin{construction}[The plumbed $4$-manifold $W$]
\label{cons:W}
Write~$W$ for the~$4$-manifold obtained from the~$D^2$-bundle~$E$ by~$c_+$ positive plumbings and~$c_-$ negative plumbings along the~$B_i$. 
More precisely, the plumbed~$4$-manifold~$W$ is defined as
$$W:=E/\sim_W,$$
where the only non-reflexive identifications of $\sim_W$ are that for 
 $(z_1,w_1) \in(D^2 \times D^2)_{2i-1}$ and $(z_2, w_2) \in (D^2 \times D^2)_{2i}$ we have
\begin{align}
\label{eq:Sim}
(z_1,w_1) 
 \sim_W
 (z_2,w_2) 
\quad  &\text{ iff }  \quad 
\shrink_{2i-1}(z_1,w_1)=\shrink_{2i}(z_2,w_2),
\end{align}
where $\shrink_k \colon (D^2 \times D^2)_k \to D^2 \times D^2$ is defined by
\begin{align*}
\shrink_k (z,w):=
\begin{cases} (z,  w/2)  & \text{if }k=2i-1 \\
\pm (w/2, z)  &\text{if } k=2i, \end{cases}
\end{align*}
and the $\pm$ sign is the same as the (shared) sign of $B_{2i-1}$ and $B_{2i}$.  This is illustrated in Figure~\ref{fig:plumbingneighborhoods}. 
\begin{figure}[htbp!]
\centering
\begin{tikzpicture}
\node[anchor=south west,inner sep=0] at (0,0){\includegraphics[height=2cm]{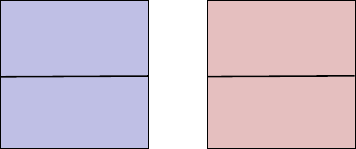}};
\node[anchor=south west,inner sep=0] at (8,0){\includegraphics[height=2cm]{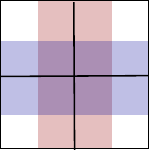}};
\node at (.95,.-.3){$(D^2 \times D^2)_{2i-1}$};
\node at (3.95,.-.3){$(D^2 \times D^2)_{2i}$};
\node at (2.4,  1){$\cup$};
\node at (6.4,  1){$\xrightarrow{\shrink_{2i-1} \cup \shrink_{2i}}$};
\node at (9, -.3){$D^2 \times D^2$};
\end{tikzpicture}
\caption{Quotienting by $\sim_W$ identifies points which have the same image under the shrink maps,  which has the effect of plumbing.}
\label{fig:plumbingneighborhoods}
\end{figure}

In particular there is a quotient map
$\operatorname{proj} \colon E \to W.$
\end{construction}

If $N$ is a $4$-manifold with $\partial N \cong S^3$ and $S \subset N$ is an immersed genus $g$ surface with $c_+$ positive double points and $c_-$ negative double points, then $W$ is homeomorphic to a tubular neighbhorhood~$\overline{\nu}(S)$ of $S$.

Next we describe a manifold that makes up most of the boundary of a tubular neighborhood of a immersed surface.

\begin{construction}[The plumbed $3$-manifold $P$]
Define the $3$-manifold
$$P:=\partial W \setminus \operatorname{proj} (E|_{\partial \Sigma}),$$
so that~$\partial W= \operatorname{proj} (E|_{\partial \Sigma}) \cup_\partial P$.

Note that $\operatorname{proj} (E|_{\partial \Sigma}) \cong \partial \Sigma \times D^2$ is homeomorphic to a solid torus.
We write~$\partial P=\partial \Sigma \times S^1$, omitting the projection map $\operatorname{proj} \colon E \to W$ from the notation.
We also write $\mu \subset \partial P$ for a loop representing a curve of the form $\{ \pt \} \times S^1$.
\end{construction}

We outline how this definition of $P$ aligns with its more familiar description as a manifold obtained by self-plumbing $\Sigma \times S^1.$
\begin{remark}[$P$ as a plumbed $3$-manifold]
\label{rem:PasaQuotient}
By analyzing $\partial W$, one verifies that 
\begin{align*}
P 
= \left( ( \Sigma^\circ \times S^1 ) \sqcup \bigsqcup_{k=1}^{2c} \left\lbrace(z,w) \in (D^2 \times S^1)_k \ \Big| \ |z| \geq \frac{1}{2}\right\rbrace   \right)\Bigg/ \sim,
 \end{align*}
 where $\sim$ identifies 
 \begin{itemize}
\item each~$\partial B_k \times S^1$ with $(S^1 \times S^1)_k$ via $(\alpha_k \times \id) \circ \eta^{\pm}$ for~$k=1, \dots, 2c$,
\item each~$\left\lbrace(z,w) \in (D^2 \times S^1)_{2i-1} \mid |z|=\tmfrac{1}{2}\right\rbrace$ with $\left\lbrace(z,w) \in (D^2 \times S^1)_{2i} \mid |z|=\tmfrac{1}{2}\right\rbrace$ via the homeomorphism~$(z,w) \leftrightarrow \pm(w/2, 2z)$ for $i=1, \dots, c$. 
\end{itemize}
One then verifies that there is a homeomorphism
$$
\left(\overbrace{\left\lbrace(z,w) \in (D^2 \times S^1)_{2i-1} \  \Big| \  |z| \geq 1/2 \right\rbrace}^{\cong T^2 \times [\frac{1}{2},1]}
\cup \overbrace{\left\lbrace(z,w) \in (D^2 \times S^1)_{2i} \ \Big| \  |z| \geq 1/2\right\rbrace}^{\cong T^2 \times [\frac{1}{2},1]}\right)
\Bigg/ \sim  \ \xrightarrow{\cong} T^2 \times I.$$
Tracing through the identifications one obtains that 
\begin{align*}
P 
&\cong 
\left((\Sigma^\circ \times S^1) \cup \bigcup_{i=1}^c T^2 \times I  \right)\Bigg/ \sim\,  \, \cong (\Sigma^\circ \times S^1)/ \sim_P,  \nonumber
\end{align*}
where for $i=1, \dots,  c$ the equivalence relation $\sim_P$ is defined as
\begin{align}\label{eqn:simP}
 \partial B_{2i-1} \times S^1 \ni (\alpha_{2i-1} \times \id) \circ \eta^{\pm} (z,w) \sim_P (\alpha_{2i} \times \id) \circ \eta^{\pm}(\pm w,  \pm z) \in \partial B_{2i} \times S^1.
 \end{align}
Throughout the paper, we will move back and forth between thinking of $P$ as a quotient of $\Sigma^\circ \times S^1$ and as a quotient of $(\Sigma^\circ \times S^1) \cup \bigcup_{i=1}^c (T^2 \times I)$.
\end{remark}

Finally we describe a manifold homeomorphic to the boundary of an immersed surface exterior when the surface has a connected nonempty boundary.
\begin{construction}[The closed $3$-manifold $P_K$]
\label{cons:PK}
For a knot $K\subset S^3$,  fix a homeomorphism
$$d_K \colon \partial E_K \to \partial \Sigma \times S^1$$
 with $d_K(\mu_K)=\{ \pt \} \times S^1$ and~$d_K(\lambda_K)=\partial \Sigma \times \lbrace \pt \rbrace$, where $\mu_K$ and $\lambda_K$ respectively denote a fixed meridian and Seifert longitude for $K$.
 
 By recognizing $\partial P= \partial \Sigma \times S^1$,  we can define the closed $3$-manifold
$$P_K:=E_K \cup_{d_K} P.$$
\end{construction}

If $N$ is a $4$-manifold with $\partial N \cong S^3$ and $S \subset N$ is an immersed genus $g$ surface with $c_+$ positive double points,  $c_-$ negative double points, and $\partial S=K$,  
then $\partial N_S$, the boundary of the exterior of $S$,  is homeomorphic to $P_K$,  while $\partial \overline{\nu}(S)$,  the boundary of a tubular neighborhood of $S$,  is homeomorphic to $\overline{\nu}(K) \cup_\partial P$.
When $S$ is a $\Z$-surface, an explicit identification of $P_K$ with $\partial N_S$ will be described in Section~\ref{sec:BoundarIdentifications}.

\begin{remark}
\label{rem:SigmaInW}
We conclude this section by constructing an immersion
$$[\times \lbrace 0 \rbrace] \colon \Sigma \looparrowright W,$$
by first constructing an embedding $\Sigma \hookrightarrow E$ and then post-composing with $\proj_W$. 
Consider the map 
\begin{align*}
\Sigma & \to \bigsqcup_{k=1}^{2c}\left( D^2 \times D^2 \right)_k \sqcup \Sigma^\circ \times D^2 \\
x &\mapsto \begin{cases}
(\alpha_k^{-1}(x), 0) \in (D^2 \times D^2)_k & \text{ if } x \in B_k,\\
(x,0) \in \Sigma^\circ \times D^2 & \text{ if } x \in \Sigma^{\circ}.
 \end{cases}
\end{align*}
For this map to descend to a
map $\Sigma \to E$,  we need to show that for any $x \in \partial B_k$,  we have that 
\[ (D^2 \times D^2)_k \ni (\alpha_k^{-1}(x), 0) \sim_E (x,0) \in \Sigma^{\circ} \times D^2.\]
But this is immediate, since ~$\sim_E$ identifies~$x \in (S^1 \times D^2)_k$ with its image under the composition
\[ (S^1 \times D^2)_k \xrightarrow{\eta^{\pm}} S^1 \times D^2 \xrightarrow{\alpha_k|_{S^1} \times \id_{D^2}} \partial B_k \times D^2\subset \Sigma^\circ \times D^2\]
and $\eta^{\pm}$ is the identity on $S^1 \times \{0\}$. 
\color{black}
The immersion $[\times \lbrace 0 \rbrace] \colon \Sigma \looparrowright W$ is now obtained by composition of the resulting embedding $\Sigma \hookrightarrow E$ with the projection $\proj_W \colon E \to W.$
\end{remark}

\begin{remark}
\label{rem:ClosedModels}
It is possible to repeat all the constructions of this section in the case where~$\Sigma$ is closed instead of having a boundary.
Repeating the construction of~$P$ with a closed surface yields a closed~$3$-manifold homeomorphic to~$P_U$,  the manifold~$P_K$ from Construction~\ref{cons:PK} with~$K=U$ the unknot.
For this reason,  when~$\Sigma$ is closed, we write~$E_U$ and~$W_U$ for the analogues of~$E$ and~$W$ and note that~$\partial W_U=P_U$. 
The same construction as in Remark~\ref{rem:SigmaInW} similarly yields an immersion~$[\times \lbrace 0 \rbrace] \colon \Sigma \looparrowright W_U$.

\end{remark}

In order to avoid stating all our results and constructions in both the closed and with-boundary cases, we will instead occasionally conclude sections with remarks such as this one listing the changes that occur when $\Sigma$ is closed.

\color{black}

\section{Integral homology calculations}
\label{sec:HomologyZ}

This section is concerned with the homology of the spaces $W,P$, and $P_K$ as well as the homology of exteriors of immersed $\Z$-surfaces.
These homological calculations are carried out in Section~\ref{sub:HomologyZ} whereas Section~\ref{sub:CoefficientSystems} uses these calculations to describe coverings of these spaces that will be used throughout this article. 
We note that the aforementioned homology groups could instead readily be calculated from handle diagrams, but our approach will be useful for later computations.  

\begin{notation*}
We remind the reader that the manifolds $W,P$ and $P_K$ depend on the genus $g$ of the surface~$\Sigma:=\Sigma_{g,1}$, integers $c_+,c_- \geq 0$ and a fixed embedding $\alpha \colon \bigsqcup_{2c} D^2 \hookrightarrow \operatorname{int}(\Sigma).$
\end{notation*}

\subsection{Homology calculations}
\label{sub:HomologyZ}

This section describes the homology groups of $P,W,P_K$ and the exterior of immersed $\Z$-surfaces.

\begin{construction}
\label{cons:PlumbingCurvesNotCellComplex}
We introduce curves and arcs in~$\Sigma^\circ \times \{1\}$ that will be helpful to describe generators of~$H_1(P)$,  thinking of $P$ as a quotient of $\Sigma^\circ \times S^1$.  
Our basepoint for $P$ will be the image of $\{\operatorname{pt}\} \times \{1\}$ for $\operatorname{pt} \in \partial \Sigma$. 
Fix points~$z_k \in\partial B_k \times \{1\} $ that satisfy~$z_{2i-1} = z_{2i} \in P$ for~$i=1,\ldots ,c$, using the description of $P$ as a quotient of $\Sigma^\circ \times S^1$ from Remark~\ref{rem:PasaQuotient}. 
Additionally,  for $i=1, \dots, c$ fix paths~$\omega_i$ connecting~$z_{2i-1}$ to~$z_{2i}$,  all disjoint except at their endpoints.  
Finally,  choose~$a_1,b_1,\ldots,a_g,b_g$  to be a standard symplectic basis of curves in $\Sigma^\circ \times \{1\}$ that are disjoint from the $\omega_i$ arcs.  This is all illustrated in Figure~\ref{fig:surfacewithcurves}. 

\begin{figure}[htbp!]
\begin{centering}
\begin{tikzpicture}
\node[anchor=south west,inner sep=0] at (0,0)
{\includegraphics[height=3cm]{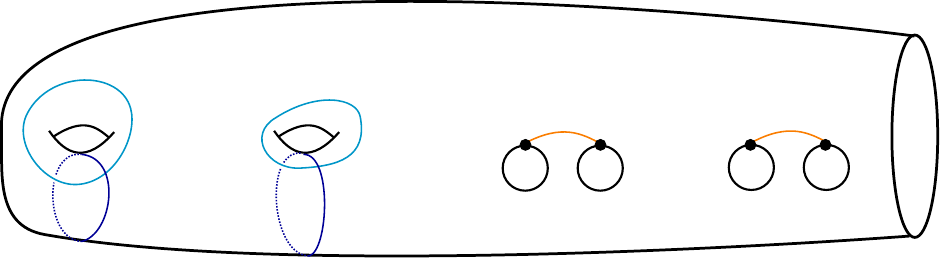}};
\node at (1.5, .4){$b_1$}; \node at (1.8, 1.7){$a_1$}; \node at (4, .4){$b_g$}; \node at (4.5, 1.7){$a_g$};
\node at (6.5, 1.65){$\omega_1$}; \node at (9.2, 1.65){$\omega_c$};
\node at (2.5, 1){$\dots$};\node at (7.9,1){$\dots$};
\node at (11.3, 1.8){$\partial \Sigma$};
\end{tikzpicture}
\end{centering}
\caption{
Certain curves and arcs on $\Sigma^\circ \times \{1\}$ that become loops in $P$.}
\label{fig:surfacewithcurves}
\end{figure}

Every~$(z,x) \in \Sigma^\circ \times D^2$ determines an point in~$E$ and if~$x \in \partial D^2$,  $(z,x)$ is in~$P$.
If~$\eta \subset \Sigma^\circ$ is a curve and~$x \in \partial D^2$, then we write~$\eta \times \{x\}$ for the corresponding loop in~$P$.
Different choices of $x \in \partial D^2$ lead to isotopic curves in $P$ and so,  when no confusion will occur, we slightly abuse notation by dropping the~`$ \times \{x\}$' from the notation and referring to the curve in $P$ as coming from~$\eta$ in $\Sigma^\circ$.

\begin{itemize}
\item We call the images of the loops $a_1,b_1,\ldots,a_g,b_g$ in $P$ \emph{genus loops}, and continue to refer to them as $a_1,b_1,\ldots,a_g,b_g$.
\item For each $i=1,\dots, c$ the points $z_{2i-1}$ and $z_{2i}$ are identified in $P$,  so the arc $\omega_i \subset \Sigma^\circ \times \{1\}$ becomes a loop  in $P$ based at $z_{2i-1}=z_{2i}$,  which we continue to refer to as $\omega_i$ and will call a \emph{plumbing loop}. 
\item The \emph{$S^1$-fibre} (or \emph{meridian}) of $P$ refers to the image $\mu\subset P$ of~$\lbrace z \rbrace \times S^1 \subset \Sigma^\circ \times S^1$ in~$P$.
\end{itemize}
When we think of $P$ as a quotient of $(\Sigma^\circ \times S^1) \cup \bigcup_{i=1}^c (T^2 \times I)$,  the plumbing loops are obtained as $ \omega_i \cdot (*_i  \times I)$,  where $*_i$ is a point in the $i$-th copy of $T^2$.
\end{construction}

The next proposition uses these generators to describe the homology of the plumbed $3$-manifold~$P$.

\begin{proposition}
\label{prop:HomologyP}
The homology of the plumbed~$3$-manifold~$P$ is given by
$$
H_i(P) \cong 
\begin{cases}
\Z &\quad \text{ for } i=0, \\
\Z^{2g+c+1}&\quad \text{ for } i=1,\\
\Z^{2g+c} &\quad \text{ for } i=2,\\
0 &\quad \text{ otherwise.}
\end{cases}
$$
More precisely,  the~$S^1$-fibre  together with the genus and plumbings curves give an isomorphism
$$ H_1(P) \cong \Z \mu \oplus \bigoplus_{i=1}^g \left( \Z a_i \oplus \Z b_i \right) \oplus \bigoplus_{i=1}^c \Z\omega_i .$$
Additionally,  the inclusion induced map $H_1(\partial P) \to H_1(P)$ maps $[\partial \Sigma]$ to zero, and is the identity on the $S^1$-fibre.
\end{proposition}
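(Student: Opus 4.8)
The plan is to compute $H_*(P)$ by presenting $P$ as a quotient of $\Sigma^\circ \times S^1$ and running a Mayer--Vietoris argument, keeping track of the explicit generators introduced in Construction~\ref{cons:PlumbingCurvesNotCellComplex}. First I would recall that $\Sigma^\circ$ is a compact surface of genus $g$ with $2c+1$ boundary components, hence homotopy equivalent to a wedge of $2g+2c$ circles; so $\Sigma^\circ \times S^1$ has the homology of $(\vee_{2g+2c} S^1) \times S^1$, which is free of ranks $1, 2g+2c+1, 2g+2c$ in degrees $0,1,2$. A convenient basis for $H_1(\Sigma^\circ \times S^1)$ is given by the genus curves $a_i, b_i$, the boundary curves $\partial B_k$ (subject to the one relation $\sum [\partial B_k] + [\partial \Sigma] = 0$), and the $S^1$-fibre $\mu$; a basis for $H_2$ is given by the tori $a_i \times S^1$, $b_i \times S^1$, and $\partial B_k \times S^1$ (again with one relation among the latter and $\partial \Sigma \times S^1$).

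\textbf{Main computation.} The manifold $P$ is obtained from $\Sigma^\circ \times S^1$ by gluing, for each $i = 1,\dots,c$, the two boundary tori $\partial B_{2i-1} \times S^1$ and $\partial B_{2i} \times S^1$ to each other via the homeomorphism of \eqref{eqn:simP}, which on homology sends the class of $\partial B_{2i-1}$ to (plus or minus) the class of $\mu$ and the class of $\mu$ to (plus or minus) the class of $\partial B_{2i}$. So I would decompose $P = (\Sigma^\circ \times S^1) \cup \bigcup_{i=1}^c (T^2 \times I)$ as in Remark~\ref{rem:PasaQuotient}, take $A = \Sigma^\circ \times S^1$ thickened slightly, $B = \bigsqcup_{i=1}^c (T^2 \times I)$, so that $A \cap B \simeq \bigsqcup_{i=1}^c (T^2 \sqcup T^2)$ ($2c$ tori total). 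The Mayer--Vietoris sequence
$$
H_2(A\cap B) \to H_2(A) \oplus H_2(B) \to H_2(P) \to H_1(A \cap B) \to H_1(A) \oplus H_1(B) \to H_1(P) \to 0
$$
can then be analyzed degree by degree. In degree $1$: $H_1(A \cap B) = \Z^{6c}$ (each $T^2$ contributes its longitude, its $S^1$-fibre, and—no, each $T^2$ has rank-$2$ first homology, so $\Z^{4c}$), $H_1(B) = \Z^{2c}$, and the gluing identifies, for each $i$, the fibre classes and the $\partial B$-classes across the two sides. Chasing the maps—using that the genus curves and $\mu$ survive, that each pair $\partial B_{2i-1}, \partial B_{2i}$ gets killed in favor of contributing one new class $\omega_i$ arising from the connecting homomorphism (the arc $\omega_i$ closed up through $T^2 \times I$)—yields $H_1(P) \cong \Z\mu \oplus \bigoplus_{i=1}^g(\Z a_i \oplus \Z b_i) \oplus \bigoplus_{i=1}^c \Z\omega_i$, free of rank $2g+c+1$. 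The degree-$2$ computation is dual: the $g$ genus tori survive, the relation among boundary tori is used up, each plumbing contributes one torus-like class, and Poincaré duality (or a direct count) gives rank $2g+c$; degrees $0$ and $\geq 3$ are immediate since $P$ is a closed connected oriented $3$-manifold.

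\textbf{The boundary clauses.} For the final sentence, note $\partial P = \partial \Sigma \times S^1$ lies in the $\Sigma^\circ \times S^1$ piece, away from the gluing region. The $S^1$-fibre $\{\pt\} \times S^1 \subset \partial \Sigma \times S^1$ is literally the meridian $\mu$ we chose as a generator, so the inclusion is the identity on it by construction. For $[\partial \Sigma] \times \{\pt\}$: in $H_1(\Sigma^\circ \times S^1)$ we have $[\partial \Sigma] = -\sum_{k=1}^{2c}[\partial B_k]$, and I would check that under the quotient map to $H_1(P)$ each $[\partial B_k]$ maps to $0$ (indeed $\partial B_{2i-1}$ is identified with $\mu$ on one side but bounds on the $T^2 \times I$ side, or more cleanly: in $P$ each $\partial B_k$ bounds a disk-with-tube, so is nullhomologous)—alternatively, $\partial \Sigma \times \{\pt\}$ bounds the surface $\Sigma^\circ \times \{\pt\}$ with the plumbing disks $B_k \times \{\pt\}$ reattached (which is legitimate in $P$ since the plumbing identifications are compatible with the $\Sigma \times \{\pt\}$ slice, cf.\ Remark~\ref{rem:SigmaInW}), so $[\partial \Sigma] = 0$ in $H_1(P)$.

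\textbf{Expected main obstacle.} The bookkeeping in the Mayer--Vietoris connecting homomorphism is the delicate part: one must verify carefully that the classes killed are exactly the $2c$ boundary circles $[\partial B_k]$ (modulo the one pre-existing relation they satisfy together with $[\partial\Sigma]$) and that the $c$ new classes produced are represented precisely by the plumbing loops $\omega_i$, with no spurious torsion appearing. An alternative that sidesteps the most error-prone signs is to build an explicit CW or handle structure on $P$ directly from the description as $(\Sigma^\circ \times S^1)/\!\sim_P$ and read off the chain complex; but since the statement only claims the homology is free of the stated ranks and that the listed curves form a basis, the Mayer--Vietoris route with a careful tracking of generators should suffice, and I would present it that way.
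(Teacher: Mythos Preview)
Your Mayer--Vietoris approach is valid and will produce the result, but it uses a different decomposition than the paper's. The paper writes $P = (\Sigma_{g,c+1} \times S^1) \cup \bigcup_{i=1}^c Q_i$, where each $Q_i$ is a single-plumbing piece $P_0(1,0)$ or $P_0(0,1)$; it first computes $H_1(Q_i) \cong \Z\mu_i \oplus \Z\omega_i$ via an explicit CW structure (Lemma~\ref{lem:H1P01}), and crucially sees there that the outer boundary circle vanishes in $H_1(Q_i)$. The subsequent Mayer--Vietoris (Proposition~\ref{prop:H1P}) then has only $c$ tori in the intersection and simple gluing. Your decomposition $(\Sigma^\circ \times S^1) \cup \bigcup_i (T^2 \times I)$ has simpler pieces but $2c$ tori in the intersection, and all the plumbing information---the $\eta^\pm$ twist and the coordinate swap of~\eqref{eqn:simP}---lives in the gluing maps you must chase through the MV differential. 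Both routes work; the paper's choice localizes the delicate computation to a single model piece done once. For $H_2$ the paper uses Poincar\'e--Lefschetz duality rather than direct MV, as you also mention.

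Two slips to fix. First, $P$ is not closed: $\partial P = \partial\Sigma \times S^1$, so $H_3(P)=0$ because $P$ is a compact connected $3$-manifold with nonempty boundary, not because it is closed. Second, your argument for $[\partial\Sigma]=0$ via ``disks $B_k \times\{\pt\}$ reattached'' does not work: Remark~\ref{rem:SigmaInW} constructs an immersed $\Sigma$ inside the $4$-manifold $W$, not inside $P \subset \partial W$, and there is no copy of $\Sigma$ in $P$ bounding $\partial\Sigma$. The route that does work is showing each $[\partial B_k]=0$ in $H_1(P)$ (so that $[\partial\Sigma] = -\sum_k[\partial B_k] = 0$), but your parenthetical reasons for this are not right either: under $\sim_P$ the curve $\partial B_{2i-1}\times\{1\}$ is not simply identified with $\mu$---it passes through $\eta^\pm$ and the swap---and verifying it becomes nullhomologous is precisely the bookkeeping you flagged as the main obstacle. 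In the paper's decomposition this fact drops out of the cell-complex computation for $Q_i$.
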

\begin{proof}
The calculations of $H_0$ and $H_3$ follow from the fact that $P$ is a compact, connected~$3$-manifold with boundary.
The calculation of $H_1(P)$ will be described in Proposition~\ref{prop:H1P}, as will the sentence concerning $H_1(\partial P) \to H_1(P).$
The result for $H_2$ then follows from Poincar\'e duality,  and the universal coefficient theorem.
\end{proof}

The next proposition describes the homology of the plumbed $4$-manifold $W$.

\begin{proposition}
\label{prop:HomologyW}
The homology of the plumbed~$4$-manifold~$W$ is given by
$$
H_i(W)\cong 
\begin{cases}
\Z &\quad \text{ for } i=0, \\
\Z^{2g+c} &\quad \text{ for } i=1,\\
0 &\quad \text{ otherwise.}
\end{cases}
$$
Additionally, the inclusion induces an isomorphism
$$H_1(P)/\Z\mu \xrightarrow{\cong} H_1(W).$$
\end{proposition}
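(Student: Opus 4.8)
The plan is to compute $H_*(W)$ by building $W$ from the disk bundle $E$ via plumbing and tracking the effect of the plumbing identifications on homology, and then to identify the inclusion-induced map $H_1(P)/\Z\mu \to H_1(W)$.

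First I would record the homology of $E$. Since $E$ is a $D^2$-bundle over $\Sigma:=\Sigma_{g,1}$ and $\Sigma$ is homotopy equivalent to a wedge of $2g$ circles, $E \simeq \Sigma$ and hence $H_0(E)\cong\Z$, $H_1(E)\cong\Z^{2g}$, and $H_i(E)=0$ for $i\ge 2$; the zero-section $\Sigma\hookrightarrow E$ induces this identification, and the $2c$ disks $B_k$ are null-homotopic in $\Sigma$, hence in $E$. Next, $W$ is obtained from $E$ by performing $c$ plumbings: each plumbing glues two $D^2\times D^2$ charts $(D^2\times D^2)_{2i-1}$ and $(D^2\times D^2)_{2i}$ along the shrink maps as in Construction~\ref{cons:W}. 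The cleanest approach is a Mayer--Vietoris argument: write $W = E' \cup (\bigcup_i \text{plumbing regions})$, or equivalently view the plumbing as gluing $E$ to itself along pairs of $4$-balls-with-corners in a way that identifies the fibre disk over (a point in) $B_{2i-1}$ with the base disk direction over $B_{2i}$. Since each $(D^2\times D^2)_k$ and each plumbing region is contractible and their relevant intersections are too (or are homotopy equivalent to circles whose classes are already understood), Mayer--Vietoris shows that the plumbings do not change $H_0$ (still connected) and contribute one new generator to $H_1$ per plumbing — geometrically, the plumbing loop $\omega_i$, which in $W$ becomes a genuine loop since $z_{2i-1}$ and $z_{2i}$ are identified. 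This yields $H_1(W)\cong\Z^{2g+c}$ and $H_i(W)=0$ for $i\ge 2$; the vanishing of $H_2$ and above follows because $E$ has none and the pieces glued in (and their intersections) contribute none, the only nonzero Mayer--Vietoris connecting terms feeding $H_1$.

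Alternatively, and perhaps more in the spirit of the paper's granular models, I would run this computation through the boundary: we already have, by Proposition~\ref{prop:HomologyP}, $H_1(P)\cong\Z\mu\oplus\bigoplus_i(\Z a_i\oplus\Z b_i)\oplus\bigoplus_i\Z\omega_i$. The inclusion $P=\partial W\setminus\proj(E|_{\partial\Sigma})\hookrightarrow W$ kills $\mu$, since $\mu$ bounds a fibre disk $\{z\}\times D^2$ in $E$ hence in $W$, and sends the genus and plumbing loops to the corresponding classes $a_i,b_i,\omega_i$ in $W$; these are exactly the generators produced by the Mayer--Vietoris count. So the induced map $H_1(P)/\Z\mu \to H_1(W)$ is surjective, and comparing with the computation $H_1(W)\cong\Z^{2g+c}$ (which has the same rank as $H_1(P)/\Z\mu$, a free group of rank $2g+c$) shows it is an isomorphism. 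One should check there is no torsion — this follows from the explicit free generating sets — so surjectivity of a map between free modules of equal finite rank forces it to be an isomorphism.

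The main obstacle I expect is purely bookkeeping: setting up the Mayer--Vietoris decomposition of $W$ so that all the pieces and pairwise (and triple) intersections are visibly contractible or have understood homotopy type, given the somewhat intricate gluing maps $\eta^\pm$ and $\shrink_k$; one has to be careful that the plumbing identifications, which twist the $D^2$-factors by the $(1,\pm 1)$-curve framing, do not secretly introduce relations among the $a_i,b_i$ or additional $H_2$ classes. Once one is convinced (e.g.\ by deformation retracting each $D^2\times D^2$ chart to a point and each plumbing region appropriately) that the homotopy type of $W$ is that of $\Sigma$ with $c$ extra $1$-cells attached (the arcs $\omega_i$ becoming loops), the stated answer is immediate; the second obstacle, identifying the inclusion map, is then handled by the explicit null-homotopy of $\mu$ and naturality.
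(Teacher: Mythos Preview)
Your approach is correct but considerably more laborious than the paper's. The paper's entire proof is the observation that $W$ deformation retracts onto the image of the immersion $[\times\{0\}]\colon\Sigma\looparrowright W$ from Remark~\ref{rem:SigmaInW}, and that this image (which is $\Sigma$ with the centres of $B_{2i-1}$ and $B_{2i}$ identified for each $i$) further deformation retracts onto the wedge of the genus loops and plumbing loops. This single retraction yields the homology computation and the identification of the inclusion map $H_1(P)/\Z\mu\to H_1(W)$ simultaneously, since those same loops live in $P$ and $\mu$ visibly bounds a fibre disk. Your final sentence --- that the homotopy type of $W$ is that of $\Sigma$ with $c$ extra $1$-cells (the $\omega_i$) --- is precisely this observation, so you eventually arrive at the paper's argument; the Mayer--Vietoris scaffolding and the separate rank-comparison step for the inclusion map are unnecessary once you notice the deformation retraction up front.
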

\begin{proof}
This follows because $W$ deformation retracts onto the image of the immersion~$[\times \{0\}]$,  which further deformation retracts onto the wedge of the genus and plumbing loops of~$P$.
\end{proof}

The next proposition deduces $H_*(P_K)$ from the calculation of $H_*(P).$

\begin{proposition}
\label{prop:HomologyPK}
The homology of~$P_K$ is 
$$
H_i(P_K)\cong 
\begin{cases}
\Z &\quad \text{ for } i=0,3, \\
\Z^{2g+c+1} &\quad \text{ for } i=1,2, \\
0 &\quad \text{ otherwise.}
\end{cases}
$$
More precisely, the inclusions~$E_K \hookrightarrow P_K$ and~$P \hookrightarrow P_K$ give rise to a short exact sequence
$$0 \to  \Z \xrightarrow{j} H_1(E_K) \oplus H_1(P) \to H_1(P_K) \to 0~$$
where~$j(1)=(\mu_K,\mu)$
and therefore to an isomorphism
$$H_1(P_K) \cong H_1(P)/\Z\langle \mu \rangle \oplus \Z\langle \mu_K \rangle.$$
Thus,  $H_1(P_K)$ is freely generated by the meridian of $K$, the genus loops and the plumbing loops.
\end{proposition}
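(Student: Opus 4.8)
The plan is to compute $H_*(P_K)$ via the Mayer--Vietoris sequence associated to the decomposition $P_K = E_K \cup_T P$, where $T := \partial E_K = \partial P$ is a $2$-torus (thickened to a bicollar so that the sequence applies, which is harmless since $P_K$ is a manifold). The homology of $P$ is supplied by Proposition~\ref{prop:HomologyP}, the homology of the knot exterior is the standard $H_0(E_K) \cong \Z$, $H_1(E_K) \cong \Z\langle\mu_K\rangle$ and $H_i(E_K) = 0$ for $i \geq 2$ (e.g.\ by Alexander duality, noting $E_K$ is a compact $3$-manifold with nonempty boundary), and $H_*(T)$ is that of the $2$-torus. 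Beyond these, the only facts needed are: (a) by the defining property of $d_K$ in Construction~\ref{cons:PK}, the gluing carries $\mu_K$ to the $S^1$-fibre $\mu$ and the Seifert longitude $\lambda_K$ to $[\partial\Sigma]$; (b) by Proposition~\ref{prop:HomologyP} the map $H_1(\partial P) \to H_1(P)$ sends $[\partial\Sigma]$ to $0$ and $\mu$ to $\mu$; (c) $\lambda_K$ is nullhomologous in $E_K$; and (d) for any compact oriented $3$-manifold $M$ the class $[\partial M]$ vanishes in $H_2(M)$, applied to $M = E_K$ and $M = P$.

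Combining (a)--(d), the homomorphism $H_1(T) \to H_1(E_K) \oplus H_1(P)$ sends $\mu_K \mapsto (\mu_K, \mu)$ and $\lambda_K \mapsto (0,0)$, so its image is the infinite cyclic subgroup generated by $(\mu_K,\mu)$ and its kernel is $\Z\langle\lambda_K\rangle$; and the homomorphism $H_2(T) \to H_2(E_K) \oplus H_2(P)$ is zero. Feeding these into the Mayer--Vietoris sequence, and using that $H_0(T) \to H_0(E_K)\oplus H_0(P)$ is injective, the relevant segment collapses to
$$0 \to \Z \xrightarrow{j} H_1(E_K) \oplus H_1(P) \to H_1(P_K) \to 0, \qquad j(1) = (\mu_K,\mu).$$
Since the middle term is free abelian this both computes $H_1(P_K)$ and splits; eliminating $\mu$ via the relation $\mu = -\mu_K$ yields the stated isomorphism $H_1(P_K) \cong H_1(P)/\Z\langle\mu\rangle \oplus \Z\langle\mu_K\rangle$, which by Proposition~\ref{prop:HomologyP} is free on the meridian $\mu_K$, the genus loops and the plumbing loops; in particular $H_1(P_K) \cong \Z^{2g+c+1}$.

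For the remaining groups, $H_0(P_K) \cong H_3(P_K) \cong \Z$ since $P_K$ is a closed connected oriented $3$-manifold, and $H_i(P_K) = 0$ for $i \geq 4$. For $H_2$, the Mayer--Vietoris segment one step up reads $0 \to H_2(E_K) \oplus H_2(P) \to H_2(P_K) \to \ker\!\big(H_1(T) \to H_1(E_K)\oplus H_1(P)\big) \to 0$, i.e.\ $0 \to \Z^{2g+c} \to H_2(P_K) \to \Z \to 0$, whence $H_2(P_K) \cong \Z^{2g+c+1}$; alternatively, once $H_1(P_K)$ is known to be free of rank $2g+c+1$, Poincar\'e duality together with the universal coefficient theorem gives $H_2(P_K) \cong H^1(P_K) \cong \Z^{2g+c+1}$ directly.

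I do not anticipate any real obstacle: the argument is a routine Mayer--Vietoris computation. The only step requiring care is the bookkeeping of the maps out of $H_1(T)$ and $H_2(T)$, but every ingredient is already recorded in Proposition~\ref{prop:HomologyP} and in the definition of $d_K$ in Construction~\ref{cons:PK}, so this amounts to assembling the pieces.
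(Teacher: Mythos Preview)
Your proposal is correct and follows essentially the same approach as the paper: both use the Mayer--Vietoris sequence for $P_K = E_K \cup_\partial P$, identify the map $H_1(T) \to H_1(E_K)\oplus H_1(P)$ via the facts that $\lambda_K$ and $[\partial\Sigma]$ vanish, and deduce $H_2$ from $H_1$ via Poincar\'e duality and the universal coefficient theorem. Your alternative direct Mayer--Vietoris computation of $H_2$ is a small bonus not in the paper but is entirely in the same spirit.
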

\begin{proof}
The calculation of $H_0$ and $H_3$ follows from the fact that $P_K$ is a closed, connected~$3$-manifold.
The calculation of $H_2$ follows from that of $H_1$ together with Poincar\'e duality and the universal coefficient theorem.
The proposition therefore reduces to calculating~$H_1(P_K)$.

Consider the Mayer-Vietoris sequence for~$P_K=E_K \cup_\partial P$:
$$\ldots  \to H_1(S^1 \times S^1) \xrightarrow{f} H_1(P) \oplus H_1(E_K) \to H_1(P_K) \to 0.$$
Write $e_1,e_2$ for the generators of $H_1(S^1 \times S^1)=\Z^2$.
The definition of the $3$-manifold~$P_K$ implies that~$f(e_2)=(\mu_K,\mu)$ and~$f(e_1)=(\lambda_K,\partial \Sigma  \times \lbrace \operatorname{pt} \rbrace)$.
The Seifert longitude vanishes in $H_1(E_K)$ as does $[\partial \Sigma]$ in $H_1(P)$ as mentioned in Proposition~\ref{prop:HomologyP}.
The conclusion follows.
\end{proof}

Next we describe the homology of $\Z$-surface exteriors.

\begin{proposition}
\label{prop:HomologyZExterior}
The homology of the exterior of a genus~$g$ immersed $\Z$-surface~$S \subset N$ with~$c$ double points~is
$$
H_i(N_S)\cong
\begin{cases}
\Z &\quad \text{ for } i=0,1, \\
\Z^{2g+c+b_2(N)} &\quad \text{ for } i=2,\\
0 &\quad \text{ otherwise.}
\end{cases}
$$
Furthermore, 
\begin{itemize}
\item the group~$H_1(N_S)$ is generated by a meridian~$\mu_S$ of~$S$;
\item the map~$\pi_1(\partial N_S) \to \pi_1(N_S)$ is surjective;
\item the intersection form of $N_S$ is isometric to~$Q_N \oplus (0)^{2g+c}$.
\end{itemize}
The same assertions hold for closed immersed $\Z$-surfaces in closed $4$-manifolds.
\end{proposition}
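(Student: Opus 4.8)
The plan is to compute $H_*(N_S)$ by a Mayer--Vietoris argument using the decomposition $N = N_S \cup_{\partial} \overline{\nu}(S)$, where $\overline{\nu}(S) \cong W$ by the discussion following Construction~\ref{cons:W}, so that $N_S \cap \overline{\nu}(S) = \partial \overline{\nu}(S) \cong \overline{\nu}(K) \cup_\partial P$ in the boundary case (and $\partial \overline{\nu}(S) \cong P_U$ in the closed case). Since $S$ is a $\Z$-surface, we know $\pi_1(N_S) \cong \Z$ by definition, which immediately gives $H_0(N_S) \cong H_1(N_S) \cong \Z$ with $H_1$ generated by a meridian $\mu_S$ of $S$ (the meridian normally generates $H_1$ of a surface complement, and maps onto $\pi_1(N_S)^{\mathrm{ab}} \cong \Z$). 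The surjectivity of $\pi_1(\partial N_S) \to \pi_1(N_S)$ is then automatic: a meridian of $S$ lies on $\partial N_S$ and already normally generates (in fact generates) $\pi_1(N_S) \cong \Z$.

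For the remaining groups I would first pin down $H_2(\partial N_S)$: in the boundary case $\partial N_S \cong P_K$, so Proposition~\ref{prop:HomologyPK} gives $H_2(P_K) \cong \Z^{2g+c+1}$ and $H_1(P_K) \cong \Z^{2g+c+1}$; in the closed case $\partial N_S \cong P_U$, and $P_U$ is the closed manifold $P_{K,g}$ with $K = U$, giving the same ranks (now with $E_U = $ solid torus, consistent with Remark~\ref{rem:ClosedModels}). Feeding this, together with $H_*(W)$ from Proposition~\ref{prop:HomologyW} ($H_1(W) \cong \Z^{2g+c}$, $H_i(W) = 0$ for $i \geq 2$) and $H_*(N)$ (simply-connected, so $H_1(N) = 0$, $H_2(N) = \Z^{b_2(N)}$, $H_i(N) = 0$ for $i \geq 3$), into the Mayer--Vietoris sequence
$$\cdots \to H_i(\partial N_S) \to H_i(N_S) \oplus H_i(\overline{\nu}(S)) \to H_i(N) \to H_{i-1}(\partial N_S) \to \cdots$$
should determine $H_i(N_S)$ for $i \geq 2$. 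The key computation is in degrees $1$ through $3$: one needs to control the map $H_1(\partial N_S) \to H_1(N_S) \oplus H_1(\overline{\nu}(S))$, and in particular identify its kernel/cokernel, which is where one uses that the meridian of $S$ generates $H_1(N_S)$ and that $H_1(\partial \overline{\nu}(S)) \to H_1(W)$ is the surjection $H_1(P)/\Z\mu \xrightarrow{\cong} H_1(W)$ from Proposition~\ref{prop:HomologyW} (so the $P$-side map is essentially an isomorphism onto $H_1(W)$ after killing the fibre). A rank count then forces $H_2(N_S) \cong \Z^{2g+c+b_2(N)}$ and $H_i(N_S) = 0$ for $i \geq 3$; Poincar\'e--Lefschetz duality and universal coefficients are available as a consistency check or, in the closed case, to run the argument in cohomology if the boundary maps are cleaner there.

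Finally, for the intersection form: $H_2(N_S)$ is free (being the top nonzero homology of a $4$-manifold with connected boundary, or by the universal coefficient computation), so its intersection form is a well-defined symmetric bilinear form of rank $2g+c+b_2(N)$. The class $\mu_S$ bounds in $N$ but its dual $2$-sphere/surface classes coming from the $2g+c$ ``capping'' pieces (the genus tori of $S$ and the double-point spheres) have self-intersection $0$ and pair trivially with everything, giving a $(0)^{\oplus 2g+c}$ summand; the complementary $\Z^{b_2(N)}$ maps isomorphically to $H_2(N)$ under inclusion preserving the intersection form (since $H_2(\overline{\nu}(S)) = 0$ and the gluing region contributes nothing to intersections of classes pushed off the surface), yielding $Q_{N_S} \cong Q_N \oplus (0)^{\oplus 2g+c}$. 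The main obstacle I anticipate is the bookkeeping in the Mayer--Vietoris boundary maps in low degrees --- specifically verifying that the long exact sequence splits as claimed and that no unexpected torsion or extra free rank appears in $H_2(N_S)$ --- together with making the identification of the $(0)^{\oplus 2g+c}$ summand of the intersection form precise rather than merely rank-theoretic; the latter is cleanest if one exhibits explicit geometric representatives (the double-point linking spheres and the genus surfaces of $S$, pushed into $N_S$) and computes their intersections directly.
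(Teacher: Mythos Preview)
Your proposal is correct and follows essentially the same Mayer--Vietoris route as the paper (decompose $N = N_S \cup_P W$ and feed in the homology of $P$, $W$, and $N$). The one refinement worth adopting from the paper is the intersection form step: the Mayer--Vietoris sequence yields a split short exact sequence $0 \to H_2(P) \to H_2(N_S) \to H_2(N) \to 0$, and $Q_{N_S}$ vanishes on the image of $H_2(P) \cong \Z^{2g+c}$ simply because $P \subset \partial N_S$, which is cleaner than producing explicit geometric representatives.
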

\begin{proof}
Use~$S^\circ$ to denote the surface obtained by removing small open $4$-balls~$B_1,\ldots,B_c$ in $N$ around the immersion points.
Note that~$S^\circ$ has~$2c+1$ boundary components and is embedded in~$N^\circ:=N \setminus \bigcup_{i=1}^c B_i$.
For~$i>0$, the Mayer-Vietoris sequence for~$N^\circ=N_S \cup \overline{\nu}(S^\circ)$ now yields
\begin{equation}
\label{eq:MVExterior}
H_2(N^\circ) \to  \overbrace{H_1(S^\circ \times S^1)}^{\cong \Z^{2g+2c+1}} \to \overbrace{H_1(N_S)}^{\cong \Z} \oplus \overbrace{H_1(\overline{\nu}(S^\circ))}^{\cong \Z^{2g+2c}} \to \overbrace{H_1(N^\circ)}^{=0}.
\end{equation}
The middle map is an isomorphism because it is a surjection between free abelian groups of the same~rank.
Since the~$H_1(S^\circ)$-summand of $H_1(S^\circ \times S^1) \cong \Z\mu_S \oplus  H_1(S^\circ)$ maps isomorphically onto~$H_1(\overline{\nu}(S^\circ))$, it follows that
$$H_1(N_S)\cong \Z \mu_S .$$
This concludes the proof of the first assertion.
The second assertion follows from 
Proposition~\ref{prop:HomologyPK} which shows that $\mu_S$ lies in the image of~$H_1(P_K) \cong H_1(\partial N_S) \to H_1(N_S)\cong \pi_1(N_S)$.
This also proves that $H_1(N_S,\partial N_S)=0$ and therefore
$$ H_3(N_S) \cong H^1(N_S,\partial N_S)=0.$$
For the remaining statement, the Mayer-Vietoris sequence for $N=N_S \cup_P \overline{\nu}(S)$ provides a convenient route.
Identify $\overline{\nu}(S)$ with $W$ and $\partial \overline{\nu}(S) \setminus (\partial N \setminus \nu(K))$ with $P$ (see Proposition~\ref{prop:IdentifyW} for details on how to do so),  and consider the following portion of the Mayer-Vietoris sequence
\begin{equation}
\label{eq:MVExterior}
 H_2(N) \to \overbrace{H_1(P)}^{\cong \Z^{2g+c+1}} \xrightarrow{f} \overbrace{H_1(N_S)}^{\cong \Z} \oplus \overbrace{H_1(W)}^{\cong \Z^{2g+c}} \to \overbrace{H_1(N)}^{=0}. 
 \end{equation}
Here,  $f$ is an isomorphism because it is a surjection between free abelian groups of the same~rank.

Since $N$ is simply-connected and has boundary $S^3$, we have~$H_3(N)=0$.
Moving further to the left in the Mayer-Vietoris sequence for $N=N_S \cup_P W$ now yields
$$ 0 \to \overbrace{H_2(P)}^{\cong \Z^{2g+c}} \to H_2(N_S) \to  \overbrace{H_2(N)}^{\cong \Z^{b_2(N)}} \to 0.$$
The rightmost zero is due to the fact that the map $f$ in~\eqref{eq:MVExterior} is an isomorphism.
Since $N$ is simply-connected, $H_2(N)$ is free abelian,  and so the short exact sequence involving $H_2(N_S)$ splits.
This yields the required isomorphism $H_2(N_S) \cong \Z^{2g+c+b_2(N)}$.

We conclude by calculating the intersection form $Q_{N_S}$ of $N_S$.
Since $P \subset \partial N_S$, the restriction of  $Q_{N_S}$ to $H_2(P)$ is trivial.
Since $H_2(P) \cong \Z^{2g+c}$,  the isometry~$Q_{N_S} \cong Q_N \oplus (0)^{2g+c}$ follows.
\color{black}

Finally, we note that the proofs in the closed case are entirely analogous.
\end{proof}

The reader may suspect that being a $\Z$-surface is an unnecessarily strong hypothesis for the previous proposition,  and indeed,  if $H_1(N)=0$,  it is enough to assume that $S$ is null-homologous.   
We have limited ourselves to the result we need, but record for later that $\Z$-surfaces are in fact nullhomologous.

\begin{proposition}
\label{prop:ZSurfaceAreNullhomologous}
Immersed $\Z$-surfaces are nullhomologous.
\end{proposition}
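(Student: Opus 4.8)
The goal is to show that if $S \subset N$ is an immersed $\Z$-surface (closed in a closed $N$, or with boundary a knot $K \subset \partial N \cong S^3$), then $[S] = 0$ in $H_2(N)$ (rel $\partial$ in the bounded case). The starting point is the defining property that $\pi_1(N_S) \cong \Z$, generated by a meridian $\mu_S$ of $S$, as established in Proposition~\ref{prop:HomologyZExterior}. The plan is to run the long exact sequence of the pair $(N, N_S)$ together with excision, identifying the relative groups with the homology of the tubular neighbourhood $\overline\nu(S) \cong W$ rel its boundary.

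First I would set up the excision isomorphism $H_*(N, N_S) \cong H_*(\overline\nu(S), \partial\overline\nu(S) \setminus (\partial N \setminus \nu(K)))$, i.e.\ $H_*(W, P)$ in the bounded case (and $H_*(W_U, P_U)$, or rather the closed-surface analogue, in the closed case). By Proposition~\ref{prop:HomologyW} and Lefschetz duality for the $4$-manifold $W$, which deformation retracts onto $\Sigma$, one computes $H_2(W, P) \cong H^2(W) \cong H_2(W) \oplus (\text{torsion from } H_1)$; since $H_1(W) \cong \Z^{2g+c}$ is free, $H^2(W) \cong \Z^{2g+c}$, so $H_2(W,P) \cong \Z^{2g+c}$ is free, and crucially $H_3(W,P) \cong H^1(W) \cong \Z^{2g+c}$ as well while $H_1(W,P) \cong H^3(W) = 0$. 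The key observation is that the map $H_2(W,P) \to H_1(P)$ in the long exact sequence of $(W,P)$ is injective: indeed from Proposition~\ref{prop:HomologyW}, $H_1(P) \to H_1(W)$ is surjective with kernel $\Z\mu$, and a diagram chase (or direct comparison of ranks, using $H_2(W) = 0$) shows the image of $H_2(W,P) \to H_1(P)$ is exactly the rank-$(2g+c)$ complement dual to the plumbing/genus generators, with the boundary map being an isomorphism onto $\ker(H_1(P) \to H_1(W)$ is wrong) — more carefully, since $H_2(W)=0$ the sequence gives $0 \to H_2(W,P) \to H_1(P) \to H_1(W) \to 0$, forcing $H_2(W,P) \cong \ker(H_1(P)\to H_1(W)) = \Z\mu$. (I'd recheck this rank bookkeeping; the point is that $H_2(W,P)$ injects into $H_1(P)$, landing in the span of the $S^1$-fibre.)

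Now consider the long exact sequence of $(N, N_S)$:
$$
H_3(N, N_S) \xrightarrow{\partial} H_2(N_S) \to H_2(N) \to H_2(N, N_S) \xrightarrow{\partial} H_1(N_S) \to H_1(N).
$$
The class $[S] \in H_2(N)$ maps to the image of the fundamental class $[W, \partial] \in H_2(W,P) \cong H_2(N, N_S)$ under the Thom/excision identification — this is essentially the statement that a tubular neighbourhood class restricts to the zero section. So it suffices to show $[W,\partial] \mapsto 0$ under $H_2(N,N_S) \to H_1(N_S)$, equivalently that this class lies in the image of $H_2(N) \to H_2(N,N_S)$, equivalently (by exactness, since $[W,\partial]$ generates or at least, its image in $H_1(N_S)$ is detected by linking number with $S$) that the meridian $\mu_S$ — which is the image of the $S^1$-fibre $\mu$ under $\partial\colon H_2(W,P) \to H_1(P) \to H_1(N_S)$ — is hit. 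But $\mu_S$ generates $H_1(N_S) \cong \Z$, so the composite $H_2(W,P) \to H_1(N_S)$ is surjective, hence (comparing with $H_1(N) = 0$ and $H_2(N)$ free) the relevant portion forces $[S]$ to lie in the image of...

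I expect the main obstacle to be getting this last sign/surjectivity argument exactly right: one must argue that the connecting map $H_2(N,N_S) \to H_1(N_S)$ sends the tubular neighbourhood class to $\pm\mu_S$ (a generator), so that by exactness with $H_1(N) = 0$ the map $H_2(N) \to H_2(N,N_S)$ is \emph{not} surjective and its cokernel is detected precisely by the linking coefficient, whence $[S]$, being the image of the zero section, is killed — the cleanest formulation is probably to observe that $[S] \in H_2(N)$ maps to $0$ in $H_2(N, N_S)$ because the composite $S \hookrightarrow \overline\nu(S) \hookrightarrow (\overline\nu(S), \partial)$ factors through the zero-section inclusion, and the fundamental class of $S$ dies in $H_2(W, P)$ for dimensional/duality reasons (the Thom class pairs $H_2(W,P)$ with the normal direction, not with $[\Sigma]$). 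Alternatively, and perhaps more robustly, one uses Poincaré–Lefschetz duality on $N$ directly: $[S] \in H_2(N)$ is Poincaré dual to a class in $H^2(N) \cong H_2(N,\partial N)$ whose restriction to $N_S$ must vanish since $H^2(N_S) \to H^2$ of a meridian disk is determined by $H_1(N_S)\cong\Z$ and the self-intersection data; I would pick whichever of these two routes the subsequent text of the paper makes cleanest, but structurally the $(N,N_S)$ exact sequence combined with Propositions~\ref{prop:HomologyW}, \ref{prop:HomologyZExterior} and Lefschetz duality for $W$ is the engine.
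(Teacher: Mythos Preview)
Your proposal has a genuine gap at the final step. You correctly (after some wavering) identify that $H_2(W,P) \cong \Z$, generated by a meridian disk, and that the connecting map $H_2(N,N_S) \cong H_2(W,P) \to H_1(N_S)$ sends this generator to $\mu_S$, hence is injective. By exactness this forces $H_2(N) \to H_2(N,N_S)$ to be the zero map. But from here you only conclude that $[S]$ lies in the image of $H_2(N_S) \to H_2(N)$; this does \emph{not} give $[S] = 0$. Your fallback claim that ``$[S]$ dies in $H_2(W,P)$ for dimensional/duality reasons'' is true but vacuous: \emph{every} class in $H_2(N)$ dies there, since the entire map is zero, so this places no constraint on $[S]$ in particular. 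The step you are missing is precisely the one that uses simple connectivity of $N$: since $H_2(N_S) \to H_2(N)$ is surjective, every $x \in H_2(N)$ is represented by a cycle disjoint from $S$, whence $Q_N([S,\partial S], x) = 0$ for all $x$; nondegeneracy of $Q_N$ then forces $[S,\partial S] = 0$.

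The paper's proof takes this last intersection-form step as its \emph{starting} point and avoids the excision bookkeeping altogether. It argues: by nondegeneracy of $Q_N$ it suffices to show $Q_N([S,\partial S], x) = 0$ for every $x \in H_2(N)$. Represent $x$ by a closed surface $R$ meeting $S$ transversely in signed points $p_1,\ldots,p_n$; then $R \cap N_S$ is a surface in $N_S$ with boundary homologous to $\sum_i \varepsilon(p_i)\mu_S$. Since $H_1(N_S) \cong \Z\mu_S$ is torsion-free, $\sum_i \varepsilon(p_i) = 0$, which is exactly $Q_N([S,\partial S], x)$. Your long exact sequence is a homological repackaging of the same mechanism (meridian has infinite order in $H_1(N_S)$), but the argument is incomplete without the nondegeneracy step, which you never invoke.
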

\begin{proof}
The proof is identical to~\cite[Lemma 5.1]{ConwayPowell} (which treats embeddings) but we recall the argument briefly.
We focus on the case with nonempty boundary as the closed case is similar.
We must show that~$[S,\partial S] =0 \in H_2(N,\partial N)$.
The intersection form~$Q_N$ pairs~$H_2(N,\partial N)$ with~$H_2(N)$ nonsingularly because~$N$ is simply-connected.
Thus, it is sufficient to show that~$Q_N([S,\partial S],x)=0$ for every class~$x \in H_2(N)$.
Represent such an~$x \in H_2(N)$ by a closed surface~$R \subseteq N$ that intersects~$S$ transversely in points~$p_1,\ldots,p_n$, so that~$Q_N([S,\partial S],x)=\sum_{i=1}^n \varepsilon(p_i)$, where~$\varepsilon(p_i)=\pm 1$.
Now the intersection~$R \cap N_S$ is an embedded surface in~$N_S$ with oriented boundary (homologous to)~$\sum_{i=1}^n \varepsilon(p_i)\mu_S$.
This implies that~$\sum_{i=1}^n \varepsilon(p_i)\mu_S =0\in H_1(N_S)$.
But now, since as calculated in Proposition~\ref{prop:HomologyZExterior} we have that~$H_1(N_S)=\pi_1(N_S)=\Z\mu_S$ is torsion-free, we deduce that~$Q_N([S,\partial S],x)=\sum_{i=1}^n \varepsilon(p_i)=0$, establishing that~$S$ is nullhomologous.
\end{proof}

\subsection{Coefficient systems}
\label{sub:CoefficientSystems}

This section describes a surjective homomorphism $\varphi \colon \pi_1(P_K) \to \Z$.
Throughout the remainder of the paper,  $P_K^\infty$ will refer to the infinite cyclic cover corresponding to $\ker(\varphi)$.
Similarly,  for $X \in \{P,P_K,E_K\}$, the homology groups~$H_*(X;\Z[t^{\pm 1}]) \cong H_*(X^\infty)$ will always be understood to be taken with respect to $\varphi.$

\begin{construction}[The epimorphism $\varphi \colon \pi_1(P_K) \to \Z$]
\label{cons:CoeffSystPK}
Recall from Proposition~\ref{prop:HomologyPK} that $H_1(P_K)$ is freely generated by the meridian of $K$, the genus loops and the plumbing loops.
Define an epimorphism $\varphi \colon \pi_1(P_K) \to \Z$ by composing the abelianization map with the map $H_1(P_K) \to \Z$ that sends the meridian to $1 \in \Z$, the genus loops to $0 \in \Z$, and the plumbing loops to $0 \in \Z$.
\end{construction}

\begin{remark}
\label{rem:CoeffSystP}
Composing $\varphi \colon \pi_1(P_K) \to \Z$ with the inclusion induced maps gives rise to epimorphisms $\pi_1(P) \to \Z$ and $\pi_1(E_K)\to \Z$.
The former is characterized by the property that it maps the $S^1$-fibre to $1 \in \Z$, the genus loops to $0 \in \Z$, and the plumbing loops to $0 \in \Z$.
The latter is the usual abelianization homomorphism that maps the meridian of $K$ to $1\in \Z$.
\end{remark}

\begin{remark}
\label{rem:Lifts}
Since the composition $\pi_1(\intt \Sigma^\circ \times \{ 1\}) \to \pi_1(P) \xrightarrow{\varphi} \Z$ is trivial,  the surface $\intt \Sigma^\circ \cong \intt \Sigma^\circ  \times \{ 1\}$ lifts to~$P^\infty$.
Therefore,   when we refer to the lift of a loop $\eta \subset \intt \Sigma^\circ $ to~$P^\infty$,  we will always mean the unique lift of $\eta$ that lies within a preferred lift of $\intt \Sigma^\circ \times\{1\}$ within $P^{\infty}$.  
When we refer to the lift of a plumbing loop $\omega$ to $P^\infty$,  we will always mean the unique lift of $\omega$ such that the interior of the arc in $\Sigma^\circ$ that gives rise to $\omega$ lies within our preferred lift of $\intt \Sigma^\circ \times\{1\}$.  
\end{remark}

\section{Decomposing the Blanchfield form of~$P_K$ and its isometries}
\label{sec:IsoBlPk}
The organization of this section is as follows: in Section~\ref{sub:BlanchfieldIntro}, we collect some facts about Blanchfield forms, and in Section~\ref{sub:HomologyPKZZ}, we establish that~$H_1(P_K;\Z[t^{\pm 1}]) \cong H_1(P;\Z[t^{\pm 1}]) \oplus H_1(E_K;\Z[t^{\pm 1}])$.  
In Section~\ref{sub:BlanchfieldPK}, we prove that the Blanchfield form of $P_K$ decomposes as the direct sum of the Blanchfield forms of $P$ and of $E_K$, i.e.~$\Bl_{P_K}\cong \Bl_P \oplus \Bl_K$.
Finally,  in Section~\ref{sub:IsometriesBlanchfieldPK}, we prove that every isometry of $\Bl_{P_K}$ decomposes as the direct sum of an isometry of $\Bl_P$ with an isometry of~$\Bl_K$, i.e. $\Aut(\Bl_{P_K}) \cong \Aut(\Bl_P) \oplus \Aut(\Bl_K).$

\subsection{Some facts about Blanchfield forms}
\label{sub:BlanchfieldIntro}

This section sets up some notation and facts concerning twisted homology and Blanchfield forms.
This is not aimed to be an expansive treatment of the topic, and we refer to e.g.~\cite{FriedlPowell} for further details.

\medbreak
We start with twisted homology.
In what follows,  spaces are assumed to have the homotopy type of a finite CW complex.
Given a space~$X$ together with an epimorphism~$\varphi \colon \pi_1(X) \twoheadrightarrow \Z$, we write~$p\colon X^\infty \to X$ for the infinite cyclic cover corresponding to~$\ker(\varphi)$.
If~$A \subset X$ is a subspace, then we set~$A^\infty :=p^{-1}(A)$ and often write~$H_*(X,A;\Z[t^{\pm 1}])$ instead of~$H_*(X^\infty,A^\infty)$.
Note that these are finitely generated~$\Z[t^{\pm 1}]$-modules because $X$ and $A$ have the homotopy type of finite~CW-complexes and $\Z[t^{\pm 1}]$ is Noetherian, see e.g.~\cite[Proposition A.9]{FriedlNagelOrsonPowell}.
In the sequel, we will also write~$H^*(X,A;\Z[t^{\pm 1}])$ for the homology of $\Hom_{\Z[t^{\pm 1}]}(\overline{C_*(X^\infty,A^\infty)},\Z[t^{\pm 1}])$. 
Here,  given a~$\Z[t^{\pm 1}]$-module $V$,  we write $\overline{V}$ for the $\Z[t^{\pm 1}]$-module whose underlying group agrees with that of~$V$ but with the $\Z[t^{\pm 1}]$-module structure induced by $t \cdot v=t^{-1}v$ for $v \in V$.

We move on to Blanchfield forms.
Given a $3$-manifold $Y$ and an epimorphism $\varphi \colon \pi_1(Y) \twoheadrightarrow \Z$ such that the Alexander module~$H_1(Y;\Z[t^{\pm 1}])$ is torsion, the \emph{Blanchfield form} is a pairing
$$
\Bl_Y \colon H_1(Y;\Z[t^{\pm 1}]) \times H_1(Y;\Z[t^{\pm 1}]) \to \Q(t)/\Z[t^{\pm 1}].$$
We will not need the definition of $\Bl_Y$, only its properties, and so we focus on the latter.
First of all, the Blanchfield form is sesquilinear and hermitian; see e.g.~\cite{PowellBlanchfield}.
Additionally,  if $Y$ is closed, then some homological algebra shows that~$\Bl_Y$ is nonsingular.

Next we introduce the algebra that relates the equivariant intersection form of a $4$-manifold with $\pi_1\cong \Z$ to the Blanchfield form of its boundary.
A lengthier exposition can be found in~\cite[Sections 2 and 3]{ConwayPowell}.

\begin{construction}[The boundary linking form $\partial \lambda$]
\label{cons:BoundaryLinkingForm}
If~$\lambda \colon H \times H \to \Z[t^{\pm 1}]$ is a nondegenerate hermitian form on a finitely generated free $\Z[t^{\pm 1}]$-module,  then we write~$\widehat{\lambda} \colon H \to H^*$ for the linear map~$z \mapsto \lambda(-,z)$, and
there is a short exact sequence
$$ 0 \to H \xrightarrow{\widehat{\lambda}} H^* \xrightarrow{} \coker(\widehat{\lambda}) \to 0.$$
Such a presentation induces a \emph{boundary linking form}~$\partial \lambda$ on~$\coker(\widehat{\lambda})$ in the following manner.
For~$[x] \in \coker(\widehat{\lambda})$ with~$x \in H^*$, since $\coker(\widehat{\lambda})$ is $\Z[t^{\pm 1}]$-torsion,  there exist elements~$z\in H$ and~$p\in\Z[t^{\pm 1}] \sm \{0\}$ such that~$\lambda(-,z)=px\in H^*$.
For~$[x],[y]\in \coker(\widehat{\lambda})$ with~$x,y\in H^*$, we define
$$\partial\lambda([x],[y]):=\frac{y(z)}{p}\in\Q(t)/\Z[t^{\pm 1}].$$
One can check that~$\partial \lambda$ is independent of the choices of~$p$ and $z$.
Furthermore, as noted in~\cite[Remark 2.4]{ConwayPowell}, if the hermitian form~$\lambda$ is represented by a matrix $A$, then~$(\coker(\widehat{\lambda}),\partial \lambda) = (\coker(A^T),\ell_A)$, where~$\ell_A([x],[y])=x^TA^{-1}y.$
\end{construction}

The next definition introduces terminology associated to the notion of a boundary linking form.

\begin{definition}
\label{def:presentation}
For~$T$ a torsion~$\Z[t^{\pm 1}]$-module with a linking form~$\ell \colon T \times T \to \Q(t)/\Z[t^{\pm 1}]$,  a nondegenerate hermitian form~$\lambda$ \textit{presents}~$(T,\ell)$ if there is an isomorphism~$h\colon\coker(\widehat{\lambda})\to T$ such that~$\partial\lambda(x,y)=\unaryminus \ell(h(x),h(y))$.
Such an isomorphism~$h$ is called an \emph{isometry} of the forms,  and the set of isometries from $\partial \lambda$ to $-\ell$ is denoted~$\Iso(\partial\lambda,\unaryminus \ell)$.
\end{definition}

If a nondegenerate hermitian form~$\lambda$ presents $(T,\ell)$ and is represented by a matrix $A$, then it is common to say that~$A$ itself \emph{presents} $(T,\ell)$.
We warn the reader that in this notation,  if $A$ presents~$(T,\ell)$, then~$(T,\ell) \cong (\coker(A^T),\unaryminus \ell_A)$,  so when considered just as a module $T$ is presented by~$A^T$,  not by~$A$.

\begin{remark}
\label{rem:MinusPresentation}
It is also common to say that a nondegenerate hermitian form $\lambda$ presents a linking form $\ell$ if $\partial \lambda \cong \ell$ instead of our convention that~$\partial \lambda \cong \unaryminus \ell$.
The reason for our choice is that several of our statements involve a presentation for~$\unaryminus \Bl_{P_K}$, and so we find it convenient to make this part of the definition.
This is also consistent with the conventions in~\cite{ConwayPiccirilloPowell,ConwayDaiMiller}.
\end{remark}

The next proposition relates the equivariant intersection form of a $4$-manifold with $\pi_1\cong \Z$ to the Blanchfield form of its boundary.
The result is known, see e.g.~\cite[Proposition 3.5]{ConwayPowell}.

\begin{proposition}
\label{prop:PresentsBlanchfield}
Let~$W$ be a~$4$-manifold with~$\pi_1(W) \cong \Z$.
If the inclusion induced homomorphism~$\varphi \colon \pi_1(\partial W) \to \pi_1(W)$ is surjective and~$H_1(\partial W;\Z[t^{\pm 1}])$ is torsion, then there is an isometry
$$D_W \colon \partial \lambda_W \cong \unaryminus \Bl_{\partial W},$$
where~$\lambda_W$ denotes the equivariant intersection form of~$W$ and~$\Bl_{\partial W}$ the Blanchfield form of~$(\partial W,\varphi)$.
\end{proposition}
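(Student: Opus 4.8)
The plan is to use Poincaré--Lefschetz duality for the infinite cyclic cover together with the long exact sequence of the pair $(W,\partial W)$, and then identify the resulting boundary map with the linking form construction of Construction~\ref{cons:BoundaryLinkingForm}. First I would record that since $\pi_1(W)\cong\Z$ and $\varphi\colon\pi_1(\partial W)\to\pi_1(W)$ is surjective, the infinite cyclic covers $W^\infty$ and $(\partial W)^\infty$ are compatible, and the hypothesis that $H_1(\partial W;\Z[t^{\pm1}])$ is torsion forces (via the half-lives-half-dies philosophy, or directly via duality) the relevant cohomology groups of $W^\infty$ to be torsion as well. Concretely, $H_2(W;\Z[t^{\pm1}])$ is free (it is the module on which $\lambda_W$ lives), $H_1(W;\Z[t^{\pm1}])$ vanishes because $\pi_1(W)\cong\Z$ acts trivially on the universal cover's $\Z[t^{\pm1}]$-homology in degree $1$, i.e. $H_1(W;\Z[t^{\pm1}])=0$, and $H_3(W,\partial W;\Z[t^{\pm1}])\cong H^1(W;\Z[t^{\pm1}])$ which one checks is also zero.

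Next I would write down the portion of the long exact sequence of $(W,\partial W)$ with $\Z[t^{\pm1}]$-coefficients:
$$
H_2(W;\Z[t^{\pm1}]) \to H_2(W,\partial W;\Z[t^{\pm1}]) \xrightarrow{\ \partial\ } H_1(\partial W;\Z[t^{\pm1}]) \to H_1(W;\Z[t^{\pm1}])=0,
$$
and identify, via Poincaré--Lefschetz duality and the universal coefficient spectral sequence, $H_2(W,\partial W;\Z[t^{\pm1}])\cong H^2(W;\Z[t^{\pm1}])\cong \overline{\Hom_{\Z[t^{\pm1}]}(H_2(W;\Z[t^{\pm1}]),\Z[t^{\pm1}])}=H^*$, where $H=H_2(W;\Z[t^{\pm1}])$; the Ext-term contributing to $H^2$ comes from $H_1(W;\Z[t^{\pm1}])=0$ and so vanishes. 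Under these identifications, the map $H_2(W;\Z[t^{\pm1}])\to H_2(W,\partial W;\Z[t^{\pm1}])$ becomes precisely the adjoint $\widehat{\lambda_W}\colon H\to H^*$ of the equivariant intersection form — this is the standard fact that the composite of the intersection pairing with Lefschetz duality is $\widehat{\lambda}$. Hence the exact sequence exhibits $H_1(\partial W;\Z[t^{\pm1}])\cong\coker(\widehat{\lambda_W})$, giving the isomorphism $D_W$ of modules underlying the claimed isometry.

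Then I would verify that $D_W$ carries $\partial\lambda_W$ to $\unaryminus\Bl_{\partial W}$. This is the step where I expect the real work — and the sign bookkeeping — to live: one has to trace an element $[x]\in\coker(\widehat{\lambda_W})$, lift it to $x\in H^2(W;\Z[t^{\pm1}])$, use torsion-ness to find $z\in H$ and $p\in\Z[t^{\pm1}]\setminus\{0\}$ with $\widehat{\lambda_W}(z)=px$, and then show that the Blanchfield pairing $\Bl_{\partial W}(\partial x,\partial y)$, computed via its definition through the Bockstein associated to $0\to\Z[t^{\pm1}]\to\Q(t)\to\Q(t)/\Z[t^{\pm1}]\to0$ on $\partial W$, equals $-y(z)/p$, matching the formula for $\partial\lambda_W$ in Construction~\ref{cons:BoundaryLinkingForm}. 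The key diagram is the commutative ladder relating the long exact sequence of $(W,\partial W)$ in $\Z[t^{\pm1}]$-coefficients to the corresponding sequence in $\Q(t)$- and $\Q(t)/\Z[t^{\pm1}]$-coefficients, together with naturality of Poincaré--Lefschetz duality under restriction to the boundary; the sign $\unaryminus$ is exactly the discrepancy between the duality isomorphism for $W$ and the one for $\partial W$ (i.e.\ the connecting-homomorphism sign). Since this is precisely~\cite[Proposition 3.5]{ConwayPowell}, I would at this point simply cite that reference for the compatibility of pairings, having set up the module-level isomorphism above, rather than reproduce the diagram chase in full.

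The main obstacle is thus not the existence of $D_W$ — that is a formal consequence of duality and the vanishing $H_1(W;\Z[t^{\pm1}])=0$ — but the verification that it intertwines the two pairings with the correct sign; fortunately this is exactly the content of the cited proposition, so in the write-up the proof can be kept short by invoking~\cite{ConwayPowell} once the hypotheses ($\pi_1(W)\cong\Z$, $\varphi$ surjective, $H_1(\partial W;\Z[t^{\pm1}])$ torsion) have been checked to match.
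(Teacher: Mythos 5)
Your proposal is correct and matches the paper, which gives no argument of its own beyond citing \cite[Proposition 3.5]{ConwayPowell} — exactly the reference you invoke for the pairing compatibility. Your preliminary sketch (duality plus the long exact sequence of $(W,\partial W)$, with $H_1(W;\Z[t^{\pm 1}])=0$ since the infinite cyclic cover is the universal cover) is the standard argument underlying that cited result.
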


We conclude with a fact that be useful to state our classifications of immersed~$\Z$-surfaces.
\begin{remark}
\label{rem:InducedIsometry}
Let $(H_0,\lambda_0)$ and $(H_1,\lambda_1)$ be hermitian forms over $\Z[t^{\pm 1}].$
A $\Z[t^{\pm 1}]$-linear isomorphism~$\varpi \colon H_0 \to H_1$ induces an isomorphism~$(\varpi^*)^{-1} \colon H_0^* \to H_1^*$.
If additionally, the isomorphism~$\varpi$ is an isometry, then~$(\varpi^*)^{-1}$ descends to an isomorphism
$$\partial \varpi:=(\varpi^*)^{-1} \colon \coker(\widehat{\lambda}_0)\to \coker(\widehat{\lambda}_1).$$
One can then verify that~$\partial \varpi$ is an isometry of the boundary linking forms.
\end{remark}

\subsection{The~$\Z[t^{\pm 1}]$-homology of~$P_K$}
\label{sub:HomologyPKZZ}

The goal of this section is to describe the~$\Z[t^{\pm 1}]$-homology of the~$3$-manifold~$P$ and to prove that the inclusions~$E_K \subset P_K$ and~$P \subset P$ induce a~$\Z[t^{\pm 1}]$-isomorphism~$H_1(P;\Z[t^{\pm 1}])\oplus H_1(E_K;\Z[t^{\pm 1}])\xrightarrow{\cong} H_1(P_K;\Z[t^{\pm 1}]).$

\begin{notation}
In what follows,  we write $\Z_\varepsilon$ to denote the $\Z[t^{\pm 1}]$-module with underlying group~$\Z$ and where the module structure is obtained by linearly extending the trivial $\Z$-action~$t \cdot x=x$.
Note that the augmentation $\Z[t^{\pm 1}] \to \Z$ induces an isomorphism $\Z[t^{\pm 1}]/(t-1) \cong \Z_\varepsilon$.
\end{notation}

We begin with the calculation of~$H_1(P;\Z[t^{\pm 1}]).$

\begin{proposition}
\label{prop:HomologyPZZ}
The~$\Z[t^{\pm 1}]$-homology of~$P$ is given by
$$
H_i(P;\Z[t^{\pm 1}])=
\begin{cases}
\Z_\varepsilon &\quad \text{ for } i=0, \\
\Z_\varepsilon^{2g} \oplus \left( \Z[t^{\pm 1}]/(t-1)^2\right)^{\oplus c} 
&\quad \text{ for } i=1,\\
0 &\quad \text{ otherwise.}
\end{cases}
$$
The first summand of~$H_1(P;\Z[t^{\pm 1}])$ is generated by lifts of the genus loops,  the second by lifts of the plumbing loops.

Additionally, the inclusion induced map~$H_1(\partial P;\Z[t^{\pm 1}]) \to H_1(P;\Z[t^{\pm 1}])$ is the zero map.
\end{proposition}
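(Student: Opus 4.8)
The plan is to compute $H_*(P;\Z[t^{\pm 1}])$ directly from the quotient description $P=(\Sigma^\circ\times S^1)/\sim_P$ of Remark~\ref{rem:PasaQuotient}, using a Mayer–Vietoris argument with the infinite cyclic cover. First I would note that, by Remark~\ref{rem:CoeffSystP}, the restriction $\varphi|_{\pi_1(\Sigma^\circ\times S^1)}$ sends the $S^1$-fibre to $1$ and everything in $\pi_1(\Sigma^\circ)$ to $0$; hence the infinite cyclic cover of $\Sigma^\circ\times S^1$ is $\Sigma^\circ\times\R$, which is homotopy equivalent to $\Sigma^\circ$. Consequently $H_*(\Sigma^\circ\times S^1;\Z[t^{\pm1}])$ is $H_*(\Sigma^\circ)$ with trivial $t$-action, i.e.\ $\Z_\varepsilon$ in degree $0$ and $\Z_\varepsilon^{2g+2c}$ in degree $1$ (recall $\Sigma^\circ=\Sigma_{g,1}\sm\bigsqcup_{k=1}^{2c}\intt B_k$ is homotopy equivalent to a wedge of $2g+2c$ circles). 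Each of the $c$ gluing tori $\partial B_{2i-1}\times S^1$ is glued to $\partial B_{2i}\times S^1$ by $\sim_P$; in the infinite cyclic cover each such torus has cover $S^1\times\R\simeq S^1$, with $H_*$ equal to $\Z_\varepsilon$ in degrees $0,1$ (the $S^1$ factor here is the $\partial B_k$ circle, on which $\varphi$ vanishes). So the plan is: take the open cover of $P$ given by a neighborhood of $\Sigma^\circ\times S^1$ and a neighborhood of the $c$ glued tori, and run Mayer–Vietoris.

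The key computation is the connecting map. In the infinite cyclic cover, the relevant Mayer–Vietoris sequence reads, schematically,
$$
0\to \bigoplus_{i=1}^c H_1\big((T^2)_i;\Z[t^{\pm1}]\big)\xrightarrow{\ \partial\ } H_1(\Sigma^\circ\times S^1;\Z[t^{\pm1}])\to H_1(P;\Z[t^{\pm1}])\to \bigoplus_{i=1}^c H_0\big((T^2)_i;\Z[t^{\pm1}]\big)\xrightarrow{\ \delta\ }H_0(\Sigma^\circ\times S^1;\Z[t^{\pm1}]),
$$
where I have already used that $H_2(\Sigma^\circ\times S^1;\Z[t^{\pm1}])=0$ (as $\Sigma^\circ$ is homotopy equivalent to a $1$-complex) to get injectivity on the left, and $H_1(\text{glued tori};\Z[t^{\pm1}])\cong\bigoplus\Z_\varepsilon$. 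The class of the $\partial B_k$-circle in $H_1(\Sigma^\circ\times S^1;\Z[t^{\pm1}])=\Z_\varepsilon^{2g+2c}$ is (a lift of) the boundary class $[\partial B_k]$; the difference of the two boundary circles being identified, carried through the identification $\eta^\pm$ of Remark~\ref{rem:PasaQuotient}, contributes $[\partial B_{2i-1}]-[\partial B_{2i}]$ to the image of $\partial$. Thus $\partial$ is injective with image the rank-$c$ summand spanned by the differences $[\partial B_{2i-1}]-[\partial B_{2i}]$, and $\delta$ is injective as well (each glued torus is connected to the rest), so the rightmost term contributes nothing. Hence $H_1(P;\Z[t^{\pm1}])\cong \Z_\varepsilon^{2g+2c}/\langle [\partial B_{2i-1}]-[\partial B_{2i}]\rangle_{i=1}^c$. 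The final step is to recognize this quotient: the genus curves $a_i,b_i$ contribute a free $\Z_\varepsilon^{2g}$ summand, and for each $i$ the two classes $[\partial B_{2i-1}],[\partial B_{2i}]$ together with the plumbing arc $\omega_i$ (whose two endpoints $z_{2i-1},z_{2i}$ get identified by $\sim_P$) assemble, in the cover, into a copy of $\Z[t^{\pm1}]/(t-1)^2$: the plumbing loop $\omega_i$ is glued across an identification that, upstairs, shifts the $S^1$-coordinate, so $(t-1)\omega_i$ is (up to sign) homologous to $[\partial B_k]$ while $(t-1)[\partial B_k]=0$. This produces exactly the claimed $\big(\Z[t^{\pm1}]/(t-1)^2\big)^{\oplus c}$, generated by lifts of the $\omega_i$, and with the genus-loop lifts generating the $\Z_\varepsilon^{2g}$. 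The vanishing of $H_i(P;\Z[t^{\pm1}])$ for $i\neq 0,1$ follows since both pieces of the cover have the homotopy type of $1$-complexes in the cover and the Mayer–Vietoris sequence gives $H_i=0$ for $i\geq 2$.

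For the last sentence, that $H_1(\partial P;\Z[t^{\pm1}])\to H_1(P;\Z[t^{\pm1}])$ is zero: $\partial P=\partial\Sigma\times S^1$ with $\varphi$ restricting to projection onto the $S^1$ factor, so its infinite cyclic cover is $\partial\Sigma\times\R\simeq S^1$, giving $H_1(\partial P;\Z[t^{\pm1}])\cong\Z_\varepsilon$ generated by (a lift of) $[\partial\Sigma]\times\{\pt\}$. The class $[\partial\Sigma]$ is the sum of the boundary classes of $\Sigma^\circ$ coming from the inner circles plus a relation among the $a_i,b_i$; in $H_1(\Sigma^\circ)$ one has $[\partial\Sigma]=\sum_{k=1}^{2c}\pm[\partial B_k]$ up to commutators, and each $[\partial B_k]$ already maps into the torsion summand $\big(\Z[t^{\pm1}]/(t-1)^2\big)^{\oplus c}$ as a multiple of $(t-1)$ — but a lift of $[\partial\Sigma]$ inside our preferred lift of $\Sigma^\circ\times\{1\}$ is then a $\Z_\varepsilon$-generator mapping to an element killed by $(t-1)^2$ and lying in the $(t-1)$-multiples, hence (being itself, not a $(t-1)$-multiple, on the $\partial P$ side) the map must send the generator to something annihilated by $t-1$; combined with the fact that already $H_1(\partial P)\to H_1(P)$ is zero integrally (Proposition~\ref{prop:HomologyP}) and the cover is detected by the integral computation, the map is zero. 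I expect the \textbf{main obstacle} to be the careful bookkeeping of the $\eta^\pm$-twists and signs in the gluing maps $\sim_P$ — tracking exactly which lift of $[\partial B_k]$ and which lift of $\omega_i$ appears, and verifying that the twist in $\eta^\pm$ is precisely what converts "$(t-1)\cdot(\text{lift of }\omega_i)=\pm(\text{lift of }[\partial B_k])$" — so that the extension of $\Z_\varepsilon$ by $\Z_\varepsilon$ is the nontrivial one $\Z[t^{\pm1}]/(t-1)^2$ rather than the split one $\Z_\varepsilon^2$. Everything else is a routine Mayer–Vietoris bookkeeping argument, cross-checked against the integral computation of Proposition~\ref{prop:HomologyP}.
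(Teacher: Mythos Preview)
Your overall strategy—compute $H_*(P;\Z[t^{\pm1}])$ via a Mayer--Vietoris decomposition coming from the quotient description of $P$—is sound, and is in the same spirit as the paper's approach. But your execution contains a genuine error that undermines the computation of $H_1$.

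The problem is the claim that $\delta$ is injective. In whichever version of the sequence you intend (the standard MV for $P=A\cup B$ with $A\simeq\Sigma^\circ\times S^1$, $B\simeq\bigsqcup_i T^2$, $A\cap B\simeq\bigsqcup_{k=1}^{2c}T^2$, or the coequalizer sequence for the identification), the map out of the $H_0$ term is \emph{not} injective. With $\Z[t^{\pm1}]$-coefficients all the relevant covers are connected, so $H_0(A\cap B)\cong\Z_\varepsilon^{2c}$, $H_0(A)\oplus H_0(B)\cong\Z_\varepsilon\oplus\Z_\varepsilon^{c}$, and the kernel of $e_k\mapsto(1,e_{\lceil k/2\rceil})$ is a copy of $\Z_\varepsilon^{c}$. (In the coequalizer picture, $f_*-g_*$ is identically zero on $H_0$.) Thus your formula $H_1(P;\Z[t^{\pm1}])\cong\Z_\varepsilon^{2g+2c}/\langle[\partial B_{2i-1}]-[\partial B_{2i}]\rangle$ is wrong: that quotient is $\Z_\varepsilon^{2g+c}$, not the desired module. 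The plumbing loops $\omega_i$ are \emph{not} in $H_1(\Sigma^\circ\times S^1;\Z[t^{\pm1}])$; they arise precisely from this $H_0$-contribution you discarded. What actually emerges is a short exact sequence
\[
0\to\Z_\varepsilon^{2g+c}\to H_1(P;\Z[t^{\pm1}])\to\Z_\varepsilon^{c}\to 0,
\]
and the entire content of the proposition is that this extension is \emph{nontrivial}, giving $(\Z[t^{\pm1}]/(t-1)^2)^c$ rather than $\Z_\varepsilon^{2c}$. Your heuristic ``$(t-1)\omega_i=\pm[\partial B_k]$'' is the right idea, but you have not supplied an argument for it, and you yourself flag this as the main obstacle.

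The paper sidesteps the extension problem by using a different decomposition: it writes $P=(\Sigma_{g,c+1}\times S^1)\cup\bigcup_{i=1}^c Q_i$ where each $Q_i$ is a copy of $P_0(1,0)$ or $P_0(0,1)$, and it first computes $H_1(Q_i;\Z[t^{\pm1}])\cong\Z[t^{\pm1}]/(t-1)^2$ directly from an explicit cell structure (Lemma~\ref{lem:H1P01ZZ}). The Mayer--Vietoris for this decomposition then has no extension ambiguity, since the nontrivial module structure is already packaged inside each $Q_i$. If you want to salvage your decomposition, you will need either an analogous cell-level computation near each plumbing or a direct argument resolving the extension—this is exactly the step your plan leaves open. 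Your treatment of the boundary map $H_1(\partial P;\Z[t^{\pm1}])\to H_1(P;\Z[t^{\pm1}])$ is similarly hand-wavy; the integral vanishing does not by itself imply vanishing over $\Z[t^{\pm1}]$, and you need to trace the lift of $[\partial\Sigma]$ explicitly (the paper does this in Proposition~\ref{prop:H1PZZ}).
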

\begin{proof}
The infinite cyclic cover of~$P$ determined by the epimorphism~$\varphi \colon H_1(P) \to \Z$ is connected and noncompact.
We deduce that $H_0(P;\Z[t^{\pm 1}])=\Z_\varepsilon$  and~$H_3(P;\Z[t^{\pm 1}])=0$.
The calculation of $H_1(P;\Z[t^{\pm 1}])$ will be described in Proposition~\ref{prop:H1PZZ}, as will the sentence concerning the homomorphism~$H_1(\partial P;\Z[t^{\pm 1}]) \to H_1(P;\Z[t^{\pm 1}]).$

It follows that $H_1(P,\partial P;\Z[t^{\pm 1}]) \cong H_1(P;\Z[t^{\pm 1}])$ is torsion and, since $H_0(P,\partial P;\Z[t^{\pm 1}])=0$,  a calculation involving the universal coefficient spectral sequence (see~\cite[Theorem 2.3]{LevineKnotModules}) shows 
that~$H_2(P;\Z[t^{\pm 1}])=H^1(P,\partial P;\Z[t^{\pm 1}])=0$.
\end{proof}

Next, we describe $H_1(P_K;\Z[t^{\pm 1}]).$

\begin{proposition}
\label{prop:HomologyPKZZ}
The~$\Z[t^{\pm 1}]$-homology of~$P_K$ satisfies
$$
H_i(P_K;\Z[t^{\pm 1}]) \cong
\begin{cases}
\Z_\varepsilon &\quad \text{ for } i=0,2,\\
H_1(E_K;\Z[t^{\pm 1}]) \oplus H_1(P;\Z[t^{\pm 1}]) &\quad \text{ for } i=1, \\
0 &\quad \text{ otherwise.}
\end{cases}
$$
The isomorphism on~$H_1(-;\Z[t^{\pm 1}])$ is induced by the inclusions $E_K \subset P_K$ and $P \subset P_K$.
\end{proposition}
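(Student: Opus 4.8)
The plan is to run the Mayer--Vietoris sequence with $\Z[t^{\pm 1}]$-coefficients for the decomposition $P_K = E_K \cup_{T^2} P$ along the gluing torus $T^2 = \partial E_K = \partial P$, and to identify the two connecting maps using the homology computations already established. First one checks that the coefficient system $\varphi$ restricts compatibly: on $E_K$ it is the usual abelianization (meridian $\mapsto 1$), on $P$ it is the map of Remark~\ref{rem:CoeffSystP}, and on $T^2$ it sends $\mu_K \mapsto 1$ and $\lambda_K \mapsto 0$ (equivalently, on $\partial P$ it sends the $S^1$-fibre to $1$ and $[\partial\Sigma]$ to $0$), so these agree on the overlap and the $\Z[t^{\pm 1}]$-Mayer--Vietoris sequence applies. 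Consequently the infinite cyclic cover of $T^2$ is $\R\times S^1$, homotopy equivalent to the circle in the $\lambda_K\simeq\partial\Sigma$ direction with trivial deck action, so
\[
H_0(T^2;\Z[t^{\pm 1}])\cong H_1(T^2;\Z[t^{\pm 1}])\cong\Z_\varepsilon,\qquad H_i(T^2;\Z[t^{\pm 1}])=0 \text{ for } i\ge 2,
\]
with $H_1(T^2;\Z[t^{\pm 1}])$ generated by a lift of $\lambda_K$.

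Feeding this, the standard computation of the $\Z[t^{\pm 1}]$-homology of a knot exterior ($H_0(E_K;\Z[t^{\pm 1}])=\Z_\varepsilon$, $H_1(E_K;\Z[t^{\pm 1}])$ the Alexander module, and $H_i(E_K;\Z[t^{\pm 1}])=0$ for $i\ge 2$; see \cite{LevineKnotModules}), and Proposition~\ref{prop:HomologyPZZ} into Mayer--Vietoris, one reads off $H_i(P_K;\Z[t^{\pm 1}])=0$ for $i\ge 3$ and is left with
\[
0\to H_2(P_K;\Z[t^{\pm 1}])\to H_1(T^2;\Z[t^{\pm 1}])\xrightarrow{\,f\,} H_1(E_K;\Z[t^{\pm 1}])\oplus H_1(P;\Z[t^{\pm 1}])\to H_1(P_K;\Z[t^{\pm 1}])\xrightarrow{\,\partial\,} H_0(T^2;\Z[t^{\pm 1}])\xrightarrow{\,g\,}\Z_\varepsilon\oplus\Z_\varepsilon,
\]
where $g$ sends $1\mapsto(1,\pm1)$ and is therefore injective; hence $\partial=0$ and $H_1(E_K;\Z[t^{\pm 1}])\oplus H_1(P;\Z[t^{\pm 1}])\to H_1(P_K;\Z[t^{\pm 1}])$ (which is exactly the map induced by the two inclusions) is surjective. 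Moreover $H_0(P_K;\Z[t^{\pm 1}])=\coker(g)=\Z_\varepsilon$.

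It remains to show $f=0$, after which injectivity of the inclusion-induced map on $H_1$ follows, giving the claimed isomorphism, and exactness at $H_1(T^2;\Z[t^{\pm 1}])$ forces $H_2(P_K;\Z[t^{\pm 1}])\cong H_1(T^2;\Z[t^{\pm 1}])=\Z_\varepsilon$, completing the computation. The two components of $f$ are the inclusion-induced maps $H_1(\partial P;\Z[t^{\pm 1}])\to H_1(P;\Z[t^{\pm 1}])$ and $H_1(\partial E_K;\Z[t^{\pm 1}])\to H_1(E_K;\Z[t^{\pm 1}])$ (composed with the identifications of $T^2$ with $\partial P$ and $\partial E_K$). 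The first vanishes by Proposition~\ref{prop:HomologyPZZ}. For the second: since $H_1(\partial E_K;\Z[t^{\pm 1}])$ is generated by a lift of the Seifert longitude $\lambda_K$, it suffices to see that $\lambda_K$ is nullhomologous in $E_K^\infty$; this is classical, as $\lambda_K$ bounds the intersection of a Seifert surface with $E_K$ and that surface lifts to $E_K^\infty$ (indeed $E_K^\infty$ is obtained by cutting $E_K$ along it). This last point — the vanishing of the longitude in the Alexander module, entirely parallel to the statement about $\partial P$ in Proposition~\ref{prop:HomologyPZZ} — is the only non-bookkeeping input, and hence the main thing to pin down; everything else is a formal consequence of Mayer--Vietoris together with Proposition~\ref{prop:HomologyPZZ} and the standard homology of a knot exterior.
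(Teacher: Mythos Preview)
Your proof is correct and follows essentially the same approach as the paper: run Mayer--Vietoris for $P_K=E_K\cup_{T^2}P$, identify $H_1(T^2;\Z[t^{\pm1}])\cong\Z_\varepsilon$ generated by a lift of the longitude, and show the map $f$ vanishes using Proposition~\ref{prop:HomologyPZZ} for the $P$-component and the Seifert-surface lift for the $E_K$-component. The only cosmetic difference is that you extract $H_0$, $H_2$, and $H_3$ directly from the same Mayer--Vietoris sequence (as $\coker(g)$, $\ker(f)$, and from vanishing of the relevant pieces, respectively), whereas the paper instead invokes the general fact that a closed $3$-manifold with torsion $H_1(-;\Z[t^{\pm1}])$ has $H_2(-;\Z[t^{\pm1}])\cong\Z_\varepsilon$ (citing \cite[Lemma~3.2]{ConwayPowell}) and handles $H_0,H_3$ by noting the infinite cyclic cover is connected and noncompact.
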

\begin{proof}
The infinite cyclic cover of~$P_K$ determined by the epimorphism~$\varphi \colon H_1(P_K) \to \Z$ is connected and noncompact.
We deduce that~$H_0(P_K;\Z[t^{\pm 1}]) \cong \Z_\varepsilon$ and~$H_3(P_K;\Z[t^{\pm 1}])=0$.
Consider the Mayer-Vietoris sequence for the decomposition~$P_K=E_K \cup_\partial P$ with~$\Z[t^{\pm 1}]$ coefficients:
\begin{equation}
\label{eq:MVPgcK}
 \cdots \to H_1(S^1 \times S^1;\Z[t^{\pm 1}]) \xrightarrow{j} H_1(E_K;\Z[t^{\pm 1}])\oplus H_1(P;\Z[t^{\pm 1}]) \to H_1(P_K;\Z[t^{\pm 1}])\to 0.
 \end{equation}
As~$H_1(E_K;\Z[t^{\pm 1}])$ and~$H_1(P;\Z[t^{\pm 1}])$ are both~$\Z[t^{\pm 1}]$-torsion, we deduce that~$H_1(P_K;\Z[t^{\pm 1}])$ is also~$\Z[t^{\pm 1}]$-torsion.
Since~$P_K$ is closed, this implies that~$H_2(P_K;\Z[t^{\pm 1}]) \cong \Z_\varepsilon$ (see e.g.~\cite[Lemma 3.2]{ConwayPowell}) and the lemma therefore reduces to proving that the map~$j$ in~\eqref{eq:MVPgcK} is the zero map.

We first study the restriction of the coefficient system to~$S^1 \times S^1$.
Under the inclusion~$\partial P \to P$,  Proposition~\ref{prop:HomologyP} ensures that,  in homology, the second factor is mapped to the~$S^1$-fibre of~$P$ and the first factor is mapped to~$[\partial \Sigma \times \lbrace \operatorname{pt} \rbrace]$.
The coefficient system maps the former to $1 \in \Z$ and the latter, say $\delta$,  to $0 \in \Z$.
We deduce that 
$$ H_1(S^1 \times S^1;\Z[t^{\pm 1}]) \cong \Z[t^{\pm 1}]/(t-1)\langle \widetilde{\delta} \rangle.$$
We now return to showing that $j$ is the zero map.
The map~$j$ has two components, one mapping into~$H_1(E_K;\Z[t^{\pm 1}])$ and one into~$H_1(P;\Z[t^{\pm 1}]$). 
We argue that each of them is the zero map.
The fact that~$H_1(S^1 \times S^1;\Z[t^{\pm 1}]) \to H_1(P;\Z[t^{\pm 1}])$ is the zero map was stated in Proposition~\ref{prop:HomologyPZZ} and so we focus on the knot complement.
For the knot complement, this follows from the fact that $j$ maps~$\widetilde{\delta}$ to a lift of a Seifert longitude.
The latter vanishes in $H_1(E_K;\Z[t^{\pm 1}])$, the nullhomology being provided by a lift of a Seifert surface.
Thus $j$ is the zero map and the proof is concluded.
\end{proof}

\subsection{The Blanchfield form of~$P_K$}
\label{sub:BlanchfieldPK}

The goal of this short section is to establish the isometry~$\Bl_{P_K} \cong \Bl_P \oplus \Bl_K$.
This relies on the following result.

\begin{theorem}[{Friedl-Leidy-Nagel-Powell~\cite[Theorem 1.1]{FLNP}}]
\label{thm:FLNP}
Let~$Y$ be a~$3$-manifold with empty or toroidal boundary and let~$Y=A \cup_T B$ be a decomposition of~$Y$ along a torus~$T$ into two~$3$-manifolds~$A$ and~$B$.
Let~$\phi \colon \Z[\pi_1(Y)]\to \Z$ be a homomorphism such that~$H_1(T;\Z[t^{\pm 1}])$ and~$H_1(Y;\Z[t^{\pm 1}])$ are torsion.
Then~$H_1(A;\Z[t^{\pm 1}])$ and~$H_1(B;\Z[t^{\pm 1}])$ are torsion and the inclusion induced maps~$i_A \colon A \to Y$ and~$i_B \colon B \to Y$ induce a morphism of linking forms
$$ i_A +i_B \colon \Bl_A \oplus \Bl_B \xrightarrow{}  \Bl_Y.$$
In particular, if~$i_A + i_B$ is an isomorphism, then it is in fact an isometry.
\end{theorem}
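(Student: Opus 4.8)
The plan is to separate the statement into three parts: that $H_1(A;\Lambda)$ and $H_1(B;\Lambda)$ are torsion (so that $\Bl_A$ and $\Bl_B$ are even defined), that the evident map $i_A+i_B$ intertwines the linking forms, and the ``in particular'' clause, which will turn out to be formal; here and below $\Lambda:=\Z[t^{\pm 1}]$ and all twisted (co)homology is taken with respect to $\phi$. First I would observe that $\phi$ restricts nontrivially to $\pi_1(T)$: otherwise $T$ lifts to the infinite cyclic cover $Y^\infty$ and $H_1(T;\Lambda)$ acquires a free $\Lambda$-summand, contradicting the hypothesis. Hence the induced cover of $T$ is a finite disjoint union of copies of $S^1\times\R$, so $H_0(T;\Lambda)$ and $H_1(T;\Lambda)$ are torsion and $H_i(T;\Lambda)=0$ for $i\geq 2$. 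Feeding this into the Mayer--Vietoris sequence
$$H_1(T;\Lambda)\to H_1(A;\Lambda)\oplus H_1(B;\Lambda)\xrightarrow{i_A+i_B} H_1(Y;\Lambda),$$
the kernel of $i_A+i_B$ is a quotient of $H_1(T;\Lambda)$, hence torsion, while its image is a submodule of the torsion module $H_1(Y;\Lambda)$; since torsion $\Lambda$-modules are closed under extensions, $H_1(A;\Lambda)\oplus H_1(B;\Lambda)$ is torsion. This settles the first assertion and gives the module map $i_A+i_B$; the substance is that it respects the pairings.

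For the pairings I would use the standard homological description of the Blanchfield form. For a compact oriented $3$-manifold $Z$ carrying the relevant coefficient system and with $H_1(Z;\Lambda)$ torsion, $\Bl_Z$ agrees up to sign with the composite
$$H_1(Z;\Lambda)\xrightarrow{\PD} H^2(Z,\partial Z;\Lambda)\xrightarrow{\beta^{-1}} H^1\big(Z,\partial Z;\Q(t)/\Lambda\big)\xrightarrow{\ev} \ol{\Hom_\Lambda\big(H_1(Z;\Lambda),\Q(t)/\Lambda\big)},$$
where $\PD$ is cap product with the fundamental class, $\beta$ is the Bockstein of $0\to\Lambda\to\Q(t)\to\Q(t)/\Lambda\to 0$ (an isomorphism because $H^*(Z;\Q(t))=0$ for such $Z$), and $\ev$ is evaluation (an isomorphism by universal coefficients, the relevant $\Ext^1_\Lambda(-,\Q(t)/\Lambda)$ vanishing). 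A preliminary bookkeeping step is to use that the toral boundary components have torsion $\Lambda$-homology to identify the absolute and rel-boundary twisted first homology of $A$, $B$, $Y$, so that these composites are genuine pairings on $H_1(-;\Lambda)$; for the closed piece $Y$ this is just the usual nonsingular $\Bl_Y$.

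The crux is that $i_A+i_B$ carries one composite to the other, and this is the step I expect to be the main obstacle. I would assemble the Mayer--Vietoris long exact sequences of the decomposition $Y=A\cup_T B$ in homology with $\Lambda$-coefficients and in cohomology with $\Lambda$- and with $\Q(t)/\Lambda$-coefficients, linked vertically by $\beta$ and $\ev$ (each a natural transformation, hence commuting with the connecting maps) and horizontally by the maps induced by $i_A$ and $i_B$. The remaining ingredient is the compatibility of Poincar\'e--Lefschetz duality with the splitting: one must verify that the duality isomorphisms for $A$, $B$, $Y$ (and for $T$, $\partial Y$) fit together into a map from the homology Mayer--Vietoris sequence to the cohomology Mayer--Vietoris sequence of the decomposition, with all squares commuting up to the usual signs. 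Granting this, a diagram chase yields $\Bl_Y(i_A x,i_A x')=\Bl_A(x,x')$, $\Bl_Y(i_B y,i_B y')=\Bl_B(y,y')$, and $\Bl_Y(i_A x,i_B y)=0$, i.e.\ that $i_A+i_B$ is a morphism of linking forms. The last vanishing is also transparent geometrically: a torsion class in $H_1(A;\Lambda)$, after multiplication by a nonzero $p\in\Lambda$, bounds a $2$-chain which, since $T\subset\partial A$, may be isotoped into $\intt(A^\infty)$; its equivariant intersection with a $1$-cycle is then computed entirely inside $A^\infty$, recovering $\Bl_A$ and visibly pairing to zero against any $1$-cycle carried by $B^\infty$.

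Finally, if $i_A+i_B$ is bijective then it is an isometry by definition, since a module isomorphism intertwining two linking forms is precisely an isometry of linking forms. In summary, the one genuinely delicate point is the duality-versus-Mayer--Vietoris compatibility above --- keeping orientations and connecting homomorphisms straight in the triad $(Y;A,B)$ and checking that $T$, being two-sided and common to $\partial A$ and $\partial B$, contributes compatibly on both sides --- with the absolute/rel-boundary identifications for $A$ and $B$ a secondary but necessary technicality.
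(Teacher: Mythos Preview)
The paper does not prove this theorem; it is quoted verbatim from Friedl--Leidy--Nagel--Powell~\cite[Theorem~1.1]{FLNP} and used as a black box (in Proposition~\ref{prop:BlPK}). So there is no proof in the paper to compare against.

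Your sketch is a reasonable outline of how the cited result is established: Mayer--Vietoris for the torsion claim, naturality of the Poincar\'e--Bockstein--evaluation composite defining $\Bl$ for the morphism claim, with the ``in particular'' clause being formal. You have correctly identified the one genuinely delicate point, namely the compatibility of Poincar\'e--Lefschetz duality with the Mayer--Vietoris connecting maps for the triad $(Y;A,B)$; this is precisely the technical content of the Friedl--Leidy--Nagel--Powell paper, where it is carried out carefully. Your geometric argument for the vanishing of $\Bl_Y(i_A x, i_B y)$ is also essentially the right intuition, though making it rigorous again comes down to the same duality-compatibility statement. One small correction: your claim that $H^*(Z;\Q(t))=0$ for $Z\in\{A,B,Y,T\}$ needs the hypothesis that $H_1$ is $\Lambda$-torsion (which you have), together with an Euler characteristic or duality argument to kill the other degrees; this is routine but should be checked rather than asserted.
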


The next proposition is the main result of this short section.

\begin{proposition}
\label{prop:BlPK}
The inclusions~$P \subset P_K$ and~$E_K \subset P_K$ induce an isometry
$$\Bl_K \oplus \Bl_{P} \cong \Bl_{P_K}.$$
In particular the inclusion~$P \hookrightarrow P_U$ induces an isometry
$$\Bl_{P} \cong \Bl_{P_U}.$$
\end{proposition}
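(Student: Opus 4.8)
The plan is to apply Theorem~\ref{thm:FLNP} to the toroidal splitting $P_K = E_K \cup_T P$ along the torus $T = \partial E_K = \partial P$, equipped with the epimorphism $\varphi \colon \pi_1(P_K) \to \Z$ of Construction~\ref{cons:CoeffSystPK}. First I would verify the hypotheses of that theorem. Since $P_K$ is a closed $3$-manifold it has empty boundary, and the two pieces $E_K$ and $P$ have torus boundary, so the decomposition has the required form. By Remark~\ref{rem:CoeffSystP} the restriction of $\varphi$ to $T = \partial P$ sends the $S^1$-fibre (equivalently, the meridian $\mu_K$) to $1 \in \Z$, so $H_1(T;\Z[t^{\pm 1}]) \cong \Z[t^{\pm 1}]/(t-1)$ is $\Z[t^{\pm 1}]$-torsion; and $H_1(P_K;\Z[t^{\pm 1}])$ is $\Z[t^{\pm 1}]$-torsion by Proposition~\ref{prop:HomologyPKZZ}. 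Hence Theorem~\ref{thm:FLNP} applies and yields a morphism of linking forms
$$ i_{E_K} + i_P \colon \Bl_K \oplus \Bl_P \to \Bl_{P_K}, $$
where we have used that $\Bl_K$ is by definition the Blanchfield form of $E_K$.

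Next I would note that this morphism is in fact an isomorphism. On the level of underlying $\Z[t^{\pm 1}]$-modules it is precisely the inclusion-induced map $H_1(E_K;\Z[t^{\pm 1}]) \oplus H_1(P;\Z[t^{\pm 1}]) \to H_1(P_K;\Z[t^{\pm 1}])$, which is an isomorphism by Proposition~\ref{prop:HomologyPKZZ}. By the last sentence of Theorem~\ref{thm:FLNP}, a morphism of linking forms of this type that happens to be an isomorphism is automatically an isometry. This gives the asserted isometry $\Bl_K \oplus \Bl_P \cong \Bl_{P_K}$.

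For the final statement, I would specialize to $K = U$ the unknot, so that $E_U$ is a solid torus. The Alexander module of the unknot vanishes, i.e. $H_1(E_U;\Z[t^{\pm 1}]) = 0$, so $\Bl_U$ is the zero form on the trivial module and the isometry above collapses to $\Bl_P \cong \Bl_{P_U}$, realized by the inclusion $P \hookrightarrow P_U$.

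I do not expect a genuine obstacle here: the substantive work has already been carried out in Theorem~\ref{thm:FLNP} and in Propositions~\ref{prop:HomologyPZZ} and~\ref{prop:HomologyPKZZ}. The only point deserving care is the bookkeeping of coefficient systems, namely checking that the homomorphism $\varphi$ on $P_K$ restricts to the coefficient systems on $E_K$, $P$, and $T$ that are implicitly used when one speaks of $\Bl_K$, $\Bl_P$, and $H_1(T;\Z[t^{\pm 1}])$; this compatibility is exactly the content of Remark~\ref{rem:CoeffSystP}.
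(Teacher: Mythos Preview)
Your proposal is correct and follows essentially the same approach as the paper: apply Theorem~\ref{thm:FLNP} to the splitting $P_K = E_K \cup_T P$, verify its hypotheses (torsion of $H_1(T;\Z[t^{\pm 1}])$ and $H_1(P_K;\Z[t^{\pm 1}])$), and then invoke Proposition~\ref{prop:HomologyPKZZ} to see that the resulting morphism of linking forms is an isomorphism and hence an isometry. Your treatment is in fact slightly more careful than the paper's, as you explicitly flag the coefficient-system compatibility via Remark~\ref{rem:CoeffSystP}.
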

\begin{proof}
Proposition~\ref{prop:HomologyPKZZ} implies that the inclusions~$P \subset P_K$ and~$E_K \subset P_K$ induce an isomorphism~$ H_1(E_K;\Z[t^{\pm 1}]) \oplus H_1(P;\Z[t^{\pm 1}]) \to H_1(P_K;\Z[t^{\pm 1}]).$
 Since~$H_1(\partial P;\Z[t^{\pm 1}])=\Z_\varepsilon$ is torsion,  Theorem~\ref{thm:FLNP} implies that this isomorphism is in fact an isometry.
 The last assertion follows by taking~$K=U$ to be the unknot and noting that the Alexander module of the unknot is trivial.
\end{proof}

\subsection{Isometries of the Blanchfield form of~$P_K$}
\label{sub:IsometriesBlanchfieldPK}

The goal of this section is to prove that the inclusions~$E_K \subset P_K$ and~$P \subset P$ induce an isometry~$\Aut(\Bl_P) \oplus \Aut(\Bl_K) \xrightarrow{\cong} \Aut(\Bl_{P_K}).$

\medbreak

We start with some preliminary algebraic lemmas.

\begin{lemma}
\label{lem:HomDim}
If~$f \colon T_1 \to T_2$ is a homomorphism between~$\Z[t^{\pm 1}]$-modules where~$T_2$ has projective dimension~$\leq 1$, then~$\im(f)$ admits a square presentation matrix.
\end{lemma}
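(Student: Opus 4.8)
The plan is to first promote the hypothesis on $T_2$ to the submodule $\im(f)\subseteq T_2$, and then run the standard stabilization argument over $R:=\Z[t^{\pm 1}]$.

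First I would check that $\im(f)$ has projective dimension $\leq 1$. Since $R$ has global dimension $2$, the quotient $T_2/\im(f)$ satisfies $\pd\bigl(T_2/\im(f)\bigr)\leq 2$; applying the standard estimate $\pd(A)\leq\max\{\pd(B),\pd(C)-1\}$ to the short exact sequence $0\to\im(f)\to T_2\to T_2/\im(f)\to 0$ together with $\pd(T_2)\leq 1$ yields $\pd(\im f)\leq\max\{1,1\}=1$. One also needs $\im(f)$ to be finitely generated: in all our applications $T_1$ is finitely generated and $R$ is Noetherian (alternatively, one may simply take $T_2$ finitely generated, as submodules of finitely generated modules over a Noetherian ring are finitely generated).

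Next, choose a surjection $\pi\colon R^n\twoheadrightarrow\im(f)$ from a finitely generated free module and set $K:=\ker(\pi)$. Because $\pd(\im f)\leq 1$, the submodule $K$ is projective (Schanuel's lemma identifies $K$ with a direct summand of a projective), and it is finitely generated since $R$ is Noetherian. Now I would invoke the fact that finitely generated projective $\Z[t^{\pm 1}]$-modules are stably free --- a consequence of Bass--Heller--Swan, which gives $\widetilde{K}_0(\Z[t^{\pm 1}])\cong\widetilde{K}_0(\Z)=0$ --- so that $K\oplus R^a\cong R^m$ for suitable $a,m$. Forming the direct sum of $0\to K\to R^n\to\im(f)\to 0$ with $0\to R^a\xrightarrow{\id}R^a\to 0\to 0$ produces a free presentation $0\to R^m\to R^{n+a}\to\im(f)\to 0$; since the left-hand map is injective and $R$ is a domain, $m\leq n+a$, so appending $n+a-m$ zero columns gives a square $(n+a)\times(n+a)$ matrix $A$ with $\coker(A)\cong\im(f)$, which is the desired square presentation matrix.

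The argument is essentially formal, so there is no serious obstacle; the only non-routine input is the stable freeness of projective $\Z[t^{\pm 1}]$-modules, which I would cite rather than reprove. The one point meriting care is the finite generation of $\im(f)$ (hence of $K$), which is why I would flag the Noetherian hypothesis and, if convenient, make the word ``finitely generated'' explicit in the statement.
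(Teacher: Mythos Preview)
Your proof is correct and follows the same two-step strategy as the paper: first show $\pd(\im f)\leq 1$ via the short exact sequence $0\to\im(f)\to T_2\to\coker(f)\to 0$ together with $\operatorname{gldim}\Z[t^{\pm 1}]=2$, then deduce a square presentation. The paper carries out step one by writing out the $\Ext$ long exact sequence directly (which is exactly your inequality $\pd(A)\leq\max\{\pd(B),\pd(C)-1\}$ unpacked), and for step two simply invokes the fact that a \emph{torsion} $\Z[t^{\pm 1}]$-module has a square presentation matrix iff it has projective dimension $\leq 1$. Your version is slightly more informative: you spell out the stable-freeness argument (via $\widetilde K_0(\Z[t^{\pm 1}])=0$) and you do not use that $T_2$ is torsion, which the paper's proof assumes even though it is absent from the lemma statement. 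Your explicit flag about finite generation is also well placed---the paper is silent on this, though in its applications the modules are finitely generated.
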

\begin{proof}
Consider the short exact sequence~$ 0 \to \im(f) \to T_2 \to \coker(f) \to 0$.
Let $V$ be a $\Z[t^{\pm 1}]$-module and consider, for $i \geq 2$, the exact sequence
$$\leftarrow \overbrace{\Ext_{\Z[t^{\pm 1}]}^{i+1}(\coker(f),V)}^{=0} \leftarrow \Ext_{\Z[t^{\pm 1}]}^i(\im(f),V) \leftarrow \overbrace{\Ext_{\Z[t^{\pm 1}]}^i(T_2,V)}^{=0} \leftarrow   $$
Since~$\Z[t^{\pm 1}]$ has global dimension~$2$,  all the modules to the left of this sequence are zero.
The zero on the right is due to $T_2$ having projective dimension~$\leq 1$. We deduce that~$\im(f)$ has projective dimension~$\leq 1$.

Since~$T_2$ is torsion, so is~$\im(f)$.
The proposition follows: a torsion module over $\Z[t^{\pm 1}]$ is seen to admit a square presentation matrix if and only if it has projective dimension at most one.
\end{proof}

We call two polynomials in~$\Z[t^{\pm 1}]$ \textit{relatively prime} if their only common divisors are units.
\begin{example}
\label{ex:Coprime}
For a knot~$K$ we argue that~$\Delta_K$ and~$(t-1)$ are relatively prime.
In fact,  since~$\Delta(K)$ is a symmetric polynomial with~$\Delta_K(1)=1$,  we can write~$\Delta_K= (t-1)(t^{-1}-1)q +1$ for some symmetric polynomial $q$.  This immediately implies that~$\Delta_K$ and~$(t-1)$ are relatively prime,  since any common divisor must divide $1$ and hence be a unit. 
\end{example}

One last lemma is needed to describe $\Aut(\Bl_{P_K}).$

\begin{lemma}
\label{lem:coprime}
If~$T_1,T_2$ are torsion~$\Z[t^{\pm 1}]$-modules with relatively prime orders, then
$$\Hom_{\Z[t^{\pm 1}]}(T_1,T_2)=0.$$
\end{lemma}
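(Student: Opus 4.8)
The claim is that two torsion $\Z[t^{\pm 1}]$-modules with relatively prime orders admit no nonzero homomorphisms between them. The plan is to reduce to a statement about the orders (i.e.\ the first elementary ideals / the products of the invariant factors) using the structure of torsion modules over the PID-like ring $\Z[t^{\pm 1}]$, and then to use coprimality to force every homomorphism to vanish.

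First, recall that $\Z[t^{\pm 1}]$ is a Noetherian UFD but not a PID, so a torsion module $T$ need not decompose as a direct sum of cyclic modules. Nonetheless, the \emph{order} $\operatorname{ord}(T) \in \Z[t^{\pm 1}]$ (defined as a generator of the smallest principal ideal containing the $0$-th Fitting ideal, equivalently $\prod$ of the diagonal entries of a presentation matrix in Smith-like normal form over the fraction field, well-defined up to units) has the property that $\operatorname{ord}(T) \cdot T = 0$ is \emph{not} generally true, but every element of $T$ is annihilated by some nonzero polynomial, and $\operatorname{ord}(T)$ is the ``best'' single polynomial measuring the size. The key facts I would use are: (a) $\operatorname{ord}$ is multiplicative on short exact sequences; (b) if $\gcd(\operatorname{ord}(T_1), \operatorname{ord}(T_2))$ is a unit, then $\operatorname{ord}(T_1) \cdot T_2$ cannot be zero unless $T_2 = 0$, and similarly for annihilators.

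The cleanest route is: let $f \colon T_1 \to T_2$ be a homomorphism, so $\im(f)$ is a quotient of $T_1$ and a submodule of $T_2$. By the multiplicativity of order in short exact sequences, $\operatorname{ord}(\im(f))$ divides $\operatorname{ord}(T_1)$ and divides $\operatorname{ord}(T_2)$. Since $\operatorname{ord}(T_1)$ and $\operatorname{ord}(T_2)$ are relatively prime, $\operatorname{ord}(\im(f))$ is a unit. But a finitely generated torsion $\Z[t^{\pm 1}]$-module with unit order is zero — this follows because a nonzero torsion module $M$ over a Noetherian domain has $\operatorname{ord}(M)$ lying in a maximal ideal (it is contained in the support of $M$, which is nonempty), hence cannot be a unit; alternatively, one can localize at any height-one prime in the support. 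Therefore $\im(f) = 0$, i.e.\ $f = 0$, and $\Hom_{\Z[t^{\pm 1}]}(T_1, T_2) = 0$.

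\textbf{Main obstacle.} The only real subtlety is making precise the claim ``unit order implies zero module'' and ``order is multiplicative / divides through sub- and quotient modules'' over $\Z[t^{\pm 1}]$, which is not a PID — one must be careful that these facts are about the order ideal (the smallest principal ideal containing $\operatorname{Fitt}_0$, or equivalently $E_0$), not about elementary ideals in general. This is standard (see, e.g., Levine's \emph{Knot modules}, which is already cited in the excerpt, or the treatment in the paper's reference \cite{FriedlPowell}), so I would simply invoke it. An alternative self-contained argument avoids order entirely: pick an element $x \in T_1$; it is annihilated by some nonzero $p \in \Z[t^{\pm 1}]$, and $p \mid \operatorname{ord}(T_1)$, so $p$ is coprime to $\operatorname{ord}(T_2)$; then $f(x) \in T_2$ is annihilated by $p$, but the $p$-torsion submodule of $T_2$ injects, after localization, into the $p$-localization of $T_2$, which vanishes since $p$ is a unit in the localization of $\Z[t^{\pm 1}]$ at the multiplicative set generated by $\operatorname{ord}(T_2)$ — hence $f(x) = 0$. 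Either way the proof is short; the bookkeeping about orders over a non-PID is the one place to be slightly careful.
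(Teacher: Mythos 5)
Your first step --- that $\im(f)$ is simultaneously a quotient of $T_1$ and a submodule of $T_2$, so that multiplicativity of orders in short exact sequences forces $\Delta_{\im(f)}$ to divide both $\Delta_{T_1}$ and $\Delta_{T_2}$ and hence be a unit --- is exactly the paper's first step and is fine. The gap is in your final step. Over $\Z[t^{\pm 1}]$, which has Krull dimension $2$, a nonzero finitely generated torsion module can have unit order: take $M = \Z[t^{\pm 1}]/(2, t-1)$. It is nonzero and torsion, its presentation matrix is the $1\times 2$ matrix $(2 \;\; t-1)$, so its zeroth elementary ideal is $(2,t-1)$ and the smallest principal ideal containing it is all of $\Z[t^{\pm 1}]$, i.e.\ $\Delta_M \doteq 1$. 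Your justification (``the order lies in the support, which is nonempty'') conflates the support with its codimension-one part; the order only sees the height-one primes, and pseudo-null modules such as $M$ are invisible to it. Your alternative argument breaks at the same spot: an annihilator $p$ of an element $x \in T_1$ need not divide $\Delta_{T_1}$ (take $x=1$, $p=2$ in $M$, where $\Delta_M \doteq 1$), and localizing $T_2$ at the multiplicative set generated by $\Delta_{T_2}$ does not kill $T_2$ when $\Delta_{T_2}$ is a unit.

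Indeed, the bare statement ``unit order implies zero'' would let you prove something false: $T_1 = \Z[t^{\pm 1}]/(2,t-1)$ and $T_2 = \Z[t^{\pm 1}]/(2,t+1)$ have relatively prime (both unit) orders, yet each is $\Z/2$ with trivial $t$-action, so $\Hom_{\Z[t^{\pm 1}]}(T_1,T_2) \neq 0$. The paper closes exactly this gap with Lemma~\ref{lem:HomDim}: because $T_2$ has projective dimension at most one (as it does in the application, being an Alexander module or $H_1(P;\Z[t^{\pm 1}])$), an $\Ext$ computation shows that the submodule $\im(f) \subseteq T_2$ also has projective dimension at most one and therefore admits a \emph{square} presentation matrix $A$; then $\det(A) \doteq \Delta_{\im(f)} \doteq 1$ forces $A$ to be invertible, so $\im(f)=0$. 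The missing ingredient in your write-up is precisely this projective-dimension (equivalently, square-presentation-matrix) input on $T_2$; without it, unit order does not imply vanishing over $\Z[t^{\pm 1}]$.
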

\begin{proof}
Given~$f \in \Hom_{\Z[t^{\pm 1}]}(T_1,T_2)$, we will show that~$\im(f)=0$.
We first prove that~$\Delta_{\im(f)} \doteq 1$.
Here, given a~$\Z[t^{\pm 1}]$-module $M$,  we let $\Delta_{M}$ denote its order; we refer to~\cite[Section 6]{LickorishIntroduction} for details but note that this is an element of $\Z[t^{\pm1}]$ that is well-defined up to multiplication by units.

The short exact sequences
\begin{align*}
0 &\to \ker (f) \to T_1 \to \im(f) \to 0, \\
0 &\to \im (f) \to T_2 \to \coker(f) \to 0
\end{align*}
imply that~$\Delta_{\im(f)}$ divides both~$\Delta_{T_1}$ and~$\Delta_{T_2}$.
Since by assumption these polynomials are relatively prime, we deduce that~$\Delta_{\im(f)}$ is a unit i.e. that~$\Delta_{\im(f)} \doteq 1$.

We conclude by showing that~$\Delta_{\im(f)} \doteq 1$ implies $\im(f)=0$.
Lemma~\ref{lem:HomDim} implies that the~$\Z[t^{\pm 1}]$-module~$\im(f)$~has a square presentation matrix~$A$. 
It follows that~$1\doteq\Delta_{\im(f)}=\det(A)$ and therefore~$A$ is an isomorphism.
This implies~$\im(f)=0$, as required.
\end{proof}

We are now able to prove the main result of this section.

\begin{proposition}
\label{prop:AutBlPK}
The inclusions~$P \hookrightarrow P_K$ and~$E_K \hookrightarrow P_K$ induce an isomorphism
$$\Aut(\Bl_K) \oplus \Aut(\Bl_{P}) \cong \Aut(\Bl_{P_K}).$$
\end{proposition}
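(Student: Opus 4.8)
The plan is to use the decomposition $\Bl_{P_K} \cong \Bl_P \oplus \Bl_K$ from Proposition~\ref{prop:BlPK} together with the coprimality of the orders of the underlying Alexander modules. First I would record that the order of $H_1(P;\Z[t^{\pm 1}])$ is (up to units) $(t-1)^{2c}$ by Proposition~\ref{prop:HomologyPZZ} — the $\Z_\varepsilon^{2g}$ summand contributes $(t-1)^{2g}$ and each $\Z[t^{\pm 1}]/(t-1)^2$ contributes $(t-1)^2$ — while the order of $H_1(E_K;\Z[t^{\pm 1}])$ is $\Delta_K$. By Example~\ref{ex:Coprime}, $\Delta_K$ and $(t-1)$ are relatively prime, so these two orders are relatively prime. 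Hence by Lemma~\ref{lem:coprime} there are no nonzero homomorphisms in either direction between $H_1(P;\Z[t^{\pm 1}])$ and $H_1(E_K;\Z[t^{\pm 1}])$.

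Given this, any $\Z[t^{\pm 1}]$-linear endomorphism of $H_1(P_K;\Z[t^{\pm 1}]) \cong H_1(P;\Z[t^{\pm 1}]) \oplus H_1(E_K;\Z[t^{\pm 1}])$ is automatically block-diagonal: the off-diagonal components are elements of the vanishing $\Hom$-groups above. In particular, an isometry $F$ of $\Bl_{P_K}$ restricts to endomorphisms $F_P$ of $H_1(P;\Z[t^{\pm 1}])$ and $F_K$ of $H_1(E_K;\Z[t^{\pm 1}])$. Since $\Bl_{P_K} = \Bl_P \oplus \Bl_K$ and the summands are orthogonal, the isometry condition $\Bl_{P_K}(F x, F y) = \Bl_{P_K}(x,y)$ decomposes summandwise and forces $F_P$ and $F_K$ to each preserve the respective Blanchfield forms; a symmetric argument applied to $F^{-1}$ shows $F_P, F_K$ are invertible, hence isometries. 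Conversely, a pair $(\phi, \psi) \in \Aut(\Bl_P) \oplus \Aut(\Bl_K)$ gives the isometry $\phi \oplus \psi$ of $\Bl_{P_K}$. These two assignments are mutually inverse group homomorphisms, and one checks directly that the map sending $(\phi,\psi)$ to $\phi\oplus\psi$ is exactly the one induced by the inclusions $P \hookrightarrow P_K$ and $E_K \hookrightarrow P_K$ (this uses that those inclusions induce the splitting of Proposition~\ref{prop:HomologyPKZZ}). This yields the desired isomorphism $\Aut(\Bl_K) \oplus \Aut(\Bl_P) \cong \Aut(\Bl_{P_K})$.

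The only mildly delicate point — and the one I'd be most careful about — is verifying that the algebraically-defined block decomposition genuinely agrees with the geometrically-induced maps, i.e.\ that ``restrict an isometry of $\Bl_{P_K}$ to its $P$- and $E_K$-components'' is inverse to ``take inclusion-induced direct sum.'' This is where one must be precise that the isomorphism of Proposition~\ref{prop:HomologyPKZZ} is inclusion-induced and that Proposition~\ref{prop:BlPK} identifies $\Bl_{P_K}$ with $\Bl_P \oplus \Bl_K$ along *that same* splitting; once this bookkeeping is set up, the vanishing of the cross $\Hom$-terms from Lemma~\ref{lem:coprime} does all the real work. Everything else is a routine diagram chase.
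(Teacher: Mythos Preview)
Your proof is correct and follows essentially the same route as the paper: reduce via Proposition~\ref{prop:BlPK} to showing $\Aut(\Bl_K \oplus \Bl_P) = \Aut(\Bl_P)\oplus\Aut(\Bl_K)$, then use Lemma~\ref{lem:coprime} and the coprimality of $\Delta_K$ with $(t-1)$ (Example~\ref{ex:Coprime}) to kill the off-diagonal blocks. One small slip: the order of $H_1(P;\Z[t^{\pm 1}])$ is $(t-1)^{2g+2c}$, not $(t-1)^{2c}$ (your own description of the summand contributions gets this right), but since only ``power of $(t-1)$'' is needed, the argument is unaffected.
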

\begin{proof}
Proposition~\ref{prop:BlPK} implies that the inclusions induce an isometry~$\Bl_K \oplus \Bl_{P}\cong \Bl_{P_K}$.
The proposition therefore reduces to establishing the equality~$\Aut(\Bl_K \oplus \Bl_{P})=\Aut(\Bl_{P}) \oplus \Aut(\Bl_K)$.
The $\supset$ inclusion is clear and so we focus on the reverse inclusion.
An isometry~$f \in \Aut(\Bl_K \oplus \Bl_{P})$ can be written as~$f=\bsm f_{11}&f_{12} \\ f_{21}&f_{22} \esm$.
The order of~$H_1(E_K;\Z[t^{\pm 1}])$ is~$\Delta_K$ and the order of~$H_1(P;\Z[t^{\pm 1}])$ is~$(t-1)^{2g+2c}$ (by Proposition~\ref{prop:HomologyPZZ}).
Since these are relatively prime (see Example~\ref{ex:Coprime}), Lemma~\ref{lem:coprime}, implies that~$f_{12}=f_{21}=0$.
This concludes the proof of the proposition.
\end{proof}


\section{Statements of classifications of immersed $\Z$-surfaces}
\label{sec:Classification}

The goal of this section is to state the classification theorems from the introduction (Theorems~\ref{thm:SurfacesClosedIntro} and~\ref{thm:SurfacesRelBoundaryIntro}) in more detail. 
Let $N$ be a simply-connected $4$-manifold with boundary~$\partial N \cong S^3$, and let $K \subset \partial N$ be a knot.
Recall our notation for the set of immersed~$\Z$-surfaces in~$N$ with boundary~$K$ whose exterior has a fixed equivariant intersection form:
\begin{align*}
\operatorname{Surf}^0_\lambda(g;c_+,c_-)(K,N)&=
\frac{
\{ \Z\text{-surface } S \subset N \mid\partial S=K,  g(S)=g, (c_+,c_-)\text{-double points}, \lambda_{N_S}\cong \lambda \}
}
{\text{equivalence rel. boundary}}.
\end{align*}
Here,  $S$ having ``$(c_+,c_-)$-double points" is an abbreviation for $S$ having $c_+$ positive double points and $c_-$ negative double points.
Also,  $g(S)$ is the genus of $S$,  whereas~$N_S$ denotes the exterior of~$S$ and $\lambda_{N_S}$ refers to its equivariant intersection form.

In the closed case,  we instead fix a closed simply-connected $4$-manifold $X$ and consider
$$\operatorname{Surf}_\lambda(g;c_+,c_-)(X)=
\frac{
\{ \text{closed } \Z\text{-surface } S \subset X \mid g(S)=g, (c_+,c_-)\text{-double points}
 \text{ and } \lambda_{X_S}\cong \lambda\}}{\text{equivalence}}.
$$
In Section~\ref{sub:SurfacesRelBoundary} we study surfaces up to equivalence rel.\ boundary and in Section~\ref{sub:SurfacesClosed}, we focus on closed surfaces.
The statements of these classifications involve isometries of linking forms and differ slightly from those from the introduction.
 Section~\ref{sub:Simplifications} reconciles these statements.
The proofs of these classifications are lengthy and are delayed to Sections~\ref{sec:ImmersionsAndNormalBundles}-\ref{sec:ProofClassifications}.

\begin{notation*}
We recall that the manifolds $P:=P_g(c_+,c_-)$ and $P_K:=P_{K,g}(c_+,c_-)$ depend on the genus $g$ of the surface~$\Sigma:=\Sigma_{g,1}$, integers $c_+,c_- \geq 0$ and the embedding~$\alpha \colon \bigsqcup_{2c} D^2 \hookrightarrow \Sigma.$
\end{notation*}

\subsection{Immersed $\Z$-surfaces up to homeomorphism rel.\ boundary}
\label{sub:SurfacesRelBoundary}

This section is concerned with the classification of immersed $\Z$-surfaces up to equivalence rel.\ boundary and, more specifically, with the set~$\operatorname{Surf}^0_\lambda(g;c_+,c_-)(K,N)$.
In order to obtain a bijection from this set to an orbit set of $\Iso(\partial \lambda,\unaryminus \Bl_{P_K})$, we describe two actions on this set of isometries. 

\begin{construction}[The action of $\Aut(\lambda)$ on $\Iso(\partial \lambda,\unaryminus \Bl_{P_K})$]
\label{cons:AutlambdaAction}
Recall from Construction~\ref{cons:BoundaryLinkingForm} that, associated to a non-degenerate hermitian form~$\lambda$, there is a linking form
$$\partial \lambda \colon \coker({\widehat{\lambda}}) \times \coker({\widehat{\lambda}}) \to \Q(t)/\Z[t^{\pm 1}].$$
Additionally, recall from Remark~\ref{rem:InducedIsometry} that any isometry~$\varpi \colon \lambda \cong \lambda'$ induces an isometry
$$\partial \varpi \colon \partial \lambda \cong \partial \lambda'.$$
In particular,  given a $3$-manifold $Y$, an epimorphism $\psi \colon \pi_1(Y) \twoheadrightarrow \Z$ and a hermitian form $\lambda$ presenting $\Bl_Y$,  the group~$\Aut(\lambda)$ acts on~$\Iso(\partial \lambda,\unaryminus \Bl_Y)$ by
$$\varpi \cdot \phi:=\phi \circ \partial \varpi^{-1}.$$
We will be particularly concerned with the case $(Y,\psi)=(P_K,\varphi).$
\end{construction}

In what follows,  $\Sigma:=\Sigma_{g,1}$ denotes a surface with a single nonempty boundary component.

\begin{construction}[The action of $\Homeo_\alpha(\Sigma,\partial)$ on $\Iso(\partial \lambda,\unaryminus \Bl_{P_K})$]
\label{cons:HomeoSigmaActionRelBoundary}
Use~$\Homeo_\alpha(\Sigma,\partial)$ to denote the group of rel. boundary homeomorphisms~$\theta \colon \Sigma \to \Sigma$ that satisfy~$\theta \circ \alpha=\theta$.
We first argue that a homeomorphism~$\theta \in \Homeo_\alpha(\Sigma,\partial)$ gives rise to a homeomorphism
$$\widehat{\theta} \colon W \to W.$$
Extend the homeomorphism~$\theta|_{\Sigma^\circ} \times \id_{D^2}$ of~$\Sigma^\circ \times D^2$ over each~$D^2 \times D^2$ by the identity to get a homeomorphism of~$E$. 
Then,  verify that this homeomorphism descends to~$ \widehat{\theta} \colon W \to W$.

Slightly abusing notation,  we also write~$\widehat{\theta} \colon P_K \to P_K$ for the homeomorphism obtained by extending~$\widehat{\theta}| \colon P \to P$ by the identity on~$E_K$.
This homeomorphism~$\widehat{\theta} \colon P_K \to P_K$ intertwines the coefficient system~$\varphi$,  meaning that $\varphi \circ \widehat{\theta}_* =\varphi.$
It follows that~$\theta$ induces an isometry
$$\widehat{\theta}_* \colon \Bl_{P_K} \xrightarrow{\cong}  \Bl_{P_K}.$$
The required action is now by postcomposition: for~$\phi \in \Iso(\partial \lambda,\unaryminus \Bl_{P_K})$, define
$$\theta \cdot \phi := \widehat{\theta}_* \circ \phi.$$
One verifies that this action combines with the one of Construction~\ref{cons:AutlambdaAction} to give rise to an action of~$\Aut(\lambda) \times \Homeo_\alpha(\Sigma,\partial)$ on~$\Iso(\partial \lambda,\unaryminus \Bl_{P_K})$ by $(\varpi,\theta) \cdot \phi=\widehat{\theta}_* \circ \phi \circ \partial \varpi$.
\end{construction}

Recall from the introduction that given a nondegenerate hermitian form $\lambda \colon H \times H \to \Z[t^{\pm 1}]$, we write $\lambda(1)$ for the symmetric bilinear form $(H,\lambda) \otimes_{\Z[t^{\pm 1}]} \Z_{\varepsilon}.$
As we noted then, if $\lambda$ is represented by a matrix $A(t)$, then $\lambda(1)$ is represented by $A(1).$

We state our main classification result on immersed $\Z$-surfaces up to equivalence rel. boundary,  which (together with Proposition~\ref{prop:SimplifyAlgebraBoundary} below) establishes Theorem~\ref{thm:SurfacesRelBoundaryIntro} from the introduction.
Its proof can be found in Section~\ref{sub:ProofRelBoundary}.

\begin{theorem}
\label{thm:SurfacesRelBoundary}
Let~$N$ be a simply-connected~$4$-manifold with boundary~$\partial N \cong S^3$,  let~$K \subset \partial N$ be a knot, and let $c_+,c_-$ and $g$ be non-negative integers.
Set $c:=c_++c_-$.
Given a nondegenerate hermitian form~$\lambda$ over~$\Z[t^{\pm 1}]$, the following assertions are equivalent:
\begin{enumerate}
\item
the hermitian form~$\lambda$ presents~$\Bl_{P_{K,g}(c_+,c_-)}$ and satisfies~$\lambda(1)\cong Q_N \oplus  (0)^{\oplus 2g+c}$;
\item the set~$\operatorname{Surf}_\lambda^0(g;c_+,c_-)(N,K)$ is nonempty and there is a bijection
$$\operatorname{Surf}_\lambda^0(g;c_+,c_-)(N,K) \approx \frac{\Iso(\partial \lambda,\unaryminus\Bl_{P_{K,g}(c_+,c_-)})}{\Aut(\lambda)\times \Homeo_\alpha(\Sigma_{g,1},\partial)}.$$
\end{enumerate}
\end{theorem}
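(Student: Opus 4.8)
The plan is to realize the bijection by sending an immersed $\Z$-surface $S \subset N$ with the prescribed data to a suitable ``$\Z$-filling'' of the fixed $3$-manifold $P_K = P_{K,g}(c_+,c_-)$, and then to invoke the classification of $\Z$-fillings from \cite[Theorem~1.1]{ConwayPiccirilloPowell} applied to $Y=P_K$. The first step is to make the necessary conditions in (1) precise: if $S$ exists, then by Proposition~\ref{prop:HomologyZExterior} the exterior $N_S$ satisfies $\pi_1(N_S)\cong\Z$, has $H_1(\partial N_S) \twoheadrightarrow H_1(N_S)$, and has intersection form $Q_N \oplus (0)^{\oplus 2g+c}$; by Section~\ref{sec:BoundarIdentifications} (to be proved later) its boundary is homeomorphic to $P_K$ compatibly with the coefficient systems. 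Proposition~\ref{prop:PresentsBlanchfield} then gives an isometry $\partial\lambda_{N_S} \cong \unaryminus\Bl_{P_K}$, so $\lambda := \lambda_{N_S}$ presents $\Bl_{P_K}$; and reducing mod $(t-1)$ identifies $\lambda(1)$ with the ordinary intersection form of $N_S$, i.e.\ $Q_N \oplus (0)^{\oplus 2g+c}$. This proves $(2)\Rightarrow(1)$ and half of the content of $(1)\Rightarrow(2)$, namely the $\lambda(1)$ condition is forced.

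For the main implication $(1)\Rightarrow(2)$, I would proceed as follows. First, using Construction~\ref{cons:W} and the identification of a tubular neighborhood $\ol\nu(S)$ with $W$ (Proposition~\ref{prop:IdentifyW}) together with the boundary identification $\partial N_S \cong P_K$, show that the assignment $S \mapsto (N_S, \text{identification } \partial N_S \cong P_K)$ descends to a well-defined map from $\operatorname{Surf}_\lambda^0(g;c_+,c_-)(N,K)$ to the set $\mathcal{V}_\lambda^0(P_K)$ of rel.\ boundary homeomorphism classes of $\Z$-fillings $Z$ of $P_K$ with $\lambda_Z \cong \lambda$; this is the content of Section~\ref{sec:MainStatement}. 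The crucial point is that this map is a \emph{bijection}: surjectivity comes from gluing $W$ (the model neighborhood) back onto a given filling $Z$ along $P \subset P_K$ to recover a $4$-manifold containing an immersed surface, and injectivity requires that a rel.\ boundary homeomorphism of exteriors extends over the (standard) neighborhoods --- this is where the normal bundle discussion of Section~\ref{sec:ImmersionsAndNormalBundles} and careful bookkeeping of the embedding $\alpha$ enter, since $P_K$ is not determined by $g,c_+,c_-,K$ alone. Second, feed this into \cite[Theorem~1.1]{ConwayPiccirilloPowell}: the conditions in (1) (that $\lambda$ presents $\Bl_{P_K}$ and has the right $\lambda(1)$, with $Q_N$ the form of $N$) are exactly what that theorem requires for $\mathcal{V}_\lambda^0(P_K)$ to be nonempty and in bijection with $\Iso(\partial\lambda,\unaryminus\Bl_{P_K})/\Aut(\lambda)$. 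Third, account for the extra $\Homeo_\alpha(\Sigma_{g,1},\partial)$ quotient: two $\Z$-surfaces with homeomorphic exteriors (rel.\ the chosen boundary identifications) give the same point of $\mathcal{V}_\lambda^0(P_K)$, but two surfaces that are equivalent rel.\ boundary \emph{in $N$} may differ by precomposing the boundary identification with a self-homeomorphism of $P_K$ coming from $\Homeo_\alpha(\Sigma_{g,1},\partial)$ (via $\theta \mapsto \widehat\theta$ of Construction~\ref{cons:HomeoSigmaActionRelBoundary}). So the set of surfaces is the further quotient of $\mathcal{V}_\lambda^0(P_K)$ by this group action, which under the bijection above becomes the quotient of $\Iso(\partial\lambda,\unaryminus\Bl_{P_K})/\Aut(\lambda)$ by $\Homeo_\alpha(\Sigma_{g,1},\partial)$, yielding exactly the orbit set in the statement. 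One checks the two actions commute (as recorded at the end of Construction~\ref{cons:HomeoSigmaActionRelBoundary}), so the double quotient makes sense.

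\textbf{Main obstacle.} I expect the hard part to be establishing that $S \mapsto N_S$ really does induce a \emph{bijection} onto $\mathcal{V}_\lambda^0(P_K)$, and in particular pinning down exactly which self-homeomorphisms of $P_K$ can appear when comparing two equivalent surfaces --- i.e.\ proving that this ambiguity group is precisely the image of $\Homeo_\alpha(\Sigma_{g,1},\partial)$ and no larger. The subtlety, flagged in the introduction, is that $P_K$ depends on the auxiliary embedding $\alpha \colon \bigsqcup_{2c} D^2 \hookrightarrow \intt(\Sigma_{g,1})$ while $\operatorname{Surf}_\lambda^0$ does not; one must choose $\alpha$-compatible immersions throughout and verify the resulting bijection is independent of $\alpha$ (up to the canonical identifications), which forces the careful normal-bundle and uniqueness-of-tubular-neighborhood arguments of Sections~\ref{sec:ImmersionsAndNormalBundles}--\ref{sec:BoundarIdentifications}. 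A secondary technical point is checking that $\widehat\theta_*$ really is an isometry of $\Bl_{P_K}$ intertwining $\varphi$ --- but this is routine given Construction~\ref{cons:HomeoSigmaActionRelBoundary}. Once the bijection with $\mathcal{V}_\lambda^0(P_K)$ is in hand, the rest is a formal consequence of \cite[Theorem~1.1]{ConwayPiccirilloPowell} and bookkeeping of group actions.
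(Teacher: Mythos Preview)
Your overall strategy matches the paper's, but there is a structural confusion about where the $\Homeo_\alpha(\Sigma_{g,1},\partial)$ quotient enters. You claim that $S \mapsto (N_S, f)$ gives a well-defined map $\operatorname{Surf}_\lambda^0 \to \mathcal{V}_\lambda^0(P_K)$ which is moreover a \emph{bijection}, and then in your third step you say the set of surfaces is a ``further quotient of $\mathcal{V}_\lambda^0(P_K)$'' by $\Homeo_\alpha$. These assertions are incompatible. In fact the map from surfaces is not even well-defined into $\mathcal{V}_\lambda^0(P_K)$: the boundary identification $f \colon \partial N_S \to P_K$ depends on a choice of parametrizing immersion (along with double point charts, adapted normal bundle, and nice framing), and two immersions with the same image that differ by $\theta \in \Homeo_\alpha(\Sigma_{g,1},\partial)$ produce boundary identifications differing by $\widehat\theta$. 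So $(N_S,f)$ is only well-defined in the quotient $\mathcal{V}_\lambda^0(P_K)/\Homeo_\alpha$, and then there is no further quotient to take.

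The paper resolves this by inserting the set $\operatorname{Imm}_\alpha(g;c_+,c_-)^0_\lambda(N,K)$ of $\alpha$-compatible \emph{immersions} as an intermediate object: it is the map $\Theta \colon \operatorname{Imm}_\alpha \to \mathcal{V}_\lambda^0(P_K)$ that is a bijection (Proposition~\ref{prop:EmbVBijections}), and one checks $\Theta$ is $\Homeo_\alpha$-equivariant (Theorem~\ref{thm:SurfacesManifoldsMainProof}). Quotienting both sides and invoking $\operatorname{Surf}_\lambda^0 \cong \operatorname{Imm}_\alpha/\Homeo_\alpha$ (Proposition~\ref{prop:ImmIsImmalpha}) gives $\operatorname{Surf}_\lambda^0 \cong \mathcal{V}_\lambda^0(P_K)/\Homeo_\alpha$; then one verifies that the bijection $b \colon \mathcal{V}_\lambda^0(P_K) \to \Iso(\partial\lambda,\unaryminus\Bl_{P_K})/\Aut(\lambda)$ from \cite{ConwayPiccirilloPowell} is also $\Homeo_\alpha$-equivariant, which is a one-line check. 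Your ``main obstacle'' paragraph already anticipates that the ambiguity is exactly $\Homeo_\alpha$, so the fix is just to reorganize the argument around immersions rather than surfaces. (A secondary omission: when $\lambda$ is odd one must restrict to $\mathcal{V}_\lambda^{0,\varepsilon}(P_K)$ with $\varepsilon = \ks(N)$, since \cite{ConwayPiccirilloPowell} produces an extra $\Z_2$ factor detecting the Kirby--Siebenmann invariant.)
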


\subsection{Closed immersed $\Z$-surfaces up to homeomorphism}
\label{sub:SurfacesClosed}

This section focuses on the classification of closed immersed $\Z$-surfaces.
The statement is similar to Theorem~\ref{thm:SurfacesRelBoundary},  so we describe the actions more briskly.
In what follows,  $\Sigma$ denotes a closed surface and $U$ denotes the unknot.
\medbreak

Construction~\ref{cons:AutlambdaAction} shows that if a nondegenerate hermitian form $\lambda$ presents $\Bl_{P_U}$, then $\Aut(\lambda)$ acts on~$\Iso(\partial \lambda,\unaryminus \Bl_{P_U})$.
Use~$\Homeo_\alpha(\Sigma)$ to denote the group of orientation-preserving homeomorphisms~$\theta \colon \Sigma \to \Sigma$ that satisfy~$\theta \circ \alpha=\theta$.
Recall from Remark~\ref{rem:ClosedModels} that the constructions of Section~\ref{sec:PlumbedManifolds} can be repeated for closed surfaces and that we denote the resulting plumbed manifolds by $W_U$ and $P_U$.
Since~$\partial W_U=P_U$, the same procedure as in Construction~\ref{cons:HomeoSigmaActionRelBoundary} leads to an action of~$\Homeo_\alpha(\Sigma)$ on $\Iso(\partial \lambda,\unaryminus \Bl_{P_U})$: extending a homeomorphism~$\theta \in \Homeo_\alpha(\Sigma)$ by the identity on the $D^2$ factor leads to a homeomorphism of $W$ which can then be restricted to the boundary.
Again, these actions combine to form an action of~$\Aut(\lambda) \times \Homeo_\alpha(\Sigma)$ on~$\Iso(\partial \lambda,\unaryminus \Bl_{P_U})$.

The next theorem constitutes our main classification result on closed immersed $\Z$-surfaces,  which (together with Proposition~\ref{prop:SimplifyAlgebraClosed} below) establishes Theorem~\ref{thm:SurfacesClosedIntro} from the introduction.
Its proof can be found in Section~\ref{sub:ProofClosed}.

\begin{theorem}
\label{thm:SurfacesClosed}
Let~$X$ be a closed simply-connected~$4$-manifold, and let $c_+,c_-$ and $g$ be non-negative integers.
Set $c:=c_++c_-$.
Given a nondegenerate hermitian form~$\lambda$ over~$\Z[t^{\pm 1}]$, the following assertions are equivalent:
\begin{enumerate}
\item
the hermitian form~$\lambda$ presents~$\Bl_{P_{U,g}(c_+,c_-)}$ and satisfies~$\lambda(1)\cong Q_X \oplus  (0)^{\oplus 2g+c}$;
\item the set~$\operatorname{Surf}_\lambda(g;c_+,c_-)(X)$ is nonempty and there is a bijection
$$\operatorname{Surf}_\lambda(g;c_+,c_-)(X) \approx \frac{\Iso(\partial \lambda,\unaryminus \Bl_{P_{U,g}(c_+,c_-)})}{\Aut(\lambda) \times \Homeo_\alpha(\Sigma_g)}.$$
\end{enumerate}
\end{theorem}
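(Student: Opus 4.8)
The plan is to run the closed analogue of the proof of Theorem~\ref{thm:SurfacesRelBoundary} from Section~\ref{sub:ProofRelBoundary}, replacing the pair $(N,K)$ by $(X,\text{--})$ throughout and using the closed models $W_U,P_U$ of Remark~\ref{rem:ClosedModels}. The central object is the \emph{exterior map} $S\mapsto X_S$. After fixing the auxiliary embedding $\alpha$ of Section~\ref{sec:PlumbedManifolds} (closed case), let $\mathcal V_\lambda(P_{U,g}(c_+,c_-))$ denote the set of homeomorphism classes, rel.\ a fixed identification of the boundary with $P_{U,g}(c_+,c_-)$, of $4$-manifolds $Z$ with $\pi_1(Z)\cong\Z$, with $\partial Z\cong P_{U,g}(c_+,c_-)$ compatibly with the coefficient systems, and with $\lambda_Z\cong\lambda$. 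Using the identification $\partial X_S\cong P_{U,g}(c_+,c_-)$ supplied by Section~\ref{sec:BoundarIdentifications} (the closed analogue of Proposition~\ref{prop:IdentifyW}), which is canonical only up to the self-homeomorphisms of $\overline\nu(S)\cong W_U$ realised by $\Homeo_\alpha(\Sigma_g)$, I would show that $S\mapsto X_S$ descends to a bijection
$$\operatorname{Surf}_\lambda(g;c_+,c_-)(X)\;\approx\;\mathcal V_\lambda(P_{U,g}(c_+,c_-))^X\big/\Homeo_\alpha(\Sigma_g),$$
where $\mathcal V_\lambda(\cdot)^X\subseteq\mathcal V_\lambda(\cdot)$ consists of those $Z$ with $Z\cup_{P_{U,g}(c_+,c_-)}W_U\cong X$, and $\Homeo_\alpha(\Sigma_g)$ acts by precomposing the boundary identification with $\widehat\theta$ as in Construction~\ref{cons:HomeoSigmaActionRelBoundary}.

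For the equivalence of $(1)$ and $(2)$, the implication $(2)\Rightarrow(1)$ is immediate from the homological input already available: if $S\in\operatorname{Surf}_\lambda(g;c_+,c_-)(X)$, then Proposition~\ref{prop:HomologyZExterior} gives that $\pi_1(\partial X_S)\to\pi_1(X_S)$ is onto and $\lambda_{X_S}(1)\cong Q_X\oplus(0)^{\oplus 2g+c}$, while $H_1(P_{U,g}(c_+,c_-);\Z[t^{\pm1}])$ is torsion by Propositions~\ref{prop:HomologyPZZ} and~\ref{prop:BlPK}; hence Proposition~\ref{prop:PresentsBlanchfield} supplies an isometry $\partial\lambda\cong\partial\lambda_{X_S}\cong\unaryminus\Bl_{P_{U,g}(c_+,c_-)}$, i.e.\ $\lambda$ presents $\Bl_{P_{U,g}(c_+,c_-)}$. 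For $(1)\Rightarrow(2)$: assuming $\lambda$ presents $\Bl_{P_{U,g}(c_+,c_-)}$ and $\lambda(1)\cong Q_X\oplus(0)^{\oplus 2g+c}$, the classification of $\Z$-fillings \cite[Theorem~1.1]{ConwayPiccirilloPowell} yields both an element of $\mathcal V_\lambda(P_{U,g}(c_+,c_-))$ and a bijection $\mathcal V_\lambda(P_{U,g}(c_+,c_-))\approx\Iso(\partial\lambda,\unaryminus\Bl_{P_{U,g}(c_+,c_-)})/\Aut(\lambda)$; a Mayer--Vietoris computation building on Proposition~\ref{prop:HomologyZExterior} shows $Q_{Z\cup W_U}\cong Q_X$, and Freedman's classification of closed simply-connected topological $4$-manifolds \cite{FreedmanQuinn}—matching the intersection form and, when $\lambda$ (equivalently $Q_X$) is odd, the Kirby--Siebenmann invariant—then gives $Z\cup_{P_{U,g}(c_+,c_-)}W_U\cong X$, so $\mathcal V_\lambda(\cdot)^X=\mathcal V_\lambda(\cdot)$ is nonempty and the immersion $[\times\lbrace 0\rbrace]\colon\Sigma_g\looparrowright W_U$ of Remark~\ref{rem:ClosedModels} realises a surface in $\operatorname{Surf}_\lambda(g;c_+,c_-)(X)$. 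Composing the exterior bijection with that of \cite[Theorem~1.1]{ConwayPiccirilloPowell}, and checking that the $\Homeo_\alpha(\Sigma_g)$-action on fillings corresponds under this bijection to the action of Construction~\ref{cons:HomeoSigmaActionRelBoundary} on isometries, gives
$$\operatorname{Surf}_\lambda(g;c_+,c_-)(X)\;\approx\;\frac{\Iso(\partial\lambda,\unaryminus\Bl_{P_{U,g}(c_+,c_-)})}{\Aut(\lambda)\times\Homeo_\alpha(\Sigma_g)},$$
the two actions commuting automatically since one acts by precomposition with $\partial\varpi$ and the other by postcomposition with $\widehat\theta_*$.

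The main obstacle is not the algebra but the careful setup of the exterior map. First one must show $S\mapsto X_S$ is well defined and injective: this rests on uniqueness of tubular neighborhoods of locally flat immersed surfaces together with the identification of $\partial X_S$ with the boundary structure of $W_U$ (Sections~\ref{sec:ImmersionsAndNormalBundles} and~\ref{sec:BoundarIdentifications}), and on showing that a homeomorphism $X_{S_0}\to X_{S_1}$ respecting the boundary identifications up to $\Homeo_\alpha(\Sigma_g)$ extends across $W_U$ to a homeomorphism of $X$ carrying $S_0$ to $S_1$. Second, and most delicately, both $W_U$ and $P_{U,g}(c_+,c_-)$ depend on the auxiliary embedding $\alpha$ whereas $\operatorname{Surf}_\lambda(g;c_+,c_-)(X)$ does not, so one must verify that the displayed bijection and the target orbit set are independent of $\alpha$; this forces one to work with immersions that are suitably $\alpha$-compatible and then argue that changing $\alpha$ changes nothing up to the relevant equivalences. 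Finally, the Kirby--Siebenmann bookkeeping in $(1)\Rightarrow(2)$ when $\lambda$ is odd requires knowing $P_{U,g}(c_+,c_-)$ admits fillings realising $\lambda$ with either value of the Kirby--Siebenmann invariant, paralleling the embedded case \cite{ConwayPowell,ConwayPiccirilloPowell}. Alternatively, one can bypass part of this by deleting an open ball $B\subset X$ meeting $S$ in a trivial disk, reducing the closed statement to Theorem~\ref{thm:SurfacesRelBoundary} for $N=X\setminus B$ and $K=U$, at the cost of promoting $\Homeo_\alpha(\Sigma_{g,1},\partial)$-orbits to $\Homeo_\alpha(\Sigma_g)$-orbits by absorbing the capping homeomorphisms.
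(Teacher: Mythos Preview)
Your proposal is correct and follows essentially the same route as the paper: the paper too adapts Sections~\ref{sec:ImmersionsAndNormalBundles}--\ref{sec:MainStatement} to the closed setting, establishes the exterior map as a bijection $\operatorname{Surf}_\lambda(g;c_+,c_-)(X)\to \mathcal V_\lambda^0(P_U)/\Homeo_\alpha(\Sigma_g)$ (with the $\varepsilon$-restriction when $\lambda$ is odd), and then composes with the bijection $b$ of \cite[Theorem~1.1]{ConwayPiccirilloPowell}, checking equivariance under $\Homeo_\alpha(\Sigma_g)$; the Mayer--Vietoris and Freedman steps you describe are exactly the content of the closed analogue of Construction~\ref{cons:Psi}. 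The one organizational difference is that the paper mediates the exterior map through the set $\operatorname{Imm}_\alpha(g;c_+,c_-)(X)$ of $\alpha$-compatible immersions rather than working directly with surfaces, but you correctly identify this as the substance of the ``main obstacle'' paragraph.
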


\subsection{Simplifications of the targets}
\label{sub:Simplifications}

Sections~\ref{sub:SurfacesRelBoundary} and~\ref{sub:SurfacesClosed} described bijections between equivalence classes of surfaces and certain sets of isometries. 
In this section, we recast these sets so as to make them more amenable to calculations.
We continue with the notation of the previous section.

\begin{convention}
\label{conv:TargetSimplify}
Let~$Y$ be a closed~$3$-manifold and let~$\psi \colon \pi_1(Y) \twoheadrightarrow \Z$ be an epimorphism.
If a hermitian form~$\lambda$ presents~$\Bl_Y$, then there are noncanonical bijections
$$\Iso(\partial \lambda,\unaryminus \Bl_Y) \cong \Aut(\Bl_Y).$$
Indeed,  since~$\lambda$ presents~$\Bl_Y$,  the leftmost set is nonempty, so fixing a~$h \in \Iso(\partial \lambda,\unaryminus \Bl_Y)$ (there is no preferred such choice), the bijection is~$g \mapsto g\circ  h^{-1}$.

Throughout this section, we will make such identifications implicitly and use them to transport actions on~$\Iso(\partial \lambda,\unaryminus \Bl_Y)$ to actions on~$\Aut(\Bl_Y)$.
In particular,  if~$\lambda$ presents~$\Bl_{P_K}$, then will we use without any further mention that the set~$\Aut(\Bl_{P_K})$ supports actions of~$\Aut(\lambda)$ and~$\Homeo_\alpha(\Sigma)$.
\end{convention}

\begin{proposition}
\label{prop:SimplifyAlgebraClosed}
Let~$\Sigma$ be a closed genus $g$ surface and set $P_U:=P_{U,g}(c_+,c_-).$
If a nondegenerate hermitian form~$\lambda$ presents~$\Bl_{P_U}$, then there is a bijection
$$ \frac{\Iso(\partial \lambda,\unaryminus \Bl_{P_U})}{\Aut(\lambda) \times \Homeo_\alpha(\Sigma)}\cong 
\frac{\Aut(\Bl_{P_U})}{\Aut(\lambda) \times \Homeo_\alpha(\Sigma)}.$$
\end{proposition}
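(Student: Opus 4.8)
The plan is to exploit Convention~\ref{conv:TargetSimplify}, which fixes, once and for all, a reference isometry $h \in \Iso(\partial \lambda, \unaryminus\Bl_{P_U})$ and uses it to define a bijection
$$\Phi \colon \Iso(\partial \lambda, \unaryminus \Bl_{P_U}) \xrightarrow{\ \cong\ } \Aut(\Bl_{P_U}), \qquad \Phi(\phi) = \phi \circ h^{-1}.$$
So the content of the proposition is purely formal: one must check that, under the transport-of-structure along $\Phi$, the action of $\Aut(\lambda) \times \Homeo_\alpha(\Sigma)$ on the source corresponds to the action on the target that was described immediately before the statement of Proposition~\ref{prop:SimplifyAlgebraClosed} (via Convention~\ref{conv:TargetSimplify}). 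Once that compatibility is in place, $\Phi$ descends to a bijection on orbit sets, which is exactly the claimed statement.

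First I would recall the two actions on $\Iso(\partial\lambda, \unaryminus\Bl_{P_U})$. By Construction~\ref{cons:AutlambdaAction}, $\varpi \in \Aut(\lambda)$ acts by $\varpi \cdot \phi = \phi \circ \partial\varpi^{-1}$ (precomposition), and by the closed analogue of Construction~\ref{cons:HomeoSigmaActionRelBoundary}, $\theta \in \Homeo_\alpha(\Sigma)$ acts by $\theta \cdot \phi = \widehat{\theta}_* \circ \phi$ (postcomposition by the isometry of $\Bl_{P_U}$ induced by the homeomorphism $\widehat\theta \colon P_U \to P_U$). These commute, giving the combined action $(\varpi,\theta)\cdot\phi = \widehat\theta_* \circ \phi \circ \partial\varpi^{-1}$. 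Then I would compute the transported action on $\Aut(\Bl_{P_U})$: for $f \in \Aut(\Bl_{P_U})$, write $f = \Phi(\phi) = \phi \circ h^{-1}$, so $\phi = f \circ h$, and
$$\Phi\big((\varpi,\theta)\cdot\phi\big) = \widehat\theta_* \circ \phi \circ \partial\varpi^{-1} \circ h^{-1} = \widehat\theta_* \circ f \circ h \circ \partial\varpi^{-1} \circ h^{-1} = \widehat\theta_* \circ f \circ \big(h \circ \partial\varpi \circ h^{-1}\big)^{-1}.$$
This is precisely the action on $\Aut(\Bl_{P_U})$ obtained by conjugating the $\Aut(\lambda)$-action through $h$ and letting $\Homeo_\alpha(\Sigma)$ act by postcomposition — which is the action referred to in Convention~\ref{conv:TargetSimplify}. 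Hence $\Phi$ is $(\Aut(\lambda)\times\Homeo_\alpha(\Sigma))$-equivariant (after the identification of source and target actions fixed by the Convention), and therefore induces the desired bijection on quotients.

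The only genuinely substantive point — and the one I would flag as the main obstacle, though it is minor — is that the bijection $\Phi$ depends on the auxiliary choice of $h$, so one should be careful that the statement is about the orbit sets relative to the \emph{same} choice of $h$ used to transport the actions in Convention~\ref{conv:TargetSimplify}; with that caveat the argument is a one-line diagram chase. I would also briefly note that one uses here that $\Bl_{P_U}$ is nonsingular (since $P_U$ is closed, by the discussion in Section~\ref{sub:BlanchfieldIntro}), so $\Iso(\partial\lambda,\unaryminus\Bl_{P_U})$ is a torsor over $\Aut(\Bl_{P_U})$ and $\Phi$ is genuinely a bijection rather than merely a map. No further input is needed; everything else is formal transport of structure.
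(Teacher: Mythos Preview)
Your proposal is correct and takes essentially the same approach as the paper: the paper's proof is the single line ``This follows directly from Convention~\ref{conv:TargetSimplify},'' and you have simply unpacked that convention by writing down the bijection $\Phi(\phi)=\phi\circ h^{-1}$ and verifying explicitly that it intertwines the transported actions. Your additional remarks about the dependence on the choice of $h$ and the torsor structure are accurate and consistent with what the paper says just before and within Convention~\ref{conv:TargetSimplify}.
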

\begin{proof}
This follows directly from Convention~\ref{conv:TargetSimplify}.
\end{proof}

Next we focus on the target of the bijection from Theorem~\ref{thm:SurfacesRelBoundary} which classifies immersed~$\Z$-surfaces with boundary a knot~$K$.
Assume that~$\lambda$ presents~$\Bl_{P_K}$.
As noted in Convention~\ref{conv:TargetSimplify}, there is a bijection~$ \Iso(\partial \lambda,\unaryminus \Bl_{P_K}) \cong \Aut(\Bl_{P_K})$ and Proposition~\ref{prop:AutBlPK} then implies that the inclusions~$P \subset P_K$ and~$E_K \subset P_K$ induce an isomorphism
\begin{equation}
\label{eq:SplitBlPk}
 \Aut(\Bl_{P}) \oplus \Aut(\Bl_K) \xrightarrow{\cong}  \Aut(\Bl_{P_K}).
 \end{equation}
Note furthermore that~$\Homeo_\alpha(\Sigma)$ acts on~$\Aut(\Bl_P)$ as~$\theta \cdot h=\widehat{\theta}_* \circ h$ and that this agrees with the restriction of the action of~$\Homeo_\alpha(\Sigma)$ on $\Aut(\Bl_{P_K})$ to~$ \Aut(\Bl_P)$.
The fact that the~$\Homeo_\alpha(\Sigma)$-action on~$\Aut(\Bl_{P_K})$ restricts to $\Aut(\Bl_P)$ uses that $\Homeo_\alpha(\Sigma)$ acts trivially on $\Aut(\Bl_K).$

\begin{proposition}
\label{prop:SimplifyAlgebraBoundary}
Let~$\Sigma$ be a genus $g$ surface with one boundary component.
Set~$P_K:=P_{K,g}(c_+,c_-)$ and~$P:=P_g(c_+,c_-).$
If a nondegenerate hermitian form~$\lambda$ presents~$\Bl_{P_K}$, then there is a bijection
\begin{align*}
&\frac{\Iso(\partial \lambda,\unaryminus \Bl_{P_K})}{\Aut(\lambda) \times \Homeo_\alpha(\Sigma,\partial)} \cong
  \frac{(\Aut(\Bl_P)/\Homeo_\alpha(\Sigma,\partial))\times \Aut(\Bl_K)}{\Aut(\lambda)}.
\end{align*}
\end{proposition}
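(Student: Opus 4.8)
The plan is to transport the entire statement onto $\Aut(\Bl_{P_K})$ by means of Convention~\ref{conv:TargetSimplify}, and then to split off the $\Aut(\Bl_K)$-factor using Proposition~\ref{prop:AutBlPK} together with the fact, recorded just before the proposition, that $\Homeo_\alpha(\Sigma,\partial)$ acts trivially on that factor.

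First I would fix a choice of $h\in\Iso(\partial\lambda,\unaryminus\Bl_{P_K})$ — possible because $\lambda$ presents $\Bl_{P_K}$ — and use the resulting bijection $\Psi\colon\Iso(\partial\lambda,\unaryminus\Bl_{P_K})\xrightarrow{\approx}\Aut(\Bl_{P_K})$, $\phi\mapsto\phi\circ h^{-1}$, from Convention~\ref{conv:TargetSimplify}. Unwinding Constructions~\ref{cons:AutlambdaAction} and~\ref{cons:HomeoSigmaActionRelBoundary}, one checks that under $\Psi$ the $\Homeo_\alpha(\Sigma,\partial)$-action becomes postcomposition, $g\mapsto\widehat\theta_*\circ g$, while the $\Aut(\lambda)$-action becomes precomposition, $g\mapsto g\circ(h\circ\partial\varpi^{-1}\circ h^{-1})$, with the conjugated isometry $h\circ\partial\varpi^{-1}\circ h^{-1}\in\Aut(\Bl_{P_K})$. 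Since one action is by postcomposition and the other by precomposition, they commute, and hence
\[
\frac{\Iso(\partial\lambda,\unaryminus\Bl_{P_K})}{\Aut(\lambda)\times\Homeo_\alpha(\Sigma,\partial)}
\;\approx\;
\Homeo_\alpha(\Sigma,\partial)\,\backslash\,\Aut(\Bl_{P_K})\,/\,\Aut(\lambda),
\]
where the left $\Homeo_\alpha(\Sigma,\partial)$- and right $\Aut(\lambda)$-actions are written multiplicatively via the homomorphisms $\theta\mapsto\widehat\theta_*$ and $\varpi\mapsto h\circ\partial\varpi^{-1}\circ h^{-1}$.

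Next I would invoke Proposition~\ref{prop:AutBlPK} to identify $\Aut(\Bl_{P_K})$ with $\Aut(\Bl_P)\oplus\Aut(\Bl_K)$ via the inclusions $P\subset P_K$ and $E_K\subset P_K$. Since the homeomorphism $\widehat\theta$ of Construction~\ref{cons:HomeoSigmaActionRelBoundary} is the identity on $E_K$, the induced isometry $\widehat\theta_*$ preserves this direct-sum decomposition and restricts to the identity on the $\Aut(\Bl_K)$-summand; thus the image of $\theta\mapsto\widehat\theta_*$ lies in $\Aut(\Bl_P)\oplus\{\id\}$, so postcomposition by it affects only the $\Aut(\Bl_P)$-coordinate. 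Quotienting a product of groups by postcomposition with a subgroup of the first factor then gives
\[
\Homeo_\alpha(\Sigma,\partial)\,\backslash\,\Aut(\Bl_{P_K})
\;\approx\;
\big(\Aut(\Bl_P)\,/\,\Homeo_\alpha(\Sigma,\partial)\big)\times\Aut(\Bl_K),
\]
and this identification is compatible with the residual right $\Aut(\lambda)$-action, which descends to the quotient precisely because the $\Aut(\lambda)$- and $\Homeo_\alpha(\Sigma,\partial)$-actions commute. Passing to the $\Aut(\lambda)$-orbit set now yields the asserted bijection.

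The argument is essentially formal, so the only points needing genuine care are the two structural facts used above: that $\widehat\theta_*$ respects the splitting of Proposition~\ref{prop:AutBlPK} and is trivial on the $\Aut(\Bl_K)$-summand — so that the $\Homeo_\alpha(\Sigma,\partial)$-quotient touches only the $\Aut(\Bl_P)$-factor — and that the $\Aut(\lambda)$- and $\Homeo_\alpha(\Sigma,\partial)$-actions commute — so that the two quotients may be performed in either order. Both become transparent once the actions have been carried over to $\Aut(\Bl_{P_K})$ as in the first step; that transport, a routine but slightly tedious unwinding of Constructions~\ref{cons:AutlambdaAction} and~\ref{cons:HomeoSigmaActionRelBoundary}, is where the bulk of the bookkeeping lies, and is the only place I would expect to have to be careful.
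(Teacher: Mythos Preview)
Your proof is correct and follows essentially the same approach as the paper: both transport to $\Aut(\Bl_{P_K})$ via Convention~\ref{conv:TargetSimplify}, invoke the splitting of Proposition~\ref{prop:AutBlPK}, and use that $\widehat\theta_*$ is the identity on the $\Aut(\Bl_K)$-summand. The only difference is packaging: you phrase the argument as recognizing a double-coset structure (left $\Homeo_\alpha$-action by postcomposition, right $\Aut(\lambda)$-action by precomposition, hence commuting) and then factoring the quotient, whereas the paper writes down the explicit map $f\mapsto[([f_P],f_K)]$ and checks well-definedness, surjectivity, and injectivity by hand. The two are the same verification in different clothing.
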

\begin{proof}
Proposition~\ref{prop:AutBlPK} ensures that every isometry~$f \in \Aut(\Bl_{P_K})$ decomposes as~$(f_P,f_K)$ with~$f_P \in \Aut(\Bl_P)$ and~$f_K \in \Aut(\Bl_K)$.
We verify that the assignment~$f \mapsto [([f_P],f_K)]$ gives the required bijection.
First, note that this map does indeed descend to the relevant orbit sets: since the action of~$\Homeo_\alpha(\Sigma,\partial)$ on~$ \Aut(\Bl_{P_K})$ is trivial on~$\Aut(\Bl_K)$,  an element~$\widehat{\theta}_* \circ f \circ \partial \varpi$ with~$\varpi \in \Aut(\lambda)$ and~$\theta \in \Homeo_\alpha(\Sigma,\partial)$ is mapped to~$(\widehat{\theta}_* \circ f_P \circ (\partial \varpi)_P,f_K \circ (\partial \varpi)_K)$ and the latter is equivalent to~$(f_P,f_K).$
The assignment on the orbit sets certainly remains surjective,  so it only remains to prove it is injective.
Assume that~$[f]$ and~$[g]$ are such that~$[([f_P],f_K)]=[([g_P],g_K)]$,  meaning that~$f_P=\widehat{\theta}_* \circ g_P \circ (\partial \varpi)_P$ and~$f_K=g_K \circ (\partial \varpi)_K$ for~$\varpi \in \Aut(\lambda)$ and~$\theta \in \Homeo_\alpha(\Sigma,\partial)$,  but this is precisely the condition that~$f=\widehat{\theta}_* \circ g \circ \partial \varpi$,  establishing injectivity.
\end{proof}

\section{Equivariant intersection forms of immersed~$\Z$-surface exteriors}
\label{sec:EquivariantIntersectionForm}

One take away from Section~\ref{sec:Classification} is the following: in order to understand closed immersed $\Z$-surfaces whose exteriors have a prescribed equivariant intersection form $\lambda$, we need to analyze the orbit set~$\Aut(\Bl_{P_U})/(\Aut(\lambda) \times \Homeo_\alpha(\Sigma))$.
In this section we begin by collecting facts related to equivariant intersection forms that will be relevant to this analysis.

\medbreak

We begin by describing the $\Z[t^{\pm 1}]$-homology of an immersed $\Z$-surface exterior.
In the next lemma it is understood that if $N$ has boundary $\partial N \cong S^3$ and $S \subset N$ is an immersed surface,  then~$\partial S \subset \partial N$ is nonempty.

\begin{lemma}
\label{lem:HomologyZZExterior}
Let $N$ be a simply-connected $4$-manifold that is either closed or has $\partial N \cong S^3$.
The~$\Z[t^{\pm 1}]$-homology of the exterior of a genus~$g$ immersed~$\Z$-surface~$S \subset N$ with~$c$ double points~is
$$
H_i(N_S;\Z[t^{\pm 1}])\cong
\begin{cases}
\Z_\varepsilon &\quad \text{ for } i=0, \\
\Z[t^{\pm 1}]^{2g+c+b_2(N)} &\quad \text{ for } i=2, \\
0 &\quad \text{ otherwise.}
\end{cases}
$$
\end{lemma}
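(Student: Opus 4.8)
The plan is to run a Mayer--Vietoris argument over $\Z[t^{\pm1}]$ for the decomposition $N = N_S \cup_P W$ (in the closed case $X = X_S \cup_{P_U} W_U$), exactly paralleling the integral computation of Proposition~\ref{prop:HomologyZExterior}, but now feeding in the $\Z[t^{\pm1}]$-homology computations already established for the pieces. First I would recall that $W$ deformation retracts onto the immersed surface $[\times\{0\}]\colon \Sigma \looparrowright W$, which in turn retracts onto the wedge of genus and plumbing loops; since the coefficient system $\varphi$ kills all of these loops (Remark~\ref{rem:CoeffSystP}), the infinite cyclic cover $W^\infty$ is homotopy equivalent to a wedge of $\Z$'s-worth of copies of the wedge, giving $H_0(W;\Z[t^{\pm1}]) = \Z_\varepsilon$, $H_1(W;\Z[t^{\pm1}])$ a free $\Z_\varepsilon$-module (of rank $2g+c$), and $H_i(W;\Z[t^{\pm1}]) = 0$ for $i\geq 2$. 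Likewise, by Proposition~\ref{prop:HomologyPZZ}, $H_*(P;\Z[t^{\pm1}])$ is $\Z_\varepsilon$ in degree $0$, torsion in degree $1$, and $0$ otherwise, and by Remark~\ref{rem:CoeffSystP} the gluing torus $T^2 = \partial P$ has $H_*(T^2;\Z[t^{\pm1}])$ concentrated in degrees $0,1$ with $H_1(T^2;\Z[t^{\pm1}]) \cong \Z[t^{\pm1}]/(t-1)$.

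Next I would write down the relevant stretch of the Mayer--Vietoris sequence. Since $N$ is simply-connected, $H_1(N;\Z[t^{\pm1}]) = 0$ and $H_i(N;\Z[t^{\pm1}]) \cong H_i(N)\otimes \Z_\varepsilon$ for all $i$ (the cover of $N$ is trivial), so $H_2(N;\Z[t^{\pm1}]) \cong \Z_\varepsilon^{b_2(N)}$ and $H_3(N;\Z[t^{\pm1}]) = 0$. The low-degree part
\begin{equation*}
H_1(T^2;\Z[t^{\pm1}]) \to H_1(W;\Z[t^{\pm1}]) \oplus H_1(N_S;\Z[t^{\pm1}]) \to H_1(N;\Z[t^{\pm1}]) = 0
\end{equation*}
together with $H_0$-injectivity shows $H_1(N_S;\Z[t^{\pm1}])$ is a quotient of (a submodule-complement of) a torsion module by the image of $H_1(T^2;\Z[t^{\pm1}])$; combined with the fact that $H_1(N_S) \cong \Z$ is torsion-free (Proposition~\ref{prop:HomologyZExterior}), a Milnor exact sequence / order argument forces $H_1(N_S;\Z[t^{\pm1}]) = 0$ and $H_0(N_S;\Z[t^{\pm1}]) = \Z_\varepsilon$. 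Actually the cleanest route here is to observe that $\pi_1(\partial N_S) \to \pi_1(N_S) \cong \Z$ is surjective (Proposition~\ref{prop:HomologyZExterior}) and $N_S$ has the rational homology of a wedge of $2$-spheres relative to its boundary; one then gets $H_1(N_S;\Z[t^{\pm1}]) = 0$ from the fact that the Alexander module of $N_S$ is both torsion (by a half-lives-half-dies / duality count against $\partial N_S$, whose Blanchfield form is nonsingular by Proposition~\ref{prop:BlPK}) and generated by $\mu_S$ with $(t-1)\mu_S$ bounding. Then $H_3(N_S;\Z[t^{\pm1}]) \cong H^1(N_S,\partial N_S;\Z[t^{\pm1}]) = 0$ by duality and UCT since $H_0(N_S,\partial N_S;\Z[t^{\pm1}]) = H_1(N_S,\partial N_S;\Z[t^{\pm1}]) = 0$, and $H_4 = 0$ trivially.

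Finally, for $H_2$: moving left in the Mayer--Vietoris sequence,
\begin{equation*}
0 \to H_2(P;\Z[t^{\pm1}]) \to H_2(W;\Z[t^{\pm1}]) \oplus H_2(N_S;\Z[t^{\pm1}]) \to H_2(N;\Z[t^{\pm1}]) \to H_1(P;\Z[t^{\pm1}]) \to \cdots
\end{equation*}
becomes, using $H_2(P;\Z[t^{\pm1}]) = 0 = H_2(W;\Z[t^{\pm1}])$ and the fact that the connecting map $H_2(N;\Z[t^{\pm1}]) \to H_1(P;\Z[t^{\pm1}])$ vanishes (it lands in a torsion module from a free-over-$\Z_\varepsilon$ one, and one checks surjectivity of $H_1(P) \to H_1(W)\oplus H_1(N_S)$ kills it, as in the integral case), a short exact sequence $0 \to H_3(N;\Z[t^{\pm1}]) \to H_3(\ldots) \to \ldots$ and ultimately $H_2(N_S;\Z[t^{\pm1}]) \cong H_2(N;\Z[t^{\pm1}]) \oplus \Z[t^{\pm1}]^{2g+c}$, where the free $\Z[t^{\pm1}]$-summand of rank $2g+c$ comes from a lift of the $2g+c$ torsion-free generators detected via the boundary $P^\infty$ (the genus and plumbing $2$-cycles). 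Since $N$ is simply-connected $H_2(N;\Z[t^{\pm1}])$ is free over $\Z_\varepsilon$, but one must be slightly careful: the assertion is that the total is free over $\Z[t^{\pm1}]$ of rank $2g+c+b_2(N)$, so the $\Z_\varepsilon^{b_2(N)}$ part must actually be promoted to a genuinely free $\Z[t^{\pm1}]$-summand — this is where I would use that $N_S^\infty$ is a finite cover-complex whose second homology, being the only nonvanishing reduced homology group and torsion-free over $\Z$ with trivial $t$-action impossible since $H_1 = 0 = H_3$ forces $H_2$ free over $\Z[t^{\pm1}]$ by a standard argument (a f.g.\ $\Z[t^{\pm1}]$-module with $H_1 = 0$ appearing as the top homology of a finite complex of dimension $\leq 2$ with $H_0$ free is itself free). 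I expect this last point — upgrading ``free over $\Z_\varepsilon$'' plus ``torsion-free'' to ``free over $\Z[t^{\pm1}]$'', i.e.\ ruling out any $\Z_\varepsilon$-summand surviving — to be the main obstacle, and I would handle it by the projective-dimension argument: $H_2(N_S;\Z[t^{\pm1}])$ has projective dimension $0$ because $H_1 = H_3 = 0$ and $N_S$ has a handle decomposition with handles of index $\leq 3$ (the chain complex $C_*(N_S^\infty)$ is chain homotopy equivalent to a complex of f.g.\ free modules concentrated in degrees $0,1,2$ with $H_0$ and $H_2$ the only homology and $H_0 = \Z_\varepsilon$; standard algebra then yields $H_2$ stably free, hence free since $\Z[t^{\pm1}]$ has trivial $K_0$), which pins down the rank via Euler characteristic as $2g + c + b_2(N)$.
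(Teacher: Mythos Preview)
Your central approach has a genuine gap: you cannot run a Mayer--Vietoris sequence with $\Z[t^{\pm1}]$-coefficients over the decomposition $N=N_S\cup_P W$, because the local coefficient system on $N_S$ does not extend to one on $N$ (equivalently, does not restrict compatibly to $W$). The meridian $\mu$ generates $\pi_1(N_S)\cong\Z$ and is sent to $1$ under $\varphi$, but $\mu$ bounds a normal disk in $W$ and hence dies in $\pi_1(W)$; consequently $\varphi|_P$ does not factor through $\pi_1(W)$, and there is no $\Z$-cover of $W$ whose restriction to $P$ is $P^\infty$. This is also why your computations of $H_*(W;\Z[t^{\pm1}])$ and $H_*(N;\Z[t^{\pm1}])$ come out inconsistently: with the only available (trivial) representation on $W$ and $N$ one gets $H_*(-)\otimes_\Z \Z[t^{\pm1}]$, i.e.\ free $\Z[t^{\pm1}]$-modules, not $\Z_\varepsilon$-modules as you write, and the sequence then does not splice with the genuinely twisted groups on $N_S$ and $P$.

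The paper instead argues intrinsically on $N_S$, which is essentially your ``cleanest route'' paragraph carried out cleanly. Since $N_S^\infty$ is the universal cover, $H_1(N_S;\Z[t^{\pm1}])=0$ is immediate. Surjectivity of $\pi_1(\partial N_S)\to\pi_1(N_S)$ (Proposition~\ref{prop:HomologyZExterior}) and torsionness of $H_1(\partial N_S;\Z[t^{\pm1}])$ (Proposition~\ref{prop:HomologyPKZZ}) feed into a general lemma about $4$-manifolds with $\pi_1\cong\Z$ \cite[Lemma~3.2]{ConwayPowell}, which packages the duality/UCSS argument for $H_3=0$ together with the chain-level projective-dimension argument (and $\widetilde K_0(\Z[t^{\pm1}])=0$) for freeness of $H_2$ that you sketch at the end. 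The rank is then read off from $\chi(N_S)=\chi^{\Z[t^{\pm1}]}(N_S)$ and the integral computation $b_2(N_S)=2g+c+b_2(N)$ of Proposition~\ref{prop:HomologyZExterior}. So drop the Mayer--Vietoris scaffolding entirely and keep only the direct argument on $N_S$.
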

\begin{proof}
We focus on the case where $\partial N \cong S^3$ and $K:=\partial S$ is a knot,  as the closed case is entirely analogous.
 Proposition~\ref{prop:HomologyPKZZ} proves that~$H_1(\partial N_S;\Z[t^{\pm 1}]) \cong H_1(P_K;\Z[t^{\pm 1}])$ is torsion and Proposition~\ref{prop:HomologyZExterior} shows that~$\pi_1(\partial N_S) \to \pi_1(N_S) \cong \Z$ is surjective.
We now apply~\cite[Lemma~3.2]{ConwayPowell} to deduce that~$H_0(N_S;\Z[t^{\pm 1}]) \cong \Z_\varepsilon,H_i(N_S;\Z[t^{\pm 1}])=0$ for~$i \neq 0,2$ and that~$H_2(N_S;\Z[t^{\pm 1}])$ is free.
An Euler characteristic argument now implies that~$H_2(N_S;\Z[t^{\pm 1}]) \cong \Z[t^{\pm 1}]^{b_2(N_S)}$.
Proposition~\ref{prop:HomologyZExterior} calculated $b_2(N_S)=2g+c+b_2(N)$, thus concluding the proof.
\end{proof}

We list some facts about the equivariant intersection forms of immersed $\Z$-surface exteriors.

\begin{proposition}
\label{prop:NecessaryConditions}
~
\begin{itemize}
\item Let $N$ be a simply-connected $4$-manifold with $\partial N \cong S^3$ and let $K \subset \partial N$ be a knot.  
If~$S \subset N$ is a genus $g$ immersed $\Z$-surface with boundary $K$ and $c$ double points, then 
$\lambda_{N_S}$ is nondegenerate, presents $\Bl_{P_K}$ and satisfies~$\lambda_{N_S}(1) \cong Q_N \oplus (0)^{2g+c}$.
\item Let $X$ be a closed simply-connected $4$-manifold.
If $S \subset X$ is a genus $g$ closed immersed~$\Z$-surface with $c$ double points, then 
$\lambda_{X_S}$ is nondegenerate, presents $\Bl_{P_U}$ and satisfies~$\lambda_{X_S}(1) \cong Q_X \oplus (0)^{2g+c}$.
\end{itemize}
\end{proposition}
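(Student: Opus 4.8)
The plan is to treat the two bullet points simultaneously, writing~$M$ for either~$N$ (with $\partial M = S^3$, $\partial S = K$) or~$X$ (closed, with $K=U$ the unknot), and~$M_S$ for the surface exterior, whose boundary is~$P_K$ by the identifications of Section~\ref{sec:PlumbedManifolds} (to be made explicit in Section~\ref{sec:BoundarIdentifications}). The three claims — nondegeneracy of~$\lambda_{M_S}$, that~$\lambda_{M_S}$ presents~$\Bl_{P_K}$, and that~$\lambda_{M_S}(1)\cong Q_M\oplus(0)^{2g+c}$ — will be deduced in that order, the first two from the general machinery of Section~\ref{sub:BlanchfieldIntro} together with the homology computations of Sections~\ref{sec:HomologyZ} and the last from Proposition~\ref{prop:HomologyZExterior}.

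First I would record the hypotheses needed to apply Proposition~\ref{prop:PresentsBlanchfield} to~$W:=M_S$. By Proposition~\ref{prop:HomologyZExterior} (and its closed analogue, also stated there), $\pi_1(M_S)\cong H_1(M_S)\cong\Z$, generated by a meridian~$\mu_S$ of~$S$; the same proposition gives that $\pi_1(\partial M_S)\to\pi_1(M_S)$ is surjective. By Proposition~\ref{prop:HomologyPKZZ} we have that $H_1(\partial M_S;\Z[t^{\pm1}])\cong H_1(P_K;\Z[t^{\pm1}])$ is $\Z[t^{\pm1}]$-torsion (it is the direct sum of~$H_1(E_K;\Z[t^{\pm1}])$, which is torsion since~$\Delta_K(1)=\pm1$, and~$H_1(P;\Z[t^{\pm1}])$, which is torsion by Proposition~\ref{prop:HomologyPZZ}). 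Hence Proposition~\ref{prop:PresentsBlanchfield} applies and yields an isometry $D_{M_S}\colon\partial\lambda_{M_S}\cong\unaryminus\Bl_{P_K}$; in particular~$\lambda_{M_S}$ is a nondegenerate hermitian form presenting~$\Bl_{P_K}$, which is the first two assertions. The one point requiring care here is the nondegeneracy of~$\lambda_{M_S}$ as a hypothesis for forming~$\partial\lambda_{M_S}$: this should follow because $H_2(M_S;\Z[t^{\pm1}])$ is a finitely generated free $\Z[t^{\pm1}]$-module (Lemma~\ref{lem:HomologyZZExterior}) and, by Poincar\'e--Lefschetz duality and the universal coefficient spectral sequence together with the torsionness of $H_1(\partial M_S;\Z[t^{\pm1}])$, the adjoint~$\widehat{\lambda}_{M_S}\colon H_2(M_S;\Z[t^{\pm1}])\to H_2(M_S;\Z[t^{\pm1}])^*$ is injective with torsion cokernel; this is the standard argument underlying Proposition~\ref{prop:PresentsBlanchfield} (see~\cite[Sections 2--3]{ConwayPowell}), so I would simply cite it.

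For the third assertion, $\lambda_{M_S}(1)\cong Q_M\oplus(0)^{2g+c}$, the input is precisely the last bullet of Proposition~\ref{prop:HomologyZExterior}, which states that the (ordinary, integral) intersection form $Q_{M_S}$ is isometric to $Q_M\oplus(0)^{2g+c}$. It remains to identify $\lambda_{M_S}(1)$ with $Q_{M_S}$. By definition $\lambda_{M_S}(1)$ is $(H_2(M_S;\Z[t^{\pm1}]),\lambda_{M_S})\otimes_{\Z[t^{\pm1}]}\Z_\varepsilon$, and one checks, from the compatibility of the equivariant and ordinary intersection pairings under the change of rings $\Z[t^{\pm1}]\to\Z$, $t\mapsto1$, that this agrees with the ordinary intersection form of~$M_S$ on $H_2(M_S;\Z_\varepsilon)=H_2(M_S;\Z)$; here I would use that $H_2(M_S;\Z[t^{\pm1}])$ is free of rank $b_2(M_S)=2g+c+b_2(M)$ (Lemma~\ref{lem:HomologyZZExterior}) and that, reducing mod $(t-1)$, its rank is unchanged and matches $b_2(M_S)=\operatorname{rk} H_2(M_S;\Z)$ from Proposition~\ref{prop:HomologyZExterior}, so that no rank is lost in the specialization and $\lambda_{M_S}(1)$ is a nondegenerate-on-the-free-quotient form realizing $Q_{M_S}$. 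Combining with $Q_{M_S}\cong Q_M\oplus(0)^{2g+c}$ finishes the proof.

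The main obstacle I anticipate is not any single deep step but rather the bookkeeping in the last paragraph: carefully justifying that passing from the equivariant intersection form to its reduction mod $(t-1)$ recovers exactly the ordinary intersection form on $H_2(M_S;\Z)$ — i.e., that the universal-coefficient/base-change comparison is an isometry and that no subtlety arises from torsion appearing or ranks dropping under $t\mapsto1$. Since Lemma~\ref{lem:HomologyZZExterior} and Proposition~\ref{prop:HomologyZExterior} pin down both $b_2(M_S)$ over $\Z[t^{\pm1}]$ and over~$\Z$ to be the same number, this comparison goes through cleanly, but it is the place where one must be precise about which form lives on which module. Everything else is a direct invocation of Propositions~\ref{prop:PresentsBlanchfield}, \ref{prop:HomologyZExterior}, \ref{prop:HomologyPKZZ}, \ref{prop:HomologyPZZ} and Lemma~\ref{lem:HomologyZZExterior}.
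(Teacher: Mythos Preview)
Your proposal is correct and follows essentially the same approach as the paper: nondegeneracy and the presentation of $\Bl_{P_K}$ are obtained from Proposition~\ref{prop:PresentsBlanchfield} (the paper citing \cite[Lemma~3.2(4)]{ConwayPowell} directly for nondegeneracy), and the identification $\lambda_{M_S}(1)\cong Q_{M_S}\cong Q_M\oplus(0)^{2g+c}$ comes from Proposition~\ref{prop:HomologyZExterior} together with the change-of-rings comparison (which the paper defers to \cite[Lemma~5.10]{ConwayPowell}). Your write-up is in fact more explicit than the paper's in verifying the hypotheses of Proposition~\ref{prop:PresentsBlanchfield} and in outlining why $\lambda_{M_S}(1)\cong Q_{M_S}$.
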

\begin{proof}
We focus on the case where $\partial N \cong S^3$ and $K:=\partial S$ is a knot,  as the closed case is entirely analogous.
Throughout this proposition, we use the identification $\partial N_S \cong P_K$ implicitly.
The fact that $\lambda_{N_S}$ is nondegenerate follows from~\cite[Lemma 3.2 item (4)]{ConwayPowell}, whereas the fact that $\lambda_{N_S}$ presents $\Bl_{P_K}$ follows from Proposition~\ref{prop:PresentsBlanchfield}.
Finally, the arguments from~\cite[Lemma 5.10]{ConwayPowell} show that $\lambda_{N_S}(1) \cong Q_{N_S}$ and Proposition~\ref{prop:HomologyZExterior} shows that $Q_{N_S} \cong Q_N \oplus (0)^{2g+c}.$
\end{proof}

\begin{figure}[!htbp]
\begin{centering}
\begin{tikzpicture}
\node[anchor=south west,inner sep=0] at (0,0)
{\includegraphics[height=2.5cm]{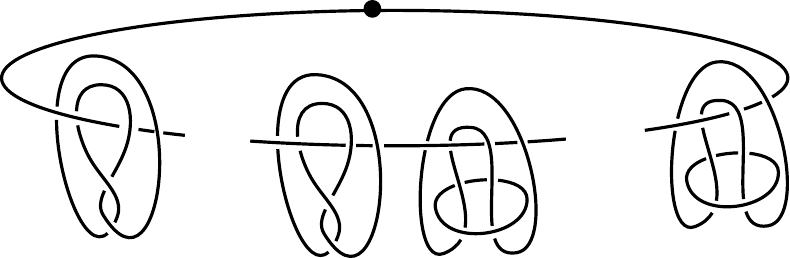}};
\node at (.8,0){$0$}; \node at (2.9,-.05){$0$}; \node at (4.65,0){$0$}; \node at (7.1,.2){$0$}; 
\node at (5.4,.5){$0$}; \node at (7.8,.6){$0$};
\node at (2.15,1.1){$\dots$}; \node at (5.7,1.1){$\dots$}; 
\end{tikzpicture}
\end{centering}
\caption{A handle diagram for the exterior of the standard genus $g$ immersed surface $S$ in $S^4$ with $c=c_++c_-$ double points,  given by attaching $c+2g$ 2-handles to $S^1 \times D^3$.   Note that the signs of the clasps of the components on the left part of the figure are determined by the signs of the double points--we depict the all negative case. }
\label{fig:exterior}
\end{figure}

\begin{example}
\label{ex:SimpleImmersed}
We introduce examples of immersed $\Z$-surfaces in $S^4$ and $D^4$, that roughly speaking, play the r\^ole of the unknot in the theory of knotted surfaces.
\begin{itemize}
\item The \emph{standard} genus $g$ immersed $\Z$-surface~$S \subset S^4$ with~$c_+$ positive double points and $c_-$ negative double points is the one obtained from the unknotted genus~$g$ surface by adding local transverse double points of the appropriate sign.
Drawing a handle diagram for this surface as in Figure~\ref{fig:exterior} (see~\cite[pages 216-217]{GompfStipsicz})
and the infinite cyclic cover of its exterior as in Figure~\ref{fig:exteriorcover},   we see that the~$\Z[t^{\pm 1}]$-intersection form of this surface exterior is represented by the size~$2g+c$ matrix
$$ \overbrace{((t-1)(t^{-1}-1))^{\oplus c_+} \oplus (-(t-1)(t^{-1}-1))^{\oplus c_-}}^{:=\lambda_{c_+,c_-}} \oplus \overbrace{\begin{pmatrix} 0&t-1 \\ t^{-1}-1&0 \end{pmatrix}^{\oplus g}}^{:=\mathcal{H}_2^{\oplus g}}.$$
\begin{figure}[!htbp]
\begin{centering}
\begin{tikzpicture}
\node[anchor=south west,inner sep=0] at (0,0)
{\includegraphics[height=3cm]{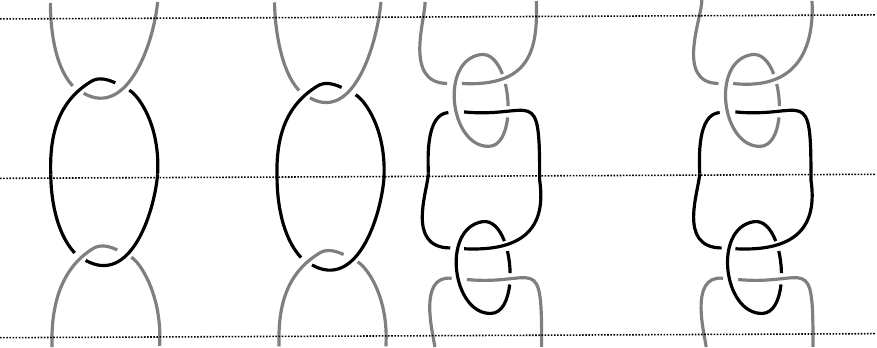}};
\node at (.1,1.7){$-2$}; \node at (2.1,1.7){$-2$}; \node at (4.9,1.7){$0$}; \node at (3.9,1.2){$0$};  \node at (7.2,1.7){$0$}; \node at (6.3,1.2){$0$};
\node at (1.9,1.25){$\dots$}; \node at (5.5,1.25){$\dots$};  \node at (.8,-.4){$\vdots$}; \node at (2.8,-.4){$\vdots$};  \node at (4.2,-.4){$\vdots$};  \node at (6.6,-.4){$\vdots$};
\node at (.8,3.4){$\vdots$}; \node at (2.8,3.4){$\vdots$};  \node at (4.2,3.4){$\vdots$};  \node at (6.6,3.4){$\vdots$};
\end{tikzpicture}
\end{centering}
\caption{The infinite cyclic cover of the exterior of $S$ is obtained from $\mathbb{R} \times D^3$ by attaching 2-handles.  We draw the preferred lift of each downstairs 2-handle in a darker line,  and note that the diagram is invariant under the $\Z$-action generated by shifting the figure up by one unit. }
\label{fig:exteriorcover}
\end{figure}
\item An Alexander polynomial one knot $K$ admits a unique~$\Z$-disk up to isotopy rel. boundary~\cite{Freedman,ConwayPowellDiscs}.
The \emph{standard} genus $g$ immersed~$\Z$-surface for $K$ with~$c_+$ positive double points and $c_-$ negative double points is the one obtained by connect summing the aforementioned $\Z$-disk with the corresponding standard genus $g$ immersed surface.
Since~$\Z$-disk exteriors in $D^4$ are aspherical~\cite[Lemma 2.1]{ConwayPowell},  a Mayer-Vietoris argument implies that the~$\Z[t^{\pm 1}]$-intersection form of this surface exterior is also $\lambda_{c_+,c_-} \oplus \mathcal{H}_2^{\oplus g}.$
\end{itemize}
\end{example}

\section{The Blanchfield form of~$P$ and its isometries}
\label{sec:BlP}

Section~\ref{sec:Classification} demonstrates that understanding the group~$\Aut(\Bl_P)$ is crucial to the classification of immersed~$\Z$-surfaces,  where as before $P$ is a shorthand for $P_g(c_+,c_-)$. 
The goal of this section is to establish cases in which generators for $\Aut(\Bl_P)$ can be understood.
\medbreak

We begin by describing the isometry type of $\Bl_P$.

\begin{proposition}
\label{prop:BlP}
The Blanchfield forms of~$P$ and $P_U$ are presented by
$$ \lambda_{c_+,c_-} \oplus \mathcal{H}_2^{\oplus g}:=
((t-1)(t^{-1}-1))^{\oplus c_+} \oplus (-(t-1)(t^{-1}-1))^{\oplus c_-}
 \oplus \begin{pmatrix} 0&t-1 \\ t^{-1}-1&0 \end{pmatrix}^{\oplus g}.$$
\end{proposition}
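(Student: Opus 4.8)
The plan is to read off the isometry type of $\Bl_P$ from the explicit handle calculation already performed in Example~\ref{ex:SimpleImmersed}, combined with the general results of Section~\ref{sec:EquivariantIntersectionForm}. First I would reduce to the closed $3$-manifold $P_U:=P_{U,g}(c_+,c_-)$: by the final assertion of Proposition~\ref{prop:BlPK}, the inclusion $P\hookrightarrow P_U$ induces an isometry $\Bl_P\cong\Bl_{P_U}$, so it suffices to prove that $\lambda_{c_+,c_-}\oplus\mathcal{H}_2^{\oplus g}$ presents $\Bl_{P_U}$.

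To see this, let $S\subset S^4$ be the standard genus $g$ immersed $\Z$-surface with $c_+$ positive and $c_-$ negative double points from Example~\ref{ex:SimpleImmersed}, and let $X_S$ denote its exterior. Since $S$ is closed, $\partial X_S\cong\partial\overline{\nu}(S)=\partial W_U=P_U$ (using the closed-surface analogue of the identifications of Section~\ref{sec:PlumbedManifolds}, see Remark~\ref{rem:ClosedModels}), and $\pi_1(X_S)\cong\Z$ because $S$ is a $\Z$-surface. On one hand, the handle diagrams of Figures~\ref{fig:exterior} and~\ref{fig:exteriorcover} show, as recorded in Example~\ref{ex:SimpleImmersed}, that the equivariant intersection form $\lambda_{X_S}$ is represented by the matrix $\lambda_{c_+,c_-}\oplus\mathcal{H}_2^{\oplus g}$. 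On the other hand, the second bullet of Proposition~\ref{prop:NecessaryConditions}, applied with $X=S^4$, asserts that $\lambda_{X_S}$ is nondegenerate and presents $\Bl_{P_U}$. Combining these two facts, the matrix $\lambda_{c_+,c_-}\oplus\mathcal{H}_2^{\oplus g}$ presents $\Bl_{P_U}$, hence also $\Bl_P$, which is the claim.

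There is no serious obstacle here: the proposition is essentially a repackaging of facts already in hand, and the only input requiring care — the identification of $\lambda_{X_S}$ with $\lambda_{c_+,c_-}\oplus\mathcal{H}_2^{\oplus g}$ — is precisely the content of Example~\ref{ex:SimpleImmersed}, obtained by reading off the linking matrix of the preferred lifts of the $2$-handles of $X_S$ in the infinite cyclic cover pictured in Figure~\ref{fig:exteriorcover}, so I would regard it as already established. If one prefers not to invoke Proposition~\ref{prop:NecessaryConditions}, one can instead apply Proposition~\ref{prop:PresentsBlanchfield} directly to $X_S$: its hypotheses hold since $\pi_1(X_S)\cong\Z$, the map $\pi_1(\partial X_S)\to\pi_1(X_S)$ is surjective by Proposition~\ref{prop:HomologyZExterior}, and $H_1(\partial X_S;\Z[t^{\pm1}])\cong H_1(P_U;\Z[t^{\pm1}])$ is torsion by Proposition~\ref{prop:HomologyPKZZ}; this yields an isometry $\partial\lambda_{X_S}\cong\unaryminus\Bl_{P_U}$, i.e.\ $\lambda_{X_S}$ presents $\Bl_{P_U}$, and one concludes as before.
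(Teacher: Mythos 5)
Your proposal is correct and follows essentially the same route as the paper: reduce to $P_U$ via the isometry $\Bl_P\cong\Bl_{P_U}$ from Proposition~\ref{prop:BlPK}, then observe that the exterior of the standard closed immersed $\Z$-surface of Example~\ref{ex:SimpleImmersed} has boundary $P_U$ and equivariant intersection form $\lambda_{c_+,c_-}\oplus\mathcal{H}_2^{\oplus g}$, which presents $\Bl_{P_U}$ by Proposition~\ref{prop:PresentsBlanchfield}. Your alternative phrasing via Proposition~\ref{prop:NecessaryConditions} is just a repackaging of the same input, so there is nothing substantive to add.
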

\begin{proof}
Proposition~\ref{prop:BlPK} ensures that the inclusion~$P \hookrightarrow P_U$ induces an isometry~$\Bl_{P} \cong \Bl_{P_U}.$
The exterior of any closed surface $S \subset S^4$ has boundary $P_U$.
If $S$ is a $\Z$-surface, Proposition~\ref{prop:PresentsBlanchfield} therefore ensures that the equivariant intersection form of its exterior presents $\Bl_{P_U}.$
In particular, this applies to the closed $\Z$-surfaces described in Example~\ref{ex:SimpleImmersed} whose equivariant intersection is represented by the required matrix.
\end{proof}

\subsection{Examples of isometries of $\Bl_P$}
\label{sub:IsometriesOfBlP}

We describe several families of isometries of $\Bl_P \cong \Bl_{P_U}$.
In this and the next subsection, for brevity we set~ $c:=c_++c_-$ and
$$\lambda:=\lambda_{c_+,c_-} \oplus \mathcal{H}_2^{\oplus g}.$$
The form~$\lambda$ is defined on $\Z[t^{\pm 1}]^c \oplus \Z[t^{\pm 1}]^{2g}$.
We write~$X_1,\ldots,X_c$ for a basis of the first summand and~$Y_1,\ldots,Y_{2g}$ for a basis of the second.
Finally we use~$x_1,\ldots,x_c$ and~$y_1,\ldots,y_{2g}$ for the projections of the duals of these elements to~$\coker({\widehat{\lambda}})$, which  generate~$\coker({\widehat{\lambda}}).$ 
Finally,  for~$i=1,\dots, c$ we define $\varepsilon_i \in \{ \pm 1\}$ so that $\Bl_P(x_i,x_i)= \frac{\varepsilon_i}{(t-1)(t^{-1}-1)}$.

We now describe examples of isometries of $\Bl_P \cong \partial \lambda$.

\begin{example}[Scaling a generator by $t^k$]
\label{ex:ScaleIsometry}
The isomorphism $\coker({\widehat{\lambda}}) \to \coker({\widehat{\lambda}})$ obtained by multiplying a single generator $x_i$ by $t^k$ (with $k \in \Z$), fixing all other generators and extending~$\Z[t^{\pm 1}]$-linearly,  defines an isometry of~$\partial \lambda$.
\end{example}

\begin{example}[The isometry $p_{i,i'}$]
\label{ex:fij}
For all distinct choices of ~$1 \leq i,i' \leq c$,  the isomorphism $p_{i,i'}$ of~$\coker({\widehat{\lambda}})$ obtained by $\Z[t^{\pm 1}]$-linearly extending the assignment that satisfies
$$
p_{i,i'} \colon 
\begin{cases}
x_i \mapsto & x_i+\varepsilon_{i'} (t-1)x_{i'}  \\
x_{i'} \mapsto& \varepsilon_i(t-1)x_i+x_{i'},
\end{cases}
$$
and fixes all other generators is seen to define an isometry of~$\partial \lambda.$
\end{example}

\begin{example}[The isometries $q_{i,j}$]
\label{ex:Rhoij}
For every $i=1,\ldots,c$ and $k=1,\ldots,g$,  the isomorphism $q_{i,2k-1}$ of $\coker({\widehat{\lambda}})$ obtained by $\Z[t^{\pm 1}]$-linearly extending the assignment that satisfies
$$
q_{i,2k-1}\colon \left\{\begin{array}{rcrrr} x_i  &\mapsto&  x_i &+  y_{2k-1}& \\ 
y_{2k-1} & \mapsto & & y_{2k-1} &\\
y_{2k} & \mapsto& \varepsilon_i (t-1)x_i& - y_{2k-1}&+y_{2k}
\end{array} \right.
$$
and fixes all other generators is seen to define an isometry of~$\partial \lambda$.
The same can be said for
$$
q_{i,2k}\colon \left\{\begin{array}{rcrrr} x_i  \mapsto&  x_i & &+ y_{2k} \\ 
y_{2k-1}  \mapsto& \varepsilon_i (t-1)x_i &+ y_{2k-1}&+y_{2k} \\
y_{2k}  \mapsto& && y_{2k}
\end{array} \right.
$$
where, again, all other generators are fixed.
\end{example}

\begin{example}[$\operatorname{Sp}(2g,\Z)$-type isometries]
\label{ex:Sp2gZ}
Note that $\coker(\mathcal{H}_2^{\oplus g}) \cong \Span_\Z(y_1,\ldots,y_{2g})$.
For every~$A \in \operatorname{Sp}(2g,\Z)$,  the isomorphism $r_A$ of $\coker({\widehat{\lambda}})= \coker({\widehat{\lambda}_{c_+,c_-}}) \oplus \coker(\mathcal{H}_2^{\oplus g})$ defined by  
$$
r_A
\begin{pmatrix}
x \\
y
\end{pmatrix}
=
\begin{pmatrix}
x \\
Ay
\end{pmatrix}
$$
defines an isometry of~$\partial \lambda$.
This follows from~\cite[Proposition 5.6]{ConwayPowell} which shows that
$$\Aut(\coker(\mathcal{H}_2^{\oplus g}),\partial \mathcal{H}_2^{\oplus g})
 \cong \Aut(H_1(\Sigma_g \times S^1;\Z[t^{\pm 1}]), \Bl_{\Sigma_g \times S^1})
 \cong \Aut(H_1(\Sigma_g),Q_{\Sigma_g}) \cong \operatorname{Sp}(2g,\Z).$$
The exact same result holds if the surface has one boundary component.
\end{example}

\begin{example}[Scaling a generator by $-1$]
\label{ex:ScaleIsometry-1}
The isomorphism $\coker({\widehat{\lambda}}) \to \coker({\widehat{\lambda}})$ obtained by sending $x_i \mapsto -x_i$ for a single $i$,  leaving all other generators intact, and extending $\Z[t^{\pm 1}]$-linearly defines an isometry of $\partial \lambda$.
\end{example}

\begin{example}[Permutation-induced isometries]
\label{ex:PermutationIsometry}
Given a permutation $\sigma \in S_{c_+}$, the isomorphism $f_\sigma^+ \colon \coker({\widehat{\lambda}}) \to \coker({\widehat{\lambda}})$ obtained by permuting the generators $x_1,\ldots,x_{c_+}$ according to $\sigma$, leaving all other generators intact,   and extending $\Z[t^{\pm 1}]$-linearly defines an isometry of $\partial \lambda$.
One can define~$f_\sigma^-$ analogously for a permutation~$\sigma \in S_{c_-}$.
\end{example}

\subsection{Isometries of $\Bl_P$ modulo isometries of the standard form}
\label{sub:IsomBlP}

This section collects several results on the group~$\Aut(\Bl_P)$.
Combined with the classifications of Section~\ref{sec:Classification}, these form the backbone of the proofs of the theorems from Sections~\ref{sub:IntroD4S4} and~\ref{sub:Other4ManifoldsIntro}.

\begin{proposition}
\label{prop:AutBl/AutStandard}
Every isometry $f \in \Aut(\partial \lambda_{c_+,c_-} \oplus \partial \mathcal{H}^{\oplus 2g})$ can be written as 
$$ f=s \circ \partial \rho$$
for  $s$ a composition of the maps described in Examples~\ref{ex:ScaleIsometry}-\ref{ex:Sp2gZ} (namely the  $t^k$, the $p_{i,i'}$, the $q_{ij}$ and the~$\operatorname{Sp}(2g,\Z)$-isometries) and their inverses, 
and $\rho \in \Aut(\lambda_{c_+,c_-}\oplus \mathcal{H}_2^{\oplus 2g})$.
\end{proposition}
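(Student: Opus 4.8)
The plan is to reduce an arbitrary isometry $f$ of $\partial\lambda = \partial\lambda_{c_+,c_-}\oplus\partial\mathcal H_2^{\oplus g}$ to the identity modulo the listed generators, by clearing it out one block at a time. Recall that $\coker(\widehat\lambda)$ is generated over $\Z[t^{\pm1}]$ by $x_1,\dots,x_c$ and $y_1,\dots,y_{2g}$; the $x_i$ generate summands isomorphic to $\Z[t^{\pm1}]/((t-1)(t^{-1}-1))$ carrying the form $\tfrac{\varepsilon_i}{(t-1)(t^{-1}-1)}$, while the $y_j$ span the $(t-1)$-torsion submodule $\coker(\mathcal H_2^{\oplus g})\cong\Span_\Z(y_1,\dots,y_{2g})$. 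The first thing I would pin down is the \emph{internal structure} of $\coker(\widehat\lambda)$ that any isometry must respect: the submodule $T_{(t-1)}$ of elements killed by $(t-1)$ is canonical, and $T_{(t-1)}$ contains both the $y_j$ and the elements $(t-1)x_i$ (note $(t-1)\cdot(t-1)x_i = (t-1)^2 x_i$, which is \emph{not} zero in the $x_i$-summand — so actually I must be careful: the $(t-1)$-torsion submodule is spanned by the $y_j$ together with $(t^{-1}-1)(t-1)x_i$, i.e.\ by the $y_j$ and the ``norm'' elements). The right canonical filtration to use is: $M := \coker(\widehat\lambda)$, its $(t-1)$-torsion $M_1$, and $(t-1)M$; one checks $(t-1)M$ is spanned by the $(t-1)x_i$ and is a free $\Z_\varepsilon$-module of rank $c$, while $M/M_1'$ for a suitable $M_1'$ recovers the $x_i$-part. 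Establishing these canonical pieces, and how $f$ acts on the associated graded, is step one.

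\textbf{Step 2: normalize the image of each $x_i$.} Writing $f(x_i) = \sum_j a_{ij} x_j + \sum_k b_{ik} y_k$ with $a_{ij}, b_{ik}\in\Z[t^{\pm1}]$ (reduced appropriately), the requirement that $f$ preserve the order ideal forces each ``diagonal'' coefficient $a_{ii}$ to be a unit $\pm t^{k}$ modulo $(t-1)(t^{-1}-1)$, and the off-diagonal $a_{ij}$ ($j\ne i$) and the $b_{ik}$ to be multiples of $(t-1)$ (this is the same coprimality/divisibility bookkeeping as in the proof of Lemma~\ref{lem:coprime} and Example~\ref{ex:Coprime}). Now I use the generators to clean up: the maps $p_{i,i'}$ of Example~\ref{ex:fij} let me cancel the $(t-1)x_j$ contributions to $f(x_i)$ coming from other $x_j$-summands; the maps $q_{i,j}$ of Example~\ref{ex:Rhoij} let me cancel the $y_k$-contributions; the scalings by $t^k$ (Example~\ref{ex:ScaleIsometry}) and by $-1$ (Example~\ref{ex:ScaleIsometry-1}) normalize the leading coefficient $a_{ii}$ to $1$; and the permutation isometries $f_\sigma^{\pm}$ (Example~\ref{ex:PermutationIsometry}) reorder the $x_i$ within a fixed sign. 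After composing $f$ with an appropriate product $s_1$ of these, I can arrange that the new isometry $f_1 = s_1\circ f$ fixes each $x_i$ modulo elements that are ``more divisible'', and a short descending induction on divisibility by $(t-1)$ (using that $(t-1)$ is nilpotent mod the order) finishes this off so that $f_1(x_i) = x_i$ for all $i$.

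\textbf{Step 3: handle the $y$-block and assemble.} Once $f_1$ fixes all $x_i$, sesquilinearity plus the pairing between the $x_i$-block and the $y$-block forces $f_1$ to preserve the complementary submodule $\Span_\Z(y_1,\dots,y_{2g})$, and there it must act by an isometry of $(\coker(\mathcal H_2^{\oplus g}),\partial\mathcal H_2^{\oplus g})$, hence — by Example~\ref{ex:Sp2gZ} and \cite[Proposition 5.6]{ConwayPowell} — by some $r_A$ with $A\in\operatorname{Sp}(2g,\Z)$. Thus $r_A^{-1}\circ f_1$ fixes every generator on the nose up to the $x$-block contributions, i.e.\ is the identity on $\coker(\widehat\lambda)$; equivalently it is $\partial\rho$ for some $\rho\in\Aut(\lambda_{c_+,c_-}\oplus\mathcal H_2^{\oplus g})$ (indeed the identity, or more generally any residual freedom is exactly realized by an ambient isometry $\rho$ of the form itself via Remark~\ref{rem:InducedIsometry}). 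Unwinding, $f = s\circ\partial\rho$ with $s := s_1^{-1}\circ r_A$ a product of the listed generators and their inverses, as claimed. The main obstacle I anticipate is \textbf{Step 2}: keeping the bookkeeping honest across the $(t-1)$-divisibility filtration while simultaneously juggling the $p_{i,i'}$, $q_{i,j}$, scaling and permutation moves, and in particular verifying that these moves genuinely suffice to kill \emph{all} cross terms (rather than merely reducing their divisibility indefinitely) — this is where one needs the nilpotence of $(t-1)$ modulo the order ideal to terminate the reduction, together with a careful check that each elementary move is compatible with the hermitian condition so that the partially-normalized $f$ remains an honest isometry at every stage.
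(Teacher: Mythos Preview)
Your Step 2 contains a genuine gap. You claim that preservation of the order ideal forces the off-diagonal coefficients $a_{ij}$ (for $j\ne i$) to be multiples of $(t-1)$, i.e.\ that the ``constant part'' of the matrix $(a_{ij})$ is diagonal. This is false in general. Writing $f(x_i) = a_i + (t-1)b_i + c_i$ with $a_i \in \Span_\Z(x_1,\dots,x_c)$, $b_i\in H_x$, $c_i\in H_y$, the form imposes only $\Bl(a_i,a_{i'}) = \Bl(x_i,x_{i'})$, which says the integer matrix expressing the $a_i$ in the $x_j$ lies in the indefinite orthogonal group $O(c_+,c_-;\Z)$. When both $c_+,c_-\geq 1$ this group is typically infinite and contains highly non-diagonal elements (see Remark~\ref{rem:special_cases_of_isom/homeo}), so no product of sign-changes and permutations can reduce it to the identity --- and in any case the sign-change and permutation isometries (Examples~\ref{ex:ScaleIsometry-1} and~\ref{ex:PermutationIsometry}) are \emph{not} among the generators allowed for $s$ in the statement. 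The paper's Step~1 handles precisely this obstruction: it lifts the integer matrix of the $a_i$ to an isometry $\rho\in\Aut(\lambda)$ and absorbs it into the factor $\partial\rho$. Only after this normalization can the remaining cross-terms be cleared by the $p_{i,i'}$, $q_{i,j}$, $t^k$-scalings, and $\operatorname{Sp}(2g,\Z)$-moves. Your approach is essentially the one the paper uses to prove the \emph{stronger} Propositions~\ref{prop:AutBl/AutStandardImproved} and~\ref{prop:AutBl/AutStandard11}, but those require the additional hypothesis $(c_+,c_-)\in\{(c,0),(0,c),(1,1)\}$ precisely so that the relevant orthogonal group \emph{is} generated by signs and permutations.

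A secondary issue: your claim that the $b_{ik}$ are multiples of $(t-1)$ is vacuous since $(t-1)y_k = 0$; in fact $f(x_i)$ can and does have a nontrivial $H_y$-component $c_i$, which the paper clears in its Step~2 using the $q_{i,j}$. Finally, your last paragraph treats $\partial\rho$ as a name for residual freedom near the identity, but in the paper $\partial\rho$ does substantial work at the very first step, absorbing the full $O(c_+,c_-;\Z)$ ambiguity; without this the decomposition $f = s\circ\partial\rho$ cannot be achieved.
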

\begin{proof}
For brevity,  we set~$\lambda:=\lambda_{c_+,c_-}\oplus \mathcal{H}_2^{\oplus 2g}$ and $c:=c_++c_-$.
The module underlying this form is~$\Z[t^{\pm 1}]^{\oplus c} \oplus \Z[t^{\pm 1}]^{\oplus 2g}.$
Write $X_1,\ldots,X_c$ for a basis of the first summand and $Y_1,\ldots,Y_{2g}$ for a basis of the second.
We write $x_1,\ldots,x_c$ and $y_1,\ldots,y_{2g}$ for the projections of the duals of these elements to $\coker({\widehat{\lambda}}).$
During this proof, we write~$\Bl:=\partial \lambda$.

Consider the subsets $H_x:=\Span_\Z(x_1,\ldots,x_c)$ and $H_y:=\Span_\Z(y_1,\ldots,y_{2g})$ of $\coker({\widehat{\lambda}})$.
Since elements of $\coker({{\widehat{\lambda}_{c_+,c_-}}})$ have order $(t-1)^2$,  they can be written as $a+b(t-1)$ with $a,b\in H_x$.
Similarly,  since~$\coker(\mathcal{H}_2^{\oplus g}) \cong \Z_\varepsilon^{2g} \cong H_y$, elements of $H_y$ can be written as $\Z$-linear combinations of the $y_i$.  
We can therefore write
\begin{align}\label{eqn:form_of_fStep1}
f(x_i)&=a_i+(t-1)b_i+c_i,  \\
f(y_j)&=\phantom{a_i+ }(t-1)d_j+e_j  \nonumber
\end{align}
for some $a_i,b_i,d_j \in  H_x$ and $c_i,e_j \in H_y$.

\begin{itemize}
\item Step 1.
We prove that after postcomposing the isometry~$f$ with an isometry of the form~$\partial \rho$ with~$\rho \in \Aut(\lambda)$ we can arrange that~$a_i=x_i$ for all $i=1,\dots, c$. 

We assert that $\Bl(x_i,x_{i'})=\Bl(a_i,a_{i'})$ for all $1 \leq i, i' \leq c$. 
Since the $a_i$ are $\Z$-linear combinations of the $x_i$,  we have that $\Bl(a_i,a_{i'})=n/(t-1)(t^{-1}-1)$ for some $n \in \Z$.
For~$i \neq i'$, we have that for some polynomial $p(t)$,
$$0=\Bl(x_i,x_{i'})=\Bl(f(x_i),f(x_{i'}))=\overbrace{\Bl(a_i,a_{i'})}^{=\frac{n}{(t-1)(t^{-1}-1)}}+\frac{(t-1)p(t)}{(t-1)(t^{-1}-1)}.$$
This implies that $n+(t-1)p(t) \in (t-1)(t^{-1}-1)\Z[t^{\pm 1}]$ which in turn forces~$n=0$.  It follows that~$\Bl(a_i,a_{i'})=0=\Bl(x_i,x_{i'})$ for $i \neq {i'}$.
The same reasoning applied to the case where~$i=i'$ implies that~$\Bl(a_i,a_i)=\pm 1/(t-1)(t^{-1}-1)=\Bl(x_i,x_i)$.
This concludes the proof of the assertion.

Each $a_i$ can be written as a $\Z$-linear combination of the $x_i$: write $A_i$ for the corresponding $\Z$-linear combination of the $X_i$.
We write $H_X$ for the $\Z[t^{\pm 1}]$-span of the $X_i$ and $H_Y$ for the~$\Z$-span of the $Y_j$.

We assert that the  map~$\rho \colon H_X \to H_X$ obtained by $\Z[t^{\pm 1}]$-linearly extending the assignment $\rho(X_i):=A_i$ is an isometry of $\lambda$.
We first note $\rho$ is form-preserving: since $A_i$ is the same $\Z$-linear combination of $\{X_i\}$ as $a_i$ of the $\{x_i\}$, the diagonalisability of~$\lambda|_{H_X}$ and the equality $\Bl(a_i,a_{i'})=\Bl(x_i,x_{i'})$ imply that $\lambda(A_i,A_{i'})=\lambda(X_i,X_{i'}).$
Since~$\lambda|_{H_X}=\lambda_{c_+,c_-}$ is nondegenerate,  this implies that~$\rho$ is injective.
The assertion therefore reduces to proving that~$\rho$ is surjective.
Since the~$X_i$ generate~$H_X$ as a~$\Z[t^{\pm1}]$-module, it suffices to show that the~$X_i$ can be written as a~$\Z$-linear combination of the~$A_i$.  
Further,  since the~$A_i$ are the same~$\Z$-linear combinations of the~$X_i$ as the~$a_i$ are of the~$x_i$,  it is enough for us to show that  the~$x_i$ are~$\Z$-linear combinations of the~$a_i$.
Since~$f$ is surjective and~$x_1,\dots,x_c,y_1,\dots,y_{2g}$ generate~$\coker({\widehat{\lambda}})$, 
,we can write any~$x \in H_x$ as a linear combination of the~$f(x_i)$ and~$f(y_j)$:
$$x=\sum_i (\alpha_i+(t-1)\beta_i)f(x_i)+\sum_j (\gamma_j + (t-1)\delta_j) f(y_j),$$
for some~$\alpha_i,\beta_i,\gamma_j,\delta_j\in \Z$. 
Expanding out this expression and using~\eqref{eqn:form_of_fStep1} we obtain
$$H_x \ni x =\sum_i \alpha_ia_i + (\text{expression in $(t-1)H_x$}) + (\text{expression in $H_y$}).$$
This forces the expressions in $(t-1)H_x$ and $H_y$ to vanish, thus establishing that $x$ is a~$\Z$-linear combination of the $a_i$.
This concludes the proof that $\rho$ is an isometry.
\medskip

\noindent
Step $1$ follows: the assertion shows that~$\partial \rho \circ f(a_i)=x_i$.
\medskip

\noindent
At this point,  we can assume that
\begin{align}\label{eqn:form_of_f_2}
f(x_i)&=x_i+(t-1)b_i+c_i,  \\
f(y_j)&=\phantom{a_i+ }(t-1)d_j+e_j, \nonumber
\end{align}
again for $b_i, d_j \in H_x$ and $c_i, e_j \in H_y$,  though the specific values may be different than before.
Our long term goal is to show that this new~$f$ must be a composition of the isometries given in Examples~\ref{ex:ScaleIsometry}-\ref{ex:Sp2gZ},  together with their inverses.

\item Step 2.
We now argue that after postcomposing by a composition of the $q_{i,j}$-isometries given in Example~\ref{ex:Rhoij} we can assume that $c_i=0$ for all $i=1,\dots,c$. 

Consider the $\Z$-linear (and therefore $\Z[t^{\pm 1}]$-linear) assignment $\psi \colon H_y \to H_y$ obtained by $\Z$-linearly extending $\psi(y_j)=e_j$.
We claim that~$\psi$ is an isomorphism.
To show that~$\psi$ is injective,  since $\Bl|_{H_y}=\mathcal{H}_2^{\oplus g}$ is nondegenerate,  it suffices to show that~$\psi$ is form preserving, and this is a direct calculation: 
$$ \Bl(y_j,y_k)=\Bl(f(y_j),f(y_k))=(t-1)(t^{-1}-1)\Bl(d_j,d_k)+\Bl(e_j,e_k)=\Bl(e_j,e_k). $$
Surjectivity amounts to verifying that every element $y$ in the $\Z$-span of the $y_i$ can be written as a $\Z$-linear combination of the $e_j$.
Since $f$ is an isometry, we can write $y$ as a linear combination of the $f(x_i)$ and $f(y_j)$ of the form
$$y=\sum_i (\alpha_i+(t-1)\beta_i)f(x_i)+\sum_j (\gamma_j+(t-1)\delta_j)f(y_j),$$
where $\alpha_i,\beta_i,\gamma_j,\delta_j\in \Z$.
Expanding this expression using Equation~\eqref{eqn:form_of_f_2},  we obtain
$$H_y \ni y =\sum_i \alpha_i x_i + (\text{expression in $(t-1)H_x$}) +  \sum_i \alpha_i c_i +\sum_j \gamma_j e_j .$$
Since $y$ is in $H_y$,  the term $\sum_i \alpha_i x_i  \in H_x$ and the expression in $(t-1)H_x$ must vanish. 
Since the $\alpha_i$ are integers and the $x_i$ are only annihilated by powers of~$(t-1)(t^{-1}-1)$,  it follows that the $\alpha_i$ themselves all vanish.
We deduce that~$ \sum_i \alpha_i c_i =0$ and so $y$ is indeed a $\Z$-linear combination of the~$e_j$.

We now prove Step 2.
We wish to use the~$e_i$ to clear the~$c_i$ via postcomposing with the~$q_{i,j}$ isometries.  
Observe that postcomposing with $q_{i,j}$ has the effect of modifying $c_i$ by adding $e_j$.  Every $c_i$ is a linear combination of the $y_j$ by definition,  and when combined with our just proven assertion that every $y_j$ is a linear combination of the $e_k$,  we have that each $c_i$ can be written as a linear combination of the $e_k$. 
Thus via compositions of the~$q_{i,j}$ isometries and their inverses, we can use the~$e_j$ to eliminate the~$c_i$,  as claimed.

\item Step 3.
At this point, without loss of generality,  we can assume that
\begin{align*}
f(x_i)&=x_i+(t-1)b_i,  \\
f(y_k)&=\phantom{a_i+ }(t-1)d_j+e_j,
\end{align*}
where again $b_i, d_j \in H_x$ and $e_j \in H_y$.
We show that after postcomposing by a $\operatorname{Sp}(2g,\Z)$-isometry (i.e. as in Example~\ref{ex:Sp2gZ}) we can assume that $y_j=e_j$ for all $j=1, \dots, 2g$.

The same reasoning as in Step 2 shows that $\Z$-linearly extending the assignment~$y_j \mapsto e_j$ gives rise to an isometry of~$(H_y,\Bl|_{H_y}) \cong (\coker(\mathcal{H}_2^{\oplus g}),\partial \mathcal{H}_2^{\oplus g}).$
Further extending by the identity on the $x_i$ then gives rise to an isometry of $\Bl$.
We can now postcompose $f$ by a~$\operatorname{Sp}(2g,\Z)$-isometry to arrange that $y_j=e_j$, thus proving Step $3$.

\item Step 4.
At this point, without loss of generality we can assume
\begin{align*}
f(x_i)&=x_i+(t-1)b_i,  \\
f(y_k)&=\phantom{a_i+ }(t-1)d_j+y_j,
\end{align*}
where $b_i, d_j \in H_x$. 
We first show that $(t-1)d_j=0$ for all $j=1, \dots, 2g$ and then show that after postcomposing by a product of the $t^k$-isometries of Example~\ref{ex:ScaleIsometry} and $p_{i,j}$-isometries of Example~\ref{ex:fij}  we can assume that $f$ is the identity.

We start by proving $(t-1)d_j=0$.
First note that for every $i=1,\dots,c$, we have
$$0=\Bl(x_i,y_j)=\Bl(f(x_i),f(y_j))=\Bl(x_i,(t-1)d_j).$$
Since $\Bl|_{H_x \oplus (t-1)H_x}$ is nondegenerate,  it follows that $(t-1)d_j=0$ as claimed.

Finally, we reduce $f$ to the identity by postcomposing with isometries from Examples~\ref{ex:ScaleIsometry} and~\ref{ex:fij}. 
For this it is convenient to expand the $b_i$ so that
\begin{align*}f(x_i)
&=x_i+(t-1)b_i
=x_i+(t-1)\sum_{i' = 1}^c \alpha_{i,i'}x_{i'} \\
&=x_i+(t-1)\alpha_{i,i} x_i +(t-1)\sum_{i' \neq i} \alpha_{i,i'}x_{i'} 
=t^{\alpha_{i,i}}x_i+(t-1)\sum_{i' \neq i} \alpha_{i,i'}x_{i'},
\end{align*}
with $\alpha_{ij} \in \Z.$
For the last equality one needs that  $t^n=1+n(t-1) \in \Z[t^{\pm1}]/ (t-1)(t^{-1}-1)$ for all $n \in \Z$,  as can be quickly verified via induction.

After postcomposing with isometries from Example~\ref{ex:ScaleIsometry},  we can assume that~$\alpha_{i,i}=0$ for all $i=1,\dots,c$. 
We note that the $\alpha_{i,i'}$ with $i \neq i'$ do not change throughout this process.
The quickest way to see this is to note that scaling $x_i$ by $t^k$ is the same as replacing $x_i$ by~$x_i+k(t-1)x_i$ and that this does not affect the $\alpha_{i,i'}$. 

Finally we assert that after postcomposing with isometries from Example~\ref{ex:fij} we can assume that~$\alpha_{i,i'}=0$ for all $i \neq i'$. 
This will conclude Step $4$ and therefore the proof of the proposition.
Expanding $0=\Bl(x_i,x_{i'})=\Bl(f(x_i),f(x_{i'}))$ we obtain the equation 
\begin{align*}
0&=(t-1)\alpha_{i,i'}\Bl(x_{i'},x_{i'})+(t^{-1}-1)\alpha_{i',i}\Bl(x_i,x_i)= \frac{\alpha_{i,i'} \varepsilon_{i'}- \alpha_{i,i} \varepsilon_i}{t^{-1}-1}.
\end{align*}
Since the $\alpha_{i,i'}$ are integers we deduce that the $\alpha_{i,i'}$ and $\alpha_{i',i}$ agree up to a sign; this sign is positive if $\varepsilon_i=\varepsilon_{i'}$ and negative if $\varepsilon_i=-\varepsilon_{i'}$.
We can therefore get the $\alpha_{i,i'}$ to be zero,  and hence $f$ to be the identity,  by postcomposing~$f$ with
\[\prod_{1 \leq i < i'  \leq c} p_{i,i'}^{-\varepsilon_i \varepsilon_{i'} \alpha_{i,i'}}.\]
(One can check that the result of this composition of $p_{i,i'}$ maps is unchanged by reordering its terms.) 
\end{itemize}
The combination of these five steps shows that any $f \in \Aut(\Bl)$
can be written as~$f=\partial \rho \circ s$ where~$\rho \in \Aut(\lambda)$ and~$s \in \Aut(\Bl)$ is as in the statement of the proposition.
The statement with~$f=s \circ \partial \rho$ instead of~$\partial \rho \circ s$ can be obtained by applying the previous version to~$f^{-1}$ and then taking inverses.
\end{proof}

In certain cases, the statement of Proposition~\ref{prop:AutBl/AutStandard} can be strengthened.
The outcome wll be useful later for establishing Theorem~\ref{thm:Other4ManifoldsSpheresIntro}.

\begin{proposition}
\label{prop:AutBl/AutStandardImproved}
Assume that $(c_+,c_-) \in \{(c,0),(0,c)\}$.
Any~$f \in \Aut(\partial \lambda_{c_+,c_-} \oplus \partial \mathcal{H}^{\oplus 2g})$ can be written as $f= s \circ s'$  for $s$ a composition of the isometries described in Examples~\ref{ex:ScaleIsometry}-\ref{ex:Sp2gZ} and their inverses,  and $s'$ a composition of the scale-a-generator by $(-1)$ isometries and permutation-induced isometries from Examples~\ref{ex:ScaleIsometry-1} and~\ref{ex:PermutationIsometry}. 
\end{proposition}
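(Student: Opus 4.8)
The plan is to feed Proposition~\ref{prop:AutBl/AutStandard} into an explicit computation of the isometry group of the definite form $\lambda_{c_+,c_-}$. Throughout I work in the case $(c_+,c_-)\in\{(c,0),(0,c)\}$, which is where the hypothesis gets used.

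First, by Proposition~\ref{prop:AutBl/AutStandard} we may write $f = s\circ\partial\rho$, with $s$ a composition of the isometries of Examples~\ref{ex:ScaleIsometry}--\ref{ex:Sp2gZ} and their inverses, and $\rho\in\Aut(\lambda_{c_+,c_-}\oplus\mathcal{H}_2^{\oplus g})$. Inspecting the proof of that proposition --- in particular Step~$1$, where $\rho$ is constructed from the assignment $\rho(X_i)=A_i$ and fixes the $Y_j$ --- one sees that $\rho$ may be taken of the form $\rho=\rho_0\oplus\id$ with $\rho_0\in\Aut(\lambda_{c_+,c_-})$, so that $\partial\rho=\partial\rho_0\oplus\id$. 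It therefore suffices to prove: for any $\rho_0\in\Aut(\lambda_{c_+,c_-})$, the induced isometry $\partial\rho_0$ of $\partial\lambda_{c_+,c_-}$ is a composition of scale-by-$t^k$ isometries (Example~\ref{ex:ScaleIsometry}), scale-by-$(-1)$ isometries (Example~\ref{ex:ScaleIsometry-1}) and permutation-induced isometries (Example~\ref{ex:PermutationIsometry}).

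The key computation is the structure of $\Aut(\lambda_{c_+,c_-})$ in the definite case. Since $\lambda_{c_+,c_-}$ is represented by $\pm(t-1)(t^{-1}-1)\,I_c$, a matrix $A\in GL_c(\Z[t^{\pm 1}])$ represents an isometry if and only if $\overline{A}^{\,T}A=I_c$, i.e. $A$ is unitary over $\Z[t^{\pm 1}]$. I claim every such $A$ is a monomial matrix with entries in $\{\pm t^k : k\in\Z\}$: writing $a_{ij}=\sum_m c_{ij,m}t^m$, the constant term of $\overline{a_{ij}}\,a_{ij}$ equals $\sum_m c_{ij,m}^2\ge 0$, so comparing constant terms in the $(j,j)$ entry of $\overline{A}^{\,T}A=I_c$ forces $\sum_{i,m}c_{ij,m}^2=1$; hence the $j$-th column of $A$ has a single nonzero entry, necessarily of the form $\pm t^k$, and since $\det A$ is a unit no two columns can be supported in the same row, so the nonzero entries realise a permutation. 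Consequently $A=D_t\,D_\pm\,P$ with $D_t$ diagonal with entries powers of $t$, $D_\pm$ diagonal with entries $\pm1$, and $P$ a permutation matrix (and $D_t,D_\pm$ commute). Passing to the induced isometry, which is multiplicative in $\rho_0$ (Remark~\ref{rem:InducedIsometry}), we get $\partial\rho_0=\partial D_t\circ\partial D_\pm\circ\partial P$, where $\partial D_t$ is a composition of Example~\ref{ex:ScaleIsometry} maps, $\partial D_\pm$ a composition of Example~\ref{ex:ScaleIsometry-1} maps, and $\partial P$ a permutation-induced isometry as in Example~\ref{ex:PermutationIsometry} --- and here the hypothesis $(c_+,c_-)\in\{(c,0),(0,c)\}$ enters, since it guarantees that an arbitrary permutation of the $c$ generators $x_1,\dots,x_c$ is an isometry of $\partial\lambda_{c_+,c_-}$.

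Substituting back, $f=s\circ\partial D_t\circ\partial D_\pm\circ\partial P$; absorbing $\partial D_t$ into $s$ (it is a composition of Example~\ref{ex:ScaleIsometry} maps, hence falls under Examples~\ref{ex:ScaleIsometry}--\ref{ex:Sp2gZ}) yields $f=s\circ s'$ with $s'=\partial D_\pm\circ\partial P$ a composition of Example~\ref{ex:ScaleIsometry-1} and Example~\ref{ex:PermutationIsometry} maps, as desired. I expect the only genuine content to be the monomiality of the unitary group $U_c(\Z[t^{\pm 1}])$, via the positivity (Fej\'er--Riesz type) argument comparing constant terms; the remaining steps --- reading off from the proof of Proposition~\ref{prop:AutBl/AutStandard} that $\rho$ is supported on the definite summand, and tracking the conventions involved in passing from $\rho_0$ to $\partial\rho_0$ --- are routine bookkeeping.
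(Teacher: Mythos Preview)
Your proof is correct, but it follows a genuinely different route from the paper.

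The paper does not invoke Proposition~\ref{prop:AutBl/AutStandard} as a black box; instead it redoes Step~1 of that proof directly at the level of the linking form. Writing $a_i=\sum_j n_{ij}x_j$ with $n_{ij}\in\Z$, the relation $\Bl(a_i,a_{i'})=\Bl(x_i,x_{i'})$ in the definite case yields $\sum_j n_{ij}n_{i'j}=\delta_{ii'}$, i.e.\ the \emph{integer} matrix $(n_{ij})$ lies in $O(c,\Z)$ and is therefore a signed permutation matrix. Postcomposing by the corresponding sign-and-permutation isometry then arranges $a_i=x_i$, and Steps~2--4 proceed as before.

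Your approach instead takes Proposition~\ref{prop:AutBl/AutStandard} as given (together with the observation that its $\rho$ is supported on the definite summand), and then determines the entire group $\Aut(\lambda_{c_+,c_-})\cong U_c(\Z[t^{\pm1}])$ via the constant-term positivity argument, showing every such matrix is monomial with entries $\pm t^k$. This is more than is needed: since the specific $\rho_0$ produced in Step~1 already has integer entries, the paper only needs the (easier) structure of $O(c,\Z)$, not all of $U_c(\Z[t^{\pm1}])$. On the other hand, your route is cleaner in its modularity and establishes the independently pleasant fact that $U_c(\Z[t^{\pm1}])$ consists of $t$-scaled signed permutation matrices.
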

\begin{proof}
We consider the case where~$\lambda:= \lambda_{c,0} \oplus \mathcal{H}_2^{\oplus 2g}$; the proof for $\lambda_{0,c} \oplus \mathcal{H}_2^{\oplus 2g}$ is exactly analogous. 
The proof is nearly identical to that of Proposition~\ref{prop:AutBl/AutStandard}: only the first step changes: we need to show that the use of elements of~$\Aut(\Bl)$ can be replaced by a composition of the scale-a-generator by $(-1)$ isometries and permutation-induced isometries.
Here are some details.

Continuing with the notation from the proof of Proposition~\ref{prop:AutBl/AutStandard}, we write
\begin{align}\label{eqn:form_of_f}
f(x_i)&=a_i+(t-1)b_i+c_i,  \\
f(y_j)&=\phantom{a_i+ }(t-1)d_j+e_j  \nonumber
\end{align}
for some $a_i,b_i,d_j \in  H_x$ and $c_i,e_j \in H_y$.

We prove the analogue of Step 1 of Proposition~\ref{prop:AutBl/AutStandard}: after postcomposing the isometry~$f$ with a composition of the scale-a-generator by~$(-1)$ isometries and permutation-induced isometries we can arrange that~$a_i=x_i$ for all $i=1,\dots, c$. 
Writing 
$$a_i=\sum_{j=1}^c n_{i,j}x_j$$
 for $n_{i,j} \in \Z$,  the idea is to show that $n_{i,j}=\pm 1$ if $j=\sigma(i)$ and $0$ otherwise for some~$\sigma\in S_c$.  
The same argument as in Proposition~\ref{prop:AutBl/AutStandard} shows that  $\Bl(x_i, x_{i'})= \Bl(a_i,a_{i'})$.
Expanding our expression for $a_i$ in this case we can quickly observe that 
\begin{align}\label{eqn:allpos}
\sum_{j=1}^c n_{i,j}n_{i',j}= \begin{cases} 1  & i'=i \\ 0 & i' \neq i. \end{cases}
\end{align}
When $i'=i$ this equation implies that there is exactly one value $j=\sigma(i)$ such that $n_{i,j}$ is nonzero, and for that value $n_{i,\sigma(i)}= \pm 1$.  
Analyzing~\eqref{eqn:allpos} for $i' \neq i$ then quickly shows that $\sigma$ is a permutation of $\{1,\dots, c\}$. 
(Alternately,  considering~\eqref{eqn:allpos} across all $i,i'$ can be repackaged as saying that the $c \times c$ integral matrix $A=(n_{i,j})$ has the property that $A \cdot A^T= I$, i.e. that $A$ is an integer orthogonal matrix, i.e. $A \in O(c,\Z).$
It is well known that all such matrices are obtained from the identity matrix by changing the signs of diagonal entries and then permuting rows.)
\end{proof}

Similarly, we obtain the following for $c_+=1=c_-.$
\begin{proposition}
\label{prop:AutBl/AutStandard11}
Any~$f \in \Aut(\partial \lambda_{1,1} \oplus \partial \mathcal{H}_2^{\oplus g})$ can be written as $f= s \circ s'$  for $s$ a composition of the isometries described in Examples~\ref{ex:ScaleIsometry}-\ref{ex:Sp2gZ} and their inverses,  and $s'$ a composition of the scale-a-generator by $(-1)$ isometries from Example~\ref{ex:ScaleIsometry-1}. 
\end{proposition}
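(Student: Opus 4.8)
The plan is to run the proof of Proposition~\ref{prop:AutBl/AutStandard} essentially verbatim, modifying only its Step~1, exactly in the spirit of the proof of Proposition~\ref{prop:AutBl/AutStandardImproved}. Steps~2, 3 and~4 of the proof of Proposition~\ref{prop:AutBl/AutStandard} never use the signs $\varepsilon_i$ of the self-pairings of the $x_i$ (in particular Step~4 only uses that two integers which agree up to sign can be cleared by the isometries $p_{i,i'}$ of Example~\ref{ex:fij}), so they apply unchanged when $(c_+,c_-)=(1,1)$. Thus the only issue is to show that in Step~1 the isometry $\partial\rho$ coming from an element $\rho\in\Aut(\lambda_{1,1}\oplus\mathcal H_2^{\oplus g})$ can be replaced by a composition of the scale-a-generator-by-$(-1)$ isometries of Example~\ref{ex:ScaleIsometry-1}.

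To carry out the modified Step~1, I would keep the notation of the proof of Proposition~\ref{prop:AutBl/AutStandard}: write $f(x_i)=a_i+(t-1)b_i+c_i$ and $f(y_j)=(t-1)d_j+e_j$ with $a_i,b_i,d_j\in H_x=\Span_\Z(x_1,x_2)$ and $c_i,e_j\in H_y$, and expand $a_i=\sum_{j=1}^{2}n_{i,j}x_j$ with $n_{i,j}\in\Z$. The same computation as in Step~1 of Proposition~\ref{prop:AutBl/AutStandard} (comparing $\Bl(x_i,x_{i'})$ with $\Bl(f(x_i),f(x_{i'}))$ modulo $(t-1)(t^{-1}-1)$, using that $H_x\perp H_y$) gives $\Bl(a_i,a_{i'})=\Bl(x_i,x_{i'})$, which now reads
$$\sum_{j=1}^{2}\varepsilon_j\,n_{i,j}\,n_{i',j}=\varepsilon_i\,\delta_{i,i'},$$
i.e.\ $A\,D\,A^{T}=D$ where $A=(n_{i,j})\in M_2(\Z)$ and $D=\operatorname{diag}(\varepsilon_1,\varepsilon_2)=\operatorname{diag}(1,-1)$ by Proposition~\ref{prop:BlP}. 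Reading off the $(1,1)$ entry gives $n_{1,1}^2-n_{1,2}^2=1$, so $(n_{1,1}-n_{1,2})(n_{1,1}+n_{1,2})=1$ forces $n_{1,1}=\pm1$, $n_{1,2}=0$; symmetrically the $(2,2)$ entry forces $n_{2,2}=\pm1$, $n_{2,1}=0$. Hence $a_i=\pm x_i$ for $i=1,2$, and postcomposing $f$ with the scale-by-$(-1)$ isometry on $x_i$ for each $i$ with $a_i=-x_i$ — these lie among the generators in the statement and preserve $H_x$ and $H_y$ — arranges $a_i=x_i$, as needed.

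With Step~1 thus modified, Steps~2--4 of the proof of Proposition~\ref{prop:AutBl/AutStandard} produce, after further postcompositions by isometries from Examples~\ref{ex:ScaleIsometry}--\ref{ex:Sp2gZ} and their inverses, the identity; this exhibits $f$ as $s'\circ s$ with $s'$ a composition of scale-by-$(-1)$ isometries and $s$ a composition of the isometries of Examples~\ref{ex:ScaleIsometry}--\ref{ex:Sp2gZ} and their inverses. Applying this to $f^{-1}$ and taking inverses — each scale-by-$(-1)$ isometry being an involution — then yields the stated decomposition $f=s\circ s'$. I do not expect a genuine obstacle here; the only point requiring care is the sign bookkeeping that pins down the integral solutions of $A\,D\,A^T=D$ as exactly the four diagonal sign matrices (so that, unlike in Proposition~\ref{prop:AutBl/AutStandardImproved}, no permutation-induced isometry of Example~\ref{ex:PermutationIsometry} is needed — consistent with the fact that interchanging $x_1$ and $x_2$ is not an isometry of $\partial\lambda_{1,1}$ since these generators have opposite self-pairings), together with checking that the Step~1 postcompositions leave $b_i\in H_x$ and $c_i\in H_y$, which is all that Steps~2--4 require.
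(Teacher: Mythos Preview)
Your proposal is correct and takes essentially the same approach as the paper's proof: the paper says the argument is exactly analogous to that of Proposition~\ref{prop:AutBl/AutStandardImproved}, noting that the conditions $\Bl(a_i,a_j)=\Bl(x_i,x_j)$ force $n_{1,2}=n_{2,1}=0$ and $n_{1,1},n_{2,2}\in\{\pm1\}$, which is precisely your computation with $ADA^T=D$. Your write-up is more detailed (including the inversion at the end to obtain the ordering $f=s\circ s'$), but the underlying idea is identical.
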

\begin{proof}
The proof is exactly analogous to that of the previous proposition: writing that $a_1= n_{1,1} x_1 + n_{1,2} x_2$ and $a_2= n_{2,1}x_1+n_{2,2} x_2$,  the conditions that $\Bl(a_i, a_j)= \Bl(x_i,x_j)$ quickly imply that $n_{1,2}=n_{2,1}=0$ and $n_{1,1}, n_{2,2} \in \{\pm1\}$.
\end{proof}

\begin{remark}
\label{rem:special_cases_of_isom/homeo}
We collect two remarks concerning Proposition~\ref{prop:AutBl/AutStandardImproved}.
\begin{itemize}
\item When  $(c_+,c_-) \in \{(c,0),(0,c)\}$,  Proposition~\ref{prop:AutBl/AutStandard} is recovered from Proposition~\ref{prop:AutBl/AutStandardImproved} by noting that the isometries from Examples~\ref{ex:ScaleIsometry-1} and~\ref{ex:PermutationIsometry} lie in the image of~$\Aut(\lambda) \to \Aut(\partial \lambda).$
\item In the notation of the proof of Proposition~\ref{prop:AutBl/AutStandardImproved}, for general $c_+$ and $c_-$,  determining the possible values of~$\{a_i\}_{i=1}^c$ is equivalent to enumerating the elements of the indefinite integer orthogonal group
$$ O(c_+,c_-;\Z)=\left\lbrace  A \in \operatorname{GL}_{c_++c_-}(\Z) \ \Bigg| \ A \begin{pmatrix} I_{c_+} & 0 \\ 0 & -I_{c_-} \end{pmatrix} A^T=  \begin{pmatrix} I_{c_+} & 0 \\ 0 & -I_{c_-} \end{pmatrix} \right\rbrace $$
which need not be finite,  see e.g.  the introduction of~\cite{Borcherds}.
\end{itemize}
\end{remark}

\section{Realising isometries of~$\Bl_P$ by homeomorphisms}
\label{sec:BlPHomeomorphisms}

In order to get a further grasp on immersed $\Z$-surfaces,  the classifications of Section~\ref{sec:Classification} imply that we need to understand the action of $\Homeo_\alpha(\Sigma,\partial)$ on $\Aut(\Bl_{P})$.
In other words,  we need to be able to realize isometries of $\Bl_{P}$ by homeomorphisms of $P$ that arise from surface homeomorphisms.

In order to understand what isometry a given surface homeomorphism induces,  we construct an explicit cellular decomposition of the~$3$-manifold $P=P_g(c_+,c_-).$
This groundwork is laid in Section~\ref{sub:P01} and will be useful both in Section~\ref{sub:HomologyPagain} to establish the homology claims stated in earlier sections and in Section~\ref{sub:RealisingIsometries} to understand the isometries of $\Bl_{P}.$
The homology calculations could be performed without constructing explicit cell structures, but since the cell structures are useful for describing the action of $\Homeo_\alpha(\Sigma,\partial)$ on $\Aut(\Bl_{P})$, we also use them to compute homology.

\subsection{The first homology of $P_0(1,0)$ using cell complexes}
\label{sub:P01}
Throughout this section we set 
$$Q:=P_0(1,0).$$
Recall that this is the $3$-manifold with torus boundary obtained by performing a single positive plumbing on a solid torus~$D^2 \times S^1$.
The goal of this section is to construct a cell structure for a space $Q'$ that is homotopy equivalent to $Q$ and to describe explicit generators for its first homology group, both with $\Z$ and $\Z[t^{\pm 1}]$ coefficients.

\begin{construction}[The spaces $\Delta'$ and $Q' \simeq Q$]
The $3$-manifold $Q$ arises as a quotient of $\Delta \times S^1$, where~$\Delta$ is a disk with two subdisks removed  as illustrated on the left of Figure~\ref{fig:disccell}; recall Remark~\ref{rem:PasaQuotient}.
Observe that~$\Delta$ deformation retracts onto the 1-complex $\Delta'$ on the right of Figure~\ref{fig:disccell},  and that~$Q$ is homotopy equivalent to the space $Q'$ obtained from $\Delta' \times S^1$ by identifying $\alpha_1 \times S^1$ with $\alpha_2 \times S^1$ using the identification as given in Remark~\ref{rem:PasaQuotient}.   Note that $\alpha_i$ as defined here is $\alpha_i(S^1)$ in the notation of previous sections. 
The notation is unfortunate but we believe that it will not cause any confusion since it is local to this section.
\end{construction}

\begin{figure}[htbp!]
\begin{centering}
\begin{tikzpicture}
\node[anchor=south west,inner sep=0] at (0,0)
{\includegraphics[height=3cm]{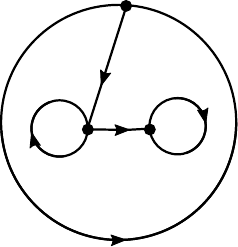}};
\node[anchor=south west,inner sep=0] at (5,0){\includegraphics[height=2.5cm]{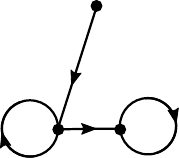}};
\node at (5.5, -.25){$\alpha_1$};\node at (7.5, -.25){$\alpha_2$};
\node at (6.15,  .25){$z_1$}; \node at (6.7, .25){$z_2$}; \node at (6.4, .7){$\omega$};
\node at (6.8, 2.5){$z$};
\node at (5.9, 1.5){$\xi$};
\end{tikzpicture}
\end{centering}
\caption{A cell structure for $\Delta$ (left) and for its deformation retract $\Delta'$ (right).}
\label{fig:disccell}
\end{figure}

\begin{construction}[A warm-up cell structure for $\Delta' \times S^1$]
\label{cons:WarmUpCellStructure}
Let $\{v,e\}$ be the usual cell-structure on $S^1$.  
This gives a product cell-structure on $\Delta' \times S^1$ with three 0-cells ($z_1v, z_2v \text{ and } zv$),  seven~$1$-cells ($\alpha_1 v, \alpha_2 v, \xi v,  \omega v,  z_1 e, z_2e,  \text{ and }  ze$),  and four 2-cells ($\alpha_1 e,  \alpha_2 e,  \xi e,  \text{ and }  \omega e$),  where throughout we refer to the product cell $c_1 \times c_2$ as $c_1c_2$. 
\end{construction}

This cell structure for $\Delta' \times S^1$ does not give rise to a cell structure for $Q'$ because we cannot accomplish our desired identification of $\alpha_1 \times S^1$ with $\alpha_2 \times S^1$ cellularly.  
We therefore modify the cell structure as follows.
\begin{itemize}
\item The torus $\alpha_1 \times S^1 \subset \Delta' \times S^1$ has cell structure $z_1 v \cup \alpha_1 v \cup z_1 e \cup \alpha_1 e$. 
We add a 1-cell $\beta_1$ representing the $(1,1)$ curve on this torus,  which has the effect of splitting the 2-cell $\alpha_1 e$ into two 2-cells $C_1$ and $C_2$.   
\item The torus $\alpha_2 \times S^1 \subset \Delta' \times S^1$ has cell structure $z_2 v \cup \alpha_2 v \cup z_2 e \cup \alpha_2 e$. 
We add a 1-cell $\beta_2$ representing the $(1,1)$ curve on this torus,  which has the effect of splitting the 2-cell $\alpha_2 e$ into two 2-cells $D_1$ and $D_2$.   
\end{itemize}

This leads to a cellular decomposition of $Q'$, as illustrated in Figure~\ref{fig:pcell}. 
We summarize this discussion as follows.

\begin{construction}(A cell structure for $Q' \simeq Q$.)
\label{cons:CellP'}
We can now accomplish our desired identification of $\alpha_1 \times S^1$ with $\alpha_2 \times S^1$  cellularly. 
Translating the identifications of Remark~\ref{rem:PasaQuotient} into the current notation,  we see that~$Q'$ arises as a quotient of $\Delta' \times S^1$ via an identification of $\alpha_1 \times S^1$ with $\alpha_2 \times S^1$ such that
\[\alpha_1 \times S^1  \ni (\alpha_1 \times \id)\circ \eta^{+}(z,w)\sim (\alpha_2 \times \id) \circ \eta^{+}( w, z) \in \alpha_2 \times S^1.\]
Tracing through this identification, we see that
\begin{itemize}
\item On the level of the~$0$-cells, $z_1v$ is identified with $z_2v$ in an orientation preserving way. 
\item On the level of the~$1$-cells, $\alpha_1 v$ is identified with $\alpha_2 v$ in an orientation reversing way,  $z_1 e$ is identified with $\beta_2$ in an orientation preserving way,  and $\beta_1$ is identified with $z_2 e$ in an orientation preserving way.
\item On the level of the $2$-cells, $C_i$ is identified with $D_i$ in an orientation preserving way for~$i=1,2$ (this last step may require re-indexing and orientation change of $D_1$, $D_2$). 
\end{itemize}
We therefore have a cell structure for $Q'$ with two $0$-cells,  six $1$-cells, and four $2$-cells:
\begin{align*}
\text{0-cells}&: z_1 v \sim z_2 v,  zv,  \\
\text{ 1-cells}&: \alpha_1 v \sim \overline{\alpha_2 v},  \beta_1 \sim z_2 e,  z_1 e \sim \beta_2,  \xi v,  \omega v,   ze,  \\
\text{ 2-cells}&: C_1 \sim D_1,  C_2 \sim D_2, \xi e,  \omega e.
\end{align*}
The attaching maps of the 2-cells are
\begin{align*}
 \partial(C_1)&= (z_1 e)(\alpha_1 v) \overline{\beta_1},\\
 \partial (C_2) &= (\alpha_1 v) (z_1 e) \overline{\beta_1},\\
 \partial (\xi e)&= (\xi v) (z_1 e) (\overline{\xi v}) (\overline{ze}),\\
 \partial (\omega e)&= (\omega v) (z_2 e) (\overline{\omega v}) (\overline{z_1e})=
 (\omega v) (\beta_1) (\overline{\omega v}) (\overline{z_1 e}).
 \end{align*}
Observe that under the deformation retraction of $Q$ onto $Q'$, the $S^1$-fibre $\mu \subset Q$ and plumbing loop $\omega \subset Q$ are respectively taken to~$ze \subset Q'$ and~$\omega v \subset Q'$.
We occasionally refer to $ze$ and $\omega v$ as the \emph{$S^1$-fibre} and \emph{plumbing loop} respectively.
\end{construction}


 \begin{lemma}
 \label{lem:H1P01}
The integral homology of $Q$ is freely generated by the $S^1$-fibre and the plumbing loop:
$$H_1(Q) \cong\Z \mu \oplus \Z \omega.$$
The inclusion induced map $H_1(\partial Q) \to H_1(Q)$ maps $[\partial \Sigma]$ to zero, and is the identity on the $S^1$-fibre.
  \end{lemma}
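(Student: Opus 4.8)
The plan is to read all the homological data directly off the explicit cell structure for $Q'\simeq Q$ constructed in Construction~\ref{cons:CellP'}. Since $Q$ is homotopy equivalent to $Q'$, it suffices to compute $H_1(Q')$ from the cellular chain complex $C_2 \xrightarrow{\partial_2} C_1 \xrightarrow{\partial_1} C_0$, where $C_0=\Z\langle z_1v, zv\rangle$, $C_1=\Z\langle \alpha_1v, \beta_1{=}z_2e, z_1e{=}\beta_2, \xi v, \omega v, ze\rangle$, and $C_2=\Z\langle C_1, C_2, \xi e, \omega e\rangle$. The attaching maps listed in Construction~\ref{cons:CellP'} give $\partial_2$ explicitly: reading off $\partial(C_1)=(z_1e)(\alpha_1v)\overline{\beta_1}$ we get $\partial_2 C_1 = \alpha_1 v$ (the $z_1e$ and $\overline{\beta_1}$ terms cancel since $\beta_1$ is identified with $z_2e$ and $z_1e$... wait, one must be careful: $z_1e$ and $\beta_1$ are distinct $1$-cells, so $\partial_2 C_1 = z_1e + \alpha_1 v - \beta_1$, and similarly $\partial_2 C_2 = \alpha_1 v + z_1e - \beta_1$). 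Thus $\partial_2 C_1 = \partial_2 C_2$, so $C_1 - C_2 \in \ker\partial_2$ contributes nothing new to $H_1$ but the relation imposed on $H_1$ is $z_1e + \alpha_1 v - \beta_1 = 0$. From $\partial(\xi e)$ we read $\partial_2(\xi e) = z_1 e$ (the $\xi v$ terms cancel, and so does... no: $\partial_2(\xi e) = z_1 e - ze$), and from $\partial(\omega e)$ we read $\partial_2(\omega e) = \beta_1 - z_1 e$.

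Next I would compute $H_1 = \ker\partial_1 / \operatorname{im}\partial_2$. Since $\partial_1 \alpha_1 v = 0$ (it is a loop: $z_1 v$ to $z_1 v$), $\partial_1 \beta_1 = 0$, $\partial_1(z_1e)=0$, $\partial_1 \xi v = 0$, $\partial_1 \omega v = 0$, $\partial_1 ze = 0$ — all six $1$-cells are actually $1$-cycles, so $\ker\partial_1 = C_1 \cong \Z^6$. The image of $\partial_2$ is spanned by $\alpha_1 v$ (from $C_1$, using $z_1e=\beta_1$ via... no), so I track carefully: $\operatorname{im}\partial_2$ is generated by $z_1e + \alpha_1 v - \beta_1$, $z_1e - ze$, and $\beta_1 - z_1e$. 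These three relations let me eliminate $\alpha_1 v$ (from the first, since $\alpha_1 v = \beta_1 - z_1 e$, and then the third says $\beta_1 = z_1e$, forcing $\alpha_1 v = 0$), eliminate $\beta_1$ (equals $z_1 e$), and identify $ze = z_1 e$. So $H_1(Q') \cong \Z\langle \xi v\rangle \oplus \Z\langle \omega v \rangle \oplus \Z\langle ze \rangle$ modulo $ze \sim z_1e \sim \beta_1 \sim \alpha_1 v$... I would need to double-check that $\xi v$ survives as a free generator (it should be nullhomotopic actually in $\Delta'$? no — $\xi$ is an arc, $\xi v$ is $\xi \times \{v\}$, but $\xi$ has both endpoints distinct, so $\xi v$ is a path, not a loop — I need to recheck the cell structure of $\Delta'$). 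Assuming the bookkeeping works out as the earlier propositions (e.g. Proposition~\ref{prop:HomologyP} with $g=0$, $c=1$) predict, I would conclude $H_1(Q) \cong \Z^2$ generated by the classes of $ze$ (the $S^1$-fibre $\mu$) and $\omega v$ (the plumbing loop $\omega$).

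For the boundary statement, $\partial Q = \partial\Sigma \times S^1$ is a torus, with $H_1(\partial Q) = \Z\langle[\partial\Sigma]\rangle \oplus \Z\langle\mu\rangle$. The $S^1$-fibre $\mu$ of $\partial Q$ is literally the $S^1$-fibre of $Q$, so it maps to the generator $\mu \in H_1(Q)$ — that is the "identity on the $S^1$-fibre" clause. For $[\partial\Sigma]$: in $\Delta'\times S^1$ the curve $\partial\Sigma$ is (homotopic to) the outer boundary loop $\xi$-type curve of the punctured disk $\Delta$, which bounds in $\Delta$ and hence in $\Delta' \times\{v\}$, so $[\partial\Sigma] = 0$ in $H_1(Q)$. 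Concretely I would exhibit $[\partial\Sigma]$ as a $\Z$-combination of $\alpha_1 v$, $\alpha_2 v$ (the boundaries of the two inner punctures) plus boundaries of $2$-cells, all of which vanish in $H_1(Q)$ by the computation above. I expect the main obstacle to be purely bookkeeping: getting the orientations and identifications in Construction~\ref{cons:CellP'} exactly right so that the three relations coming from $\partial_2$ are as claimed, and correctly tracking which $1$-cells are loops versus arcs after the quotient. Everything else is linear algebra over $\Z$.
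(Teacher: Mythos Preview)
Your approach is exactly the paper's: compute $H_1(Q')$ from the cell structure of Construction~\ref{cons:CellP'} and identify the generators $ze$ and $\omega v$. The two bookkeeping points you flagged as uncertain are indeed the only places where your draft goes astray, and both are easily fixed.

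First, $\xi v$ is \emph{not} a cycle: $\xi$ is an arc from $z$ to $z_1$ in $\Delta'$, so $\partial_1(\xi v)=z_1v-zv\neq 0$. The other five $1$-cells are loops, so $\ker\partial_1\cong\Z^5$ is generated by $\alpha_1 v,\ \beta_1,\ z_1e,\ \omega v,\ ze$. Your three relations from $\partial_2$ then give $\beta_1=z_1e=ze$ and $\alpha_1 v=0$, leaving $H_1(Q')\cong\Z\langle ze\rangle\oplus\Z\langle\omega v\rangle$ as required. (Your intermediate worry that $\xi v$ might survive as a third free generator was exactly the symptom of this slip.)

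Second, $[\partial\Sigma]$ does \emph{not} bound in $\Delta$, since $\Delta$ is the twice-punctured disk; your first sentence there is wrong. Your fallback is the correct one, and is what the paper does: under the retraction $\Delta\to\Delta'$ the outer boundary (times $v$) is sent to the word $(\xi v)(\overline{\alpha_1 v})(\omega v)(\overline{\alpha_2 v})(\overline{\omega v})(\overline{\xi v})$, which in $H_1$ reduces to $-[\alpha_1 v]-[\alpha_2 v]=0$ since $\alpha_1 v$ is nullhomologous and $\alpha_2 v\sim-\alpha_1 v$ under the plumbing identification.
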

  \begin{proof}
The first assertion follows by using the cell structure from Construction~\ref{cons:CellP'} to note that
  $$H_1(Q') \cong \Z \langle ze \rangle \oplus \Z \langle \omega v \rangle.$$
To see this,  one uses the attaching maps described in Construction~\ref{cons:CellP'} to note that both~$z_1e \sim \beta_2$ and $z_2 e \sim \beta_1$ are homologous to $ze$ and that~$\alpha_1 v$ (and hence also~$\alpha_2 v$) is null-homologous.  
  
The map~$H_1(\partial Q) \to H_1(Q)$ can similarly studied using $Q'$.
Under the deformation retraction of~$Q$ to~$Q'$ guided by the deformation retraction of $\Delta$ to $\Delta'$,  the outer boundary of $\Delta$ times the point $v$  is sent to the curve~$(\xi v)(\overline{\alpha_1 v})(\omega v)(\overline{\alpha_2 v})(\overline{\omega v}) (\overline{\xi v})$.
The fact that $\alpha_1 v$ and $\alpha_2 v$ are nullhomologous in $H_1(Q')$ ensures that so is this loop.
  \end{proof}

\begin{figure}[htbp!]
\begin{centering}
\begin{tikzpicture}
\node[anchor=south west,inner sep=0] at (0,0)
{\includegraphics[height=3.5cm]{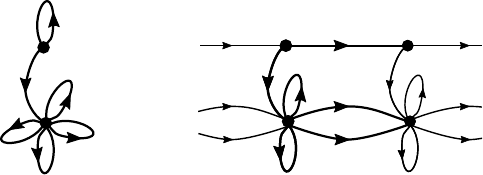}};
\node at (-.1, .45){$\alpha_1 v$}; \node at (.9, -.2){$\beta_1$}; \node at (1.7, .5) {$z_1e$};
\node at (.9, 2.2){$zv$};
\node at (.1,  1.7) {$\xi v$}; \node at (1.7,  1.7) {$\omega v$}; 
\node at (1.4,3.1){$ze$};
\node at (5.7, -.2){$\widetilde{\alpha_1 v}$};\node at (7, .4){$\widetilde{\beta_1}$}; \node at (7, 1.7){$\widetilde{z_1 e}$};
\node at (4.75, 1.9){$\widetilde{\xi v}$}; \node at (6.3,2.1){$\widetilde{\omega v}$}; \node at (7, 2.9){$\widetilde{ze}$};
\node at (5.2, 1.05){\small $\widetilde{z_1 v}$}; \node at (5.6, 2.9){$\widetilde{zv}$};
\end{tikzpicture}
\end{centering}
\caption{The 1-skeleton of $Q'$ (left) and the 1-skeleton of $(Q')^\infty$ (right).}
\label{fig:pcell}
\end{figure}

We now calculate the $\Z[t^{\pm 1}]$-homology of $Q$ with respect to the epimorphism $\varphi \colon \pi_1(Q,zv) \to \Z$
from Construction~\ref{cons:CoeffSystPK} that maps the $S^1$-fibre to $1 \in \Z$ and the plumbing loop to $0 \in \Z$.
We achieve this by calculating the~$\Z[t^{\pm 1}]$-homology of $Q'$ with respect to the epimorphism $\varphi' \colon H_1(P') \to \Z$ with~$\varphi'(ze)=1$ and~$\varphi'((\xi v)(\omega v) (\overline{\xi v}))=0$.
Indeed, the infinite cyclic cover $\widetilde{Q}'$ corresponding to~$\ker(\varphi')$ deformation retracts onto the infinite cyclic cover of $Q$ corresponding to~$\ker(\varphi)$.
In the sequel,  we use~$\widetilde{\omega} \subset Q^\infty$ to denote our preferred lift of the plumbing loop~$\omega \subset Q$ as in Remark~\ref{rem:Lifts}.

 \begin{lemma}
 \label{lem:H1P01ZZ}
 The $\Z[t^{\pm1}]$-homology of $Q$ is generated by a lift of the plumbing loop:
\[ H_1(Q;\Z[t^{\pm1}]) \cong \left( \Z[t^{\pm}]/ (t-1)^2\right) \langle \widetilde{\omega} \rangle. \]
Additionally, the inclusion induced map~$H_1(\partial Q;\Z[t^{\pm 1}]) \to H_1(Q;\Z[t^{\pm 1}])$ is the zero map.
\end{lemma}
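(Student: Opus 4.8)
The plan is to read everything off the cell structure for $Q' \simeq Q$ built in Construction~\ref{cons:CellP'}. First I would lift that structure to the infinite cyclic cover $(Q')^\infty$ determined by $\ker(\varphi')$, whose $1$-skeleton is drawn in Figure~\ref{fig:pcell}. Each of the two $0$-cells, the six $1$-cells and the four $2$-cells of $Q'$ lifts to a free $\Z[t^{\pm 1}]$-generator, so the cellular chain complex of $(Q')^\infty$ is a complex of free $\Z[t^{\pm 1}]$-modules
\[ \Z[t^{\pm 1}]^4 \xrightarrow{\partial_2} \Z[t^{\pm 1}]^6 \xrightarrow{\partial_1} \Z[t^{\pm 1}]^2 \]
computing $H_\ast(Q;\Z[t^{\pm 1}])$, with $C_1$ spanned by the lifts of $\alpha_1 v,\beta_1,z_1 e,\xi v,\omega v,ze$ and $C_2$ by the lifts of $C_1,C_2,\xi e,\omega e$.

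Next I would write $\partial_1$ and $\partial_2$ down explicitly. The differential $\partial_1$ is determined by the incidences of $0$- and $1$-cells together with the observation that $\varphi'$ sends $ze$, $z_1 e$ and $\beta_1$ to $1$ and sends $\alpha_1 v$, $\xi v$ and $\omega v$ to $0$ (the latter being null-homologous resp.\ in the kernel, by the proof of Lemma~\ref{lem:H1P01}); thus, e.g., $\partial_1(\widetilde{ze})=(t-1)\widetilde{zv}$ and $\partial_1(\widetilde{\xi v})=\widetilde{z_1 v}-\widetilde{zv}$. The differential $\partial_2$ is obtained by lifting the attaching words recorded in Construction~\ref{cons:CellP'}; for instance $\partial(C_1)=(z_1 e)(\alpha_1 v)(\overline{\beta_1})$ lifts to $\widetilde{z_1 e}+t\,\widetilde{\alpha_1 v}-\widetilde{\beta_1}$, and one treats $C_2$, $\xi e$ and $\omega e$ the same way. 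The only thing requiring attention here is keeping track of the powers of $t$ introduced whenever a traversed $1$-cell is one of $z_1 e$, $\beta_1$, $ze$ (the cells sent to $1$ by $\varphi'$); a convenient consistency check is that each resulting chain automatically lies in $\ker(\partial_1)$.

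The lemma then becomes a short linear-algebra computation. One computes that $\ker(\partial_1)$ is free of rank $4$, with a basis including $\widetilde{\alpha_1 v}$, $\widetilde{\omega v}$, and $\widetilde{\beta_1}-\widetilde{z_1 e}$; expressing $\partial_2(\widetilde{C_1}),\partial_2(\widetilde{C_2}),\partial_2(\widetilde{\xi e}),\partial_2(\widetilde{\omega e})$ in this basis and row/column reducing the resulting $4\times 4$ presentation matrix (whose determinant is $\doteq (t-1)^2$) shows that all relations collapse the quotient to the cyclic module $\Z[t^{\pm 1}]/(t-1)^2$ generated by the class of $\widetilde{\omega v}$, i.e.\ by a lift of the plumbing loop. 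Since any two lifts of the loop $\omega\subset Q$ differ by a unit $t^k$, this generator may be taken to be the preferred lift $\widetilde{\omega}$ of Remark~\ref{rem:Lifts}, giving $H_1(Q;\Z[t^{\pm 1}])\cong(\Z[t^{\pm 1}]/(t-1)^2)\langle\widetilde{\omega}\rangle$.

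For the last sentence, note that $\partial Q$ is a torus on which $\varphi'$ restricts to the map sending the meridian $\mu$ to $1$ and sending $[\partial\Sigma]$ to $0$, so $H_1(\partial Q;\Z[t^{\pm 1}])\cong\Z[t^{\pm 1}]/(t-1)$ is generated by a lift $\widetilde{\delta}$ of $[\partial\Sigma]$, exactly as in the proof of Proposition~\ref{prop:HomologyPKZZ}. Under the deformation retraction of $Q$ onto $Q'$ the curve $[\partial\Sigma]\times\{v\}$ is carried to the word $(\xi v)(\overline{\alpha_1 v})(\omega v)(\overline{\alpha_2 v})(\overline{\omega v})(\overline{\xi v})$ as in the proof of Lemma~\ref{lem:H1P01}; every $1$-cell appearing here is killed by $\varphi'$, so the lift $\widetilde{\delta}$ stays within a single level of $(Q')^\infty$, and after applying the identification $\alpha_2 v\sim\overline{\alpha_1 v}$ the cellular $1$-chain it represents is the zero chain. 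Hence $\widetilde{\delta}\mapsto 0$ in $H_1(Q;\Z[t^{\pm 1}])$, so the inclusion-induced map vanishes. The main (and only) obstacle in all of this is the bookkeeping of $t$-powers in $\partial_2$; everything else is routine.
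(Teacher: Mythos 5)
Your proposal is correct and follows essentially the same route as the paper: lift the cell structure of $Q'$ from Construction~\ref{cons:CellP'} to the infinite cyclic cover, compute $\ker(\partial_1)$ and $\operatorname{im}(\partial_2)$ explicitly (your relations, e.g.\ $\partial(\widetilde{C_1})=\widetilde{z_1e}+t\,\widetilde{\alpha_1v}-\widetilde{\beta_1}$, match the paper's $t\cdot h_1+h_3$), and reduce the presentation to $\Z[t^{\pm1}]/(t-1)^2$ generated by $\widetilde{\omega v}$; the boundary computation via the word $(\xi v)(\overline{\alpha_1 v})(\omega v)(\overline{\alpha_2 v})(\overline{\omega v})(\overline{\xi v})$ is also exactly the paper's argument. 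No gaps.
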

\begin{proof}
The cell structure for $Q'$ from Construction~\ref{cons:CellP'} lifts to an equivariant cell-structure for~$(Q')^\infty$,  the 1-skeleton of which is drawn on the right of Figure~\ref{fig:pcell}.  
The first assertion will follow by using this cell structure to show that 
\[ H_1(Q';\Z[t^{\pm1}]) \cong \left( \Z[t^{\pm}]/ (t-1)^2\right) \langle \widetilde{\omega v} \rangle. \]
It is not overly challenging to compute the differentials~$\partial_i \colon C_{i}(Q'; \Z[t^{\pm1}]) \to C_{i-1}(Q'; \Z[t^{\pm 1}])$ for~$i=1,2$ and obtain that 
\begin{align*}
\ker(\partial_1) &= \langle \overbrace{\widetilde{\alpha_1 v}}^{:=h_1}, \overbrace{\widetilde{\omega v}}^{:=h_2},  \overbrace{\widetilde{z_1 e} - \widetilde{\beta_1}}^{:=h_3},  \overbrace{(1-t)\widetilde{\xi v} + \widetilde{z_1 e}- \widetilde{z e}}^{:=h_4} \rangle,\\ 
\text{Im}(\partial_2)&= \langle\partial(\widetilde{C_1})= t\cdot h_1 + h_3,  \partial(\widetilde{C_2})= h_1+h_3, \partial(\widetilde{\xi e})=h_4,  \partial(\widetilde{\omega e})= (1-t) h_2- h_3 \rangle.
\end{align*}
Some algebraic simplifications then yield the calculation of $H_1(Q'; \Z[t^{\pm1}])$,  with $h_2$ as a generator.

Next, we study~$H_1(\partial Q;\Z[t^{\pm 1}]) \to H_1(Q;\Z[t^{\pm 1}])$.
Recall that under the deformation retraction of $Q$ onto $Q'$,  the outer boundary of $\Delta$ times $v$ is sent to the curve~$(\xi v)(\overline{\alpha_1 v})(\omega v)(\overline{\alpha_2 v})(\overline{\omega v}) (\overline{\xi v})$.
One verifies that this latter loop lifts to 
\[\widetilde{\xi v} + \widetilde{\alpha_1 v} +\widetilde{\omega v} - \widetilde{\alpha_1 v} - \widetilde{\omega v} - \widetilde{\xi v}=0 \in C_1(Q', \Z[t^{\pm1}])
\]
and so is certainly zero in~$H_1(Q'; \Z[t^{\pm1}])$. 
\end{proof}

\begin{remark}\label{rem:onenegativedp}
An extremely similar cell complex computation shows that the $\Z[t^{\pm1}]$-first homology of $Q_{-}= P_0(0,1)$ is generated by a lift of the plumbing loop,  which is order $(t-1)(t^{-1}-1)$,  and that the inclusion induced map $H_1(\partial Q_{-}; \Z[t^{\pm1}]) \to H_1(Q_{-}; \Z[t^{\pm1}])$ is the zero map.  

However, there is one difference among the relations,  which will play a role in later computations:
in $H_1(Q; \Z[t^{\pm1}]$ we have that $$[\widetilde{\alpha_1 v}]= (t-1)[ \widetilde{\omega}]= -[\widetilde{\alpha_2 v}],$$while in $H_1(Q_-; \Z[t^{\pm1}])$,  we have that $$[\widetilde{\alpha_1 v}]= -(t-1)[ \widetilde{\omega}]= -[\widetilde{\alpha_2 v}].$$ 
This sign difference comes from the differing identifications between $\alpha_1 \times S^1$ and $\alpha_2 \times S^1$ depending on the sign of the double point,  as described in Equation~\eqref{eqn:simP}.
\end{remark}

\subsection{The first homology of $P_g(c_+,c_-)$ via cell complexes}
\label{sub:HomologyPagain}

This section calculates the first homology~of
$$P:=P_g(c_+,c_-)$$
 both with integral and~$\Z[t^{\pm 1}]$ coefficients.
To do so, we decompose~$P$ as a union of~$\Sigma_{g,c+1}$, the genus~$g$ surface with~$c+1$ boundary components, with $c_+$ copies of~$P_0(1,0)$ and $c_-$ copies of~$P_0(0,1)$ and apply Mayer-Vietoris.

\begin{notation}
For $i=1,\ldots,c$, we use the following notation.
\begin{itemize}
\item Write~$Q_i$ for the aforementioned copies of $P_0(1,0)$ and $P_0(0,1).$
The signs need not be distinguished in the statements of this section.
Recall that~$Q_i$ is a quotient of $\Delta_i \times S^1$, where~$\Delta_i$ is a disk with two open balls removed.
Write $\partial_{\operatorname{out}}\Delta_i$ for the outer boundary of~$\Delta_i$.
\item Both the plumbing curve of $Q_i$ and its image in $P$ are denoted $\omega_i$.
\item Write~$\mu_i  \subset Q_i$ for the~$S^1$-fibre of~$Q_i$ and~$\mu$ for the~$S^1$-fibre of~$\Sigma_{g,c+1} \times S^1$.
\item As illustrated in Figure~\ref{fig:surfacewithcurves},  $\partial  \Sigma_{g,c+1}$ decomposes into an outer boundary curve,  $\partial_{\text{out}} \Sigma_{g,c+1}$ and interior boundary curves $\partial_i$ that arise from the removal of the open balls.
These latter curves freely generate the~$\Z^c$ summand of~$H_1(\Sigma_{g,c+1})=\Z^{2g} \oplus \Z^c.$
When $P$ is formed from~$\Sigma_{g,c+1} \times S^1$ by gluing the $Q_i$, the $\partial_i \subset \Sigma_{g,c+1}$ are identified with the $\partial_{\operatorname{out}}\Delta_i \subset Q_i$,  and the~$S^1$-fibre~$\mu \subset \Sigma_{g,c+1} \times S^1$ with the~$S^1$-fibres $\mu_i \subset Q_i $.
\item  Fix a standard symplectic basis of $2g$ curves in the interior of~$\Sigma_{g,c+1}$.
The \emph{genus {loops}} refer to the images of these curves both in~$\Sigma_{g,c+1} \times S^1$ and in~$P$.
We use the notation~$a_1,b_1,\ldots,a_g,b_g$ for these curves.
\end{itemize}
\end{notation}

The result of the next proposition was stated (but not proved) in Proposition~\ref{prop:HomologyP}.
\begin{proposition}
\label{prop:H1P}
The group~$H_1(P)$ is freely generated by the genus {loops}, the~$S^1$-fibre and the plumbing {loops}:
\[ H_1(P) \cong \Z \mu  \oplus  \bigoplus_{i=1}^g \left( \Z a_i \oplus \Z b_i \right) \oplus \bigoplus_{i=1}^c \Z  \omega_i.
\]
The map~$\Z^2 \cong H_1(\partial P) \to H_1(P)$ sends the~$S^1$-fibre to~$\mu$ and~$\partial_{\text{out}} \Sigma_{g,c+1}$ to zero.
\end{proposition}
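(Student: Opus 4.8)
The plan is to apply the Mayer--Vietoris sequence to the decomposition $P = A \cup_{\mathcal T} B$ with $A := \Sigma_{g,c+1}\times S^1$, $B := \bigsqcup_{i=1}^c Q_i$ (where $Q_i$ is a copy of $P_0(1,0)$ or $P_0(0,1)$ according to the sign of the $i$-th plumbing), and $\mathcal T := A \cap B = \bigsqcup_{i=1}^c T_i$, the $i$-th torus being $\partial_i \times S^1 = \partial_{\operatorname{out}}\Delta_i \times S^1$. This is exactly the decomposition advertised at the start of the subsection, and it lets us feed in Lemma~\ref{lem:H1P01} and Remark~\ref{rem:onenegativedp} for the pieces $Q_i$.

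First I would record the homology of the pieces. By Künneth $H_1(A) \cong \Z\mu \oplus \bigoplus_{i=1}^g(\Z a_i \oplus \Z b_i) \oplus \bigoplus_{i=1}^c \Z\partial_i$, using that the $\partial_i$ for $i=1,\dots,c$ freely generate the boundary-curve summand $\Z^c$ of $H_1(\Sigma_{g,c+1})$. By Lemma~\ref{lem:H1P01} and Remark~\ref{rem:onenegativedp}, $H_1(Q_i) \cong \Z\mu_i \oplus \Z\omega_i$, so $H_1(B) \cong \bigoplus_{i=1}^c(\Z\mu_i \oplus \Z\omega_i)$. Finally $H_1(T_i) \cong \Z t_i \oplus \Z s_i$, where $t_i$ is the $S^1$-fibre class and $s_i = [\partial_i \times \operatorname{pt}]$. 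Tracing the gluing of Remark~\ref{rem:PasaQuotient}, the fibre $t_i$ is sent to $\mu$ in $H_1(A)$ and to $\mu_i$ (up to sign) in $H_1(Q_i)$, while $s_i$ is sent to $\partial_i$ in $H_1(A)$ and to $[\partial_{\operatorname{out}}\Delta_i]$ in $H_1(Q_i)$, which vanishes by Lemma~\ref{lem:H1P01}. Hence the Mayer--Vietoris map $\psi \colon \bigoplus_i H_1(T_i) \to H_1(A)\oplus H_1(B)$ satisfies $\psi(t_i) = (\mu, \pm\mu_i)$ and $\psi(s_i) = (\partial_i, 0)$.

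Next I would note that $\psi$ is injective: an element of its kernel has all $\mu_i$-components zero, which forces the coefficients of the $t_i$ to vanish, and then the remaining combination $\sum_i (\text{coeff})\,\partial_i$ must vanish in $H_1(A)$, so the coefficients of the $s_i$ vanish since the $\partial_i$ are part of a free basis. The same argument shows the induced map $H_0(\mathcal T) \to H_0(A)\oplus H_0(B)$ is injective, so the connecting homomorphism $H_1(P) \to H_0(\mathcal T)$ is zero and $H_1(P) \cong \coker(\psi)$. Computing this cokernel is immediate: the relations $\mu_i = \mp\mu$ collapse each summand $\Z\mu_i$ onto $\Z\mu$, and the relations $\partial_i = 0$ kill $\bigoplus_i \Z\partial_i$, leaving $\Z\mu \oplus \bigoplus_{i=1}^g(\Z a_i \oplus \Z b_i) \oplus \bigoplus_{i=1}^c \Z\omega_i$, as claimed. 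For the last sentence, $\partial P = \partial_{\operatorname{out}}\Sigma_{g,c+1}\times S^1$ is a torus whose $H_1$ is generated by the $S^1$-fibre and $[\partial_{\operatorname{out}}\Sigma_{g,c+1}]$; the former maps to $\mu$ under $H_1(\partial P) \to H_1(A) \to H_1(P)$, and the latter maps to zero since $[\partial_{\operatorname{out}}\Sigma_{g,c+1}] = -\sum_{i=1}^c \partial_i$ and each $\partial_i = 0$ in $H_1(P)$.

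The only real obstacle is the bookkeeping in identifying $\psi$ correctly: matching the fibre and boundary-circle generators of each $H_1(T_i)$ with the right classes on the two sides, in particular that $s_i \mapsto \partial_i$ (a basis element of $H_1(A)$) on one side and $s_i \mapsto [\partial_{\operatorname{out}}\Delta_i] = 0$ on the $Q_i$ side, and keeping the sign conventions in the fibre identification consistent with Remark~\ref{rem:onenegativedp} (these signs are irrelevant to the isomorphism type computed here but matter downstream). Once $\psi$ is pinned down, injectivity and the cokernel computation are routine.
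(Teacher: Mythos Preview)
Your argument is correct and is exactly the approach the paper indicates: the paper's proof says only ``This follows quickly from Lemma~\ref{lem:H1P01} by using the Mayer--Vietoris sequence for~$P= ( \Sigma_{g,c+1}\times S^1) \cup \bigcup_{i=1}^c Q_i$. We leave the details to the reader,'' and you have supplied precisely those details.
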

\begin{proof}
This follows quickly from Lemma~\ref{lem:H1P01} by using the Mayer-Vietoris sequence for~$P= ( \Sigma_{g,c+1}\times S^1) \cup \bigcup_{i=1}^c Q_i$.  We leave the details to the reader.
\color{black}
\end{proof}

The result of the next proposition was stated (but not proved) in Proposition~\ref{prop:HomologyPZZ}. 
Recall that homology with $\Z[t^{\pm 1}]$-coefficients is taken with respect to the homomorphism $\varphi \colon H_1(P) \to \Z$ that sends the $S^1$-fibre to $1$ and the genus and plumbing {loops} to $0$.

\begin{proposition}
\label{prop:H1PZZ}
The first $\Z[t^{\pm 1}]$-homology module of the plumbed~$3$-manifold $P$ is given by
$$
H_1(P;\Z[t^{\pm 1}]) \cong 
\Z_\varepsilon^{2g} \oplus \left( \Z[t^{\pm 1}]/(t-1)^2\right)^{\oplus c}.
$$
The summands are respectively generated by lifts of the genus and plumbing {loops}.

Additionally,  the inclusion induced map $H_1(\partial P;\Z[t^{\pm 1}]) \to H_1(P;\Z[t^{\pm 1}])$ is the zero map.
\end{proposition}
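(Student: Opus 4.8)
The plan is to run the Mayer--Vietoris sequence for the decomposition $P = (\Sigma_{g,c+1}\times S^1)\cup\bigcup_{i=1}^c Q_i$ set up in the notation above, thickening the $Q_i$ by open collars so that the $i$-th piece of the overlap is homotopy equivalent to the torus $T_i := \partial_i\times S^1$. The inputs are the computations of $H_1(Q_i;\Z[t^{\pm 1}])$ and of the inclusion induced map $H_1(\partial Q_i;\Z[t^{\pm 1}])\to H_1(Q_i;\Z[t^{\pm 1}])$ provided by Lemma~\ref{lem:H1P01ZZ} and Remark~\ref{rem:onenegativedp}, together with the elementary $\Z[t^{\pm 1}]$-homology of $\Sigma_{g,c+1}\times S^1$ and of a torus.

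First I would record the restricted coefficient systems. On $\Sigma_{g,c+1}\times S^1$ the homomorphism $\varphi$ is trivial on $\pi_1(\Sigma_{g,c+1})$ and sends the $S^1$-fibre to $1$, so the infinite cyclic cover is $\Sigma_{g,c+1}\times\R\simeq\Sigma_{g,c+1}$ and hence $H_1(\Sigma_{g,c+1}\times S^1;\Z[t^{\pm 1}])\cong\Z_\varepsilon^{2g+c}$, freely generated by lifts of a symplectic basis $a_1,b_1,\dots,a_g,b_g$ together with lifts of the inner boundary curves $\partial_1,\dots,\partial_c$, while $H_0\cong\Z_\varepsilon$. On each $T_i$ the map $\varphi$ kills $\partial_i$ and sends the $S^1$-fibre to $1$, so $H_1(T_i;\Z[t^{\pm 1}])\cong\Z[t^{\pm 1}]/(t-1)$ is generated by the lift $\widetilde{\partial_i}$ and $H_0\cong\Z_\varepsilon$. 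Lemma~\ref{lem:H1P01ZZ} and Remark~\ref{rem:onenegativedp} give $H_1(Q_i;\Z[t^{\pm 1}])\cong\Z[t^{\pm 1}]/(t-1)^2$ (the negative plumbings contributing an order $(t-1)(t^{-1}-1)\doteq(t-1)^2$ module), generated by the lift of the plumbing loop $\omega_i$, with $H_0\cong\Z_\varepsilon$, and the inclusion induced map $H_1(\partial Q_i;\Z[t^{\pm 1}])\to H_1(Q_i;\Z[t^{\pm 1}])$ is zero. In the Mayer--Vietoris sequence
$$\bigoplus_{i=1}^c H_1(T_i;\Z[t^{\pm 1}]) \xrightarrow{j} H_1(\Sigma_{g,c+1}\times S^1;\Z[t^{\pm 1}])\oplus\bigoplus_{i=1}^c H_1(Q_i;\Z[t^{\pm 1}]) \to H_1(P;\Z[t^{\pm 1}]) \xrightarrow{\partial} \bigoplus_{i=1}^c H_0(T_i;\Z[t^{\pm 1}]) \xrightarrow{k}\cdots$$
the key point is that $j(\widetilde{\partial_i}) = (\widetilde{\partial_i},0)$: the first coordinate is one of the free basis elements above, while the second vanishes because $\widetilde{\partial_i}$ lies in the image of the zero map $H_1(\partial Q_i;\Z[t^{\pm 1}])\to H_1(Q_i;\Z[t^{\pm 1}])$. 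Hence $j$ is injective onto the $\Z_\varepsilon^c$ spanned by the $\widetilde{\partial_i}$, so $\coker(j)\cong\Z_\varepsilon^{2g}\oplus\bigoplus_{i=1}^c\Z[t^{\pm 1}]/(t-1)^2$, with generators the lifts of the genus loops $a_k,b_k$ and of the plumbing loops $\omega_i$. On the other end, the $Q_i$-component of $k$ is an isomorphism (inclusion of a connected space with surjective coefficient system into another), so $k$ is injective, $\partial=0$, and exactness yields $H_1(P;\Z[t^{\pm 1}])\cong\coker(j)$, which is the claimed answer with the claimed generators.

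For the boundary statement, $\partial P = \partial_{\operatorname{out}}\Sigma_{g,c+1}\times S^1$, so $H_1(\partial P;\Z[t^{\pm 1}])\cong\Z[t^{\pm 1}]/(t-1)$ is generated by the lift of $\partial_{\operatorname{out}}\Sigma_{g,c+1}$; since $[\partial_{\operatorname{out}}\Sigma_{g,c+1}]$ is an integral combination of the $[\partial_i]$ in $H_1(\Sigma_{g,c+1})$ and each $\widetilde{\partial_i}\in\im(j)$ maps to $0$ in $H_1(P;\Z[t^{\pm 1}])$ by exactness, this lift maps to $0$, so the inclusion induced map is zero. I expect the only real obstacle to be bookkeeping: carefully tracking the restricted coefficient systems so that the various $\Z_\varepsilon$-summands are correctly recognised as $\Z[t^{\pm 1}]/(t-1)$ and all maps are genuinely $\Z[t^{\pm 1}]$-linear, and verifying the asymmetry that the gluing curve $\widetilde{\partial_i}$ is a free generator on the $\Sigma_{g,c+1}\times S^1$ side yet dies on the $Q_i$ side — this asymmetry is exactly what makes $j$ injective and produces the clean cokernel.
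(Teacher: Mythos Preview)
Your proof is correct and follows essentially the same route as the paper: the same Mayer--Vietoris decomposition $P=(\Sigma_{g,c+1}\times S^1)\cup\bigcup_i Q_i$, the same identification of the restricted coefficient systems, the same observation that $j(\widetilde{\partial_i})=(\widetilde{\partial_i},0)$ via Lemma~\ref{lem:H1P01ZZ}, and the same boundary argument expressing the outer boundary lift as a sum of the $\widetilde{\partial_i}$. Your justification that the connecting map vanishes (via injectivity of $k$ on $H_0$) is in fact slightly cleaner than the paper's phrasing.
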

\begin{proof}
Consider the Mayer-Vietoris sequence for~$P= ( \Sigma_{g,c+1}\times S^1)  \cup \bigcup_{i=1}^c Q_i$:
$$
 \bigoplus_{i=1}^c H_1(S^1 \times S^1;\Z[t^{\pm 1}]) \xrightarrow{j_1} H_1( \Sigma_{g,c+1}\times S^1;\Z[t^{\pm 1}])  \oplus  \bigoplus_{i=1}^c H_1(Q_i;\Z[t^{\pm 1}]) \to H_1(P;\Z[t^{\pm 1}])  \xrightarrow{j_0} $$
We study the restriction of the coefficient system on $P$ to the~$Q_i$, the $S^1 \times S^1$ and to~$\Sigma_{g,c+1}$.
\begin{itemize}
\item  We begin with the~$Q_i$.
Under the inclusion induced map~$H_1(Q_i) \to H_1(P)$, the $S^1$-fibre~$\mu_i \subset Q_i$ is mapped to the~$S^1$-fibre $\mu \subset P$ and the plumbing loop~$\omega_i \subset Q_i$ is mapped to the corresponding plumbing loop in $P$.
It follows that~$\varphi(\mu_i)=1$ and~$\varphi(\omega_i)=0$ so that by Lemma~\ref{lem:H1P01ZZ} we have
$$H_1(Q_i;\Z[t^{\pm 1}]) \cong \Z[t^{\pm 1}]/(t-1)^2,$$
generated by a lift of a plumbing loop.
\item We move on to~$\Sigma_{g,c+1}$.
Under the inclusion induced map, the genus {loops} (resp.\ $S^1$-fibre) of $\Sigma_{g,c+1} \times S^1$ map to the genus {loops} (resp.\ $S^1$-fibre) of $P$,  and the $\partial_i$ map to~$[\partial_{\text{out}} \Delta_i]=0 \in H_1(P)$. 
It follows that the cover of $\Sigma_{g,c+1} \times S^1$ induced by $\varphi$ is just~$\Sigma_{g,c+1} \times \R$, and therefore
\[H_1(\Sigma_{g,1} \times S^1; \Z[t^{\pm 1}]) \cong \Z^{2g}_\varepsilon \oplus \Z_\varepsilon^c\] is generated by lifts of the genus {loops} as well as by lifts $\widetilde{\partial}_i$ of the $\partial_i$ for $i=1,\ldots c$.
\item Finally we consider~$S^1 \times S^1\cong \partial_{\text{out}} \Delta_i \times S^1$.
Under the inclusion induced map, the first factor is mapped to $\partial_i \sim \partial_{\text{out}} \Delta_i$,  and the second factor is mapped to the $S^1$-fibre.
Thus the coefficient system maps the first factor to $0$ and second factor to $1$. 
It follows that 
$$H_1(S^1 \times S^1;\Z[t^{\pm 1}]) \cong \Z_\varepsilon$$
 is generated by a lift of a $(1,0)$-curve.
\end{itemize}
In particular, since none of the coefficients systems is trivial, $j_0$ is injective and thus
\begin{align*}
H_1(P; \Z[t^{\pm1}])
& \cong \left(  H_1( \Sigma_{g,c+1}\times S^1; \Z[t^{\pm1}])  \oplus  \bigoplus_{i=1}^c H_1(Q_i; \Z[t^{\pm1}]) \right)\Bigg/ \im (j_1).
\end{align*}
The first summand is generated by lifts of the genus {loops} and lifts $\widetilde{\partial}_i$ of the $\partial_i$, whereas the second summand is generated by lifts of the plumbing loops.

The $i$-th copy of the lift of the aforementioned $(1,0)$-curve in $H_1(S^1 \times S^1;\Z[t^{\pm 1}])$ maps under~$j_1$ to~$[\widetilde{\partial}_i] \in H_1(\Sigma_{g,c+1} \times S^1;\Z[t^{\pm 1}])$ and~$[\partial_{\text{out}} \Delta_i]\in H_1(Q_i;\Z[t^{\pm 1}])$.
 Lemma~\ref{lem:H1P01ZZ} ensures that this latter homology class vanishes.
It follows that 
\begin{align*}
H_1(P; \Z[t^{\pm1}])
& \cong \Z_\epsilon^{2g} \oplus \bigoplus_{i=1}^c \Z[t^{\pm1}]/(t-1)^2.
\end{align*}
The summands are respectively generated by lifts of the genus and plumbing {loops}.

It remains to show that~$H_1(\partial P;\Z[t^{\pm 1}]) \to H_1(P;\Z[t^{\pm 1}])$ is the zero map.
We first study the restriction of the coefficient system to $\partial P \cong \partial_{\text{out}} \Sigma_{g,c+1} \times S^1$.
Under the inclusion map, the second factor is mapped to the $S^1$-fibre of $P$.
Since $\partial_{\text{out}} \Sigma_{g,c+1}\times \lbrace \operatorname{pt} \rbrace$ is homologous to~$\cup_{i=1}^c \partial_i$ in~$\Sigma_{g,c+1}$, the first factor $\delta$ is mapped to~$[\partial_{\text{out}} \Sigma_{g,c+1} \times \lbrace \operatorname{pt} \rbrace]=-\sum_{i=1}^c[\partial_i]=0\in H_1(P)$.
We deduce that
$$ H_1(\partial P;\Z[t^{\pm 1}]) \cong \Z[t^{\pm 1}]/(t-1)\langle \widetilde{\delta} \rangle.$$
The assertion thus reduces to proving that $\widetilde{\delta}$ is trivial in $H_1(P;\Z[t^{\pm 1}])$.  
Note that~$\delta$ cobounds a surface with~$\cup_{i=1}^c \partial_i$ within $\Sigma_{g,c+1}$.
Since the coefficient is trivial on $\Sigma_{g,c+1}$, this cobordism lifts to the cover.
It follows from our description of $H_1(\Sigma_{g,c+1} \times S^1; \Z[t^{\pm 1}])$ above that 
$$\widetilde{\delta}=-\sum_{i=1}^c [\widetilde{\partial_i}] \in H_1(\Sigma_{g,c+1};\Z[t^{\pm 1}]) \cong \Z^{2g}_\varepsilon \oplus \Z^c_\varepsilon. $$
Since the~$\Z^c_\varepsilon$ summand of $H_1(\Sigma_{g,c+1};\Z[t^{\pm 1}])$ maps to zero in~$H_1(P;\Z[t^{\pm 1}])$, the assertion follows.
This concludes the proof of the lemma.
\color{black}
\end{proof}

\subsection{Realising isometries  by surface homeomorphisms}
\label{sub:RealisingIsometries}

In this section,  we show that each of the isometries of Examples~\ref{ex:ScaleIsometry}-\ref{ex:Sp2gZ} is induced by $\theta \times \id_{S^1}$ for some~$\theta \in \Homeo_\alpha(\Sigma, \partial)= \Homeo(\Sigma^\circ, \partial \Sigma^\circ)$.  
In fact, each of these realizations results also holds if $\Sigma$ is closed: all the constructions take place far from the boundary and Proposition~\ref{prop:BlPK} shows that the inclusion induces an isometry~$\Bl_P \cong \Bl_{P_U}$.

\medbreak

We begin by observing that~$\intt(\Sigma^\circ) \times \{1\} \subset P=P_g(c_+,c_-)$ has the property that the following composition is the zero map:
\[\pi_1(\intt(\Sigma^\circ) \times \{1\}) \to \pi_1(P) \xrightarrow{\varphi} \Z.\]
Therefore~$\intt \Sigma^\circ \times \{1\}$ lifts to~$P^\infty$,  the ~$\varphi$-induced infinite cyclic cover of~$P$.  
In particular,  Dehn twisting along any curve~$\beta$  in~$\intt \Sigma^\circ \times \{1\}$  will induce a map of~$\Sigma^{\circ} \times S^1$ of the form~$f \times \id_{S^1}$,  where~$f$ is supported on~$\mathcal{N}_{\Sigma^\circ}(\beta)$,  a neighborhood of $\beta$ in $\Sigma^\circ$.
The induced homeomorphism of~$P$,  thought as a quotient of~$\Sigma^{\circ} \times S^1$,  will then lift to a map that is supported on~$\mathcal{N}_{\Sigma^\circ}(\beta) \times \R \subset P^\infty$,  where it is of the form~$f \times \id_{\R}$.  
We will understand how~$f_*$ acts on~$H_1(P; \Z[t^{\pm1}])$  by analyzing~$f$'s action on curves in~$\Sigma^\circ \times \{1\}$,  thought of as a subset of~$\Sigma^ \circ \times \R \subset P^\infty$.  

Recall from Proposition~\ref{prop:HomologyPZZ} that our preferred generators for~$H_1(P; \Z[t^{\pm 1}])$ consist of~$y_1, \ldots y_{2g}$,  which are order~$(t-1)$ elements obtained by lifting the genus {loops}~$a_1,b_1,\dots,a_g,b_g$ to~$\Sigma^\circ \times \{1\} \subset P^\infty$,   and~$x_1, \dots, x_c$,  which are order~$(t-1)(t^{-1}-1)$ elements obtained by lifting~$\omega_1,\dots, \omega_c$ to start (and end) within~$\Sigma^\circ \times \{1\}  \subset P^\infty$.  
Here, recall that the $\omega_i$ are the arcs that project to the plumbing loops in $P$, and that we also denote the latter by $\omega_i$.
We will repeatedly use the observation from Remark~\ref{rem:onenegativedp} that in~$H_1(P; \Z[t^{\pm1}])$,  
\[ [\widetilde{\alpha}_{2i-1}]= \varepsilon_i (t-1) [\widetilde{\omega}_i]\]
where  
$\widetilde{\alpha}_{2i-1}$ is the lift of~$\alpha_{2i-1}$  to within~$\Sigma^\circ \times \{1\}$, ~$\widetilde{\omega}_i$ is the lift of~$\omega_{i}$ starting (and ending) in~$\Sigma^{\circ} \times \{1\}$, 
 and~$\varepsilon_i= \pm 1$ according to the sign of the~$i$-th plumbing. 

\begin{construction}
For~$i=1,\dots,c$,  let~$\zeta_i$ be a small push-off of the boundary component~$\alpha_{2i-1}$ of~$\Sigma^{\circ}$ into the interior.  
For~$i \neq i'$,  let~$\sigma_{i,i'}$ be a tubing of~$\zeta_i$ to~$\zeta_{i'}$,  as illustrated in Figure~\ref{fig:surfacewithmorecurvesf}. 
\begin{figure}[htbp!]
\begin{centering}
\begin{tikzpicture}
\node[anchor=south west,inner sep=0] at (0,0)
{\includegraphics[height=3cm]{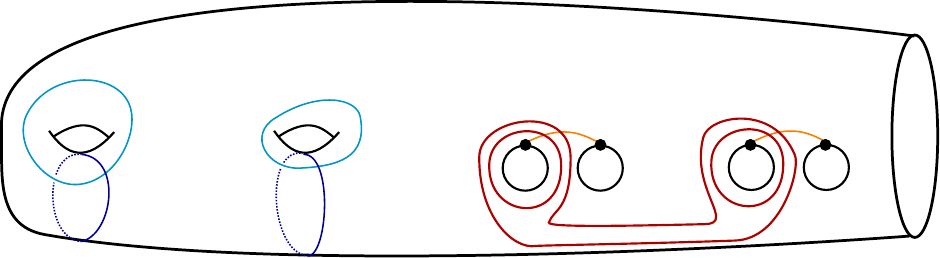}};
\node at (1.5, .4){$b_1$}; \node at (1.8, 1.7){$a_1$}; \node at (4, .4){$b_g$}; \node at (4.4, 1.7){$a_g$};
\node at (6.2,.38){\small $\zeta_i$};\node at (8.7,.42){\small $\zeta_{i'}$};\node at(7.5, .55){$\sigma_{i,i'}$};
\node at (6.8, 1.65){$\omega_1$}; \node at (9.5, 1.65){$\omega_c$};
\node at (2.5, 1){$\dots$};\node at (7.9,1){$\dots$};
\node at (11.3, 1.8){$\partial \Sigma$};
\end{tikzpicture}
\end{centering}
\caption{The curves $\zeta_i$ and $\sigma_{i,i'}$.}
\label{fig:surfacewithmorecurvesf}
\end{figure}
For~$i=1,\dots,c$ and $j=1,\dots, 2g$,  let $\tau_{i,j}$ be a tubing of $\zeta_i$ to a parallel push-off of $a_k$ (if~$j=2k-1$) or of $b_k$ (if~$j=2k$),  as illustrated in Figure~\ref{fig:surfacewithmorecurvesrho}.
\begin{figure}[hbtp!]
\begin{centering}
\begin{tikzpicture}
\node[anchor=south west,inner sep=0] at (0,0)
{\includegraphics[height=3cm]{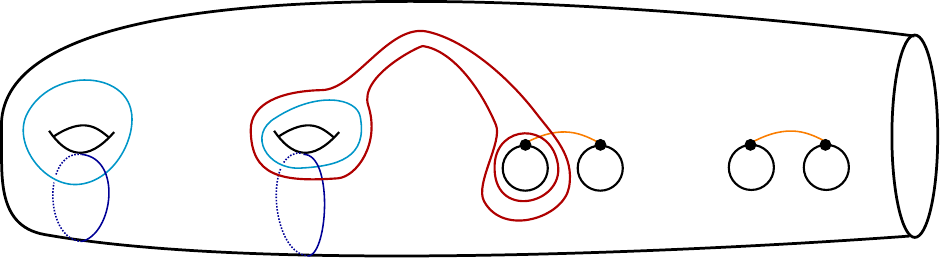}};
\node at (1.5, .4){$b_1$}; \node at (1.8, 1.7){$a_1$}; \node at (4, .4){$b_g$}; \node at (5,2){$\tau_{i,j}$};
\node at (6,1.65){$\zeta_1$};
\node at (6.8, 1.65){$\omega_1$}; \node at (9.5, 1.65){$\omega_c$};
\node at (2.5, 1){$\dots$};\node at (7.9,1){$\dots$};
\node at (11.3, 1.8){$\partial \Sigma$};
\end{tikzpicture}
\end{centering}
\caption{The curves $\tau_{i,j}$,  illustrated for $i=1$ and $j=2g-1$.}
\label{fig:surfacewithmorecurvesrho}
\end{figure}
\end{construction}

\begin{example}[Realizing the `scale a generator by~$t^k$'-isometries]
\label{ex:ScaleIsometryrealize}
For any $1 \leq i \leq c$,  left handed Dehn twisting along~$\zeta_i$ realizes the isometry of~$\Bl_P$ that sends
\[x_i= [\widetilde{\omega}_i] \mapsto
[\widetilde{\alpha}_{2i-1}] + [\widetilde{\omega}_i]=\varepsilon_i (t-1)[\widetilde{\omega}_i]+[\widetilde{\omega}_i]= \begin{cases} t  [\widetilde{\omega}_i]& \varepsilon_i=1 \\
t^{-1} [\widetilde{\omega}_i] & \varepsilon_i=-1 \end{cases}
=\begin{cases} t  x_i& \varepsilon_i=1 \\
t^{-1}  x_i & \varepsilon_i=-1 \end{cases}
\] while fixing all other generators. 
Compositions of this map and its inverse allow us to realize all `scale~$x_i$ by~$t^k$' isometries of Example~\ref{ex:ScaleIsometry}. 
\end{example}

\begin{example}[Realizing~$p_{i,i'}$]\label{ex:fijrealize}
For any distinct $1 \leq i, i' \leq c$,  simultaneous left-handed Dehn twisting along~$\sigma_{i,i'}$  and right-handed Dehn twisting along both~$\zeta_i$ and~$\zeta_i'$ realizes the isometry of~$\Bl_P$ that sends 
\begin{align*}
x_i=[\widetilde{\omega}_i] \mapsto \ & [\widetilde{\omega}_i] + [\widetilde{\alpha}_{2i'-1}]=  [\widetilde{\omega}_i] + \varepsilon_{i'}(t-1) [\widetilde{\omega}_{i'}]= x_i + \varepsilon_{i'}(t-1) x_{i'}, \\
x_{i'}=[\widetilde{\omega}_{i'}] \mapsto \ &  [\widetilde{\alpha}_{2i-1}] + [\widetilde{\omega}_{i'}] = \varepsilon_i (t-1) [\widetilde{\omega}_i] + [\widetilde{\omega}_{i'}]= \varepsilon_i (t-1)x_i + x_{i'},
\end{align*}
while fixing all other generators; that is, it realizes the~$p_{i,i'}$ isometry of Example~\ref{ex:fij}. 
\end{example}

\begin{example}[Realizing~$q_{i,j}$]
\label{ex:Rhoijrealize}
For any $1 \leq i \leq c$ and $1 \leq k \leq g$,  simultaneous left-handed Dehn twisting along~$\tau_{i,2k-1}$ and right-handed twisting along~$\zeta_i$ realizes the isometry of~$\Bl_P$ that sends 
\begin{align*}
x_i=[\widetilde{\omega}_i]  & \mapsto\phantom{abc} [\widetilde{\omega}_i] +  [\widetilde{a}_{k}]  \phantom{ \ + [\widetilde{b}_{k}]}=   \phantom{\varepsilon_i (t-1)}[\widetilde{\omega}_i] +  [\widetilde{a}_{k}]  \phantom{ +[\widetilde{b}_{k}]} = \phantom{ \varepsilon_i(t-1)}x_i + y_{2k-1}, \phantom{ \ + y_{2k}}
 \\
y_{2k-1}=[\widetilde{a}_{k}] & \mapsto \phantom{[\widetilde{\alpha}_{2i-1}] + \ }   [\widetilde{a}_k] \phantom{\ + [\widetilde{b}_k]}= \phantom{\varepsilon_i (t-1) [\widetilde{\omega}_i] -\ \ } [\widetilde{a}_{k}]  \phantom{\ + [\widetilde{b}_{k}]}= \phantom{ \varepsilon_i(t-1)x_i+ \ } y_{2k-1},  \phantom{ \ + y_{2k}}
\\
y_{2k}=[\widetilde{b}_k] & \mapsto  [\widetilde{\alpha}_{2i-1}]- [\widetilde{a}_{k}] + [\widetilde{b}_{k}]=  \varepsilon_i (t-1) [\widetilde{\omega}_i]-[\widetilde{a}_{k}] + [\widetilde{b}_{k}]= \varepsilon_i(t-1)x_i - y_{2k-1}+y_{2k},
\end{align*}
while fixing all other generators; that is, it realizes the~$q_{i,2k-1}$ isometry of Example~\ref{ex:Rhoij}.  A similar computation shows that simultaneous left-handed Dehn twisting along~$\tau_{i,2k}$ and right handed Dehn twisting along~$\zeta_i$ realizes the isometry~$q_{i,2k}$. 
\end{example}

\begin{example}[Realizing $\operatorname{Sp}(2g,\Z)$-type isometries]
\label{ex:Sp2gZrealize}
Realising the $\operatorname{Sp}(2g,\Z)$-isometries from Example~\ref{ex:Sp2gZ} amounts to the fact that isometries of
$$\Aut(\coker(\mathcal{H}_2^{\oplus g}),\partial \mathcal{H}_2^{\oplus g}) \cong \Aut(H_1(\Sigma_{g,1}),  Q_{\Sigma_{g,1}}) \cong \operatorname{Sp}(2g,\Z)$$ are realized by homeomorphisms of $\Sigma_{g,1}$ that fix the boundary pointwise~\cite[Section~2.1 and the discussion following Theorem 6.4]{FarbMargalit}.
\end{example}

Each of these constructions work if $\Sigma=\Sigma_g$ is closed and result in isometries of $\Bl_{P_U} \cong \Bl_P$.
Indeed everything takes place far from the boundary of $\Sigma_{g,1}$.
We summarize the above work in the following proposition. 

\begin{proposition}~\label{prop:realizability}
Let $s \in \Aut(\Bl_P)$ be a composition of the `scale by $t^k$',   $p_{i,i'}$,  $q_{i,j}$, and~$\operatorname{Sp}(2g,\Z)$-isometries from Examples~\ref{ex:ScaleIsometry}-\ref{ex:Sp2gZ},  and their inverses. 
There exists $\theta \in \Homeo_\alpha(\Sigma, \partial)$ such that 
\[ s= \widehat{\theta}_*,\]
where $\widehat{\theta}$ is the homeomorphism of $P$ determined by $\theta$ as described in Construction~\ref{cons:HomeoSigmaActionRelBoundary}. 

The same result holds if $\Sigma$ is closed,  with $P_U$ in place of $P$.
\end{proposition}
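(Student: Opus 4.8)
The plan is to assemble Proposition~\ref{prop:realizability} from the explicit realizations already carried out in Examples~\ref{ex:ScaleIsometryrealize}--\ref{ex:Sp2gZrealize}; the remaining work is essentially bookkeeping. First I would record the elementary but essential fact that the assignment $\theta \mapsto \widehat\theta$ of Construction~\ref{cons:HomeoSigmaActionRelBoundary} is a group homomorphism $\Homeo_\alpha(\Sigma,\partial) \to \Homeo(P)$. Indeed $\widehat\theta$ is built by extending $\theta|_{\Sigma^\circ} \times \id_{D^2}$ over each copy of $D^2 \times D^2$ by the identity and then checking compatibility with $\sim_W$ (and restricting to $P$), and every step of this construction is natural in $\theta$, so $\widehat{\theta_1 \circ \theta_2} = \widehat\theta_1 \circ \widehat\theta_2$. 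Since $\widehat\theta$ intertwines the coefficient system $\varphi$, passing to $H_1(-;\Z[t^{\pm1}])$ and invoking the identification $\Iso(\partial\lambda,\unaryminus\Bl_P) \cong \Aut(\Bl_P)$ of Convention~\ref{conv:TargetSimplify} yields a homomorphism $\theta \mapsto \widehat\theta_* \colon \Homeo_\alpha(\Sigma,\partial) \to \Aut(\Bl_P)$, whose image is therefore a subgroup, in particular closed under composition and inversion.

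Next I would invoke the four families of realizations directly: Example~\ref{ex:ScaleIsometryrealize} realizes the `scale by $t^k$' isometries of Example~\ref{ex:ScaleIsometry} by (signed powers of) Dehn twists along the curves $\zeta_i$; Example~\ref{ex:fijrealize} realizes $p_{i,i'}$ of Example~\ref{ex:fij} by simultaneous twists along $\sigma_{i,i'}$, $\zeta_i$, $\zeta_{i'}$; Example~\ref{ex:Rhoijrealize} realizes $q_{i,j}$ of Example~\ref{ex:Rhoij} by twists along $\tau_{i,j}$ and $\zeta_i$; and Example~\ref{ex:Sp2gZrealize} realizes the $\operatorname{Sp}(2g,\Z)$-isometries of Example~\ref{ex:Sp2gZ} via the classical fact that $\operatorname{Sp}(2g,\Z) \cong \Aut(H_1(\Sigma_{g,1}),Q_{\Sigma_{g,1}})$ is generated by boundary-fixing homeomorphisms. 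In each case the supporting curves lie in $\intt(\Sigma^\circ)$, hence are disjoint from the disks $B_k = \alpha_k(D^2)$, so the associated twists are supported away from $\bigcup_k B_k$ and the resulting homeomorphism $\theta$ lies in $\Homeo_\alpha(\Sigma,\partial)$ (and its inverse, a twist of the opposite handedness, likewise). An arbitrary $s$ in the statement is by hypothesis a composite of such generators and their inverses; by the first paragraph we may then write $s = \widehat\theta_*$ where $\theta$ is the corresponding composite of these Dehn twists, which again lies in $\Homeo_\alpha(\Sigma,\partial)$. This proves the first assertion.

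For the closed case I would note that every curve used above --- the $\zeta_i$, $\sigma_{i,i'}$, $\tau_{i,j}$, and the standard symplectic basis --- can be taken in the interior of $\Sigma_{g,1} \subset \Sigma_g$, away from $\partial\Sigma_{g,1}$, so the same Dehn twists define elements of $\Homeo_\alpha(\Sigma_g)$ and, via the closed analogues $E_U$, $W_U$, $P_U$ of Remark~\ref{rem:ClosedModels}, induce homeomorphisms $\widehat\theta \colon P_U \to P_U$ with the same effect on $H_1(-;\Z[t^{\pm1}])$. By Proposition~\ref{prop:BlPK} the inclusion $P \hookrightarrow P_U$ induces an isometry $\Bl_P \cong \Bl_{P_U}$ intertwining these actions, so the verifications of Examples~\ref{ex:ScaleIsometryrealize}--\ref{ex:Sp2gZrealize} carry over verbatim with $P_U$ in place of $P$, giving the final clause of the proposition.

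The only genuinely delicate point --- and the one I would verify carefully rather than leave implicit --- is precisely the multiplicativity of $\theta \mapsto \widehat\theta_*$ and its compatibility with the non-canonical identification $\Iso(\partial\lambda,\unaryminus\Bl_P) \cong \Aut(\Bl_P)$, since Examples~\ref{ex:ScaleIsometryrealize}--\ref{ex:Sp2gZrealize} compute the effect of a \emph{single} twist on $H_1(P;\Z[t^{\pm1}])$ whereas the proposition concerns arbitrary composites. Once that naturality is in hand, everything else is the routine assembly already performed in those examples, so I do not expect any further obstruction.
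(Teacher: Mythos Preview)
Your proposal is correct and follows the same approach as the paper, which treats the proposition as a direct summary of Examples~\ref{ex:ScaleIsometryrealize}--\ref{ex:Sp2gZrealize} without further argument. Your explicit articulation of the multiplicativity $\widehat{\theta_1\circ\theta_2}=\widehat{\theta}_1\circ\widehat{\theta}_2$ (already noted in Construction~\ref{cons:ActionHomeo}) and of the fact that all supporting curves lie in $\intt(\Sigma^\circ)$ is exactly the bookkeeping needed; the reference to Convention~\ref{conv:TargetSimplify} is slightly beside the point, since the examples work directly with the fixed identification $x_i=[\widetilde\omega_i]$, $y_j=[\widetilde a_k]$ or $[\widetilde b_k]$ in $H_1(P;\Z[t^{\pm1}])$ rather than through a non-canonical choice, but this does not affect correctness.
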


\section{Enumerating immersed~$\Z$-surfaces}
\label{sec:Enumerating}

The goal of this section is to prove the results stated in Sections~\ref{sub:IntroD4S4} and~\ref{sub:Other4ManifoldsIntro} of the introduction.

\subsection{Immersed surfaces in $D^4$ and $S^4$.}
\label{sub:ProofofIntroD4S4}

We prove the results stated in Section~\ref{sub:IntroD4S4}.

\begin{customthm}
{\ref{thm:UnknottingAllSameSignIntro}}
Assume that~$S$ is either a closed genus~$g$ immersed~$\Z$-surface in~$S^4$ or a genus~$g$ immersed~$\Z$-surface in~$D^4$ with boundary an Alexander polynomial one knot~$K$.
The following assertions are equivalent:
\begin{enumerate}
\item $S$ is isotopic to the standard genus $g$ surface with~$c_+$ positive double points and~$c_-$ negative double points; in the non-closed case,  this isotopy can be taken to be rel. boundary. 
\item the equivariant intersection form of the exterior of~$S$ is isometric to~$\lambda:=\lambda_{c_+,c_-} \oplus\mathcal{H}_2^{\oplus g}.$
\end{enumerate}
\end{customthm}
\begin{proof}
We begin with the closed case.
The $(1) \Rightarrow (2)$ direction follows from Example~\ref{ex:SimpleImmersed}, so we focus on the $(2)\Rightarrow (1)$ direction.
We write $P_U:=P_{U,g}(c_+,c_-)$.
Thanks to the classification stated in Theorem~\ref{thm:SurfacesClosed} and the simplification from Proposition~\ref{prop:SimplifyAlgebraClosed},  the theorem amounts to proving~$\Aut(\Bl_{P_U})/(\Aut(\lambda) \times \Homeo_\alpha(\Sigma_g))$ is trivial.
It suffices to show that every  $f \in \Aut(\Bl_{P_U})$ can be written as
\[f= \widehat{\theta}_* \circ \partial \rho\]
for some $\theta \in \Homeo_{\alpha}(\Sigma_g)$ and $\rho \in \Aut(\lambda)$.  
By Proposition~\ref{prop:AutBl/AutStandard},  we can write 
\[f= s \circ \partial \rho\]
for  $s$ a composition of isometries of $\Bl_{P_U}$ as given in Examples~\ref{ex:ScaleIsometry}-\ref{ex:Sp2gZ} and $\rho \in \Aut(\lambda)$.  But by Proposition~\ref{prop:realizability},  such an $s$ can be realized as the induced map of some $\theta \in \Homeo_{\alpha}(\Sigma_g)$,  and the result up to equivalence follows.

To obtain the result up to isotopy instead of up to equivalence, we apply work of Quinn~\cite[Theorem 1.1]{Quinn} (see also~\cite[Theorem~10.1]{FreedmanQuinn}) that ensures that every homeomorphism of $S^4$ is topologically isotopic to the identity.

The proof for surfaces with boundary is analogous.  
The only differences are that this time we appeal to Theorem~\ref{thm:SurfacesRelBoundary} and to the simplification from Proposition~\ref{prop:SimplifyAlgebraBoundary}, and that the upgrade to isotopy relies on the Alexander trick rather than on Quinn's deep work.
\end{proof}

\begin{customthm}
{\ref{thm:UniqueSpherec=1Intro}}
~
\begin{itemize}
\item  Every~$\Z$-sphere~$S \subset S^4$ with a single double point is isotopic to the standard~$\Z$-sphere with a single double point of the same sign.
\item Given an Alexander polynomial one knot~$K$,  every~$\Z$-disk in~$D^4$ with boundary~$K$ and a single double point is isotopic rel. boundary to the standard~$\Z$-disk with boundary~$K$ and a single double point of the same sign.
\end{itemize}
\end{customthm}
\begin{proof}
We begin with the spherical case.
Set $P_U:=P_{U,0}(c_+,c_-)$ and let $\varepsilon \in \{-1,1\}$ be the sign of the double point.
There is a unique size one matrix presenting $\Bl_{P_U}$, namely $(\varepsilon (t-1)(t^{-1}-1))$.
Recall from Example~\ref{ex:SimpleImmersed} that this is the equivariant intersection form of the exterior of the standard immersed sphere with a single double point of sign $\varepsilon$.
Theorem~\ref{thm:UnknottingAllSameSignIntro} therefore implies that $S$ is isotopic to said standard immersed sphere.
The proof for disks is entirely analogous.
\end{proof}

\begin{customthm}
{\ref{thm:g=0c=1BoundaryD4Intro}}
\label{thm:g=0c=1BoundaryD4IntroProof}
Let~$K$ be a knot,  let~$(c_+,c_-) \in \{(0,1),(0,1)\}$,  and let~$\varepsilon\in \{-1,1\}$ be $1$ if $c_+=1$ and $-1$ if $c_-=1$. 
The following assertions are equivalent:
\begin{enumerate}
\item the Blanchfield form $\Bl_K$ is presented by $(\varepsilon \Delta_K)$. 
\item the set of rel. boundary isotopy classes of~$\Z$-disks with boundary~$K$ and a single double point of sign~$\varepsilon$  is nonempty and corresponds bijectively to~$U(\Delta_K)/\{ t^k \}_{k \in \Z}.$
\end{enumerate}
\end{customthm}
\begin{proof}
There is a unique size one nondegenerate hermitian form presenting~$\Bl_{P_{K,0}(c_+,c_-)}$, namely the form~$\lambda:=\varepsilon \Delta_K(t-1)(t^{-1}-1)$.
Proposition~\ref{prop:BlKBlPK} below implies that $\Bl_{P_{K,0}(c_+,c_-)}$ is presented by $\lambda$ if and only if $\Bl_K$ is presented by $(\varepsilon\Delta_K)$.
By the classification stated in Theorem~\ref{thm:SurfacesRelBoundary} and the simplification from Proposition~\ref{prop:SimplifyAlgebraBoundary},  the first condition is therefore equivalent to the set of surfaces under consideration being nonempty and corresponding to
$$  \frac{(\Aut(\Bl_P)/\Homeo_\alpha(D^2,\partial))\times \Aut(\Bl_K)}{\Aut(\lambda)}.$$
Here we use the notation~$P:=P_0(c_+,c_-)$.
Proposition~\ref{prop:AutBl/AutStandardImproved} implies~$\Aut(\Bl_{P})=\lbrace \pm t^k \rbrace_{k \in \Z}$.
The action of~$\Homeo_\alpha(D^2,\partial)$ factors through the action of the mapping class group $\operatorname{MCG}_\alpha(D^2,\partial)$ which is the mapping class group of the pair of pants.  
It is well-known (see for example \cite[Section 3.6.4]{FarbMargalit}) that this group is isomorphic to $\Z^3$, generated by Dehn twisting around push-offs of each of the three boundary components.  The Dehn twist along either of the two `inner' boundary components scales a generator by $t^{\pm1}$ (recall Example~\ref{ex:ScaleIsometryrealize}),  while the Dehn twist along the outer boundary $\partial D^2$ induces the identity map.  
Therefore,  ~$\Aut(\Bl_P)/\Homeo_\alpha(D^2,\partial)=\{ \pm \id\}$.
As~$\lambda$ has size one, we have~$\Aut(\lambda)=\{ \pm t^k \}_{k \in \Z}$.
We deduce that
$$ \frac{(\Aut(\Bl_P)/\Homeo_\alpha(D^2,\partial))\times \Aut(\Bl_K)}{\Aut(\lambda)}
\cong 
 \frac{ \{ \pm \id \} \times \Aut(\Bl_K)}{ \{ \pm t^k \}_{k \in \Z} } 
 \cong 
 \frac{\Aut(\Bl_K)}{ \{t^k \}_{k \in \Z} }.
$$
\color{black}
As noted in the introduction, 
 this set corresponds bijectively to~$U(\Delta_K)/\{ t^k \}_{k \in \Z}.$
\end{proof}

\subsection{Immersed surfaces in other $4$-manifolds}
\label{sub:ProofofOther4ManifoldsIntro}

We prove the results stated in Section~\ref{sub:Other4ManifoldsIntro}.

\begin{customthm}
{\ref{thm:Other4ManifoldsSpheresIntro}}
Let~$X$ be a closed simply-connected~$4$-manifold, and let~$c_+,c_-$ be positive integers.
The number of equivalence classes of~$\Z$-surfaces in~$X$ with~$c_+$ positive double points, ~$c_-$ double points and whose exteriors have the same equivariant intersection form $\lambda$ is
\begin{itemize}
\item one if~$(c_+,c_-) \in \{(1,0),(0,1)\}$.
\item at most two if~$(c_+,c_-)=(1,1)$,
\item finite if~$(c_+,c_-) \in \{(c,0),(0,c)\}$,
\end{itemize}
For $N$ a simply-connected $4$-manifold with boundary $S^3$, the same statement holds for rel. boundary equivalence classes of $\Z$-surfaces in $N$ with boundary an Alexander polynomial one knot.
\end{customthm}
\begin{proof}
We begin with closed surfaces in closed manifolds.
Set $P_U:=P_{U,g}(c_+,c_-)$.
Thanks to the combination of Theorem~\ref{thm:SurfacesClosed} and Proposition~\ref{prop:SimplifyAlgebraClosed}, it suffices to estimate the cardinality of~$\Aut(\Bl_{P_U})/(\Aut(\lambda) \times \Homeo_\alpha(\Sigma_g))$.

Applying Proposition~\ref{prop:AutBl/AutStandardImproved} under the assumption that~$(c_+,c_-) \in \{(c,0),(0,c)\}$ ,  we have that any~$f \in \Aut(\Bl_{P_U})$ can be written as $f= s \circ s'$ for some $s$ a composition of the isometries in Examples~\ref{ex:ScaleIsometry}-\ref{ex:Sp2gZ} (which,  by Proposition~\ref{prop:realizability} can be realized by a~$\theta \in \Homeo_{\alpha}(\Sigma_g)$) and $s'$ a composition of the `scale a generator by $-1$' and permutation-induced isometries from Examples~\ref{ex:ScaleIsometry-1} and~\ref{ex:PermutationIsometry}.  We therefore immediately obtain the first item: 
\begin{equation*}
\Bigg|\frac{\Aut(\Bl_{P_U})}{(\Aut(\lambda) \times \Homeo_\alpha(\Sigma_g))}\Bigg| \leq
\Bigg|\frac{\Aut(\Bl_{P_U})}{\Homeo_\alpha(\Sigma_g)}\Bigg| \leq  c! \cdot 2^c.
\end{equation*}
Now suppose that~$c=1$ and let~$f \in \Aut(\Bl_{P_U})$. 
 Using the notation from the proof of Proposition~\ref{prop:AutBl/AutStandard},  write $f(x_1)= a_1 + (t-1) b_1 + c_1$.  If $a_1=x_1$,  then by Steps (2)-(4) of the proof of Proposition~\ref{prop:AutBl/AutStandard}, 
 $f$ is a composition of isometries from Examples~\ref{ex:ScaleIsometry}-\ref{ex:Sp2gZ} and their inverses hence, by Proposition~\ref{prop:realizability},  is related to the identity by the action of $\Homeo_\alpha(\Sigma_g)$.  
 If $a_1 \neq x_1$,  then the proof of Proposition~\ref{prop:AutBl/AutStandardImproved} shows that $a_1=-x_1$ (specifically, see~\eqref{eqn:allpos} with $c=1$).
  Let $\rho$ be the isometry of $\lambda$ that scales every element of the equivariant second homology by $-1$.  We then have that $f \circ \partial \rho$ has the property that $(f \circ \partial \rho)(x_1)= x_1 -(t-1)b_1-c_1$, and as explained above is related to the identity by the action of $\Homeo_\alpha(\Sigma_g)$.  
Thus, in this case, the set~$\frac{\Aut(\Bl_{P_U})}{(\Aut(\lambda) \times \Homeo_\alpha(\Sigma_g))}$ is a singleton for every $\lambda$.
 
The argument is similar for $c_+=1=c_-$.  
First,  the proof of Proposition~\ref{prop:AutBl/AutStandard11} tells us that we may have~$a_1= \pm x_1$ and $a_2 = \pm x_2$.  Then,  the `scale everything by $-1$' isometry of $\lambda$ allows us to assume that either $a_1=x_1,  a_2=x_2$ or $a_1=x_1, a_2=-x_2$.  Any map  $f$ as in the first case is related to the identity via the action of $\Homeo_\alpha(\Sigma_g)$,  and any two maps $f,f'$ as in the second case are related via the action of $\Homeo_\alpha(\Sigma_g)$,  because the composition $f' \circ f^{-1}$ is as in the first case.
Thus, in this case, the set~$\frac{\Aut(\Bl_{P_U})}{(\Aut(\lambda) \times \Homeo_\alpha(\Sigma_g))}$ has at most two elements for every $\lambda$.

We move on to surfaces with boundary.
Set $P:=P_g(c_+,c_-)$.
Since knots with Alexander polynomial one have trivial Alexander module, thanks to the combination of Theorem~\ref{thm:SurfacesRelBoundary} and Proposition~\ref{prop:SimplifyAlgebraBoundary}, it suffices to estimate the cardinality of the set~$\Aut(\Bl_P)/(\Aut(\lambda) \times \Homeo_\alpha(\Sigma_g,\partial))$. 
The arguments for that are exactly as in the closed case
\end{proof}

\begin{remark}
\label{rem:WhyItsHardMain}
As announced in Remark~\ref{rem:WhyItsHard}, the argument in this proof can be modified as follows to show that for $g=0$ and $(c_+,c_-)=(1,0)$,  we have
 $$\Bigg| \frac{\Aut(\Bl_{P_U})}{\Homeo_\alpha(\Sigma_g)} \Bigg|=2.$$
First note that~$\Aut(\Bl_{P_U})=\lbrace \pm t^k \rbrace_{k \in \Z}$.
Next, note that the action of~$\Homeo_\alpha(\Sigma_g)$ factors through the action of the mapping class group.
In this case, $\Sigma^\circ$ is an annulus and the generator of~$\operatorname{MCG}_\alpha(\Sigma_0)=\operatorname{MCG}(\Sigma^\circ,\partial \Sigma^\circ) \cong \Z$ acts by $t$, so $\frac{\Aut(\Bl_{P_U})}{\Homeo_\alpha(\Sigma_g)}=\lbrace \pm \id \rbrace.$
\end{remark}

We record the following generalization of Theorem~\ref{thm:UnknottingAllSameSignIntro} to other simply-connected $4$-manifolds.
\begin{theorem}
\label{thm:StandardNotS4}
Let~$X$ be a closed simply-connected~$4$-manifold,  let~$c_+,c_-$ be positive integers, and let~$S \subset X$ be a closed genus $g$ immersed~$\Z$-surface.
The following assertions are equivalent:
\begin{itemize}
\item $S$ is equivalent to (a local copy of) the standard genus $g$ surface in $X$ with~$c_+$ positive double points and~$c_-$ negative double points;
\item the equivariant intersection form of the exterior of~$S$ is isometric to~$Q_X \oplus \lambda_{c_+,c_-} \oplus \mathcal{H}_2^{\oplus g}.$
\end{itemize}
For $N$ a simply-connected $4$-manifold with boundary $S^3$, the same statement holds for $\Z$-surfaces in $N$ with boundary an Alexander polynomial one knot, and the equivalence can be taken to be rel. boundary.
\end{theorem}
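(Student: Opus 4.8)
The plan is to deduce Theorem~\ref{thm:StandardNotS4} directly from Theorem~\ref{thm:UnknottingAllSameSignIntro} (or rather its analogue for general $4$-manifolds proved via Theorems~\ref{thm:SurfacesClosed} and~\ref{thm:SurfacesRelBoundary}) together with the algebra already developed. First I would recall that ``the standard genus $g$ surface in $X$ with $(c_+,c_-)$ double points'' means the surface obtained from an unknotted genus $g$ surface (a standard embedded surface in a ball $B^4 \subset X$) by adding $c_+$ local positive and $c_-$ local negative double points; equivalently it is the connected sum of the standardly embedded genus $g$ surface in $X$ with the standard immersed $\Z$-sphere of Example~\ref{ex:SimpleImmersed}. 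The $(1)\Rightarrow(2)$ direction is then immediate: a Mayer--Vietoris computation for the exterior, exactly as in Example~\ref{ex:SimpleImmersed} and in the proof of Proposition~\ref{prop:HomologyZExterior}, shows that the equivariant intersection form of the exterior of this standard surface is $Q_X \oplus \lambda_{c_+,c_-} \oplus \mathcal{H}_2^{\oplus g}$; the $Q_X$ summand comes from the ambient $4$-manifold and the rest from the local model, and the summands do not interact because the embedded part lives in a ball disjoint from the immersed part.

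For $(2)\Rightarrow(1)$ the strategy is to invoke the classification. Set $P_U := P_{U,g}(c_+,c_-)$ and let $\lambda := Q_X \oplus \lambda_{c_+,c_-} \oplus \mathcal{H}_2^{\oplus g}$. First I would check that $\lambda$ satisfies the hypotheses of Theorem~\ref{thm:SurfacesClosed}(1): it is nondegenerate since each summand is (using that $Q_X$ is unimodular, hence nondegenerate over $\Z[t^{\pm 1}]$), it presents $\Bl_{P_U}$ because $\lambda_{c_+,c_-}\oplus \mathcal{H}_2^{\oplus g}$ does by Proposition~\ref{prop:BlP} and adjoining the unimodular block $Q_X$ does not change the boundary linking form (since $\coker(\widehat{Q_X}) = 0$), and $\lambda(1) = Q_X \oplus (0)^{\oplus 2g + c}$ as required. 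Hence Theorem~\ref{thm:SurfacesClosed} applies with this $\lambda$, so $\operatorname{Surf}_\lambda(g;c_+,c_-)(X)$ is in bijection with $\Aut(\Bl_{P_U})/(\Aut(\lambda) \times \Homeo_\alpha(\Sigma_g))$. Now the key point is that $\Aut(\lambda)$ is \emph{larger} than $\Aut(\lambda_{c_+,c_-}\oplus \mathcal{H}_2^{\oplus g})$ — it additionally contains the automorphisms of the $Q_X$ block — but since the relevant action on $\Aut(\Bl_{P_U})$ factors through $\partial(-)$ and $\coker(\widehat{Q_X}) = 0$, the $Q_X$-part acts trivially; thus the orbit set is the same as the one appearing in the proof of Theorem~\ref{thm:UnknottingAllSameSignIntro}. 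By Proposition~\ref{prop:AutBl/AutStandard} every $f \in \Aut(\Bl_{P_U})$ can be written $s \circ \partial\rho$ with $\rho \in \Aut(\lambda_{c_+,c_-}\oplus \mathcal{H}_2^{\oplus g}) \subset \Aut(\lambda)$ and $s$ a composition of the isometries of Examples~\ref{ex:ScaleIsometry}--\ref{ex:Sp2gZ} and their inverses; Proposition~\ref{prop:realizability} realizes $s$ as $\widehat\theta_*$ for some $\theta \in \Homeo_\alpha(\Sigma_g)$. Therefore $\Aut(\Bl_{P_U})/(\Aut(\lambda)\times \Homeo_\alpha(\Sigma_g))$ is a singleton, the set of surfaces is a singleton, and that unique element is the standard surface (which we already know lies in $\operatorname{Surf}_\lambda(g;c_+,c_-)(X)$ by the $(1)\Rightarrow(2)$ direction). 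This gives the equivalence in the closed case; the proof for $N$ with $\partial N \cong S^3$ and boundary an Alexander polynomial one knot $K$ is identical, using Theorem~\ref{thm:SurfacesRelBoundary} and Proposition~\ref{prop:SimplifyAlgebraBoundary} in place of Theorem~\ref{thm:SurfacesClosed} and Proposition~\ref{prop:SimplifyAlgebraClosed}, noting that $\Aut(\Bl_K)$ is handled trivially since $\Delta_K \doteq 1$ forces $H_1(E_K;\Z[t^{\pm 1}]) = 0$, and upgrading from equivalence rel.\ boundary to a statement rel.\ boundary requires no further work.

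The main obstacle I anticipate is not conceptual but bookkeeping: one must verify carefully that adjoining the $Q_X$ block genuinely does not perturb any of the structure used in Theorem~\ref{thm:UnknottingAllSameSignIntro}'s proof — that $\lambda$ still presents $\Bl_{P_U}$ with the \emph{same} boundary linking form up to the relevant sign, that the enlarged $\Aut(\lambda)$ acts on $\Aut(\Bl_{P_U})$ through the same quotient (because the $Q_X$ summand contributes nothing to $\coker(\widehat\lambda)$, so $\partial(-)$ kills it), and that ``standard surface in $X$'' is unambiguous, i.e.\ any two local copies of the standard surface are equivalent (this follows because an unknotted genus $g$ surface in a ball is unique up to ambient isotopy, and local double points can be slid around freely). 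Once these routine checks are in place the theorem drops out of the machinery. I would also remark, as the paper does for Theorem~\ref{thm:UnknottingAllSameSignIntro}, that one cannot conclude that the exterior \emph{must} have this equivariant intersection form — only that having it is equivalent to being standard — so the theorem is genuinely a detection statement, not an existence/uniqueness statement for the form itself.
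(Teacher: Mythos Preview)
Your proposal is correct and follows essentially the same approach as the paper: the paper's proof is a two-sentence remark that since $Q_X$ is nonsingular the boundary linking form of $\lambda = Q_X \oplus \lambda_{c_+,c_-} \oplus \mathcal{H}_2^{\oplus g}$ coincides with $\partial(\lambda_{c_+,c_-} \oplus \mathcal{H}_2^{\oplus g})$, and hence the isometries of $\lambda_{c_+,c_-} \oplus \mathcal{H}_2^{\oplus g}$ (which embed in $\Aut(\lambda)$) already suffice to make $\Aut(\Bl_{P_U})/(\Aut(\lambda)\times \Homeo_\alpha(\Sigma_g))$ a singleton, exactly as in Theorem~\ref{thm:UnknottingAllSameSignIntro}. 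Your write-up simply unpacks this observation in more detail and checks the bookkeeping explicitly.
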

\begin{proof}
The proof is identical to that of Theorem~\ref{thm:UnknottingAllSameSignIntro}: since $Q_N$ is nonsingular, the boundary linking form of $\lambda:=Q_X \oplus \lambda_{c_+,c_-} \oplus \mathcal{H}_2^{\oplus g}$ is still $\Bl_{P_U}\cong \partial(\lambda_{c_+,c_-} \oplus \mathcal{H}_2^{\oplus g})$ and the isometries of~$\lambda_{c_+,c_-} \oplus \mathcal{H}_2^{\oplus g}$ suffices to ensure that~$\Aut(\Bl_{P_U})/(\Aut(\lambda) \times \Homeo_\alpha(\Sigma_g))$ is a singleton.
The same reasoning applies when there is a boundary.
\end{proof}

We also record the following generalization of Theorem~\ref{thm:g=0c=1BoundaryD4IntroProof} to other simply-connected $4$-manifolds.

\begin{theorem}
\label{thm:g=0c=1Boundary}
Let $N$ be a simply-connected $4$-manifold with $\partial N \cong S^3$,  let $K \subset \partial N$ be a knot, and let $\lambda$ be a nondegenerate hermitian form.
The following assertions are equivalent:
\begin{enumerate}
\item the hermitian form $\lambda$ presents $\Bl_{P_K}$ and satisfies~$\lambda(1) \cong Q_N \oplus (0)$;
\item the sets $\Surf_\lambda^0(0;0,1)(K,N)$ and $\Surf_\lambda^0(0;1,0)(K,N)$ are nonempty and correspond bijectively to $\frac{ \{ \pm \id \} \times \Aut(\Bl_K)}{  \Aut(\lambda)}.$
\end{enumerate}
\end{theorem}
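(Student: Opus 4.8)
The plan is to deduce this from the combination of Theorem~\ref{thm:SurfacesRelBoundary} and Proposition~\ref{prop:SimplifyAlgebraBoundary}, following closely the template of the proof of Theorem~\ref{thm:g=0c=1BoundaryD4IntroProof}, but now keeping the ambient $4$-manifold $N$ general instead of specializing to $D^4$. Here $P:=P_{0}(c_+,c_-)$ and $P_K:=P_{K,0}(c_+,c_-)$ with $(c_+,c_-)\in\{(1,0),(0,1)\}$; note that since $\lambda(1)\cong Q_N\oplus(0)$ the form $\lambda$ has size $b_2(N)+1$, so it is genuinely larger than in the $D^4$ case, and one should not expect $\Aut(\lambda)=\{\pm t^k\}$ anymore. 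What saves us is that $\lambda(1)$ has a single $(0)$-summand, so $\partial\lambda\cong\partial\lambda_{c_+,c_-}$ is unchanged and the nonsingular part $Q_N$ does not affect the boundary linking form; this is exactly the mechanism used in Theorem~\ref{thm:StandardNotS4}.

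The key steps, in order, are as follows. First, by Convention~\ref{conv:TargetSimplify} and Proposition~\ref{prop:SimplifyAlgebraBoundary}, $\operatorname{Surf}_\lambda^0(0;c_+,c_-)(K,N)$ is nonempty if and only if $\lambda$ presents $\Bl_{P_K}$ and $\lambda(1)\cong Q_N\oplus(0)$, in which case it is in bijection with $\bigl((\Aut(\Bl_P)/\Homeo_\alpha(\Sigma_{0,1},\partial))\times\Aut(\Bl_K)\bigr)/\Aut(\lambda)$. Second, I would identify $\Aut(\Bl_P)/\Homeo_\alpha(\Sigma_{0,1},\partial)=\{\pm\operatorname{id}\}$: Proposition~\ref{prop:AutBl/AutStandardImproved} gives $\Aut(\Bl_P)=\{\pm t^k\}_{k\in\Z}$, and the mapping class group $\operatorname{MCG}_\alpha(D^2,\partial)$ of the pair of pants is $\Z^3$ generated by Dehn twists about the three boundary components, of which the two inner ones scale the generator by $t^{\pm1}$ (Example~\ref{ex:ScaleIsometryrealize}) and the outer one acts trivially, exactly as in the proof of Theorem~\ref{thm:g=0c=1BoundaryD4IntroProof}. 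Third, I would address the $\Aut(\lambda)$-action on $\{\pm\operatorname{id}\}\times\Aut(\Bl_K)$: the boundary map $\Aut(\lambda)\to\Aut(\partial\lambda)=\Aut(\Bl_{P_K})$ sends $\varpi$ to $\partial\varpi$, and composing with the projection to $\Aut(\Bl_P)/\Homeo_\alpha$ and to $\Aut(\Bl_K)$ one records how $\Aut(\lambda)$ acts on the product. The upshot is that the target of the bijection is precisely $\bigl(\{\pm\operatorname{id}\}\times\Aut(\Bl_K)\bigr)/\Aut(\lambda)$, which is the statement of $(2)$.

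There is one point requiring a little care that did not arise in the $D^4$ case, and I expect it to be the main obstacle: reconciling the roles of $\Bl_{P_K}$ and $\Bl_K$. In Theorem~\ref{thm:g=0c=1BoundaryD4IntroProof} one uses Proposition~\ref{prop:BlKBlPK} (referenced but stated elsewhere) to translate ``$\lambda$ presents $\Bl_{P_{K,0}(c_+,c_-)}$'' into ``$\Bl_K$ is presented by $(\varepsilon\Delta_K)$''; here, however, condition $(1)$ is already phrased directly in terms of $\Bl_{P_K}$, so no such translation is needed and the statement is genuinely cleaner. What must be checked is simply that the hypotheses $\lambda$ presents $\Bl_{P_K}$ and $\lambda(1)\cong Q_N\oplus(0)$ match hypothesis $(1)$ of Theorem~\ref{thm:SurfacesRelBoundary} with $g=0$ and $c=1$ (since $2g+c=1$), which is immediate. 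The only real work is verifying the $\Aut(\lambda)$-action descends correctly to $\{\pm\operatorname{id}\}\times\Aut(\Bl_K)$ and that the bijection of Proposition~\ref{prop:SimplifyAlgebraBoundary} is compatible with this descent; by Proposition~\ref{prop:AutBlPK} the action of $\Aut(\lambda)$ respects the splitting $\Aut(\Bl_{P_K})\cong\Aut(\Bl_P)\oplus\Aut(\Bl_K)$, so this compatibility is formal. I would then close by noting that when $N=D^4$ one recovers Theorem~\ref{thm:g=0c=1BoundaryD4Intro} since there $\Aut(\lambda)=\{\pm t^k\}_{k\in\Z}$ and the target simplifies to $U(\Delta_K)/\{t^k\}_{k\in\Z}$.
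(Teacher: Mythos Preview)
Your proposal is correct and follows essentially the same approach as the paper: the paper's proof simply says ``identical to that of Theorem~\ref{thm:g=0c=1BoundaryD4IntroProof} with its first sentence removed and without the simplifications afforded by the explicit knowledge of $\Aut(\lambda)$,'' and you have spelled out exactly what that means. Your additional remarks about why the translation via Proposition~\ref{prop:BlKBlPK} is no longer needed and about the compatibility of the $\Aut(\lambda)$-action with the splitting from Proposition~\ref{prop:AutBlPK} are accurate elaborations of steps the paper leaves implicit.
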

\begin{proof}
The proof is identical to that of Theorem~\ref{thm:g=0c=1BoundaryD4IntroProof} with its first sentence removed and without the simplifications afforded by the explicit knowledge of $\Aut(\lambda)$.
\color{black}
\end{proof}

\section{Immersed $\Z$-disks and unknotting sequences}
\label{sec:Unknotting}

We prove Theorem~\ref{thm:Unknotting} from the introduction.

\begin{customthm}{\ref{thm:Unknotting}}
\label{thm:UnknottingMain}
Let $c_+,c_- \geq 0$ be integers and set $c:=c_++c_-.$
Given a knot $K$,  the following assertions are equivalent:
\begin{enumerate}
\item $K$ bounds a $\Z$-disk in $D^4$ with $c_+$ positive double points and $c_-$ negative double points;
\item $\Bl_{P_{K,0}(c_+,c_-)}$ is presented by a size $c$ nondegenerate hermitian matrix $B$ with~$B(1)=(0)^{\oplus c}$;
\item $\Bl_K$ is presented by a size $c$ nondegenerate hermitian matrix $A$ with~$A(1)=(1)^{\oplus c_+} \oplus~(-1)^{\oplus c_-}$;
\item $K$ can be converted into an Alexander polynomial one knot via $c_+$ positive-to-negative crossing changes and $c_-$ negative-to-positive crossing changes;
\item $K$ bounds an embedded $\Z$-disk in $(\#_{c_+}  \C P^2 \#_{c_-}  \overline{\C P^2})\setminus \intt(B^4)$.
\end{enumerate}
\end{customthm}
\begin{proof}
The equivalence $(1) \Leftrightarrow (2)$ follows from Theorem~\ref{thm:SurfacesRelBoundary}.
The equivalence $(3) \Leftrightarrow (4)$ is due to Borodzik-Friedl~\cite{BorodzikFriedlLinking}.
The implication $(4) \Rightarrow (5)$ is well known (see e.g.~\cite[Theorem~1.1]{ConwayDaiMiller} for the argument when $c=1$).
The implication $(5) \Rightarrow (3)$ follows because the equivariant intersection form of the exterior of a $\Z$-disk in $\#_{c_+}  \C P^2 \#_{c_-}  \overline{\C P^2}$ presents $\Bl_{P_{K,0}(0,0)}=\Bl_K$ and its evaluation at $1$ is $Q_{\#_{c_+}  \C P^2 \#_{c_-}  \overline{\C P^2}}=(1)^{\oplus c_+} \oplus (-1)^{\oplus c_-}.$
Thus $(3) \Leftrightarrow (4) \Leftrightarrow (5)$.
For the implication~$(4) \Rightarrow (1)$,  the immersed~$\Z$-disk in $D^4$ is obtained by using a $\Z$-disk to cap off the trace of the homotopy arising from the crossing changes and arguing that the resulting disk has knot group~$\Z$; see e.g.~\cite[Proof of Lemma 6.2]{ChaOrrPowell} (alternatively one could show $(3) \Rightarrow (2)$ algebraically).
Finally,  the implication $(2) \Rightarrow (3)$ is the content of Proposition~\ref{prop:BlKBlPK} below.
\end{proof}

It remains to prove the following.
\begin{proposition}
\label{prop:BlKBlPK}
Let $K$ be a knot, let $c_+,c_-\geq 0$ be integers and set $c=c_++c_-$.
If $\Bl_{P_{K,0}(c_+,c_-)}$ is presented by a nondegenerate hermitian matrix $B$ with $B(1)=0^{\oplus c}$,  then $\Bl_K$ is presented by a hermitian matrix $A$ with~$A(1) = (1)^{\oplus c_+} \oplus (-1)^{\oplus c_-}$. 
\end{proposition}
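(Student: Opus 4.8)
The plan is to reduce the statement to a divisibility fact about $B$ over $\Z[t^{\pm1}]$ analysed at the prime $(t-1)$. By Proposition~\ref{prop:BlPK} (in the with‑boundary form) and Proposition~\ref{prop:BlP} with $g=0$, the hypothesis says that $B$ presents $\Bl_{P_0(c_+,c_-)}\oplus\Bl_K$, where $\Bl_{P_0(c_+,c_-)}\cong-\partial\lambda_{c_+,c_-}$ and $\lambda_{c_+,c_-}=(t-1)(t^{-1}-1)D$ with $D:=(1)^{\oplus c_+}\oplus(-1)^{\oplus c_-}$, a size $c$ matrix. Correspondingly $\coker(B^T)$ splits as $T_P\oplus T_K$, with $T_P$ the $(t-1)$‑primary part (isometric to $-\Bl_{P_0(c_+,c_-)}$, hence by Proposition~\ref{prop:HomologyPZZ} supported on $(\Z[t^{\pm1}]/(t-1)^2)^{\oplus c}$) and $T_K$ the part whose order is coprime to $(t-1)$, namely the Alexander module of $K$, which is killed by $\Delta_K$ (recall Example~\ref{ex:Coprime}).

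First I would show $B$ is divisible by $(t-1)(t^{-1}-1)$. Over the discrete valuation ring $\mathcal{O}:=\Z[t^{\pm1}]_{(t-1)}$ one has $\coker(B^T)\otimes\mathcal{O}\cong T_P\otimes\mathcal{O}\cong(\mathcal{O}/(t-1)^2)^{\oplus c}$ while $(t-1)$ divides $\det B$ exactly $2c$ times, so the Smith normal form of $B$ over $\mathcal{O}$ is $(t-1)^2 I_c$; since $(t-1)$ is a prime element of the UFD $\Z[t^{\pm1}]$, this forces $(t-1)^2\mid B$ already over $\Z[t^{\pm1}]$. Setting $\hat A:=B/\bigl((t-1)(t^{-1}-1)\bigr)=-t\cdot B/(t-1)^2$, one checks that $\hat A$ is a size $c$ hermitian matrix over $\Z[t^{\pm1}]$ (because $B$ is hermitian and $(t-1)(t^{-1}-1)$ is conjugation–invariant), that $\hat A\in GL_c(\mathcal{O})$, hence $\det\hat A\doteq\Delta_K$, and that $\hat A(1)\in GL_c(\Z)$.

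Next I would check that $\hat A$ presents $\Bl_K$. The factorisation $B^T=(t-1)(t^{-1}-1)\hat A^T$ gives, via the snake lemma, a short exact sequence $0\to(\Z[t^{\pm1}]/(t-1)^2)^{\oplus c}\to\coker(B^T)\to\coker(\hat A^T)\to0$; comparing orders identifies the submodule with $T_P$ and $\coker(\hat A^T)$ with $T_K$. Computing with the linking form formula of Construction~\ref{cons:BoundaryLinkingForm}, the boundary form $\partial\hat A$ on $\coker(\hat A^T)$ agrees with $\partial B|_{T_K}=-\Bl_K$ up to multiplication by $(t-1)(t^{-1}-1)$; since $(t-1)(t^{-1}-1)=(t-1)\overline{(t-1)}$ is a norm and is a unit modulo $\Delta_K$, this discrepancy is absorbed by rescaling the isomorphism $\coker(\hat A^T)\cong T_K$, so $\partial\hat A\cong-\Bl_K$, i.e.\ $\hat A$ presents $\Bl_K$.

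Finally, to pin down the evaluation: over $\mathcal{O}$ the forms $\partial B$ and $\partial\lambda_{c_+,c_-}$ are isometric, both being the $(t-1)$‑primary part of $-\Bl_{P_0(c_+,c_-)}$, and both are presented over $\mathcal{O}$ by $(t-1)^2$ times a unit hermitian matrix, namely $-t^{-1}\hat A$ and $-t^{-1}D$. A short argument shows that an isometry between two nonsingular linking forms on $(\mathcal{O}/(t-1)^2)^{\oplus c}$ of this shape forces the two unit hermitian matrices to be congruent modulo $(t-1)^2$; cancelling the common factor $-t^{-1}$ gives $\hat A\cong D$ as hermitian forms over $\Z[t^{\pm1}]/(t-1)^2$, and reducing modulo $(t-1)$ yields that $\hat A(1)$ is $\Z$‑congruent to $D=(1)^{\oplus c_+}\oplus(-1)^{\oplus c_-}$. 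Conjugating $\hat A$ by an integral matrix realising this congruence produces the desired hermitian matrix $A$ of size $c$ with $A(1)=(1)^{\oplus c_+}\oplus(-1)^{\oplus c_-}$ still presenting $\Bl_K$. I expect the main obstacle to lie in the third paragraph — making the identification $\coker(\hat A^T)\cong T_K$ into an \emph{isometry} rather than a mere isomorphism, where the fact that $(t-1)(t^{-1}-1)$ is a norm is essential — together with the ``local isometry forces congruence modulo $(t-1)^2$'' step in the last paragraph; the divisibility input is the conceptual heart but is comparatively routine.
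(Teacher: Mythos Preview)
Your first two steps are correct and differ from the paper only in packaging: for divisibility the paper argues by contradiction over $\Q[t^{\pm1}]$ rather than via Smith normal form over the localization, and for step~2 the paper identifies $T_K$ with the \emph{submodule} $z\cdot\coker(B^T)$ and exhibits an explicit isometry to $\coker(\hat A^T)$ given by multiplication by $-q(t^{-1}-1)z$, whereas you use the \emph{quotient} $\coker(B^T)/T_P$ and the norm trick. Both routes work.

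The genuine gap is in step~3. The residue field of $\mathcal{O}=\Z[t^{\pm1}]_{(t-1)}$ is $\mathcal{O}/(t-1)\cong\Q$, not $\Z$: every nonzero integer lies outside the prime ideal $(t-1)$ and hence becomes a unit in $\mathcal{O}$. So even granting your ``short argument'' that the isometry of linking forms on $(\mathcal{O}/(t-1)^2)^{\oplus c}$ forces $\hat A\cong D$ over $\mathcal{O}/(t-1)^2$, reducing modulo $(t-1)$ only yields $\hat A(1)\cong D$ over $\Q$, not over $\Z$. This is strictly weaker: for $(c_+,c_-)=(8,0)$ the forms $E_8$ and $I_8$ are both unimodular, positive definite, hence $\Q$-congruent, yet not $\Z$-congruent, and nothing in your argument rules out $\hat A(1)\cong_\Z E_8$. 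The unexplained jump from $\mathcal{O}/(t-1)^2$ to $\Z[t^{\pm1}]/(t-1)^2$ is precisely where the proof breaks.

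The repair is to drop the localization for this step: the isometry $\ell_B|_{T_P}\cong-\Bl_{P_0(c_+,c_-)}$ already holds over $\Z[t^{\pm1}]$, and the change-of-basis realising it then lies in $GL_c(\Z[t^{\pm1}]/z)$, hence has the form $N+(t-1)M$ with $N,M$ integer matrices and $N\in GL_c(\Z)$. Expanding the isometry condition and evaluating at $t=1$ gives $\hat A^{-1}(1)=NDN^T$, which is the desired $\Z$-congruence. This is exactly the paper's computation: it writes the generators $v_i:=(\delta\circ\iota)^{-1}(\Delta_K e_i)$ of $H_1(P;\Z[t^{\pm1}])$ in terms of the standard generators, computes $\Bl_P(v_i,v_j)$ both ways, and extracts the relation at $t=1$; the integrality of $N$ (which the paper verifies separately) is precisely the information that localizing to $\mathcal{O}$ discards.
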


\begin{notation}
Let $K$ be a knot, let $c_+,c_-\geq 0$ be integers and set $c=c_++c_-$.
In what follows,  we set $P_K:=P_{K,0}(c_+,c_-)$ and fix the following notation for the remainder of this section.
\begin{itemize}
\item The boundary linking form of a size $c$ nondegenerate hermitian form $C$ over $\Z[t^{\pm 1}]$ is
\begin{align*}
\ell_C \colon \Z[t^{\pm 1}]^c/{C^T} \times \Z[t^{\pm 1}]/{C^T} &\to \Q(t)/\Z[t^{\pm 1}] \\
(x,y)& \mapsto x^TC^{-1}\overline{y}.
\end{align*}
\item Set $z:=(t-1)(t^{-1}-1).$
\item Let~$\Delta_K$ be the symmetric representive of the Alexander polynomial with $\Delta_K(1)=1$,  and write $\Delta_K=qz+1$ for $q \in \Z[t^{\pm 1}]$ a symmetric polynomial as in Example~\ref{ex:Coprime}. 
\item Let $\iota \colon \Bl_K \oplus \Bl_P  \xrightarrow{\cong}  \Bl_{P_K}$ be the inclusion induced isometry from Proposition~\ref{prop:BlPK}.
\item Fix a presentation matrix~$B$ for $\Bl_{P_K}$ that satisfies $B(1)=(0)^{\oplus c}$ and an isometry
\[\delta \colon (H_1(P_K;\Z[t^{\pm 1}]),{\unaryminus\Bl_{P_K}}) \to  (\Z[t^{\pm 1}]^c/{B^T}, \ell_B).\]
\end{itemize}
\end{notation}

We extract from $B$ what will be our presentation matrix for $\Bl_K$.
\begin{lemma}
\label{lem:B=zA}
If $\Bl_{P_K}$ is presented by a nondegenerate hermitian matrix $B$ with $B(1)=0^{\oplus c}$, then~$B=zA$ for some nondegenerate hermitian matrix $A$.
\end{lemma}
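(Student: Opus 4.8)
The statement is that if $\Bl_{P_K}$ is presented by a nondegenerate hermitian matrix $B$ with $B(1) = 0^{\oplus c}$, then $B = zA$ for some nondegenerate hermitian $A$, where $z = (t-1)(t^{-1}-1)$. The key input is the structure of $H_1(P_{K,0}(c_+,c_-);\Z[t^{\pm 1}])$, which by Proposition~\ref{prop:HomologyPKZZ} and Proposition~\ref{prop:HomologyPZZ} (with $g=0$) decomposes as $H_1(E_K;\Z[t^{\pm 1}]) \oplus (\Z[t^{\pm 1}]/(t-1)^2)^{\oplus c}$. The first summand has order $\Delta_K$, which is coprime to $t-1$ (Example~\ref{ex:Coprime}), and the second summand is annihilated by $(t-1)^2$ (equivalently by $z$, since $z$ and $(t-1)^2$ are associates in $\Z[t^{\pm 1}]$). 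So $H_1(P_K;\Z[t^{\pm 1}])$ is annihilated by $z\Delta_K$; more to the point, the $(t-1)$-primary part of $H_1(P_K;\Z[t^{\pm 1}])$ is exactly $(\Z[t^{\pm 1}]/(t-1)^2)^{\oplus c}$, which is annihilated by $z$ and has order $z^c$ (up to units).

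\textbf{Key steps.} First I would recall that $B$ presents $\Bl_{P_K}$ means $(\coker(B^T), \ell_B) \cong (H_1(P_K;\Z[t^{\pm 1}]), -\Bl_{P_K})$, so in particular $\coker(B^T) \cong H_1(P_K;\Z[t^{\pm 1}])$ as $\Z[t^{\pm 1}]$-modules, and $\det(B) \doteq z^c \Delta_K$ up to units (the order of the module). The hypothesis $B(1) = 0^{\oplus c}$ says that every entry of $B$ vanishes at $t=1$, i.e.\ every entry of $B$ is divisible by $(t-1)$; but $B$ is hermitian, so $b_{ij}(t) = \overline{b_{ji}(t)} = b_{ji}(t^{-1})$, and an entry divisible by $(t-1)$ whose conjugate is also an entry of $B$ (also divisible by $(t-1)$) must in fact be divisible by both $(t-1)$ and $(t^{-1}-1)$, hence by $z = (t-1)(t^{-1}-1)$. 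This is the crux: write $B = zA$ where $A$ has entries in $\Z[t^{\pm 1}]$. Since $z$ is real (i.e.\ $\bar z = z$), $A$ is automatically hermitian, and $\det(B) = z^c \det(A)$, so $\det(A) \doteq \Delta_K \neq 0$, confirming $A$ is nondegenerate.

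\textbf{The subtle point.} The one place requiring care is the claim ``$b_{ij}$ divisible by $t-1$ and hermitian $\Rightarrow$ divisible by $z$.'' For a \emph{diagonal} entry $b_{ii}$ this is immediate since $b_{ii}$ is already symmetric ($b_{ii}(t) = b_{ii}(t^{-1})$), so divisibility by $(t-1)$ forces divisibility by $(t^{-1}-1)$ too, hence by $z$ (using that $(t-1)$ and $(t^{-1}-1)$ are coprime, differing only by the unit $-t$... actually $(t^{-1}-1) = -t^{-1}(t-1)$, so $(t-1)$ and $(t^{-1}-1)$ are \emph{associates}, which means divisibility by $(t-1)$ is the \emph{same} as divisibility by $(t^{-1}-1)$). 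Hmm --- so actually $z = (t-1)(t^{-1}-1) = -t^{-1}(t-1)^2$ is an associate of $(t-1)^2$, \emph{not} of $(t-1)$. So the hypothesis as literally stated --- each entry divisible by $(t-1)$ --- does \emph{not} obviously give divisibility by $z \doteq (t-1)^2$. This is the real obstacle, and I expect the actual argument must use the module structure: the isometry $\delta$ identifies $\coker(B^T)$ with $H_1(P_K;\Z[t^{\pm 1}])$, whose $(t-1)$-primary part is $(\Z[t^{\pm 1}]/(t-1)^2)^{\oplus c}$ with \emph{no} $(t-1)$-summand of the form $\Z[t^{\pm 1}]/(t-1)$. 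I would argue: localize at the prime $(t-1)$; then $B$ becomes a presentation matrix over the local ring $\Z[t^{\pm 1}]_{(t-1)}$ of a module isomorphic to $(\Z[t^{\pm 1}]_{(t-1)}/(t-1)^2)^{\oplus c}$, which is a free $\Z[t^{\pm 1}]_{(t-1)}/(t-1)^2$-module of rank $c$; since $B$ is a $c\times c$ presentation matrix of a free module of rank $c$ over the quotient ring, $B \equiv 0 \pmod{(t-1)^2}$ after the localization, and combined with $B(1) = 0$ globally and hermitian symmetry, one deduces $B = zA$ over $\Z[t^{\pm 1}]$ (clearing denominators, which is legitimate since the relevant denominators are coprime to $(t-1)$ while $B(1)=0$ already handles the $(t-1)$-divisibility at the level of a single factor). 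Assembling these: $B$ is divisible entrywise by $(t-1)$ (from $B(1)=0$) and, after localizing at $(t-1)$, divisible by $(t-1)^2$; since $(t-1)$ is prime and the extra factor needed is a power of $(t-1)$, $B$ is divisible by $(t-1)^2 \doteq z$ globally. Set $A := z^{-1}B$; then $A$ is hermitian (as $z$ is its own conjugate up to a unit --- more carefully, $\overline{z^{-1}B} = z^{-1}\overline{B} = z^{-1}B^T$ does not quite land us at $A$, so I'd instead write $z = -t^{-1}(t-1)^2$ and track the unit, or simply verify $A^T(t^{-1}) = A(t)$ directly from $B^T(t^{-1}) = B(t)$ and $z(t^{-1}) = z(t)$), nondegenerate since $\det A \doteq \Delta_K \ne 0$, and $B = zA$ as required.
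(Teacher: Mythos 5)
Your final argument is correct, but it takes a genuinely different route from the paper's. The paper tensors with $\Q$ and argues by contradiction: if some entry of $B$, say the $(i,1)$-entry, were not divisible by $z$, then writing it as $zp+(t-1)a$ with $a\in\Q\setminus\{0\}$ (using $B(1)=0$) and multiplying the $i$-th column relation by $\Delta_K$ shows that $e_2,\dots,e_c$ generate $(t-1)\Delta_K\bigl(H_1(P_K;\Z[t^{\pm1}])\otimes\Q\bigr)\cong(\Q[t^{\pm1}]/(t^{-1}-1))^{\oplus c}$, a module requiring $c$ generators — a contradiction. You instead localize at the prime $(t-1)$: the $H_1(E_K;\Z[t^{\pm1}])$ summand dies because $\Delta_K(1)=1$ makes $\Delta_K$ a unit in $R=\Z[t^{\pm1}]_{(t-1)}$, so $B^T$ becomes a square presentation of the free module $(R/(t-1)^2)^{\oplus c}$ over the DVR $R$, and uniqueness of elementary divisors forces $B^T=(t-1)^2UV$ with $U,V$ invertible, hence entrywise divisibility by $(t-1)^2\doteq z$; this descends to $\Z[t^{\pm1}]$ since $(t-1)$ is prime and the denominators are units mod $(t-1)$. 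Both proofs rest on the same structural input, namely that the $(t-1)$-primary part of $H_1(P_K;\Z[t^{\pm1}])$ is $(\Z[t^{\pm1}]/(t-1)^2)^{\oplus c}$ with no $\Z[t^{\pm1}]/(t-1)$ summands (this is where $g=0$ enters, and where the statement would fail for $g>0$). Your route is more structural and explains directly why the whole matrix, not just its determinant, picks up the factor $z$; the paper's route avoids localization at the cost of a generator-counting contradiction. You were also right to discard your first idea — hermitian symmetry plus $B(1)=0$ only yields divisibility by $(t-1)$, since $(t^{-1}-1)$ is an associate of $(t-1)$ — and your self-correction leads to a complete argument, with the hermitianness and nondegeneracy of $A=z^{-1}B$ verified exactly as the paper does via $\overline{z}=z$ and $\det(B)\doteq z^c\Delta_K$.
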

 \begin{proof}
Let $\vec{e}_1,\dots,\vec{e}_c$ be the standard basis vectors for $\Q[t^{\pm1}]^c$,  so 
$H_1(P_K; \Z[t^{\pm 1}]) \otimes \Q $ is generated by their images ${e}_1,\dots,{e}_c$ in the quotient ${\Q[t^{\pm1}]^c/B^T}.$
Suppose for a contradiction that some entry of $B$ is not divisible by $z$ over $\Q[t^{\pm1}]$; without loss of generality, say it is the
 $(i,1)$-entry.  
The $i$-th column of $B^T$
 imposes the relation
\begin{align*}
\left(zp+ (t-1)a \right){e}_1= \sum_{j=2}^n p_j {e}_j,
\end{align*}
for some $p, p_j \in \Q[t^{\pm 1}]$ and $a$ a nonzero element of $\Q$.  
Here we use that since $B(1)$ is the zero matrix,  every entry of $B$ and in particular the $(i,1)$-entry is divisible by $(t-1)$.

Multiplying this relation by~$\Delta_K$,  and using that every element of  $H_1(P_K; \Z[t^{\pm 1}]) \otimes \Q$ is annihilated by $\Delta_Kz$, we obtain the following equality in~$H_1(P_K; \Z[t^{\pm 1}]) \otimes \Q$:
\begin{align*}
(t-1)\Delta_K {e}_1 = \frac{-p}{a} \Delta_Kz {e}_1
+ \sum_{j=2}^c \frac{1}{a} \Delta_K p_j {e}_j
= \sum_{j=2}^c \frac{1}{a} \Delta_K p_j {e}_j.
\end{align*}
We deduce that ${e_2}, \dots, {e_c}$ generate $(t-1)\Delta_K\left(H_1(P_K; \Z[t^{\pm 1}]) \otimes \Q\right).$

Using successively Proposition~\ref{prop:HomologyPKZZ},  the fact that $\Delta_K$ annihilates $H_1(E_K; \Q[t^{\pm1})$ and the equality~$\Delta_K= zq+1$,  we deduce that ${e_2}, \dots, {e_c}$ generate 
\begin{align*}
(t-1)\Delta_K\left(H_1(P_K; \Z[t^{\pm 1}]) \otimes \Q\right) & \cong 
(t-1)\Delta_K \left( (\Q[t^{\pm1}]/z)^c \oplus H_1(E_K; \Q[t^{\pm1}) \right) \\
& \cong (t-1)\Delta_K(\Q[t^{\pm1}]/z)^c \\
& \cong(t-1)(\Q[t^{\pm1}]/z)^c \\
& \cong (\Q[t^{\pm1}]/(t^{-1}-1))^c. 
\end{align*}
This contradiction shows that that every entry of $B$ must be divisible by $z$ over $\Q[t^{\pm1}]$.  
However, since $z=(t-1)(t^{-1}-1)$ is monic,  the result holds over $\Z[t^{\pm1}]$ as well. 
\end{proof}

Next, we determine the image of the restriction of~$\delta \circ \iota\colon \unaryminus\left(\Bl_K \oplus \Bl_P\right) \xrightarrow{\cong} \ell_B$ to $H_1(E_K;\Z[t^{\pm 1}])$.

\begin{lemma}
\label{lem:Restrictphiiota}
The isometry~$\delta \circ \iota \colon \Bl_K \oplus \Bl_P \xrightarrow{\cong} \ell_B$ restricts to an isometry
\[ (\delta \circ \iota)|  \colon (H_1(E_K;\Z[t^{\pm 1}]),{\unaryminus\Bl_K}) \to \left( z \cdot \Z[t^{\pm 1}]^c/{B^T},  \ell_B|\right)\]
\end{lemma}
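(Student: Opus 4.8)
The plan is to unwind the direct sum decomposition $\Bl_{P_K} \cong \Bl_K \oplus \Bl_P$ at the level of the presentation matrix $B$, using the factorization $B = zA$ from Lemma~\ref{lem:B=zA}. First I would recall from Proposition~\ref{prop:HomologyPKZZ} and Proposition~\ref{prop:BlPK} that the isometry $\iota$ identifies $H_1(E_K;\Z[t^{\pm 1}])$ with a submodule of $H_1(P_K;\Z[t^{\pm 1}])$, namely the image of the inclusion $E_K \subset P_K$, and that the complementary summand is $\iota(H_1(P;\Z[t^{\pm 1}]))$. Under the further isometry $\delta$, the target becomes $\Z[t^{\pm 1}]^c/B^T$ with the linking form $\ell_B$. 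So the claim is that $\delta(\iota(H_1(E_K;\Z[t^{\pm 1}])))$ is precisely the submodule $z\cdot \Z[t^{\pm 1}]^c/B^T$. The key observation is a divisibility/order-theoretic one: elements of $H_1(E_K;\Z[t^{\pm 1}])$ are annihilated by $\Delta_K$ (which is coprime to $z$ by Example~\ref{ex:Coprime}), whereas elements of $H_1(P;\Z[t^{\pm 1}])$ are annihilated by a power of $z$ (here, since $g=0$, by $z$ itself — $H_1(P;\Z[t^{\pm 1}]) \cong (\Z[t^{\pm 1}]/(t-1)^2)^{\oplus c}$, and multiplication by $z$ kills the order-$(t-1)^2$ part? one must be careful here: $(t-1)^2$ and $z=(t-1)(t^{-1}-1)$ are associates up to a unit, so $z$ does annihilate $H_1(P;\Z[t^{\pm 1}])$).

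With that in hand, the strategy is as follows. Since $B = zA$ with $A$ nondegenerate, the module $\Z[t^{\pm 1}]^c/B^T = \Z[t^{\pm 1}]^c/(zA^T)$ contains the submodule $z\Z[t^{\pm 1}]^c/(zA^T\Z[t^{\pm 1}]^c)$, which one checks is isomorphic to $\Z[t^{\pm 1}]^c/A^T$ via the map induced by multiplication by $z$ (using that $z$ is a non-zero-divisor in $\Z[t^{\pm 1}]$). I would show that this submodule is exactly the set of elements annihilated by... no — rather, it is the image of multiplication by $z$, equivalently (since $zA$ presents the whole module, and $A(1)$ is invertible over $\Q$ so $z\Delta_K$... ) I would instead characterize $z\cdot\Z[t^{\pm1}]^c/B^T$ as the $\Delta_K$-torsion submodule, or dually as $z$ times everything. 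The cleanest route: the quotient $(\Z[t^{\pm1}]^c/B^T)/(z\cdot\Z[t^{\pm1}]^c/B^T) \cong \Z[t^{\pm1}]^c/(z,A^T)$... hmm. Let me instead argue directly that $\delta\circ\iota$ carries $H_1(P;\Z[t^{\pm 1}])$ onto the submodule of $z$-torsion elements and $H_1(E_K;\Z[t^{\pm 1}])$ onto the submodule of $\Delta_K$-torsion elements, and that the latter equals $z\cdot\Z[t^{\pm 1}]^c/B^T$. The $\Delta_K$-torsion elements of $\Z[t^{\pm1}]^c/(zA^T)$: an element $[v]$ satisfies $\Delta_K v \in zA^T\Z[t^{\pm1}]^c$; since $\Delta_K = zq+1$ this reads $v + zqv \in zA^T\Z[t^{\pm1}]^c$, i.e. $v \in z(A^T\Z[t^{\pm1}]^c - q v + \cdots)$, which after a short manipulation (using that $z$ is prime and doesn't divide $\Delta_K$, so doesn't divide $1$) forces $v \in z\Z[t^{\pm1}]^c$. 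Conversely every element of $z\Z[t^{\pm1}]^c$ modulo $zA^T$ is $\Delta_K$-torsion since $z\Delta_K$ kills $\Z[t^{\pm1}]^c/A^T$... wait, need $\Delta_K\cdot(z v) \in zA^T\Z[t^{\pm1}]^c$, i.e. $\Delta_K v \in A^T\Z[t^{\pm1}]^c$ — but that's not automatic unless $\Delta_K$ annihilates $\coker(A^T)$. Here I would invoke that $\coker(A^T)$, as a $\Z[t^{\pm1}]$-module, has order dividing $\det(A) \doteq \det(B)/z^c \doteq \Delta_{H_1(P_K;\Z[t^{\pm1}])}/z^c \doteq \Delta_K\cdot z^c/z^c \doteq \Delta_K$ (using Proposition~\ref{prop:HomologyPKZZ} and multiplicativity of orders). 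So $\Delta_K$ does annihilate $\coker(A^T)$, closing the loop.

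Concretely, then, the steps are: (1) compute $\det B \doteq z^c\Delta_K$ from Proposition~\ref{prop:HomologyPKZZ} and $B=zA$, hence $\det A \doteq \Delta_K$; (2) identify the $\Delta_K$-torsion submodule $T_{\Delta_K}$ of $\Z[t^{\pm1}]^c/B^T$ with $z\Z[t^{\pm1}]^c/B^T$ using the coprimality of $z$ and $\Delta_K$ and the computation that $\Delta_K$ annihilates $\coker(A^T)$; (3) note that $\delta\circ\iota$ is an isometry of linking forms and that $\iota(H_1(E_K;\Z[t^{\pm1}]))$ is exactly the $\Delta_K$-torsion submodule of $H_1(P_K;\Z[t^{\pm1}])$ (since $H_1(E_K;\Z[t^{\pm1}])$ is $\Delta_K$-torsion and the complementary summand $H_1(P;\Z[t^{\pm1}])$ is $z$-torsion with $z$ coprime to $\Delta_K$, so it has no $\Delta_K$-torsion), whence $\delta\circ\iota$ restricts to an isomorphism $H_1(E_K;\Z[t^{\pm1}]) \xrightarrow{\cong} z\Z[t^{\pm1}]^c/B^T$; (4) since $\delta\circ\iota$ is an isometry $\Bl_K\oplus\Bl_P \to \ell_B$, its restriction to the summand $H_1(E_K;\Z[t^{\pm1}])$ is an isometry onto $(z\Z[t^{\pm1}]^c/B^T, \ell_B|)$, carrying $\unaryminus\Bl_K$ to $\ell_B|$. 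The main obstacle I anticipate is step (2): making the divisibility argument fully rigorous over $\Z[t^{\pm1}]$ (not just $\Q[t^{\pm1}]$) requires care, because $\Z[t^{\pm1}]$ is not a PID; the argument via "$z$ is monic/irreducible and coprime to $\Delta_K$" together with the order computation in step (1) is what makes it work, but one has to be attentive to the distinction between annihilator ideals and orders, and to the fact that $A^T$ and $A$ present possibly-different-looking but isomorphic modules (cf. the warning after Definition~\ref{def:presentation}). Once that is pinned down, the rest is bookkeeping with the isometry $\delta\circ\iota$ — the final statement $A(1) = (1)^{\oplus c_+}\oplus(-1)^{\oplus c_-}$ promised in Proposition~\ref{prop:BlKBlPK} is not part of this lemma and will be handled separately using $B(1)=0$ and a signature/inertia argument on $A$.
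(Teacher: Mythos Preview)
Your proposal is correct and follows essentially the same route as the paper: both arguments identify $(\delta\circ\iota)(H_1(E_K;\Z[t^{\pm1}]))$ with the $\Delta_K$-torsion submodule of $\Z[t^{\pm1}]^c/B^T$ and then show this equals $z\cdot\Z[t^{\pm1}]^c/B^T$, using the identity $\Delta_K = qz+1$ throughout. One small simplification the paper makes: for the inclusion $z\cdot\Z[t^{\pm1}]^c/B^T \subset T_{\Delta_K}$ you compute $\det A \doteq \Delta_K$ to see that $\Delta_K$ annihilates $\coker(A^T)$, but it is quicker to observe directly that $z\Delta_K$ annihilates all of $\Z[t^{\pm1}]^c/B^T \cong H_1(P_K;\Z[t^{\pm1}]) \cong H_1(E_K;\Z[t^{\pm1}])\oplus H_1(P;\Z[t^{\pm1}])$, so $\Delta_K\cdot(zv)=0$ for every $v$.
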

\begin{proof}
It suffices to prove that $\delta \circ \iota(H_1(E_K;\Z[t^{\pm 1}]))=z \cdot \Z[t^{\pm 1}]^c/ {B^T}$.
We will in fact show more.
\begin{claim*}
\label{Claim:EquivzLambda/B}
The following equalities hold:
$$\delta \circ \iota(H_1(E_K;\Z[t^{\pm 1}]))\stackrel{(1)}{=}\{ x \in \Z[t^{\pm 1}]/B \mid \Delta_Kx=0\}\stackrel{(2)}{=}z \cdot \Z[t^{\pm 1}]/B^T.$$
\end{claim*}
\begin{proof}
The $\subset$ inclusion of $(1)$ is immediate,  because $\Delta_K$ annihilates $H_1(E_K;\Z[t^{\pm 1}])$,  as is the $\supset$ inclusion of $(2)$, because $\Z[t^{\pm 1}]^c/{B^T} \cong H_1(P_K;\Z[t^{\pm 1}])$ is annihilated by $z\Delta_K$. 
We will therefore show that if $x \in \Z[t^{\pm 1}]^c/{B^T}$ satisfies $\Delta_Kx=0$, then it belongs to the two other sets.

Since $\delta \circ \iota$ is surjective,  we can write $x= (\delta \circ \iota)(x_K, x_P)$ for some $x_K \in H_1(E_K;\Z[t^{\pm 1}])$ and~$x_P \in H_1(P;\Z[t^{\pm 1}])$. 
Since $\delta \circ \iota$ is injective,  $0=\Delta_Kx_P$.
Using that~$z$ annihilates $H_1(P;\Z[t^{\pm 1}])$, we deduce that
\[x_P=\overbrace{(\Delta_K-qz)}^{=1} x_P=\Delta_Kx_P=0.\]
Similarly,  since $\Delta_K$ annihilates $H_1(E_K;\Z[t^{\pm 1}])$,  we have 
\[ x_K= \overbrace{(\Delta_K- qz)}^{=1} x_K=  \unaryminus qz x_K.\]
Putting these two equalities together, we obtain
\[x= (\delta \circ \iota)(x_K,x_P)= z(\delta \circ \iota)(-q x_K,0).\]
We therefore observe that $x$ is in both $z\cdot \Z[t^{\pm 1}]^c/{B^T}$ and $ (\delta \circ \iota)(H_1(E_K;\Z[t^{\pm 1}]))$ and are done. 
\end{proof}
Since $(\delta \circ \iota)$ is an isometry, so is its restriction to~$H_1(E_K;\Z[t^{\pm 1}])$.
Claim 1 implies that the image of this isometry is~$ \left( z \cdot \Z[t^{\pm 1}]^c/{B^T},  \ell_B|\right)$.
\end{proof}

The next lemma determines the isometry type of~$\left( z \cdot \Z[t^{\pm 1}]^c/{B^T},  \ell_B|\right)$ in terms of the matrix~$A$ from Lemma~\ref{lem:B=zA} satisfying~$B=zA$.

\begin{lemma}
Multiplication by $-q(t^{-1}-1)z$ induces an isometry
\begin{align*}
\psi \colon \left(\Z[t^{\pm 1}]^c/{A^T},\ell_A\right) \xrightarrow{\cong} \left(z \cdot \Z[t^{\pm 1}]^c/{B^T},  \ell_B|\right).
\end{align*}
\end{lemma}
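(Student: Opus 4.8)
The plan is to exhibit the map $\psi$ explicitly and check it is a well-defined isometry, using the relation $B = zA$ from Lemma~\ref{lem:B=zA}. First I would verify that multiplication by $-q(t^{-1}-1)z$ descends to a well-defined map $\Z[t^{\pm 1}]^c/A^T \to z\cdot\Z[t^{\pm 1}]^c/B^T$. Since $B^T = zA^T$, the target $z\cdot\Z[t^{\pm 1}]^c/B^T$ is exactly $\{ z\cdot[x] : x \in \Z[t^{\pm 1}]^c\}$ inside $\Z[t^{\pm 1}]^c/B^T$, and the image of $x$ under multiplication by $-q(t^{-1}-1)z$ is $z\cdot(-q(t^{-1}-1)x)$, which lands there; well-definedness amounts to checking that $x \in A^T\Z[t^{\pm 1}]^c$ implies $-q(t^{-1}-1)zx \in B^T\Z[t^{\pm 1}]^c = zA^T\Z[t^{\pm 1}]^c$, which is immediate after cancelling the monic factor $z$.

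Next I would check that $\psi$ is bijective. Surjectivity: an arbitrary element of the target is $z\cdot[x]$ for $x\in\Z[t^{\pm 1}]^c$; since $z\cdot[x]$ is annihilated by $\Delta_K$ (because $B^T\Z[t^{\pm 1}]^c \cong H_1(P_K;\Z[t^{\pm 1}])$ is annihilated by $z\Delta_K$, so $z\cdot[x]$ is annihilated by $\Delta_K$ once... more precisely $\Delta_K z[x]=0$ in $\Z[t^{\pm 1}]^c/B^T$), one uses $\Delta_K = qz+1$ to write $z[x] = z(\Delta_K - qz)[x] = z\Delta_K[x] - qz^2[x] = -qz\cdot z[x]$, hence $z[x] = \psi\big((t-1)\,[\text{something}]\big)$; a cleaner route is to observe that over $\Z[t^{\pm 1}]^c/A^T$ the element $(t-1)x$ maps under $\psi$ to $-q(t^{-1}-1)(t-1)x \cdot z = -qz\cdot zx$, and reconciling this with the displayed identity gives surjectivity. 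Injectivity then follows either by a direct order count, comparing $|\Z[t^{\pm 1}]^c/A^T|$ (order $\det A$, up to units) with the order of $z\cdot\Z[t^{\pm 1}]^c/B^T$, or by noting that a surjection of finitely generated torsion modules of the same order is an isomorphism; here $\det B = z^c \det A$ and the order of the submodule $z\cdot\Z[t^{\pm 1}]^c/B^T$ works out (via the $\Delta_K$-torsion description from the Claim in Lemma~\ref{lem:Restrictphiiota}) to $\det A$ up to units.

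Finally I would verify $\psi$ is form-preserving, i.e.\ $\ell_B(\psi[x],\psi[y]) = \ell_A([x],[y])$ for $[x],[y] \in \Z[t^{\pm 1}]^c/A^T$. Unwinding the definitions, $\psi[x] = z\cdot(-q(t^{-1}-1)x) \in \Z[t^{\pm 1}]^c/B^T$ and $\ell_B(u,v) = u^T B^{-1}\overline{v}$ with $B^{-1} = z^{-1}A^{-1}$. Thus
\begin{align*}
\ell_B(\psi[x],\psi[y]) &= \big(z(-q(t^{-1}-1)x)\big)^T\, z^{-1}A^{-1}\, \overline{z(-q(t^{-1}-1)y)} \\
&= z\,q(t^{-1}-1)q(t-1)\, x^T A^{-1}\overline{y} = z\, q\overline{q}\,(t-1)(t^{-1}-1)\, \ell_A([x],[y]),
\end{align*}
using $\overline{q} = q$ and $\overline{t^{-1}-1} = t-1$; since $(t-1)(t^{-1}-1) = z$ this equals $q^2 z^2\,\ell_A([x],[y])$. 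The remaining point is that $q^2 z^2 \equiv 1$ on the relevant torsion module: indeed $\Delta_K = qz+1$ gives $qz \equiv -1$ hence $q^2z^2 \equiv 1$ modulo $\Delta_K$, and both $\ell_A$-values and $\ell_B$-values we are comparing lie in $\Q(t)/\Z[t^{\pm 1}]$ where the relevant elements are annihilated by $\Delta_K$ (the target of $\psi$ consists of $\Delta_K$-torsion elements), so $q^2z^2\,\ell_A([x],[y]) = \ell_A([x],[y])$. The main obstacle I anticipate is precisely this last bookkeeping step: making sure the congruence $q^2z^2\equiv 1$ is applied in the right module and that the $\Delta_K$-torsion statement really does license it, rather than a naive cancellation in $\Q(t)/\Z[t^{\pm 1}]$; the earlier Claim in Lemma~\ref{lem:Restrictphiiota} (that the submodule $z\cdot\Z[t^{\pm 1}]^c/B^T$ is exactly the $\Delta_K$-torsion) is what makes this legitimate.
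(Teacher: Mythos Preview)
Your proposal is correct and follows essentially the same route as the paper: check well-definedness, verify form-preservation via the computation $\ell_B(\psi x,\psi y)=q^2z^2\,\ell_A(x,y)=(\Delta_K-1)^2\ell_A(x,y)=\ell_A(x,y)$, and establish bijectivity. Two small points where the paper is cleaner: for injectivity, the paper simply observes that since $\ell_A$ is nondegenerate and $\psi$ is form-preserving, $\psi$ is injective---no order count is needed; and for the step $q^2z^2\ell_A(x,y)=\ell_A(x,y)$, the relevant fact is that $\Delta_K\doteq\det A$ annihilates the \emph{domain} $\Z[t^{\pm 1}]^c/A^T$ (so $\Delta_K\ell_A(x,y)=0$ in $\Q(t)/\Z[t^{\pm 1}]$), rather than the $\Delta_K$-torsion description of the target.
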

\begin{proof}
The equality $B=zA$ ensures that~$\psi$ is well defined, and~$\psi$ is form preserving because
\begin{align*}
\ell_B(\psi(x),\psi(y))
=q^2z^2z\ell_B(x,y)
=q^2z^2z\left(\frac{1}{z}\ell_A(x,y)\right)
=(\Delta_K-1)^2\ell_A(x,y)
=\ell_A(x,y).
\end{align*}
Here we used that $\Delta_K$ annihilates $\Z[t^{\pm 1}]^c/A^T$: indeed we have~$\det(A)=\Delta_K$ as can be deduced from the equalities $z^c\Delta_K=\det(B)=\det(zA)=z^c\det(A)$.

Since $\ell_A$ is nondegenerate and $\psi$ is form-preserving, we deduce that~$\psi$ is injective.
For surjectivity, note that $zx \in \Z[t^{\pm 1}]^c/{B^T}$ is annihilated by $\Delta_K$ so 
$$zx=(1-\Delta_K)zx=-qz^2x=\psi((t-1)x).\qedhere$$
\end{proof}

The combination of Lemmas~\ref{lem:B=zA} and~\ref{lem:Restrictphiiota} shows that if $\Bl_{P_K}$ is presented by a hermitian matrix~$B$ with~$B(1)=0^{\oplus c}$, then $B=zA$ and $A$ presents~$\Bl_K$.
Only one step remains.

\begin{lemma}
\label{lem:A(1)}
If $\Bl_{P_K}$ is presented by $B=zA$,
then $A(1)$ is congruent to $(1)^{\oplus c_+} \oplus (-1)^{\oplus c_-}$.
 \end{lemma}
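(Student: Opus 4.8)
The plan is to read off the integral form $A(1)$ from the presentation $B=zA$ by passing not to the $(t-1)$-localization of $\Z[t^{\pm1}]$, but to the finite residue ring $\bar\Lambda:=\Z[t^{\pm1}]/(t-1)^2$. Inverting $(t-1)$ would only determine $A(1)$ up to congruence over $\Q$, which is useless in the definite cases (it cannot, e.g., tell $I_8$ from $E_8$); the ring $\bar\Lambda$ is exactly the right ``second-order'' gadget, because its units have constant term $\pm1$, so an isometry of hermitian forms over $\bar\Lambda$ descends, modulo $(t-1)$, to a genuine element of $\operatorname{GL}_c(\Z)$ conjugating the reductions. That last fact is the engine that upgrades a $\Q$-congruence to a $\Z$-congruence.

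Concretely, set $\Lambda:=\Z[t^{\pm1}]$ and $T:=\coker(B^T)\cong H_1(P_K;\Lambda)$. First I would isolate the $(t-1)$-primary summand $M\subseteq T$. By Propositions~\ref{prop:HomologyPKZZ} and~\ref{prop:HomologyPZZ} (with $g=0$) together with Example~\ref{ex:Coprime} (the order $\Delta_K$ of $H_1(E_K;\Lambda)$ is coprime to $t-1$), $M$ is precisely the $(t-1)^2$-torsion of $T$, and the isometry furnished by ``$B$ presents $\Bl_{P_K}$'' together with Proposition~\ref{prop:BlPK} restricts to an isometry $(M,\unaryminus\ell_B|_M)\cong(H_1(P;\Lambda),\Bl_P)$. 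Using $(t-1)^2=-tz$ and that $A^T$ is injective with $t$ a unit, one checks $M=A^T\Lambda^c/zA^T\Lambda^c$, so $w\bmod z\mapsto[A^Tw]$ is a $\bar\Lambda$-module isomorphism $\bar\Lambda^c\xrightarrow{\ \cong\ }M$. Transporting $\unaryminus\ell_B|_M$ along this isomorphism and along the conjugation-compatible identification $\tfrac1z\Lambda/\Lambda\cong\bar\Lambda$, $\tfrac{a}{z}\mapsto a\bmod z$ — here one uses $\ol z=z$, $AB^{-1}=\tfrac1z I$, and $\ol{A^T}=A$ — produces the hermitian form over $\bar\Lambda$ represented by $\bar A:=A\bmod(t-1)^2$ (up to an overall sign). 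Note $\bar A\in\operatorname{GL}_c(\bar\Lambda)$, since $\det A\doteq\Delta_K\equiv 1\bmod(t-1)^2$, and $\bar A$ reduces modulo $(t-1)$ to $A(1)$, which is therefore unimodular over $\Z$.

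Next I would run the identical recipe on the standard presentation $D:=z\bigl((1)^{\oplus c_+}\oplus(-1)^{\oplus c_-}\bigr)$ of $\Bl_P$ supplied by Proposition~\ref{prop:BlP}: it transports $\unaryminus\ell_D$ to the hermitian form over $\bar\Lambda$ represented by $(1)^{\oplus c_+}\oplus(-1)^{\oplus c_-}$ (up to the same overall sign). Since $\unaryminus\ell_B|_M\cong\Bl_P\cong\unaryminus\ell_D$ (Propositions~\ref{prop:BlPK} and~\ref{prop:BlP}), the two transported forms are isometric over $\bar\Lambda$, so there is $H\in\operatorname{GL}_c(\bar\Lambda)$ with $\ol H^T\bigl((1)^{\oplus c_+}\oplus(-1)^{\oplus c_-}\bigr)H=\bar A$. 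Reducing modulo $(t-1)$ — the involution becomes trivial, and $H(1)\in\operatorname{GL}_c(\Z)$ because $\det H$ is a unit of $\bar\Lambda$, hence has constant term $\pm1$ — yields $H(1)^T\bigl((1)^{\oplus c_+}\oplus(-1)^{\oplus c_-}\bigr)H(1)=A(1)$, which is the assertion.

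The module-theoretic bookkeeping (identifying $M$, and matching linking-form isometries on $\bar\Lambda^c$ with hermitian-form isometries over $\bar\Lambda$) is routine. The one real idea is the choice of the ring $\bar\Lambda$ and the observation that isometries there automatically descend to integral congruences; this is what handles the definite cases, where congruence over $\Q$ is far too weak. I expect the fiddliest point to be pinning down the correct conjugation-compatible identification $\tfrac1z\Lambda/\Lambda\cong\bar\Lambda$, so that ``$-\ell_B$ restricted to $M$'' is represented over $\bar\Lambda$ by exactly $\bar A$ (and not by some sign- or $t$-twisted variant): a spurious twist there would prevent the two transported forms from matching on the nose.
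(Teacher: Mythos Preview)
Your proposal is correct and follows the same underlying strategy as the paper: isolate the $(t-1)$-primary summand $M\subset\coker(B^T)$, identify it with $H_1(P;\Lambda)$, compare the restricted linking form with the standard presentation $zI_{c_+,c_-}$ of $\Bl_P$, and extract an integral congruence of the reductions at $t=1$. The execution, however, differs in a way worth noting.

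The paper works element-by-element. It chooses the generators $v_i=(\delta\circ\iota)^{-1}(\Delta_K e_i)$ for $H_1(P;\Lambda)$, expands each $v_i$ in the standard basis $u_1,\dots,u_c$ as $\sum_k(m_{i,k}(t-1)+n_{i,k})u_k$, computes $\Bl_P(v_i,v_j)$ two ways to obtain $A^{-1}(1)=N\,I_{c_+,c_-}\,N^T$ with $N=(n_{i,k})$, and then separately verifies that $N\in\operatorname{GL}_c(\Z)$ by also expanding the $u_i$ in the $v_j$. Your approach packages all of this into the single observation that the relevant linking forms, transported to $\bar\Lambda^c$ via $w\mapsto[A^Tw]$ (resp.\ $w\mapsto[I_{c_+,c_-}w]$), are the $\bar\Lambda$-valued hermitian forms $\bar A$ and $I_{c_+,c_-}$; an isometry between them is an $H\in\operatorname{GL}_c(\bar\Lambda)$, and reducing modulo $(t-1)$ yields the desired $\Z$-congruence automatically because units of $\bar\Lambda\cong\Z[u]/(u^2)$ have constant term $\pm1$. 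In the paper's language, the entries of your $H$ are exactly the $m_{i,k}(t-1)+n_{i,k}$ (after an appropriate change of basis), and your ``units of $\bar\Lambda$ have constant term $\pm1$'' is the abstract reason behind the paper's invertibility-of-$N$ argument.

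Your route is cleaner: it avoids the detour through $A^{-1}(1)$ and makes the passage from $\Q$-congruence to $\Z$-congruence structural rather than computational. The paper's route has the virtue of being completely explicit and self-contained. Both rest on the same two facts from earlier in the section: that $\Bl_P$ is presented by $zI_{c_+,c_-}$ (Proposition~\ref{prop:BlP}) and that the $z$-torsion of $\coker(B^T)$ corresponds to $H_1(P;\Lambda)$ under $\delta\circ\iota$.
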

\begin{proof}
An argument exactly analogous to that of Claim~\ref{Claim:EquivzLambda/B} shows that
\[ x \in \Delta_K \cdot \Z[t^{\pm 1}]^c/{B^T} \iff z x =0 \iff x \in (\delta \circ \iota)(H_1(P;\Z[t^{\pm 1}])).\]
In particular $\delta \circ \iota \colon \unaryminus(\Bl_K \oplus \Bl_P) \to \ell_B$ restricts to an isometry $\unaryminus\Bl_P \to \ell_B|_{\Delta_K \cdot \coker({B^T})}.$

Use $e_1,\dots,e_c$ to denote the images by the canonical projection of the standard basis of $\Z[t^{\pm 1}]^c$ to~$\Z[t^{\pm 1}]^c/{B^T}$.
Since the~$\Delta_K e_i$ generate~$\Delta_K\cdot \Z[t^{\pm 1}]^c/{B^T}$,  the equivalence above ensures that generators of $H_1(P;\Z[t^{\pm 1}])$ are
\[v_i:= (\delta \circ \iota)^{-1}(\Delta_K e_i) \in H_1(P;\Z[t^{\pm 1}]).\]
The strategy of the proof is as follows.
Calculating $\Bl_P(v_i,v_i)$ in two different ways will lead to the equation $A^{-1}(1)=NI_{c_+,c_-}N^T$ for some integral matrix $N$, where $I_{c_+,c_-}:=(1)^{\oplus c_+} \oplus (-1)^{\oplus c_-}.$
We will then show that $N$ is invertible which will imply that  $A^{-1}$ is congruent to~$I_{c_+,c_-}$,  and hence that~$A(1)$ is congruent to~$I_{c_+,c_-}.$

The first calculation of~$\Bl_P(v_i,v_i)$ involves the definition of the~$v_i$.
Since $A$ presents $H_1(E_K;\Z[t^{\pm 1}])$,   we have $\det(A)\doteq \Delta_K$.
The adjoint formula for a matrix inverse implies there are $\alpha_{i,j} \in \Z[t^{\pm 1}]$ such that $A_{i,j}^{-1}=\left(\frac{\alpha_{i,j}}{\Delta_K}\right).$
Using the isometry $(\delta \circ \iota)| \colon \unaryminus\Bl_P \xrightarrow{\cong} \ell_B|_{\Delta_K \cdot \coker({B^T})}$, we obtain
\begin{align}\label{eqn:blpofv}
\Bl_P(v_i,v_j)
= \Delta_K^2 B^{-1}_{i,j}
= \Delta_K^2 \frac{1}{z}A^{-1}_{i,j} 
= \Delta_K^2 \frac{1}{z}\frac{\alpha_{i,j}}{\Delta_K}
= \frac{ \Delta_K(qz+1)\alpha_{i,j}}{z\Delta_K}
= \frac{\alpha_{i,j}}{z}.
\end{align}
The second calculation of~$\Bl_P(v_i,v_i)$ uses that $\Bl_P$ is presented by $z I_{c_+,c-}$, so that there are generators $u_1,\dots,u_c$ of $H_1(P;\Z[t^{\pm 1}])$ that satisfy
\[ \unaryminus\Bl_P(u_i,u_j)= 
\begin{cases} 
0 \quad & \text{if } i \neq j \\
 \frac{ \varepsilon_i}{z} \quad & \text{if } i=j. \end{cases}
\]
Since the $u_i$ generate $H_1(P;\Z[t^{\pm 1}])$,  there are $m_{i,k} \in  \Z$ and $n_{i,k} \in \Z$ such that
\begin{align}\label{eqn:usgeneratevs}
v_i = \sum_{k=1}^c (m_{i,k}(t-1)+n_{i,k})u_k.
\end{align}
Using Equations~\eqref{eqn:blpofv} and~\eqref{eqn:usgeneratevs}, we obtain that
\begin{align*}
\frac{\alpha_{i,j}}{z}= \unaryminus\Bl_P(v_i,v_j)
= \frac{1}{z} \sum_{k=1}^c (m_{i,k}(t-1)+n_{i,k})((m_{j,k}(t^{-1}-1)+n_{j,k})\varepsilon_k.
\end{align*}
Expanding and consolidating terms, we obtain that there are $f_{i,j} \in \Z[t^{\pm 1}]$ such that 
\begin{align*}
\alpha_{i,j}= zf_{i,j}(t)+(t-1) \left(\sum_{k=1}^c (m_{i,k}n_{j,k}-n_{i,k}m_{j,k}) \right) + \sum_{k=1}^c n_{i,k} n_{j,k} \varepsilon_k. 
\end{align*}
Evaluating this equation at $t=1$,  we obtain
\begin{align*}
\alpha_{i,j}(1)= \sum_{k=1}^c n_{i,k} n_{j,k} \varepsilon_k.
\end{align*}
Now recall that $A^{-1}=(\frac{\alpha_{i,j}}{\Delta_K})$.
Since $\Delta_K(1)=1$, we deduce that $A^{-1}(1)=( \alpha_{i,j}(1)).$
By considering~$n_{i,j}$ as a matrix over $\Z$,  we have therefore obtained the following equality in $M_c(\Z[t^{\pm 1}])$:
\begin{align*}
\begin{pmatrix} \alpha_{i,j}(1) \end{pmatrix}= \begin{pmatrix} n_{i,j} \end{pmatrix} I_{c_+,c_-} \begin{pmatrix} n_{i,j} \end{pmatrix}^T. 
\end{align*}
It therefore remains only to show that the matrix~$\begin{pmatrix} n_{i,j} \end{pmatrix}$ is invertible. 

Using that the $v_i$ generate $H_1(P;\Z[t^{\pm 1}])$, we can write the $u_i \in H_1(P;\Z[t^{\pm 1}])$ as
\begin{align*}
u_i= \sum_{k=1}^c (m'_{i,k}(t-1)+n'_{i,k}) v_k. 
\end{align*}
for some~$m'_{i,k} \in \Z$ and $n'_{i,k} \in \Z$.
Using~\eqref{eqn:usgeneratevs} and~$(t-1)^2 \cdot H_1(P;\Z[t^{\pm 1}])=0$,
we obtain
\begin{align*}
u_i&= \sum_{\ell=1}^c \sum_{k=1}^c (m'_{i, \ell}(t-1) + n'_{i,\ell})(m_{\ell,k}(t-1)+n_{\ell, k}) u_k \\
&=\sum_{k=1}^c \left( (t-1) \sum_{\ell=1}^c(m'_{i,\ell} n_{\ell, k}+ n'_{i, \ell} m_{\ell,k}) + \sum_{\ell=1}^c n'_{i,\ell} n_{\ell,k} \right)u_k.
\end{align*}
It follows that 
\[ \sum_{\ell=1}^c n'_{i,\ell}n_{\ell,k}=
 \begin{cases}
  1  \quad & \text{if }i=k \\ 
  0 \quad & \text{otherwise.}
 \end{cases}\]
We have therefore established that $\begin{pmatrix} n_{i,j} \end{pmatrix}$ and $\begin{pmatrix} n'_{i,j} \end{pmatrix}$ are inverses. 
As explained above,  this implies that $A^{-1}(1)$ is congruent to~$I_{c_+,c_-}$ and therefore that $A(1)$ is congruent to~$I_{c_+,c_-}.$
\end{proof}

We now prove Proposition~\ref{prop:BlKBlPK} whose statement is that if $\Bl_{P_{K,0}(c_+,c_-)}$ is presented by a nondegenerate hermitian matrix $B$ with $B(1)=0^{\oplus c}$,  then $\Bl_K$ is presented by a nondegenerate hermitian matrix $A$ such that $A(1) = I_{c_+,c_-}$,  where $I_{c_+,c_-}:=(1)^{\oplus c_+} \oplus (-1)^{\oplus c_-}$.
\begin{proof}[Proof of Proposition~\ref{prop:BlKBlPK}]
The combination of Lemmas~\ref{lem:B=zA} and~\ref{lem:Restrictphiiota} shows that if $\Bl_{P_K}$ is presented by a nondegenerate hermitian matrix~$B$ with~$B(1)=0^{\oplus c}$, then $B=zA$ with $A$ a nondegenerate hermitian form that presents~$\Bl_K$.
Lemma~\ref{lem:A(1)} ensures that~$A(1)$ is congruent to~$I_{c_+,c_-}$, say $A(1)=NI_{c_+,c_-}N^T$.
Now taking $A'=N^{-1}A(N^{-1})^T$ gives us our desired presentation matrix for $\Bl_K$ with $A'(1)=I(c_+,c_-)$. 
\end{proof}

\section{Proof strategy of the classification results}
\label{sec:ProofStrategy}

Sections~\ref{sec:ImmersionsAndNormalBundles}-\ref{sec:ProofClassifications} are concerned with the proof of the classifications stated in Section~\ref{sec:Classification}.
Since the proofs are lengthy, we devote this section to outlining their main ideas.
We focus on the rel. boundary classification as the proof in the closed case follows a similar strategy.

\medbreak

Fix a simply-connected~$4$-manifold $N$ with boundary~$\partial N \cong S^3$,  a knot~$K \subset \partial N$,  and non-negative integers $c_+,c_-$ and $g$.
Let $c=c_++c_-$ and~$P_K=P_{K,g}(c_+,c_-).$
The difficult part of Theorem~\ref{thm:SurfacesRelBoundary} consists of showing that if a nondegenerate hermitian form~$\lambda$ over~$\Z[t^{\pm 1}]$ presents~$\Bl_{P_K}$ and satisfies~$\lambda(1)\cong Q_N \oplus  (0)^{\oplus 2g+c}$, then the set~$\operatorname{Surf}_\lambda^0(g;c_+,c_-)(N,K)$ is nonempty and there is a bijection
$$\operatorname{Surf}_\lambda^0(g;c_+,c_-)(N,K) \approx \frac{\Iso(\partial \lambda,\unaryminus\Bl_{P_K})}{\Aut(\lambda)\times \Homeo_\alpha(\Sigma_{g,1},\partial)}.$$
For simplicity,  we focus on the case where $\lambda$ is even: this avoids discussing some additional complications related to the Kirby-Siebenmann invariant.
Additionally, for brevity, we focus on the strategy involved in establishing the bijection, leaving aside the nonemptiness question.
What follows are the steps necessary to establish this bijection; we are slightly informal, delaying a detailed discussion to the next four sections.

\begin{itemize}
\item First we prove that the set of surfaces $\operatorname{Surf}_\lambda^0(g;c_+,c_-)(N,K)$ corresponds bijectively to an analogously defined set of immersions, which we denote~$\Imm(g;c_+,c_-)^0_\lambda(N,K)$, modulo the action of $\Homeo_\alpha(\Sigma_{g,1},\partial)$ by precomposition.
Mapping an immersion to its image is seen without too much difficulty to induce a bijection 
$$\Imm(g;c_+,c_-)^0_\lambda(N,K)/\Homeo(\Sigma_{g,1},\partial) \to \operatorname{Surf}_\lambda^0(g;c_+,c_-)(N,K).$$
Working with $\Homeo_\alpha(\Sigma_{g,1},\partial)$ instead of $\Homeo(\Sigma_{g,1},\partial)$ requires some additional effort.
This is the content of Section~\ref{sec:ImmersionsAndNormalBundles}.
\item Next we prove that mapping an immersion $\Imm(g;c_+,c_-)^0_\lambda(N,K)$ to the exterior of its image defines a bijection $\Theta$ to a set $\mathcal{V}^0_\lambda(P_K)$ which consists, roughly speaking, of homeomorphism classes of pairs $(V,f)$, where $V$ is a $4$-manifold with $\pi_1(V) \cong \Z$ and equivariant intersection form isometric to $\lambda$,  and $f \colon \partial V \to P_K$ is an orientation-preserving homeomorphism.
Given an immersion $e \colon \Sigma_{g,1} \looparrowright N$, defining a homeomorphism 
$$f \colon \partial (N\setminus \intt(\overline{\nu}(e(\Sigma_{g,1}))) \to P_K$$
 is arduous as is verifying that $\Theta$ is indeed a bijection.
 The former is the topic of Section~\ref{sec:BoundarIdentifications}, whereas the latter is the topic of Section~\ref{sec:MainStatement} and is by far the longest part of the proof.
Modding out by the $\Homeo_\alpha(\Sigma_{g,1},\partial)$-actions, then defines a bijection from~$\operatorname{Surf}_\lambda^0(g;c_+,c_-)(N,K)$ to $\mathcal{V}^0_\lambda(P_K)/\Homeo_\alpha(\Sigma_{g,1},\partial)$.
\item The last step essentially consists in verifying that a bijection 
$$\mathcal{V}^0_\lambda(P_K) \to \Iso(\partial \lambda,\unaryminus\Bl_{P_K})/(\Aut(\lambda)$$ from~\cite{ConwayPiccirilloPowell} respects the $\Homeo_\alpha(\Sigma_{g,1},\partial)$-actions. 
This is the topic of Section~\ref{sec:ProofClassifications}.
\end{itemize}

The nature of the bijection we are going to construct requires we pay close attention to all the choices involved in each of our constructions.
For this reason, the level of detail in the second part of this paper will be significantly higher than in the first, to a level that might at first seem overly fastidious.
For example, it will be important that we record the choices made to define the exterior of a surface and this is the reason we devote some time to fixing our conventions concerning ``standard" notions such as immersions and vector bundles.

\section{Immersions and their normal bundles}
\label{sec:ImmersionsAndNormalBundles}

The first goal of this section is to recall the definitions of immersions (Section~\ref{sub:immersions}) and normal bundles (Section~\ref{sub:NormalBundles}) at the level of detail needed to construct the bijection $\Theta$ mentioned in the outline of the previous section.
Additionally,  in Section~\ref{sub:SurfBijectionSetUp}, we identify the set of immersed surfaces with a quotient 
of a set of immersions.

\subsection{Immersions}
\label{sub:immersions}

This section recalls the definition of a generic immersion and introduces various sets of immersed surfaces that are relevant to the proofs of our classification results.

\begin{definition}\label{def:DoublePointCharts}
An \emph{immersion} is a locally flat map $e \colon \Sigma \looparrowright N$ that is locally an embedding.
An immersion $e \colon \Sigma \looparrowright N$ is \emph{generic} if its singular set is a closed, discrete subset of~$N$ consisting only of transverse double points each of whose preimages lies in the interior of~$\Sigma$.
\end{definition}

Definition~\ref{def:DoublePointCharts} encompasses both the case where $\Sigma$ is closed and the case where it has a boundary.
We refer to~\cite[Definition 2.1]{KasprowskiPowellRayTeichnerEmbedding} for further details on how to precisely make sense of $e$ being locally an embedding when the boundary of $\Sigma$ or $N$ is involved.

\begin{convention}
In what follows, all immersions are assumed to be generic,~$\Sigma$ denotes an oriented genus~$g$ surface with a single boundary component, $N$ denotes a simply-connected~$4$-manifold with boundary homeomorphic to~$S^3$, and~$K \subset \partial N$ denotes a knot.
\end{convention}

The following definition is inspired by~\cite[Definition 2.3]{KasprowskiPowellRayTeichnerEmbedding}.

\begin{definition}
\label{def:Compatible}
An immersion $e \colon \Sigma \looparrowright N$ with double points~$e(x_{2i-1})=y_i=e(x_{2i})$ for~$i=1,\ldots,c$ is \emph{compatible} with an embedding~$\alpha=(\alpha_k)_{k=1}^{2c}  \colon \bigsqcup_{k=1}^{2c} D^2 \hookrightarrow \Sigma$,  if for~$i=1,\ldots,c$, there exists a neighborhood~$y_i \in U_i \subset N$,  and a homeomorphism~$\psi_i \colon U_i \to D^2 \times D^2$  such that
\begin{enumerate}
\item \label{item:Compatible1} the $U_i$ are pairwise disjoint,
\item \label{item:Compatible2}  the neighborhood $U_i$ satisfies
$$e(\Sigma) \cap U_i= e(\alpha_{2i-1}(D^2)) \cup e(\alpha_{2i}(D^2)),$$
\item \label{item:Compatible3} the composition $\psi_i \circ e \circ \alpha_k$ equals 
\begin{align*}
\begin{cases} \id \colon D^2 \to D^2 \times \{0\} & \text{ if } k=2i-1 \\ 
\pm \id \colon D^2 \to \{0\} \times D^2 & \text{ if }  k=2i, \end{cases}
\end{align*}
where the $\pm$-sign is according to the sign of $y_i$,  the $i$-th self-intersection of $e(\Sigma)$. 
\end{enumerate}
We call $\mathcal{U}=(U_i, \psi_i)_{i=1}^{c}$ \emph{a family of double point charts for $(e,\alpha)$.}
\end{definition}

\begin{remark}
Here is some motivation and some observations concerning this definition.
\begin{itemize}
\item The reason for this somewhat cumbersome definition is that we need to control the behavior of our immersions near double points.  In the absence of transversality results coming from smoothness,  we instead require that near double points are immersions are `standard' in the sense described in Item~\ref{item:Compatible3} of Definition~\ref{def:Compatible}. 
\item Every generic immersion is compatible with \emph{some} $\alpha$.
The reason for introducing the terminology is that, in the next sections, we will fix $\alpha$ once and for all and require our immersions to be compatible with that $\alpha$. 
\item If $\mathcal{U}$ is a family of double point charts for both $(e,\alpha)$ and $(e,  \alpha')$,   then $\alpha=\alpha'$, since condition~\eqref{item:Compatible3} can be rewritten to express~$\alpha_k$ in terms of $\psi_i$ and $e$.  On the other hand,  given~$(e,\alpha)$ there will generally be many choices for $\mathcal{U}$. 
\end{itemize}
\end{remark}

We introduce some further terminology related to immersions.
\begin{definition}
~
\begin{itemize}
\item Two immersions~$e,  e' \colon \Sigma \looparrowright N$ with $e(\partial \Sigma)=e'(\partial \Sigma)$ are \emph{equivalent} if there is a homeomorphism $F \colon N \to N$ that satisfies $F \circ e=e'$ and \emph{equivalent rel. boundary} if $F$ is additionally required to be the identity map on $\partial N$. 
\item An \emph{immersed surface} refers to the image of an immersion of a surface in a $4$-manifold.
\item Two immersed surfaces $S,S'\subset N$ with $\partial S= \partial S'$ are \emph{equivalent} if there is a homeomorphism $F \colon N \to N$ that satisfies $F(S)=S'$ and \emph{equivalent rel. boundary} if $F$ is additionally required to be the identity map on $\partial N$. 
\item An \emph{immersed $\Z$-surface} is an immersed surface $S \subset N$ with $\pi_1(N \setminus S) \cong \Z$.
A~\emph{$\Z$-immersion} is an immersion $e \colon \Sigma \looparrowright N$ whose image is a $\Z$-surface.
\item An immersed surface $S$ \emph{has $(c_+,c_-)$-double points} if it has $c_+$ positive double points and~$c_-$ negative double points.
An immersion \emph{has $(c_+,c_-)$-double points} if its image has \emph{has $(c_+,c_-)$-double points}.
\end{itemize}
\end{definition}

Next we introduce sets of immersed $\Z$-surfaces with prescribed genus and intersection data.
\begin{notation}
\label{not:Imm0Surf0}
Given integers $g,c_+,c_- \geq 0$, we use the following notation:
\begin{align*}
\operatorname{Imm}^0(g;c_+,c_-)(K,N)&=
\frac{
\{ \Z\text{-immersion } e \colon \Sigma \looparrowright N \mid g(\Sigma)=g,  \  \partial e(\Sigma)=K  \text{ and } (c_+,c_-)\text{-double points}\}
}
{\text{equivalence rel. boundary}}, \\
\operatorname{Surf}^0(g;c_+,c_-)(K,N)&=
\frac{
\{ \Z\text{-surface } S \subset N \mid g(S)=g, \partial S=K  \text{ and } (c_+,c_-)\text{-double points}\}
}
{\text{equivalence rel. boundary}}.
\end{align*}
Set $c:=c_++c_-$.
Given an embedding $\alpha\colon  \bigsqcup_{i=1}^{2c} D^2 \hookrightarrow \operatorname{int}(\Sigma)$, we note that compatibility with~$\alpha$ is preserved by equivalence rel. boundary and set
$$
\operatorname{Imm}_\alpha(g;c_+,c_-)^0(K,N):=\{ [e] \in \operatorname{Imm}_\alpha(g;c_+,c_-)^0(K,N)] \mid e  \text{ is compatible with $\alpha$}\}.
$$
Finally, note that the assignment~$\theta \cdot e := e \circ \theta^{-1}$ defines left actions 
\begin{align*}
&\Homeo(\Sigma,\partial) \curvearrowright \operatorname{Imm}(g;c_+,c_-)^0(K,N), \\
&\Homeo_\alpha(\Sigma,\partial) \curvearrowright \operatorname{Imm}_\alpha(g;c_+,c_-)^0(K,N).
\end{align*}
\end{notation}
Finally,  we note that the definitions in this section can be adapted to immersions of closed surfaces in closed $4$-manifolds.

\subsection{Relating sets of immersions and sets of immersed surfaces}
\label{sub:SurfBijectionSetUp}

The goal of this section is to relate the set $\operatorname{Surf}^0(g;c_+,c_-)(K,N)$ of $\Z$-surfaces with the set $\operatorname{Imm}^0(g;c_+,c_-)(K,N)$ of $\Z$-immersions.
Later on however we will want to associate to each immersed surface $S$ its exterior~$N_{S}$ and a homeomorphism $\partial N_S\to P_K$.
This will require working with the set~$\operatorname{Imm}^0_\alpha(g;c_+,c_-)(K,N)$ of~$\Z$-immersions that are compatible with $\alpha$.

\begin{notation*}
We write $\Sigma:=\Sigma_{g,1}$ for the genus $g$ surface with a single boundary component.
\end{notation*}

The following lemma serves as a first step in relating $\operatorname{Surf}^0(g;c_+,c_-)(K,N)$ to $\operatorname{Imm}^0(g;c_+,c_-)(K,N)$.

\begin{lemma}
\label{lem:EquivalenceImmersionvsSurface}
Let~$e, e' \colon \Sigma \looparrowright N$ be immersions, and suppose 
$F \colon N \to N$  is a rel.~boundary homeomorphism
with the property that~$F(e(\Sigma))= e'(\Sigma)$. 
Then there is a rel. boundary homeomorphism~$\theta \colon \Sigma \to \Sigma$ with the property that~$F \circ e = e' \circ \theta$.
\end{lemma}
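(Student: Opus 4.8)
The statement says: if $F\colon N\to N$ is a rel.\ boundary homeomorphism carrying the image of one immersion $e$ onto the image of another immersion $e'$, then in fact $F\circ e = e'\circ\theta$ for some rel.\ boundary homeomorphism $\theta$ of $\Sigma$. The only subtlety is that a priori $F\circ e$ and $e'$ parametrize the same subset $e'(\Sigma)\subset N$, but possibly via different parametrizations of $\Sigma$; we must produce the reparametrization $\theta$ and check it is a homeomorphism that is the identity on $\partial\Sigma$.

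First I would set $e'' := F\circ e\colon \Sigma\looparrowright N$. Since $F$ is a homeomorphism of $N$, the composite $e''$ is again a generic immersion of $\Sigma$, with the same source $\Sigma$, and $e''(\Sigma) = F(e(\Sigma)) = e'(\Sigma)$. Moreover $F$ carries double points of $e$ bijectively to double points of $e''$, and since $F$ is a homeomorphism it preserves the local structure, so $e''$ is still generic with transverse double points; in particular $e''$ restricted to the complement of the (finite) preimage of its double-point set is an embedding onto its image, and similarly for $e'$. The key point is that a generic immersion $f\colon\Sigma\looparrowright N$ of a surface is, away from the double-point preimages, an embedding, and near each double-point preimage it is a local embedding; hence $f$ is a \emph{quotient map onto its image followed by} — more precisely, $f$ factors as $\Sigma \xrightarrow{\text{homeo onto its image in a blow-up}}$ — but the clean way to say it is: the map $f$ is injective except over finitely many points where it is two-to-one, and it is a local homeomorphism onto its image everywhere. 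Therefore $e'(\Sigma)$, as a subspace of $N$, together with its "singular structure," determines $\Sigma$ and the map $e'$ up to homeomorphism of $\Sigma$.

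Concretely, I would define $\theta := (e')^{-1}\circ e''$ on the complement of the double-point preimages and extend. On $\Sigma^\circ := \Sigma\smallsetminus (e'')^{-1}(\text{double points})$, the map $e''$ is an embedding onto $e'(\Sigma)\smallsetminus(\text{double points})$, and likewise $e'$ restricted to $(e')^{-1}$ of that set is an embedding onto the same subset; so $\theta := (e')^{-1}\circ e''$ is a well-defined homeomorphism there. To extend over the double-point preimages $x_{2i-1},x_{2i}$: near $x_{2i-1}$ (resp.\ $x_{2i}$) the immersion $e''$ is a local embedding onto one of the two local sheets of $e'(\Sigma)$ through the double point $F(y_i)$, and exactly one of the two local sheets of $e'$ maps onto that same sheet; matching them up — using that the $\theta$ already constructed on the punctured neighborhood extends continuously, since $(e')^{-1}$ of a local sheet is a well-defined continuous map on that sheet — gives a continuous extension of $\theta$ over $x_{2i-1}$ and $x_{2i}$, and the same argument applied to $\theta^{-1}$ shows $\theta$ is a homeomorphism. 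Since $F$ is the identity on $\partial N$ and $e(\partial\Sigma) = e'(\partial\Sigma) = K\subset\partial N$, we get $e''|_{\partial\Sigma} = e|_{\partial\Sigma}$, while $e'|_{\partial\Sigma}$ parametrizes the same knot $K$; here one must be slightly careful that $\theta$ restricted to $\partial\Sigma$ is the identity rather than merely a self-homeomorphism of $\partial\Sigma$ — this uses that both $e|_{\partial\Sigma}$ and $e'|_{\partial\Sigma}$ are the \emph{same} fixed parametrization of $K$ (this is part of what ``$\partial e(\Sigma) = K$'' with $K$ a fixed oriented knot means in the paper's conventions), so on $\partial\Sigma$ we have $\theta = (e')^{-1}\circ e'' = (e')^{-1}\circ F\circ e = (e')^{-1}\circ e = \mathrm{id}$. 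Finally, by construction $e'\circ\theta = e'\circ(e')^{-1}\circ e'' = e'' = F\circ e$ on $\Sigma^\circ$, hence everywhere by continuity, which is the desired identity.

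\textbf{Main obstacle.} The genuinely delicate step is the extension of $\theta$ over the double-point preimages and the verification that the extension is a homeomorphism: one has to argue that $e''$ and $e'$ agree, sheet-by-sheet, near each double point, i.e.\ that $e''$ does not ``swap'' the two local sheets relative to $e'$ in a way that obstructs a global continuous matching. This is where local flatness and the fact that $F$ preserves orientations and the local two-sheet structure are used; in the smooth or PL setting one would invoke transversality and local models directly, while here one must phrase everything in terms of local embeddings and local homeomorphisms onto the image. I would carry this out by choosing, for each $i$, the double-point charts guaranteed by genericity (and, if convenient, invoking compatibility with some $\alpha$ as in Definition~\ref{def:Compatible}, though the lemma as stated does not assume it), and comparing the two local parametrizations within such a chart. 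Everything else is formal.
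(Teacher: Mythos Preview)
Your proof is correct and follows essentially the same approach as the paper: define $\theta = (e')^{-1}\circ F\circ e$ away from the double-point locus and then extend. The only difference in execution is that the paper removes small disk neighborhoods $e^{-1}(U_i)$ of the double-point preimages rather than just the finitely many points themselves, which makes the extension step a composite of honest homeomorphisms $\theta_{B_k} := (e'|_{B_k'})^{-1}\circ F\circ e|_{B_k}$ on each disk and thereby sidesteps the sheet-matching argument you flagged as the main obstacle.
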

\begin{proof}
Let~$x_1, \dots, x_{2c} \in \Sigma$ be the double points of~$e$,  labelled so that~$e(x_{2i-1})= e(x_{2i})$ for each~$i=1,\ldots,c$.  
Let~$U_1, \dots, U_c$ be disjoint balls in~$\operatorname{int}(N)$ containing~$e(x_1), \dots e(x_{2c})$,  chosen to be small enough that~$e^{-1}(U_i)  \subset \Sigma$ consists of two disjoint disks:
~$B_{2i-1}$ containing~$x_{2i-1}$ and~$B_{2i}$ containing~$x_{2i}$.
Removing these disks, we set
$$\Sigma_0:= \Sigma \setminus \intt \bigcup_{i=1}^c  e^{-1}(U_i) \quad \text{ and } \quad \Sigma_0':=  \Sigma \setminus \intt \bigcup_{i=1}^c (e')^{-1}(F (U_i)).$$
Since~$F$ sends the double points of~$e(\Sigma)$ to the double points of~$e'(\Sigma)$, we obtain a homeomorphism
\[\theta_{\Sigma_0}:= (e'|_{\Sigma_0'})^{-1} \circ F  \circ (e|_{\Sigma_0}) \colon \Sigma_0 \to \Sigma_0'.\]
By definition of~$\theta_{\Sigma_0}$, it is certainly the case that~$F(e(s))= e'(\theta_{\Sigma_0}(s))$ for every~$s \in \Sigma_0$. 

We now wish to extend~$\theta_{\Sigma_0}$ across all of~$\Sigma$.  We will do so across one disk at a time.  Note that for each~$1 \leq k \leq 2c$,  the map~$\theta_{\Sigma_0}$ sends~$\partial B_k$ to some boundary curve of~$\Sigma_0'$. 
Since~$e'$ is one-to-one except on~$\bigcup_{i=1}^c (e')^{-1}(F(e(x_i)))$,  we have that~$ \bigcup_{i=1}^c (e')^{-1}(F(U_i))$ is a disjoint union of~$2c$ sub-disks of~$\Sigma$. 
The map~$\theta_{\Sigma_0}$ must send~$\partial B_k$ to the boundary of one of these disks--call it~$B_k'$.  
The map~$(e')|_{B_k'} \colon B_k' \to e'(B_k')= F(e(B_k))$ is a homeomorphism,  so we obtain a well-defined homeomorphism
\[ \theta_{B_k}:= (e'|_{B_k'})^{-1} \circ F \circ (e|_{B_k}) \colon B_k \to B_k'.\]
By construction~$\theta_{B_k}$ agrees with~$\theta_{\Sigma_0}$ on~$\partial B_k$,  and so we define the
homeomorphism $\theta \colon \Sigma \to \Sigma$ as
\begin{align*}
\theta(x):=
 \begin{cases}
  \theta_{B_k}(x) & x \in B_k \text{ for } 1 \leq k \leq 2c\\ 
  \theta_{\Sigma_0}(x) & x \in \Sigma_0.
 \end{cases}
\end{align*}
By construction this homeomorphism satisfies~$F(e(s))= e'(\theta(s))$ for all~$s \in \Sigma$, as required.  We remark that, while it was convenient for us to choose balls $U_1, \dots, U_c$ and corresponding disks~$B_1, \dots, B_{2c}$ to verify continuity,  the map $\theta$ is independent of these choices,  since for any point~$x \notin \{x_1, \dots, x_{2c}\}$ we have that $\theta(x)$ is the unique point in $\Sigma$ mapped to $F(e(x))$ by $e'$. 
\end{proof}

\begin{proposition}
\label{prop:SurfQuotientOfImm}
Mapping an immersion to its image induces a bijection
$$\operatorname{Imm}^0(g;c_+,c_-)(K,N)/\Homeo(\Sigma,\partial) \to \operatorname{Surf}^0(g;c_+,c_-)(K,N).$$
\end{proposition}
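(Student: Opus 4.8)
The plan is to show that the assignment $e \mapsto e(\Sigma)$ descends to a well-defined map on the quotient, and then exhibit an explicit inverse. First I would check well-definedness: if $e, e'$ are equivalent rel.\ boundary $\Z$-immersions, there is a rel.\ boundary homeomorphism $F \colon N \to N$ with $F \circ e = e'$, which in particular satisfies $F(e(\Sigma)) = e'(\Sigma)$, so they have the same image up to rel.\ boundary equivalence of surfaces. Moreover, if $e' = e \circ \theta$ for $\theta \in \Homeo(\Sigma,\partial)$, then $e$ and $e'$ have literally the same image, so the induced map on $\Homeo(\Sigma,\partial)$-orbits is well-defined. The map lands in $\operatorname{Surf}^0(g;c_+,c_-)(K,N)$ because $e(\Sigma)$ is a $\Z$-surface of genus $g$ with boundary $K$ and $(c_+,c_-)$-double points by hypothesis on $e$.

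Next I would prove surjectivity: any immersed $\Z$-surface $S \subset N$ of genus $g$ with boundary $K$ and $(c_+,c_-)$-double points is by definition the image of \emph{some} generic immersion $e \colon \Sigma \looparrowright N$ (after identifying the abstract domain surface with $\Sigma = \Sigma_{g,1}$ via an orientation-preserving homeomorphism, which is where the genus hypothesis enters), so $[S]$ is the image of $[[e]]$. For injectivity, suppose $e, e' \colon \Sigma \looparrowright N$ are $\Z$-immersions whose images $S = e(\Sigma)$ and $S' = e'(\Sigma)$ are equivalent rel.\ boundary, i.e.\ there is a rel.\ boundary homeomorphism $F \colon N \to N$ with $F(S) = S'$. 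Here I would invoke Lemma~\ref{lem:EquivalenceImmersionvsSurface}, which produces a rel.\ boundary homeomorphism $\theta \colon \Sigma \to \Sigma$ with $F \circ e = e' \circ \theta$. This exactly says that $F$ witnesses an equivalence rel.\ boundary between $e$ and $e' \circ \theta$, so $[e] = [e' \circ \theta]$ in $\operatorname{Imm}^0(g;c_+,c_-)(K,N)$, and hence $[e]$ and $[e']$ lie in the same $\Homeo(\Sigma,\partial)$-orbit. Thus the two orbits that mapped to $[S] = [S']$ coincide, establishing injectivity.

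The only genuinely substantive input is Lemma~\ref{lem:EquivalenceImmersionvsSurface}, which has already been proved in the excerpt; everything else is formal bookkeeping about orbit sets and the definition of ``immersed surface'' as the image of an immersion. I do not expect a real obstacle here — the main thing to be careful about is keeping the two distinct equivalence relations (equivalence rel.\ boundary of immersions versus precomposition by $\Homeo(\Sigma,\partial)$) straight, and making sure the abstract-domain identification used in surjectivity is harmless, which it is precisely because we then pass to $\Homeo(\Sigma,\partial)$-orbits. One could also remark, for later use, that the same argument with $\Homeo_\alpha(\Sigma,\partial)$ in place of $\Homeo(\Sigma,\partial)$ requires the compatibility refinement and is treated separately, but that is not needed for the present statement.
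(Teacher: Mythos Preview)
Your proof is correct and follows essentially the same route as the paper: the paper dismisses well-definedness and surjectivity as immediate and then handles injectivity by invoking Lemma~\ref{lem:EquivalenceImmersionvsSurface} exactly as you do. Your write-up is simply more explicit about the bookkeeping the paper leaves to the reader.
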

\begin{proof}
Well-definedness and surjectivity are immediate, so
assume that $S=e(\Sigma)$ and $S'=e'(\Sigma)$ agree in $\operatorname{Surf}^0(g;c_+,c_-)(K,N)$,
meaning that there is a rel.\ boundary homeomorphism $F \colon N \to N$ such that $F \circ e =e'$.  
Lemma~\ref{lem:EquivalenceImmersionvsSurface} implies that there is a self-homeomorphism $\theta \colon \Sigma \to \Sigma$ such that~$F \circ e=e' \circ \theta$,  and so up to homeomorphism $e$ and $e' \circ \theta$ differ by the action of $\operatorname{Homeo}(\Sigma,\partial)$.
\end{proof}

The main result of this section is the following.

\begin{proposition}\label{prop:ImmIsImmalpha}
The inclusion of  $\operatorname{Imm}_{\alpha}(g;c_+,c_-)^0(K,N)$ in $
\operatorname{Imm}(g;c_+,c_-)^0(K,N)$
induces a bijection
$$\Phi \colon \operatorname{Imm}_{\alpha}(g;c_+,c_-)^0(K,N)/\Homeo_\alpha(\Sigma,\partial) \to
\operatorname{Imm}(g;c_+,c_-)^0(K,N)/\Homeo(\Sigma,\partial) 
$$
In particular, mapping an immersion to its image induces a bijection
$$ \operatorname{Imm}_{\alpha}(g;c_+,c_-)^0(K,N)/\Homeo_\alpha(\Sigma,\partial) \cong \operatorname{Surf}^0(g;c_+,c_-)(K,N). $$
\end{proposition}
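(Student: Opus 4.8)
The statement to prove is Proposition~\ref{prop:ImmIsImmalpha}: that the inclusion $\operatorname{Imm}_{\alpha}(g;c_+,c_-)^0(K,N) \hookrightarrow \operatorname{Imm}(g;c_+,c_-)^0(K,N)$ descends to a bijection
$$\Phi \colon \operatorname{Imm}_{\alpha}(g;c_+,c_-)^0(K,N)/\Homeo_\alpha(\Sigma,\partial) \to \operatorname{Imm}(g;c_+,c_-)^0(K,N)/\Homeo(\Sigma,\partial),$$
and that consequently, by composing with the bijection of Proposition~\ref{prop:SurfQuotientOfImm}, mapping an immersion to its image induces a bijection onto $\operatorname{Surf}^0(g;c_+,c_-)(K,N)$. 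The overall strategy is to check that $\Phi$ is well-defined, surjective, and injective; once $\Phi$ is shown to be a bijection, the final ``in particular'' clause is immediate since Proposition~\ref{prop:SurfQuotientOfImm} already identifies the right-hand side with $\operatorname{Surf}^0(g;c_+,c_-)(K,N)$.

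\textbf{Well-definedness and surjectivity.} For well-definedness, one must check two things: that the class of a compatible immersion in $\operatorname{Imm}(g;c_+,c_-)^0(K,N)/\Homeo(\Sigma,\partial)$ is independent of the representative chosen within its $\Homeo_\alpha(\Sigma,\partial)$-orbit (immediate, since $\Homeo_\alpha(\Sigma,\partial) \subset \Homeo(\Sigma,\partial)$), and that the map is independent of the rel.\ boundary equivalence class chosen in $\operatorname{Imm}_\alpha^0$ — but this too is automatic since equivalence rel.\ boundary between compatible immersions is in particular equivalence rel.\ boundary, and we already quotient by $\Homeo(\Sigma,\partial)$ on the target. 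For surjectivity, start with an arbitrary $\Z$-immersion $e \colon \Sigma \looparrowright N$ with the prescribed genus, boundary, and double-point data. Every generic immersion is compatible with \emph{some} embedding $\alpha' \colon \bigsqcup_{2c} D^2 \hookrightarrow \operatorname{int}(\Sigma)$ (as observed in the Remark following Definition~\ref{def:Compatible}); so the task reduces to showing that $(e,\alpha')$ can be replaced, up to the action of $\Homeo(\Sigma,\partial)$, by a pair $(e \circ \theta^{-1}, \alpha)$ compatible with our fixed $\alpha$. Since any two embeddings of $\bigsqcup_{2c} D^2$ into $\operatorname{int}(\Sigma)$ that induce the same partition into $\pm$-labelled disks and the same orientations are ambient isotopic rel.\ boundary in $\Sigma$ (this is the disk complement / change-of-coordinates principle for surfaces, cf.\ \cite{FarbMargalit}), there is $\theta \in \Homeo(\Sigma,\partial)$, isotopic to the identity, with $\theta \circ \alpha' = \alpha$; then $e \circ \theta^{-1}$ is compatible with $\alpha$, using the same double-point charts, and represents the same class in the target. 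This establishes surjectivity.

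\textbf{Injectivity, the main obstacle.} This is the crux. Suppose $e, e' \colon \Sigma \looparrowright N$ are both compatible with $\alpha$ and become equal in $\operatorname{Imm}(g;c_+,c_-)^0(K,N)/\Homeo(\Sigma,\partial)$; that is, there is a rel.\ boundary homeomorphism $F \colon N \to N$ and a homeomorphism $\phi \in \Homeo(\Sigma,\partial)$ with $F \circ e = e' \circ \phi$. We must produce a homeomorphism $\psi \in \Homeo_\alpha(\Sigma,\partial)$ realizing the equality in the source, i.e.\ with $F' \circ e = e' \circ \psi$ for some rel.\ boundary $F'$. The natural candidate is to correct $\phi$ by an ambient isotopy: the composite $\phi$ need not satisfy $\phi \circ \alpha = \alpha$, but since $F \circ e = e' \circ \phi$ and both $e, e'$ are compatible with $\alpha$ via double-point charts, $\phi$ must carry the disk system $\alpha(\bigsqcup D^2)$ to a disk system that is ``$e'$-compatible'' with the \emph{same} $\alpha$ — more precisely, $\phi$ sends each $\alpha_k(D^2)$ to a disk $B_k'$ with $e'|_{B_k'}$ a coordinate sheet through a double point, and the combinatorics (which sheet, which sign, which chart) is forced by $F$ matching double points of $e(\Sigma)$ to those of $e'(\Sigma)$ with signs (the sign of a double point being a homeomorphism invariant of the immersed surface, preserved by the orientation-preserving $F$). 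Hence the two disk systems $\phi \circ \alpha$ and $\alpha$ are isotopic rel.\ boundary in $\Sigma$, compatibly with the labelling and orientations; let $\rho_t$ be such an isotopy with $\rho_0 = \id$, $\rho_1 \circ \phi \circ \alpha = \alpha$, so that $\psi := \rho_1 \circ \phi \in \Homeo_\alpha(\Sigma,\partial)$. One then defines $F'$ by absorbing the ambient isotopy of $N$ induced by pushing $e'$ along $\rho_t$ (this uses that a tubular neighborhood of the image disks can be dragged by an ambient isotopy of $N$, with support away from $\partial N$), obtaining $F' \circ e = e' \circ \psi$. The delicate points — and where I expect the real work to lie — are (i) checking carefully that the combinatorial matching of the charts really is forced, so that $\phi \circ \alpha$ and $\alpha$ genuinely are isotopic as \emph{labelled, oriented} disk systems (this is where the precise form of Definition~\ref{def:Compatible}\eqref{item:Compatible3} with the $\pm$ signs matters and where one needs that $F$ respects double-point signs), and (ii) promoting the surface-level isotopy $\rho_t$ to an ambient isotopy $F'$ of $N$ that is identity on $\partial N$ and interacts correctly with the double-point charts — i.e.\ a locally flat isotopy-extension argument near the finitely many double points, of the type used in \cite{KasprowskiPowellRayTeichnerEmbedding}. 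Finally, I would remark that every step adapts verbatim to the closed case, replacing $(\Sigma_{g,1}, N, K)$ by $(\Sigma_g, X, \varnothing)$ and dropping the ``rel.\ boundary'' clauses, which gives the analogous statement needed for the closed classification.
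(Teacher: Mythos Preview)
Your surjectivity argument is fine and matches the paper's. The injectivity argument, however, has a real gap at step~(ii), and this is precisely where the paper's proof differs in substance from yours.

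You propose to correct $\phi$ to $\psi\in\Homeo_\alpha(\Sigma,\partial)$ by a surface-level isotopy $\rho_t$, and then to \emph{extend} this to an ambient isotopy of $N$ so that $G_1\circ e'=e'\circ\rho_1$. But isotopy extension is a theorem about embeddings, not immersions: for $G_t\circ e'=e'\circ\rho_t$ to even make sense at a double point $p=e'(x_1)=e'(x_2)$ one needs $e'(\rho_t(x_1))=e'(\rho_t(x_2))$ for every $t$, which fails as soon as $\rho_t$ moves the disks off the double-point preimages. Even asking only for the endpoint homeomorphism $G_1$, you are asking to take one family of double point charts for $e'$ (namely $(F(U_i),\psi_i\circ F^{-1})$, compatible with $\phi\circ\alpha$) to another (the given charts for $(e',\alpha)$) by an ambient homeomorphism of $N$. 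This is exactly the content of Lemma~\ref{lem:Homeo}, and its proof is not an isotopy-extension argument: after shrinking, the two chart balls $U_i$ and $U_i'$ around a double point are nested, and the annular region between them, intersected with $e'(\Sigma)$, is a concordance in $S^3\times I$ between two Hopf links. The paper shows these concordance exteriors have $\pi_1\cong\Z^2$ and then invokes Theorem~\ref{thm:HopfConcordancesEquivalent} (uniqueness of $\Z^2$-concordances to the Hopf link, proved in the appendix via surgery theory) to build the required homeomorphism. Nothing in \cite{KasprowskiPowellRayTeichnerEmbedding} supplies this; it is one of the new ingredients of the paper.

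Consequently the paper's logic for injectivity runs in the opposite order from yours: first construct $F'\colon N\to N$ with $F'(e'(\Sigma))=e(\Sigma)$ and $\psi_p'=\psi_p\circ F'|_{U_p'}$ (via Lemma~\ref{lem:Homeo} applied to the two chart families $\{(U_p,\psi_p)\}$ and $\{(F(U_p'),\psi_p'\circ F^{-1})\}$ for $e$), then use Lemma~\ref{lem:EquivalenceImmersionvsSurface} to extract $\theta'$ with $F'\circ e'=e\circ\theta'$, and finally verify $\theta'\circ\alpha=\alpha$ by a direct chart computation using $\psi_p'=\psi_p\circ F'$ and condition~\eqref{item:Compatible3} of Definition~\ref{def:Compatible}. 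Your step~(i) about the combinatorial matching is handled implicitly by this computation rather than by an isotopy of disk systems.
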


The proposition relies on the following lemma which, informally, states that for a given $\Z$-immersion~$e$, one can always find a homeomorphism preserving $e(\Sigma)$ that takes any one family of double point charts to any another.

\begin{lemma}
\label{lem:Homeo}
Let $e \colon \Sigma \looparrowright N$ be a $\Z$-immersion. 
Suppose that $\alpha, \alpha' \colon  \bigsqcup_{k=1}^{2c} D^2 \hookrightarrow \Sigma$ are compatible with $e$,  and  $\mathcal{U}:=\{ (U_i,\psi_i)\}$ and $\mathcal{U}':=\{ (U_i',\psi_i')\}$  are families of double point charts for $(e,\alpha)$ and $(e, \alpha')$ respectively.
Then there exists a rel. boundary homeomorphism $F \colon N \to N$ satisfying
$$ F (e(\Sigma)) =e(\Sigma) \ \ \ \text{ and } \ \ \ F|_{U_p}=(\psi_i')^{-1} \circ \psi_i \colon U_i \xrightarrow{\cong} U_i'.$$
\end{lemma}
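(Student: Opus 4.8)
The strategy is to first build the homeomorphism locally near the double points, then extend it over the rest of $N$ using the fact that $e(\Sigma)$ is a $\Z$-surface. Concretely, for each $i = 1,\dots,c$ the composition $g_i := (\psi_i')^{-1} \circ \psi_i \colon U_i \to U_i'$ is a homeomorphism of $4$-balls; by compatibility of $\mathcal{U}$ with $(e,\alpha)$ and of $\mathcal{U}'$ with $(e,\alpha')$ (condition \eqref{item:Compatible2} of Definition~\ref{def:Compatible}), it sends $e(\Sigma) \cap U_i = e(\alpha_{2i-1}(D^2)) \cup e(\alpha_{2i}(D^2))$ onto $e(\Sigma) \cap U_i'$, since the latter is likewise the union of the two coordinate disks $\{0\}\times D^2$ and $D^2 \times \{0\}$ under $\psi_i'$ and both are the standard pair of transverse disks through the origin of $D^2 \times D^2$. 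So the $g_i$ are already defined and already preserve the immersed surface on their (disjoint) domains.

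\textbf{The main step: extending over the complement.} Set $V := \overline{N \setminus \bigcup_{i=1}^c U_i}$ and $V' := \overline{N \setminus \bigcup_{i=1}^c U_i'}$; these are $4$-manifolds with boundary $\bigsqcup_i S^3$ (in the interior) together with $\partial N = S^3$. The maps $g_i$ restrict to homeomorphisms $\partial U_i \to \partial U_i'$ which carry the three-component unlink $e(\Sigma) \cap \partial U_i$ (two fibres of each Hopf-type link, i.e. the two transverse circles) to $e(\Sigma) \cap \partial U_i'$. I would next observe that $e(\Sigma) \cap V$ and $e(\Sigma) \cap V'$ are the \emph{same} embedded surface-with-boundary $S^\circ \subset N$ (this is literally $e$ restricted to $\Sigma$ minus the preimages of the $U_i$, as in the proof of Lemma~\ref{lem:EquivalenceImmersionvsSurface}): the only difference between $V$ and $V'$ is which balls around the double points are removed. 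Thus I want a rel.-boundary homeomorphism $F_0 \colon V \to V'$ that is the identity on $\partial N$, agrees with $g_i|_{\partial U_i}$ on each interior boundary sphere, and preserves $S^\circ$. This is where being a $\Z$-surface enters: the exterior $N_{e(\Sigma)} = N \setminus \overline{\nu}(e(\Sigma))$ has $\pi_1 \cong \Z$ with boundary $P_K$ (Proposition~\ref{prop:HomologyZExterior}), and both $V \setminus \nu(S^\circ)$ and $V' \setminus \nu(S^\circ)$ are obtained from it by re-gluing the pieces of the tubular neighborhood near the double points; the pairs $(V, S^\circ)$ and $(V', S^\circ)$ are each homeomorphic rel.\ $(\partial N \cup S^\circ)$ to a fixed model — namely the model $W$-complement constructed in Section~\ref{sec:PlumbedManifolds}, pushed off from the double points — and that homeomorphism can be chosen to restrict on the interior boundary spheres to the prescribed standard identifications, because near each removed ball everything is the standard picture $(D^2\times D^2,\, D^2\times 0 \cup 0 \times D^2)$. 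Concretely I would invoke (a relative, pair version of) the uniqueness of the $\Z$-surface exterior filling discussed in Sections~\ref{sec:MainStatement}--\ref{sec:ProofClassifications}, or more elementarily note that the needed extension over $V$ is a statement about tubular neighborhoods and collars and does not actually require the classification: one thickens $S^\circ$ to $\overline{\nu}(S^\circ) \cong S^\circ \times D^2$ (normal bundles are trivial since $e$ is compatible with $\alpha$ and $S$ is nullhomologous, cf.\ Proposition~\ref{prop:ZSurfaceAreNullhomologous}), uses that $g_i$ is standard on each $\partial U_i$ hence extends over the thickened surface, and then the problem reduces to extending a homeomorphism of $\partial(V \setminus \nu(S^\circ))$ over $V \setminus \nu(S^\circ)$ — and $V \setminus \nu(S^\circ)$ is homeomorphic to $V' \setminus \nu(S^\circ)$ rel.\ boundary because both are homeomorphic to $N_{e(\Sigma)}$ with standard collar adjustments.

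\textbf{Assembly and the expected obstacle.} Having produced $F_0 \colon V \to V'$ rel.\ $\partial N$, agreeing with $g_i$ on $\partial U_i$ and preserving $S^\circ$, I define
\[
F(x) :=
\begin{cases}
g_i(x) & x \in U_i, \ i = 1,\dots,c, \\
F_0(x) & x \in V,
\end{cases}
\]
which is a well-defined homeomorphism $F \colon N \to N$ because the pieces agree on the overlaps $\partial U_i$; it is the identity on $\partial N$ by construction, it satisfies $F|_{U_i} = (\psi_i')^{-1}\circ\psi_i$ by definition, and $F(e(\Sigma)) = F(S^\circ \cup \bigcup_i (e(\Sigma)\cap U_i)) = S^\circ \cup \bigcup_i (e(\Sigma)\cap U_i') = e(\Sigma)$, as required. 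The main obstacle is the existence of $F_0$ preserving $S^\circ$ with the prescribed boundary behaviour: the $\Z$-hypothesis is exactly what guarantees the two exteriors $V \setminus \nu(S^\circ)$ and $V' \setminus \nu(S^\circ)$ are homeomorphic rel.\ boundary (one cannot drop it — a general immersed surface could have wildly different complements after re-indexing the double-point balls), and some care is needed to ensure the homeomorphism respects the already-fixed identifications $g_i$ on the interior boundary spheres rather than differing from them by a homeomorphism of $S^3$, which is handled because $g_i$ is literally the standard coordinate identification there and $\mathrm{Homeo}(S^3)$ is connected (Cerf / Quinn), so any discrepancy is isotopic to the identity and can be absorbed into a collar. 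I expect the write-up to be a page of careful but unsurprising gluing arguments once this one point about the $\Z$-exterior is correctly set up.
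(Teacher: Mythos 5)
Your overall gluing scheme (define $g_i := (\psi_i')^{-1}\circ\psi_i$ on the $U_i$, extend over the complement, reassemble) matches the paper's, but the step you label ``the main step'' is exactly where your argument has a genuine gap, and the idea that closes it in the paper is absent from your proposal. After reducing to the case $U_i'\subseteq U_i$ and enlarging to disjoint balls $U_i^+$, the paper takes $F$ to be the \emph{identity} outside $\bigsqcup_i U_i^+$ and $g_i$ on $U_i$, so the whole problem localizes to the shells $X_i=U_i^+\setminus\intt U_i$ and $X_i'=U_i^+\setminus\intt U_i'$. There $e(\Sigma)$ meets each shell in a concordance from the Hopf link to the Hopf link inside $S^3\times I$, and the required homeomorphism of pairs $(X_i,C_i)\to(X_i,\zeta_i^{-1}(C_i'))$ rel.\ boundary is supplied by Theorem~\ref{thm:HopfConcordancesEquivalent} (uniqueness of $\Z^2$-concordances to the Hopf link), after a van Kampen argument shows that the $\Z$-hypothesis on $e$ forces both concordance exteriors to have $\pi_1\cong\Z^2$. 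This appendix-level input is the entire technical content of the lemma, and nothing in your write-up replaces it.

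Concretely, your two suggested routes both fail. Invoking ``uniqueness of the $\Z$-surface exterior filling'' from Sections~\ref{sec:MainStatement}--\ref{sec:ProofClassifications} would be circular: this lemma feeds into Proposition~\ref{prop:ImmIsImmalpha}, which is an ingredient of those classification results, and in any case those results give homeomorphisms only up to the $\Aut(\lambda)\times\Homeo_\alpha$ actions, not with prescribed boundary behaviour. Your ``more elementary'' route asserts that $V\setminus\nu(S^\circ)$ and $V'\setminus\nu(S^\circ)$ are ``homeomorphic to $N_{e(\Sigma)}$ with standard collar adjustments'' — but they differ from $N_{e(\Sigma)}$ precisely by the exteriors of the Hopf-link concordances in the shells, and producing a homeomorphism between two abstractly homeomorphic manifolds that extends a \emph{given} boundary identification and carries one embedded concordance to the other is the hard point, not a collar manipulation. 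Likewise, connectivity of $\Homeo(S^3)$ lets you absorb a discrepancy on a boundary sphere into a collar of the ambient manifold, but an ambient isotopy of $S^3$ need not preserve the Hopf link, and even when it does, the resulting self-homeomorphism of $S^3\times I$ need not carry $C_i$ to $C_i'$; that is exactly the statement of Theorem~\ref{thm:HopfConcordancesEquivalent}, which requires the surgery-theoretic argument of the appendix.
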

\begin{proof}
Note that given any $\alpha, \alpha'$ and $\mathcal{U}, \mathcal{U}'$ as above, there exists an embedding $\alpha'' \colon  \bigsqcup_{k=1}^{2c} D^2 \hookrightarrow \Sigma$ compatible with $e$ and  family of double point charts $\{ (U_i'', \psi_i'') \}_{i=1}^c$  for $(e, \alpha'')$ with~$U_i'' \subset U_i \cap U_i'$ for all~$i=1, \dots, c$. 
It therefore suffices to prove this lemma under the additional hypothesis that~$U_i' \subseteq U_i$, which we now do. 

For each $i=1, \dots, c$,  consider a pair $(U_i^+,\psi_i^+)$, where $U_i^+$ is a small enlargement of $U_i$,  and~$\psi_i^+ \colon U_i^+ \to D^2_{1+\delta} \times D^2_{1+\delta}$ is a homeomorphism with $\psi_i^+|_{U_i}=\psi_i$ for some $\delta>0$ and such that 
$\psi_i^+(e(\Sigma) \cap U_i^+))=D^2_{1+\delta} \times \{0\} \cup \{0\} \times D^2_{1+\delta}$.  
Note that $\mathcal{U}^+$ is a family of double point charts for $(e,  \alpha^+)$,  where $\alpha^+$
is the unique embedding that satisfies Item~\ref{item:Compatible3} of Definition~\ref{def:Compatible} with respect to~$\psi_i^+$ and $e$.
We can (and will) arrange that the $U_i^+$ are pairwise disjoint.
This is illustrated in Figure~\ref{fig:twodoublepointcharts}, as are the spaces
$$X_i = U_i^+ \smallsetminus \intt \, U_i,
\quad
X_i'= U_i^+ \smallsetminus \intt \, U_i',
\quad 
C_i:=e(\Sigma) \cap X_i, \quad \text{ and }
 \quad C_i':=e(\Sigma) \cap X_i'.$$
\begin{figure}[h!]
\centering
\begin{tikzpicture}
\node[anchor=south west,inner sep=0] at (0,0){\includegraphics[height=5cm]{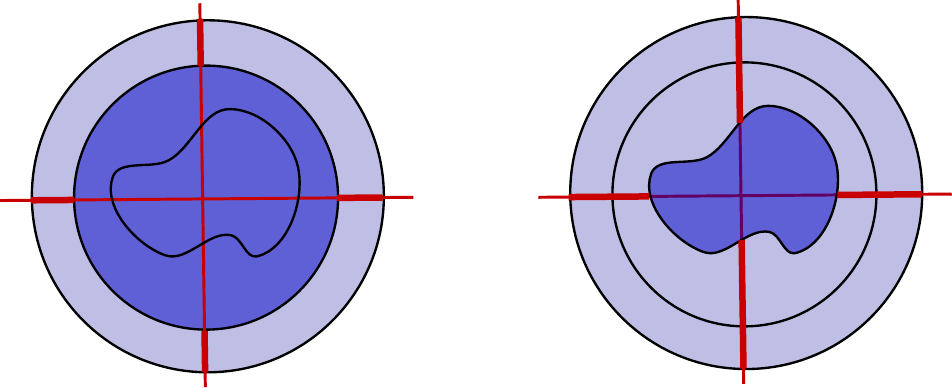}};
\node at (0, 2.1){$e(\Sigma)$};
\node at (7, 2.1){$e(\Sigma)$};
\node at (3.1,1.2) {$U_i$};
\node at (3.6, .7){$X_i$};
\node at (10.1,3) {$U_i'$};
\node at (9,1.4) {$X_i'$};
\node at (4.5,0.5) {$U_i^+$};
\end{tikzpicture}
\caption{The decompositions $U_i^+= X_i \cup U_i$ (left) and $U_i^+= X_i' \cup U_i'$ (right).  The parts of $e(\Sigma)$ comprising $C_i \subset X_i$ and $C_i' \subset X_i'$ are bolded on the left and right respectively.  } 
\label{fig:twodoublepointcharts}
\end{figure}

We have two decompositions of $N$:
\[\left( N \setminus \bigsqcup_{i=1}^c \intt U_i^+ \right) \cup \left(\bigcup_{i=1}^c X_i \right) \cup\left( \bigcup _{i=1}^c U_i\right)= N=\left( N \setminus \bigsqcup_{i=1}^c  \intt U_i^+\right) \cup\left( \bigcup_{i=1}^c X_i' \right) \cup \left(\bigcup _{i=1}^c U_i'\right).\]
We define the required homeomorphism $N \to N$ to be the identity on $N \smallsetminus \sqcup_{i=1}^c \intt U_i^+$,  and the homeomorphism~$(\psi_i')^{-1} \circ \psi_i \colon U_i \to U_i'$ on $U_i$.
In order to extend these assignments to a homeomorphism~$F \colon N \to N$, it therefore suffices to show that there are homeomorphisms
 $F_i \colon X_i \to X_i'$ 
 that extend  $\id \colon \partial U_i^+ \to \partial U_i^+$ and $(\psi_i')^{-1} \circ \psi_i \colon \partial U_i \to \partial U_i'$
 while sending $C_i$ to $C_i'$. 
Indeed $F$ will then automatically satisfy the two properties listed in the lemma.

Both $X_i$ and $X_i'$ are homeomorphic to $S^3 \times I$ and every homeomorphism of $S^3$ is isotopic to the identity, so there exists a homeomorphism $\zeta_i \colon X_i \to X_i'$ that extends $\id \colon \partial U_i^+ \to \partial U_i^+$ and~$(\psi_i')^{-1} \circ \psi_i \colon \partial U_i \to \partial U_i'$. 
Thus $C_i$ and~$\zeta_i^{-1}(C_i')$ are two concordances in~$X_i\cong S^3 \times I$ going from the Hopf link~$\partial e(\Sigma) \cap \partial U_i^+$ in~$\partial U_i^+ \cong S^3 \times \{1\}$ to the Hopf link $\partial e(\Sigma) \cap \partial U_i$ in~$\partial U_i \cong S^3 \times \{0\}$.
In what follows, we write $\mathcal{H} \subset S^3$ for the Hopf link.

\begin{claim*}
We claim that both $C_i \subset X_i$ and $C_i' \subset X_i'$ (and hence $\zeta_i^{-1}(C_i') \subset X_i$) have exteriors with fundamental group isomorphic to $\Z^2$. 
\end{claim*}
\begin{proof}
The homeomorphism~$\psi_i^+ \colon U_i^+ \to D^2_{1+\delta} \times D^2_{1+\delta}$ restricts to a homeomorphism of pairs
\begin{align*}
X_i &\to (D^2_{1+\delta} \times D^2_{1+ \delta}) \smallsetminus (D^2 \times D^2) \\
C_i &\to  (D^2_{1+ \delta} \smallsetminus D^2) \times \{0\} \cup \{0\} \times (D^2_{1+ \delta} \smallsetminus D^2).
 \end{align*}
Thus $(X_i, C_i)$ is homeomorphic to the trivial concordance~$(S^3 \times I, \mathcal{H} \times I)$,  and so $\pi_1(X_i \setminus C_i) \cong \Z^2$.

Next we turn to the concordance exterior $X_i'\setminus C_i'$ and calculate $\pi_1(X_i' \setminus C_i').$
Choose $\epsilon>0$ such that $U_i^{\epsilon}:=(\psi_i)^{-1}(D^2_\epsilon \times D^2_ \epsilon)$ is contained within~$U_i'$ and set
$$X_i^{\epsilon}:= U_i' \smallsetminus \intt\,  U_i^{\epsilon} \quad \text{ and } \quad C_i^\epsilon:= e(\Sigma) \cap X_i^{\epsilon},$$
as illustrated in Figure~\ref{fig:gluingtwoconcordances}.  
\begin{figure}[h!]
\centering
\begin{tikzpicture}
\node[anchor=south west,inner sep=0] at (0,0){\includegraphics[height=5cm]{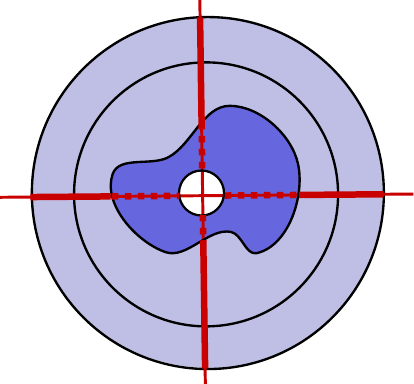}};
\node at (0, 2.1){$e(\Sigma)$};
\node at (2.95,1.2) {$C_i'$};
\node at (2,1.4) {$U_i'$};
\node at (3.25, 2.2){$C_i^{\epsilon}$};
\node at (3.25, 3){$X_i^{\epsilon}$};
\end{tikzpicture}
\caption{The concordances $C_i' \subset X_i'$ (drawn in a heavier line) and $C_i^\epsilon \subset X_i^{\epsilon}$ (drawn in a heavier dashed line). } 
\label{fig:gluingtwoconcordances}
\end{figure}

Observe that $C_i^\epsilon \subset X_i^{\epsilon} \cong S^3 \times I$ is a concordance, and an analogous argument to the one given above for $(X_i,C_i)$ shows that 
\begin{equation}
\label{eq:StackConcordanceExterior}
(X_i' \cup X_i^\epsilon, C_i' \cup C_i^\epsilon)= (U_i^+ \smallsetminus \intt  U_i^{\epsilon},  e(\Sigma) \cap (U_i^+ \smallsetminus \intt U_i^{\epsilon})) \cong (S^3 \times I, \mathcal{H} \times I).
\end{equation}
Apply the van Kampen theorem to the decomposition
 \[( X_i' \cup X_i^\epsilon \smallsetminus \nu( C_i' \cup C_i^\epsilon))
 = (X_i' \smallsetminus \nu(C_i')) \cup_{ \partial U_i' \smallsetminus \nu ( e(\Sigma) \cap \partial U_i')}
 (X_i^{\epsilon} \smallsetminus \nu(C_i^\epsilon))\]
to obtain a push-out diagram 
 \[\begin{tikzcd}
 &  \pi_1(X_i' \smallsetminus \nu(C_i'))  \arrow{rd}{j_1} \\
\pi_1(\partial U_i' \smallsetminus \nu ( e(\Sigma) \cap \partial U_i')) \cong\Z^2 \arrow{ru}{i_1}  \arrow{rd}{i_2} & & \pi_1( X_i' \cup X_i^\epsilon \smallsetminus \nu( C_i' \cup C_i^\epsilon))  \cong \Z^2. \\
&\pi_1(X_i^{\epsilon} \smallsetminus \nu(C_i^\epsilon)) \arrow{ru}{j_2}
\end{tikzcd}
\]
Since $\partial U_i' \smallsetminus \nu ( e(\Sigma) \cap \partial U_i')$ is a Hopf link exterior,~$\pi_1( \partial U_i' \smallsetminus \nu ( e(\Sigma) \cap \partial U_i'))$ is generated by meridians of~$e(\Sigma) \cap \partial U_i'$.
Since we noted in~\eqref{eq:StackConcordanceExterior} that $(X_i' \cup X_i^\epsilon, C_i' \cup C_i^\epsilon)$ is the exterior of a trivial concordance from the Hopf link to itself, we deduce that~$\pi_1( X_i' \cup X_\epsilon \smallsetminus \nu( C_i' \cup C_i^\epsilon))$ is generated by meridians of~$C_i' \cup C_i^{\epsilon}$,  and thus the maps~$j_1 \circ i_1$ and~$j_2 \circ i_2$ are isomorphisms. 
It follows that~$i_1$ and~$i_2$ are injective and since we have a free product with amalgamation,~$j_1$ and~$j_2$ are injective as well~\cite[Example~1B.12]{Hatcher}.
Since the $j_i$ take meridians to meridians, they are also surjective and hence isomorphisms.
This concludes the proof of the claim.
\end{proof}
Since $C_i,\zeta_i^{-1}(C_i') \subset X_i \cong S^3 \times [0,1]$ are both $\Z^2$-concordances,  from the Hopf link~$\partial e(\Sigma) \cap \partial U_i^+$ in~$\partial U_i^+ \cong S^3 \times \{1\}$ to the Hopf link $\partial e(\Sigma) \cap \partial U_i$ in~$\partial U_i \cong S^3 \times \{0\}$,  Theorem~\ref{thm:HopfConcordancesEquivalent} ensures that there is a homeomorphism $\rho_i \colon (X_i, C_i) \to (X_i, \zeta_i^{-1}(C_i'))$ which is the identity on $\partial X_i$.  
It follows that the homeomorphism~$F_i:= \zeta_i \circ \rho_i \colon X_i \to X_i'$ extends $\id \colon \partial U_i^+ \to \partial U_i^+$ and $(\psi_i')^{-1} \circ \psi_i \colon \partial U_i \to \partial U_i'$ and satisfies~$F_i(C_i)=C_i'$. 
As described above, combining the $F_i$ with the $\psi_i' \circ \psi_i$ and $\id_{N \smallsetminus \cup_{i=1}^c \intt U_i^+}$ leads to the required homeomorphism $F \colon N \to N.$
\end{proof}

We now prove the main result of this section, Proposition~\ref{prop:ImmIsImmalpha},  which states that the inclusion of  $\operatorname{Imm}_{\alpha}(g;c_+,c_-)^0(K,N)$ in $
\operatorname{Imm}(g;c_+,c_-)^0(K,N)$ induces a bijection 
$$\Phi \colon \operatorname{Imm}_{\alpha}(g;c_+,c_-)^0(K,N)/\Homeo_\alpha(\Sigma,\partial) \to
\operatorname{Imm}(g;c_+,c_-)^0(K,N)/\Homeo(\Sigma,\partial).$$
Here, recall that $\alpha \colon \bigsqcup_{2c} D^2 \hookrightarrow \Sigma$ is a fixed embedding.

\begin{proof}[Proof of Proposition~\ref{prop:ImmIsImmalpha}]
The well-definedness of $\Phi$ is immediate; the work is to show bijectivity. 

We begin by showing that $\Phi$ is surjective.  
Let $e' \colon \Sigma \looparrowright N$ be an immersion representing an element of $\operatorname{Imm}(g;c_+,c_-)^0(K,N)$,  which is compatible with some $\alpha' \colon \bigsqcup_{k=1}^{2c} D^2 \hookrightarrow \Sigma$.  We will define a homeomorphism $\theta \colon \Sigma \to \Sigma$ such that $e' \circ \theta$ is compatible with $\alpha$.  
First, we have a homeomorphism~$\alpha_k' \circ (\alpha_k)^{-1}  \colon \alpha_k(D^2) \to \alpha_k'(D^2)$ for all $k=1,  \dots,  2c$.  Now consider the punctured surfaces~$\Sigma^\circ= \Sigma \smallsetminus \bigcup_{k=1}^{2c} \intt \alpha_k(D^2)$ and $(\Sigma^\circ)'=  \Sigma \smallsetminus \bigcup_{k=1}^{2c} \intt \alpha_k'(D^2)$. 
It suffices to show that there exists a homeomorphism $\theta^\circ \colon \Sigma^ \circ \to (\Sigma^\circ)'$ extending the~$\alpha_k' \circ (\alpha_k)^{-1} \colon \partial \alpha_k(D^2)  \to \partial \alpha_k'(D^2)$ and the identity map on $\partial \Sigma$.  But this follows from the classification of surfaces and the fact that, up to isotopy,  there is only one orientation preserving homeomorphism of  $S^1$. 

We now show that $\Phi$ is injective.  
Let $e, e' \colon \Sigma \looparrowright N$ be elements of $\operatorname{Imm}_{\alpha}(g;c_+,c_-)^0(K,N)$ with the same image under $\Phi$.  That is,  both $e$ and $e'$ are compatible with $\alpha$ and there exist rel. boundary homeomorphisms $F \colon N \to N$ and  $\theta \colon \Sigma \to \Sigma$ that satisfy $F \circ e' = e \circ \theta$.  
We need to show that there exist rel. boundary homeomorphisms $F' \colon N \to N$ and~$\theta' \colon \Sigma \to \Sigma$ such that~$F'  \circ e' = e \circ \theta'$ and $\theta' \circ \alpha= \alpha$.

Fix double point charts $\{ (U_p,\psi_p) \}_p$ for $(e,\alpha)$ and $\{ (U_p',\psi_p')\}_p$ for $(e',\alpha)$.
\begin{claim*}
\label{claim:F'}
There is a homeomorphism $F' \colon N \to N$ such that 
$$F'(e'(\Sigma))=e(\Sigma) \quad \text{ and } \quad \psi_p'=\psi_p \circ F'|_{U_p'}.$$
\end{claim*}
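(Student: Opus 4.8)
The plan is to build $F'$ by piecing together the homeomorphism $F$ (which we are handed by hypothesis) with a correction near the double points supplied by Lemma~\ref{lem:Homeo}. First I would observe that since $F\circ e'=e\circ\theta$, the homeomorphism $F$ already sends $e'(\Sigma)$ to $e(\Sigma)$; the only defect is that it need not carry the double point charts $\{(U_p',\psi_p')\}$ for $(e',\alpha)$ to the prescribed charts $\{(U_p,\psi_p)\}$ for $(e,\alpha)$. However, $F$ \emph{does} carry $\{(U_p',\psi_p')\}$ to \emph{some} family of double point charts for $(e,\alpha'')$, where $\alpha''$ is obtained from $\alpha$ by postcomposing with the self-homeomorphism $\theta^{-1}$ on each disk; more precisely, setting $V_p:=F(U_p')$ and $\xi_p:=\psi_p'\circ F^{-1}|_{V_p}$, the pairs $\{(V_p,\xi_p)\}$ form a family of double point charts for $(e,\theta\circ\alpha)$ (using that $F\circ e'=e\circ\theta$ and that $e'$ is compatible with $\alpha$). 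Here I would need the elementary fact, already implicit in the Remark following Definition~\ref{def:Compatible}, that a family of double point charts determines the embedding it is compatible with; a small computation with Item~\eqref{item:Compatible3} gives $\theta\circ\alpha_k$ in terms of $\xi_p$ and $e$.

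Next I would apply Lemma~\ref{lem:Homeo} to the $\Z$-immersion $e$, with the two families of double point charts $\{(U_p,\psi_p)\}$ (for $(e,\alpha)$) and $\{(V_p,\xi_p)\}$ (for $(e,\theta\circ\alpha)$). The lemma produces a rel.\ boundary homeomorphism $G\colon N\to N$ with $G(e(\Sigma))=e(\Sigma)$ and $G|_{U_p}=\xi_p^{-1}\circ\psi_p\colon U_p\xrightarrow{\cong}V_p$. I would then set $F':=G^{-1}\circ F$. This is a rel.\ boundary homeomorphism of $N$ (both $G$ and $F$ are), it satisfies $F'(e'(\Sigma))=G^{-1}(F(e'(\Sigma)))=G^{-1}(e(\Sigma))=e(\Sigma)$, and on $U_p'$ we compute $F'|_{U_p'}=G^{-1}\circ F|_{U_p'}$, whose image is $G^{-1}(V_p)=U_p$, with $\psi_p\circ F'|_{U_p'}=\psi_p\circ G^{-1}\circ F|_{U_p'}=\xi_p\circ F|_{U_p'}=\psi_p'\circ F^{-1}\circ F|_{U_p'}=\psi_p'$, as required. (One should double-check the direction of the identification in Lemma~\ref{lem:Homeo}: it is stated as $F|_{U_p}=(\psi_i')^{-1}\circ\psi_i$, so matching conventions may swap the roles of the two chart families or replace $G$ by $G^{-1}$, but this is purely bookkeeping.)

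The main obstacle I anticipate is purely organizational rather than conceptual: verifying carefully that $\{(V_p,\xi_p)\}$ genuinely satisfies all three conditions of Definition~\ref{def:Compatible} as a family of double point charts for $(e,\theta\circ\alpha)$ — in particular that the sign in Item~\eqref{item:Compatible3} is preserved (it is, since $F$ is orientation-preserving and the sign of a double point is an invariant of the immersed surface) and that the disjointness and the intersection condition Item~\eqref{item:Compatible2} transport correctly under $F$. Once this is in place, the construction of $F'$ is immediate, and the claim follows. I would then use this Claim to finish the proof of Proposition~\ref{prop:ImmIsImmalpha}: with $F'$ in hand, Lemma~\ref{lem:EquivalenceImmersionvsSurface} (or a direct argument mimicking its proof, this time keeping track of $\alpha$) produces the desired $\theta'\colon\Sigma\to\Sigma$ with $F'\circ e'=e\circ\theta'$, and the equation $\psi_p=\psi_p\circ F'|_{U_p'}\circ(\text{chart map})$ forces $\theta'\circ\alpha=\alpha$, establishing injectivity of $\Phi$.
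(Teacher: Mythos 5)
Your proof is correct and follows essentially the same route as the paper: transport the charts $\{(U_p',\psi_p')\}$ by $F$, apply Lemma~\ref{lem:Homeo} to the two resulting families for $e$, and compose the correcting homeomorphism with $F$ (your $G^{-1}\circ F$ is the paper's $H\circ F$ with the roles of the two families swapped in the lemma, which is indeed just bookkeeping). If anything you are slightly more careful than the paper here, since the transported family is a family of double point charts for $(e,\theta\circ\alpha)$ rather than for $(e,\alpha)$ — which is exactly why Lemma~\ref{lem:Homeo} is stated for two possibly different embeddings.
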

\begin{proof}
Verify that $\{ (F(U_p'), \psi_p' \circ F^{-1}|_{F(U_p')})\}_p$ is a family of double point charts for $(e,\alpha)$.
Apply Lemma~\ref{lem:Homeo} to $\{ (U_p,\psi_p) \}_p$ and $\{ (F(U_p'), \psi_p' \circ F^{-1}) \}_p$ to obtain a homeomorphism $H \colon N \to N$ with $H(e(\Sigma))=e(\Sigma)$ and 
$H|_{F(U_p')}
=\psi_p^{-1}  \circ (\psi_p' \circ F^{-1}|_{F(U_p')}).$
We can now complete the proof of the claim by verifying that the homeomorphism $F':=H \circ F \colon N \to N$ satisfies the required properties: namely~$F'( e'(\Sigma))=H( F( e'(\Sigma)))=H (e(\Sigma))=e(\Sigma)$ and~$\psi_p \circ F'|_{U_p'}
=\psi_p \circ H \circ F|_{U_p'}
=\psi_p'.$
\end{proof}

Apply Proposition~\ref{prop:SurfQuotientOfImm} to obtain a homeomorphism $\theta' \colon \Sigma \to \Sigma$ with $F' \circ e'=e\circ \theta'.$

\begin{claim*}
The homeomorphism $\theta'$ satisfies~$\alpha=\theta' \circ \alpha$.
\end{claim*}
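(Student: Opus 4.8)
The claim to prove is that the homeomorphism $\theta' \colon \Sigma \to \Sigma$ (obtained from applying Proposition~\ref{prop:SurfQuotientOfImm} to the homeomorphism $F'$ from the previous claim, so that $F' \circ e' = e \circ \theta'$) satisfies $\theta' \circ \alpha = \alpha$. The plan is to exploit the control provided by the double point charts $\{(U_p, \psi_p)\}_p$ for $(e,\alpha)$ and $\{(U_p', \psi_p')\}_p$ for $(e',\alpha)$, together with the key property of $F'$ extracted in the previous claim, namely $F'(e'(\Sigma)) = e(\Sigma)$ and $\psi_p' = \psi_p \circ F'|_{U_p'}$. The essential point is that both $e$ and $e'$ are \emph{compatible} with the \emph{same} embedding $\alpha$, and compatibility (Item~\ref{item:Compatible3} of Definition~\ref{def:Compatible}) pins down the restriction of each immersion to the disks $\alpha_k(D^2)$ in terms of the respective double point charts.

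First I would recall from Remark~\ref{rem:SigmaInW} (or directly from Definition~\ref{def:Compatible}) that for each $k = 2i-1, 2i$, compatibility of $e$ with $\alpha$ gives $\psi_i \circ e \circ \alpha_k = \id$ (onto $D^2 \times \{0\}$) or $\pm\id$ (onto $\{0\} \times D^2$), according to the parity of $k$, and similarly $\psi_i' \circ e' \circ \alpha_k$ equals the same standard map. Thus on the disk $\alpha_k(D^2)$ we have, using $\psi_i' = \psi_i \circ F'|_{U_i'}$,
\begin{align*}
\psi_i \circ (e \circ \theta')|_{\alpha_k(D^2) \cap (\theta')^{-1}(\cdot)}
\quad\text{versus}\quad
\psi_i \circ F' \circ e' \circ \alpha_k = \psi_i' \circ e' \circ \alpha_k.
\end{align*}
Since $F' \circ e' = e \circ \theta'$, we get $\psi_i \circ e \circ \theta' \circ \alpha_k = \psi_i' \circ e' \circ \alpha_k = \psi_i \circ e \circ \alpha_k$, hence $e \circ (\theta' \circ \alpha_k) = e \circ \alpha_k$ as maps $D^2 \to N$. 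Away from the double point preimages, $e$ is injective, so this forces $\theta' \circ \alpha_k = \alpha_k$ on the interior of $D^2$, and then by continuity on all of $D^2$. (One must check that $\theta'$ maps $\alpha_k(D^2)$ into the region where $e$ is injective, or more precisely into $\alpha_k(D^2)$ itself; this follows because $\theta'(\alpha_k(D^2))$ must be the unique disk in $\bigcup (e)^{-1}(e(\alpha_k(D^2)))$ carried by $e$ to $e(\alpha_k(D^2))$ in the standard way dictated by the chart, using that $U_i$ meets $e(\Sigma)$ in exactly $e(\alpha_{2i-1}(D^2)) \cup e(\alpha_{2i}(D^2))$.)

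I would then assemble these pieces: ranging over all $k = 1, \dots, 2c$ gives $\theta' \circ \alpha_k = \alpha_k$ for every $k$, which is precisely the statement $\theta' \circ \alpha = \alpha$, i.e. $\theta' \in \Homeo_\alpha(\Sigma, \partial)$. This completes the claim, and with it the proof that $\Phi$ is injective (take $F'$ and $\theta'$ as the required rel.\ boundary homeomorphisms witnessing that $e$ and $e'$ agree in $\operatorname{Imm}_\alpha(g;c_+,c_-)^0(K,N)/\Homeo_\alpha(\Sigma,\partial)$), and hence of Proposition~\ref{prop:ImmIsImmalpha}. The main obstacle I anticipate is the bookkeeping in the parenthetical verification above: one needs to be careful that $\theta'$ genuinely sends $\alpha_k(D^2)$ to $\alpha_k(D^2)$ (rather than to some other preimage disk), which requires invoking both the local form of $e$ near its double points (guaranteed by the chart $\psi_i$) and the fact that $F'$ was built in the previous claim precisely so as to intertwine the charts $\psi_p'$ and $\psi_p$; once that compatibility of charts is in hand the identity $\theta' \circ \alpha_k = \alpha_k$ is forced, but stating it cleanly requires tracking which of the two sheets through $y_i$ is being discussed.
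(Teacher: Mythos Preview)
Your proposal is correct and follows essentially the same route as the paper: both combine $F'\circ e'=e\circ\theta'$ with the chart-intertwining property $\psi_p'=\psi_p\circ F'|_{U_p'}$ and the compatibility condition $\psi_p\circ e\circ\alpha_k=\psi_p'\circ e'\circ\alpha_k=\pm\id_{D^2}$ to force $\theta'\circ\alpha_k=\alpha_k$. The only cosmetic difference is that the paper rewrites the identity as $\pm\id_{D^2}=(\pm\id_{D^2})\circ(\alpha_k^{-1}\circ\theta'\circ\alpha_k)$ and reads off the conclusion, whereas you phrase the same cancellation as ``$e\circ\theta'\circ\alpha_k=e\circ\alpha_k$ and $e$ is injective on the relevant sheet''; the paper also asserts (and you correctly flag as needing verification) the intermediate fact $\theta'(\alpha_k(D^2))=\alpha_k(D^2)$, which in both arguments is exactly the two-sheet bookkeeping you describe.
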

\begin{proof}
The equality $F' \circ e'=e \circ \theta'$ implies that for each disc $D^2$ in $\bigsqcup_{k=1}^{2c} D^2$ we have
 \[\psi_p \circ F' \circ e' \circ \alpha|_{D^2}=\psi_p \circ e \circ \theta' \circ \alpha|_{D^2}.\]
Again using~$F' \circ e'=e\circ \theta',$ one verifies that $\alpha(D^2) = \theta'(\alpha(D^2))$, 
and so we rewrite the above as
$$ (\psi_p \circ F' \circ (\psi_p')^{-1}) \circ (\psi_p' \circ e' \circ \alpha|_{D^2})=(\psi_p \circ e \circ \alpha|_{D^2}) \circ (\alpha^{-1}|_{\alpha(D^2)} \circ \theta'\circ \alpha|_{D^2}).$$
Here we implicitly use on the lefthand side that  $e' \circ \alpha|_{D^2}$ takes values in~$U_p'$.

The previous claim tells us that $\psi_p \circ F' \circ (\psi_p')^{-1}=\id_{D^2 \times D^2}$,  and combining this with the definition of a double point chart applied to $(e, \psi_p)$ and $(e', \psi_p')$ lets us rewrite our equation as
%
$$\pm \id_{D^2}=(\pm \id_{D^2}) \circ (\alpha^{-1}|_{\alpha(D^2)} \circ \theta' \circ \alpha|_{D^2}).$$
So $\alpha|_{D^2}= \theta' \circ \alpha|_{D^2}$ and we have established the claim.
\end{proof}
The existence of the homeomorphisms~$F' \colon N \to N $ and $\theta' \colon \Sigma \to \Sigma$ with~$F' \circ e'= e \circ \theta'$ and~$\alpha=\theta' \circ \alpha$ ensures that~$e$ and~$e'$ agree in~$\operatorname{Imm}^0_\alpha(g;c_+,c_-)(K,N)/\Homeo_\alpha(\Sigma,\partial)$.
\end{proof}

\begin{remark}
\label{rem:ClosedImmSurfAdapt}
We record some variants on the content of this chapter. 
\begin{itemize}
\item 
We remark for later that, should we restrict $\operatorname{Surf}$,  $\operatorname{Imm}$,  and $\operatorname{Imm}_\alpha$ to  immersed/ immersions of surfaces whose complements have certain properties,  such as having a certain equivariant intersection form,  the proofs of Propositions~\ref{prop:SurfQuotientOfImm} and~\ref{prop:ImmIsImmalpha} go through verbatim to establish the corresponding bijections. 
\item We also note that the proofs of Propositions~\ref{prop:SurfQuotientOfImm} and~\ref{prop:ImmIsImmalpha} can be adapted to closed surfaces.
Given a closed, simply-connected $4$-manifold $X$,  one proceeds as in Notation~\ref{not:Imm0Surf0} to define the sets $\operatorname{Imm}_\alpha(g;c_+,c_-)(X)$ and $\operatorname{Surf}(g;c_+,c_-)(X)$ so that mapping an immersion to its image yields a bijection
$$\operatorname{Imm}_\alpha(g;c_+,c_-)(X)/\Homeo_\alpha(\Sigma) \xrightarrow{\cong} \operatorname{Surf}(g;c_+,c_-)(X).$$ 
Again, this can be refined to include the constraint that the exteriors of the immersed surfaces have a fixed equivariant intersection form.
\end{itemize}
\end{remark}

\subsection{Normal bundles}
\label{sub:NormalBundles}

This short section fixes some conventions concerning normal bundles of immersions and tubular neighborhouds of immersed surfaces.

\begin{notation*}
For the first part of this section,  $\Sigma$ denotes a genus $g$ surface that is either closed or has a single boundary component.
\end{notation*}

We use the definition of a normal bundle of an immersion from~\cite[Definition~2.2]{KasprowskiPowellRayTeichnerEmbedding}.
\begin{definition}
\label{def:NormalBundle}
A \emph{normal bundle} of an immersion $e \colon \Sigma \looparrowright N$ is a pair $(\nu(e),\iota)$ consisting of a rank two vector bundle~$\pi \colon \nu(e) \to \Sigma$ together with an immersion $\iota \colon \nu(e) \looparrowright N$ that restricts to $e$ on the zero section $s_0$ i.e. $\iota \circ s_0=e$ and such that each point $p \in \Sigma$ has a neighborhood $U$ such that~$\iota|_{\pi^{-1}(U)}$ is an embedding.
\end{definition}

As is customary, we will refer to the vector bundle~$\nu(e)$ as the normal bundle of~$e$ but it should be understood that an immersion is also present in the background.
We also write~$\overline{\nu}(e)$ for the disk bundle of~$e$ and~$\overline{\nu}(e(\Sigma)):=\iota(\overline{\nu}(e))$ for the closed tubular neighbhorhood of $e(\Sigma).$
The normal bundle~$\nu(e)$ is homeomorphic to~$\iota(\nu(e))$ if and only if~$e$ is an embedding.
In general,  neighbhorhoods of immersed surfaces are obtained by plumbing the disk bundle over the surface.
We will describe the outcome in more detail in Section~\ref{sec:BoundarIdentifications} but we introduce some relevant terminology here.

\begin{definition}
\label{def:UAdapted}
Let $\alpha \colon \bigsqcup_{2c} D^2 \hookrightarrow \Sigma$ be an embedding,  let $e \colon \Sigma \looparrowright N$ be an $\alpha$-compatible immersion,  and let~$\mathcal{U}:=\{(U_i,\psi_i)\}_{i=1}^c$ be a family of double point charts for $(e,\alpha)$. 
A normal bundle~$(\nu(e),\iota \colon \nu(e) \looparrowright N)$ is \emph{$\mathcal{U}$-adapted} if it satisfies the following conditions:
\begin{enumerate}
\item over the $B_k:= \alpha_k(D^2) \subset \Sigma$, the total space of the vector bundle is 
$$ \nu(e|_{B_k})=B_k \times \R^2$$
and the bundle-projection map $\nu(e|_{B_k})=B_k \times \R^2 \to B_k$ is the projection~$(x,y) \mapsto x.$
\item for each $i=1, \dots, c$,  the immersion $\iota \colon \overline{\nu}(e) \looparrowright N$ restricts to
embeddings 
\begin{align*}
\iota \colon  \overline{\nu}(e|_{B_{2i-1}})&=B_{2i-1} \times D^2 \hookrightarrow  U_i \\
\iota \colon  \overline{\nu}(e|_{B_{2i}})&=B_{2i} \times D^2 \hookrightarrow  U_i
\end{align*}
that,  for any $(z,w) \in D^2 \times D^2$, satisfy
\begin{align}\label{eqn:shrink}
\left( \psi_i \circ \iota \circ (\alpha_k \times \id_{D^2})\right)(z,w)
&= \shrink_k (z,w)=
\begin{cases} (z,  w/2)  & \text{if }k=2i-1 \\
\pm (w/2, z)  &\text{if } k=2i, \end{cases}
\end{align}
where the $\pm$ is determined by the sign of the $i$-th self-intersection of $e(\Sigma)$. 
\end{enumerate}
\end{definition}

Just as we required our immersions to be 	`standard' in neighborhoods of double points in Definition~\ref{def:Compatible},  the two conditions of Definition~\ref{def:UAdapted} ensure that we only work with normal bundles that are given by a standard model in the neighborhood of double points.  Away from double points,  $e$ is an embedding and so we will have control over its normal bundle coming from uniqueness results for normal bundles of embeddings.

The next proposition ensures that one can always choose a normal bundle $\nu(e)$ and an immersion~$\iota \colon \nu(e) \looparrowright N$ that are suitably compatible with the double point charts.

\begin{proposition}
\label{prop:NormalBundle}
Let $\alpha \colon \bigsqcup_{2c} D^2 \hookrightarrow \Sigma$ be an embedding,  let $e \colon \Sigma \looparrowright N$ be an $\alpha$-compatible immersion, and let~$\mathcal{U}:=\{(U_i,\psi_i)\}_{i=1}^c$ be a family of double point charts for $(e,\alpha)$.
When $\partial \Sigma \neq \emptyset$,  let $(V,  \iota_V\colon V \hookrightarrow \partial N)$ be an arbitrary normal bundle for $e|_{\partial \Sigma}$.
\begin{enumerate}
\item There exists a normal bundle~$(\nu(e),\iota \colon \nu(e) \looparrowright N)$  that is $\mathcal{U}$-adapted and extends $(V,\iota_V)$. 
\item  If $(\nu(e),\iota)$ is an $\mathcal{U}$-adapted normal bundle, then pairs of consecutive disks~$B_{2i-1} \cup B_{2i}$ are taken under~$\iota$ to the plumbing of two disks in $N$: for~$x=(x_1,x_2) \in \overline{\nu}(e|_{B_{2i-1}})$ and~$y=(y_1,y_2) \in \overline{\nu}(e|_{B_{2i}}),$
\begin{align*}
\iota(x)=\iota(y) \ \ &\text{ iff } \ \ 
\shrink_{2i-1} \circ(\alpha_{2i-1}^{-1} \times \id_{D^2})(x)=\shrink_{2i}\circ(\alpha_{2i}^{-1} \times \id_{D^2})(y)\\
&\text{ iff } \ \ 
 \left(\alpha_{2i-1}^{-1}(x_1),  \frac{x_2}{2}\right)=\pm \left(\frac{y_2}{2},  \alpha_{2i}^{-1}(y_1)\right),
\end{align*}
where as usual the $\pm$ sign is determined by the sign of the $i$-th intersection point of $e(\Sigma)$. 
The notation $\alpha_k^{-1}$ is legitimate since $\overline{\nu}(e|_{B_k})=B_k\times D^2$ and $\alpha_k \colon D^2 \to B_k \subset \intt(\Sigma)$ is an embedding.

In particular,  the centers of the $B_k$ are carried to the double points of~$e$.
\end{enumerate}
\end{proposition}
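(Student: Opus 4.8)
The plan is to prove the two items largely by unwinding the definitions in Section~\ref{sec:PlumbedManifolds}, using the explicit models $E$ and $W$ from Constructions~\ref{cons:E} and~\ref{cons:W} as the template, and the uniqueness of normal bundles of embeddings (in the topological category, due to Freedman--Quinn) to handle what happens away from the double points. The key observation is that an $\mathcal{U}$-adapted normal bundle is, by Definition~\ref{def:UAdapted}, already pinned down over each $B_k$ to be $B_k\times\R^2$ with the projection map, and the immersion $\iota$ restricted to the disk bundles over the $B_k$ is pinned down by equation~\eqref{eqn:shrink} to be exactly $\psi_i^{-1}\circ\shrink_k\circ(\alpha_k^{-1}\times\id_{D^2})$. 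So there is essentially nothing left to choose near the double points, and the content of item (1) is just that the standard model over the $B_k$ can be extended over $\Sigma^\circ$, where $e$ is an embedding.

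For item (1), first I would invoke existence and essential uniqueness of topological normal bundles for the locally flat embedding $e|_{\Sigma^\circ}\colon\Sigma^\circ\hookrightarrow N\smallsetminus\bigcup_i\intt(U_i)$ to get a rank-two bundle $(\nu(e|_{\Sigma^\circ}),\iota^\circ)$; when $\partial\Sigma\neq\emptyset$ one can arrange it to restrict to the prescribed $(V,\iota_V)$ over $\partial\Sigma$, using the relative version of normal bundle uniqueness (cf.\ the argument in~\cite[Section 2]{KasprowskiPowellRayTeichnerEmbedding}). Over each circle $\partial B_k\subset\Sigma^\circ$ the bundle $\nu(e|_{\Sigma^\circ})$ must agree (up to isotopy of trivializations) with the already-fixed product bundle $B_k\times\R^2$ coming from the double point chart, since $\psi_i$ carries a collar of $e(\partial B_k)$ in $e(\Sigma)$ to a standard picture; here one uses that an orientation-preserving self-homeomorphism of $S^1$ is isotopic to the identity, so any two trivializations of a rank-two bundle over $S^1$ are isotopic, to modify $\iota^\circ$ near $\bigcup_k\partial B_k$ so that it matches the $\shrink_k$-normalized model on the nose. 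Gluing the standard model over $\bigcup_k B_k\times D^2$ to the modified $\iota^\circ$ over $\Sigma^\circ$ along $\bigcup_k\partial B_k\times D^2$ produces a $\mathcal{U}$-adapted normal bundle $(\nu(e),\iota)$ extending $(V,\iota_V)$, exactly as in Construction~\ref{cons:E}.

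Item (2) is a direct computation from the definitions and requires no real argument: since $e$ is an embedding on $\Sigma^\circ$ and $\iota$ restricted to the complement of the $\overline{\nu}(e|_{B_k})$ is an embedding (the normal bundle of an embedding is embedded), the only coincidences $\iota(x)=\iota(y)$ with $x\neq y$ must occur with $x\in\overline{\nu}(e|_{B_{2i-1}})$ and $y\in\overline{\nu}(e|_{B_{2i}})$ mapping into the common chart $U_i$; applying $\psi_i$ and using~\eqref{eqn:shrink} turns the equation $\iota(x)=\iota(y)$ into $\shrink_{2i-1}(\alpha_{2i-1}^{-1}\times\id)(x)=\shrink_{2i}(\alpha_{2i}^{-1}\times\id)(y)$, which unwinds to the stated formula $(\alpha_{2i-1}^{-1}(x_1),x_2/2)=\pm(y_2/2,\alpha_{2i}^{-1}(y_1))$; setting $x_2=y_2=0$ and $\alpha_k^{-1}(x_1)=\alpha_k^{-1}(y_1)=0$ recovers that the centers of $B_{2i-1}$ and $B_{2i}$ are carried to the single double point $y_i=e(x_{2i-1})=e(x_{2i})$. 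The only mildly delicate point is checking that $\iota$ has no further self-intersections, i.e.\ that one cannot have $\iota(x)=\iota(y)$ with $x,y$ in fibers over distinct $B_k$'s belonging to different double points or with one of them in $\nu(e|_{\Sigma^\circ})$; this follows from Item~\eqref{item:Compatible1} of Definition~\ref{def:Compatible} (the $U_i$ are pairwise disjoint) together with $\overline{\nu}(e(\Sigma))$ being a small enough neighborhood that it meets each $U_i$ only in $\iota(\overline{\nu}(e|_{B_{2i-1}}))\cup\iota(\overline{\nu}(e|_{B_{2i}}))$, which one may arrange in the construction of item (1).

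\textbf{Main obstacle.} The only genuine work is in item (1), and specifically in the matching step: arranging that the chosen normal bundle over $\Sigma^\circ$ agrees \emph{exactly} (not merely up to isotopy) with the product model dictated by the double point charts along the boundary circles $\partial B_k$, and simultaneously with the prescribed $(V,\iota_V)$ over $\partial\Sigma$. This is where one must be careful to use the relative/parametrized form of topological normal bundle uniqueness and the triviality of $\pi_0$ of the orientation-preserving homeomorphism group of $S^1$; once the trivializations are isotoped to match on the nose the gluing is formal.
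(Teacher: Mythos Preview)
Your treatment of item~(2) is correct and essentially identical to the paper's: apply $\psi_i$ to both sides and use~\eqref{eqn:shrink}.

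For item~(1), however, there is a genuine error and a strategic difference from the paper. Your matching argument claims that ``any two trivializations of a rank-two bundle over $S^1$ are isotopic'' because orientation-preserving self-homeomorphisms of $S^1$ are isotopic to the identity. This is false: trivializations of a trivial oriented rank-two bundle over $S^1$ are classified by $\pi_1(GL_2^+(\R))\cong\pi_1(SO(2))\cong\Z$, not by $\pi_0(\Homeo^+(S^1))$. So the step where you modify $\iota^\circ$ near $\partial B_k$ to match the $\shrink_k$-model ``on the nose'' is unjustified as written; you would need a genuine uniqueness statement for normal bundles of embedded circles (with control on the ambient isotopy), not a trivialization count.

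The paper avoids this issue entirely by reversing the direction of the construction. Rather than building a normal bundle over $\Sigma^\circ$ first and then trying to match it to the model over the $B_k$, the paper starts by writing down the unique embeddings $\iota_k\colon B_k\times D^2\hookrightarrow U_i$ forced by~\eqref{eqn:shrink}, thickens each to an embedding of $B_k\times\R^2$ into $U_i$ (using the extra room in $D^2\times D^2$), and then invokes the relative \emph{existence} theorem for topological normal bundles (Freedman--Quinn, as packaged in~\cite{KasprowskiPowellRayTeichnerEmbedding}) to extend these local normal bundles over the $B_k$ together with the prescribed $(V,\iota_V)$ over $\partial\Sigma$ to a normal bundle over all of $\Sigma$. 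This way there is no matching problem: the model near the double points is built in first, and the extension theorem fills in the rest.
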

\begin{proof}
Since $\shrink_k$ and the~$\psi_i$ are homeomorphisms and $\alpha_{2i-1}$,  and $\alpha_{2i}$ are homeomorphisms onto their images, for $k=2i-1,2i$ there exists a unique embedding~$\iota_k \colon B_k \times D^2 \hookrightarrow U_i$  such that~\eqref{eqn:shrink} is satisfied. 
The image of $\psi_i \circ \iota_k \circ (\alpha_k \times \id_{D^2})$ is $D^2 \times D^2_{r \leq \frac{1}{2}}$ for $k=2i-1$ and $D^2_{r \leq \frac{1}{2}} \times D^2$ for~$k=2i$,  as illustrated on the left of Figure~\ref{fig:extendingnbhd}.  
In either case one can extend $\psi_i \circ \iota_k \circ (\alpha_k \times \id_{D^2})$ to an embedding $j_k$ of $D^2 \times \R^2$ into $D^2 \times D^2$,  as illustrated on the right of Figure~\ref{fig:extendingnbhd}.
\begin{figure}[h!]
\centering
\begin{tikzpicture}
\node[anchor=south west,inner sep=0] at (0,0){\includegraphics[height=2.5cm]{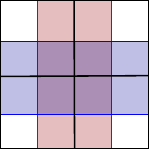}};
\node[anchor=south west,inner sep=0] at (4,0){\includegraphics[height=2.5cm]{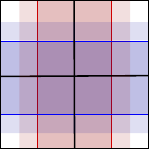}};
\end{tikzpicture}
\caption{ The embedding $\psi_i \circ \iota_k \circ (\alpha_k \times \id_{D^2})$ of $D^2 \times D^2$ into $D^2 \times D^2$ can be extended to an embedding $j_k$ of $D^2 \times \R^2$ into $D^2 \times D^2$.} 
\label{fig:extendingnbhd}
\end{figure}
The embedding~$B_k \times \mathbb{R}^2 \hookrightarrow U_i \subset N$ given by $\psi_i^{-1} \circ j_k \circ (\alpha_k \times \id_{D^2})^{-1}$ gives rise to a normal bundle for $B_k \subset \Sigma$.
As explained in~\cite{KasprowskiPowellRayTeichnerEmbedding},  work of Freedman-Quinn~\cite[Theorem~9.3]{FreedmanQuinn} ensures that we can extend these normal bundles (together with our chosen normal bundle for $e|_{\partial \Sigma}$,  when $\partial\Sigma \neq \emptyset$) to
a normal bundle over the whole of $\Sigma$.
By construction, this normal bundle is $\mathcal{U}$-adapted.

We now prove the second item. 
Observe that $\iota(x)= \iota(y)$ if and only if $\psi_i(\iota(x))= \psi_i(\iota(y))$,  since $\psi_i$ is a homeomorphism. 
We have from item (2)  of Definition~\ref{def:UAdapted} that
\begin{align*}
\psi_i(\iota(x))&= (\psi_i \circ \iota \circ (\alpha_{2i-1}\times \id_{D^2}))( \alpha_{2i-1}^{-1} (x_1),  x_2)=\shrink_{2i-1}( \alpha_{2i-1}^{-1} (x_1),  x_2)= ( \alpha_{2i-1}^{-1}(x_1),  x_2/2),\\
\psi_i(\iota(y))&= (\psi_i \circ \iota \circ (\alpha_{2i} \times \id_{D^2}))( \alpha_{2i}^{-1} (y_1),  y_2)=
\shrink_{2i}(\alpha_{2i}^{-1}(y_1), y_2)=\pm (y_2/2,  \alpha_{2i}^{-1}(y_1)).
\end{align*}
So $\psi_i(\iota(x_1,x_2))= \psi_i(\iota(y_1, y_2))$ if and only if $(\alpha_{2i-1}^{-1} (x_1),  x_2/2)= \pm (y_2/2,  \alpha_{2i}^{-1}(y_1))$,  as desired. 
\end{proof}

\begin{notation}
From now on we assume that $\Sigma$ has a single nonempty boundary component,  in order to fix a normal bundle for $e|_{\partial \Sigma}$. 
\end{notation}


\begin{notation}[Bundle data related to $K \subset S^3$]\label{not:nbhdforK}
Given a knot $K \subset S^3$,  choose a normal bundle~$\nu(K)$ and,  as is customary in knot theory, identify it with its image in $S^3$.
This defines a tubular neighborhood~$K \subset \overline{\nu}(K) \subset S^3$ 
and an exterior $E_K= S^3 \setminus \intt(\overline{\nu}(K))$. 
The Seifert framing~$\fr_K \colon \nu(K) \to \partial \Sigma \times \R^2$ restricts to a homeomorphism
 $$\fr_K \colon \overline{\nu}(K) \to \partial \Sigma \times D^2$$
that sends $K$ to $\partial \Sigma \times \{0\}$,  $\mu_K$ to $\{\text{pt}\} \times \partial D^2$ and the Seifert longitude to $\partial \Sigma \times \{\text{pt}\}$. 
By restricting to the boundary,  we obtain 
\[ d_K \colon \partial E_K = \partial \overline{\nu}(K) \to \partial \Sigma \times S^1.\]
\end{notation}

\begin{construction}[A normal bundle for~$e|_{\partial \Sigma}$]\label{cons:normalbundleforboundary}
Fix a homeomorphism $h \colon \partial N \to S^3$. 
 By an abuse of notation,  we usually do not distinguish between  $K \subset S^3$ and $h^{-1}(K)= e(\partial \Sigma) \subset N$,  but we briefly do so here. 
We obtain a normal bundle for $e|_{\partial \Sigma}$ by taking $\nu(e|_{\partial \Sigma})= \partial \Sigma \times \R^2$,  and 
\[ \iota|_{\partial \Sigma} \colon \nu(e|_{\partial \Sigma})= \partial \Sigma \times \R^2 \xrightarrow{\fr_{K}^{-1}} \nu(K) \subset S^3 \xrightarrow{h^{-1}} \partial N.\]
\end{construction}

\section{Boundary identifications}
\label{sec:BoundarIdentifications}

Given an immersion $e \colon \Sigma \looparrowright N$ with $\partial e(\Sigma)=K$ together with some additional data that we will soon introduce,  Section~\ref{sub:Identifications} constructs a homeomorphism $W \cong \overline{\nu}(e|_{\Sigma^\circ})$.
This homeomorphism depends on a choice of framing of $\nu(e|_{\Sigma^\circ})$ and Section~\ref{sub:GoodFramings} describes the particular type of framing that we will use.

\begin{notation*}
We remind the reader that the manifolds $W,P,$ and $P_K$ depend on the genus $g$ of the surface~$\Sigma:=\Sigma_{g,1}$, integers $c_+,c_- \geq 0$ and a fixed embedding $\alpha \colon \bigsqcup_{2c} D^2 \hookrightarrow \operatorname{int}(\Sigma).$
\end{notation*}

\subsection{Identifications}
\label{sub:Identifications}

The goal is to show that the data from the previous section together with certain framings of $\nu(e|_{\Sigma^\circ})$ canonically determine homeomorphisms~$\overline{\nu}(e) \cong E$ and~$\overline{\nu}(e(\Sigma)) \cong W$.

\begin{notation}
\label{not:PuncturedEmbeds}
In addition to fixing $\Sigma$ and $\alpha$,  we fix the following. 
\begin{itemize}
\item An immersion $e \colon \Sigma \looparrowright N$ with $c=c_++c_-$ double points that is compatible with $\alpha$.
\item A family of double point charts $\mathcal{U}:=\{ (U_i,\psi_i) \}_{i=1}^c$ for $(e,\alpha)$ as in Definition~\ref{def:Compatible}.  
\item A normal bundle~$(\nu(e),\iota \colon \nu(e) \looparrowright N)$ that satisfies the properties listed in Proposition~\ref{prop:NormalBundle} with respect to the normal bundle for $\nu(e|_{\partial \Sigma})$ described in Construction~\ref{cons:normalbundleforboundary}. 
\end{itemize}
Set $N^\circ:=N\setminus \bigsqcup_{i=1}^c  \intt U_i$.  
The boundary $\partial N^\circ$ consists of~$c+1$ copies of~$S^3$.
We write
$$S:=e(\Sigma), \ \ \ \ S^\circ:=e(\Sigma^\circ), \ \ \ \ \overline{\nu}(S):=\iota(\overline{\nu}(e)), \ \ \ \
\overline{\nu}(S^\circ):=\iota(\overline{\nu}(e|_{\Sigma^\circ})).$$
A framing of~$\nu(e|_{\Sigma^\circ})$ determines a framing of~$\nu(S^\circ)$ and vice versa.
The same applies to sections.
\end{notation} 


\begin{remark}
We record two remarks on our set-up.
\begin{itemize}
\item The immersion~$\iota \colon \nu(e) \looparrowright N$ restricts to an embedding~$\nu(e|_{\Sigma^\circ}) \hookrightarrow N^\circ.$
\item The link~$e(\partial B_{2i-1} \cup \partial B_{2i})$ is a Hopf link in (a component of)~$\partial N^\circ$ whose sign agrees with the sign of the disks.
\end{itemize}
\end{remark}

We will consider framings of $\nu(e|_{\Sigma^\circ})$ that extend the following framing of the boundary.

\begin{construction}[The Seifert framing on $e|_{\partial \Sigma^\circ}$]
\label{cons:SeifertFraming}
We now describe a framing
$$\fr_\partial \colon \nu(e|_{\partial \Sigma^\circ}) \to \partial \Sigma^\circ \times~\R^2.$$
First, given a~$\pm$-labelled disk $B_k$,   we frame~$\nu(e|_{\partial B_k })$ using the composition
\begin{equation}
\label{eq:fr}
\fr_{\partial B_k} \colon  \nu(e|_{\partial B_k }) \xrightarrow{=}  \partial B_k  \times \R^2 \xrightarrow{\alpha_k ^{-1} \times \id_{\R^2}} (S^1 \times \R^2)_k   \xrightarrow{\eta^{\pm}} S^1 \times \R^2 \xrightarrow{\alpha_k \times \id_{\R^2}} \partial B_k \times \R^2.
\end{equation}
Identifying the link~$L_i:=\partial B_{2i-1} \cup \partial B_{2i}$ with its image in~a component of~$\partial N^\circ$, we argue that~$\fr_{\partial B_{2i-1}} \cup \fr_{\partial B_{2i}}$ gives $L_i$ the Seifert framing. 
Note that upon applying~$\iota \colon \nu(e) \looparrowright N$
the identity framing~$\nu(e|_{\partial B_k }) \xrightarrow{=}  \partial B_k \times \R^2$ gives the $0$-framing on each component of $L_i$,  since the framing of each component unknot is obtained by restricting the disk framing.
Now observe that under~$\iota$,  the $\overline{\nu}(e|_{B_k})$ are mapped into $U_i$, whose boundary has the opposite orientation from that of the corresponding component of~$\partial N^\circ$.
The definition of~$\eta^\pm$ implies that~$\fr_{\partial B_k}$ corresponds to the~$(\pm 1)$-framing on $L_i \subset \partial U_i$ which is the $(\mp 1)$ framing on $L_i \subset \partial N^\circ$.  This completes the argument,  since the Seifert framing on the components of the~$\pm$-Hopf link is~$(\mp 1)$.

Second,   we have that $\nu(e|_{\partial \Sigma})= \partial \Sigma \times \R^2$ (Construction~\ref{cons:normalbundleforboundary}), and we take the identity framing
\[\fr_{\partial \Sigma} \colon \nu(e|_{\partial \Sigma})= \partial \Sigma \times \R^2 \to \partial \Sigma \times \R^2.\]
Given a normal bundle $(\nu(e),\iota)$ that extends $e|_{\partial \Sigma}$,  upon applying $\iota \colon \nu(e) \looparrowright N$, the framing $\fr_{\partial \Sigma}$ gives the Seifert framing on $e(\partial \Sigma)= K$,  
essentially because of our choice of $\fr_K$ in Construction~\ref{not:nbhdforK} and the use of $\fr_K^{-1}$ in the definition of $\iota|_{\partial \Sigma}$ in Construction~\ref{cons:normalbundleforboundary}.

The $\{ \fr_{\partial B_k}\}_k$ together with~$\fr_{\partial \Sigma}$ lead to the required framing
$$\fr_\partial \colon \nu(e|_{\partial \Sigma^\circ}) \to \partial \Sigma^{\circ} \times \R^2.$$
\end{construction}

The next proposition justifies the fact,  mentioned in Section~\ref{sec:PlumbedManifolds}, that the manifold $W$ is homeomorphic to the neighborhood of an immersed surface.

\begin{proposition}
\label{prop:IdentifyW}
Let $e \colon \Sigma \looparrowright N$ be a $\Z$-immersion that is compatible with $\alpha$. 
\begin{enumerate}
\item Given a framing $\fr \colon \nu(e|_{\Sigma^\circ}) \to \Sigma^\circ \times \R^2$ that extends the framings $\{\fr_{\partial B_k}\}_k$ from~\eqref{eq:fr}, there exists a homeomorphism
$$\gamma \colon \overline{\nu}(e) \xrightarrow{\cong} E$$
whose restriction 
\begin{itemize}
\item  to~$\overline{\nu}(e|_{\Sigma^\circ})$ agrees with the framing $\fr$,
\item to~$\overline{\nu}(e|_{B_k})=B_k \times D^2$ is the homeomorphism
$$(\alpha_k^{-1} \times \id_{D^2}) \colon \overline{\nu}(e|_{B_k })=B_k \times D^2 \to {(D^2 \times D^2)_k}$$
\end{itemize}
\item  Given a framing $\fr \colon \nu(e|_{\Sigma^\circ}) \to \Sigma^\circ \times \R^2$ that extends the framings $\{\fr_{\partial B_k}\}_k$ from~\eqref{eq:fr},  a family $\mathcal{U}$ of double point charts for $(e, \alpha \colon \bigsqcup_{2c}D^2 \hookrightarrow \Sigma)$ and an immersion $\iota \colon \overline{\nu}(e) \looparrowright N$ as in Proposition~\ref{prop:NormalBundle},  the homeomorphism~$\gamma$ induces a homeomorphism
$$\widehat{\gamma} \colon \overline{\nu}(S):=\iota(\overline{\nu}(e)) \xrightarrow{\cong} W$$
 that fits into the following commutative diagram
\begin{equation}
\label{eq:IdentificationDiagram}
\xymatrix{
\overline{\nu}(e) \ar@{->>}[r]^-{\iota}\ar[d]^{\gamma}_\cong& \overline{\nu}(S) \ar[d]^{\widehat{\gamma}}_\cong \\
E \ar@{->>}[r]^{\operatorname{proj}}& W
}
\end{equation}
and induces a homeomorphism
$$ \widehat{\gamma} \colon \partial \overline{\nu}(S) \setminus \left( \partial N \setminus \intt(\overline{\nu}(K))\right) \xrightarrow{\cong} P.  $$
 \end{enumerate}
\end{proposition}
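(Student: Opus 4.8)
The plan is to build the homeomorphism $\gamma \colon \overline{\nu}(e) \xrightarrow{\cong} E$ by assembling it from pieces over $\Sigma^\circ$ and over the disks $B_k$, then descend to the quotients to get $\widehat\gamma$, and finally restrict $\widehat\gamma$ to the appropriate part of the boundary. For part (1), I would first declare $\gamma$ on $\overline{\nu}(e|_{\Sigma^\circ})$ to be the disk-bundle version of the given framing $\fr \colon \nu(e|_{\Sigma^\circ}) \to \Sigma^\circ \times \R^2$, i.e.\ $\gamma|_{\overline{\nu}(e|_{\Sigma^\circ})} = \overline{\fr} \colon \overline{\nu}(e|_{\Sigma^\circ}) \to \Sigma^\circ \times D^2$, and on each $\overline{\nu}(e|_{B_k}) = B_k \times D^2$ to be $\alpha_k^{-1} \times \id_{D^2} \colon B_k \times D^2 \to (D^2 \times D^2)_k$. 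The key point is that these two prescriptions agree on the overlap $\overline{\nu}(e|_{\partial B_k})$ \emph{after} passing to $E$: in $E$, a point $x \in (S^1 \times D^2)_k$ is glued to $(\alpha_k|_{S^1} \times \id_{D^2}) \circ \eta^{\pm}(x)$, and the hypothesis that $\fr$ extends $\fr_{\partial B_k}$ from~\eqref{eq:fr} is precisely the statement that $\overline{\fr}$ restricted to $\overline{\nu}(e|_{\partial B_k})$ is the composite $\partial B_k \times D^2 \xrightarrow{\alpha_k^{-1} \times \id} (S^1 \times D^2)_k \xrightarrow{\eta^{\pm}} S^1 \times D^2 \xrightarrow{\alpha_k \times \id} \partial B_k \times D^2$. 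Tracing this through the gluing relation $\sim_E$ of Construction~\ref{cons:E} shows the two definitions of $\gamma$ match on $\overline{\nu}(e|_{\partial B_k})$, so $\gamma$ is well-defined and continuous; it is a homeomorphism because it is built from homeomorphisms on a closed cover with matching overlaps and because $E$ is, tautologically, glued from the same pieces by the same maps.

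For part (2), I would descend $\gamma$ through the quotient maps $\iota \colon \overline{\nu}(e) \twoheadrightarrow \overline{\nu}(S)$ and $\proj \colon E \twoheadrightarrow W$. The claim is that $\proj \circ \gamma$ factors through $\iota$, which amounts to checking $\gamma$ identifies exactly the point-pairs that $\iota$ does. By Proposition~\ref{prop:NormalBundle}(2), $\iota(x) = \iota(y)$ for $x \in \overline{\nu}(e|_{B_{2i-1}})$, $y \in \overline{\nu}(e|_{B_{2i}})$ iff $\shrink_{2i-1}(\alpha_{2i-1}^{-1} \times \id)(x) = \shrink_{2i}(\alpha_{2i}^{-1} \times \id)(y)$; meanwhile $\sim_W$ (Construction~\ref{cons:W}) identifies $(z_1,w_1) \in (D^2\times D^2)_{2i-1}$ with $(z_2,w_2) \in (D^2\times D^2)_{2i}$ iff $\shrink_{2i-1}(z_1,w_1) = \shrink_{2i}(z_2,w_2)$. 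Since $\gamma|_{\overline{\nu}(e|_{B_k})} = \alpha_k^{-1} \times \id_{D^2}$, the relation $\gamma(x) \sim_W \gamma(y)$ is literally the same equation, so $\proj \circ \gamma$ is constant on $\iota$-fibers and descends to $\widehat\gamma \colon \overline{\nu}(S) \to W$ making~\eqref{eq:IdentificationDiagram} commute. A symmetric argument (or the fact that $\gamma$ and $\proj,\iota$ are all surjective with matching fibers) shows $\widehat\gamma$ is a bijection, and it is a homeomorphism since both quotient maps are closed and $\overline{\nu}(S)$ is compact. Finally, for the boundary statement I would restrict $\widehat\gamma$ to $\partial \overline{\nu}(S) \setminus (\partial N \setminus \intt \overline{\nu}(K))$. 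Under $\widehat\gamma$ this is carried to $\partial W$ minus the image of the part of the boundary lying over $\partial \Sigma$, i.e.\ minus $\proj(E|_{\partial \Sigma})$, because Construction~\ref{cons:normalbundleforboundary} and the use of $\fr_K$ there arrange that $\iota(\overline{\nu}(e|_{\partial \Sigma})) = \overline{\nu}(K) \subset \partial N$ maps under $\gamma$ to $E|_{\partial\Sigma}$; hence $\widehat\gamma$ restricts to a homeomorphism onto $\partial W \setminus \proj(E|_{\partial\Sigma}) = P$ by the very definition of $P$.

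The main obstacle I anticipate is the careful bookkeeping at the boundary of the disks and over $\partial\Sigma$: one must verify that the chosen framing $\fr$ restricted to $\overline{\nu}(e|_{\partial B_k})$ really equals the twisted map $\fr_{\partial B_k}$ from~\eqref{eq:fr} and not some other framing differing by a Dehn twist, and that the $\eta^{\pm}$ twist in $\sim_E$ is matched on the nose — this is exactly the compatibility hypothesis ``$\fr$ extends the framings $\{\fr_{\partial B_k}\}_k$,'' so the work is really in unwinding the definitions rather than proving anything deep. The other delicate point is identifying which boundary collar is removed: I need that $\overline{\nu}(S) \cap \partial N = \iota(\overline{\nu}(e|_{\partial\Sigma})) = \overline{\nu}(K)$, so that $\partial \overline{\nu}(S) \setminus (\partial N \setminus \intt\overline{\nu}(K))$ is precisely the ``interior'' part of $\partial\overline{\nu}(S)$, and that this matches $\proj(E|_{\partial\Sigma})$ under $\widehat\gamma$; this follows from Construction~\ref{cons:normalbundleforboundary} together with the $B_k$-piece of $\gamma$ being $\alpha_k^{-1}\times\id$, but it warrants an explicit check. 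Everything else is formal gluing and quotient-space nonsense.
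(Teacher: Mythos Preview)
Your proposal is correct and follows essentially the same approach as the paper: define $\gamma$ piecewise via $\fr$ on $\Sigma^\circ$ and $\alpha_k^{-1}\times\id$ on the $B_k$, verify compatibility on the overlap using the definition of $\sim_E$ and the hypothesis that $\fr$ extends $\fr_{\partial B_k}$, then descend to $\widehat\gamma$ by matching the $\iota$-fibers with the $\sim_W$-classes via Proposition~\ref{prop:NormalBundle}(2). You in fact give slightly more detail on the boundary restriction to $P$ than the paper does, which simply asserts it after establishing the commutative diagram.
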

\begin{proof}
Because of the decomposition~$\overline{\nu}(e)=\overline{\nu}(e|_{\bigsqcup B_k })\cup\overline{\nu}(e|_{\Sigma^\circ})$, in order for the homeomorphism~
\[\bigsqcup_{k=1}^{2c} (\alpha_k ^{-1} \times \id_{D^2})   \colon \overline{\nu}(e|_{\bigsqcup B_k })=\bigsqcup_{k=1}^{2c} (B_k \times D^2)  \to \bigsqcup_{k=1}^{2c} (D^2 \times D^2)_k \]
from Construction~\ref{cons:SeifertFraming} and the framing
\[\fr\colon \overline{\nu}(e|_{\Sigma^\circ}) \to \Sigma^\circ \times D^2\] 
to combine into a homeomorphism~$\gamma \colon \overline{\nu}(e) \to E$, we need only verify that for any $x \in \overline{\nu}(e|_{\partial B_k })$
\[ (S^1 \times D^2)_k  \ni (\alpha_k ^{-1} \times \id_{D^2}) (x) \sim_E \fr(x) \in \Sigma^\circ \times D^2.\]
Recall the construction of the $D^2$-bundle $E$ as
 $$ E:=\left(\bigsqcup_{k=1}^{2c}\left( D^2 \times D^2 \right)_k  \sqcup \Sigma^\circ \times D^2\right)\Bigg/\sim_E,$$
where~$y \in \partial (D^2 \times D^2)_k $ is identified with~$\left((\alpha_k \times \id_{D^2}) \circ \eta^\pm\right)  (y) \in \partial B_k  \times D^2 \subset \Sigma^\circ \times D^2$.  
So,  for~$x \in \overline{\nu}(e|_{\partial B_k })$,  we need only check that
\[ \left((\alpha_k \times \id_{D^2}) \circ \eta^\pm \circ (\alpha_k ^{-1} \times \id_{D^2})\right)(x) = {\fr}(x). 
\]
This follows immediately from the fact that ${\fr}$ extends $\{\fr_{\partial B_k}\}$ and our definition of $\{\fr_{\partial B_k}\}$ in~\eqref{eq:fr}.

Next we assert that~$\gamma \colon \overline{\nu}(e) \xrightarrow{\cong} E$ induces a homeomorphism~$\widehat{\gamma} \colon \iota(\overline{\nu}(e)) \to W$ that makes the diagram in~\eqref{eq:IdentificationDiagram} commute.
Given~$x' \in \overline{\nu}(S)$, there exists~$x \in \overline{\nu}(e)$ so that~$\iota(x)=x'$.
The assertion will follow once we show that
$$\widehat{\gamma}(x'):= \left(\operatorname{proj} \circ \gamma\right)(x)$$
does not depend on the choice of the~$x \in \overline{\nu}(e)$ with $\iota(x)=x'$ and that the resulting map $\widehat{\gamma}$ is injective.
This is clear outside of the plumbed regions (as $\iota$ restricts to an embedding on $\overline{\nu}(e|_{\Sigma^\circ})$),  
so we let~$x \in \overline{\nu}(e|_{B_{2i-1}})$ and~$y \in \overline{\nu}(e|_{B_{2i}})$ 
and observe that 
\begin{align*}
\iota(x)=\iota(y) & \iff \left(\shrink_{2i-1} \circ(\alpha_{2i-1}^{-1} \times \id_{D^2})\right)(x)=\left(\shrink_{2i}\circ(\alpha_{2i}^{-1} \times \id_{D^2})\right)(y) \\
& \iff \left( \shrink_{2i-1} \ \circ\ \gamma\right)(x)=\left( \shrink_{2i} \ \circ \ \gamma \right)(y) \\
& \iff \gamma(x) \sim_W \gamma(y)\\
& \iff \left(\proj \circ \gamma\right)(x) =\left( \proj \circ\gamma\right)(y).
\end{align*}
Here the first equivalence follows from the fact that~$\iota$ has the effect of plumbing~$\overline{\nu}(e)$, as noted in the second item of Proposition~\ref{prop:NormalBundle};  the second equivalence follows from the fact that~$\gamma$ restricts to~$\alpha_k^{-1} \times  \id$ on the~$\overline{\nu}(e|_{B_k})$ for~$k \in \{2i-1,2i\}$; and the third equivalence follows from the definition of $\sim_W$.
This concludes the proof of the proposition.
\end{proof}

\subsection{Nice framings}
\label{sub:GoodFramings}

Proposition~\ref{prop:IdentifyW} shows that there is some flexibility in defining an identification $\widehat{\gamma} \colon \overline{\nu}(S) \cong W$,  coming from the choice of extension of the Seifert framing $\fr_\partial$ from Construction~\ref{cons:SeifertFraming} to a framing of all of $\nu(e|_{\Sigma^\circ})$.
The next definition describes a specific choice of framing that will be important to ensure the homomorphism
$$\pi_1(P) \to \pi_1(\partial \overline{\nu}(S)) \to \pi_1(N_S) \cong \Z$$
coincides with the one from Section~\ref{sub:CoefficientSystems}.

\medbreak

\begin{definition}(Nice framings.)
\label{def:NiceFraming}
A framing $\fr$ of $(\nu(e|_{\Sigma^\circ}),\iota)$ is \emph{nice} if
\begin{enumerate}
\item it extends the Seifert framing $\fr_{\partial}$ of $\nu(e|_{\partial \Sigma^\circ})$ from Construction~\ref{cons:SeifertFraming};
\item the homeomorphism $\widehat{\gamma}$ associated to $\fr$ by Proposition~\ref{prop:IdentifyW} satisfies 
$$\widehat{\gamma}^{-1}(c)=0 \in  H_1(N_{e(\Sigma)})$$
for every plumbing loop and genus loop $c \subset P$.
\end{enumerate}
\end{definition}

We describe how, on genus loops, the second condition can be restated using push-offs.
Recall that every~$(z,x) \in \Sigma^\circ \times D^2$ determines an element in~$E$ and that if~$x \in \partial D^2$,  one further obtains an element in~$P$.
Recall furthermore that if~$\eta \subset \Sigma^\circ$ is a curve and~$x \in \partial D^2$, then we write~$\eta \times \{x\}$ for the corresponding loop in~$P$.
The section \emph{associated to a framing~$\fr \colon \nu(e|_{\Sigma^\circ}) \to \Sigma^\circ \times \R^2$ and to an~$x \in \R^2$} is the section~$s_x \colon \Sigma^\circ \to \nu(e|_{\Sigma^\circ}), z \mapsto \fr^{-1}(z,x).$

\begin{lemma}
\label{lem:NiceReformulationGenus}
On a genus loop~$\eta \times \{x\} \subset P$,  the second condition in the definition of a nice framing~$\fr$ is equivalent to requiring that for any section~$s_y \colon \Sigma^\circ \to \nu(e|_{\Sigma^\circ})$ associated to~$\fr$ and~$y \in \partial D^2$,  the push-off~$(\iota \circ s_y)(\eta)$ is nullhomologous in~$N_{e(\Sigma)}$:
$$(\iota \circ s_y)(\eta)=0 \in  H_1(N_{e(\Sigma)}).$$
\end{lemma}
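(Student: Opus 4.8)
The plan is to unwind the two sides of the claimed equivalence by tracking a genus loop through the identification $\widehat{\gamma}\colon \overline{\nu}(S)\cong W$ and to recognize the section $s_y$ as a specific lift of the loop into $\partial\overline{\nu}(S)$. Recall from Construction~\ref{cons:PlumbingCurvesNotCellComplex} and Remark~\ref{rem:PasaQuotient} that a genus loop in $P$ is the image of $\eta\times\{x\}$ for a curve $\eta\subset\Sigma^\circ\times\{1\}$ and some $x\in\partial D^2$, and that (as noted there) changing $x$ gives an isotopic curve in $P$, so the class in $H_1$ does not depend on $x$. Under the homeomorphism of Proposition~\ref{prop:IdentifyW}(2), $\widehat\gamma^{-1}$ of the curve $\eta\times\{x\}\subset P$ is exactly the push-off of $\eta\subset\Sigma^\circ$ obtained from the section $z\mapsto\fr^{-1}(z,x)$, pushed into $N$ via $\iota$ — that is, $\widehat\gamma^{-1}(\eta\times\{x\})=(\iota\circ s_x)(\eta)$. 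This is essentially a bookkeeping statement: the restriction of $\widehat\gamma$ to $\overline{\nu}(e|_{\Sigma^\circ})$ agrees with $\fr$ by the first bullet of Proposition~\ref{prop:IdentifyW}(1), and the bundle-theoretic description of $P$ as a quotient of $\Sigma^\circ\times S^1$ (Remark~\ref{rem:PasaQuotient}) identifies the curve $\eta\times\{x\}$ with the image of the corresponding section curve.

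First I would make precise that $(\iota\circ s_y)(\eta)$ is a well-defined loop in the exterior $N_{e(\Sigma)}$: since $s_y$ is a section of the normal bundle with $y\ne 0$, its image is disjoint from the zero section $e(\Sigma^\circ)$, and because $\eta\subset\Sigma^\circ$ stays away from the double-point disks $B_k$, the push-off $(\iota\circ s_y)(\eta)$ lies in $N_{e(\Sigma)}=N\setminus\intt\overline{\nu}(e(\Sigma))$ once $y$ is taken on (or near) $\partial D^2$; after a small radial isotopy we may assume it lies on $\partial\overline{\nu}(S)$. Then I would observe that $(\iota\circ s_y)(\eta)$, viewed in $\partial\overline{\nu}(S)\setminus(\partial N\setminus\intt\overline{\nu}(K))$, is carried by $\widehat\gamma$ precisely to the genus loop $\eta\times\{y\}\subset P$; composing with the inclusion $P\hookrightarrow\partial\overline\nu(S)\to N_{e(\Sigma)}$ then shows $\widehat\gamma^{-1}(\text{genus loop})=0\in H_1(N_{e(\Sigma)})$ if and only if $(\iota\circ s_y)(\eta)=0\in H_1(N_{e(\Sigma)})$. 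Finally, independence of the particular $y\in\partial D^2$ chosen follows because all the sections $s_y$, $y\in\partial D^2$, are homotopic through sections avoiding the zero section (the fibre $D^2\setminus\{0\}$ is connected), so their push-offs are homotopic in $N_{e(\Sigma)}$ and hence homologous.

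The main obstacle I anticipate is purely notational rather than conceptual: carefully matching the three different models of the relevant curve — (i) the genus loop in $P$ described combinatorially in Construction~\ref{cons:PlumbingCurvesNotCellComplex} as $\eta\times\{x\}$ in a quotient of $\Sigma^\circ\times S^1$; (ii) the section-push-off $(\iota\circ s_y)(\eta)\subset N_{e(\Sigma)}$; and (iii) the image $\widehat\gamma^{-1}(\eta\times\{x\})$ in $\partial\overline\nu(S)$ — and checking that under $\widehat\gamma$ (which on $\overline\nu(e|_{\Sigma^\circ})$ is literally $\fr$) these agree on the nose, using that $\fr^{-1}(z,x)$ is by definition the point of $\nu(e|_{\Sigma^\circ})$ hit by the section $s_x$. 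Once this identification is set up, the equivalence in the lemma is immediate from the definition (Definition~\ref{def:NiceFraming}(2)) of a nice framing restricted to genus loops, together with the remark that the homology class is insensitive to the choice of $x\in\partial D^2$.
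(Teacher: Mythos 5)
Your proposal is correct and follows essentially the same route as the paper's proof: reduce to $y=x$ by noting that the push-offs for different $y\in\partial D^2$ are isotopic (the paper uses a rotation, you use connectedness of $\partial D^2$, which is the same observation), and then identify $\widehat\gamma^{-1}(\eta\times\{x\})$ with $\iota\circ\fr^{-1}(\eta\times\{x\})=\iota\circ s_x(\eta)$ using that $\widehat\gamma$ restricted to $\overline{\nu}(e|_{\Sigma^\circ})$ is the framing $\fr$. The extra well-definedness remarks you include are harmless but not needed.
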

\begin{proof}
The outcome of pushing off a curve $\gamma \subset e(\Sigma^\circ)$ using any section $s_y$ with $y \in \partial D^2$ results in isotopic (and in particular homologous) outcomes: a rotation takes one image to the other.
It therefore suffices to prove the lemma for~$y=x \in \partial D^2$.
But now, by definition of $\widehat{\gamma}$ on the surface~$\Sigma^\circ$, the condition that $\widehat{\gamma}^{-1}(\eta \times \{x\})=0$ can be rewritten as
$$0=\widehat{\gamma}^{-1}(\eta \times \{x\})=\iota \circ \fr^{-1}(\eta \times \{x\})=\iota \circ s_x(\eta).$$
\end{proof}

Next,  we formulate the analogue of Lemma~\ref{lem:NiceReformulationGenus} for plumbing loops.
Given a plumbing arc~$\omega^{\operatorname{arc}} \subset \Sigma$ and $x \in S^1$,  the arc $\omega^{\operatorname{arc}}  \times \{ x \} \subset \Sigma^\circ \times S^1$ defines a plumbing loop, denoted~$\omega \times \{ x \}$ in $P$.
Similarly,
$s_x(\omega^{\operatorname{arc}}) \subset \partial \overline{\nu}(e|_{\Sigma^\circ})$ is an arc,  but~$\iota \circ s_x(\omega^{\operatorname{arc}}) \subset N_{e(\Sigma)}$ is a loop.

\begin{lemma}
\label{lem:NiceReformulationPlumbing}
On a plumbing loop~$\omega \times \{x\} \subset P$,  the second condition in the definition of a nice framing~$\fr$ is equivalent to requiring that for any section~$s_y \colon \Sigma^\circ \to \nu(e|_{\Sigma^\circ})$ associated to~$\fr$ and~$y \in \partial D^2$,  the push-off~$\iota \circ s_y(\omega^{\operatorname{arc}})$ is nullhomologous in~$N_{e(\Sigma)}$:
$$(\iota \circ s_y)(\omega^{\operatorname{arc}})=0 \in  H_1(N_{e(\Sigma)}).$$
\end{lemma}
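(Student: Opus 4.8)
The idea is to reduce the statement about the plumbing loop to the already-treated case of genus loops, using the description of $P$ from Remark~\ref{rem:PasaQuotient} as a quotient of $(\Sigma^\circ \times S^1) \cup \bigcup_{i=1}^c (T^2 \times I)$, in which a plumbing loop decomposes as $\omega_i \cdot (*_i \times I)$. First I would observe, exactly as in the proof of Lemma~\ref{lem:NiceReformulationGenus}, that the homology class of $\iota\circ s_y(\omega^{\operatorname{arc}})$ is independent of the choice of $y \in \partial D^2$: any two sections $s_y, s_{y'}$ with $y,y'\in\partial D^2$ differ by a fibrewise rotation of $\overline\nu(e|_{\Sigma^\circ})$, which carries the arc $s_y(\omega^{\operatorname{arc}})$ to an isotopic arc rel.\ endpoints in $\partial\overline\nu(e|_{\Sigma^\circ})$, hence to a homologous loop in $N_{e(\Sigma)}$ once the endpoints are closed up (the closing-up happens in the plumbed region and is the same for both). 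So it suffices to treat $y = x$.

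For $y = x$, I would unwind the definition of $\widehat\gamma$ given in Proposition~\ref{prop:IdentifyW}: on $\overline\nu(e|_{\Sigma^\circ})$ the homeomorphism $\widehat\gamma$ is (induced by) the framing $\fr$, i.e.\ $\widehat\gamma \circ \iota = \proj \circ \gamma$ where $\gamma|_{\overline\nu(e|_{\Sigma^\circ})} = \fr$. The plumbing loop $\omega\times\{x\}\subset P$ is by definition the image under $\proj$ of the arc $\omega^{\operatorname{arc}}\times\{x\} \subset \Sigma^\circ\times D^2$, with its two endpoints glued via the plumbing identification $\sim_P$ from~\eqref{eqn:simP}. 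Pulling back through $\widehat\gamma$, the preimage $\widehat\gamma^{-1}(\omega\times\{x\})$ is precisely $\iota\circ\fr^{-1}(\omega^{\operatorname{arc}}\times\{x\})$ with the corresponding endpoint identification; but $\fr^{-1}(\omega^{\operatorname{arc}}\times\{x\}) = s_x(\omega^{\operatorname{arc}})$ by the definition of the section associated to $\fr$ and $x$. The two endpoints of $s_x(\omega^{\operatorname{arc}})$ lie over the centres of $B_{2i-1}$ and $B_{2i}$, i.e.\ at the two double-point sheets, and under $\iota$ they are glued to the single point $\iota\circ s_x$ of one endpoint equal to the other — this is exactly the statement that $\iota$ plumbs $\overline\nu(e)$ (second item of Proposition~\ref{prop:NormalBundle}), so that $\iota\circ s_x(\omega^{\operatorname{arc}})$ is genuinely a loop in $\partial\overline\nu(S^\circ)\subset N_{e(\Sigma)}$ and coincides with $\widehat\gamma^{-1}(\omega\times\{x\})$ as a loop. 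Therefore $\widehat\gamma^{-1}(\omega\times\{x\}) = 0 \in H_1(N_{e(\Sigma)})$ if and only if $\iota\circ s_x(\omega^{\operatorname{arc}}) = 0 \in H_1(N_{e(\Sigma)})$, which is the asserted equivalence.

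The main technical point to get right — and the only place where the argument is more delicate than in Lemma~\ref{lem:NiceReformulationGenus} — is the bookkeeping of the endpoint identifications: one must check that the plumbing gluing $\sim_P$ on the arc $\omega^{\operatorname{arc}}$ matches, under $\widehat\gamma$, the gluing of endpoints induced by $\iota$ on $s_x(\omega^{\operatorname{arc}})$. This is precisely the content of the commuting square~\eqref{eq:IdentificationDiagram} in Proposition~\ref{prop:IdentifyW} together with the equivalence chain $\iota(x)=\iota(y) \iff \gamma(x)\sim_W\gamma(y) \iff (\proj\circ\gamma)(x) = (\proj\circ\gamma)(y)$ established there, specialized to points of the arc $\omega^{\operatorname{arc}}$; restricting that square to the boundary $\partial\overline\nu(S^\circ)\setminus(\partial N\setminus\intt\overline\nu(K)) \xrightarrow{\cong} P$ gives exactly what is needed. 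I would spell this out in one or two sentences and conclude. Everything else is a verbatim adaptation of the proof of Lemma~\ref{lem:NiceReformulationGenus}.
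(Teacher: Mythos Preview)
Your approach is correct and matches the paper's: reduce to $y=x$ via fibrewise rotation, then verify $\widehat\gamma^{-1}(\omega\times\{x\}) = \iota\circ s_x(\omega^{\operatorname{arc}})$ by unwinding the definition of $\widehat\gamma$ from Proposition~\ref{prop:IdentifyW}. The paper's proof is a two-line sketch that asserts exactly this equality; your extra care with the endpoint gluing via diagram~\eqref{eq:IdentificationDiagram} is a welcome elaboration, though note a small slip---the endpoints of $\omega^{\operatorname{arc}}$ lie on $\partial B_{2i-1}$ and $\partial B_{2i}$ (the points $z_{2i-1}, z_{2i}$ of Construction~\ref{cons:PlumbingCurvesNotCellComplex}), not the centres, but this does not affect the argument.
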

\begin{proof}
As in the proof of Lemma~\ref{lem:NiceReformulationGenus}, it suffices to consider the case $y=x$ and the result now follows by noting that~$0=\widehat{\gamma}^{-1}(\omega \times \{x\})=\iota \circ s_x(\omega^{\operatorname{arc}}).$
\end{proof}

Our aim is now to prove that nice framings exist.
We start with a lemma.

\begin{lemma}
\label{lem:RelativeEulerNumber}
The relative Euler number of~$\nu(e|_{\Sigma^\circ})$  with respect to the Seifert framing~$\fr_\partial$ is
$$ \operatorname{eul}(\nu(e|_{\Sigma^\circ}),\fr_\partial)=Q_N([S],[S])).$$
In particular, if $Q_N([S],[S]))=0$, then $\fr_\partial$ extends over $\Sigma^\circ$.
\end{lemma}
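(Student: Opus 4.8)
The plan is to compute the relative Euler number by a standard homological argument comparing two framings along the boundary, and then to interpret the result as a self-intersection number. First I would recall that the relative Euler number $\operatorname{eul}(\nu(e|_{\Sigma^\circ}),\fr_\partial)$ is, by definition, the obstruction to extending the framing $\fr_\partial$ of $\nu(e|_{\partial\Sigma^\circ})$ to a nonvanishing section of $\nu(e|_{\Sigma^\circ})$ over all of $\Sigma^\circ$; equivalently it is the signed count of zeros of any generic extension of a section associated to $\fr_\partial$. The key comparison is between $\Sigma^\circ$ with its boundary framing and the closed surface $S=e(\Sigma)$: capping off the punctures $B_k$ recovers $\Sigma$, and the chosen model framings $\fr_{\partial B_k}$ on $\nu(e|_{\partial B_k})$ differ from the disk-bundle framing by the $(\pm 1)$-twist built into $\eta^\pm$ (this was analyzed in Construction~\ref{cons:SeifertFraming}, where it is shown that $\fr_{\partial B_k}$ gives the $(\mp1)$-framing on the corresponding Hopf link component in $\partial N^\circ$, i.e.\ the Seifert framing of the $\pm$-Hopf link).

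The main computation I would carry out is the following. Extend $\fr_\partial$ to a generic section $s$ of $\nu(e|_{\Sigma^\circ})$ and count zeros; the relative Euler number is that count. On the other hand, $[S]=[e(\Sigma),e(\partial\Sigma)]\in H_2(N,\partial N)$ and $Q_N([S],[S])$ can be computed as the self-intersection of a generic pushoff of $S$. The pushoff of $S$ along the section $\iota\circ s$ meets $S$ in: (i) the zeros of $s$ over $\Sigma^\circ$, contributing $\operatorname{eul}(\nu(e|_{\Sigma^\circ}),\fr_\partial)$; (ii) contributions near the double points coming from the plumbing regions $U_i$, where the two sheets $e(B_{2i-1})$ and $e(B_{2i})$ meet, but the $\eta^\pm$-twisted model framings were precisely chosen so that the pushoff of one sheet misses the other sheet near the double point — so these contribute $0$; and (iii) a boundary contribution along $K=e(\partial\Sigma)$, which vanishes because $\fr_\partial$ restricts to the \emph{Seifert} framing on $K$ (Construction~\ref{cons:SeifertFraming}), and the Seifert longitude is by definition nullhomologous in $E_K$, so the pushed-off boundary does not contribute to the intersection number in $N$. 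Assembling these, $Q_N([S],[S]) = \operatorname{eul}(\nu(e|_{\Sigma^\circ}),\fr_\partial)$, which is the claim. The final sentence of the lemma is then immediate: if $Q_N([S],[S])=0$ then the obstruction to extending $\fr_\partial$ over $\Sigma^\circ$ vanishes, so such an extension exists.

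The step I expect to be the main obstacle is bookkeeping the local contributions at the double points and verifying they cancel — that is, making precise that the $(\pm1)$-twisted framings $\fr_{\partial B_k}$ of~\eqref{eq:fr} exactly compensate for the framing defect introduced by the plumbing, so that the pushoff of $S$ by $\iota\circ s$ has no intersection points inside the charts $U_i$. Concretely one would work inside $\psi_i(U_i)\cong D^2\times D^2$, where by Definition~\ref{def:UAdapted} and Proposition~\ref{prop:NormalBundle}(2) the two sheets are $D^2\times\{0\}$ and $\{0\}\times D^2$ (up to sign), and check that the section associated to $\fr_{\partial B_k}$ pushes $D^2\times\{0\}$ off in a direction transverse to $\{0\}\times D^2$ without creating algebraic intersection. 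The alternative, cleaner route — which I would actually prefer to write up — is to avoid local analysis entirely and instead use the naturality of the relative Euler class: compare $\nu(e|_{\Sigma^\circ})$ (with $\fr_\partial$) directly against the closed normal bundle $\nu(S)$ of the \emph{embedded} pushed-apart surface, whose Euler number is $[S]\cdot[S]=Q_N([S],[S])$, and observe that passing from $\Sigma^\circ$ to $\Sigma$ by gluing back the disk bundles over the $B_k$ with the $\eta^\pm$-identifications contributes exactly $0$ to the Euler number (since $\eta^\pm$ is isotopic to the identity on each $S^1\times D^2$, as noted in Construction~\ref{cons:E}), so the two Euler numbers agree. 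Either way the content is the same additivity-of-Euler-numbers-under-gluing computation.
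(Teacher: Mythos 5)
Your overall strategy — compute $Q_N([S],[S])$ as the intersection of $S$ with a pushoff determined by a section extending $\fr_\partial$, and localize the contributions — is the same as the paper's, and your final bookkeeping happens to give the right total. But the justification of your step (ii) is wrong, and it is exactly the step you flagged as the main obstacle. In the model chart $\psi_i(U_i)\cong D^2\times D^2$ the two sheets are $D^2\times\{0\}$ and $\{0\}\times D^2$; a pushoff of $D^2\times\{0\}$ is a graph $\{(z,s(z))\}$ with $s$ nowhere zero, and this meets $\{0\}\times D^2$ in exactly one point (where $z=0$), of sign $\varepsilon_i$. So the pushoff of one sheet does \emph{not} miss the other: the two cross-terms contribute $+2\varepsilon_i$ per double point. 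What you are also omitting is that the twisted framings $\fr_{\partial B_k}$ of~\eqref{eq:fr} are the $\mp 1$-framings (Seifert framings of the $\pm$-Hopf link components), so each disk's pushoff meets the disk itself in $-\varepsilon_i$ points, contributing $-2\varepsilon_i$ per double point. The double-point contribution vanishes only because these two nonzero quantities cancel; the verification you propose ("no algebraic intersection with the other sheet") would fail. This cancellation is precisely what the paper's proof makes explicit via Whitney's formula $Q_N([S],[S])=2\mu(S)+\operatorname{eul}(\nu(e))$ in one step and the linking-number computation $D_i\cdot D_i'=\ell k(\partial D_i,\partial D_i')=\mp 1$ in the other.

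Your "alternative, cleaner route" has the same defect in a different guise: regluing the disk bundles over the $B_k$ along $\eta^\pm$ changes the Euler number by $\mp 1$ per disk, i.e.\ by $-2(c_+-c_-)$ in total, not by $0$. That $\eta^\pm$ is isotopic to the identity as a homeomorphism of $S^1\times D^2$ is irrelevant; the framing change is measured by the induced loop in $SO(2)$, which represents $\pm 1\in\pi_1(SO(2))\cong\Z$. The $-2(c_+-c_-)$ you lose there is recovered only because the Euler number of the normal bundle of the \emph{immersion} differs from $[S]\cdot[S]$ by $+2(c_+-c_-)$ — again Whitney's formula. Finally, your treatment of the boundary case (item (iii)) is only sketched; the paper handles it cleanly by capping $S$ with a pushed-in Seifert surface and reducing to the closed case, which you may want to adopt.
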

\begin{proof}
Since $e|_{\Sigma^\circ} \colon \Sigma^\circ \hookrightarrow N^\circ$ is an embedding,  the assertion can be equivalently stated and proved with~$S^\circ=e(\Sigma^\circ)$ in place of~$\Sigma^\circ$.
We start with the case where~$\Sigma$  is closed so that the normal Euler number, intersections and self-intersections of the immmersion~$e$ are related by
$$Q_N([S],[S])=2\mu(S)+\operatorname{eul}(\nu(e)).$$
Since the image has $c_+$ positive double points and $c_-$ negative double points, it follows that
\begin{equation}
\label{eq:EulerSelfInt}
\operatorname{eul}(\nu(e))=Q_N([S],[S])-2\mu(F)=Q_N([S],[S])-2(c_+-c_-).
\end{equation}
Next we calculate $\operatorname{eul}(\nu(e))$ in a different way,  by pushing $S$ off itself and counting intersections with the $0$-section. 
The Seifert framing $\fr_\partial$ determines (up to isotopy) a push-off of~$\partial S^\circ$.
Push~$S$ off itself in a way that extends this push-off on the boundary, and denote the result by~$S'$.
This determines push offs~$D_i':=e(B_i)'$ of~$D_i:=e(B_i)$ and~$(S^\circ)':=e(\Sigma^\circ)'$ of~$S^\circ=e(\Sigma^\circ)$ and we calculate
\begin{align*}
Q_N([S],[S])-2(c_+-c_-)
=\operatorname{eul}(\nu(e)) 
=S \cdot S' 
=S^\circ \cdot (S^\circ )' +\sum_{i=1}^{2c_+} D_i \cdot D_i'+\sum_{i=2c_++1}^{2c_-} D_i \cdot D_i'.
\end{align*}
Using again the definition of the relative Euler number as the count of intersections of a push off with the $0$-section,  note that~$S^\circ \cdot (S^\circ )'=\operatorname{eul}(\nu(e|_{S^\circ}),\fr_\partial)$.
On the other hand, since the disks~$D_i$ are contained in a small $4$-ball, it follows that $D_i \cdot D_i'=\ell k(\partial D_i,\partial D_i')$.
Since our push-off of $F$ extends the (Seifert) framing $\fr_\partial$,  each of these linking numbers equals $\mp 1$, depending on the sign of the disks.
Indeed $\ell k(\partial D_i,\partial D_i')$ equals the Seifert framing of a $\pm$-Hopf link and this is $\mp 1$.
We therefore conclude that
\begin{align*}
Q_N([S],[S])-2(c_+-c_-)
=\operatorname{eul}(\nu(e|_{S^\circ}),\fr_\partial)-2(c_+-c_-)
\end{align*}
and therefore~$\operatorname{eul}(\nu(e|_{S^\circ}),\fr_\partial)=Q_N([S],[S])$ as required.

We conclude by treating the case when $N$ has boundary $\partial N \cong S^3$ and~$\partial S=K \subset \partial N$ is a knot.
In this case, we cap off~$S$ with  a Seifert surface~$S_0$ for $K$ that we slightly push into $D^4$. 
Write~$\widehat{e}$ for the resulting immersion of a closed surface
and $\widehat{S}$ for its image so that 
$$\operatorname{eul}(\nu(\widehat{e}|_{\widehat{S}^\circ}))=Q_N([\widehat{S}],[\widehat{S}])=Q_N([S],[S]).$$
Reasoning with the $0$-section we have 
$$\operatorname{eul}(\nu(\widehat{e}|_{\widehat{\Sigma}^\circ}))=\operatorname{eul}(\nu(e|_{S^\circ}))+\operatorname{eul}(\nu(S_0),\fr_\partial).$$
As~$K$ is endowed with the Seifert framing,  $\operatorname{eul}(\nu(S_0),\fr_\partial)=0$ and the proposition follows.
\end{proof}

In the next proposition,  a \emph{$\Z$-immersion} refers to an immersion whose image is a $\Z$-surface.

\begin{proposition}
\label{prop:FramingPushoffUsingF}
A $\Z$-immersion $e \colon \Sigma \looparrowright N$ admits a nice framing
$$\fr \colon \nu(e|_{\Sigma^\circ}) \xrightarrow{\cong} \Sigma^\circ \times \R^2 .$$
\end{proposition}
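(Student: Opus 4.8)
The plan is to first produce \emph{some} framing of $\nu(e|_{\Sigma^\circ})$ extending the Seifert framing $\fr_\partial$, and then to correct it by an element of $H^1(\Sigma^\circ,\partial\Sigma^\circ;\Z)$ so that the homological condition in Definition~\ref{def:NiceFraming} is met.

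For the first step: since $e$ is a $\Z$-immersion, $S=e(\Sigma)$ is a $\Z$-surface, hence nullhomologous by Proposition~\ref{prop:ZSurfaceAreNullhomologous}, so $Q_N([S],[S])=0$. Lemma~\ref{lem:RelativeEulerNumber} then shows that the relative Euler number $\operatorname{eul}(\nu(e|_{\Sigma^\circ}),\fr_\partial)$ vanishes, so $\fr_\partial$ extends to a framing $\fr_0\colon\nu(e|_{\Sigma^\circ})\to\Sigma^\circ\times\R^2$. This $\fr_0$ satisfies the first condition of Definition~\ref{def:NiceFraming}, but possibly not the second.

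For the second step I would recall that the set of framings extending $\fr_\partial$, up to homotopy rel.\ boundary, is a torsor over $H^1(\Sigma^\circ,\partial\Sigma^\circ;\Z)$, where $\phi$ acts on $\fr_0$ by fibrewise rotation, and that $\phi$ may be represented supported away from a collar of $\partial\Sigma^\circ$ (so $\fr_0+\phi$ still restricts to $\fr_\partial$). By Lemmas~\ref{lem:NiceReformulationGenus} and~\ref{lem:NiceReformulationPlumbing}, the second condition of Definition~\ref{def:NiceFraming} for a framing $\fr$ with associated section $s$ is equivalent to the vanishing in $H_1(N_S)\cong\Z\mu_S$ of the $2g+c$ classes $[(\iota\circ s)(a_k)]$, $[(\iota\circ s)(b_k)]$ ($k=1,\dots,g$) and $[(\iota\circ s)(\omega_i^{\operatorname{arc}})]$ ($i=1,\dots,c$). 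Replacing $\fr_0$ by $\fr_0+\phi$ changes the class of a pushed-off genus loop $a_k$ (resp.\ $b_k$) by $\langle\phi,[a_k]\rangle\mu_S$ (resp.\ $\langle\phi,[b_k]\rangle\mu_S$), where $\langle\cdot,\cdot\rangle$ is the perfect intersection pairing $H^1(\Sigma^\circ,\partial\Sigma^\circ;\Z)\times H_1(\Sigma^\circ;\Z)\to\Z$, and changes the class of the pushed-off arc $\omega_i^{\operatorname{arc}}$ by $(\int_{\omega_i^{\operatorname{arc}}}\phi)\,\mu_S$, since the two sections agree at the endpoints of the arc (these endpoints are identified in $N_S$ precisely because of the choice of boundary framings $\fr_{\partial B_k}$ in Construction~\ref{cons:SeifertFraming}). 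Hence the assignment $\phi\mapsto$ (the tuple of these $2g+c$ classes) defines a homomorphism $\Psi\colon H^1(\Sigma^\circ,\partial\Sigma^\circ;\Z)\to\Z^{2g+c}$, and it suffices to show $\Psi$ is surjective: taking $\phi$ with $\Psi(\phi)=-v_0$ for $v_0$ the obstruction tuple of $\fr_0$ then yields a nice framing $\fr=\fr_0+\phi$.

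To prove $\Psi$ surjective I would exhibit explicit classes hitting a basis of $\Z^{2g+c}$. Using the standard symplectic basis $a_k,b_k$ of genus curves of Construction~\ref{cons:PlumbingCurvesNotCellComplex} (which can be taken disjoint from neighbourhoods of the $B_j$), take their Poincar\'e--Lefschetz duals $\phi_{a_k},\phi_{b_k}$ together with the duals $\phi_{\ell_i}$ of small loops $\ell_i$ parallel to $\partial B_{2i-1}$. Then $a_k,b_k$ are disjoint from the $\ell_i$, each arc $\omega_i^{\operatorname{arc}}$ meets $\ell_j$ transversely in $\pm\delta_{ij}$ points, and the $a_k\cdot b_l$ give the symplectic form; so in these classes and the listed loops/arcs the matrix of $\Psi$ is block lower-triangular with a unimodular symplectic block and a $\pm I$ block, hence unimodular. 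I expect the main obstacle to be exactly this last piece of bookkeeping: verifying rigorously that a framing change alters the homology class of a pushed-off arc by the stated winding number times $\mu_S$ (and that the pushed-off arc is genuinely a loop in $N_S$), together with checking the intersection numbers $\ell_i\cdot\omega_j^{\operatorname{arc}}$ and $a_k\cdot\ell_i$ so that surjectivity of $\Psi$ follows.
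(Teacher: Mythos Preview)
Your proof is correct and follows essentially the same strategy as the paper's: extend $\fr_\partial$ using the vanishing relative Euler number (via Proposition~\ref{prop:ZSurfaceAreNullhomologous} and Lemma~\ref{lem:RelativeEulerNumber}), then correct the framing on the genus loops and plumbing arcs to kill the meridional obstructions in $H_1(N_S)$. The paper carries out the correction step more directly by modifying a section on the $1$-skeleton of $\Sigma^\circ$, adding multiples of the $S^1$-fibre on each loop and arc individually, whereas you package the same modification via the torsor action of $H^1(\Sigma^\circ,\partial\Sigma^\circ;\Z)$ and an explicit surjectivity computation; the underlying content is identical.
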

\begin{proof}
 Lemma~\ref{lem:RelativeEulerNumber} ensures that~$\fr_\partial$ extends to a framing $\fr'$ on~$\Sigma^\circ$.
This lemma applies because immersed $\Z$-surfaces are nullhomologous; recall Proposition~\ref{prop:ZSurfaceAreNullhomologous}.
Let $s'_x$ be a nowhere zero section associated to this framing and to~$x \in \partial D^2$. 
The idea is to redefine this section on the genus and plumbing loops, so that the push offs become nullhomologous.

Since $\Sigma^\circ$ deformation retracts onto its $1$-skeleton~$(\Sigma^\circ)^{(1)}$,  the section $s'_x$ is determined by its values on $(\Sigma^\circ)^{(1)}$.
This $1$-skeleton is homotopy equivalent to the union of the genus loops $\eta_i$ together with the $\omega_i^{\operatorname{arc}}$ arcs,  the boundary components $\partial \Sigma^\circ \smallsetminus \partial \Sigma$,  and some arcs joining them.
If~$c$ is one of these loops or arcs,  then~$s'_x(c)$ takes values in~$\mathbb{S}(\nu(\Sigma^\circ)|_{c})$ which is a trivial circle bundle over $c$.

Consider~$\mathbb{S}(\nu(\Sigma^\circ)|_{{\eta}})$ with~${\eta}$ a genus loop.
Using Lemma~\ref{lem:NiceReformulationGenus} we have~$\widehat{\gamma}^{-1}(\eta)=\iota \circ s_x'(\eta).$
The homology of the torus~$\mathbb{S}(\nu(\Sigma^\circ))|_{{\eta}}$ is generated by the~$S^1$-fibre and a longitude.
The longitude is not canonical and is determined by choosing one of the~$\Z$'s worth of framings of the trivial~$\R^2$-bundle~$\nu(\Sigma^\circ)|_{{\eta}}.$
If a section~$s$ is nice, then each of the~$\iota \circ s(\eta)$ has no meridional component in homology.
While this might not be the case for~$s'_x$,  this can be arranged by redefining it on the~$\eta$ by adding multiples of the~$S^1$-fibre to~$s'_x(\eta)$.

Next we consider~$\widehat{\gamma}^{-1}(\omega \times \{x\}) \in H_1(N_S) \cong \Z$ with~$\omega \times \{x\} \subset P$ a plumbing loop.
Using Lemma~\ref{lem:NiceReformulationPlumbing} we have~$(\widehat{\gamma}_*)^{-1}(\omega)=\iota \circ s_x(\omega^{\operatorname{arc}}).$
The framing is fixed on the endpoints because, no matter the framing of~$\Sigma^\circ$,  the remainder of~$\widehat{\gamma}$ is defined using the~$\alpha_k$.
We can nonetheless modify the homology class of~$\widehat{\gamma}^{-1}(\omega \times\{x\})=\iota \circ s_x'(\omega^{\operatorname{arc}})$ by adding multiples of the~$S^1$-fibre to~$s'_x(\omega^{\text{arc}})$.
Since~$H_1(N_{e(\Sigma)})\cong \Z$ is generated by the meridian,  this allows us to modifiy~$(\widehat{\gamma})^{-1}(\omega) \in H_1(N_{e(\Sigma)})$ until it vanishes.

Since~$(\nu(e|_{\Sigma^\circ}),\iota|)$ is an oriented vector bundle,  we can apply a rotation to our newly obtained ``nice section" to obtain a second nowhere zero nice section.
Together, these sections give rise to the required nice framing.
\end{proof}

\begin{remark}
\label{rem:ClosedFraming}
We note that the results of this section also hold for immersions of closed surfaces in closed simply-connected~$4$-manifolds.
The proofs and constructions are entirely analogous (and are in fact simpler since there is no knot $K$) with $E_U,W_U$ and $P_U$ in place of $E,W$ and $P$.

In particular,  nice framings are defined similarly and, given a closed simply-connected~$4$-manifold $X$ and a $\Z$-immersion $e \colon \Sigma_g \looparrowright X$ that is compatible with $\alpha$, Proposition~\ref{prop:IdentifyW} can be adapted to show that under the same assumptions, a nice framing extends to a homeomorphism~$\gamma \colon \overline{\nu}(e) \to E_U$ that, in the presence of a family of double point charts for $(e,\alpha)$ and a normal bundle $(\nu(e),\iota)$ further gives rise to a homeomorphism~$\widehat{\gamma} \colon \overline{\nu}(S) \to W_U$ that then restricts to a homeomorphism $\widehat{\gamma} \colon \partial \overline{\nu}(S) \xrightarrow{\cong} P_U.$
\end{remark}

\section{Immersed $\Z$-surfaces and $\Z$-fillings of $P_K$.}
\label{sec:MainStatement}

The goal of this section is to prove that equivalence classes of immersed $\Z$-surfaces correspond bijectively to fillings of $P_K$ with fundamental group $\Z$.
Section~\ref{sub:StatementBijection} introduces the notation needed to make this statement precise in Theorem~\ref{thm:SurfacesManifolds},  whereas Sections~\ref{sub:ImmersionsManifolds}-\ref{sub:ImmersionsToSurfaces} are concerned with the proof of Theorem~\ref{thm:SurfacesManifolds}.
The modifications needed to obtain the analogues of these results for immersions of closed surfaces will be discussed in Section~\ref{sub:ProofClosed}.

\subsection{The  statement}
\label{sub:StatementBijection}

The goal of this section is to describe a bijection between the set of equivalence  classes rel.\ \ boundary of immersed $\Z$-surfaces and the set of rel. boundary homeomorphism classes of pairs~$(V,f)$, where $V$ is a $\Z$-filling of $P_K$ and $f \colon \partial V \to P_K$ is a homeomorphism. 
We recall the notation needed to make this statement more precise.

\begin{notation}
Fix a knot $K \subset S^3$,  a tubular neigbhorhood $\overline{\nu}(K)$ and write $E_K$ for the knot exterior.
We also fix a meridian $\mu_K$,  a $0$-framed longitude $\lambda_K$ of $K$, and
\begin{itemize}
\item non-negative integers~$c,c_+$ and~$c_-$ such that~$c=c_++c_-$,
\item a compact oriented surface~$\Sigma:=\Sigma_{g,1}$ with a single boundary component,
\item an embedding $\alpha \colon \bigsqcup_{2c} D^2 \hookrightarrow \Sigma$ with $\im(\alpha)\subset  \intt(\Sigma)$.
Set $B_i:=\alpha_i(D^2)$ for~$i=1,\ldots,2c$ and label the first~$2c_+$ of these disks with a sign~$+$ and the remaining~$2c_-$ with a sign~$\unaryminus$. 
\end{itemize}
Recall the plumbed $3$-manifold $P$ from Construction~\ref{cons:W} and the homeomorphism $d_K \colon \partial  E_K \to \partial  P$ with $d_K(\mu_K)=\mu$ and $d_K(\lambda_K)=\partial \Sigma \times \lbrace \pt \rbrace.$
As in Construction~\ref{cons:PK}, consider the closed $3$-manifold 
$$P_K:=E_K \cup_{d_K} P.$$
We remind the reader that the manifolds $P:=P_g(c_+,c_-)$ and $P_K:=P_{K,g}(c_+,c_-)$ depend on the genus $g$ of the surface~$\Sigma:=\Sigma_{g,1}$,  the integers $c_+,c_- \geq 0$ and on the fixed embedding $\alpha.$

As proved in Proposition~\ref{prop:HomologyPK}, the group $H_1(P_K)$ sits in an exact sequence
$$0 \to  \Z \xrightarrow{j} H_1(E_K) \oplus H_1(P) \to H_1(P_K) \to 0 $$
where $j(1)=(\mu_K,\mu)$.  
The inclusions therefore lead to an isomorphism
$$H_1(P_K) \cong H_1(P)/\Z\langle \mu \rangle \oplus \Z\langle \mu_K \rangle.$$
Consider the epimorphism from Section~\ref{sub:CoefficientSystems}
\begin{equation}
\label{eq:CoeffSys}
 \varphi \colon H_1(P_K) \to \Z
 \end{equation}
that vanishes on the genus and plumbing loops and satisfies~$\varphi(\mu_K)=1.$
\end{notation}

\begin{notation}
\label{def:Surface(g)RelBoundary}
For a nondegenerate hermitian form~$\lambda$ over $\Z[t^{\pm 1}]$, set
\begin{align*}
 \operatorname{Surf}(g;c_+,c_-)^0_\lambda(N,K)&:= \lbrace [S] \in \Surf^0(g;c_+,c_-) \mid  \lambda_{N_S}\cong \lambda \rbrace
 \\
\mathcal{V}_\lambda^0(P_K)&:=
\frac{\lbrace (V^4,f) \mid \pi_1(V) \cong \Z \text{ and } f \colon \partial V \xrightarrow{\cong} P_K \text{ and } \lambda_{V}\cong \lambda \rbrace}
{\text{homeomorphism rel.~$\partial$}}. 
\end{align*}
Here, when we write $f \colon \partial V \xrightarrow{\cong} P_K$, we require~$\pi_1(P_K) \xrightarrow{f_*} \pi_1(\partial V) \to \pi_1(V) \cong \Z$
agree with the homomorphism $\varphi \colon \pi_1(P_K) \to \Z$ from~\eqref{eq:CoeffSys}. 
 It is also understood that $f$ is orientation-preserving.
Also, two pairs $(V_1,f_1)$ and $(V_2,f_2)$ are said to be \emph{homeomorphic rel. boundary} if there is a homeomorphism $F \colon V_1 \to V_2$ with $f_2 \circ F|_{\partial V_1} =f_1$.

Finally, we set $\varepsilon:=\ks(N)$ and 
$$\mathcal{V}_\lambda^{0,\varepsilon}(P_K)=\{[V,f] \in \mathcal{V}_\lambda^0(P_K) \mid \ks(V)=\varepsilon \}.$$
Note that when no further condition is imposed on $\lambda$, these sets might be empty. 
\end{notation}

\begin{construction}[The action of $\Homeo_\alpha(\Sigma,\partial)$ on $\mathcal{V}_\lambda^0(P_K)$]
\label{cons:ActionHomeo}
Recall that $\Homeo_\alpha(\Sigma,\partial)$ denotes the group of rel. boundary homeomorphisms of $\theta \colon \Sigma \to \Sigma$ that satisfy $\theta \circ \alpha=\theta$.
Recall that a homeomorphism~$\theta \in \Homeo_\alpha(\Sigma,\partial)$ gives rise to a homeomorphism $\widehat{\theta} \colon W \to W$: consider the homeomorphism~$\theta|_{\Sigma^\circ} \times \id_{D^2}$ of $\Sigma^\circ \times D^2$,  and extend over each $D^2 \times D^2$ by the identity to get a homeomorphism of~$E$. Finally,   verify that this homeomorphism descends to $ \widehat{\theta} \colon W \to W$.
Slightly abusing notation we also write $\widehat{\theta} \colon P_K \to P_K$ for the homeomorphism obtained by extending~$\widehat{\theta}| \colon P \to P$ by the identity on $E_K$.
Define 
$$\theta \cdot (V,f):=(V,\widehat{\theta} \circ f).$$
To verify this is an action, one notes the equalities~$\widehat{\id_{\Sigma}}=\id_W$ and $\widehat{\theta \circ \theta'}=\widehat{\theta} \circ \widehat{\theta}'$.
\end{construction}

We are now ready to state the main result of this section,  a more precise version of which will be formulated in Theorem~\ref{thm:SurfacesManifoldsMainProof}.
\begin{theorem}
\label{thm:SurfacesManifolds}
Let~$N$ be a simply-connected~$4$-manifold with boundary~$\partial N \cong S^3$,  let~$K \subset S^3$ be a knot,  let $c_+,c_-$ and $g$ be non-negative integers, and let~$\lambda$ be  a nondegenerate hermitian form with $\lambda(1) \cong Q_N \oplus (0)^{2g+c}$, where $c:=c_++c_-.$

Taking the exterior of an immersed surface (see Construction~\ref{cons:EmbVBijection}) gives rise to a bijection
$$
\operatorname{Surf}_\lambda^0(g;c_+,c_-)(N,K) \xrightarrow{\cong}
\begin{cases}
\mathcal{V}_\lambda^0(P_K)/\operatorname{Homeo}_\alpha(\Sigma,\partial) &\quad \text{ if $\lambda$ is even}
\\
 \mathcal{V}_\lambda^{0,\varepsilon}(P_K)/\operatorname{Homeo}_\alpha(\Sigma,\partial)
 &\quad \text{ if $\lambda$ is odd.}
\end{cases}
$$
\end{theorem}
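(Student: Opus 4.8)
The plan is to construct the bijection in two stages, using the two main reduction results already proved in the excerpt. First, Proposition~\ref{prop:ImmIsImmalpha} together with the refinement noted in Remark~\ref{rem:ClosedImmSurfAdapt}(first item) identifies
$\operatorname{Surf}_\lambda^0(g;c_+,c_-)(N,K)$ with
$\operatorname{Imm}_\alpha(g;c_+,c_-)_\lambda^0(K,N)/\Homeo_\alpha(\Sigma,\partial)$,
the set of $\alpha$-compatible $\Z$-immersions whose exterior has equivariant intersection form $\lambda$, modulo $\Homeo_\alpha(\Sigma,\partial)$. (Here one has to check that the defining condition $\lambda_{N_S}\cong\lambda$ is invariant under equivalence rel.\ boundary, which is immediate, and that it descends along the bijection of Proposition~\ref{prop:ImmIsImmalpha}, which is where the parenthetical remark in Remark~\ref{rem:ClosedImmSurfAdapt} is used.) So it remains to produce a bijection
$$\operatorname{Imm}_\alpha(g;c_+,c_-)_\lambda^0(K,N) \xrightarrow{\ \cong\ }
\begin{cases}\mathcal{V}_\lambda^0(P_K) & \lambda\text{ even}\\ \mathcal{V}_\lambda^{0,\varepsilon}(P_K) & \lambda\text{ odd}\end{cases}$$
that is $\Homeo_\alpha(\Sigma,\partial)$-equivariant, where on the source $\theta$ acts by precomposition $e\mapsto e\circ\theta^{-1}$ and on the target by Construction~\ref{cons:ActionHomeo}.

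The map itself is the exterior construction. Given an $\alpha$-compatible $\Z$-immersion $e\colon\Sigma\looparrowright N$, choose a family of double point charts $\mathcal U$ for $(e,\alpha)$, a $\mathcal U$-adapted normal bundle $(\nu(e),\iota)$ extending the boundary normal bundle of Construction~\ref{cons:normalbundleforboundary} (this exists by Proposition~\ref{prop:NormalBundle}), and a nice framing $\fr$ of $\nu(e|_{\Sigma^\circ})$ (this exists by Proposition~\ref{prop:FramingPushoffUsingF}, using that $\Z$-surfaces are nullhomologous, Proposition~\ref{prop:ZSurfaceAreNullhomologous}). Set $V:=N_{e(\Sigma)}=N\smallsetminus\intt(\overline\nu(e(\Sigma)))$. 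By Proposition~\ref{prop:HomologyZExterior} we have $\pi_1(V)\cong\Z$, and by Proposition~\ref{prop:NecessaryConditions} the equivariant intersection form $\lambda_V$ presents $\Bl_{P_K}$ and satisfies $\lambda_V(1)\cong Q_N\oplus(0)^{2g+c}$; in the case at hand $\lambda_V\cong\lambda$ by hypothesis on the subset we restrict to. The homeomorphism $f\colon\partial V\to P_K$ is assembled from Proposition~\ref{prop:IdentifyW}(2): the nice framing gives $\widehat\gamma\colon\partial\overline\nu(e(\Sigma))\smallsetminus(\partial N\smallsetminus\intt\overline\nu(K))\xrightarrow{\cong}P$, and we glue this to the standard identification $\partial N_S\cap(\partial N\smallsetminus\nu(K))=E_K$ via the framing $d_K$ of Notation~\ref{not:nbhdforK}; niceness of $\fr$ is exactly the condition (Definition~\ref{def:NiceFraming}, via Lemmas~\ref{lem:NiceReformulationGenus} and~\ref{lem:NiceReformulationPlumbing}) guaranteeing that $\pi_1(P_K)\xrightarrow{f_*}\pi_1(\partial V)\to\pi_1(V)\cong\Z$ agrees with the prescribed $\varphi$. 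When $\lambda$ is odd we record $\ks(V)=\ks(N)=\varepsilon$ (additivity of Kirby--Siebenmann, since the normal bundle and $P_K$ contribute nothing), landing in $\mathcal V_\lambda^{0,\varepsilon}(P_K)$. One then checks well-definedness: different choices of $\mathcal U$, of $\mathcal U$-adapted normal bundle, and of nice framing all yield pairs $(V,f)$ that are homeomorphic rel.\ boundary — for the normal bundle and double point charts one invokes the uniqueness machinery for normal bundles of embeddings away from the double points (Freedman--Quinn) together with Lemma~\ref{lem:Homeo}, and for the framing one uses that any two nice framings differ by a map $\Sigma^\circ\to \mathrm{SO}(2)$ killing the genus and plumbing loops in $H_1(V)$, which is absorbed by a homeomorphism of $V$ supported near $\partial V$. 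Finally, equivalent immersions rel.\ boundary (by $F\colon N\to N$) give homeomorphic exteriors rel.\ boundary via $F|_V$, after adjusting the normal-bundle/framing data using the uniqueness statements just cited; this shows the map descends to the quotient and, at the level of immersions before quotienting, that it is $\Homeo_\alpha(\Sigma,\partial)$-equivariant (precomposition by $\theta$ changes the identification $\widehat\gamma$ precisely by $\widehat\theta$, matching Construction~\ref{cons:ActionHomeo}).

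For injectivity, suppose $(N_{e(\Sigma)},f)$ and $(N_{e'(\Sigma)},f')$ are homeomorphic rel.\ boundary via $G$. Glue back the tubular neighborhoods: the models $W$ (Construction~\ref{cons:W}) for $\overline\nu(e(\Sigma))$ and $\overline\nu(e'(\Sigma))$ are canonically homeomorphic, and $G$ together with this identification and the compatibility of $f,f'$ with the gluing $d_K$ extends to a rel.\ boundary homeomorphism $\widehat G\colon N\to N$ carrying $e(\Sigma)$ to $e'(\Sigma)$ (the immersions being recovered as the zero sections of the plumbed bundles, which are preserved by construction); hence $e,e'$ define the same class in $\operatorname{Imm}_\alpha(\cdots)/\Homeo_\alpha(\Sigma,\partial)$ by Proposition~\ref{prop:ImmIsImmalpha}. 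For surjectivity one runs this in reverse: given $(V,f)$, form $N:=V\cup_f W$ (re-gluing the standard plumbed neighborhood), which is a closed simply-connected $4$-manifold (resp.\ with $\partial N\cong S^3$) with the correct intersection form $Q_N$ — here one uses $\lambda(1)\cong Q_N\oplus(0)^{2g+c}$ and, in the odd case, the Kirby--Siebenmann bookkeeping — so that by Freedman's classification $N$ is homeomorphic to the given $N$; under such a homeomorphism the zero section $[\times\{0\}]\colon\Sigma\looparrowright W\subset N$ of Remark~\ref{rem:SigmaInW} becomes the desired $\alpha$-compatible $\Z$-immersion with exterior $(V,f)$, and niceness of the resulting framing is automatic since the plumbing and genus loops already bound in $V$ by the defining property of $\mathcal V_\lambda^0(P_K)$ (the coefficient system condition on $f$). \textbf{The main obstacle} is the well-definedness of $f$ up to rel.\ boundary homeomorphism — i.e.\ proving that the whole package (double point charts, $\mathcal U$-adapted normal bundle, nice framing) is unique enough that the pair $(V,f)$ does not depend on it; this is precisely where Lemma~\ref{lem:Homeo}, Theorem~\ref{thm:HopfConcordancesEquivalent} (hidden inside Lemma~\ref{lem:Homeo}), and the careful Kirby--Siebenmann/framing bookkeeping all have to be combined, and it is the reason the authors warn that this section is the longest and most detail-heavy part of the paper.
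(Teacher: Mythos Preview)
Your proposal is correct and follows essentially the same strategy as the paper: reduce to $\alpha$-compatible immersions via Proposition~\ref{prop:ImmIsImmalpha}, define $\Theta(e)=(N_{e(\Sigma)},f)$ using a nice framing, establish well-definedness via Lemma~\ref{lem:Homeo} and Freedman--Quinn normal bundle uniqueness, construct the inverse by gluing $W$ back and invoking Freedman's classification, and finally check $\Homeo_\alpha(\Sigma,\partial)$-equivariance. The only organizational differences are that the paper packages your injectivity/surjectivity arguments as an explicit inverse map $\Psi$ (Construction~\ref{cons:Psi}) with a careful verification that $\Theta\circ\Psi=\id$ and $\Psi\circ\Theta=\id$ (Proposition~\ref{prop:EmbVBijections}), and treats all the choice-independence (double point charts, adapted normal bundle, and nice framing) in a single unified argument (Proposition~\ref{prop:PhiWellDefined}) rather than handling the framing case separately as you do.
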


The proof shows that if $\operatorname{Surf}_\lambda^0(g;c_+,c_-)(N,K)$ is nonempty, then so is the target of the bijection (without any assumption on~$\lambda$).  
The converse requires the assumption that~$\lambda(1) \cong Q_N \oplus  (0)^{\oplus 2g+c}.$

Theorem~\ref{thm:SurfacesManifolds} will be proved in three steps (here, we focus on the case where $\lambda$ is even for simplicity).
\begin{enumerate}
\item Section~\ref{sub:ImmersionsManifolds} defines a map $\Theta \colon \operatorname{Imm}_\alpha(g;c_+,c_-)_\lambda^0(N,K) \to \mathcal{V}_\lambda^0(P_K)$.
\item Section~\ref{sub:ManifoldsImmersions} proves that the map $\Theta$ is a bijection, by defining an inverse $\Psi$.
\item 
Section~\ref{sub:ImmersionsToSurfaces} argues that $\Theta$ intertwines the $\operatorname{Homeo}_\alpha(\Sigma,\partial)$-actions and thus descends to 
$$\operatorname{Surf}_\lambda^0(g;c_+,c_-)(N,K)  \cong \frac{\operatorname{Imm}_\alpha(g;c_+,c_-)_\lambda^0(N,K)}{\Homeo_\alpha(\Sigma,\partial)} \xrightarrow{\cong} \frac{\mathcal{V}_\lambda^0(P_K)}{\Homeo_\alpha(\Sigma,\partial)}.$$
Here, the first bijection was proved in Proposition~\ref{prop:ImmIsImmalpha}.
\end{enumerate}

\subsection{From immersions to manifolds.}
\label{sub:ImmersionsManifolds}

This section defines the bijection $\Theta$ that will be used to prove Theorem~\ref{thm:SurfacesManifolds}.
During the remainder of this section we fix a homeomorphism~$h \colon \partial N \xrightarrow{\cong} S^3$ and, slightly abusively,  we write $K \subset \partial N$ instead of $h^{-1}(K)$.
Recall that we have a fixed tubular neighborhood $\overline{\nu}(K) \subset S^3$,  which pulls back under $h$ to give a tubular neighbohood $h^{-1}(\overline{\nu}(K)) \subset N$.
We will use~$E_K$ exclusively to denote~$S^3 \smallsetminus \intt(\overline{\nu}(K))$.

\begin{convention}
Given an immersion $e$, in this section, we use $\nu(e)$ to denote both the normal vector bundle of $e$ (this is an $\R^2$-bundle) and the interior of the disk bundle~$\overline{\nu}(e)$.
We do so to avoid notation such as $\intt(\iota(\overline{\nu}(e))).$
\end{convention}

We begin the construction of the bijection $\Theta$.

\begin{construction}
\label{cons:EmbVBijection}
Given an $\alpha$-compatible immersion $e \colon \Sigma \looparrowright N$, we write $N_{e(\Sigma)}$ for the complement of a yet-to-be-chosen tubular neighborhood of $e(\Sigma)$
and aim to construct a homeomorphism
$$f \colon \partial N_{e(\Sigma)} \to P_K.$$
The pair $(N_{e(\Sigma)},f)$ will depend on several choices, but we will show that the outcome is independent of these choices up to equivalence rel.\  boundary. 
In particular,  we choose:
\begin{itemize}
\item A family $\mathcal{U}$ of double point charts for  $(e,  \alpha)$.
\item A normal bundle $(\nu(e),\iota)$ as in Proposition~\ref{prop:NormalBundle} with
$\nu(e|_{\partial \Sigma})=\partial \Sigma \times \R^2.$
\item A nice framing
$\fr\colon \nu(e|_{\Sigma^\circ}) \xrightarrow{\cong} \Sigma^\circ \times \R^2$.
\end{itemize}

We also record some objects and notation that stem from these choices.
\begin{itemize}
\item The boundary of the surface exterior $N_{e(\Sigma)}=N\setminus \iota(\nu(e))$ decomposes as
\begin{equation}
\label{eq:DecompositionBoundarySurfaceExterior}
\partial N_{e(\Sigma)} \cong \big(\partial N \setminus \nu(K)\big) \cup \Big{(}\partial \iota(\overline{\nu}(e))\smallsetminus \left( \iota(\nu(e)) \cap \partial N \right)\Big{)}.
\end{equation}
Here, the first part of this union is homeomorphic to the knot exterior $S^3 \setminus \nu(K)$ via 
$$h| \colon \partial N \smallsetminus \nu(K) \to E_{K} \subset P_K.$$
while the second is homeomorphic to the plumbed $3$-manifold $P$.
\item Recall from Proposition~\ref{prop:IdentifyW} that the nice framing $\fr \colon \nu(e|_{\Sigma^\circ}) \cong \Sigma^\circ \times D^2$ extends to a homeomorphism $\gamma \colon \ol{\nu}(e)\to E$ which itself induces a homeomorphism $\widehat{\gamma} \colon \ol{\nu}(S) \to W$ that fits in the commutative diagram
\begin{equation}
\label{eq:IdentificationDiagram}
\xymatrix{
\overline{\nu}(e) \ar@{->>}[r]^-{\iota}\ar[d]^{\gamma}_\cong& \overline{\nu}(S) \ar[d]^{\widehat{\gamma}}_\cong \\
E \ar@{->>}[r]^{\operatorname{proj}}& W.
}
\end{equation}
The construction of $\widehat{\gamma}$ depends on the family of double point charts $\mathcal{U}$.

On the plumbed part of~\eqref{eq:DecompositionBoundarySurfaceExterior},  this homeomorphism restricts to
  \[ \widehat{\gamma}| \colon \Big{(}\partial \iota(\overline{\nu}(e))\smallsetminus \left( \iota(\nu(e)) \cap \partial N \right) \Big{)} \to P \subset P_K.\]
\end{itemize}
We claim that $h|$ and $\widehat{\gamma}|$ can be glued,  yielding the homeomorphism we have been building towards:
\begin{equation}
\label{eq:BoundaryHomeoSurface}
f:=h| \cup \widehat{\gamma}|\colon \partial N_{e(\Sigma)} \to P_K.
 \end{equation}
Recall that $\partial N_{e(\Sigma)} \cong \big(\partial N \setminus \nu(K)\big) \cup \Big{(}\partial \iota(\overline{\nu}(e))\smallsetminus \left( \iota(\nu(e)) \cap \partial N \right)\Big{)}$, glued along  the boundary of tubular neighborhood of $K \subset \partial N$.
On this torus, viewed as a subset of the second part of this union,  Construction~\ref{cons:SeifertFraming} ensures that $\iota= h^{-1} \circ (\fr_{K})^{-1}$,  which is certainly invertible.
Since we also know that on this torus,  $\gamma$ restricts to $\fr_{\partial \Sigma}= \id$, Proposition~\ref{prop:IdentifyW} implies that
\[\widehat{\gamma}|=\gamma \circ \iota^{-1}
= \id \circ (h^{-1} \circ (\fr_{K})^{-1})^{-1}
= \fr_{K} \circ h
= d_K \circ h,
\]
where for the last equality we used that~$\fr_{K}|_{\partial \overline{\nu}(K)}= d_K$. 
This implies that the maps $\widehat{\gamma}$ on $P$ and~$h$ on~$E_K$ glue together to give a well-defined map on $P_K= E_K \cup_{d_K} P$, as claimed.
\end{construction}


\begin{remark}
\label{rem:fonpi1}
We argue that the following composition agrees with $\varphi \colon H_1(P_K) \to \Z$:
$$ \pi_1(P_K) \xrightarrow{f_*^{-1}} \pi_1(\partial N_{e(\Sigma)}) \to \pi_1(N_{e(\Sigma)}) \xrightarrow{\cong} \Z.$$
This map factors though $H_1(P_K)$ which, recall from Proposition~\ref{prop:HomologyPK} is freely generated by the meridian of $K$, the genus loops and the plumbing loops.

It therefore suffices to prove that the above homomorphism maps $\mu_K$ to $1$ and the other generators to zero.
For the meridian, $f^{-1}(\mu_K)=d_K^{-1}(\mu_K)=(\{ \pt\} \times \partial D^2)$ is a meridian of $e(\Sigma)$ and is therefore mapped to $1$.
For the genus and plumbing loops, this follows because $\widehat{\gamma}$ was constructed using a nice framing.
\end{remark}
\color{black}

The next two propositions show that the assignment $e \mapsto  (N_{e(\Sigma)},f)$ gives rise to a map
$$\operatorname{Imm}_{\alpha}(g;c_+,c_-)_\lambda^0(N,K) \to \mathcal{V}^0_\lambda(P_K).$$
Note that Remark~\ref{rem:fonpi1} ensures that $(N_{e(\Sigma)},f)$ lies in $\mathcal{V}^0_\lambda(P_K)$.

\begin{proposition}
\label{prop:PhiWellDefined}
Up to equivalence rel.\ \ boundary, the pair $(N_{e(\Sigma)}, f)$ depends neither on the choice of the double point charts $\mathcal{U}:=\{ (U_p,\psi_p) \}$, nor on the choice of the $\mathcal{U}$-adapted immersion~$\iota \colon \nu(e) \looparrowright N$ nor on the nice framing~$\fr \colon \nu(e|_{\Sigma^\circ}) \cong \Sigma^\circ \times \R^2$.
\end{proposition}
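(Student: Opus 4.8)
The plan is to show that any two choices of double point charts, adapted immersions, and nice framings produce pairs that are related by a rel.\ boundary homeomorphism, handling one type of choice at a time in the right order. First I would address the double point charts and the adapted immersion together, since the normal bundle $(\nu(e),\iota)$ is only constrained away from the plumbing regions by uniqueness of normal bundles (Freedman--Quinn), while near the double points it is pinned down by the choice of $\mathcal{U}$ via the $\shrink_k$ maps. Given two families $\mathcal{U}$ and $\mathcal{U}'$ of double point charts (with their respective adapted immersions $\iota,\iota'$), Lemma~\ref{lem:Homeo} produces a rel.\ boundary homeomorphism $F\colon N\to N$ with $F(e(\Sigma))=e(\Sigma)$ that intertwines the charts, i.e.\ $F|_{U_i}=(\psi_i')^{-1}\circ\psi_i$. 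The key point is then that $F$ carries the tubular neighborhood $\iota(\overline\nu(e))$ to a tubular neighborhood of $e(\Sigma)$ which, after a further application of uniqueness of normal bundles of the embedding $e|_{\Sigma^\circ}$ rel.\ the already-matched plumbing pieces, can be taken to be $\iota'(\overline\nu(e))$; hence $F$ restricts to a rel.\ boundary homeomorphism $N_{e(\Sigma)}\to N_{e(\Sigma)}$ (with the two different tubular neighborhoods removed). One checks that this homeomorphism is compatible with the boundary identifications $f,f'$ built in Construction~\ref{cons:EmbVBijection}, because both $\widehat\gamma$ and $\widehat\gamma'$ were defined using the $\psi_i$ (resp.\ $\psi_i'$) and $F$ was chosen to intertwine these, while on the knot-exterior part $h$ is unchanged.

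Next I would fix $\mathcal{U}$ and $\iota$ and vary only the nice framing $\fr\colon \nu(e|_{\Sigma^\circ})\xrightarrow{\cong}\Sigma^\circ\times\R^2$. Two nice framings $\fr_0,\fr_1$ both extend the Seifert framing $\fr_\partial$ on $\partial\Sigma^\circ$ and both restrict to the identity on the neighborhoods of the $\partial B_k$ (by clause~(1) of Definition~\ref{def:NiceFraming} together with~\eqref{eq:fr}), so $\fr_1\circ\fr_0^{-1}$ is a self-homeomorphism of $\Sigma^\circ\times\R^2$ that is the identity near the boundary and near the $\alpha_k$-circles; equivalently it is a map $\Sigma^\circ\to\mathrm{GL}^+_2(\R)\simeq S^1$ relative to that subcomplex. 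Since $N_{e(\Sigma)}$ is the same manifold in both cases (only the identification $\gamma$ of $\overline\nu(e)$ with $E$ changes), the two pairs $(N_{e(\Sigma)},f_0)$ and $(N_{e(\Sigma)},f_1)$ differ by $f_1\circ f_0^{-1}\colon P_K\to P_K$, a self-homeomorphism of $P_K$ supported away from $E_K$ that on $P$ is induced by the framing-change map. I would then argue that we may extend $\mathrm{id}_{N_{e(\Sigma)}}$ over a collar of $\partial N_{e(\Sigma)}$ to absorb $f_1\circ f_0^{-1}$: the obstruction to doing so lives in the mapping class data of the framing change, and the \emph{niceness} condition~(2) of Definition~\ref{def:NiceFraming}---which forces the push-offs of genus and plumbing loops to be nullhomologous in $N_{e(\Sigma)}$, i.e.\ pins down the longitudinal framings up to the meridional $S^1$-fibre---is exactly what kills the homologically visible part, so that $f_1\circ f_0^{-1}$ becomes isotopic rel.\ the relevant subsurface to the identity, and its mapping torus over the collar gives the desired rel.\ boundary homeomorphism. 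Here one uses that the only freedom in a nice framing is a choice of section differing by multiples of the $S^1$-fibre on the $1$-skeleton, and that such a change extends across $\Sigma^\circ$ to an ambient isotopy of the tubular neighborhood.

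The main obstacle I anticipate is the last step: showing that a change of nice framing really is realized by a rel.\ boundary homeomorphism of $N_{e(\Sigma)}$ and not merely by an abstract homeomorphism of the pair $(N_{e(\Sigma)},\text{boundary})$. The subtlety is that two nice framings can still differ by a nontrivial element of $\pi_1(\mathrm{GL}^+_2\R)=\Z$ on each genus loop and each plumbing arc even after the niceness normalization kills the \emph{homological} ambiguity---niceness says the push-offs are nullhomologous, not that the framings literally agree---so one must check that the \emph{residual} framing difference is still realized by twisting the tubular neighborhood $\iota(\overline\nu(e))$ inside $N$, which changes the identification $\gamma$ but not the homeomorphism type of the pair. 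I would handle this by observing that a full twist of $\overline\nu(e)$ along a curve $c\subset\Sigma^\circ$ (a ``Dehn twist in the normal direction'') is an ambient move---it is supported in $\iota(\overline\nu(\mathcal N_{\Sigma^\circ}(c)))$---and realizes precisely the generator of the framing ambiguity on $c$ while fixing everything outside; composing finitely many of these matches $\fr_0$ to $\fr_1$ on the $1$-skeleton, hence (by cellular approximation and the fact that $\Sigma^\circ$ is homotopy equivalent to its $1$-skeleton) everywhere, and the composite ambient homeomorphism is the identity on $\partial N$. Once this is in place, combining the three reductions shows $(N_{e(\Sigma)},f)$ is well defined in $\mathcal V^0_\lambda(P_K)$ up to rel.\ boundary homeomorphism, and (with the evident bookkeeping) the Kirby--Siebenmann invariant of $N_{e(\Sigma)}$ equals $\ks(N)$, which will matter for the odd case later.
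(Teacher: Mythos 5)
Your proposal is correct and its core coincides with the paper's argument: both rest on Lemma~\ref{lem:Homeo} to intertwine the two families of double point charts and on Freedman--Quinn uniqueness of normal bundles applied over $\Sigma^\circ$ away from the (already matched) plumbing regions, followed by the check that the resulting ambient homeomorphism is compatible with $f$ and $f'$ because $\widehat\gamma,\widehat\gamma'$ are built from the $\psi_i,\psi_i'$. The one genuine difference is how the framing is treated: the paper compares the two \emph{framed} embeddings $H\circ\iota\circ\fr^{-1}$ and $\iota'\circ(\fr')^{-1}$ of $\Sigma^\circ\times\R^2$ and absorbs the framing change into a single invocation of normal-bundle uniqueness, whereas you isolate it and realize the residual difference by explicit normal Dehn twists supported in $\iota(\overline\nu(\mathcal N_{\Sigma^\circ}(c)))$. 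Your version makes visible a point the paper's one-line citation glosses over (why two trivializations differing by a class in $H^1(\Sigma^\circ,\partial\Sigma^\circ)$ are ambiently equivalent), at the cost of two small imprecisions you should tidy up: first, niceness is not what makes well-definedness work --- \emph{every} framing difference rel $\fr_\partial$ is killed by normal twists, and niceness is only needed so that $f$ intertwines the coefficient systems (Remark~\ref{rem:fonpi1}); your third paragraph effectively concedes this, so delete the claim in the second paragraph that niceness ``kills the homologically visible part.'' Second, since niceness of $\fr$ is a condition on push-offs in $N$ and hence depends on $\iota$, your two-step reduction needs the intermediate bookkeeping that the framing transported by the homeomorphism of step one is again nice for $\iota'$ before step two compares it with $\fr'$.
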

\begin{proof}
Assume that~$\mathcal{U}:=\{ (U_p,\psi_p) \}$ and $\mathcal{U}':=\{ (U_p',\psi_p') \}$ are two families of double point charts for $(e, \alpha)$  and that $\iota$ and $\iota'$ are two immersions of $\nu(e)$ that are $\mathcal{U}$ and $\mathcal{U}'$-adapted respectively.
Set 
$$N^\circ:=N \setminus \bigcup_p U_p \ \ \text{and} \ \ (N^\circ)':=N \setminus \bigcup_p U_p'.$$
Choose nice framings~$\fr$ and $\fr'$ (for $\iota$ and $\iota'$ respectively) extending the Seifert framing.
Lower in the proof, we will write $\widehat{\gamma}$ and $\widehat{\gamma}'$ for the corresponding homeomorphisms into $P$.

Choose a homeomorphism~$H \colon N \to N$  as in Lemma~\ref{lem:Homeo}.
Since $H$ takes the $U_p$ to the $U_p'$, we obtain two normal bundles
 of $\Sigma^\circ$ inside $(N^\circ)'$ namely
\begin{align*}
& H \circ \iota \circ {\fr}^{-1} \colon \Sigma^\circ \times \R^2 \hookrightarrow (N^\circ)'\\
&  \iota'  \circ ({\fr}')^{-1} \colon \Sigma^\circ \times \R^2 \hookrightarrow (N^\circ)'.
\end{align*}
The uniqueness of normal bundles from~\cite[Theorem 9.3]{FreedmanQuinn} ensures the existence of a rel. boundary homeomorphism 
$G_0 \colon (N^\circ)' \to (N^\circ)'$ with 
\begin{equation}
\label{eq:IsotopyPunctured}
G_0 \circ (H \circ \iota \circ {\fr}^{-1})= \iota'  \circ ({\fr}')^{-1}.
\end{equation}
Since $G_0$ restricts to the identity on $\partial (N^\circ)'=\partial N \sqcup \bigsqcup_p \partial U_p'$, it follows that we can extend it to a homeomorphism~$G \colon N\to N$ by defining $G|_{U_p'}:=\id_{U_p'}$ on each of the $U_p'$.
\begin{claim} \label{claim:GPresNbhd}
 $G$ preserves tubular neighbhorhoods, i.e.~$G(H(\iota(\overline{\nu}(e)))=\iota'(\overline{\nu}(e)).$
\end{claim}
\begin{proof}
Recall that the linear disk bundle $\overline{\nu}(e)$ decomposes as $\overline{\nu}(e|_{\Sigma^\circ}) \cup \overline{\nu}(e|_{\bigsqcup_k B_k})$ and that, by virtue of being $\mathcal{U}$-adapted, the immersion $\iota \colon \overline{\nu}(\Sigma) \looparrowright N$ satisfies, for $k \in \{ 2i-1,2i \}$,
$$  \iota(\overline{\nu}(e|_{\Sigma^\circ})) \subset N^\circ
 \ \ \text{and} \ \
\iota(\overline{\nu}(e|_{B_k})) \subset U_i. $$
We will verify the claim on these two parts of $\overline{\nu}(e)$.
We verify that $G(H(\iota(\overline{\nu}(e|_{\Sigma^\circ})))=\iota'(\overline{\nu}(e|_{\Sigma^\circ}))$.
This follows because $G$ was obtained by extending~$G_0$ and by recalling~\eqref{eq:IsotopyPunctured}:
$$
G(H(\iota(\overline{\nu}(e|_{\Sigma^\circ})))
=G \circ H \circ \iota \circ \fr^{-1}(\Sigma^\circ \times D^2)
=\iota' \circ (\fr')^{-1}(\Sigma^\circ \times D^2)
=\iota'(\overline{\nu}(e|_{\Sigma^\circ})).
 $$
We now verify that $G(H(\iota(\overline{\nu}(e|_{B_j})))=\iota'(\overline{\nu}(e|_{B_j})).$
For $k \in \{2i-1,2i \}$, we have that $\iota(\overline{\nu}(e|_{B_k})) \subset U_i$ and $\iota'(\overline{\nu}(e|_{B_k})) \subset U_i'$;  $H|_{U_i}=(\psi_i')^{-1} \circ \psi_i$ takes $U_i$ to $U_i'$; and~$G|_{U_i'}=\id$, so this further reduces to showing that $(\psi_i')^{-1} \circ \psi_i \circ \iota=\iota'$ on $\overline{\nu}(e|_{B_k})=B_k \times D^2$ i.e.  that  $\psi_i \circ \iota=\psi_i' \circ \iota'$ on each $B_k \times D^2$.

By definition of an adapted normal bundle (recall Definition~\ref{def:UAdapted})
we have
$$ \psi_i \circ \iota \circ (\alpha_k \times \id_{D^2})
=\shrink_k
=\psi_i' \circ \iota' \circ (\alpha_k \times \id_{D^2}).$$
As $\alpha_k \times \id_{D^2}$ is a homeomorphism onto $B_k \times D^2$,  the equality $(\psi_i \circ \iota)|_{B_k \times D^2}=(\psi_i' \circ \iota')|_{B_k \times D^2}$~follows.
\end{proof}

\color{black}
 Claim~\ref{claim:GPresNbhd}
 implies that $G \colon N \to N$ restricts to a homeomorphism $N \setminus H(\iota(\nu(e))) \to N \setminus \iota'(\nu(e))$.
The situation we have so far can be summarized in the following diagram:
$$
\xymatrix@C1.5cm{
P_K \ar[d]^= 
& \partial (N \setminus \iota(\nu(e))) \ar[r]^-{\subset}\ar[d]^-{H} \ar[l]_-{h \cup \widehat{\gamma}} 
& N \setminus \iota(\nu(e)) \ar[r]^-{\subset}\ar[d]^-{H}  
& N \ar[d]^-{H} \\
P_K 
\ar[d]^= 
& \partial (N \setminus H(\iota(\nu(e)))) \ar[r]^-{\subset} \ar[d]^-{G}  \ar[l]_-{h \cup (\widehat{\gamma} \circ H^{-1})} 
& N \setminus H(\iota(\nu(e))) \ar[r]^-{\subset} \ar[d]^-{G}
&  N \ar[d]^-{G} \\
P_K 
& \partial (N \setminus \iota'(\nu(e))) \ar[r]^-{\subset}  \ar[l]_-{h \cup \widehat{\gamma}'} 
& N \setminus \iota'(\nu(e)) \ar[r]^-{\subset}
 & N. \\
}
$$
We now will be done if we can show that this diagram commutes.
The only real substance lies in verifying that the bottom left square commutes.
Since $G|_{\partial N}=\id_{\partial N}$, we have $h \circ G=h \colon \partial N \to S^3$ and so this further reduces to proving that
$$G \circ H \circ \widehat{\gamma}^{-1}=\widehat{\gamma}'^{-1}.$$
Recall that the domain of these functions, namely the plumbed $3$-manifold~$P$, is defined as a subset of $\partial W$ where $W=E/\sim_W$ is a quotient of the $D^2$-bundle $E=(\bigsqcup (D^2 \times D^2)_i \sqcup \Sigma^\circ \times D^2)/\sim_E$.
We will therefore verify the claim on (the image of)~$\Sigma^\circ \times D^2$ and then on (the image of)~$D^2 \times D^2$.
Since the functions involved descend to $P$,  our result will follow.

On the part coming from $\Sigma^\circ \times D^2$, the homeomorphism $\widehat{\gamma}^{-1}$ is induced from~$\iota \circ {\fr}^{-1}$ (recall Proposition~\ref{prop:IdentifyW}).
As~$G \colon N \to N$ is obtained by extending $G_0 \colon (N^\circ)' \to (N^\circ)'$, the equality follows from~\eqref{eq:IsotopyPunctured} which stated that
$$G_0 \circ (H \circ \iota \circ \fr^{-1})= \iota'  \circ (\fr')^{-1}.$$

We now verify the equality on the subsets of $P$ that arise from the $D^2 \times D^2$.
For $k \in \{2i-1,2i\},$
we have to show that the outer square of the following diagram commutes:
$$
\xymatrix@R0.5cm{
D^2 \times D^2\ar[dd]^=
& B_k \times D^2  \ar[l]_{\widehat{\gamma}} \ar[r]^-{\iota} 
&U_i \ar[rr]^{H} \ar[rd]^{\psi_i}
&
&U_i' \ar[dd]^{G|_{U_i'}=\id} \ar[ld]^{\psi_i'} \\
&
&
& D^2\times D^2
& \\
D^2 \times D^2 
&  B_k \times D^2 \ar[rrr]^{\iota'} \ar[l]_{\widehat{\gamma}'}
&
&
&U_i'.  \ar[ul]^{\psi_i'}
}
$$
Since Lemma~\ref{lem:Homeo} ensures that~$H|_{U_i}= (\psi_i')^{-1} \circ \psi_i$, the upper triangle commutes.  The rightmost triangle clearly commutes,  leaving us to consider the left part of the diagram. 
By Proposition~\ref{prop:IdentifyW} we have that  on $B_k \times D^2 \subset \Sigma \times D^2$,  the homeomorphisms~$\widehat{\gamma}$ and $\widehat{\gamma}'$ both agree with  
 $\alpha_k^{-1} \times\id_{ D^2}$,  independently of $\fr$ and $\fr'$.  
 Combining this with the second condition of 
Definition~\ref{def:UAdapted},  we see that the left hand side of the diagram commutes:  
$$ \psi_i \circ \iota \circ \widehat{\gamma}^{-1}
= \psi_i \circ \iota \circ (\alpha_k \times \id_{D^2})
=\shrink_k
=\psi_i' \circ \iota' \circ (\alpha_k \times \id_{D^2})
=\psi_i' \circ \iota' \circ \widehat{\gamma}'^{-1}.$$
This concludes the proof of the proposition.
\end{proof}

The next proposition concludes the proof that the assignment $e \mapsto  (N_{e(\Sigma)},f)$ gives rise to a well defined map $\operatorname{Imm}_{\alpha}(g;c_+,c_-)_\lambda^0(N,K) \to \mathcal{V}^0_\lambda(P_K)$.

\begin{proposition}
\label{prop:ThetaWellDefEnd}
Up to equivalence rel. boundary, the pair $(N_{e(\Sigma)}, f)$ does not depend on the equivalence rel. boundary type of the immersion~$e$. 
\end{proposition}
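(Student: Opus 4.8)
\textbf{Proof plan for Proposition~\ref{prop:ThetaWellDefEnd}.} The plan is to show that if $e, e' \colon \Sigma \looparrowright N$ are equivalent rel.\ boundary immersions, then the pairs $(N_{e(\Sigma)}, f)$ and $(N_{e'(\Sigma)}, f')$ are homeomorphic rel.\ boundary. By Proposition~\ref{prop:PhiWellDefined} we are free to make any convenient choices of double point charts, adapted normal bundle, and nice framing when building $f$ and $f'$, so the strategy is to transport the data for $e$ forward along the given equivalence and then invoke Proposition~\ref{prop:PhiWellDefined} to compare with an independently chosen set of data for $e'$.

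First I would unpack the hypothesis: there is a rel.\ boundary homeomorphism $G \colon N \to N$ with $G \circ e = e'$. Fix a family $\mathcal{U} = \{(U_i, \psi_i)\}$ of double point charts for $(e, \alpha)$, a $\mathcal{U}$-adapted normal bundle $(\nu(e), \iota)$, and a nice framing $\fr$ of $\nu(e|_{\Sigma^\circ})$, so that Construction~\ref{cons:EmbVBijection} produces $(N_{e(\Sigma)}, f)$ with $f = h| \cup \widehat{\gamma}|$. Now push this data forward: $\mathcal{U}' := \{(G(U_i), \psi_i \circ G^{-1}|_{G(U_i)})\}$ is readily checked to be a family of double point charts for $(e', \alpha)$ (using $G \circ e = e'$ and that $G$ carries the double points of $e(\Sigma)$ to those of $e'(\Sigma)$ preserving signs, as $G$ is orientation-preserving); similarly $(\nu(e), G \circ \iota)$ is a $\mathcal{U}'$-adapted normal bundle for $e'$, via the identification $\nu(e) = \nu(e')$ coming from $G \circ e = e'$; and $\fr$, unchanged, is a nice framing for this pushed-forward normal bundle, because niceness is phrased in terms of vanishing of push-off classes in $H_1(N_{e(\Sigma)})$ and $G$ restricts to an orientation-preserving homeomorphism $N_{e(\Sigma)} \to N_{e'(\Sigma)}$ inducing an isomorphism on $H_1$ carrying meridian to meridian. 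Feeding $\mathcal{U}', (\nu(e'), G\circ \iota), \fr$ into Construction~\ref{cons:EmbVBijection} yields a pair $(N_{e'(\Sigma)}, f'')$.

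The key step is then to verify that $G$ itself furnishes the rel.\ boundary homeomorphism $(N_{e(\Sigma)}, f) \to (N_{e'(\Sigma)}, f'')$, i.e.\ that $G$ restricts to a homeomorphism $N \setminus \iota(\nu(e)) \to N \setminus (G\circ\iota)(\nu(e'))$ with $f'' \circ G|_{\partial N_{e(\Sigma)}} = f$. The first point is immediate since $G((G\circ\iota)(\nu(e')))$ — wait, rather $G(\iota(\nu(e))) = (G\circ\iota)(\nu(e))= (G \circ \iota)(\nu(e'))$ under our identification, so $G$ does carry the surface exterior for $e$ to that for $e'$. For the boundary identification, on the knot-exterior part $\partial N \setminus \nu(K)$ both $f$ and $f''$ restrict to $h|$, and $G|_{\partial N} = \id$, so there is nothing to check there; on the plumbed part, $f = \widehat{\gamma}$ is built from $\gamma$ (which extends $\fr$ and is $\alpha_k^{-1}\times\id$ on the $B_k\times D^2$) via the commutative square of Proposition~\ref{prop:IdentifyW}, and $f'' = \widehat{\gamma}''$ is built the same way from $G \circ \iota$, $\mathcal{U}'$, and $\fr$; a diagram chase — essentially the same one as in the bottom-left square of Proposition~\ref{prop:PhiWellDefined}, using that $\widehat{\gamma}^{-1}$ is induced by $\iota \circ \fr^{-1}$ on $\Sigma^\circ \times D^2$ and by $\alpha_k^{-1}\times\id$ on $D^2\times D^2$ — gives $\widehat{\gamma}'' \circ G = \widehat{\gamma}$. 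Finally, Proposition~\ref{prop:PhiWellDefined} says $(N_{e'(\Sigma)}, f'')$ and $(N_{e'(\Sigma)}, f')$ agree in $\mathcal{V}^0_\lambda(P_K)$, completing the argument.

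The main obstacle I anticipate is purely bookkeeping: checking that niceness of the framing is preserved under pushing forward by $G$ (one must confirm that $G$ intertwines the maps $\widehat\gamma$ and $\widehat\gamma''$ well enough that the defining condition $\widehat\gamma^{-1}(c) = 0 \in H_1(N_{e(\Sigma)})$ transports to $\widehat\gamma''^{-1}(c) = 0 \in H_1(N_{e'(\Sigma)})$, using that $G_* \colon H_1(N_{e(\Sigma)}) \to H_1(N_{e'(\Sigma)})$ is an isomorphism sending meridian to meridian), and that the equivariant intersection form condition $\lambda_{N_{e'(\Sigma)}} \cong \lambda$ is automatic since $G$ restricts to a homeomorphism of exteriors. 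None of these are deep, but they must be stated carefully, and I would present them as a short sequence of lemmas-in-line rather than grinding through each diagram in full.
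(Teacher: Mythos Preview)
Your proposal is correct and takes essentially the same approach as the paper: transport the auxiliary data (double point charts, adapted normal bundle, framing) along the rel.\ boundary homeomorphism and verify that the resulting $\widehat{\gamma}$'s are intertwined by it. The only cosmetic difference is direction---the paper fixes data for $e'$ and \emph{pulls back} along $H^{-1}$ to obtain data for $e$ (so no final appeal to Proposition~\ref{prop:PhiWellDefined} is needed), whereas you push forward from $e$ to $e'$ and then invoke Proposition~\ref{prop:PhiWellDefined} to reconcile with an independent choice; both work, and the paper's route is marginally cleaner for exactly the reason you anticipated in your last paragraph (niceness of the framing is inherited automatically from the commutative square rather than requiring a separate check).
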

\begin{proof}
Assume that~$e,e' \colon \Sigma \looparrowright N$ are immersions compatible with~$\alpha$ that are homeomorphic rel.\ boundary via a homeomorphism~$H \colon N \to N$ with~$H \circ e=e'$.
Pick a family of double point charts~$\mathcal{U}':=\{ (U_p',\psi_p')\}_p$ for~$(e',\alpha)$,  an~$\mathcal{U}'$-adapted normal bundle~$(\nu(e'),\iota')$,  and a nice framing~${\fr}'$ for~$\nu(e'|_{\Sigma^\circ})$.
In what follows, we will define a family of double point charts for~$(e,\alpha)$, an~$\mathcal{U}$-adapted normal bundle~$(\nu(e),\iota)$ for~$e$, and a nice framing for~$\nu(e|_{\Sigma^\circ})$ with respect to~$\iota$.

Consider the family of double point charts~$\mathcal{U}:=\{ (H^{-1}(U_p'),\psi_p' \circ H)\}_p$ for~$(e,\alpha)$,  and the normal bundle~$(\nu(e),\iota)$ for~$e$ given by~$\nu(e):=\nu(e')$, with the same projection onto~$\Sigma$,  and
$$\iota:= H^{-1} \circ \iota' \colon \nu(e)=\nu(e') \looparrowright N.$$

We verify that the normal bundle~$(\nu(e),\iota)$ is $\mathcal{U}$-adapted.
The requirement~$\nu(e|_{B_k})=B_k \times \R^2$ holds because~$\nu(e')=\nu(e)$ is $\mathcal{U}'$-adapted and so we focus on verifying that for $k \in \{2i-1,2i \}$ we have~$\psi_i \circ \iota \circ (\alpha_k \times \id_{D^2})=\shrink_k$.
To do so, we use the definitions of $\psi_i$ and $\iota$ and the fact that~$\iota$ is~$\mathcal{U}'$-adapted:
$$\psi_p \circ \iota \circ (\alpha_k \times \id_{D^2})
=(\psi_p' \circ (H^{-1})|_{U_p'}) \circ (  H|_{U_p} \circ \iota) \circ (\alpha_k \times \id_{D^2})
=\psi_p' \circ \iota'  \circ (\alpha_k \times \id_{D^2})
=\shrink_k.$$
Thus the normal bundle~$(\nu(e),\iota)$ is $\mathcal{U}$-adapted, as required.
\color{black}

Proposition~\ref{prop:IdentifyW} implies that the framings give rise to homeomorphisms~$\gamma \colon \overline{\nu}(e) \xrightarrow{\cong} E$ and~$\gamma' \colon \overline{\nu}(e') \xrightarrow{\cong} E$.
Since $\fr=\fr'$,  we have~$\gamma'=\gamma$.
Proposition~\ref{prop:IdentifyW} also implies that $\fr,\iota$ and~$\fr',\iota'$ give rise to homeomorphisms~$\widehat{\gamma} \colon \overline{\nu}(e) \xrightarrow{\cong} W$ and~$\widehat{\gamma}' \colon \overline{\nu}(e') \xrightarrow{\cong} W$ that satisfy $\widehat{\gamma} \circ \iota=\proj_W \circ \gamma$ and~$\widehat{\gamma}' \circ \iota'=\proj_W \circ \gamma'.$
We deduce that
$$  \widehat{\gamma}' \circ H \circ \iota
=\widehat{\gamma}' \circ H \circ (H^{-1} \circ \iota')
=\widehat{\gamma}' \circ \iota'
=\proj_W \circ \gamma'
=\proj_W \circ \gamma
=\widehat{\gamma} \circ \iota
$$
and therefore the following diagram commutes:
\color{black}
\begin{equation}
\label{eq:DiagramVerificationThetaRelBoundary}
\xymatrix{
W  \ar[d]^=
& \iota(\overline{\nu}(e)) \ar[r]^-{\subset } \ar[d]^{H} \ar[l]_-{\widehat{\gamma},\cong}
& N  \ar[d]^H\\
W
& \iota'(\overline{\nu}(e')) \ar[r]^-{\subset } \ar[l]_-{\widehat{\gamma}',\cong}
& N.
}
\end{equation}
Since $\fr'$ is a nice framing, the commutativity of this diagram ensures that so is~$\fr$.
It follows that~$(N \setminus \iota(\nu(e)),  f=h| \cup \widehat{\gamma})$ can be used to calculate $\Theta(e).$

We can now conclude the proof of the proposition.
As in Construction~\ref{cons:EmbVBijection}, the choices above lead to boundary homeomorphisms
\begin{align*}
& f=h| \cup \widehat{\gamma}  \colon \partial (N \setminus \iota(\nu(e))) \xrightarrow{\cong} P_K, \\
& f'=h| \cup \widehat{\gamma}' \colon \partial (N \setminus \iota'(\nu(e'))) \xrightarrow{\cong} P_K.
\end{align*}
Using the diagram from~\eqref{eq:DiagramVerificationThetaRelBoundary} and the fact that $H$ is a rel.\ boundary homeomorphism, we deduce that $H|_{ \partial (N \setminus \iota(\nu(e)))}=f'^{-1}\circ f$ and that $H$ restricts to a homeomorphism
$$H|\colon N \setminus \iota(\nu(e)) \to N \setminus \iota'(\nu(e')).$$
We conclude that $(N \setminus \iota(\nu(e)),f)$ agrees with $(N \setminus \iota'(\nu(e)),f')$ in $\mathcal{V}^0_\lambda(P_K)$.
\end{proof}

\subsection{From manifolds to immersions}
\label{sub:ManifoldsImmersions}

This section constructs an inverse~$\Psi$ to the assignment~$\Theta \colon e \mapsto (N_{e(\Sigma)},f)$ from Construction~\ref{cons:EmbVBijection}.
Let~$(V,f)$ be a pair, where~$V$ is a~$4$-manifold with fundamental group~$\pi_1(V)\cong\Z$,  equivariant intersection form~$\lambda_V\cong \lambda$ and,  in the odd case,  Kirby-Siebenmann invariant~$\ks(V)=\ks(N)$, and~$f  \colon \partial V \xrightarrow{\cong} P_K$ is a homeomorphism.

\begin{construction}
\label{cons:Psi}
Assume that $\lambda(1) \cong Q_N \oplus (0)^{2g+c}.$
The inverse~$\Psi(V,f)$ is an $\alpha$-compatible immersion $\Sigma  \looparrowright N$ defined as follows.
Glue the plumbed $4$-manifold~$W$ to~$V$ via the homeomorphism~$f^{-1}|_P$.
This produces a~$4$-manifold
$$\widehat{V}:=V \cup_{f^{-1}(P) \cong P} W$$
 with boundary~
 \[\partial \widehat{V}=(\partial V \setminus f^{-1}(P)) \cup (\partial \Sigma \times D^2)= f^{-1}(S^3\setminus \nu(K)) \cup (\partial \Sigma \times D^2) \cong S^3.\]
We outline how the canonical immersion~$[\times \{0\}] \colon \Sigma \looparrowright W\subset \widehat{V}$ from Remark~\ref{rem:SigmaInW} gives rise to an immersion~$\Sigma \looparrowright N$.
\color{black}

We will use the homeomorphism~$f \colon \partial V \to  P_K$ to define a homeomorphism~$f' \colon  \partial \widehat{V} \to \partial N$ and then use  Freedman's classification of compact simply-connected 4-manifolds with~$S^3$ boundary,  to deduce that this homeomorphism extends to a homeomorphism~$F\colon \widehat{V} \to N$. We will then take our immersion to be
$$\Psi(V,f)  :=F\circ  ([\times \lbrace 0 \rbrace])
\colon \Sigma  \looparrowright N.$$
The next paragraphs flesh out the details of this construction.
First we build the homeomorphism~$f' \colon \partial \widehat{V} \to \partial N$ and secondly we argue it extends to a homeomorphism $F \colon \widehat{V} \to N$. 
Finally in Construction~\ref{cons:UConstruction} we will verify that~$e$ is compatible with~$\alpha$ (for any of the choices involved in the construction).

\begin{itemize}
\item  Observe that restricting~$f$  gives a homeomorphism~$f| \colon \partial V \setminus f^{-1}(P) \xrightarrow{\cong} S^3\setminus \nu(K)$.
Recall that the homeomorphism $d_K \colon  \partial \overline{\nu}(K) \to \partial \Sigma \times S^1 $ sends $\lambda_K$ to $\partial\Sigma\times\{\operatorname{pt}\}$ and~$\mu_K$ to~$\{\pt\}\times~\partial D^2$,  where $\lambda_K$ and~$\mu_K$ respectively denote the Seifert longitude and meridian of~$K\subset S^3$.
 Recall from Notation~\ref{not:nbhdforK} that  $d_K^{-1}$ extends to 
\begin{equation}
\label{eq:varphi}
\fr_K^{-1}\colon \partial\Sigma\times D^2\to \overline{\nu}(K).
\end{equation}
and that $\fr_K^{-1}(\partial \Sigma \times \{0\})= K.$
Since $f|$ is obtained by restricting
\[f \colon \partial V \to P_K=(S^3\setminus \nu(K)) \cup_{d_K} P\]
and $\fr_K^{-1}$ extends~$d_K^{-1}$,the maps~$f$ and~$\fr_K^{-1}$ combine to a homeomorphism
$$ f|\cup\fr_K^{-1} \colon =\partial\widehat{V} \to S^3.$$
Then~$h^{-1} \circ (f|\cup\fr_K^{-1})$ gives the required homeomorphism
$$f':= h|^{-1} \circ (  f|  \cup \fr_K^{-1}) \colon \partial \widehat{V} \to \partial N.$$ Further, we observe that $f'(\partial\Sigma \times \{0\})=(h|^{-1} \circ \fr_K^{-1})(\partial\Sigma  \times \{0\})=K$.
\item To prove that this homeomorphism extends to a homeomorphism~$\widehat{V} \cong N$, we will appeal to Freedman's theorem that for every pair of simply-connected topological~$4$-manifolds with boundary homeomorphic to~$S^3$, the same intersection form, and the same Kirby-Siebenmann invariant, every homeomorphism between the boundaries extends to a homeomorphism between the 4-manifolds~\cite{Freedman}.
 We check that the hypotheses are satisfied.

\color{black}
First, we argue that~$\widehat{V}$ is simply-connected.
The hypothesis that~$(V,f)$ lies in~$\mathcal{V}^0_\lambda(P_K)$ implies that there is an isomorphism~$\widehat{\varphi} \colon \pi_1(V) \xrightarrow{\cong} \Z$ such that~$\varphi=\widehat{\varphi} \,  \circ \, i_*\,  \circ\, (f^{-1})_*$, where~$i_*$ is the inclusion induced map~$\pi_1(\partial V) \to \pi_1(V)$.
Gluing~$W$ to $V$ along~$P$ has the effect of killing~$(i_* \circ (f^{-1})_*)(\mu_K) \in \pi_1(V)$,  which generates $\pi_1(V) \cong \Z$ since we required that~$\varphi(\mu_K)$ generates~$\Z$.  
We conclude that~$\widehat{V}$ is simply-connected as claimed.

Next we must show that~$Q_{\widehat{V}}$ is isometric to~$Q_N$.
\begin{claim*}
There is an isometry~$Q_{\widehat{V}}\oplus (0)^{\oplus 2g+c}\cong Q_V$.
\end{claim*}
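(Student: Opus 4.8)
\textbf{Proof plan for the claim $Q_{\widehat V}\oplus (0)^{\oplus 2g+c}\cong Q_V$.}

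The plan is to compute $Q_V$ by applying a Mayer--Vietoris argument to the decomposition $\widehat V = V\cup_P W$, in a manner parallel to the computation of $Q_{N_S}$ carried out in the proof of Proposition~\ref{prop:HomologyZExterior}. First I would record that, by hypothesis, $\lambda_V\cong\lambda$ and hence $Q_V=\lambda_V(1)\cong\lambda(1)\cong Q_N\oplus(0)^{\oplus 2g+c}$; so really the content of the claim is to identify $Q_{\widehat V}$ with $Q_N$, i.e.\ to show the $(0)^{\oplus 2g+c}$ summand is exactly what is destroyed by gluing on $W$. Concretely, I would use that $W$ deformation retracts onto the wedge of the $2g+c$ genus and plumbing loops (Proposition~\ref{prop:HomologyW}), that $H_2(W)=0$, and that $H_*(P)$ and $H_2(P)\cong\Z^{2g+c}$ are as computed in Propositions~\ref{prop:HomologyP} and~\ref{prop:HomologyW}.

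The key steps, in order, would be: (1) Run the Mayer--Vietoris sequence for $\widehat V=V\cup_P W$ in degrees $1$ and $2$. Using $H_1(W)\cong H_1(P)/\Z\mu$ from Proposition~\ref{prop:HomologyW}, the map $H_1(P)\to H_1(V)\oplus H_1(W)$ is surjective (a surjection of finitely generated abelian groups forced by ranks, exactly as in~\eqref{eq:MVExterior}), so $H_1(\widehat V)$ is a quotient and we already know $\widehat V$ is simply connected; more importantly the connecting map $H_2(\widehat V)\to H_1(P)$ is zero, giving a short exact sequence $0\to H_2(P)\to H_2(V)\oplus H_2(W)\to H_2(\widehat V)\to 0$, i.e.\ $0\to \Z^{2g+c}\to H_2(V)\to H_2(\widehat V)\to 0$ since $H_2(W)=0$. (2) Since $\widehat V$ is simply connected, $H_2(\widehat V)$ is free, so this sequence splits: $H_2(V)\cong H_2(\widehat V)\oplus \Z^{2g+c}$, with the $\Z^{2g+c}$ summand generated by the images of the classes in $H_2(P)$. (3) Observe that $P\subset \partial V$ (more precisely $P\cong f^{-1}(P)\subset\partial V$), so $Q_V$ restricted to the image of $H_2(P)$ is identically zero; hence the splitting is orthogonal and $Q_V\cong Q_{\widehat V}\oplus (0)^{\oplus 2g+c}$, as the surjection $H_2(V)\to H_2(\widehat V)$ respects intersection forms on the complementary summand. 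This last point mirrors the concluding paragraph of the proof of Proposition~\ref{prop:HomologyZExterior}.

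The main obstacle I anticipate is step~(3): being careful that the identification $H_2(V)\cong H_2(\widehat V)\oplus H_2(P)$ coming from Mayer--Vietoris is genuinely an \emph{orthogonal} splitting for the intersection form, and that the induced form on the $H_2(\widehat V)$-summand really is $Q_{\widehat V}$ rather than merely abstractly isometric to it. One has to check that a class in $H_2(\widehat V)$ lifts to a class in $H_2(V)$ that can be represented by a surface disjoint from $W$ (equivalently, disjoint from $f^{-1}(P)\subset\partial V$), which is possible precisely because $H_2(P)\to H_2(\widehat V)$ vanishes and $P$ lies in the boundary; then its self-intersection in $V$ computes its self-intersection in $\widehat V$. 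The pairing of such a lift with any class supported near $W$ is zero since the latter can be pushed into $\partial V$. Once this bookkeeping is done the claim follows, and combined with $Q_V\cong Q_N\oplus(0)^{\oplus 2g+c}$ one concludes $Q_{\widehat V}\cong Q_N$, which is what the subsequent appeal to Freedman's classification requires.
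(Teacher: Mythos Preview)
Your approach is essentially identical to the paper's: run Mayer--Vietoris for $\widehat V = V\cup_P W$, use $H_2(W)=0$ and the isomorphism $H_1(P)\xrightarrow{\cong} H_1(V)\oplus H_1(W)$ to obtain the short exact sequence $0\to H_2(P)\to H_2(V)\to H_2(\widehat V)\to 0$, split it using freeness of $H_2(\widehat V)$, and conclude orthogonality from $P\subset\partial V$. The one step you omit is the \emph{left} zero in that sequence: you need $H_2(P)\to H_2(V)$ injective, equivalently that $H_3(\widehat V)\to H_2(P)$ vanishes, which the paper obtains by noting $H_3(\widehat V)\cong H^1(\widehat V,\partial\widehat V)=0$ since $\widehat V$ is simply-connected with boundary $S^3$; once you insert this, your argument is complete and matches the paper's.
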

\begin{proof}
Since $\widehat{V}$ has boundary $S^3$ and is simply-connected,  duality and the universal coefficient theorem give~$H_3(\widehat{V}) \cong H^1(\widehat{V},\partial \widehat{V})=0$.
The Mayer-Vietoris sequence for the decomposition~$\widehat{V}=V \cup_{P} W$ therefore reduces to
$$ 0 \to H_2(P) \to H_2(V) \oplus H_2(W) \to H_2(\widehat{V}) \to H_1(P) \to H_1(V) \oplus H_1(W) \to 0.$$
Recall from Proposition~\ref{prop:HomologyW} that~$H_1(W)\cong \Z^{2g+c}$ and $H_2(W)=0$.
This proposition also ensures that the inclusion induces an isomorphism $H_1(P) \xrightarrow{\cong} H_1(V) \oplus H_1(W)$ and, as a consequence,  the previous exact sequence further simplifies to 
$$ 0 \to H_2(P) \to H_2(V)  \to H_2(\widehat{V}) \to 0.$$
As~$\widehat{V}$ is simply-connected and has boundary $S^3$, we have~$H_2(\widehat{V}) \cong H^2(\widehat{V},\partial \widehat{V}) \cong H_2(\widehat{V},\partial \widehat{V})^*$.
This abelian group is free,  so the exact sequence above splits: $H_2(V) \cong H_2(\widehat{V}) \oplus H_2(P)$.
Since $P \subset \partial V$,  we deduce that~$Q_{V} \cong Q_{\widehat{V}} \oplus (0)^{\oplus (2g+c)}.$
\end{proof}

This claim combined with our assumption on the form~$\lambda$ leads to the isometries
$$   Q_{\widehat{V}} \oplus (0)^{\oplus 2g+c} \cong Q_V \cong \lambda_V(1) \cong \lambda(1) \cong Q_N \oplus  (0)^{\oplus 2g+c}.$$
This implies that~$Q_{\widehat{V}} \cong Q_N$ because both forms are nonsingular (indeed~$\partial \widehat{V}\cong \partial N \cong S^3$).

In the even case, we deduce that both~$\widehat{V}$ and~$N$ are spin.
In the odd case, using the additivity of the Kirby-Siebenmann invariant (see e.g.~\cite[Theorem 8.2]{FriedlNagelOrsonPowell}), we have~$\ks(\widehat{V})=\ks(V)=\varepsilon=\ks(N)$.

Therefore~$\widehat{V}$ and~$N$ are simply-connected topological~$4$-manifolds with boundary~$S^3$, with the same intersection form and the same Kirby-Siebenmann invariant. Freedman's classification of simply-connected~$4$-manifolds with boundary~$S^3$ now ensures that the homeomorphism~$f'  \colon \partial   \widehat{V} \to \partial N$ extends to a homeomorphism $F\colon \widehat{V} \to N$ that induces the isometry $Q_{\widehat{V}} \cong Q_N$.
\end{itemize}
As mentioned above,  the desired immersion is now obtained as
\begin{equation}
\label{eq:DefOfPsi}
\Psi(V,f):=\Big{(}e\colon \Sigma  \xrightarrow{[\times \lbrace 0\rbrace]} \widehat{V} \xrightarrow{F,\cong}N\Big{)}.
\end{equation}
We illustrate $\widehat{V}= V \cup W$ and the image of $[\times \{0\}]$ in Figure~\ref{fig:vhat} below.

Finally, we finish by verifying that~$e$ is a~$\Z$-immersion with~$\partial e(\Sigma)=K$.  
The immersed surface~$e(\Sigma)=(F \circ [\times \{0\}])(\Sigma)$ has boundary~$K$ because~$F$ extends~$f'=h^{-1} \cup (f| \cup \fr_K^{-1})$ which satisfies~$f'(\partial \Sigma \times \{0\})=(h^{-1} \circ \fr_K^{-1})(\partial \Sigma \times \{0\})=K.$
The fact that~$\pi_1(N_{e(\Sigma})) \cong \Z$ follows from the homeomorphism~$N \setminus e(\Sigma) \cong V \cup (W \setminus \text{im}([ \times \{0\}])$ and the fact that this latter space deformation retracts onto~$V$.
  The verification that~$e$ is~$\alpha$-compatible  is deferred to Construction~\ref{cons:UConstruction} below.
\end{construction}

\begin{construction}
\label{cons:UConstruction}
For  any of the choices involved in Construction~\ref{cons:Psi},  we construct a family of double point charts $\mathcal{U}$ for $(e,\alpha)$, so that $e$ is compatible with $\alpha$.
In what follows we set
$$U^+=\left\lbrace (z,w)  \in D^2 \times D^2 \ \Big| \ |z| \leq \frac{1}{2} \text{ or } |w| \leq \frac{1}{2} \right\rbrace.$$
We also fix a homeomorphism~$\psi \colon U^+ \to D^2 \times D^2$ 
\color{black}
with $\psi(r_1, \theta_1, r_2,\theta_2)=(r_1', \theta_1, r_2', \theta_2)$,  
where~$(r_1', r_2')= \psi_{\text{rad}}(r_1,r_2)$, where $\psi_{\text{rad}}$ affinely maps the corresponding colored line segments of Figure~\ref{fig:buildingpsi}.

We record the following properties for later use,  where colors refer to Figure~\ref{fig:buildingpsi}:
\begin{enumerate}
\item (Blue) $\psi_{\text{rad}}$ sends $\{r_1=0\} \cup \{r_2=0\}$ to itself via the identity map. 
\item  (Green) $\psi_{\text{rad}}$ sends $\{r_1=1, \, r_2 \leq 1/4\}$ to $\{r_1=1, \, r_2 \leq 1/2\}$  via $\psi_{\text{rad}}(r_1, r_2)= (r_1, 2r_2)$, and similarly sends $\{r_1 \leq 1/4,  r_2=1\}$ to  $\{r_1 \leq 1/2,  r_2=1\}$ via $\psi_{\text{rad}}(r_1,r_2)=(2r_1,r_2)$. 
\item For  $p=(s_1,\theta_1, s_2,\theta_2) \in U^+$ such that $s_1,s_2 \geq 1/4$ (darker grey on the left) there is a unique $a \in [0,1]$ and $p_L \in \{ r_1=1/4, r_2 \geq 1/4\} \cup \{r_1 \geq 1/4, r_2=1/4\}$
(red on the left) such that $(s_1,s_2)= a\cdot p_L + (1-a) \cdot \psi_{\text{rad}}(p_L)$,  as illustrated on the left of Figure~\ref{fig:buildingpsi}. 
\end{enumerate}

\color{black}
\begin{figure}[h!]
\centering
\begin{tikzpicture}
\node[anchor=south west,inner sep=0] at (0,0){\includegraphics[height=4cm]{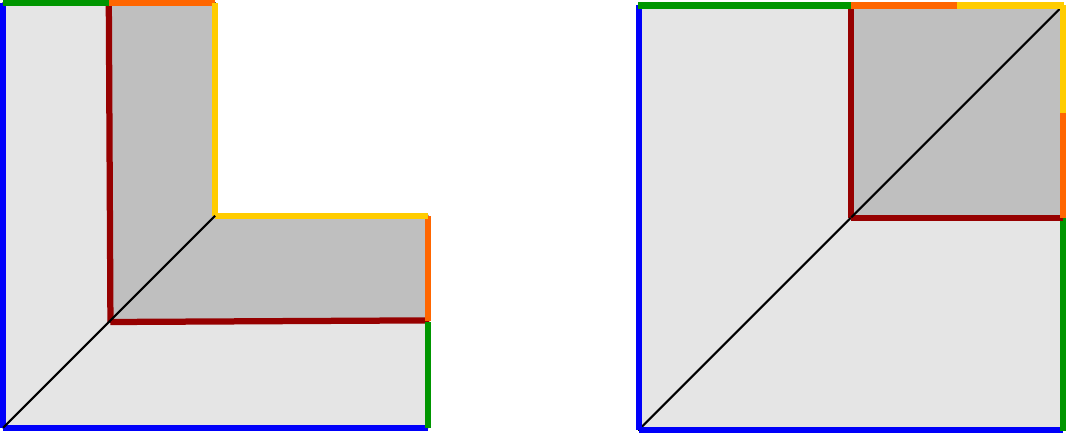}};
\node at (2, 3) {$\bullet$};
\node at (2.7, 3.25) {$\psi_{\text{rad}}(p_L)$};
\node at (7.87, 3) {$\bullet$};
\node at (7.1, 3.25) {$\psi_{\text{rad}}(p_L)$};
\node at (1.02,  2.5) {$\bullet$};
\node at (.75,2.5) {$p_L$};
\node at (4.7, 2.2) {$\xrightarrow{\psi_{\text{rad}}}$};
\draw[dashed] (1,2.5)--(2,3);
\draw[dashed] (-.25,  0)--(-.25,4);
\draw[dashed] (0, -.25)--(4, -.25);
\node at (0,  -.5){$0$};\node at (4, -.5){$1$}; \node at (2,-.5){$r_1$};
\node at (-.5,0.2){$0$}; \node at (-.5,3.8){$1$}; \node at (-.5,2){$r_2$};
\draw[dashed] (5.67,  0)--(5.67,4);
\draw[dashed] (5.9, -.25)--(9.9, -.25);
\node at (5.9,  -.5){$0$};\node at (9.9, -.5){$1$}; \node at (7.9,-.5){$r_1$};
\node at (5.4,0.2){$0$}; \node at (5.4,3.8){$1$}; \node at (5.4,2){$r_2$};

\node at (1.32,  2.64){$\bullet$};
\draw[<-] (1.42,2.56)--(2.3, 2.4);
\node at(2.9,2.4){$(s_1,s_2)$};
\end{tikzpicture}
\caption{The map $\psi\colon U^+ \to D^2 \times D^2$  leaves angular coordinates unchanged and via $\psi_{\text{rad}}$ converts the radial coordinates of a point in $U^+$ (i.e.  a point in the subset of $[0,1] \times [0,1]$ depicted on the left)  to the radial coordinates of a point in~$D^2 \times D^2$ (i.e.  a point in $[0,1] \times [0,1]$ as depicted on the right). 
Note that on the left $\psi_{\text{rad}}(p_L)$ is redrawn in $U^+$ to illustrate the third property of $\psi_{\text{rad}}$ mentioned in Construction~\ref{cons:UConstruction}.} 
\label{fig:buildingpsi}
\end{figure}
Write $\proj_E \colon (D^2\times D^2)_i \to E$ and $\proj_W \colon E \to W$ for the projections.
  For~$i=1,\ldots,c$, set
$$U_i:=\proj_W \circ \proj_E ((D^2 \times D^2)_{2i-1} \cup (D^2 \times D^2)_{2i}).$$
The~$U_i$ (resp.\ $F(U_i)$) are pairwise disjoint neighborhoods of the double points of the immersion~$[\times \{ 0\}]$  (resp.~$e$) which, recall, are the images of the centers of the disks~$B_j$.
By definition of the shrink map~$\shrink_{2i-1} \cup \shrink_{2i} \colon U_i \to D^2 \times D^2$ induces a homeomorphism onto its image, which is~$U^+.$
By definition of $\sim_W$, the map
$$\psi_i:=\psi \circ (\shrink_{2i-1} \cup \shrink_{2i})$$
descends to $U_i \subset W$.
\medskip

We now verify that $\{ (U_i,\psi_i) \}$ (resp.~$\{ (F(U_i),\psi_i \circ F^{-1})\}$) is a family of double point charts for~$([\times \{0\}],\alpha)$ (resp.~$(e,\alpha)$).
We perform the verification for~$(e,\alpha)$ and note that it also yields the result for~$([\times \{0\}],\alpha)$.
This is a consequence of the definition of $e$ and the first condition we imposed on $\psi$.
Indeed, for $x \in D^2$,  and $k \in \{2i-1,2i\}$ we have
\begin{align*}
(\psi \circ (\shrink_{2i-1} \cup \shrink_{2i}) \circ F^{-1}) \circ e \circ \alpha_k(x)
&=\psi \circ (\shrink_{2i-1} \cup \shrink_{2i})  \circ [\times \{0\}] \circ \alpha_k(x) \\
&=\psi \circ (\shrink_{2i-1} \cup \shrink_{2i})  \circ \proj_W \circ \proj_E  (x,0) \\
&=
\begin{cases}
(x,0)  \quad& \text{if } k=2i-1, \\
(0,x)  \quad& \text{if }  k=2i.
\end{cases}
\end{align*}
\color{black}
\end{construction}
\color{black}
We illustrate $\widehat{V}= V \cup W$ and the image of $[\times \{0\}]$ in Figure~\ref{fig:vhat}.  

\begin{figure}[htbp!]
\centering
\begin{tikzpicture}
\node[anchor=south west,inner sep=0] at (0,0){\includegraphics[height=5cm]{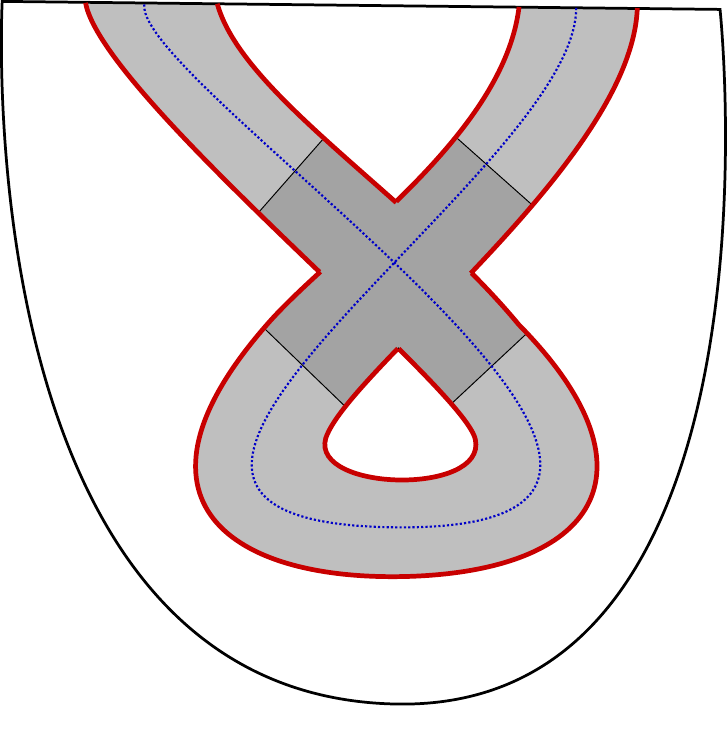}};
\node at (2.8,2.8){$U_i$};
\node at (3.6, .7){$V$};
\draw [-stealth] (.5, .5)--(1.6,1.6);
\draw [-stealth](-.3,  4)--(1.2,4.1);
\node at (-1.3,4){$f^{-1}(P) \sim P$};
\node at (.5, .3){$\Sigma^\circ \times D^2$};
\end{tikzpicture}
\caption{A schematic picture of $\widehat{V}= V \cup W$. The double point chart region $U_i\subset W$ is shaded in dark grey and $\Sigma^{\circ} \times D^2 \subset W$ is shaded in light grey.   The unshaded region is $V$.  } 
\label{fig:vhat}
\end{figure}
\color{black}

The next proposition verifies that if $\lambda$ is even,  this construction gives rise to a map
$$\Psi \colon \mathcal{V}^0_\lambda(P_K)\to\operatorname{Imm}_{\alpha}(g;c_+,c_-)_\lambda^0(N,K)$$
whereas if $\lambda$ is odd, it gives rise to a map 
$$\Psi \colon \mathcal{V}^{0,\varepsilon}_\lambda(P_K)\to\operatorname{Imm}_{\alpha}(g;c_+,c_-)_\lambda^0(N,K).$$
Here, as before, recall that $\varepsilon:=\ks(N).$

\begin{proposition}
\label{prop:PsiWellDef}
Up to homeomorphisms of $N$ rel.\ boundary,  the $\alpha$-compatible immersion $e$ from~\eqref{eq:DefOfPsi} depends neither on the choice of isometry~$Q_{\widehat{V}} \cong Q_N$
 nor the homeomorphism~$\widehat{V} \cong N$ extending the boundary homeomorphism $f'$ nor on the homeomorphism rel.\ boundary type of~$(V,f)$.

In particular, $\Psi$ is well defined.
\end{proposition}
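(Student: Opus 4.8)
The plan is to verify that the construction of $e$ in~\eqref{eq:DefOfPsi} is independent of the three types of choices made, and that it is invariant under the homeomorphism-rel.-boundary equivalence relation on pairs $(V,f)$; the combination of these facts shows that $\Psi$ descends to a well-defined map on $\mathcal{V}^0_\lambda(P_K)$ (resp.\ $\mathcal{V}^{0,\varepsilon}_\lambda(P_K)$). I would treat the choices from the "inside out", since each later choice depends on the earlier ones.

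First I would handle the two choices internal to the extension step: the isometry $Q_{\widehat V}\cong Q_N$ and the homeomorphism $F\colon\widehat V\to N$ extending $f'$. Here the key point is that any two homeomorphisms $F_0,F_1\colon\widehat V\to N$ that restrict to $f'$ on $\partial\widehat V$ differ by a homeomorphism $F_1\circ F_0^{-1}\colon N\to N$ which is the identity on $\partial N$. Since $e_i=F_i\circ[\times\{0\}]$, setting $G:=F_1\circ F_0^{-1}$ gives $G\circ e_0=e_1$ with $G$ a rel.\ boundary self-homeomorphism of $N$, so $e_0$ and $e_1$ agree in $\operatorname{Imm}_\alpha(g;c_+,c_-)^0_\lambda(N,K)$. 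The choice of isometry $Q_{\widehat V}\cong Q_N$ only feeds into which $F$ Freedman's theorem produces, so it is subsumed in this argument. I should also note that $\widehat V$, as a manifold, does not depend on any of these choices — only on $(V,f)$ via the gluing $\widehat V=V\cup_{f^{-1}|_P}W$ — and that $[\times\{0\}]\colon\Sigma\looparrowright\widehat V$ is a fixed immersion by Remark~\ref{rem:SigmaInW}, so the only freedom is in $F$.

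Next I would address invariance under equivalence of $(V,f)$. Suppose $(V_0,f_0)$ and $(V_1,f_1)$ are homeomorphic rel.\ boundary, witnessed by a homeomorphism $\Xi\colon V_0\to V_1$ with $f_1\circ\Xi|_{\partial V_0}=f_0$. Then $\Xi$ extends by the identity on $W$ (using that $\Xi$ intertwines the two gluing maps $f_0^{-1}|_P$ and $f_1^{-1}|_P$) to a homeomorphism $\widehat\Xi\colon\widehat V_0\to\widehat V_1$ that is the identity on the copy of $W$, and in particular commutes with the two immersions $[\times\{0\}]$. Moreover $\widehat\Xi$ restricts on the boundary to the map intertwining $f_0'$ and $f_1'$ (one checks this on the two pieces $\partial V_i\setminus f_i^{-1}(P)$ and $\partial\Sigma\times D^2$ of $\partial\widehat V_i$, using that $\widehat\Xi$ is the identity on the latter). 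Choosing $F_1\colon\widehat V_1\to N$ extending $f_1'$, the composite $F_1\circ\widehat\Xi\colon\widehat V_0\to N$ extends $f_0'$, and so by the independence-of-$F$ argument already established, the immersion built from $(V_0,f_0)$ using $F_1\circ\widehat\Xi$ equals $F_1\circ\widehat\Xi\circ[\times\{0\}]=F_1\circ[\times\{0\}]$, which is the immersion built from $(V_1,f_1)$; and both agree up to rel.\ boundary homeomorphism with any other valid choice. Finally I would record that in the odd case, the additivity of the Kirby–Siebenmann invariant (already invoked in Construction~\ref{cons:Psi}) guarantees $\ks(\widehat V)=\ks(V)=\varepsilon=\ks(N)$ so Freedman's theorem applies on the nose, and that the target lands in $\operatorname{Imm}_\alpha$ by Construction~\ref{cons:UConstruction}.

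The main obstacle I anticipate is not any single deep input — all the hard topology (Freedman's classification, uniqueness of normal bundles, the identification $\partial\widehat V\cong S^3$) was already done in Construction~\ref{cons:Psi} — but rather the bookkeeping needed to check that $\widehat\Xi$ restricts correctly on the boundary, i.e.\ that the decomposition $\partial\widehat V_i=f_i^{-1}(S^3\setminus\nu(K))\cup(\partial\Sigma\times D^2)$ is respected and that the gluing via $\fr_K^{-1}$ is compatible with $\Xi$. This requires carefully tracing through the definition $f_i'=h^{-1}\circ(f_i|\cup\fr_K^{-1})$ and the fact that $\Xi$ and $\widehat\Xi$ are the identity on the $W$-side, including on the solid torus $\partial\Sigma\times D^2\subset W$ that is glued to $\overline\nu(K)$. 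I would present this as a short lemma-style verification rather than grinding through coordinates, emphasizing that everything on the $W$-side is untouched by $\widehat\Xi$ and everything on the $V$-side is intertwined by $\Xi$ by hypothesis.
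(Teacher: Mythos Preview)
Your proposal is correct and follows essentially the same approach as the paper: for independence of the extension $F$ you use $F_1\circ F_0^{-1}$ as the rel.\ boundary self-homeomorphism of $N$, and for independence of the rel.\ boundary type of $(V,f)$ you extend the given $\Xi$ by the identity on $W$ to obtain $\widehat\Xi\colon\widehat V_0\to\widehat V_1$ and then compose with a chosen $F_1$. The paper presents the same two steps (with a small commutative diagram for the first) and, like you, flags the boundary-compatibility check for $\widehat\Xi$ as a routine formal verification rather than writing it out.
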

\begin{proof}
The proof is nearly identical to the corresponding argument in the embedded case~\cite[proof of Proposition 6.6]{ConwayPiccirilloPowell}. 
  The precise immersion~$e$ depends on the homeomorphism~$\widehat{V} \cong N$ chosen to extend a given~$f'$.
 This homeomorphism in turn depends on the choice of isometry~$Q_{\widehat{V}} \cong Q_N$.
 However for any two choices $F_1$ and $F_2$ of homeomorphisms~$\widehat{V} \cong N$ extending $f'$,  the resulting immersions are equivalent rel. boundary,  as can be seen by composing one choice of homeomorphism with the inverse of the other:
$$
\xymatrix@R0.5cm{
\Sigma \ar[r]^{[\times \{0\}]} \ar[d]^=&\widehat{V} \ar[r]^{F_1}\ar[d]^=& N \ar[d]^{F_2 \circ F_1^{-1}}\\
\Sigma \ar[r]^{[\times \{0\}]} &\widehat{V} \ar[r]^{F_2}& N.
}
 $$ 
So the rel. boundary equivalence class of the immersion~$\Psi(V,f)$ does not depend on the choice of isometry~$Q_{\widehat{V}} \cong Q_N$ nor on the choice of homeomorphism~$\widehat{V} \cong N$ realizing this isometry and extending $f'$.
Next, we check the independence of the rel.\ boundary homeomorphism type of~$(V,f)$.
If we have~$(V_1,f_1)$ and~$(V_2,f_2)$ that are homeomorphic rel.\ boundary, then there is a homeomorphism~$F \colon V_1 \to V_2$ that satisfies~$f_2 \circ F| =f_1$.
This homeomorphism extends to~$\widehat{F}:=F \cup \id_{W} \colon \widehat{V}_1 \to \widehat{V}_2$ and therefore to a homeomorphism~$N \to N$ that is, by construction rel.\ boundary.
A formal verification using this latter homeomorphism then shows that the immersions~$\Psi(V_1,f_1)$ and~$\Psi(V_2,f_2)$ are equivalent rel.\ boundary. 
\end{proof}

Our next goal is to prove that~$\Theta$ and~$\Psi$ are mutually inverse.
In order to carry out this goal,  we need an explicit $\mathcal{U}$-adapted normal bundle for any immersion  $e:=F \circ [\times \{0\}]$ representing~$\Psi(V,f)$.
Here~$\mathcal{U}$ denotes the family of  double point charts for $(e,\alpha)$ that was described in Construction~\ref{cons:UConstruction}.
The next construction is concerned with describing such a bundle.
\begin{construction}
\label{cons:UAdaptedNormalBundle}
For  any of the choices involved in Construction~\ref{cons:Psi},  we construct an~$\mathcal{U}$-adapted normal bundle $(\nu(e),\iota)$. 
In fact, it will be convenient to simultaneously fix this data for the immersion $[\times \{0\}] \colon \Sigma \looparrowright \widehat{V}$ as this will prove to be useful in Proposition~\ref{prop:EmbVBijections} when we argue that~$\Theta \circ \Psi=\id$.

We begin the construction of an $\mathcal{U}$-adapted normal bundle $(\nu(e),\iota)$.
\begin{itemize}
\item Define the vector bundle
$$ \nu(e):=E_\R:=\left(\bigsqcup_{k=1}^{2c}(B_k \times \R^2) \sqcup \Sigma^\circ \times \R^2\right)\Bigg/\sim_{E_\R}.$$
Here $\sim_{E_\R}$ identifies 
\[B_k \times \R^2 \supset \partial B_k \times R^2  \ni x  \sim_{E_{\R}} (\alpha_k \times \id_{\R^2}) \circ \eta_\R^\pm \circ (\alpha_k^{-1} \times \id_{\R^2})(x)
\in \partial B_k \times \R^2 \subset \Sigma^\circ \times \R^2\]
 where the homeomorphism~$\eta_\R^\pm \colon S^1 \times \R^2 \to S^1 \times \R^2$ is given by~$\eta_{\R}^{\pm}(\omega,  (r, \theta))= (\omega,  (r, \theta \pm \omega))$. 

Observe that the $D^2$-bundle  $\overline{\nu}(e)$ of $\nu(e)$ is given by the same formula as $E$ except that we replace~$(D^2 \times D^2)_k$ by~$B_k \times D^2$ in the definition:
$$\overline{\nu}(e)=E':=\left(\bigsqcup_{k=1}^{2c}(B_k \times D^2) \sqcup \Sigma^\circ \times D^2\right)\Bigg/\sim_E'.$$
Here we write~$\sim_E'$ to indicate that we are using the same equivalence relation as in Construction~\ref{cons:E} but with $B_k \times D^2$ in place of $(D^2 \times D^2)_k.$
In other words~$\sim_E'$ identifies $x$ with~$(\alpha_k \times \id) \circ \eta^\pm \circ (\alpha_k^{-1} \times \id)(x)$.
The reason for this minor change is that in a $\mathcal{U}$-adapted normal bundle,  the restriction of the disk bundle to $B_k$ must \emph{equal}~$B_k \times D^2$.

\item We begin by defining $\iota|_{\overline{\nu}(e)} \colon \overline{\nu}(e) \looparrowright N$. 
In fact we will define a map $j\colon \overline{\nu}(e) \to E$ and then obtain $\iota|_{\overline{\nu}(e)}$ as the composition
$$\iota|_{\overline{\nu}(e)} \colon \overline{\nu}(e) \xrightarrow{j} E \xrightarrow{\proj_W} W \to V \cup W \xrightarrow{F,\cong} N.$$

For $k \in \{2i-1,2i\}$,  define $j|_{B_k \times D^2}$ as 
\begin{equation}
\label{eq:jRestrictedToBkxD2}
j|_{B_k \times D^2}:= \psi_i^{-1} \circ \shrink_k \circ (\alpha_k^{-1} \times \id) \colon \overbrace{B_k \times D^2}^{\subset \overline{\nu}(e)} \to  \overbrace{(D^2 \times D^2)_k }^{\subset E}.
\end{equation}

We need to define $j$  on $\Sigma^\circ \times D^2$.  We will define $j$ slightly differently near the boundary of $\Sigma$,  as illustrated on the left of Figure~\ref{fig:imageofiota},  so we begin by fixing a collar of $\partial \Sigma \subset \Sigma$:
$$ \beta \colon \partial \Sigma \times [0,1] \hookrightarrow \Sigma^\circ$$
with $\beta(-,1)=\id_{\partial \Sigma}$.
The exterior of this collar in $\Sigma^\circ$ is denoted~$\Sigma_\beta$:
$$\Sigma_\beta:=\Sigma^\circ \setminus \operatorname{int}(\im(\beta)).$$
We now claim that the $j|_{B_k \times D^2}$ and
\begin{align*}
j|_{\Sigma^\circ \times D^2} \colon \Sigma^\circ \times D^2 &\to N \\
(p,x) & \mapsto
\begin{cases}
(p,\frac{x}{2})  \quad & \text{ if }p \notin \im(\beta) \\
(p,x(1+\frac{s}{2})) \quad & \text{ if } p=\beta(y,s).
\end{cases}
\end{align*}
can be combined in order to obtain a map~$\overline{\nu}(e) \to E$,  whose image in $\widehat{V}= V \cup W$ we illustrate in Figure~\ref{fig:imageofiota}. 
We need to verify that $j|_{B_k \times D^2}$ and~$j|_{\Sigma^\circ \times D^2} $ agree on the image of~$\partial B_k \times D^2$ in $\overline{\nu}(e).$
For $(p,z) \in \partial B_k \times D^2$,  using successively the definition of $j|_{B_k \times D^2}$, the definition of $\psi_i$ and the second condition on $\psi$, we obtain
\begin{align*} 
j|_{B_k \times D^2}(p,z)
&= \psi_i^{-1} \circ \shrink_k \circ (\alpha_k^{-1} \times \id)(p,z) \\
&= (\psi \circ (\shrink_{2i-1} \cup \shrink_{2i}) )^{-1} \circ \shrink_k \circ (\alpha_k^{-1} \times \id)(p,z) \\
&=\left(\alpha_k^{-1}(p),\frac{z}{2}\right).
\end{align*}
We must therefore show that,  once considered in $E$,  this is equivalent to $j|_{\Sigma^\circ \times D^2}([(p,z)]).$
In order to calculate $j|_{\Sigma^\circ \times D^2}([(p,z)])$, we first note that,  by definition of $\sim_E'$
$$(p,z) \sim_E' (\alpha_k \times \id) \circ \eta^\pm \circ (\alpha_k^{-1} \times \id)(p,z).$$
Since $\eta^{\pm}$ commutes with $(x,y)\mapsto (x,y/2)$, it then follows that
\begin{align*}
j|_{\Sigma^\circ \times D^2}([(p,z)])
&=j|_{\Sigma^\circ \times D^2} \circ (\alpha_k \times \id) \circ \eta^\pm \circ (\alpha_k^{-1} \times \id)(p,z) \\
&=(\alpha_k \times \id) \circ \eta^\pm \circ (\alpha_k^{-1} \times \id)\left(p,\frac{z}{2}\right) \\
&\sim_E \left(\alpha_k^{-1}(p),\frac{z}{2}\right).
\end{align*}
\color{black}

\begin{figure}[htbp!]
\centering
\begin{tikzpicture}
\node[anchor=south west,inner sep=0] at (0,0){\includegraphics[height=5cm]{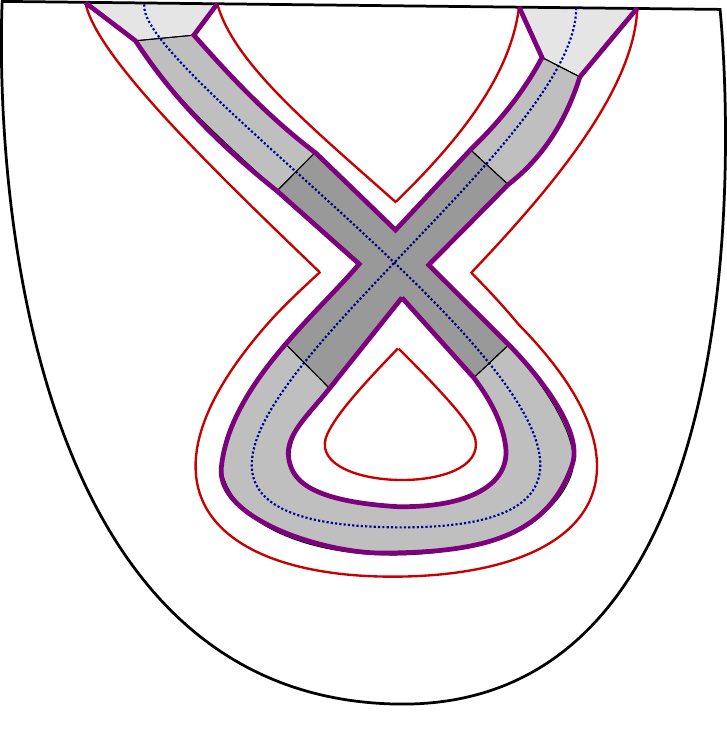}};
\node[anchor=south west,inner sep=0] at (6,0){\includegraphics[height=5cm]{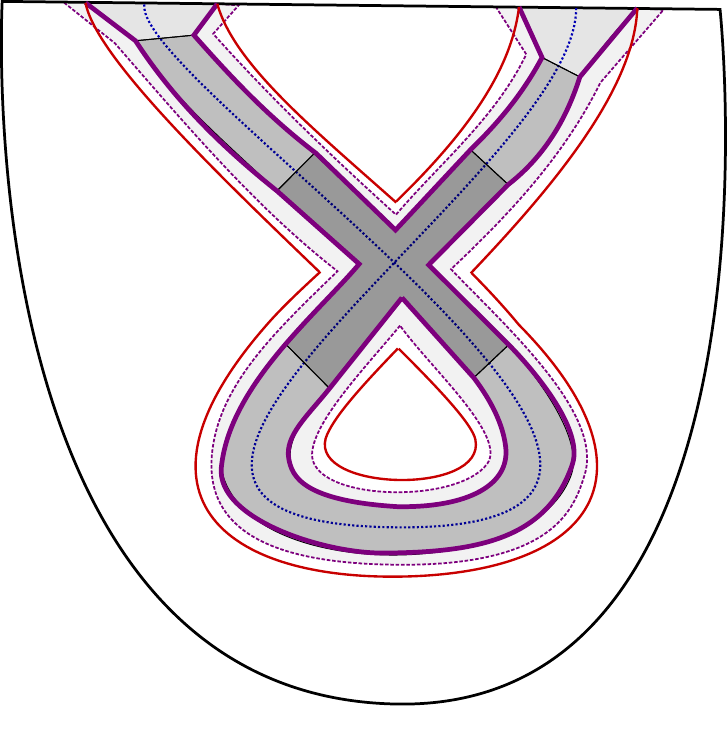}};
\end{tikzpicture}
\caption{A schematic picture of $\text{proj}_W \circ j(\overline{\nu}(e))$ in $\widehat{V}$ (left).  Notice that,  as indicated by the piecewise definition,  this decomposes naturally into three pieces, corresponding to the images of the normal disk bundle restricted to
${\text{im}(\beta)}$ (light grey),~${\Sigma_{\beta}}$ (medium grey),  
and $B_{2i-1} \cup B_{2i}$ (dark grey).   
On the right, we show a schematic picture of the image of all of $\nu(e)$ in $\widehat{V}$,  which is roughly obtained from that of $\overline{\nu}(e)$ by adding a small open collar of $\partial \overline{\nu}(e) \smallsetminus \overline{\nu}(e|_{\partial \Sigma})$. 
} 
\label{fig:imageofiota}
\end{figure}

\item We now briefly describe how to extend $\proj_W \circ j \colon \overline{\nu}(e) \to W \subset \widehat{V}$ to an immersion~$\widehat{j} \colon \nu(e) \looparrowright \widehat{V}$.  
The process is best understood by considering the right hand side of Figure~\ref{fig:imageofiota}.
We include some details for completeness but emphasize that $\widehat{j}$ will not be needed again; we only include this discussion because the existence of such an extension $\widehat{j}$ is required by the definition of a normal bundle. 
Let $s \colon [1,\infty) \to [1/2, 3/4)$ be a homeomorphism.  
We define $\widehat{j}$ on $\Sigma_\beta \times \R^2_{|z| \geq 1}$ by setting~$\widehat{j}(p,z)=\proj_W(p, \frac{s(|z|)}{|z|} z) \in \proj_W(\Sigma_\beta \times D^2_{\frac{1}{2} \leq r \leq \frac{3}{4}})$.  
We can similarly use the extra space in $(D^2 \times D^2)_k$ to have $\widehat{j}$ send $B_k \times \R^2_{|z| \geq 1}$ homeomorphically to $\proj_W(D^2 \times D^2_{\frac{1}{2} \leq r\leq \frac{3}{4}})$. 
 However,  note that $\proj_W \circ j$ sends $\overline{\nu}(e|_{\partial \Sigma)})$ onto all of~$\proj_W(\partial \Sigma \times D^2)$.  
 We therefore need to use a small collar neighborhood of $\partial V$ in order to extend $\proj_W \circ j$ to an embedding~$\widehat{j}$ of all of $\nu(e|_{\text{im}(\beta)})$ into $\widehat{V}$. 
  We leave the remaining details of this to the interested reader. 

\item As indicated previously,  we define the immersion $\iota \colon {\nu}(e) \looparrowright N$ as the composition
$$\nu(e) \xrightarrow{\widehat{j}} \widehat{V} \xrightarrow{F,\cong} N.$$
We simultaneously verify that $(\nu(e),\iota)$ is a normal bundle for $e$ and that~$(\nu(e),F^{-1} \circ \iota)$ is a normal bundle for $[\times \{0\}]$.
For $[\times \{0\}]$, the $0$-section is $s_0 \colon \Sigma \to \nu(e),x \mapsto [(x,0)]$, whereas for $e$ it is $F \circ s_0$.
In both cases,  the verification that this is a $0$-section reduces to checking that $\proj_W \circ j \circ s_0=[\times \{0\}]$ and this follows from the definitions.
\end{itemize}
We now verify that~$(\nu(e),\iota)$ is a~$\{(F(U_i), \psi_i \circ F^{-1})\}$-adapted normal bundle for $e$. 
By definition of the bundle $\nu(e)$, we have $\overline{\nu}(e|_{B_k})=B_k \times D^2$ so this reduces to using the definitions of $\psi_i, \iota,j$ to check that 
\begin{align*}
\psi_i \circ (F^{-1} \circ \iota) \circ (\alpha_k \times \id_{D^2})
&=(\psi_i   \circ F^{-1}) \circ (F\circ \proj_W \circ j) \circ (\alpha_k \times \id_{D^2}) \\
&=\psi_i \circ  \proj_W \circ \psi_i^{-1} \circ \shrink_k \\
&=\shrink_k.
\end{align*} 
This argument also shows that~$(\nu(e),F^{-1} \circ \iota)$  is a~$\{(U_i, \psi_i)\}$-adapted normal bundle for $[\times \{0\}]$. 
\end{construction}

Now we prove that the maps~$\Theta$ and~$\Psi$ are mutually inverse.
\begin{proposition}
\label{prop:EmbVBijections}
Let~$N$ be a simply-connected~$4$-manifold with boundary~$\partial N \cong S^3$,  let~$K \subset S^3$ be a knot,  let $c_+,c_-$ and $g$ be non-negative integers, and let~$\lambda$ be  a nondegenerate hermitian form with $\lambda(1) \cong Q_N \oplus (0)^{2g+c}$, where $c:=c_++c_-.$
\begin{enumerate}
\item If~$\lambda$ is even, then the map~$\Theta$ from Construction~\ref{cons:EmbVBijection} determines a bijection
$$\operatorname{Imm}_\alpha(g;c_+,c_-)_\lambda^0(N;K) \to \mathcal{V}^0_\lambda(P_K).$$
\item If~$\lambda$ is odd, then the map~$\Theta$ from Construction~\ref{cons:EmbVBijection} determines a bijection
$$\operatorname{Imm}_\alpha(g;c_+,c_-)_\lambda^0(N;K) \to \mathcal{V}^{0,\varepsilon}_\lambda(P_K).$$
\end{enumerate}
\end{proposition}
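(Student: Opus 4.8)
The plan is to show that the maps $\Theta$ and $\Psi$ constructed in Sections~\ref{sub:ImmersionsManifolds} and~\ref{sub:ManifoldsImmersions} are mutually inverse, since each has already been verified to be well defined (Propositions~\ref{prop:PhiWellDefined}, \ref{prop:ThetaWellDefEnd} for $\Theta$; Proposition~\ref{prop:PsiWellDef} for $\Psi$), and the only remaining content is the two composites $\Psi \circ \Theta = \id$ and $\Theta \circ \Psi = \id$. The even/odd dichotomy only affects which target set ($\mathcal{V}^0_\lambda$ versus $\mathcal{V}^{0,\varepsilon}_\lambda$) one lands in, and since both $\Theta$ and $\Psi$ respect the Kirby--Siebenmann constraint by construction (the gluing $\widehat V = V\cup W$ has $\ks(\widehat V)=\ks(V)$ and conversely $\ks(N_{e(\Sigma)})=\ks(N)$ by additivity, as used in Construction~\ref{cons:Psi}), the same argument handles both cases simultaneously; I would treat the even case in detail and remark that the odd case is identical.

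First I would verify $\Psi \circ \Theta = \id$. Start with an $\alpha$-compatible $\Z$-immersion $e \colon \Sigma \looparrowright N$, choose a family of double point charts $\mathcal{U}$, a $\mathcal{U}$-adapted normal bundle $(\nu(e),\iota)$, and a nice framing $\fr$, so that $\Theta([e]) = [(N_{e(\Sigma)}, f)]$ with $f = h| \cup \widehat{\gamma}|$ as in Construction~\ref{cons:EmbVBijection}. Applying $\Psi$ glues $W$ back onto $N_{e(\Sigma)}$ along $f^{-1}|_P$. The key observation is that by Proposition~\ref{prop:IdentifyW}, $\overline{\nu}(S) = \iota(\overline{\nu}(e))$ is identified with $W$ via $\widehat{\gamma}$ in a way compatible with $\widehat{\gamma}|_P$, so $\widehat{V} := N_{e(\Sigma)} \cup_P W$ is canonically homeomorphic to $N$ itself (reattaching the tubular neighborhood one removed), and under this homeomorphism the canonical immersion $[\times\{0\}]\colon \Sigma \looparrowright W$ is carried to $e$, since $\widehat{\gamma}$ was built so that $\widehat{\gamma}^{-1}\circ[\times\{0\}] = e|$ on the relevant pieces (this uses that $\gamma$ restricts to $\alpha_k^{-1}\times\id$ on the $\overline{\nu}(e|_{B_k})$ and to the framing $\fr$ on $\overline{\nu}(e|_{\Sigma^\circ})$, together with Remark~\ref{rem:SigmaInW}). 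One then checks the boundary homeomorphism $f'$ produced inside $\Psi$ agrees with the obvious identification $\partial\widehat V \cong \partial N$, invoking Construction~\ref{cons:SeifertFraming} (i.e.\ $\widehat{\gamma}| = d_K \circ h$ on the gluing torus), so that $\Psi\circ\Theta([e])$ is represented by $e$ up to rel.\ boundary equivalence. The well-definedness results allow us to assume the choices made inside $\Psi$ are the canonical ones coming from $e$.

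Next I would verify $\Theta \circ \Psi = \id$. Given $(V, f) \in \mathcal{V}^0_\lambda(P_K)$, the immersion $e = F \circ [\times\{0\}]$ represents $\Psi(V,f)$, where $\widehat V = V \cup_P W$ and $F\colon \widehat V \xrightarrow{\cong} N$ extends the boundary homeomorphism. To compute $\Theta([e])$ I would use precisely the double point charts $\mathcal{U} = \{(F(U_i), \psi_i\circ F^{-1})\}$ from Construction~\ref{cons:UConstruction} and the $\mathcal{U}$-adapted normal bundle $(\nu(e),\iota)$ from Construction~\ref{cons:UAdaptedNormalBundle}, which is exactly why that construction was set up. With these choices, the image $\iota(\overline{\nu}(e))$ is $F(W) \subset N$, the exterior $N_{e(\Sigma)}$ is $F(V)$, and the framing implicit in $(\nu(e),\iota)$ is manifestly nice (the pushoffs of genus and plumbing loops bound in $V$, hence in $F(V) = N_{e(\Sigma)}$, because those loops live in $W$ and are nullhomotopic after adding $W$'s $2$-handles — this is where Proposition~\ref{prop:HomologyW} and the definition of the coefficient system enter, paralleling Remark~\ref{rem:fonpi1}). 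The resulting boundary homeomorphism $\partial N_{e(\Sigma)} \to P_K$ is then $F^{-1}$ followed by $f$, i.e.\ it equals $f$ up to the homeomorphism $F|\colon N_{e(\Sigma)} = F(V) \to V$ of pairs, so $\Theta\circ\Psi([(V,f)]) = [(V,f)]$ in $\mathcal{V}^0_\lambda(P_K)$.

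The main obstacle is bookkeeping rather than any genuine mathematical difficulty: one must be scrupulous that the normal bundle $(\nu(e),\iota)$ and double point charts chosen to evaluate $\Theta(\Psi(V,f))$ are legitimate (i.e.\ really satisfy Definitions~\ref{def:Compatible} and~\ref{def:UAdapted} and Proposition~\ref{prop:NormalBundle}), which is the purpose of Constructions~\ref{cons:UConstruction} and~\ref{cons:UAdaptedNormalBundle}, and that the framing there is nice in the sense of Definition~\ref{def:NiceFraming} — verified via Lemmas~\ref{lem:NiceReformulationGenus} and~\ref{lem:NiceReformulationPlumbing} by noting that the genus and plumbing loops of $P$, pushed into $\widehat V$, lie in $W$ and hence die in $H_1(V) = H_1(N_{e(\Sigma)})$. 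Because $\Theta$ and $\Psi$ have already been shown to be independent of all auxiliary choices (Propositions~\ref{prop:PhiWellDefined}, \ref{prop:ThetaWellDefEnd}, \ref{prop:PsiWellDef}), it suffices to make one convenient choice in each composition, which reduces the proof to tracing through the commutative diagrams of Construction~\ref{cons:EmbVBijection} and Constructions~\ref{cons:Psi}--\ref{cons:UAdaptedNormalBundle}. I would present this as two short lemmas ($\Psi\circ\Theta=\id$ and $\Theta\circ\Psi=\id$) and then conclude the proposition in one line.
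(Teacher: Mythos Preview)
Your outline for $\Psi\circ\Theta=\id$ matches the paper's argument closely and is fine: the paper also observes that $\id_{N_{e(\Sigma)}}\cup\widehat{\gamma}^{-1}$ is an explicit extension of $f'$, so one may take $F$ to be this map and then checks $\widehat{\gamma}^{-1}\circ[\times\{0\}]=e$ piecewise.

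However, your treatment of $\Theta\circ\Psi=\id$ has a genuine gap. You assert that with the choices from Constructions~\ref{cons:UConstruction} and~\ref{cons:UAdaptedNormalBundle} one has $\iota(\overline{\nu}(e))=F(W)$ and hence $N_{e(\Sigma)}=F(V)$. This is false: inspecting Construction~\ref{cons:UAdaptedNormalBundle}, the map $j\colon\overline{\nu}(e)\to E$ involves scaling the fibre coordinate by $\tfrac{1}{2}$ over $\Sigma_\beta$ and by the shrink maps over the $B_k$, so $\proj_W\circ j(\overline{\nu}(e))$ is a \emph{proper} subset of $W$ (this is exactly what Figure~\ref{fig:imageofiota} depicts). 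Consequently the exterior $N_{e(\Sigma)}=F(\widehat V\setminus \proj_W(j(\nu(e))))$ strictly contains $F(V)$: there is a residual ``shell'' of $W$ left over. The paper handles this by an explicit intermediate step (its Claim~\ref{claim:homeo1}) reducing to the pair $(\widehat V_{[\times\{0\}](\Sigma)},f\cup\widehat\gamma_{[\times\{0\}]})$, and then a second claim constructing a concrete homeomorphism $G\colon\widehat V_{[\times\{0\}](\Sigma)}\to V$ piece by piece (maps $G_1,G_2,G_3$ that compress the leftover shell onto a collar $f^{-1}(P)\times I$ of $\partial V$) and verifying $f\circ G|_{\partial}=f\cup\widehat\gamma_{[\times\{0\}]}|$. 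This is the technical heart of the argument and cannot be bypassed; without it the boundary identifications do not match on the nose and the conclusion $\Theta(\Psi(V,f))=(V,f)$ does not follow.

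Relatedly, your verification that the framing is nice is circular as written: the pushoffs of the genus and plumbing loops lie in $\partial N_{e(\Sigma)}$, which you have not yet identified with $\partial V$. The paper defers the niceness check until \emph{after} the rel.\ boundary homeomorphism $G$ has been built, and then deduces it from the hypothesis that $(V,f)\in\mathcal V_\lambda^0(P_K)$ forces $\varphi$ to kill the genus and plumbing loops.
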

\begin{proof}
We prove that the maps $\Theta$ and $\Psi$ satisfy~$\Psi \circ \Theta=\id$ and~$\Theta \circ \Psi=\id$.

First we prove that~$\Psi \circ \Theta=\id$.
Start with an immersion~$e \colon \Sigma \looparrowright N$ that is compatible with~$\alpha$ and write~$\Theta(e)=(N_{e(\Sigma)},f)$ with~$f=h| \cup \widehat{\gamma}| \colon \partial N_{e(\Sigma)} \to P_K$ the homeomorphism described in Construction~\ref{cons:EmbVBijection}.
Then~$\Psi(\Theta(e))$ is an immersion
$$ \Sigma \stackrel{[\times \lbrace 0 \rbrace]}{\looparrowright} N_{e(\Sigma)} \cup_f W \xrightarrow{F,\cong} N.$$
We showed that the equivalence class of this immersion is independent of the homeomorphism~$F$
that extends~$f$.
It suffices to show that we can make choices so that $\Psi(\Theta(e))$ recovers $e$. 
This can be done explicitly as follows.
By definition we have~$f'=h|^{-1} \circ (f| \cup \fr_K^{-1})$ and~$f=h| \cup \widehat{\gamma}$.
Since~$f|$ denotes the restriction of~$f$ to the knot complement, we have~$f|=h$ and we deduce that~$f'=\id_{\partial N \setminus \nu(K)}  \cup (h^{-1} \circ \fr_K^{-1})$.
Since~$\widehat{\gamma}$ extends the Seifert framing~$\fr_\partial=\{ \fr_{\partial B_k} \}_k \sqcup \id$, we can rewrite this as~$f'=\id_{\partial N \setminus \nu(K)} \cup \widehat{\gamma}|^{-1}$.
We already know an extension of $f'$, namely $\id_{N_{e(\Sigma)}} \cup \widehat{\gamma}^{-1}$, which we take to be~$F$.
Thus we obtain
$$\Psi(\Theta(e))=\widehat{\gamma}^{-1} \circ [\times \{0\}] \colon \Sigma \looparrowright N.$$ 
We verify that this agrees with the initial immersion $e$.
We must check that~$\widehat{\gamma} \circ e=[\times \{0\}] \colon \Sigma \to W$.
Use $s_0 \colon \Sigma \to \overline{\nu}(e)$ to denote the $0$-section of the linear $D^2$-bundle $\overline{\nu}(e)\to \Sigma$ so that $\iota \circ s_0 = e.$
This way,  it suffices to prove that~$\gamma \circ s_0 =[\times \{0\}]_E \colon \Sigma \to E$.
Indeed, we would then have
$$ \widehat{\gamma} \circ e
=\widehat{\gamma} \circ (\iota \circ s_0)
=[\proj_W \circ \gamma \circ s_0]_W
=[\times \{0\}].$$
We first verify this on $\Sigma^\circ$ and then on the $B_k$.

For $x \in \Sigma^\circ$, note that $s_0(x) \in \pi^{-1}(x)$ is the zero element, as is $(x,0) \in  \{ x \} \times D^2 \subset \Sigma^\circ \times D^2$.
Since the homeomorphism~$\gamma|_{\nu(e|_{\Sigma^\circ})}=\fr$ is induced by an isomorphism of vector bundles,  it therefore indeed satisfies $\gamma \circ s_0(x)=(x,0)$.

For $x \in B_k$,  since the zero section of $\nu(e|_{B_k})=B_k \times D^2 \xrightarrow{\proj_1} B_k$ is $s_0(x)=(x,0)$,  we have 
$$\gamma \circ s_0(x)
=(\alpha_k^{-1} \times \id)(x,0)
=(\alpha_k^{-1}(x),0)
=\eta^\pm(\alpha_k^{-1}(x),0)
\sim_E
(x,0)
=[\times \{0\}]_E(x).
$$
This concludes the verification that $\Psi \circ \Theta=\id.$

\medbreak

Next we prove that~$\Theta \circ \Psi=\id$.
This time we start with a pair~$(V,f)$ consisting of a 4-manifold~$V$ and a homeomorphism~$f \colon  \partial V \to P_K$.
By definition,~$\Psi(V,f)$ is represented by an immersion~$e \colon \Sigma \stackrel{ [\times \lbrace 0 \rbrace]}{\looparrowright} \widehat{V} \xrightarrow{F,\cong} N$.
Recall that we write~$h \colon \partial N \to S^3$ for our preferred homeomorphism and that by construction,  on the boundaries,~$F$ restricts to
$$h|^{-1} \circ (f|  \cup \fr_K^{-1}) \colon \partial \widehat{V} \to \partial N.$$

In order to understand the effect of~$\Theta$ on~$e$, we need to fix a nice framing on~$e|_{\Sigma^\circ}$. 
To this effect, we use the family~$\mathcal{U}$ of double point charts for~$(e,\alpha)$ and the~$\mathcal{U}$-adapted normal bundles~$(\nu(e),\iota)$ (for the immersion~$e$) and~$(\nu(e),F^{-1} \circ \iota)$ (for the immersion $[\times \{0\}]$) from Constructions~\ref{cons:UConstruction} and~\ref{cons:UAdaptedNormalBundle}.
We frame both~$[\times \{0\}]$ and~$e$ by noting that 
$$ \nu(e|_{\Sigma^\circ})= \Sigma^\circ \times \R^2.$$
In the notation of Proposition~\ref{prop:FramingPushoffUsingF},  this means that~$\fr=\id.$

Proposition~\ref{prop:IdentifyW} then ensures that this framing gives rise to a homeomorphism $\gamma \colon \overline{\nu}(e) \xrightarrow{\cong}~E$.
Proposition~\ref{prop:IdentifyW}  also shows that the additional data of the normal bundles $(\nu(e),\iota)$ and $(\nu(e),F^{-1} \circ~\iota)$ gives rise to homeomorphisms
\begin{align*}
&\widehat{\gamma}_{[\times \{0\}]} \colon  (F^{-1}\circ\iota)(\overline{\nu}(e)) \xrightarrow{\cong} W,  \qquad \widehat{\gamma} \colon \iota(\overline{\nu}(e)) \xrightarrow{\cong} W.
\end{align*}
We record for later on that~$\widehat{\gamma}=\widehat{\gamma}_{[\times \{0\}]} \circ F^{-1}$: this can be seen from the proof of Proposition~\ref{prop:IdentifyW} combined with the equations~$\widehat{\gamma} \circ \iota= \proj_W \circ \gamma$ and $\widehat{\gamma}_{[\times \{0\}]} \circ (F^{-1}\circ\iota)= \proj_W \circ \gamma$.

Pending the verification that our framing of $e$ is nice,  we obtain~$\Theta(\Psi(V,f))=(N_{e(\Sigma)},h| \cup~\widehat{\gamma}|)$, where, as dictated by Construction~\ref{cons:EmbVBijection}, the boundary homeomorphism is~$h| \cup  \widehat{\gamma}| \colon \partial N_{e(\Sigma)} \to P_K$.
Here we are making use of the fact that up to homeomorphism rel. boundary, we can choose any nice framing in the definition of~$\Theta$.

We have to prove that~$(N_{e(\Sigma)},h| \cup \widehat{\gamma}|)$ is homeomorphic rel.\ boundary to~$(V,f)$.
We begin with an intermediate step which consists of considering the~$\alpha$-compatible immersion~$[\times \{0\}] \colon \Sigma \looparrowright~\widehat{V}$.
Recall that the immersion~$[\times \{0\}]$ admits the same compatible pair of double point charts~$\mathcal{U}$ as~$e$ and admits~$(\nu(e),\iota \circ F^{-1})$ as an~$\mathcal{U}$-adapted normal bundle, and write~$\widehat{V}_{[\times \{0\}](\Sigma)}$ for the exterior of~$[\times \{0\}]$.
Our strategy is to first show that~$(N_{e(\Sigma)},h| \cup \widehat{\gamma}|)$ is homeomorphic rel. boundary to~$(\widehat{V}_{[\times \{0\}](\Sigma)},f \cup \widehat{\gamma}_{[\times \{0\}]})$ and to then show that~$(\widehat{V}_{[\times \{0\}](\Sigma)},f \cup \widehat{\gamma}_{[\times \{0\}]})$ is homeomorphic rel. boundary to~$(V,f)$.

\begin{claim}\label{claim:homeo1}
The pair $(N_{e(\Sigma)},h| \cup \widehat{\gamma}|)$ is homeomorphic rel. boundary to~$(\widehat{V}_{[\times \{0\}](\Sigma)},f \cup \widehat{\gamma}_{[\times \{0\}]})$.
\end{claim}
\begin{proof}
Since~$(\nu(e),\iota \circ F^{-1})$ is a normal bundle for the immersion~$[\times \{0\}] \colon \Sigma \looparrowright \widehat{V}$,  the homeomorphism~$F \colon \widehat{V} \to N$ restricts to a homeomorphism~$F| \colon \widehat{V}_{[\times \{0\}](\Sigma)} \to N_{e(\Sigma)}$ between the exteriors.
We therefore obtain the following diagram,  where the two right squares certainly commute:
$$
\xymatrix @C+0.3cm{
P_K \ar[d]^=& \partial \widehat{V}_{[\times \{0\}](\Sigma)} \ar[r]^{\subset} \ar[d]^{F|} \ar[l]_-{f \cup \widehat{\gamma}_{[\times \{0\}]}}&   \widehat{V}_{[\times \{0\}](\Sigma)} \ar[r]^{\subset} \ar[d]^{F|}& \widehat{V} \ar[d]^F \\
P_K & \partial N_{e(\Sigma)} \ar[r]^{\subset} \ar[l]_-{h| \cup \widehat{\gamma}} & N_{e(\Sigma)} \ar[r]^{\subset} &N.
}
$$
To see that the left square commutes and complete the proof of this claim,  use~$\widehat{\gamma}=\widehat{\gamma}_{[\times \{0\}]} \circ F^{-1}$ and recall that~$F|$ extends the homeomorphism~$h|^{-1} \circ f$ on~$\partial V \setminus f^{-1}(P)$.
\end{proof}


\begin{claim} 
The pair~$(\widehat{V}_{[\times \{0\}](\Sigma)},f \cup \widehat{\gamma}_{[\times \{0\}]})$ is homeomorphic rel. boundary to~$(V,f)$.
\end{claim}
\begin{proof}
We need to build a homeomorphism
$$ G \colon \widehat{V}_{[\times \{0\}](\Sigma)} \to V$$
and then verify that its restriction to the boundary satisfies~$f \circ G|=\widehat{\gamma}_{[\times \{0\}]} \cup f|.$

To construct~$G$,  begin by choosing a boundary collar~$\partial V \times I \subset V$ that satisfies~$\partial V \times \{1\}= \partial V$,  consider~$f^{-1}(P) \times I \subset \partial V \times I$, and define
$$ V_{\text{small}}:=V \setminus (f^{-1}(P) \times (0,1]).$$
Informally,  $G$ can be described as leaving $V_{\text{small}}$ alone,  while homeomorphically compressing  $(f^{-1}(P) \times I) \cup (\widehat{V}_{[\times \{0\}](\Sigma)} \cap W)$ onto $f^{-1}(P) \times I$. However,  it takes significant work to make this precise while ensuring that our boundary identifications correspond as required.

We note the decompositions
\begin{align*}
V&=V_{\text{small}} \cup (f^{-1}(P) \times I), \\
\widehat{V}_{[\times \{0\}](\Sigma)}&=(V \cup W) \setminus j(\nu(e))=V_{\text{small}} \cup (f^{-1}(P) \times I) \cup (W \setminus j(\nu(e))).
\end{align*}
We will define~$G|_{V_{\text{small}}}:=\id$, so the challenge is really to define a homeomorphism 
$$(f^{-1}(P) \times I) \cup (W \setminus j(\nu(e))) \to (f^{-1}(P) \times I).$$

For this,  we will decompose~$W \smallsetminus j(\nu(e))$ and~$f^{-1}(P) \times I$ into three pieces (see Figure~\ref{fig:buildingG}) and define~$G$ on these pieces; these constituents of $G$ will be denoted~$G_1,G_2$ and~$G_3$. 
\color{black}
\begin{figure}[h!]
\centering
\begin{tikzpicture}
\node[anchor=south west,inner sep=0] at (0,0){\includegraphics[height=5cm]{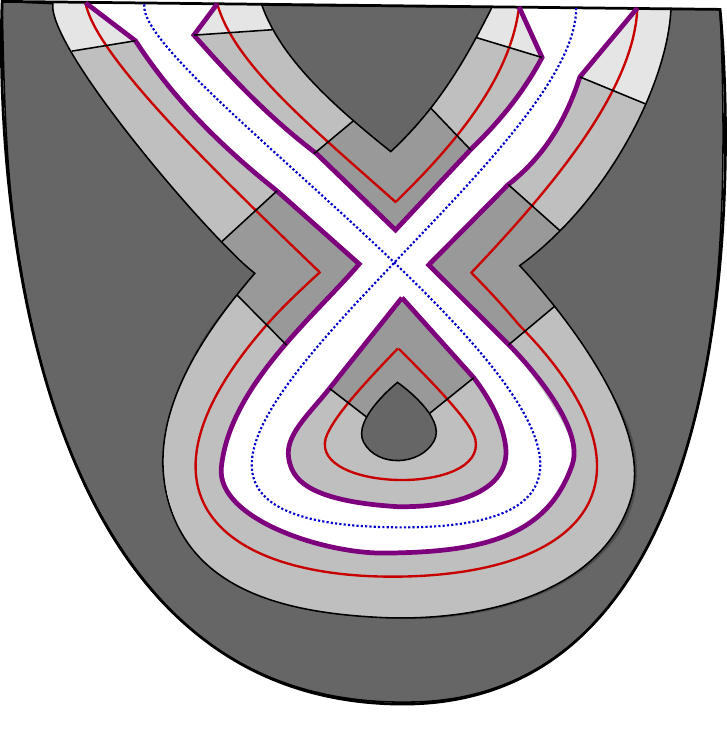}}; 
 \node[anchor=south west,inner sep=0]at (6,0){\includegraphics[height=5cm]{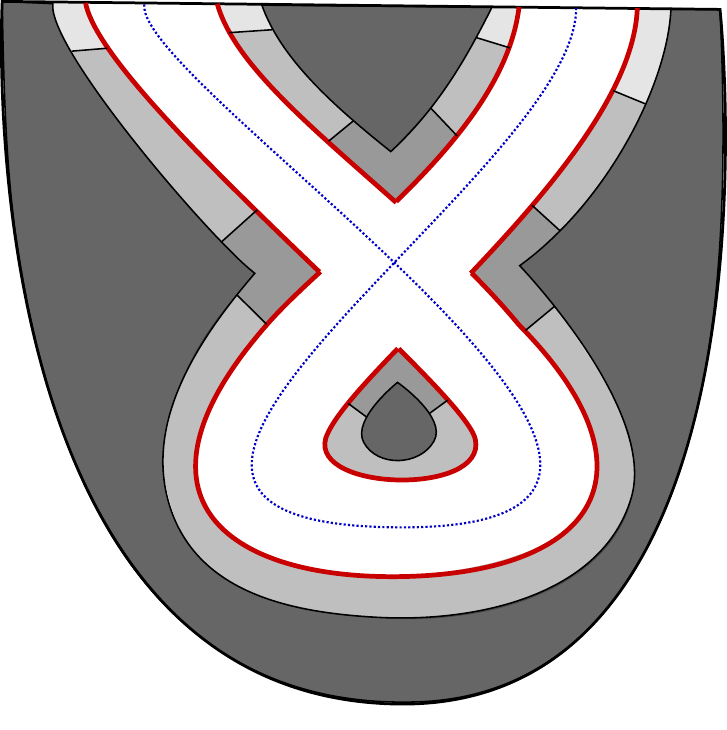}};
 \draw [-stealth](-.3, 4.8)--(.6, 4.8);
 \node at (-.6, 4.8){$R_1'$};
 \draw [-stealth](-.3, 4.1)--(1.1, 4.1);
 \node at (-.6, 4.1){$R_2'$};
 \draw[-stealth] (-.3,  3.2)--(2, 3.2);
 \node at (-.6, 3.2){$R_3'$};
 \draw [-stealth](0, 2.2)--(1, 2.5);
 \node at (-.3, 2){$V_{\text{small}}$};
 \draw [stealth-](10.4, 4.7)--(11.3,4.7);
 \node at (11.6, 4.7){$R_1$};
 \draw [stealth-] (10.1, 4)--(11.3,  4);
 \node at (11.6, 4) {$R_2$};
 \draw [stealth-] (9.4,3.2)--(11.3, 3.2);
 \node at (11.6, 3.2){$R_3$};
  \draw [stealth-](10.5, 2)--(11,1.5);
 \node at (11.5,1.5){$V_{\text{small}}$};
\end{tikzpicture}
\caption{Right: 
the decomposition~$V=R_1 \cup R_2 \cup R_3 \cup V_{\text{small}}$, where $R_1=f^{-1}(\text{im}(\beta) \times S^1) \times I$,  $R_2=f^{-1}(\Sigma_\beta \times S^1) \times I$ and~$R_3=\bigcup_{i=1}^c f^{-1}(P \cap U_i) \times  I$.
Left: the decomposition~$\widehat{V}_{[\times \{0\}](\Sigma)}= R_1' \cup R_2' \cup R_3' \cup V_{\text{small}}$,  where $R_1'=  \left((\text{im}(\beta) \times D^2) \setminus j(\nu(e|_{\im(\beta)}))\right)\cup R_1$, 
  $R_2'=  \left(\Sigma_\beta \times D^2 \setminus j(\nu(e|_{\Sigma_\beta}))\right) \cup R_2$,  
  and $R_3'= \bigcup_{i=1}^c \left(U_i \setminus j(\nu(e|_{B_{2i-1} \cup B_{2i}})) \right) \cup R_3$.  
The goal is to construct a homeomorphisms~$G_i \colon R_i' \to R_i$ for $i=1, 2,3$ that agree on the overlaps.
}
\label{fig:buildingG}
\end{figure}

A verification using the definition of~$j$ gives us the decompositions
\[\begin{tikzcd}[column sep=small]
j(\overline{\nu}(e))  \arrow[r,phantom,"=" description]  \arrow[d, "\subset"] & \left\lbrace \bsm \beta(y,s) \\ x' \esm  \ \Big| \ (y,s) \in \partial \Sigma \times  [0,1] ,|x'| \leq \frac{1+s}{2} \right\rbrace
 \arrow[r,phantom,"\cup" description] 
 \arrow[d, "\subset"]
& \arrow[r,phantom,"\cup" description]  \arrow[d, "\subset"] \Sigma_\beta \times D^2_{\frac{1}{2}}
&  \arrow[d, "\subset"] \bigcup_{k=1}^{2c}D^2 \times D^2_{\frac{1}{2}} \\
W  \arrow[r,phantom,"=" description] & \im(\beta) \times D^2  \arrow[r,phantom,"\cup" description] & \Sigma_\beta \times D^2 \arrow[r,phantom,"\cup" description] & \bigcup_{i=1}^{c} U_i \\
P  \arrow [u,  "\subset"] \arrow[r,phantom,"=" description] & \im(\beta) \times S^1 \arrow [u,  "\subset"]  \arrow[r,phantom,"\cup" description] & \Sigma_\beta \times S^1 \arrow [u,  "\subset"]  \arrow[r,phantom,"\cup" description] & \bigcup_{i=1}^{c} P \cap U_i, \arrow [u,  "\subset"]  \\
\end{tikzcd}
\]
and a decomposition of $f^{-1}(P)$ obtained by applying $f^{-1}$ to each piece of~$P$. 

We now construct the constituent homeomorphisms~$G_1,G_2,G_3$ of~$G$.  Note that in each of the domains of $G_1,G_2,G_3$ described below,  the $\cup$ refers to a union, not inclusion. 

The homeomorphism $G_1$ is defined as
\begin{align*}
G_1 \colon R_1'= 
\left( \begin{array}{c}
 (\im(\beta) \times D^2) \setminus j(\nu(e|_{\im(\beta)})) \\
 \cup \\
 f^{-1}(\im(\beta) \times S^1) \times I 
 \end{array} \right) &\to f^{-1}(\im(\beta) \times S^1) \times I=R_1\\
 (\beta(p,s),(r,\theta)) \qquad \qquad &\mapsto \left(f^{-1}(\beta(p,s),\theta),1+ r(s-1)/2 \right) \\
 (f^{-1}(\beta(p,s),\theta), t)\qquad \quad &\mapsto \left(f^{-1}(\beta(p,s),\theta), t(s+1)/2\right).
\end{align*}
Here and in what follows, we use $(r,\theta)$ to denote polar coordinates on $D^2$.

The homeomorphism~$G_2$ is defined as
\begin{align*}
G_2 \colon R_2'= \left( \begin{array}{c}
(\Sigma_\beta \times D^2)  \setminus j(\nu(e|_{\Sigma_\beta})) \\ \cup\\  f^{-1}(\Sigma_\beta \times S^1) \times I\end{array}\right) & \to f^{-1}(\Sigma_\beta \times S^1) \times I=R_2 \\
 (x,(r,\theta))\qquad \qquad &\mapsto \left(f^{-1}(x,\theta),3/2-r\right) \\
(f^{-1}(x,\theta), t) \qquad \quad  &\mapsto \left(f^{-1}(x,\theta), t/2\right).
\end{align*}

The construction of the homeomorphism~$G_3$ is more arduous and requires some additional notation.
An element of~$U_i \setminus j(\nu(e|_{B_{2i-1} \cup B_{2i}}))$ is of the form~$(\shrink_{2i-1} \cup \shrink_{2i})^{-1}(p)$ where, as illustrated in Figure~\ref{fig:buildingpsi},  $p=(r_1, \theta_1, r_2, \theta_2)$ with $(r_1,r_2)=s \cdot p_L +(1-s)\psi(p_L)$ for~$s \in [0,1]$ and~$p_L \in (\partial D^2_{\frac{1}{4}} \times D^2_{\geq \frac{1}{4}}) \cup (D^2_{\geq \frac{1}{4}} \times D^2_{\frac{1}{4}}).$

Using this notation, for each $i=1, \dots, c$ we define the homeomorphism $G_3^i$ as 
\begin{align*}
G_3^i \colon (R_3^i )' = \left(\begin{array}{c} U_i \setminus j(\nu(e|_{B_{2i-1} \cup B_{2i}})) \\ \cup\\  f^{-1}(U_i \cap P) \times I \end{array} \right) &\to f^{-1}(U_i \cap P) \times I = R_3^i\\
 (\shrink_{2i-1} \cup \shrink_{2i})^{-1}(p)  \quad& \mapsto \left( f^{-1} \left( (\shrink_{2i-1} \cup \shrink_{2i})^{-1}(p')\right), (s+1)/2 \right) \\
(f^{-1}(y),t)  \qquad \qquad & \mapsto \left(f^{-1}(y),t/2 \right),
\end{align*}
where $p=(r_1, \theta_1, r_2, \theta_2)$ for $(r_1,r_2)= sp_L+(1-s) \psi_{\operatorname{rad}}(p_L)$ 
for~$p_L \in (\{1/4\} \times [1/4,1]) \cup ([1/4,1] \times \{1/4\})$ and $s \in [0,1]$, and~$p':= (r_1', \theta_1, r_2', \theta_2)$ with $(r_1',r_2')=\psi_{\operatorname{rad}}(p_L)$.

A verification shows that the maps $G_1,G_2,  G_3:= \bigcup_{i=1}^c G_3^i$, and $\id_{V_{\text{small}}}$ are well-defined homeomorphisms that agree  where their domains overlap, and give rise to a homeomorphism
$$ G:= G_1 \cup G_2 \cup G_3  \cup \id_{V_{\text{small}}} \cup \colon \widehat{V}_{[\times \{0\}](\Sigma)} \to V.$$
\medskip

We now need to verify that~$f \circ G|=f \cup \widehat{\gamma}_{[\times \{0\}]}|$ on $\partial \widehat{V}_{[\times \{0\}](\Sigma)}$ .
We describe the main steps and leave the ultimate verification to the reader.
The first step is to describe~$\partial (j(\overline{\nu}(e)))$ and~$\widehat{\gamma}_{[\times \{0\}]}|$.
We write~$r_1$ and~$r_2$ for the radial coordinates of~$D^2 \times D^2.$
Using the decomposition of~$j(\overline{\nu}(e))$ from above, and the definition of~$\widehat{\gamma}_{[\times \{0\}]}|$ from Proposition~\ref{prop:IdentifyW},  the homeomorphism
\[ \widehat{\gamma}_{[\times \{0\}]} \colon \partial (j(\overline{\nu}(e))) \setminus \partial \Sigma \times D^2 \to P\]
is given piecewise by
 \begin{align*}
 \begin{array}{ccc}
 \left\lbrace \bsm \beta(y,s) \\ z' \esm  \ \Big| \ (y,s) \in \partial \Sigma \times [0,1],|z'|=\frac{1+s}{2} \right\rbrace &
 \xrightarrow{(\beta(y,s),z') \mapsto (\beta(y,s),z'\cdot \frac{2}{1+s})}& \im(\beta) \times S^1,  \\
& &\\
\Sigma_\beta \times \partial D^2_{\frac{1}{2}}  &\xrightarrow{(p,z)  \mapsto (p,2z)}& \Sigma_\beta \times S^1, \\
& & \\
 \bigcup_{k=1}^{2c} D^2_{\frac{1}{2} \leq r_1} \times \partial D^2_{\frac{1}{2}} &
\xrightarrow{ (z_1, z_2) \mapsto (z_1, 2z_2)} &
\bigcup_{k=1}^{2c} P \cap (D^2 \times D^2)_k.
\end{array}
\end{align*}
One can now verify using the definition of~$G$ that~$f \circ G|=f \cup \widehat{\gamma}_{[\times \{0\}]}|$.
We omit the details, but the point is that one checks the equality on each of the pieces of $\partial (j(\overline{\nu}(e)))$ using the definition of~$G_1,G_2$ and $G_3$.
This concludes the proof of this claim.
\end{proof} 
The combination of these claims implies that~$(N_{e(\Sigma)},h| \cup \widehat{\gamma}|)$ is homeomorphic rel.\ boundary to~$(V,f)$.
Since~$\pi_1(P_K) \xrightarrow{f,\cong} \pi_1(\partial V) \to \pi_1(V) \cong \Z$ agrees with the epimorphism~$\varphi$, it sends the genus and plumbing loops to zero.
The aforementioned rel.\ boundary homeomorphism therefore ensures that our framing of $e$ is nice.
In turn, this ensures that~$\Theta(\Psi(V,f))=(N_{e(\Sigma)},h| \cup \widehat{\gamma}|)$.
This concludes the proof that $\Theta \circ \Psi=\id$ and the proposition follows.
\end{proof}

\subsection{From immersions to immersed surfaces}
\label{sub:ImmersionsToSurfaces}

The goal of this section is to deduce Theorem~\ref{thm:SurfacesManifolds} from Proposition~\ref{prop:EmbVBijections}.
This will be done by mod-ing out the domain and target of the bijection~$\Theta$ by $\Homeo_\alpha(\Sigma,\partial)$-actions and verifying that the bijection~$\Theta$ intertwines these actions.

\medbreak

Recall that~$\Homeo_\alpha(\Sigma,\partial)$ acts on $\mathcal{V}_\lambda^0(P_K)$ by $\theta \cdot (V,f)=(V,\widehat{\theta} \circ f)$.
The next result proves the main result of this section, 
Theorem~\ref{thm:SurfacesManifolds}, which we restate in a more precise way as follows.

\begin{theorem}
\label{thm:SurfacesManifoldsMainProof}
Let~$N$ be a simply-connected~$4$-manifold with boundary~$\partial N \cong S^3$,  let~$K \subset S^3$ be a knot,  let $c_+,c_-$ and $g$ be non-negative integers, and let~$\lambda$ be a nondegenerate hermitian form over~$\Z[t^{\pm 1}]$ with $\lambda(1) \cong Q_N \oplus (0)^{2g+c}$, where $c:=c_++c_-.$

The map~$\Theta$ from Construction~\ref{cons:EmbVBijection} descends to a bijection
$$
\operatorname{Surf}_\lambda^0(g;c_+,c_-)(N,K) \xrightarrow{\cong}
\begin{cases}
\mathcal{V}_\lambda^0(P_K)/\operatorname{Homeo}_\alpha(\Sigma,\partial) &\quad \text{ if $\lambda$ is even}
\\
 \mathcal{V}_\lambda^{0,\varepsilon}(P_K)/\operatorname{Homeo}_\alpha(\Sigma,\partial)
 &\quad \text{ if $\lambda$ is odd.}
\end{cases}
$$
\end{theorem}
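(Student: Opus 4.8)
The plan is to combine the bijection $\Theta \colon \operatorname{Imm}_\alpha(g;c_+,c_-)^0_\lambda(N,K) \xrightarrow{\cong} \mathcal{V}^0_\lambda(P_K)$ (resp. $\mathcal{V}^{0,\varepsilon}_\lambda(P_K)$ in the odd case) from Proposition~\ref{prop:EmbVBijections} with the bijection
$$\operatorname{Imm}_\alpha(g;c_+,c_-)^0_\lambda(N,K)/\Homeo_\alpha(\Sigma,\partial) \xrightarrow{\cong} \operatorname{Surf}^0_\lambda(g;c_+,c_-)(N,K)$$
obtained from Proposition~\ref{prop:ImmIsImmalpha} (using Remark~\ref{rem:ClosedImmSurfAdapt}, which notes that the proofs of Propositions~\ref{prop:SurfQuotientOfImm} and~\ref{prop:ImmIsImmalpha} go through verbatim when one imposes a constraint on the equivariant intersection form of the complement). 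The only substantive thing left to check is that $\Theta$ is equivariant for the two $\Homeo_\alpha(\Sigma,\partial)$-actions: the action $\theta \cdot e := e \circ \theta^{-1}$ on immersions from Notation~\ref{not:Imm0Surf0}, and the action $\theta \cdot (V,f) := (V, \widehat{\theta} \circ f)$ on $\mathcal{V}^0_\lambda(P_K)$ from Construction~\ref{cons:ActionHomeo}. Once equivariance is established, $\Theta$ descends to a bijection on the quotients, and precomposing with the bijection from Proposition~\ref{prop:ImmIsImmalpha} gives exactly the asserted bijection, with nonemptiness of the target following from nonemptiness of $\operatorname{Surf}^0_\lambda(g;c_+,c_-)(N,K)$ and conversely under the hypothesis $\lambda(1) \cong Q_N \oplus (0)^{2g+c}$ as recorded after the statement of Theorem~\ref{thm:SurfacesManifolds}.

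So the heart of the proof is the equivariance claim: for every $\theta \in \Homeo_\alpha(\Sigma,\partial)$ and every $\alpha$-compatible $\Z$-immersion $e \colon \Sigma \looparrowright N$, the pairs $\Theta(e \circ \theta^{-1})$ and $\theta \cdot \Theta(e) = (N_{e(\Sigma)}, \widehat{\theta} \circ f)$ agree in $\mathcal{V}^0_\lambda(P_K)$. I would argue this as follows. Set $e' := e \circ \theta^{-1}$; since $\theta$ is a homeomorphism, $e$ and $e'$ have the same image $S = e(\Sigma) = e'(\Sigma)$, hence the same surface exterior $N_{e(\Sigma)} = N_{e'(\Sigma)}$, so the underlying $4$-manifolds of $\Theta(e)$ and $\Theta(e')$ coincide on the nose. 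One then has to compare the boundary identifications. Fix a family of double point charts $\mathcal{U}$ for $(e,\alpha)$, a $\mathcal{U}$-adapted normal bundle $(\nu(e),\iota)$, and a nice framing $\fr$ for $\nu(e|_{\Sigma^\circ})$; these determine $f = h| \cup \widehat{\gamma}|$ as in Construction~\ref{cons:EmbVBijection}. Transport everything through $\theta$: because $\theta \circ \alpha = \theta$ (i.e. $\theta$ fixes each $B_k$ pointwise — here I am reading off Construction~\ref{cons:HomeoSigmaActionRelBoundary}, which is exactly why $\Homeo_\alpha$ is defined by that condition), the family $\mathcal{U}$ is also a family of double point charts for $(e',\alpha)$, and $(\nu(e')=\theta^*\nu(e), \iota')$ with $\iota' = \iota \circ (\theta^{-1})_*$ on the bundle is a $\mathcal{U}$-adapted normal bundle for $e'$; the framing $\fr$ pulls back to a framing $\fr'$ of $\nu(e'|_{\Sigma^\circ})$. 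The key point is to trace through the construction of $\widehat{\gamma}'$ from Proposition~\ref{prop:IdentifyW} for this transported data and verify the identity $\widehat{\gamma}'| = \widehat{\theta} \circ \widehat{\gamma}|$ of homeomorphisms from $\partial\iota(\overline{\nu}(S)) \setminus (\iota(\nu(e))\cap \partial N)$ onto $P$, where $\widehat{\theta} \colon P \to P$ is the homeomorphism induced by $\theta$. This is essentially bookkeeping: on the $\Sigma^\circ \times D^2$ piece, $\widehat{\gamma}$ is given by the framing and $\widehat{\gamma}'$ by $\fr' = \fr \circ (\theta \times \id)$, which is precisely postcomposition by $\widehat{\theta}$; on the $(D^2 \times D^2)_k$ pieces, both $\widehat{\gamma}$ and $\widehat{\gamma}'$ are given by $\alpha_k^{-1} \times \id_{D^2}$ and $\widehat{\theta}$ is the identity there, so the identity holds trivially. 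Since $h|$ is unchanged (it lives on $\partial N \setminus \nu(K)$, away from $S$, and $\widehat{\theta}$ restricts to the identity on $E_K$), one concludes $f' = h| \cup \widehat{\gamma}'| = \widehat{\theta} \circ (h| \cup \widehat{\gamma}|) = \widehat{\theta} \circ f$, and hence $\Theta(e') = (N_{e(\Sigma)}, \widehat{\theta} \circ f) = \theta \cdot \Theta(e)$, using that the representative $(N_{e(\Sigma)},f)$ of $\Theta(e)$ is independent of all choices by Propositions~\ref{prop:PhiWellDefined} and~\ref{prop:ThetaWellDefEnd}.

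I expect the main obstacle to be precisely this last equivariance verification, and specifically keeping the dependence on $\mathcal{U}$, the normal bundle, and the framing under control while pushing them through $\theta$ — one must check that the transported framing $\fr'$ is still nice, which follows because pulling back along $\theta$ does not change homology classes in $N_{e(\Sigma)}$ (the push-offs are literally the same curves), and that the induced homeomorphism $\widehat\theta$ on $P$ appearing here genuinely coincides with the one used to define the action in Construction~\ref{cons:ActionHomeo}. Granting those compatibilities, the rest is formal. To finish, I would spell out: by Proposition~\ref{prop:EmbVBijections}, $\Theta$ is a bijection onto $\mathcal{V}^0_\lambda(P_K)$ (even case) or $\mathcal{V}^{0,\varepsilon}_\lambda(P_K)$ (odd case); by the equivariance just established it descends to a bijection between $\operatorname{Imm}_\alpha(g;c_+,c_-)^0_\lambda(N,K)/\Homeo_\alpha(\Sigma,\partial)$ and the corresponding quotient of $\mathcal{V}$; and by Proposition~\ref{prop:ImmIsImmalpha} (in the intersection-form-constrained form of Remark~\ref{rem:ClosedImmSurfAdapt}) the former quotient is in bijection with $\operatorname{Surf}^0_\lambda(g;c_+,c_-)(N,K)$, the composite being realized by mapping an immersed surface to its exterior equipped with its boundary identification. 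This composite is the map denoted $\Theta$ in the statement, completing the proof.
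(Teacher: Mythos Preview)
Your proposal is correct and follows essentially the same approach as the paper: reduce to proving that $\Theta$ is $\Homeo_\alpha(\Sigma,\partial)$-equivariant, transport the double point charts, normal bundle, and nice framing along $\theta$ (using $\theta\circ\alpha=\alpha$, not $\theta\circ\alpha=\theta$ as you wrote), and verify that the resulting boundary identification satisfies $\widehat{\gamma}'=\widehat{\theta}\circ\widehat{\gamma}$, whence $f'=\widehat{\theta}\circ f$. The only slip is the formula for the transported framing: it should be $(\theta\times\id)\circ\fr$ rather than $\fr\circ(\theta\times\id)$, matching the paper's ``$\theta\circ\fr$'' and yielding $\widehat{\gamma}'=\widehat{\theta}\circ\widehat{\gamma}$ on the $\Sigma^\circ\times D^2$ piece.
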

\begin{proof}
By Proposition~\ref{prop:EmbVBijections},  it is enough to check that~$\Theta(\theta \cdot e)=\theta \cdot \Theta(e)$ for~$\theta \in \Homeo_\alpha(\Sigma,\partial)$ and~$e \colon  \Sigma \looparrowright N$ an immersion representing an element of~$\Imm_\alpha(g;c_+,c_-)_\lambda^0(N;K)$.
By definition of~$\Theta$, we know that~$\Theta(\theta \cdot e)$ is~$(N_{e(\theta^{-1}(\Sigma))},f_{e\circ \theta^{-1}})$ and~$\theta \cdot \Theta(e)=(N_{e(\Sigma)},\widehat{\theta} \circ f_{e})$ where~$f_e$ and~$f_{e \circ \theta^{-1}}$ are homeomorphisms from the boundaries of these surface exteriors to~$P_K$,  built as in Construction~\ref{cons:EmbVBijection}, using any choice of double point charts,  immersion of the normal bundle, and nice framing.
We will make choices of these data so that the pairs~$\Theta(\theta \cdot e)=(N_{e(\theta^{-1}(\Sigma))},f_{e \circ \theta^{-1}})$ and~$\theta \cdot \Theta(e)=(N_{e(\Sigma)},\widehat{\theta} \circ f_e)$ are homeomorphic rel.\ boundary.

The reader can verify that if $(\pi \colon \nu(e) \to \Sigma,\iota)$ is a normal bundle for $e$, then $(\theta \circ \pi,\iota)$ is a normal bundle for $e \circ \theta^{-1}$.
The total spaces of these two bundles are identical, and it follows that the exteriors~$N_{e(\Sigma)}:=N\setminus \iota(\nu(e))$ and~$N_{e \circ \theta^{-1}(\Sigma)}:=N\setminus \iota(\nu(e \circ \theta^{-1}))$ are equal.
The reader can also use $\theta \circ \alpha=\alpha$ to verify that if~$\{ (U_p,\psi_p) \}_p$ is a family of double point charts for~$(e,\alpha)$, then it is also a family of double point charts for~$(e \circ \theta^{-1},\alpha)$.
Also, if~${\fr}$ is a nice framing for~$(e,\iota)$, then~$\theta \circ {\fr}$ is nice for~$(e\circ \theta^{-1},\iota)$.
The equation~$\theta \circ \alpha=\alpha$ implies that $\theta|_{B_k}=\id$ and it follows that in Proposition~\ref{prop:IdentifyW}, the homeomorphism~$\widehat{\gamma}$ for~$e$ becomes~$\widehat{\theta} \circ \widehat{\gamma}$ for~$e \circ \theta^{-1}$.
Using these choices to construct~$f_{e \circ \theta^{-1}}$, we have~$\Theta(e \circ \theta^{-1})=(N_{e \circ \theta^{-1}(\Sigma)},h| \cup (\widehat{\theta} \circ \widehat{\gamma}))$.
Using these observations and the fact that~$N\setminus \iota(\nu(e))=N\setminus \iota(\nu(e \circ \theta^{-1}))$, we obtain
\begin{align*}
  \Theta(\theta \cdot e)
&=\Theta(e \circ \theta^{-1})
=(N_{e \circ \theta^{-1}(\Sigma)},h| \cup (\widehat{\theta} \circ \widehat{\gamma}|))\\
&=(N_{e \circ \theta^{-1}(\Sigma)},\widehat{\theta} \circ (h| \cup \widehat{\gamma}|))
  =\theta \cdot (N_{e(\Sigma)},f_e)
=\theta \cdot  \Theta(e).
\end{align*}
This proves that the pairs~$\Theta(\theta \cdot e)=(N_{e(\theta^{-1}(\Sigma))},f_{e \circ \theta^{-1}})$ and~$\theta \cdot \Theta(e)=(N_{e(\Sigma)},f_e)$ are homeomorphic rel.\ boundary and thus concludes the proof of the theorem.
\end{proof}

\section{Proof of the classification theorems }
\label{sec:ProofClassifications}

The goal of this section is to prove the main classification results stated in Theorems~\ref{thm:SurfacesRelBoundary} and~\ref{thm:SurfacesClosed}.

\subsection{Proof of the rel. boundary classification}
\label{sub:ProofRelBoundary}

We begin with immersed $\Z$-surfaces that have a nonempty boundary.

\medbreak

We recall some notation from~\cite{ConwayPiccirilloPowell}.
Let $Y$ be a closed $3$-manifold and let $\varphi \colon \pi_1(Y) \twoheadrightarrow \Z$ be an epimorphism.
The main technical result of~\cite{ConwayPiccirilloPowell} states if a nondegenerate hermitian form~$\lambda$ presents~$\Bl_Y$, then the assignment~$(W,f) \mapsto b(W,f)=f_* \circ D_W \circ \partial \varpi$ induces (together with the Kirby-Siebenmann invariant if~$\lambda$ is odd) a bijection
$$
b \colon \mathcal{V}_\lambda^0(Y) 
\xrightarrow{\cong}
\begin{cases}
\Iso(\partial \lambda,\unaryminus \Bl_Y)/\Aut(\lambda) \quad &\text{ if $\lambda$ is even,} \\
( \Iso(\partial \lambda,\unaryminus \Bl_Y)/\Aut(\lambda)) \times \Z_2 \quad &\text{ if $\lambda$ is odd.} 
 \end{cases}
$$
Here recall from Section~\ref{sub:BlanchfieldIntro} that $D_W \colon \lambda_W \cong \unaryminus \Bl_{\partial W}$ is the canonical isometry from Proposition~\ref{prop:PresentsBlanchfield} and that $\partial \varpi \colon \partial  \lambda \cong  \partial \lambda_W$ denotes the boundary of an isometry $\varpi \colon \lambda \cong \lambda_W$,  as described in Remark~\ref{rem:InducedIsometry}.
The bijection $b$ does not depend on the choice of $\varpi.$

\begin{customthm}
{\ref{thm:SurfacesRelBoundary}}
Let~$N$ be a simply-connected~$4$-manifold with boundary~$\partial N \cong S^3$,  let~$K \subset S^3$ be a knot, and let $c_+,c_-$ and $g$ be non-negative integers.
Set $c:=c_++c_-$.
Given a nondegenerate hermitian form~$\lambda$ over~$\Z[t^{\pm 1}]$, the following assertions are equivalent:
\begin{enumerate}
\item
the hermitian form~$\lambda$ presents~$\Bl_{P_{K,g}(c_+,c_-)}$ and satisfies~$\lambda(1)\cong Q_N \oplus  (0)^{\oplus 2g+c}$;
\item the set~$\operatorname{Surf}_\lambda^0(g;c_+,c_-)(N,K)$ is nonempty and there is a bijection
$$\operatorname{Surf}_\lambda^0(g;c_+,c_-)(N,K) \approx \frac{\Iso(\partial \lambda,\unaryminus\Bl_{P_K})}{(\Aut(\lambda)\times \Homeo_\alpha(\Sigma_{g,1},\partial))}.$$
\end{enumerate}
\end{customthm}
\begin{proof}
The implication $(2) \Rightarrow (1)$ follows from Proposition~\ref{prop:NecessaryConditions}, and so we focus on the $(1) \Rightarrow (2)$ implication.
Set $\Sigma:=\Sigma_{g,1}$ and $P_K:=P_{K,g}(c_+,c_-).$
Since $\lambda$ presents $\Bl_{P_K}$, applying~\cite[Theorem 1.1]{ConwayPiccirilloPowell} ensures that~$\mathcal{V}^0_\lambda(P_K)$ is nonempty and in fact that~$\mathcal{V}^{0,\varepsilon}_\lambda(P_K)$ is nonempty when $\lambda$ is odd.
The condition $\lambda(1) \cong Q_N \oplus (0)^{2g+c}$ ensures that the map $\Psi$ is defined (Proposition~\ref{prop:PsiWellDef}) and therefore that $\operatorname{Surf}_\lambda^0(g;c_+,c_-)(N,K)$ is nonempty as well.
Theorem~\ref{thm:SurfacesManifoldsMainProof} then shows that the map~$\Theta$ from Construction~\ref{cons:EmbVBijection} induces a bijection
$$
\operatorname{Surf}_\lambda^0(g;c_+,c_-)(N,K) \xrightarrow{\cong}
\begin{cases}
\mathcal{V}_\lambda^0(P_K)/\operatorname{Homeo}_\alpha(\Sigma,\partial) &\quad \text{ if $\lambda$ is even}
\\
 \mathcal{V}_\lambda^{0,\varepsilon}(P_K)/\operatorname{Homeo}_\alpha(\Sigma,\partial)
 &\quad \text{ if $\lambda$ is odd.}
\end{cases}
$$
Thus the theorem will follow once we show that the map~$b \colon V_\lambda^0(P_K) \to \Iso(\partial \lambda,\unaryminus \Bl_{P_K})/\Aut(\lambda)$ intertwines the~$\Homeo_\alpha(\Sigma,\partial)$-actions, i.e. satisfies~$b_{\theta\cdot (W,f)}=\theta \cdot b_{(W,f)}$ for every~$\theta \in \Homeo^+_\alpha(\Sigma,\partial)$ and for every pair~$(W,f)$ representing an element of~$V_\lambda^0(P_K)$.
Here note that,  as observed above,~$b$ is a bijection thanks to the hypothesis that $\lambda$ presents $\Bl_{P_K}$\cite{ConwayPiccirilloPowell}.

This follows formally from the definitions of the actions:
on the one hand,  for any isometry~$\varpi \colon \lambda \cong  \lambda_W$, we have~$b_{\theta\cdot (W,f)}=b_{(W,\widehat{\theta} \circ f)}=\widehat{\theta} _*\circ  f_* \circ D_W \circ \partial \varpi$; on the other hand,~$\theta \cdot b_{(W,f)}$ is~$\theta \cdot (f_* \circ D_W \circ \partial \varpi)$ and this gives the same result.
This concludes the proof of Theorem~\ref{thm:SurfacesRelBoundary}.
\end{proof}

\subsection{Proof of the closed classification}
\label{sub:ProofClosed}

The goal of this section is to prove Theorem~\ref{thm:SurfacesClosed}, which is our main classification result of closed immersed $\Z$-surfaces up to homeomorphism.

\medbreak

\begin{customthm}
{\ref{thm:SurfacesClosed}}
Let~$X$ be a closed simply-connected~$4$-manifold,  and let $c_+,c_-$ and $g$ be non-negative integers.
Set $c:=c_++c_-$.
Given a nondegenerate hermitian form~$\lambda$ over~$\Z[t^{\pm 1}]$, the following assertions are equivalent:
\begin{enumerate}
\item
the hermitian form~$\lambda$ presents~$\Bl_{P_{U,g}(c_+,c_-)}$ and satisfies~$\lambda(1)\cong Q_X \oplus  (0)^{\oplus 2g+c}$;
\item the set~$\operatorname{Surf}_\lambda(g;c_+,c_-)(X)$ is nonempty and there is a bijection
$$\operatorname{Surf}_\lambda(g;c_+,c_-)(X) \approx \frac{\Iso(\partial \lambda,\unaryminus \Bl_{P_{U,g}(c_+,c_-)})}{\Aut(\lambda) \times \Homeo_\alpha(\Sigma_g)}.$$
\end{enumerate}
\end{customthm}
\begin{proof}
The implication $(2) \Rightarrow (1)$ follows from Proposition~\ref{prop:NecessaryConditions},  and so we focus on $(1) \Rightarrow (2)$.
Set~$\Sigma:=\Sigma_g$ and~$P_U:=P_{U,g}(c_+,c_-).$
The proof of the rel. boundary classification adapts to the closed case, as we now explain.
First,  as noted in Remark~\ref{rem:ClosedImmSurfAdapt} one verifies that the proofs of Propositions~\ref{prop:SurfQuotientOfImm} and~\ref{prop:ImmIsImmalpha} can be adapted to closed surfaces, so that mapping an immersion to its image yields a bijection
$$\operatorname{Imm}_\alpha(g;c_+,c_-)(X)/\Homeo_\alpha(\Sigma) \to \operatorname{Surf}(g;c_+,c_-)(X).$$ 
We outline how the analysis of $\operatorname{Imm}_\alpha(g;c_+,c_-)(X)$ from Sections~\ref{sec:PlumbedManifolds}, ~\ref{sec:BoundarIdentifications} and~\ref{sec:MainStatement} adapts to the closed case.
First,  as noted in Remark~\ref{rem:ClosedModels},  the definitions of the model manifolds~$P$ and~$W$ can be adapted to case where $\Sigma$ is closed; we used the notation $P_U$ and $W_U$ for the resulting manifolds.

As noted in Remark~\ref{rem:ClosedFraming}, nice framings can also be defined for closed surfaces and the results of Section~\ref{sec:BoundarIdentifications} adapt to the closed setting,
yielding~$f=\widehat{\gamma} \colon \partial X_{e(\Sigma)} \xrightarrow{\cong} P_U$.
Setting~$\varepsilon:=\ks(X)$, the exact same verifications as in (the propositions leading up to) Proposition~\ref{prop:EmbVBijections} show that the assignment~$\Theta(e):=(X_{e(\Sigma)},f)$ induces a bijection
$$\Imm_{\alpha}(g;c_+,c_-)_\lambda(X) \to 
\begin{cases}
\mathcal{V}_\lambda^0(P_U)  \quad& \text{if $\lambda$ is even} \\
\mathcal{V}_\lambda^{0,\varepsilon}(P_U)  \quad& \text{if $\lambda$ is odd.}
\end{cases}
$$
Verifications analogous to in the proof of Theorem~\ref{thm:SurfacesManifoldsMainProof} shows that this bijection descends to
$$
\operatorname{Surf}_\lambda(g;c_+,c_-)(X) \xrightarrow{\cong}
\begin{cases}
\mathcal{V}_\lambda^{0}(P_U) / \Homeo_\alpha(\Sigma) &\quad \text{ if $\lambda$ is even}
\\
 \mathcal{V}_\lambda^{0,\varepsilon}(P_U) / \Homeo_\alpha(\Sigma)
 &\quad \text{ if $\lambda$ is odd.}
\end{cases}
$$
Since $\lambda$ presents $\Bl_{P_U}$, applying~\cite[Theorem 1.1]{ConwayPiccirilloPowell} ensures that~$\mathcal{V}^0_\lambda(P_U)$ is nonempty and in fact that~$\mathcal{V}^{0,\varepsilon}_\lambda(P_K)$ is nonempty when $\lambda$ is odd.
The condition $\lambda(1) \cong Q_X \oplus (0)^{2g+c}$ ensures that~$\operatorname{Surf}_\lambda^0(g;c_+,c_-)(N,K)$ is nonempty as well.
By~\cite[Theorem 1.1]{ConwayPiccirilloPowell},  the bijection~$b$ described in Section~\ref{sub:ProofRelBoundary} now gives the required bijection,  thus proving the theorem.
\end{proof}

\section{Open questions}
\label{sec:OpenQuestions}

We conclude with two open questions.
The first asks whether there are genus~$g$ immersed $\Z$-surfaces with distinct equivariant intersection forms.
\begin{question*}
\label{quest:Equivariant}
Are there examples of two genus $g$ immersed $\Z$-surfaces with $c_+$ positive double points and $c_-$ negative double points whose exteriors have distinct equivariant intersection forms?
When the surfaces have nonempty boundary, we assume that their boundary knots coincide.

In particular,  if such a surface is immersed in $S^4$ (or in $D^4$ with boundary an Alexander polynomial one knot), must the equivariant intersection form of its exterior be equivalent to $\lambda_{c_+,c_-} \oplus \mathcal{H}_2^{\oplus g}$?
\end{question*}

The answer to the second question is positive for embedded surfaces in $S^4$ (or in $D^4$ if $\Delta_K =1$) if $g \geq 3$~\cite[Theorem 7.4]{ConwayPowell},  for embedded disks and spheres in $\C P^2$~\cite{ConwayDaiMiller,ConwayOrson},  and for immersed spheres in $S^4$ and disks in~$D^4$ when $c_++c_-=1$; recall Theorem~\ref{thm:g=0c=1BoundaryD4Intro}.
Recent work of Juhasz-Powell shows that most of the known smooth constructions of embedded $\Z$-tori in $S^4$ admit $\mathcal{H}_2$ as their equivariant intersection form~\cite{JuhaszPowell}.

\begin{remark}
Assume that the ambient manifold is $S^4$ or $D^4$; in the latter case further assume the knot on the boundary has Alexander polynomial one.
When $g=0$,  Lemmas~\ref{lem:B=zA} and~\ref{lem:A(1)} show that if $\lambda$ is the equivariant intersection form of such a $\Z$-surface exterior with $(c_+,c_-)$-double points, then~$\lambda=(t-1)(t^{-1}-1)\lambda'$ where $\lambda'$ is nonsingular and satisfies $\lambda'(1) \cong (1)^{\oplus c_+} \oplus (-1)^{\oplus c_-}$.
It follows that equivariant intersection forms for such $\Z$-immersed sphere or disk exteriors correspond bijectively to the set of nonsingular hermitian forms $\lambda'$ that satisfy $\lambda'(1) \cong (1)^{\oplus c_+} \oplus (-1)^{\oplus c_-}$. 
Thus when $g=0$ the answer to Question~\ref{quest:Equivariant} is ``yes"  if and only if this set is a singleton.
\end{remark}
\color{black}

\medbreak

Even if two genus $g$ immersed $\Z$-surfaces with $c_+$ double points, $c_-$ double points and boundary a knot $K$ have exteriors with isometric equivariant intersection forms, the surfaces need not be isotopic.
This was first noted for embedded $\Z$-disks in $(\C P^2)^\circ$~\cite{ConwayDaiMiller} and in Theorem~\ref{thm:g=0c=1BoundaryD4Intro} for immersed $\Z$-disks in~$D^4$ with a single double point.
The situation is less clear in the closed case: while closed embedded $\Z$-surfaces are determined up to equivalence by the equivariant intersection form of their exteriors~\cite[Theorem 1.4]{ConwayPowell},  and similarly for immersed $\Z$-surfaces with a single double point (Theorem~\ref{thm:Other4ManifoldsSpheresIntro}),the corresponding question for immersed surfaces remains open in general.

\begin{question*}
If two closed genus $g$ immersed $\Z$-surfaces with the same number of positive and negative double points have exteriors whose equivariant intersection forms are isometric, must the surfaces be equivalent?
\end{question*}

In terms of algebra,  a negative answer to this question amounts to deciding whether there are forms for which the orbit set~$\Aut(\Bl_{P_U})/(\Aut(\lambda) \times \Homeo_\alpha(\Sigma))$ is nontrivial.
As proved in Proposition~\ref{prop:AutBl/AutStandard},  when $\lambda \cong Q_X \oplus \lambda_{c_+,c_-} \oplus \mathcal{H}_2^{\oplus g}$, then this set is trivial and so closed genus $g$ immersed $\Z$-surfaces with this equivariant intersection form are equivalent to the standard one.
\medbreak

\appendix
\section{Concordances to the Hopf link}

Finally we prove our uniqueness result for $\Z^2$-concordances from two-component 
links to the Hopf link.
While this result might be of independent interest, recall that we also made use of it during the proof of Lemma~\ref{lem:Homeo} which was a crucial ingredient needed to establish the bijection~$ \operatorname{Imm}_{\alpha}(g;c_+,c_-)^0(K,N)/\Homeo_\alpha(\Sigma,\partial) \cong \operatorname{Surf}^0(g;c_+,c_-)(K,N). $
In what follows,  links are assumed to be ordered and oriented and the Alexander polynomial refers to the multivariable Alexander polynomial.

We remind the reader that any 2-component link with Alexander polynomial~$1$ is $\Z^2$-concordant to the Hopf link with the same linking number~\cite{Davis},  and in fact it is not hard to show that the converse holds, i.e.\ that any link that is $\Z^2$-concordant to the Hopf link must have Alexander polynomial 1. 

\begin{customthm}
{\ref{thm:HopfConcordancesEquivalent}}
Let~$L$ be a~$2$-component link
with Alexander polynomial~$1$.
Any two~$\Z^2$-concordances between~$L$ and the Hopf link $H$ are equivalent rel.\  boundary.
\end{customthm}

\begin{remark}
We do not know whether the result holds with isotopy in place of equivalence: 
Since~$\operatorname{Mod}(S^3 \times I,\partial)\cong\Z_2$, generated by a Dehn twist~\cite{OrsonPowell},  the homeomorphism from Theorem~\ref{thm:HopfConcordancesEquivalent} need a priori not be isotopic to the identity.
Put differently,  in the setting of Theorem~\ref{thm:HopfConcordancesEquivalent}, we do not know whether a given $\Z^2$-concordance is isotopic to the $\Z^2$-concordance obtained by applying a Dehn twist to $S^3 \times [0,1].$
\end{remark}

The idea of the proof of Theorem~\ref{thm:HopfConcordancesEquivalent} is to observe that the exterior of a $\Z^2$-concordance is a~$K(\Z^2,1)$ and to then consider the surgery exact sequence.
The proof of this second step is near identical to that of~\cite{ConwayDiscs}.
The overall proof strategy is similar to the one from the earlier~\cite{ConwayPowellDiscs} and can also be used to prove that up to equivalence rel. boundary, there is a unique $\Z$-concordance between an Alexander polynomial one knot and the unknot.

\subsection{Algebraic topology of concordance exteriors}

This section establishes that a $\Z^2$-concordance exterior is a $K(\Z^2,1)$ (Proposition~\ref{prop:ConcordanceExterior}) and proves a technical result (Proposition~\ref{prop:Compatible}) needed during the proof of Theorem~\ref{thm:HopfConcordancesEquivalent}. 

\begin{notation}
In what follows,~$H= H_a \cup H_b$ denotes the Hopf link and~$L= L_a \cup L_b$ is a~$2$-component link with~$\Delta_L=1$.
We denote the meridians of $L$ by $\mu_a$ and $\mu_b.$
We let~$E_H$ and~$E_L$ denote the exteriors of~$H$ and~$L$ respectively.
Finally,  $E_C$ will denote the exterior of a concordance~$C$ between $L$ and $H$,  and we decompose $\partial E_C=E_L \cup \partial_{\intt} E_C\cup E_H$.
All our concordances are of ordered links,  and so consist of a pair of annuli,  one between~$L_a$ and~$H_a$ and  one between~$L_b$ and~$H_b$.
\end{notation}

We begin by calculating the algebraic topology of a $\Z^2$-concordance exterior.

\begin{proposition}
\label{prop:ConcordanceExterior}
Let~$C$ be a~$\Z^2$-concordance between~$L$ and~$H$. 
\begin{itemize}
\item The inclusion induced map~$\pi_1(\partial E_C) \to \pi_1(E_C)$ is surjective.
\item The inclusion induces an isomorphism~$H_*(E_L; \Z) \xrightarrow{\cong} H_*(E_C; \Z).$
\item The concordance exterior~$E_C$ is a~$K(\Z^2,1).$ 
\end{itemize}
\end{proposition}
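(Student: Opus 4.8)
The plan is to handle the three bullets in order, each building on the previous one, and to reduce the computation of $H_*(E_C;\Z)$ to a Mayer--Vietoris argument that uses only $\Delta_L = 1$ and standard facts about Hopf link exteriors. The main obstacle will be establishing the homology isomorphism $H_*(E_L;\Z) \xrightarrow{\cong} H_*(E_C;\Z)$; once that is known, surjectivity on $\pi_1$ and the aspherical conclusion follow by fairly soft arguments.

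For the first bullet, I would argue as in the analogous embedded-surface statements (e.g. Proposition~\ref{prop:HomologyZExterior} and the half-lives--half-dies philosophy): the concordance $C$ is a pair of annuli, so a tubular neighborhood $\overline\nu(C)$ deformation retracts onto $C \times I$-like pieces and $E_C = (S^3\times I) \setminus \intt(\overline\nu(C))$. One decomposes $S^3\times I = E_C \cup \overline\nu(C)$ along $\partial_{\intt}\overline\nu(C)$, which is a disjoint union of two copies of $T^2 \times I$, and applies van Kampen. Since $\pi_1(S^3\times I) = 1$ and the $\pi_1$ of each $T^2\times I$ surjects onto $\pi_1$ of the corresponding solid-torus-times-interval piece of $\overline\nu(C)$, the pushout forces $\pi_1(\partial E_C) \to \pi_1(E_C)$ to be surjective (the normal generators of $\pi_1(E_C)$ coming from meridians already lie in $\partial E_C$). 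One should also record that the meridians $\mu_a,\mu_b$ of $L$ generate $H_1(E_C;\Z)\cong\Z^2$, which is immediate from the half-lives--half-dies argument together with the linking number calculation.

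For the second bullet I would run the Mayer--Vietoris sequence for $S^3\times I = E_C \cup_{\partial_{\intt}\overline\nu(C)} \overline\nu(C)$ with $\Z$ coefficients, or more directly compare $E_C$ with $E_L$ via the product structure at the $L$-end. The cleanest route: note $E_L \hookrightarrow E_C$ as $E_L \times \{0\}$ in a collar, so it suffices to show the map is an isomorphism, and this can be checked by showing $H_*(E_C, E_L;\Z) = 0$. By excision and the product neighborhood of $C$, the pair $(E_C, E_L)$ is built from $(S^3\times I, S^3\times\{0\})$ by removing $\overline\nu(C)$, and a diagram chase using $\Delta_L = 1$ (which gives $H_*(E_L;\Z[\Z^2])$ control, hence $H_*(E_C;\Z[\Z^2])$ is as small as possible — this is exactly where Davis's hypothesis enters) pins down the relative homology. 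Concretely, I expect to mirror \cite{ConwayDiscs}: first show $H_*(E_C;\Z[\Z^2])\cong H_*(E_H;\Z[\Z^2])\cong\Z$ concentrated in degree $0$ using that both link exteriors have trivial Alexander module and the concordance exterior is an $h$-cobordism-like object on the level of infinite cyclic (here $\Z^2$) covers, then deduce the integral statement from a Wang-type / Milnor sequence comparing $\Z$ and $\Z^2$ coefficients. Since $E_L \simeq E_H \simeq T^2$ on homology ($H_*(E_L;\Z)\cong H_*(T^2)$ because $\Delta_L=1$ forces the link to have the homology of the Hopf link), this gives $H_*(E_C;\Z)\cong H_*(T^2;\Z)$.

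For the third bullet, combine the previous two: $E_C$ has $\pi_1$ a quotient of $\pi_1(\partial E_C)$, and using the first bullet together with the identification $H_1(E_C;\Z)\cong\Z^2$ generated by $\mu_a,\mu_b$, one shows $\pi_1(E_C)$ is generated by two commuting elements (the two meridians commute because they cobound an annulus-complement torus boundary component), hence there is a surjection $\Z^2\twoheadrightarrow\pi_1(E_C)$; abelianizing shows it is an isomorphism on $H_1$, and then one checks it is injective by a standard argument (e.g. the $\Z^2$-cover $\widetilde{E_C}$ has $H_*(\widetilde{E_C};\Z)=H_*(E_C;\Z[\Z^2])\cong\Z$ in degree $0$ from the second bullet's proof, so $\widetilde{E_C}$ is acyclic, and being an open $4$-manifold that is connected and acyclic with free $\Z^2$-action downstairs, $E_C = K(\Z^2,1)$). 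Thus $E_C$ is aspherical with $\pi_1\cong\Z^2$, i.e. a $K(\Z^2,1)$, completing the proof of the proposition. The aspherical conclusion is then precisely the input needed for the surgery-exact-sequence argument of Theorem~\ref{thm:HopfConcordancesEquivalent}, exactly as in \cite{ConwayPowellDiscs, ConwayDiscs}.
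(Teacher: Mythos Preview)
Your proposal has a structural misreading of the hypothesis that makes each bullet harder than it needs to be. In this paper's conventions (parallel to ``$\Z$-surface''), a \emph{$\Z^2$-concordance} is by definition one whose exterior has $\pi_1\cong\Z^2$; this is not something to be derived. With that in hand the paper's proof is very short:

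\begin{itemize}
\item For the first bullet, $\pi_1(E_C)\cong\Z^2$ is generated by the two meridians, and meridians visibly lie in $\partial E_C$; done. Your van Kampen argument is not wrong, but it is aimed at reproving the hypothesis.
\item For the second bullet, the inclusion $E_L\hookrightarrow E_C$ is a $\Z$-homology isomorphism for \emph{any} concordance: the exterior of a concordance in $S^3\times I$ is a relative $\Z$-homology cobordism by excision/Alexander duality. This uses neither $\Delta_L=1$ nor any $\Z[\Z^2]$-coefficient computation, so your detour through equivariant homology and a Wang/Milnor sequence is unnecessary.
\item For the third bullet, since $\pi_1(E_C)\cong\Z^2$ is given, the only task is to show that the universal cover is acyclic, i.e.\ $H_i(E_C;\Z[\Z^2])=0$ for $i\ge 1$. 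The paper does this directly: $H_1=0$ because $\pi_1=\Z^2$; $H_3\cong H^1(E_C,\partial E_C;\Z[\Z^2])=0$ by Poincar\'e--Lefschetz duality and the first bullet; and $H_2$ is both torsion (Euler characteristic $\chi(E_C)=0$) and torsion-free (duality identifies it with a dual module), hence zero. There is no comparison with $E_H$ or $E_L$ at this stage.
\end{itemize}

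Two specific points where your argument would break: your claim that $\mu_a$ and $\mu_b$ commute ``because they cobound an annulus-complement torus boundary component'' is not correct reasoning---they are meridians of \emph{different} components and do not lie on a common boundary torus; their commuting in $\pi_1(E_C)$ is exactly the $\Z^2$-hypothesis. And your assertion that $H_*(E_C;\Z[\Z^2])\cong H_*(E_H;\Z[\Z^2])$ (``$h$-cobordism-like on the $\Z^2$-cover'') is not automatic and is not what the paper proves; the paper bypasses this entirely with the Euler-characteristic/duality argument above.
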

\begin{proof}
For the first assertion,  note that~$\pi_1(E_C) \cong \Z^2$ is freely generated by meridians.
The second fact follows because the exterior of a concordance is a relative homology cobordism.
The third assertion will follow from  Hurewicz's theorem applied to the universal cover of~$E_C$, once we establish that  the~$\Z[\Z^2]$-homology of~$E_C$ with respect to the coefficient system mapping~$\mu_j$ to~$t_j$ for~$j=a,b$ is
$$
H_i(E_C;\Z[\Z^2])\cong
\begin{cases}
\Z &\quad \text{ for } i=0, \\
0 &\quad \text{ otherwise.}
\end{cases}
$$
We argue that $H_i(E_C; \Z[\Z^2])=0$ for $i=1,2,3.$
Since~$\pi_1(E_C)=\Z^2,$ we have~$H_1(E_C;\Z[\Z^2])=0$.
Note also that since~$\pi_1(\partial E_C)\to \pi_1(E_C)$ is surjective we have~$H_i(E_C,\partial E_C;\Z[\Z^2])=0$ for~$i=0,1$,  which in turn implies that~\[H_3(E_C;\Z[\Z^2])=H^1(E_C,\partial E_C;\Z[\Z^2])=0.\]
Next observe that~$H_2(E_C;\Z[\Z^2])$ is torsion because
$$0=1-2+1-0=\chi(E_C)=\chi^{\Z[\Z^2]}(E_C)=0-0+b_2^{\Z[\Z^2]}-0=b_2^{\Z[\Z^2]}$$
but it is also torsion-free because, using that~$H_i(E_C,\partial E_C;\Z[\Z^2])=0$ for~$i=0,1$,
we have
$$H_2(E_C;\Z[\Z^2])\cong H^2(E_C,\partial E_C;\Z[\Z^2]) \cong H_2(E_C,\partial E_C;\Z[\Z^2])^*.$$
It follows that~$H_2(E_C;\Z[\Z^2])=0$,  finishing the proof of the proposition.
\end{proof}

The remainder of this section is devoted to proving a preliminary result needed in the proof of Theorem~\ref{thm:HopfConcordancesEquivalent}.
Given $\Z^2$-concordances $C_1$ and $C_2$ between $L$ and $H$, this result,  stated precisely in Proposition~\ref{prop:Compatible},  ensures that there is an isomorphism $\psi \colon \pi_1(E_{C_1}) \to \pi_1(E_{C_2})$ and a homeomorphism $f \colon \partial E_{C_1} \to \partial E_{C_2}$ such that $i_2 \circ f_*=\psi \circ i_1  \colon \pi_1(\partial E_{C_1}) \to \pi_1(E_{C_2})$, where $i_1,i_2$ denote the appropriate inclusion-induced maps.

\begin{construction}[Generators for $H_1(\partial E_C)$]\label{cons:Loopc}
A Mayer-Vietoris argument gives~$H_1(\partial E_C) \cong \Z^3$.
More precisely,~$H_1(\partial E_C)\cong \Z\mu_ a \oplus \Z\mu_b \oplus \Z$, where~$\mu_a,\mu_b$ are the images of the meridians of the link~$L$ (or $H$) and the last summand is obtained by splitting the short exact sequence
$$ 0 \to \Z\mu_a \oplus \Z\mu_b \to H_1(\partial E_C) \to \Z \to 0.$$
Such a splitting is not canonical,  but a generator $\gamma$ for the third summand can be obtained by picking points $a_L, b_L$ on each torus component of $\partial E_C \cap \overline{\nu}(L)$ and connecting them with an arc in~$E_L$; picking $a_H,b_H$ on each torus component of $\partial E_C \cap \overline{\nu}(H)$ and connecting them with an arc in~$E_H$;  and finally connecting $a_L$ to $a_H$ and $b_L$ to $b_H$ with  a pair of arcs,  one in each component of~$\partial_{\text{int}}E_C\cong C \times S^1$.  
\end{construction}

Our framings~$\fr \colon \overline{\nu}(X) \to X \times D^2$ continue to satisfy~$\fr(X)= X \times \{0\}$,  though contrarily to Section~\ref{sec:ImmersionsAndNormalBundles},  we suppress the underlying vector bundles from the notation and work directly with tubular neighborhoods.
We also fix tubular neighborhoods $\overline{\nu}(L)$ and $\overline{\nu}(H)$ of $L$ and~$H$ and require that tubular neighorhoods of concordances from $L$ to $H$ extend these fixed ones.

\begin{notation}[The $0$-framings of $L$ and $H$]
Denote the $0$-framings of $L$ and $H$ by
\begin{align*}
\fr_L \colon \overline{\nu}(L) \to L \times D^2 \text{,  } \quad
\fr_H \colon \overline{\nu}(H) \to H \times D^2.
\end{align*}
By the~$0$-framing, we mean that each component of $L$ and $H$ is endowed with the $0$-framing.
Given a concordance $C$ between $L$ and $H$,  this framing extends to a framing of each component of $C$; this is a standard result in knot concordance that can be proved via a simpler version of the proof of Lemma~\ref{lem:RelativeEulerNumber}.
\end{notation}

\begin{construction}[Constructing homeomorphisms~$\partial E_{C_1} \cong \partial E_{C_2}$]
Let $C_1$ and $C_2$ be two concordances between~$L$ and~$H$.
Since both $C_1$ and $C_2$ connect $L_a$ to $H_a$ and $L_b$ to $H_b$,  there exists a homeomorphism $g \colon C_1 \to C_2$ with $g|_{L \sqcup H}=\id_{L \sqcup H}$.  

Given framings  $\fr_i \colon \overline{\nu}(C_i) \to C_i \times D^2$ for $i=1,2$  that each extend the $0$-framing $\fr_L \sqcup \fr_H$,  we obtain a homeomorphism
$$ \id_{E_L} \cup (\fr_2^{-1} \circ (g \times \id_{D^2}) \circ \fr_1)  \cup \id_{E_H}  \colon 
\overbrace{E_L \cup \partial_{\intt} \overline{\nu}(C_1) \cup E_H}^{=\partial E_{C_1}}
\to \overbrace{E_L \cup \partial_{\intt} \overline{\nu}(C_2) \cup E_H}^{=\partial E_{C_2}}.$$
These homeomorphisms glue together because on $\partial \overline{\nu}(L)$,  the homeomorphism $\fr_2^{-1} \circ (g \times \id_{D^2}) \circ \fr_1$ restricts to $\fr_L^{-1} \circ (\id_L \times \id_{D^2}) \circ\fr_L=\id|_{\partial \overline{\nu}(L)}$ and similarly on $\partial \overline{\nu}(H)$.
\end{construction}

\begin{lemma}
\label{lem:CompatibleHomology}
Let $C_1$ and $C_2$ be concordances between $L$ and $H$.
There are framings~$\fr_i$ of $C_i$ and
a homeomorphism $g \colon C_1  \to C_2$ such that the following diagram commutes:
$$
\xymatrix@C4cm{
H_1(E_{C_1})\ar[r]^{\psi,\cong}&H_1( E_{C_2}) \\
H_1(\partial E_{C_1}) \ar[u]^{i_1}\ar[r]^{(\id_{E_L} \cup (\fr_2^{-1} \circ (g \times \id) \circ \fr_1)  \cup \id_{E_H})_*} &  H_1(\partial E_{C_2}) \ar[u]^{i_2},
}
$$
where $i_1,i_2$ are induced by inclusion and  $\psi \colon H_1(E_{C_1}) \to H_1(E_{C_2})$ sends $\mu_a$ to $\mu_a$ and $\mu_b$ to $\mu_b$.
\end{lemma}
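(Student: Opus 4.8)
The plan is to reduce everything to the level of first homology, where $H_1(E_{C_i}) \cong \Z^2$ is freely generated by the meridians $\mu_a,\mu_b$ (Proposition~\ref{prop:ConcordanceExterior}), so that $\psi$ sending $\mu_a \mapsto \mu_a$, $\mu_b \mapsto \mu_b$ is literally the identity under these identifications. Thus the only real content is to show that for suitable choices of framings $\fr_1,\fr_2$ and homeomorphism $g$, the boundary homeomorphism $G := \id_{E_L} \cup (\fr_2^{-1} \circ (g \times \id) \circ \fr_1) \cup \id_{E_H}$ induces on $H_1(\partial E_{C_i}) \cong \Z\mu_a \oplus \Z\mu_b \oplus \Z\gamma$ a map that is compatible with the two inclusion maps $i_1,i_2 \colon H_1(\partial E_{C_i}) \to \Z^2$.

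First I would pin down the inclusion maps $i_1,i_2$ explicitly using the generators from Construction~\ref{cons:Loopc}. On the meridional summands $\Z\mu_a \oplus \Z\mu_b$ both $i_1$ and $i_2$ are the obvious surjection onto $\Z^2$, and any homeomorphism $G$ built as above restricts to $\id$ near $\overline{\nu}(L)$ and $\overline{\nu}(H)$, hence sends $\mu_a \mapsto \mu_a$, $\mu_b \mapsto \mu_b$; so the meridional part is automatically compatible. The work is therefore concentrated on the class $\gamma$: I need to compute $i_1(\gamma) \in \Z^2$ and $i_2(G_*\gamma) \in \Z^2$ and arrange they agree. The class $\gamma$ is a loop running along an arc in $E_L$, then along a $C_i \times S^1$-arc in $\partial_{\intt} E_{C_i}$, then an arc in $E_H$, then back. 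Its image in $H_1(E_{C_i}) \cong \Z^2$ is some element $m_a\mu_a + m_b\mu_b$, where $m_a, m_b$ measure how the two "bridging" arcs (one in each annulus component $\times S^1$) wind around the meridian directions; this winding is exactly governed by the framing $\fr_i$ restricted to the components of $C_i$, i.e. by the relative framings on the two annuli relative to the fixed $0$-framings $\fr_L \sqcup \fr_H$ on the boundary. Since each concordance annulus has a well-defined relative Euler number zero with respect to the Seifert/$0$-framing on its ends (a standard fact, provable as in the proof of Lemma~\ref{lem:RelativeEulerNumber}, and indeed this is why the $0$-framing of $L$ extends over any concordance to $H$), the framing $\fr_i$ extending $\fr_L \sqcup \fr_H$ is essentially unique up to the choice of how the two components are joined; I would fix $\fr_1$ and $\fr_2$ compatibly so that $G_*\gamma_1 = \gamma_2$ on the nose, or at worst so that $i_2(G_*\gamma_1) = i_1(\gamma_1)$.

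Concretely, the steps in order: (1) identify $H_1(E_{C_i}) = \Z\mu_a \oplus \Z\mu_b$ via Proposition~\ref{prop:ConcordanceExterior} and note $\psi$ becomes $\id_{\Z^2}$; (2) fix once and for all a choice of the arc data defining $\gamma$ in $\partial E_{C_i}$ that is "the same" for $i=1,2$ away from the annuli, using that $\partial_{\intt} E_{C_i} \cong C_i \times S^1$ and $g|_{L \sqcup H} = \id$ carries one annulus-piece to the other; (3) choose $\fr_2$ to be $g$-compatible with $\fr_1$, i.e. so that $\fr_2 \circ (\fr_2^{-1}\circ (g \times \id)\circ \fr_1)|_{\partial_{\intt}\overline{\nu}(C_1)} = (g \times \id)\circ \fr_1$, which is automatic, and additionally so that both extend $\fr_L \sqcup \fr_H$, which is possible since the relative Euler number obstruction vanishes; (4) chase $\gamma_1$ through $G$ and through $i_2$, using that $G$ is $\id$ on $E_L$ and $E_H$ and is $\fr_2^{-1}\circ(g\times \id)\circ\fr_1$ on the middle, to check $i_2(G_*\gamma_1) = i_1(\gamma_1)$ inside $\Z^2$; the meridional directions are preserved by construction, and the longitudinal part contributes nothing to $H_1(E_{C_i})$ because the longitude of each annulus component, pushed into $E_{C_i}$, is the class it is on the boundary of $E_L$ (i.e. the Seifert longitude of $L_a$ or $L_b$), which lies in the meridian subgroup only through its linking-number content and that content is the same for $C_1$ and $C_2$ as it equals $\mathrm{lk}$ of the link.

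The main obstacle I anticipate is step (3)–(4): making the framings of $C_1$ and $C_2$ genuinely match up under $g$ and the fixed boundary framings, and verifying that the residual ambiguity (which of the $\Z$-many framings of each annulus one picks) does not change $i_i(\gamma)$, or can be absorbed by re-choosing $g$ or the arc $\gamma$. This is a bookkeeping argument about relative framings of annuli in a $4$-manifold with the $0$-framing pinned on the boundary; the key input is the vanishing of the relative Euler number of each concordance annulus rel.\ the Seifert framings on its two boundary curves, and the fact that the linking number of $L$ (equivalently $H$) is a concordance invariant, so the "$\mathrm{lk}$-content" of $\gamma$ is the same for both concordances. Once these are in hand the commutativity of the square is a direct diagram chase.
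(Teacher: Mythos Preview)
Your outline is essentially the paper's: reduce to the three generators $\mu_a,\mu_b,\gamma$ of $H_1(\partial E_{C_1})$, note that the meridians are handled automatically, and concentrate on $\gamma$. Where you diverge from the paper is in step~(4), and there the argument doesn't go through as written.

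The class $\gamma$ is not a longitude of any component, so the discussion of ``longitudinal parts'' and ``$\mathrm{lk}$-content'' is a red herring. The image $i_j(\gamma)$ in $H_1(E_{C_j})\cong\Z^2$ records exactly the meridional winding of the two arcs of $\gamma$ that lie in $\partial_{\intt}E_{C_j}\cong C_j\times S^1$; this is \emph{not} determined by the linking number of $L$ (or $H$), and it genuinely changes when you change either the arcs or the framings. Your equation in step~(3), $\fr_2 \circ (\fr_2^{-1}\circ (g \times \id)\circ \fr_1) = (g \times \id)\circ \fr_1$, is a tautology and imposes no constraint on $\fr_2$; in particular it does not force $i_2(G_*\gamma_1)=i_1(\gamma_1)$. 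So as stated, step~(4) does not close.

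The paper's execution is a clean two-step normalization. First, use the freedom in the arcs on $\partial_{\intt}E_{C_1}$ (adding multiples of $\mu_a$ and $\mu_b$ to each arc) to arrange $i_1(\gamma_1)=0$. Second, fix any $\fr_1$ and any initial $\fr_2'$ extending the $0$-framing, push $\gamma_1$ across to get $\gamma_2'\subset\partial E_{C_2}$, and then use the $\Z$-freedom in the framing $\fr_2$ on each of the two annulus components of $C_2$ to add the correct multiples of $\mu_a,\mu_b$ to the two arcs of $\gamma_2'$ so that the resulting $\gamma_2$ satisfies $i_2(\gamma_2)=0$ as well. (This is the same framing-twist manoeuvre as in Proposition~\ref{prop:FramingPushoffUsingF}.) Now both compositions send $\gamma_1$ to $0$ and the square commutes. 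You correctly flagged this freedom in your ``main obstacle'' paragraph; exploiting it directly, rather than via a linking-number argument, is what makes the proof work.
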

\begin{proof}
Fix a homeomorphism $g \colon C_1 \to C_2$ with $g|_{L \sqcup H}=\id_{L \sqcup H}$,  and a loop $\gamma_1 \in \partial E_{C_1}$ as in Construction~\ref{cons:Loopc}.  
By modifying the arcs in $\gamma_1$ lying on the two components of $\partial_{\intt} E_{C_1}$ by parallel push-offs of $\mu_a$ and $\mu_b$,  we can ensure that $\gamma_1$ is null-homologous in $E_{C_1}$.  Now choose framings~$\fr_1$ of $C_1$  and $\fr_2'$ of $C_2$ that each extend the $0$-framing,  and 
consider the loop
$$\gamma_2':=(\id_{E_L} \cup (\fr_2'^{-1} \circ (g \times \id_{D^2}) \circ \fr_1)  \cup \id_{E_H})(\gamma_1) \subset \partial E_{C_2}.$$
Depending on our choice of framings,  we may have that  $\gamma_2'$ is nontrivial in $H_1(E_{C_2})$: our goal is to modify $\fr_2'$ (and thus $\gamma_2'$) so that this is not the case.
The subset~$\partial_{\intt}E_{C_2}$ has two components~$\partial_{\intt}^a E_{C_2}$ and~$\partial_{\intt}^bE_{C_2}$,  each of which is homeomorphic to~$S^1 \times S^1 \times [0,1]$.
Changing the framing~$\fr_2'$ of the two components of~$C_2=C_2^a \sqcup C_2^b$ modifies the part of~$\gamma_2'$ within the corresponding thickened torus.
We change the framing of~$C_2^a$ and~$C_2^b$ by adding the relevant number of meridians to~$\gamma_2' \cap \partial_{\intt}^a E_{C_2}$ and~$\gamma_2' \cap \partial_{\intt}^b E_{C_2}$ so that the outcome~$\gamma_2$ has no meridional components; this step is similar to the proof of Proposition~\ref{prop:FramingPushoffUsingF}.
In other words, we change the framing~$\fr_{2}'$ into a framing~$\fr_{2}$ so that~$(\id_{E_L} \cup (\fr_2^{-1} \circ (g \times \id_{D^2}) \circ \fr_1)  \cup \id_{E_H})(\gamma_1)$ is nullhomologous in~$E_{C_2}$.

The diagram commutes on meridians; this uses that both $C_1$ and $C_2$ are concordances of ordered links.
Finally, on~$\gamma_1$ both paths in the diagram give zero,  as justified above.  
This finishes the proof,  since from Construction~\ref{cons:Loopc} we have that $H_1(\partial E_{C_1})$ is generated by $\mu_a,\mu_b,$ and $\gamma_1$. 
\end{proof}

The next proposition establishes the main technical result of this section.

\begin{proposition}
\label{prop:Compatible}
Let $C_1$ and $C_2$ be $\Z^2$-concordances between $L$ and $H$.
There is a homeomorphism $F \colon \overline{\nu}(C_1) \to \overline{\nu}(C_2)$ with~$F(C_1)=C_2$  and $F|_{\overline{\nu}(L) \cup \overline{\nu}(H)}=\id$ such that the following diagram commutes:
$$
\xymatrix@C2.5cm{
\pi_1(E_{C_1})\ar[r]^{\psi,\cong}&\pi_1( E_{C_2}) \\
\pi_1(\partial E_{C_1}) \ar[u]^{i_1}\ar[r]^{(\id_{E_L} \cup F| \cup \id_{E_H})_*} &  \pi_1(\partial E_{C_2}) \ar[u]^{i_2},
}
$$
where~$i_1,i_2$ are induced by inclusion and $\psi \colon \pi_1(E_{C_1}) \to \pi_1(E_{C_2})$ sends~$\mu_a$ to~$\mu_a$ and~$\mu_b$ to~$\mu_b$. 
\end{proposition}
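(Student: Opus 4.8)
The plan is to build the required homeomorphism $F \colon \overline{\nu}(C_1) \to \overline{\nu}(C_2)$ by upgrading the homology-level statement of Lemma~\ref{lem:CompatibleHomology} to a $\pi_1$-level statement. Since $\pi_1(E_{C_1}) \cong \Z^2 \cong \pi_1(E_{C_2})$ are abelian and freely generated by meridians, the map $\psi$ appearing in Lemma~\ref{lem:CompatibleHomology} is automatically an isomorphism $\pi_1(E_{C_1}) \to \pi_1(E_{C_2})$, and similarly $\pi_1(\partial E_{C_i})$ is a quotient of $\pi_1(E_{C_i}) \times (\text{stuff})$ — more precisely, by Proposition~\ref{prop:ConcordanceExterior} the inclusion $\pi_1(\partial E_{C_i}) \to \pi_1(E_{C_i})$ is surjective, and $\pi_1(\partial E_{C_i})$ is generated by $\mu_a, \mu_b$ and the loop $\gamma_i$ from Construction~\ref{cons:Loopc} together with the $S^1$-fibre directions of $\partial_{\intt}E_{C_i} \cong C_i \times S^1$. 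So first I would take the framings $\fr_1, \fr_2$ and homeomorphism $g \colon C_1 \to C_2$ produced by Lemma~\ref{lem:CompatibleHomology}, set $F := \fr_2^{-1} \circ (g \times \id_{D^2}) \circ \fr_1$, and observe $F(C_1) = C_2$, $F|_{\overline{\nu}(L) \cup \overline{\nu}(H)} = \id$ by the compatibility of the framings with $\fr_L \sqcup \fr_H$ (exactly as in the construction preceding Lemma~\ref{lem:CompatibleHomology}).

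Second, I would verify the commutativity of the $\pi_1$-square. The point is that all fundamental groups in sight are abelian: $\pi_1(E_{C_i}) \cong \Z^2$, and $\pi_1(\partial E_{C_i})$ — being the fundamental group of a closed $3$-manifold that is the union of two link exteriors and two copies of $(\text{annulus}) \times S^1$ along tori — is also abelian here, isomorphic to $H_1(\partial E_{C_i}) \cong \Z^3$. (This should be checked via van Kampen applied to the decomposition $\partial E_{C_i} = E_L \cup \partial_{\intt}E_{C_i} \cup E_H$, using $\pi_1(\partial_{\intt}E_{C_i}) \cong \Z^2$ and the fact that the gluing tori carry the meridian-longitude generators, so the amalgamated product collapses to $\Z^3$.) Once everything is abelian, the $\pi_1$-square coincides with the $H_1$-square of Lemma~\ref{lem:CompatibleHomology}, which commutes by construction. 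Thus $i_2 \circ (\id_{E_L} \cup F| \cup \id_{E_H})_* = \psi \circ i_1$ on all of $\pi_1(\partial E_{C_1})$.

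The main obstacle I anticipate is precisely the verification that $\pi_1(\partial E_{C_i})$ is abelian — or, if it is not, finding the right replacement argument. If $\pi_1(\partial E_{C_i})$ turns out to be nonabelian (it need not be, a priori, since $E_L$ and $E_H$ can have nonabelian fundamental group), then one cannot simply quote Lemma~\ref{lem:CompatibleHomology}; instead one must argue directly that the composite $\pi_1(\partial E_{C_1}) \to \pi_1(E_{C_1}) \xrightarrow{\psi} \pi_1(E_{C_2})$ factors through $\pi_1(\partial E_{C_2})$ via $(\id \cup F| \cup \id)_*$. The saving grace is that the target $\pi_1(E_{C_2}) \cong \Z^2$ \emph{is} abelian, so the composite $\pi_1(\partial E_{C_1}) \to \Z^2$ factors through $H_1(\partial E_{C_1})$, and likewise $i_2 \circ (\cdots)_* \colon \pi_1(\partial E_{C_1}) \to \Z^2$ factors through $H_1(\partial E_{C_1})$; hence the two maps $\pi_1(\partial E_{C_1}) \to \Z^2$ agree if and only if the induced maps $H_1(\partial E_{C_1}) \to \Z^2$ agree, and \emph{that} is exactly Lemma~\ref{lem:CompatibleHomology}. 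So in fact the abelianness of $\pi_1(\partial E_{C_i})$ is not needed — only the abelianness of the target $\pi_1(E_{C_2})$, which is guaranteed. I would write the proof using this last observation, making it short and robust: define $F$, note its boundary behavior, then reduce the $\pi_1$-commutativity to the already-proved $H_1$-commutativity by passing to abelianizations of the domains (legitimate because the common target is abelian).
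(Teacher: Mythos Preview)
Your proposal is correct and matches the paper's approach essentially verbatim: define $F := \fr_2^{-1} \circ (g \times \id_{D^2}) \circ \fr_1$ from the data of Lemma~\ref{lem:CompatibleHomology}, check $F(C_1)=C_2$ and $F|_{\overline{\nu}(L)\cup\overline{\nu}(H)}=\id$, and reduce the $\pi_1$-commutativity to the $H_1$-commutativity using that the target $\pi_1(E_{C_2})\cong\Z^2$ is abelian. Your final observation---that abelianness of $\pi_1(\partial E_{C_i})$ is irrelevant because only the target needs to be abelian---is exactly the point, and the paper's proof is correspondingly a two-line reduction to Lemma~\ref{lem:CompatibleHomology}.
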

\begin{proof}
Since the fundamental groups of the concordance exteriors are abelian,  it suffices  to verify the statement on homology.
The result now follows from Lemma~\ref{lem:CompatibleHomology} by setting~$F:=\fr_2^{-1} \circ (g \times \id_{D^2}) \circ \fr_1$.
Indeed
\[F(C_1)= \fr_2^{-1} \circ (g \times \id_{D^2})\circ \fr_1(C_1)= 
 \fr_2^{-1} \circ (g \times \id_{D^2})(C_1 \times \{0\})
 =  \fr_2^{-1}(C_2 \times \{0\})=C_2,
 \]
and~$F|_{\overline{\nu}(L) \cup \overline{\nu}(H)}=\id$ because~$g|_{L \sqcup H}=\id$ and~$\fr_1,\fr_2$ both extend the $0$-framing $\fr_L \sqcup \fr_H$. 
\end{proof}

\subsection{Surgery theory input}

Let $L$ be a~$2$-component link with Alexander polynomial~$1$.
We prove Theorem~\ref{thm:HopfConcordancesEquivalent},  which states that any two~$\Z^2$-concordances between~$L$ and the Hopf link $H$ are equivalent rel.\  boundary.
\begin{proof}[Proof of Theorem~\ref{thm:HopfConcordancesEquivalent}]
Let $C_1$ and $C_2$ be~$\Z^2$-concordances between~$L$ and~$H$.
Proposition~\ref{prop:Compatible} ensures that there is a homeomorphism $F \colon \overline{\nu}(C_1) \to \overline{\nu}(C_2)$ with~$F(C_1)=C_2$ and $F|_{\overline{\nu}(L) \cup \overline{\nu}(H)}=\id$ such that the following diagram commutes
$$
\xymatrix@C2.5cm{
\pi_1(E_{C_1})\ar[r]^{\psi,\cong}&\pi_1( E_{C_2}) \\
\pi_1(\partial E_{C_1}) \ar[u]^{i_1}\ar[r]^{(\id_{E_L} \cup F| \cup \id_{E_H})_*} &  \pi_1(\partial E_{C_2}) \ar[u]^{i_2}
}
$$
where $i_1,i_2$ are induced by inclusion and $\psi \colon \pi_1(E_{C_1}) \to \pi_1(E_{C_2})$ sends $\mu_a$ to $\mu_a$ and $\mu_b$ to $\mu_b$. 

The commutativity of this diagram and the fact that~$E_{C_2}$ is a $K(\Z^2,1)$ (by Proposition~\ref{prop:ConcordanceExterior}) ensures that the obstruction theoretic argument from~\cite[Lemma 2.1(2)]{ConwayPowellDiscs} applies to our setting.
The outcome is that the homeomorphism~$\id_{E_L} \cup F| \cup \id_{E_H} \colon \partial E_{C_1} \to \partial E_{C_2}$ extends to a homotopy equivalence~$E_{C_1} \xrightarrow{\simeq} E_{C_2}$ that induces $\psi$ on $\pi_1$.
Consider the rel.\ boundary surgery exact sequence:
$$ \mathcal{N}(E_{C_2} \times [0,1],\partial (E_{C_2} \times [0,1])) \xrightarrow{\sigma_5}  L_5(\Z[\Z^2]) \dashrightarrow \mathcal{S}(E_{C_2},\partial E_{C_2}) \xrightarrow{\eta} \mathcal{N}(E_{C_2},\partial E_{C_2}) \xrightarrow{\sigma_4} L_4(\Z[\Z^2]).$$
We omitted decorations because~$\Z^2$ has vanishing Whitehead group~\cite{BassHellerSwan}.
The sequence is exact because~$\Z^2$ is a good group.
Since~$E_{C_2}$ is a~$K(\Z^2,1)$,  using algebraic surgery,  for~$i=4,5$ the surgery obstruction map~$\sigma_i$ agrees with the composition
$$\sigma_i \colon H_i(E_{C_2};\mathbf{L}\langle 1 \rangle_\bullet) \xrightarrow{\cong} H_i(E_{C_2};\mathbf{L}_\bullet) \xrightarrow{A_i} L_i(\Z[\Z^2]).$$
Here~$\mathbf{L}_\bullet$ denotes the~$L$-theory spectrum, ~$\mathbf{L}\langle 1\rangle_\bullet$ is its~$1$-connective cover, and~$A_i$ denotes the assembly map in~$L$-theory.
We use these notions as blackboxes and refer to~\cite{LueckMacko} and the references within for background on surgery theory.
Since $B\Z^2 \simeq K(\Z^2,2) \simeq S^1 \times S^1$ admits a~$2$-dimensional CW model and satisfies the Farrell-Jones conjecture (e.g. by virtue of being a surface group see e.g.~\cite[Theorem~15.1]{LueckIsomorphism}), it follows that $\sigma_4$ is injective and $\sigma_5$ is surjective~\cite[Lemma 2.2]{KasprowskiLand}.
\color{black}


It then follows that the structure set~$ \mathcal{S}(E_{C_2})$ is a singleton and therefore the exteriors are s-cobordant rel. boundary.
Since~$\operatorname{Wh}(\Z^2)=0$ and since~$\Z^2$ is a good group (see e.g.~\cite[Chapter 12]{DET}), the~$5$-dimensional relative~$s$-cobordism theorem implies that $\id_{E_L} \cup F \cup \id_{E_H}$ extends to a homeomorphism $G \colon E_{C_1} \to E_{C_2}$~\cite[Theorem 7.1A]{FreedmanQuinn}.
Combining this homeomorphism with the homeomorphism~$F \colon \overline{\nu}(C_1) \to  \overline{\nu}(C_2)$,  which satisfies $F(C_1)=C_2$ and $F|_{\overline{\nu}(L) \cup \overline{\nu}(H)}=\id$, 
yields the desired rel.\ boundary homeomorphism of~$S^3 \times I$ taking one concordance to the other.
\end{proof}

\color{black}

\bibliography{BiblioImmersed}
\bibliographystyle{alpha}
\end{document}